\documentclass[10pt,reqno,a4paper]{amsart}
\usepackage[dvipsnames]{xcolor}

\usepackage{amsmath, amssymb, amsfonts, latexsym, mdwlist, amsthm}
\usepackage{subfig}
\usepackage{graphicx}
\usepackage{tikz-cd}
\usepackage{wrapfig}
\usepackage{mathrsfs}

\tikzstyle{startstop} = [rectangle, rounded corners, minimum width=3cm, minimum height=1cm,text centered, draw=black, fill=red!30]
\tikzstyle{io} = [trapezium, trapezium left angle=70, trapezium right angle=110, minimum width=3cm, minimum height=1cm, text centered, draw=black, fill=blue!30]
\tikzstyle{process} = [rectangle, minimum width=3cm, minimum height=1cm, text centered, draw=black, fill=orange!30]
\tikzstyle{decision} = [diamond, minimum width=3cm, minimum height=1cm, text centered, draw=black, fill=green!30]
\tikzstyle{arrow} = [thick,->,>=stealth]

\usepackage{mathtools}
\usepackage{tikz}
\usepackage{comment}

\usetikzlibrary{patterns}

\usepackage{enumerate}

\usepackage{amsmath, amssymb, amsfonts, latexsym, mdwlist, amsthm}
\usepackage{subfig}
\usepackage{wrapfig}

\usepackage{mathtools}
\usepackage{bm}

\numberwithin{equation}{section}

\definecolor{lightred}{HTML}{ff4d4d}
\definecolor{lightblue}{HTML}{1F88CD}
\definecolor{lightgrey}{HTML}{727272}
\definecolor{lightblue2}{HTML}{009EC1}
\definecolor{mypink}{HTML}{FD00B0}

\usepackage[bookmarks, colorlinks, breaklinks, pdftitle={},
pdfauthor={Zhiyu Liu}]{hyperref}
\hypersetup{linkcolor=OliveGreen,citecolor=MidnightBlue,filecolor=black,urlcolor=blue}

\usepackage{tikz}
\usetikzlibrary{calc,trees,positioning,arrows,chains,shapes.geometric,%
    decorations.pathreplacing,decorations.pathmorphing,shapes,%
    matrix,shapes.symbols}

\tikzset{
>=stealth',
  punktchain/.style={
    rectangle,
    rounded corners,
    draw=black, thick,
    minimum height=3em,
    text centered,
    on chain},
  line/.style={draw, thick, <-},
  element/.style={
    tape,
    top color=white,
    bottom color=blue!50!black!60!,
    minimum width=8em,
    draw=blue!40!black!90, very thick,
    text width=10em,
    minimum height=3.5em,
    text centered,
    on chain},
  every join/.style={->, thick,shorten >=1pt},
  decoration={brace},
  tuborg/.style={decorate},
  tubnode/.style={midway, right=2pt},
}

\usepackage{paralist}
\setdefaultenum{(a)}{(i)}{}{}
\usepackage[shortlabels]{enumitem} 
\usepackage{derivative}

\makeatletter
\newtheorem*{rep@theorem}{\rep@title}
\newcommand{\newreptheorem}[2]{%
\newenvironment{rep#1}[1]{%
 \def\rep@title{#2 \ref{##1}}%
 \begin{rep@theorem}}%
 {\end{rep@theorem}}}
\makeatother

\usepackage[titletoc,title]{appendix}
\newtheorem{theorem}{Theorem}[section]
\newreptheorem{theorem}{Theorem}
\newtheorem{proposition}[theorem]{Proposition}

\newtheorem{lemma}[theorem]{Lemma}

\newtheorem{corollary}[theorem]{Corollary}
\newreptheorem{corollary}{Corollary}
\newtheorem{conjecture}[theorem]{Conjecture}
\newreptheorem{conjecture}{Conjecture}

\newtheorem{Assum}[theorem]{Assumption}
\newtheorem{Setup}[theorem]{Setup}

\newtheorem{thm-int}{Theorem}

\theoremstyle{definition}
\newtheorem{Def-s}[theorem]{Definition}
\newtheorem{definition}[theorem]{Definition}

\newtheorem{example}[theorem]{Example}
\newtheorem{remark}[theorem]{Remark}

\newcommand{\ignore}[1]{}

\usepackage[numbers]{natbib}
\newcommand{\ra}{\rightarrow}
\newcommand{\xra}{\xrightarrow}

\newcommand{\wt}{\widetilde}

\newcommand{\D}{\mathrm{D}}
\newcommand{\HN}{\mathrm{HN}}
\newcommand{\tf}{\mathrm{tf}}
\newcommand{\LL}{\mathrm{L}}
\newcommand{\Db}{\mathrm{D}^{\mathrm{b}}}
\newcommand{\Dperf}{\mathrm{D}_{\mathrm{perf}}}
\newcommand{\Dqc}{\mathrm{D}_{\mathrm{qc}}}
\newcommand{\Supp}{\mathrm{Supp}}

\newcommand{\sT}{\mathsf{T}}
\newcommand{\Td}{\mathrm{Td}}
\newcommand{\tor}{\mathrm{tor}}
\newcommand{\Ass}{\mathrm{Ass}}
\newcommand{\depth}{\mathrm{depth}}

\newcommand{\ZZ}{\mathbb{Z}}

\newcommand{\QQ}{\mathbb{Q}}

\newcommand{\CC}{\mathbb{C}}
\newcommand{\RR}{\mathbb{R}}
\newcommand{\PP}{\mathbb{P}}

\newcommand{\DD}{\mathbb{D}}
\newcommand{\kk}{\mathsf{k}}
\newcommand{\bv}{\mathbf{v}}

\newcommand{\bp}{\mathbf{p}}
\newcommand{\bch}{\mathbf{ch}}
\newcommand{\st}{\mathrm{st}}
\newcommand{\A}{\mathrm{A}}

\newcommand{\CH}{\mathrm{CH}}

\newcommand{\Dpug}{\mathrm{D}_{\mathrm{pug}}}

\newcommand{\ch}{\mathrm{ch}}

\newcommand{\codim}{\mathrm{codim}}

\newcommand{\td}{\mathrm{td}}
\newcommand{\bn}{\mathrm{BN}}

\newcommand{\KK}{\mathrm{K}}
\newcommand{\Knum}{\mathrm{K}_{\mathrm{num}}}
\newcommand{\num}{\mathrm{num}}

\newcommand{\perf}{\mathrm{perf}}

\newcommand{\pr}{\mathrm{pr}}

\newcommand{\bDelta}{\overline{\Delta}}

\renewcommand{\Re}{\operatorname{Re}}
\renewcommand{\Im}{\operatorname{Im}}

\DeclareMathOperator{\im}{im}

\DeclareMathOperator{\rk}{rk}
\DeclareMathOperator{\cok}{cok}

\DeclareMathOperator{\Coh}{\mathrm{Coh}}

\DeclareMathOperator{\Ext}{Ext}

\DeclareMathOperator{\Hom}{Hom}
\DeclareMathOperator{\RHom}{RHom}

\DeclareMathOperator{\Spec}{Spec}
\DeclareMathOperator{\Pic}{Pic}

\DeclareMathOperator{\cone}{cone}
\DeclareMathOperator{\Stab}{Stab}
\DeclareMathOperator{\Gr}{Gr}

\newcommand{\cX}{\mathcal{X}}

\newcommand{\cZ}{\mathcal{Z}}
\newcommand{\cC}{\mathcal{C}}
\newcommand{\cA}{\mathcal{A}}
\newcommand{\cE}{\mathcal{E}}
\newcommand{\cU}{\mathcal{U}}
\newcommand{\cH}{\mathcal{H}}
\newcommand{\cR}{\mathcal{R}}
\newcommand{\cB}{\mathcal{B}}

\newcommand{\cI}{\mathcal{I}}

\newcommand{\cT}{\mathcal{T}}
\newcommand{\cQ}{\mathcal{Q}}

\newcommand{\cP}{\mathcal{P}}
\newcommand{\cD}{\mathcal{D}}
\newcommand{\cL}{\mathcal{L}}
\newcommand{\cN}{\mathcal{N}}
\newcommand{\cM}{\mathcal{M}}
\newcommand{\rH}{\mathrm{H}}
\newcommand{\cO}{\mathcal{O}}

\newcommand{\bA}{\mathbb{A}}

\DeclareMathOperator{\cF}{\mathcal{F}}
\DeclareMathOperator{\cG}{\mathcal{G}}

\DeclareMathOperator{\oh}{\mathcal{O}}

\usepackage{hyperref}
\hypersetup{
	colorlinks=true,
    linkcolor={blue},
    citecolor={blue},
	urlcolor={black}
}

\begin{document}

\title[Tilt-stability on singular schemes and BG-type inequalities]{Tilt-stability on singular schemes and Bogomolov--Gieseker-type inequalities}
\subjclass[2020]{14F08 (Primary); 14F06, 14D06, 14D20 (Secondary)}
\keywords{Derived categories, Tilt-stability, Generalized Bogomolov--Gieseker inequalities, Stability conditions}

\author{Zhiyu Liu}
\address{School of Mathematical Sciences, Zhejiang University, Hangzhou, Zhejiang Province 310058, P. R. China}
\email{jasonlzy0617@gmail.com}

\author{Tianle Mao}
\address{Kavli Institute for the Physics and Mathematics of the Universe,
The University of Tokyo\\ 
5-1-5 Kashiwanoha, Kashiwa, Chiba, 277-8583, Japan}
\email{tianle.mao@ipmu.jp}

\begin{abstract}
We generalize the framework of tilt-stability to singular schemes and formulate the generalized Bogomolov--Gieseker inequality conjecture of Bayer--Macr\`i--Toda for singular threefolds. We also develop relative versions of these constructions, generalizing corresponding results in \cite{BLMNPS21}. Along the way, we establish Bogomolov--Gieseker-type inequalities for semistable sheaves on any projective scheme.


By extending previous techniques, we verify the conjecture for all Fano threefolds with canonical Gorenstein $\mathbb{Q}$-factorial singularities and a series of singular Calabi--Yau threefolds. Furthermore, we construct stability conditions on the relative Kuznetsov components associated with families of singular Fano threefolds, thereby proving a singular analogue of a conjecture of Kuznetsov--Shinder.

\end{abstract}

\vspace{-1em}
\maketitle

\setcounter{tocdepth}{1}
\tableofcontents

\section{Introduction}

Motivated by Douglas’s work on $\Pi$-stability, Bridgeland introduces the notion of stability conditions on triangulated categories \cite{bridgeland:stability}. More recently, Li \cite{li:remark} proves the existence of stability conditions on the derived categories of projective schemes, while the relative version and the properness of the corresponding moduli spaces are established in \cite{LLLMPSZ}.

For many geometric applications, it is more effective to work with \emph{tilt-stability} rather than full stability conditions. The construction of tilt-stability is introduced by \cite{bridgeland:stab-on-K3,aaron:bridgeland-moduli-k-trivial}, which provides examples of stability conditions on surfaces. Later, it is extended to any smooth projective variety in \cite{bayer2011bridgeland,bayer2016space}. Although tilt-stability is only a weak stability condition on higher-dimensional varieties (cf.~Definition \ref{def:stability-condition}), it provides a powerful and computable tool to study sheaves on varieties and has many applications, including birational geometry of moduli spaces of sheaves \cite{bayer:projectivity,aaron:bridgeland-moduli-k-trivial}, hyper-K\"ahler geometry \cite{bayer:mmp}, Brill--Noether theory of curves \cite{bayer:brill-noether,feyz-li:clifford-indices,bayer:brill-noether-abelian-surface}, effective restriction problems \cite{feyzbakhsh:effective-restriction,feyz:mukai-program}, etc.

Moreover, Bayer--Macr\`i--Toda \cite{bayer2011bridgeland,bayer2016space} introduced a conjectural Bogomolov--Gieseker-type inequality involving $\ch_3$ of tilt-stable objects. Although originally proposed as a route to construct stability conditions on smooth threefolds, such a $\ch_3$-inequality also plays a crucial role in enumerative geometry of Calabi--Yau threefolds \cite{toda:bogomolov-counting,feyz:curve-counting,feyz:rank-r-dt-theory-from-0,feyz:rank-r-dt-theory-from-1,feyz:physics-abelian-dt,liu-ruan:cast-bound,liu:cast-bound-3fold} and in birational geometry \cite{bbmt14}.




On the other hand, singularities arise naturally even when one is ultimately interested only in smooth varieties. For instance, it is often useful to study degenerations of smooth varieties to singular schemes with richer geometric structures. Motivated by degeneration techniques in enumerative geometry (cf.~\cite{PP12}), it is natural to expect that Bayer--Macr\`i--Toda's (BMT) Conjecture might be approached through such a method.

Singular varieties are also unavoidable in birational geometry. A conjectural relation between the minimal model program for smooth threefolds and stability conditions is proposed in \cite{toda:ex-contract,toda:mmp-surface}. Since the threefolds appearing in the program are terminal and $\QQ$-factorial, it is therefore natural to investigate tilt-stability and the BMT Conjecture for these singular threefolds. We refer to Section \ref{subsec:mot} for a more detailed discussion of these motivations and related questions.

To generalize the theory above to singular varieties and pursue applications, the first step is to develop a theory of tilt-stability in the singular setting. The main results of this paper can be summarized as follows.

\begin{itemize}
    \item We extend tilt-stability to singular schemes, formulate the BMT Conjecture in this setting, and develop relative versions of these constructions, establishing their basic properties.

    \item We verify the BMT Conjecture for singular Fano threefolds and a series of singular Calabi--Yau threefolds.

    \item We prove a semistable reduction theorem for tilt-semistable objects, showing that the BMT Conjecture can be verified via degeneration.

    \item We construct stability conditions on relative Kuznetsov components associated with families of singular Fano threefolds.
\end{itemize}


\subsection{Tilt-stability}

To define tilt-stability on singular schemes, we need a notion of the second Chern character for coherent sheaves. Motivated by the Mumford Chern character used in \cite{langer:normal-surface}, we introduce a homomorphism
\[\bch_i(-)\colon \KK(\Coh(X))\to \CH_{\dim X-i}(X)_{\QQ}\]
for $i\leq d$ and any quasi-projective scheme $X$ over a field $\kk$ that is a \emph{local complete intersection (lci)} in codimension $d$, where $\CH_{k}(X)_{\QQ}$ is the rational Chow group of $k$-dimensional cycles in $X$. As explained in Section \ref{subsec:generalization-ch}, these maps $\bch_i$ satisfy most of the functorial properties familiar from the smooth case.


Now, fix a projective scheme $X$ of dimension $n\geq 2$ over a field $\kk$ that is a local complete intersection in codimension $2$ equipped with an ample divisor $H$. The usual slope function of coherent sheaves can be written as
\[\mu_H(E)\coloneqq \frac{\bch_1(E).H^{n-1}}{\bch_0(E).H^n}\]
for $E\in \Coh(X)$ with $\bch_0(E)\neq 0$, and $+\infty$ otherwise. This allows the construction of tilt-stability on smooth varieties to be extended to this setting: for any $(b,w)\in \RR^2$, we define a full subcategory
\begin{align*}
\Coh^b_H(X)\coloneqq &\Big\{E\in \Db(X)\colon \cH^{i}(E)=0  \text{ for }  i\notin \{-1,0\}, 
\text{ any subsheaf } 0\neq F\subset \cH^{-1}(E) \\
 &\text{ satisfies } \mu_H(F)\leq b, \text{ any quotient sheaf } \cH^{0}(E)\twoheadrightarrow G\neq 0 \text{ satisfies } \mu_H(G)> b\Big\}
\end{align*}
and a homomorphism $Z^{b,w}\colon  \KK(\Coh(X))\to \CC$
\[Z^{b,w}(-)\coloneqq -\bch_2(-).H^{n-2}+w\bch_0(-).H^n+\mathfrak{i}(\bch_1(-).H^{n-1}-b\bch_0(-).H^n).\]
Our first main result is:

\begin{theorem}\label{thm:intro-1}
With the above notation, we define 
\[\Phi_{X, H}(x)\coloneqq \limsup_{\mu\to x} \left\{\frac{\bch_2(E).H^{n-2}}{\bch_0(E).H^{n}} \colon E\text{ is }\mu_H\text{-semistable and }\mu_H(E)=\mu\right\}\in \RR\cup\{\pm \infty\}\footnote{In our paper, $\limsup$ is defined using unpunctured neighborhoods. We define $\limsup$ of $\varnothing$ to be $-\infty$.}.\]
Then 

\begin{enumerate}
    \item \emph{(Theorem \ref{thm:exist-bg-function})} there exists a constant $\mathsf{D}_{X,H}\geq 0$ so that $\Phi_{X, H}(x)\leq \frac{1}{2}x^2+\mathsf{D}_{X,H}$,

    \item \emph{(Theorem \ref{thm-bms})} for any $w>\Phi_{X, H}(b)$, the pair $(\Coh^b_H(X), Z^{b,w})$ is a weak stability condition on $\Db(X)$, and 

    \item \emph{(Theorem \ref{thm:wall-chamber-tilt})} there exists a wall-chamber structure on $\{(b,w)\in \RR^2\colon w>\Phi_{X, H}(b)\}\subset \RR^2$.
\end{enumerate}
\end{theorem}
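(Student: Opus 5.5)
The proof splits into the three parts, and the real content is part (a); parts (b) and (c) then follow the now-standard arguments of Bayer--Macrì--Toda and Bayer--Macrì--Stellari, once we have the formal properties of $\bch_i$ from Section \ref{subsec:generalization-ch}.

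For (a), I would reduce to a Bogomolov--Gieseker-type bound on a smooth model. Take a resolution, or an alteration, $f\colon Y\to X$. In positive characteristic this must be an alteration via de Jong, and its degree will be absorbed into the constants. Alternatively, cut $X$ by general members of $|mH|$ down to a surface $S$ that is lci in codimension~$2$, hence lci away from finitely many points. Restriction theorems of Mehta--Ramanathan/Langer type show that $\mu_H$-semistability is preserved for general complete intersections. The functoriality of $\bch$ under restriction to lci divisors gives $\bch_2(E|_S)=\bch_2(E).H^{n-2}$ up to the factor $m^{n-2}$. It therefore suffices to treat a projective surface $S$ with an ample divisor $H$. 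On $S$, pull back along a resolution $\pi\colon \wt S\to S$. For semistable $E$, replace $\pi^*E$ by its torsion-free part. Its HN factors with respect to $\pi^*H+\epsilon A$ have slopes within a bounded distance of $\mu_H(E)$, and Langer's Bogomolov inequality on the smooth surface $\wt S$ (valid in all characteristics, up to a constant) bounds $\ch_2$ of each factor. Then I compare $\bch_2(E)$ with $\pi_*\ch_2(\pi^*E)$. The discrepancy comes from the finitely many non-lci points and the exceptional locus; I expect it to be bounded by $C\cdot \rk E$, with $C$ depending only on $S$, via Langer's argument for normal surfaces using the Mumford Chern character. Dividing by $\rk E\cdot H^n$ then yields $\Phi_{X,H}(x)\le \tfrac12 x^2+\mathsf D_{X,H}$. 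The main obstacle is exactly this uniform discrepancy bound. It must be linear in rank and independent of $E$, and it has to hold for non-normal $X$, where I would first pass to the normalization, controlling the conductor contribution by rank.

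For (b), I would check the axioms directly. The category $\Coh^b_H(X)$ is the heart of a bounded t-structure obtained by tilting the torsion pair defined by $\mu_H$ at slope $b$; this uses the existence of HN filtrations for $\mu_H$ on the noetherian category $\Coh(X)$. Positivity of $\Im Z^{b,w}$ is immediate. When $\Im Z=0$, the object is either a sheaf supported in codimension $\ge2$, where $\bch_2.H^{n-2}\ge0$ and the sheaf lies in the kernel when its support has dimension $\le n-3$ (allowed for a weak stability condition), or it is $F[1]$ with $F$ semistable of slope $b$. In the latter case, $\Re Z>0$ is exactly the condition $w>\bch_2/(\bch_0H^n)$, guaranteed by $w>\Phi_{X,H}(b)$ up to the limsup subtlety, which is why strict inequality is required. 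For HN filtrations I would use the standard criterion: the heart is noetherian and $\Im Z$ is discrete after rescaling, using that $\bch_1.H^{n-1}$ takes values in $\tfrac1N\ZZ$. The support property I would verify with the quadratic form $\Delta=(\bch_1H^{n-1})^2-2(\bch_0H^n)(\bch_2H^{n-2})$ modified by $\Phi$. To do this, I would choose a convex function above $\Phi$, namely $\tfrac12x^2+\mathsf D$ or a locally finer one, and argue as in BMS by wall-crossing in $w\to+\infty$. The large-volume limit identifies tilt-semistable objects with $\mu_H$-semistable sheaves (or twisted Gieseker ones), whose invariants lie below the graph of $\Phi$.

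For (c), I would deduce local finiteness of walls from the support property as in BMS, Proposition~A.5-type arguments. Numerical walls for a fixed class form a locally finite family of curves. Moreover, only finitely many classes of destabilizing subobjects occur on any compact subset of the region, by the support property together with the discreteness of $\bch_1.H^{n-1}$.
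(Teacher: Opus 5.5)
\textbf{Part (a).} Your route does not work in the generality of the statement. There $X$ is an arbitrary equidimensional projective scheme over an arbitrary field, lci only in codimension $2$, so it may be reducible, non-reduced and non-normal.

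The restriction step is the main gap. Restriction theorems of Mehta--Ramanathan, Flenner or Langer type are proved for normal (or smooth) varieties. The degree $m$ they need depends on $E$, on $\rk E$, or on $\bDelta(E)$ through a Bogomolov inequality on $X$ itself, which is what you are trying to prove. So you do not know that $E|_S$ is semistable, or even that its HN spread is uniformly bounded; without that, Lemma \ref{lem-key-lem} gives nothing. Nor do you know that the constant you obtain on $(S,H|_S)$ is independent of $E$.

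You have also located the difficulty in the wrong place. A general complete-intersection surface misses the non-lci locus, which has codimension $\ge 3$, so $S$ is in fact lci. The real obstacles are non-normality, reducibility and nilpotents, and Langer's normal-surface theorem covers none of them. ``Passing to the normalization'' is meaningless for non-reduced $X$. For reduced non-normal $X$ it produces correction terms pairing $\bch_1(E)$ with classes other than $H$ (along the conductor), and these are not bounded by $\rk E$ and $\mu_H(E)$. In the paper, restriction arguments appear only for the sharper inequality on normal varieties (Theorem \ref{thm-bg-normal}).

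The missing idea is a general finite projection $\pi\colon X\to\PP^n$ with $\pi^*h=H$, which avoids all of this:
\begin{enumerate}
\item compute $\bch_{\le 2}(\pi_*E)$ by Riemann--Roch for $\pi$ over the complement of a codimension-$\ge 3$ subset of $\PP^n$ (Lemma \ref{lem:chi-property}(a));
\item bound the HN spread of $\pi_*E$ using the adjunctions $\pi^*\dashv\pi_*\dashv\pi^!$ and a fixed surjection $\cO_{\PP^n}(-N)^{\oplus m}\twoheadrightarrow\pi_*\cO_X$;
\item apply the classical Bogomolov inequality on $\PP^n$, valid in every characteristic, together with Lemma \ref{lem-key-lem};
\item remove the uncontrolled term $(\tau_1.\bch_1(E)).H^{n-2}$ by averaging the inequalities for $E$ and $E^{\vee}$, using $\bch_2(E^{\vee}).H^{n-2}\ge\bch_2(E).H^{n-2}$.
\end{enumerate}

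\textbf{Parts (b) and (c).} ``Noetherian heart plus discrete $\Im Z$'' only gives HN filtrations for $b\in\QQ$. For irrational $b$ the values $\bch_1(E).H^{n-1}-b\,\bch_0(E).H^n$ are dense in $\RR$, yet the statement is for all real $b$. Handling this is the bulk of Section \ref{sec:general-tilt}: the paper deforms from a nearby rational $b$, adapting Bridgeland's deformation argument to weak stability conditions through skewed stability functions on thin subcategories of the slicing (Lemmas \ref{lem:deformation-1}--\ref{lem:compare-Q-Ab'}).

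Even for rational $b$, noetherianity of $\Coh^b_H(X)$ requires excluding infinite chains in $\cF^b$ whose successive quotients are supported in codimension $\ge 2$. In this singular setting that rests on the $S_2$-hull $E^H$ built from the dualizing sheaf (Lemma \ref{lem-S2-hull}), i.e.\ on the tilting property, not on reflexive hulls.

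For the support property, the choice $g=\frac12x^2+\mathsf D$ only reaches $w>\frac12 b^2+\mathsf D$, in general a strictly smaller region than claimed. You would need a convex majorant adapted to each point, for example $A(x-b)^2+w'$ with $\Phi_{X,H}(b)<w'<w$ and $A\gg 0$; this exists by upper semicontinuity and (a). Your wall-crossing induction also presupposes the HN property along the path. The paper instead proves $\|\overline{\bv}(E)\|\le C_{b,w}|Z^{b,w}(E)|$ directly on all of $\{w>\Phi_{X,H}(b)\}$ (Lemma \ref{lem:support-property}). It compares $E$ with $\HN^-(\cH^0(E))$ or $\HN^+(\cH^{-1}(E))$ and uses the distance from the vertical ray above $(b,w)$ to the points $\Pi(F)$ of $\mu_H$-semistable sheaves. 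Part (c) is then deduced from this bound as in your sketch.
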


The function $\Phi_{X, H}(x)$ is called \emph{the Le Potier function} in \cite{li:stab-manifold-finite-albanese}. For smooth varieties over a field of characteristic $0$, the classical Bogomolov--Gieseker inequality implies $\Phi_{X, H}(x)\leq \frac{1}{2}x^2$.

For smooth projective varieties and normal projective surfaces over any field, Theorem \ref{thm:intro-1}(a) is proved in \cite{koseki:bg-positive-char,langer:normal-surface}. Our inequality may be regarded as a generalization, although the proof uses a different approach. When $X$ has rational singularities and $\mathrm{char}(\kk)=0$, we can take $\mathsf{D}_{X, H}=0$ (cf.~Theorem \ref{thm-bg-normal}).

\begin{remark}
Parts (b) and (c) of Theorem \ref{thm:intro-1} follow from more general theorems for tilting of abstract weak stability conditions on triangulated categories, see Theorem \ref{thm:tilt-stability} and \ref{thm:wall-chamber-abstract}. In the geometric setting, tilt-stability satisfies many expected properties as listed in Section \ref{subsec-titl-from-slope} and \ref{subsec:lemma-about-tilt}.  

Note that Theorem \ref{thm:intro-1}(b) and (c) also hold for normal projective surfaces. In this case, $\bch(-)$, the corresponding  Bogomolov--Gieseker inequality, and tilt-stability have already been established in \cite{langer:normal-surface}.
\end{remark}

\begin{remark}
In Appendix \ref{appendix:approach}, we present an alternative treatment of tilt-stability on arbitrary projective schemes. Rather than using the Chern characters $\bch_i(-)$, this approach defines tilt-stability using the (normalized) coefficients of the Hilbert polynomial. Furthermore, we establish an analogue of Theorem \ref{thm:intro-1} within this framework. Specifically, we prove a BG-type inequality involving the first three coefficients of the Hilbert polynomial for semistable sheaves on any projective scheme (cf.~Theorem \ref{thm-bg-general}), which may be of independent interest. 

However, the primary focus of this paper is the BMT Conjecture, which requires a sharper Bogomolov-Gieseker-type inequality and a well-behaved notion of $\bch_3$, so we do not explore Appendix \ref{appendix:approach} further in the main text.


\end{remark}


\subsection{Bayer--Macr\`i--Toda Conjecture}

We next formulate the singular analogue of the BMT Conjecture. Fix a $3$-dimensional projective scheme $X$ over a field $\kk$ which is lci in codimension $2$ and an ample divisor $H$. Then $\bch_i$ is well-defined for $0\leq i\leq 2$. We denote by $$\nu_{b,w}(-)\coloneqq -\frac{\Re Z^{b,w}(-)}{\Im Z^{b,w}(-)}$$ the slope function associated with $Z^{b,w}$, so we can define $\nu_{b,w}$-semistable objects in $\Coh^b_H(X)$ in the usual way. 

Furthermore, if $X$ is either lci, or $\QQ$-factorial and normal, then we can define $\bch_3(E)\in \QQ$ for any $E\in \Db(X)$ as in Definition \ref{def:ch-lci} and \ref{def:ch3}. In these two cases, the following conjecture generalizes \cite[Conjecture 4.1]{bayer2016space} and \cite[Question 2.4]{macri:fano-threefold}.

\begin{conjecture}[{Conjecture \ref{conj-2}, simplified version}]\label{intro:conj}
In the above setting, take a constant $\mathsf{D}\geq 0$ so that $$\Phi_{X, H}(x)\leq \frac{1}{2}x^2+\mathsf{D}.$$ Then there exists $\Gamma\in \mathsf{E}(X)_{\QQ}$ with $\Gamma.H\geq 0$, such that for any $w>\frac{1}{2}b^2+\mathsf{D}$ and any $\nu_{b,w}$-semistable object $E\in \Coh^b_H(X)$, we have a quadratic inequality as in Remark \ref{rmk:ch3}, involving $\bch_i(E).H^{3-i}$ and $\Gamma.\bch_1(E)$, whose coefficients depend on $\Gamma. H$, the parameters $(b,w)$, and the constant $\mathsf{D}$.
\end{conjecture}

\begin{remark}
As discussed above, the constant $\mathsf{D}$ in the formulation always exists, and should be viewed as a correction term to the classical Bogomolov–Gieseker inequality for $\mu_H$-semistable sheaves, reflecting singularities of $X$ and $\mathrm{char}(\kk)$.
\end{remark}

Here, $\mathsf{E}(X)_{\QQ}$ denotes the bivariant Chow group $\A^2(X)_{\QQ}$ of degree $2$ when $X$ is lci, or $\CH_1(X)_{\QQ}$ when $X$ is $\QQ$-factorial but not lci. In particular, we always have an intersection pairing $\mathsf{E}(X)_{\QQ}\times \CH_2(X)_{\QQ}\to \QQ$. 


As in the smooth case, the above conjecture has two equivalent formulations; see Conjecture \ref{conj-1} and \ref{conj-3}. Moreover, by Theorem \ref{thm-stab}, Conjecture \ref{intro:conj} allows us to construct an explicit family of stability conditions on $X$, which generalizes the corresponding results in \cite{bayer2016space,macri:fano-threefold}.

Conjecture \ref{intro:conj} has been proved for many smooth threefolds, including smooth Fano threefolds \cite{li:fano-3fold,macri:fano-threefold}, smooth Calabi--Yau complete intersections in weighted projective spaces \cite{Li19,FKLR,koseki:stability-cy-solid,liu:bg-ineqaulity-quadratic}, and a series of smooth Calabi--Yau threefolds \cite{FKLR}. Moreover, by \cite{FKLR}, Conjecture \ref{intro:conj} for smooth Calabi--Yau threefolds can be reduced to a conjectural inequality for $\bch_2$ for $\mu_H$-stable sheaves with small slope. We extend this reduction method to a more general setting in Section \ref{subsec:reduce-to-ch2} and apply it to a broad class of singular threefolds.


For example, if $\kk$ is an algebraically closed field of characteristic $0$, we have the following result. Recall that a \emph{Fano threefold} $X$ is a normal projective $3$-dimensional variety with rational Gorenstein singularities and $-K_X$ ample.

\begin{theorem}[{Corollary \ref{cor:fano3}}]
Let $X$ be a Fano threefold over $\kk$ that is either lci or $\QQ$-factorial. Then Conjecture \ref{intro:conj} holds for $(X, -K_X)$, $\mathsf{D}=0$, some $\Gamma\in \mathsf{E}(X)_{\QQ}$ with $\Gamma.(-K_X)\geq 0$, and $(b,w)$ in the range 
\begin{equation}\label{eq:intro-range}
w>\frac{1}{2}b^2+\frac{1}{2}(b-\lfloor b \rfloor)(\lfloor b \rfloor+1-b).
\end{equation}
\end{theorem}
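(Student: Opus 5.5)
We follow the strategy of \cite{li:fano-3fold} and \cite{macri:fano-threefold} for smooth Fano threefolds, now in the singular setting, using the reduction method of Section \ref{subsec:reduce-to-ch2}. Throughout, $H=-K_X$, and $\mathsf{D}=0$ is admissible because $X$ has rational singularities and $\mathrm{char}(\kk)=0$ (Theorem \ref{thm-bg-normal}), so $\Phi_{X,H}(x)\le \frac12x^2$.

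The reduction method turns Conjecture \ref{intro:conj} on a region of the $(b,w)$-plane into two inputs: a sharper bound on the Le Potier function $\Phi_{X,H}$ near integer slopes, and vanishings $\Hom(\cO_X(mH),E)=0$ and $\Hom(E,\cO_X(mH)[2])=0$ for suitable tilt-semistable $E$. The first input is a sharpened $\bch_2$-bound
\[\frac{\bch_2(E).H}{\bch_0(E).H^3}\le \frac12\mu^2+\frac12(\mu-\lfloor\mu\rfloor)(\lfloor\mu\rfloor+1-\mu)\ \text{ fails only if } \mu\in\ZZ \text{ and } E\cong \cO_X(\mu H)^{\oplus r},\]
where $\mu=\mu_H(E)$ and $E$ is $\mu_H$-semistable. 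Equivalently, $\Phi_{X,H}(x)\le\frac12x^2+\frac12(x-\lfloor x\rfloor)(\lfloor x\rfloor+1-x)$ off line bundles, which is exactly the curve \eqref{eq:intro-range}. By tensoring with $\cO_X(H)$ we may assume $\mu\in(-1,0]$. We then prove the bound using $\Hom(\cO_X,E)=0$ and $\Hom(E,\cO_X(-H))^\vee=\Ext^3(\cO_X,E)=0$, which hold by stability and Serre duality on the Gorenstein $X$, together with the vanishing of the appropriate higher cohomology. Riemann--Roch then gives $\chi(E)\le 0$ (respectively for $E(H)$), and with the singular Riemann--Roch from Section \ref{subsec:generalization-ch} this translates into the bound. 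Since $\chi(\cO_X)=1$ for a Fano threefold with rational singularities, and $\td$ contributes $\frac{1}{12}(c_1^2+c_2)$, this is the input where $\Gamma$ arises: the $\bch_3$/Riemann--Roch correction (a class like $\td_2$ modulo $H^2$ multiples) is absorbed into $\Gamma$, and the sign $\Gamma.H\ge0$ comes from $\chi(\cO_X)=1$ and $H^3>0$.

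Given these inputs, we run the standard argument. Assume a counterexample $E$ that is $\nu_{b,w}$-semistable in the range \eqref{eq:intro-range}. Following Conjecture \ref{conj-3}, we may assume $E$ is $\nu_{b_0,w_0}$-stable with $\nu_{b_0,w_0}(E)=0$, and we move $(b,w)$ toward the boundary curve using the wall-chamber structure (Theorem \ref{thm:wall-chamber-tilt}), choosing $E$ with minimal $\bch_1.H^2$-discriminant. Near the boundary, the strengthened $\Phi$ bound forces a tilt-stable object to behave like a sheaf or a shifted sheaf of nearly integral slope. We then compare against $\cO_X(mH)$ with $m=\lfloor b\rfloor$ or $\lfloor b\rfloor+1$, which lie on or below the curve. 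Tilt-stability gives the vanishings $\Hom(\cO_X(mH),E)=0=\Hom(E,\cO_X((m-1)H)[1])$, so $\chi(\cO_X(mH),E)\le0$. Riemann--Roch (Hirzebruch--Riemann--Roch for lci $X$; for $\QQ$-factorial $X$, via the definitions of $\bch_3$ and the pairing with $\CH_1(X)_\QQ$ in Definitions \ref{def:ch-lci} and \ref{def:ch3}) converts this into the required $\bch_3$-inequality, with the $\Gamma.\bch_1(E)$ term appearing.

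The main obstacle is the Riemann--Roch step in the $\QQ$-factorial non-lci case: $\chi(\cO_X(mH),E)$ must be expressed through $\bch_0,\dots,\bch_3$ and a fixed $\Gamma\in\CH_1(X)_\QQ$ uniformly in $E$. We would attempt this with a resolution $\pi\colon\widetilde X\to X$. Rational singularities give $R\pi_*\cO=\cO$, and by the projection formula $\chi(X,E(-mH))=\chi(\widetilde X,L\pi^*E(-m\pi^*H))$. We would define $\Gamma=\pi_*\td_2(\widetilde X)$ and check that the exceptional corrections vanish or are absorbed into $\bch_3$ as defined. The same formula is also needed to verify the sharpened $\Phi$ bound in the first step.
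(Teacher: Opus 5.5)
There is a genuine gap. The extra input your argument is built on carries no information, and the input that is actually needed is missing.

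\textbf{The ``sharpened'' bound is not sharper than classical BG.} The function $\frac12x^2+\frac12(x-\lfloor x\rfloor)(\lfloor x\rfloor+1-x)$ is the piecewise-linear interpolation of $\frac12x^2$ at the integers, so it lies \emph{above} the parabola. Your displayed inequality therefore already follows from $\Phi_{X,H}(x)\le\frac12x^2$ (Theorem \ref{thm-bg-normal}). It never fails, so the clause about $\cO_X(\mu H)^{\oplus r}$ is empty.

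The curve \eqref{eq:intro-range} is not a bound on $\Phi_{X,H}$ at all. It is produced by the reduction in Lemma \ref{lem:reduce-bmt-to-bn}, which moves along walls through $\Pi(E)$ and twists by $\cO_X(H)$ until one reaches BN-stable objects at the origin with $\nu_{BN}\in[0,\frac12]$.

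Your sketched proof of the bound also fails on its own terms. The vanishings $\Hom(\cO_X,E)=\Ext^3(\cO_X,E)=0$ only give $\chi(E)\le\dim_{\kk}\Ext^2(\cO_X,E)=\dim_{\kk}\Ext^1(E,\omega_X)$, and this need not vanish. Even $\chi(E)\le0$ would involve $\bch_3(E)$, so it could not bound $\bch_2$ alone.

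\textbf{Without a genuine strengthening of BG, the line-bundle comparison does not close.} For $\overline{b}\in(0,1)$ the only available vanishings are $\Hom(\cO_X(H),E)=\Hom(E,\cO_X[1])=0$, giving $\chi(\cO_X(H),E)\le0$. Rewrite this by Riemann--Roch in terms of $\bv_i^{\overline{b}}(E)$ with $\bv_2^{\overline{b}}(E)=0$, and set $c=1-\overline{b}$. You get $\bv_3^{\overline{b}}(E)+\td_2(X).(\bch_1(E)-\overline{b}\rk(E)H)\le\frac{c(1-c)}{2}\bv_1^{\overline{b}}(E)+g(c)\bv_0(E)$, where $g(c)=\frac{c^3}{6}-\frac{c^2}{4}+c\frac{\td_2(X).H}{H^3}-\frac{1}{H^3}$. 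This $g$ runs from $-\frac{1}{H^3}$ to $+\frac{1}{H^3}$ on $[0,1]$.

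The $\Gamma$-term equals $\Gamma.(\bch_1(E)-\overline{b}\rk(E)H)$, which depends only on $\bch_1(E)-\overline{b}\rk(E)H$. It cannot absorb a multiple of $\bv_0(E)$. So you need $|\bv_0(E)|$ bounded in terms of $\bv_1$, which is why the line-bundle argument of Theorem \ref{thm-fano3} is only used in index $\ge2$.

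\textbf{How the paper gets the rank bound.} It comes from \ref{BGn}. For a BN-stable $E$ with $\nu_{BN}(E)\in(0,\frac12]$, take its universal extension $\widetilde{E}$ by $\Hom(\cO_X,E)\otimes\cO_X[1]$, which is BN-semistable by Lemma \ref{lem:cone-bn-stable}. Then \ref{BGn} forces $\mu_H(E),\mu_H(\widetilde{E})\notin[-\epsilon,\epsilon]$. This gives \eqref{eq:randa}, \eqref{eq:slope} and the bound \eqref{eq:bound-hom} on $\dim_{\kk}\Hom(\cO_X,E)$. Combined with $\chi(E)\le\dim_{\kk}\Hom(\cO_X,E)$ (Lemma \ref{lem:vanish-hom-bn}(a)) and Riemann--Roch, this yields Theorem \ref{thm:ch2} with $\Gamma=\theta H^2-\td_2(X)$ and $\theta\gg0$; that choice of $\theta$ is where $\Gamma.H\ge0$ comes from. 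Lemma \ref{lem:reduce-bmt-to-bn} then gives the range \eqref{eq:intro-range}.

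\textbf{Where Fano geometry enters.} \ref{BGn} for $(X,-K_X)$ is obtained as follows. A member $S\in|-K_X|$ is a K3 surface with rational Gorenstein singularities. \ref{BGn} holds on $S$ by Lemma \ref{lem:k3}, which combines a Clifford-type bound on $C\in|H_S|$ with $\chi(\cO_S)=2>1$. It then lifts to $X$ by Lemma \ref{prop:surface-to-3fold-1}. This anticanonical K3 surface is the missing idea in your proposal.

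\textbf{The obstacle you single out is not one.} For $\QQ$-factorial $X$, $\bch_3$ is \emph{defined} in Definition \ref{def:ch3} so that Riemann--Roch holds with $\td_2(X)\in\CH_1(X)_{\QQ}$, so no resolution is needed. The resolution route would itself be problematic: $\pi$ need not have finite Tor-dimension, so $L\pi^*E$ need not be bounded.
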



For a more effective choice of $\Gamma$, see Theorem \ref{thm-fano3}. Note that there are many more deformation types of $\QQ$-factorial Fano threefolds than of smooth Fano threefolds.

For Calabi--Yau threefolds, we have the following singular version of \cite{Li19}.

\begin{theorem}[{Theorem \ref{thm:sing-quintic}}]
Let $X\subset \PP^4_{\kk}$ be a quintic normal threefold with rational singularities. Then Conjecture \ref{intro:conj} holds for $(X,H)$, $\mathsf{D}=\Gamma=0$, and $(b,w)$ in the range \eqref{eq:intro-range}.
\end{theorem}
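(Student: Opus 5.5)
The plan follows Li's argument for smooth quintics \cite{Li19}, run through the reduction of Section \ref{subsec:reduce-to-ch2}. That reduction generalizes \cite{FKLR}: it reduces Conjecture \ref{intro:conj} with $\Gamma=0$ in the range \eqref{eq:intro-range} to a sharper $\bch_2$-inequality for $\mu_H$-stable sheaves of small slope.

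First, since $X$ is a normal hypersurface with rational singularities in characteristic $0$, Theorem \ref{thm-bg-normal} gives $\mathsf{D}_{X,H}=0$, so $\Phi_{X,H}(x)\le \frac12 x^2$. Because $X$ is a hypersurface, it is lci, so $\bch_3$ is defined via Definition \ref{def:ch-lci}. Also $\omega_X\cong \cO_X$ and $H^3=5$. By the reduction, together with the periodicity of the tilt under $\otimes\cO_X(H)$ and the symmetry of derived duality (both available for lci $X$), it suffices to prove the following. For every $\mu_H$-stable reflexive (or torsion-free) sheaf $E$ with $\mu_H(E)\in[-\frac12,\frac12]$, say $\mu_H(E)=\mu$, we need
\[\frac{\bch_2(E).H}{\bch_0(E)H^3}\le \frac12\mu^2+\frac12|\mu|(1-|\mu|)\]
or a comparable Li-type bound. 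For $\mu\in[0,1]$ the required bound is $\Phi(\mu)\le \frac12\mu$; the boundary case $\mu=0$ gives $\bch_2.H\le 0$, which is already the BG inequality.

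To prove this $\bch_2$-bound I would restrict to hyperplane sections. A general hyperplane section $S=X\cap H$ is a normal quintic surface in $\PP^3$ with at worst isolated singularities, and I expect rational double points, since rational threefold singularities cut by a general hyperplane give rational surface singularities. A general complete-intersection curve $C=X\cap H\cap H'$ is a smooth plane-section curve of $S$. Following Li, I would use a restriction theorem to $S$: Langer's effective restriction, as extended to normal surfaces in \cite{langer:normal-surface}, or the tilt-stability wall argument on $S$ that exists by Theorem \ref{thm:intro-1}. Together with Li's stronger BG-type bound on quintic surfaces, this should give $\bch_2(E|_S)\le$ the desired bound. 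The quintic-surface bound comes from Clifford-type estimates on curves in $|H|$ on $S$: $h^0$ of stable bundles on the plane quintic curve $C$ of genus $6$ is bounded by Clifford/Mercat-type inequalities. Then $\bch_2(E).H=\bch_2(E|_S)$ on $S$, using the compatibility of $\bch$ with lci pullback along a Cartier divisor from Section \ref{subsec:generalization-ch}.

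The main obstacle is transplanting Li's surface step to singular $S$. The computation there passes through Riemann--Roch and $\chi(E(-H))$ bounds via vanishing of $\Hom(\cO(H),E)$ and $\Ext^2$. I would use the rational-singularity Riemann--Roch with Mumford Chern characters on $S$, which has no correction term for rational double points. I would also verify that the tilt-stability wall-crossing on $S$, which uses only $\Phi_{S,H}\le\frac12x^2$ and Serre duality via $\omega_S=\cO_S(H)$, goes through verbatim. Finally, plugging the $\bch_2$-bound into the reduction theorem yields the $\bch_3$-inequality with $\Gamma=0$ in the range \eqref{eq:intro-range}.
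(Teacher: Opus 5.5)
Your proposal has a genuine gap: it reduces to the wrong inequality, and it restricts to the wrong linear system. The target you state,
\[\frac{\bch_2(E).H}{\bch_0(E)H^3}\le \frac12\mu^2+\frac12|\mu|(1-|\mu|)=\frac12|\mu|,\]
is weaker than the Bogomolov--Gieseker inequality of Theorem \ref{thm-bg-normal} for $|\mu|\le 1$, so it carries no information. The function in \eqref{eq:intro-range} describes the region of $(b,w)$ where the $\bch_3$-inequality is asserted; it is not the $\bch_2$-input. Moreover, Theorem \ref{thm:ch2} on its own only produces $\Gamma(\epsilon)=\theta H^2-\td_2(X)$, not $\Gamma=0$. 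For a quintic, $H^3=5$, $\chi(\cO_X)=0$ and $\td_2(X)=\frac56 H^2$, so $\Gamma(\epsilon)=0$ forces $\theta=\frac56$. The constraints on $\theta$ in Theorem \ref{thm:ch2} then force $\epsilon\ge\frac{12}{25}$. In other words, you would need $\bv_2(E)<-\frac12\bv_1(E)$ for every $\mu_H$-stable $E$ with $0<\mu_H(E)\le\frac{12}{25}$, a bound far below $\frac12\mu^2$. That is the strength of Li's stronger BG inequality \cite[Theorem 5.5]{Li19}. The paper feeds it into \cite[Proposition 3.3]{Li19}, after the reduction of Lemma \ref{lem:reduce-bmt-to-bn}.

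Restricting to hyperplane sections cannot reach this strength. Lemma \ref{lem-naoki} for $D\in|kH|$ only applies to comparison functions with $f(t)\ge t^2-\frac{k}{2}t$. For $k=1$ this lower bound is $-\frac{6}{625}$ at $t=\frac{12}{25}$, whereas you must rule out sheaves with $\frac{\bv_2}{\bv_0}$ down to $-\frac{6}{25}$ there. Similarly, Lemma \ref{lem:fklr-restriction} needs $\epsilon<\frac13$. What $S\in|H|$ together with Clifford on the genus-$6$ plane quintic $C$ actually gives is $\bn_C\le\frac72<\chi(\cO_X(H))-1=4$. That is exactly the input of Theorem \ref{thm:main-criterion}(b). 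There $\epsilon=\frac{\delta}{2+3\delta}<\frac13$, so the resulting $\Gamma(\epsilon)\in\mathsf{E}(X)_{\QQ}$ is nonzero, and you get a weaker theorem than the one claimed. Also, the surface statement needed, \cite[Proposition 5.2]{Li19}, concerns surfaces in $|2H|$, not quintic surfaces in $\PP^3$.

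The missing idea is the paper's choice of a general $S\in|2H|$. Such an $S$ is a normal lci surface with rational singularities, so Theorem \ref{thm-bg-normal} applies on it. One then applies Lemma \ref{lem-naoki} with $k=2$, whose constraint $f\ge t^2-t$ accommodates Li's bound, both from $X$ to $S$ and from $S$ to a smooth curve $C\in|2H_S|$. The argument concludes with Li's curve estimate \cite[Proposition 4.1]{Li19}.
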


Similar results hold for examples in \cite{liu:bg-ineqaulity-quadratic,koseki:stability-cy-solid} with rational singularities.

We also generalize criteria in \cite[Theorem 3.1]{FKLR}:

\begin{theorem}[{Theorem \ref{thm:main-criterion}}]
Let $(X, H)$ be a polarised normal projective threefold over $\kk$ with rational Gorenstein singularities, such that $K_X$ is numerically trivial, $\mathrm{H}^1(\cO_X)=0$, and either $X$ is lci or $\QQ$-factorial. Fix divisors $S\in |H|$ and $C\in |H|_S|$. Assume either

\begin{enumerate}
    \item $S$ and $C$ are both smooth with $\bn_C< \chi(\cO_X(H))$ or

    \item $S$ has rational singularities and $C$ is integral with $\bn_C< \chi(\cO_X(H))-1.$
\end{enumerate}

Then Conjecture \ref{intro:conj} holds for $(X, H)$, $\mathsf{D}=0$, some $\Gamma\in \mathsf{E}(X)_{\QQ}$ with $\Gamma.H\geq 0$, and $(b,w)$ in the range \eqref{eq:intro-range}. Here, $\bn_C$ is the invariant defined in Definition \ref{def:bnc}.
\end{theorem}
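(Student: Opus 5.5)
The argument will follow the strategy of \cite[Theorem 3.1]{FKLR}, adapted via the reduction of Section \ref{subsec:reduce-to-ch2}. The reduction says that, for a polarised threefold with $K_X\equiv 0$, rational Gorenstein singularities and $\mathsf{D}=0$, Conjecture \ref{intro:conj} in the range \eqref{eq:intro-range} follows from a sharpened $\bch_2$-inequality for $\mu_H$-semistable sheaves of small slope. More precisely, I would aim to show that for every $\mu_H$-semistable sheaf $E$ with $\mu_H(E)\in[-1,1]$ one has
\[
\frac{\bch_2(E).H}{\bch_0(E)H^3}\le \frac12\mu_H(E)^2+\frac12|\mu_H(E)|\bigl(1-|\mu_H(E)|\bigr)\quad\text{or a similar quadratic bound}.
\]
Such a bound then feeds into the $\bch_3$-estimate via the Euler characteristic and Serre duality. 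Here $K_X\equiv0$ and Gorenstein make $\RHom(E,E)$ symmetric, and rational singularities give $\mathsf{D}=0$ by Theorem \ref{thm-bg-normal}. The class $\Gamma$ arises from the Todd-type correction $\td_2$, interpreted in $\mathsf{E}(X)_\QQ$ via lci or $\QQ$-factoriality. This is why we only get \emph{some} $\Gamma$ with $\Gamma.H\ge0$.

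The key steps are as follows. First, reduce to $\mu_H(E)\in(0,1)$ by tensoring with $\cO_X(H)$ and taking derived duals. Reflexivity and the Gorenstein hypothesis keep duals well behaved, and I would use the functoriality of $\bch_i$ from Section \ref{subsec:generalization-ch}.

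Second, restrict $E$ to $S\in|H|$ and then to $C\in|H|_S|$. By a restriction theorem (Feyzbakhsh-type effective restriction, or a Mehta--Ramanathan style argument along a generic member), the HN factors of $E|_C$ have controlled slopes.

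Third, bound $h^0$ of these restricted pieces on $C$ by a Clifford-type inequality. This is where $\bn_C$ enters: the curve is assumed sufficiently Brill--Noether general relative to $\chi(\cO_X(H))$. In case (a) this works for smooth $C$. In case (b), an integral $C$ on a surface with rational singularities requires generalized line bundles and torsion-free sheaves on integral curves. That is why the bound loses $1$.

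Fourth, lift sections back up using $\mathrm{H}^1(\cO_X)=0$ and the vanishing $H^i(\cO_X(-H))=0$, which follows from Kawamata--Viehweg for rational singularities. Comparing $\chi(E)$ computed by Riemann--Roch with $h^0$-estimates then yields the $\bch_2$ bound. Finally, plug into the reduction theorem to obtain the $\bch_3$ inequality.

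The main obstacle I expect is the Riemann--Roch step on the singular $X$ and $S$. One must check that $\chi(E)$ is expressed by $\bch_i$ plus a term $\Gamma.\bch_1(E)$ with $\Gamma\in\mathsf{E}(X)_\QQ$. In the lci case this comes from bivariant Todd classes. In the $\QQ$-factorial case it is more delicate: I would use a resolution $f\colon \widetilde X\to X$, push forward $\td_2(\widetilde X)$ to define $\Gamma\in \CH_1(X)_\QQ$, and use $Rf_*\cO_{\widetilde X}=\cO_X$. Positivity $\Gamma.H\ge0$ should come from $c_2$ of a smooth surface section, or from Miyaoka-type pseudo-effectivity. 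I would attack this first, since the restriction and Clifford arguments transfer from \cite{FKLR} essentially verbatim.
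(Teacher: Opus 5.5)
There is a genuine gap: the inequality you set out to prove is the wrong one. On $[-1,1]$ your right-hand side simplifies to $\frac12\mu^2+\frac12|\mu|(1-|\mu|)=\frac12|\mu|$. This is \emph{weaker} than the classical bound $\bv_2(E)/\bv_0(E)\le\frac12\mu_H(E)^2$, which you already have here (Theorem \ref{thm-bg-normal} with $\mathsf{D}=0$), so it cannot produce any $\bch_3$-inequality. You seem to have confused the boundary curve of the $(b,w)$-range \eqref{eq:restricted-range} with a bound on the Le Potier function.

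What the hypothesis on $\bn_C$ actually provides is the strictly sharper, linear statement \ref{BGn}: $\bv_2(E)<-\frac12\bv_1(E)$ for every $\mu_H$-stable $E$ with $0<\mu_H(E)\le\epsilon$. Equivalently, $\Phi_{X,H}(x)\le -x/2$ for $0<x<\epsilon$. In the paper this is first obtained on $S$ from \cite[Propositions 3.10, 3.11]{FKLR}; that is where $C$, $\bn_C$ and the dichotomy (a)/(b) enter. It is then lifted from $S$ to $X$ by Lemma \ref{prop:surface-to-3fold-1}, which rests on the effective restriction Lemma \ref{lem:fklr-restriction} proved via tilt-stability. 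Your steps 2--4 (restriction to $S$ and $C$, Clifford-type bounds, Riemann--Roch) are in the spirit of this part, but they must be aimed at \ref{BGn}.

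The second missing piece is the passage from the $\bch_2$-bound to the $\bch_3$-bound, which you dispose of with ``Euler characteristic and Serre duality''. This is where the real work lies, and it happens for tilt-stable objects, not slope-semistable sheaves.
\begin{enumerate}
\item By Lemma \ref{lem:reduce-bmt-to-bn} (after \cite{Li19}), it suffices to show $Q^{\Gamma}_{0,0}(E)\ge0$ for BN-stable $E\in\cA^0(X)$ with $\nu_{\mathrm{BN}}(E)\in[0,\frac12]$.
\item For $\nu_{\mathrm{BN}}(E)>0$, form the universal extension $\widetilde E$ of $E$ by $\Hom_X(\cO_X,E)\otimes\cO_X[1]$. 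It is BN-semistable (Lemma \ref{lem:cone-bn-stable}).
\item \ref{BGn} forbids $\mu_H(\widetilde E)\in[-\epsilon,\epsilon]$, which gives $\dim_{\kk}\Hom_X(\cO_X,E)\,H^3\le\bv_0(E)+\bv_1(E)/\epsilon$.
\item Combine this with $\chi(E)\le\dim_{\kk}\Hom_X(\cO_X,E)$ (Lemma \ref{lem:vanish-hom-bn}, which kills $\Ext^2_X(\cO_X,E)\cong\Hom_X(E,\omega_X[1])^*$ by a tilt-slope comparison) and Riemann--Roch to bound $\bv_3(E)$.
\item The case $\nu_{\mathrm{BN}}(E)=0$ needs a separate filtration argument.
\end{enumerate}
This is Theorem \ref{thm:ch2}. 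It explains why $\Gamma=\theta H^2-\td_2(X)$ with $\theta$ of order $1/(H^3\epsilon)$: the $\theta H^2$ term is essential. The condition $\Gamma.H\ge0$ is arranged simply by taking $\theta\ge\td_2(X).H/H^3$, not from positivity of a second Chern class or Miyaoka-type arguments.

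Finally, what you flag as the main obstacle is not one. In the $\QQ$-factorial non-lci case, $\bch_3$ is defined (Definition \ref{def:ch3}) precisely so that Riemann--Roch holds, with $\td_2(X)$ taken from the lci locus. A $\Gamma$ built from a resolution would not match the $\bch_3$ appearing in the conjecture.
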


\begin{example}
In addition to the singular analogs of smooth examples considered in previous papers, we also apply Theorem \ref{thm:main-criterion} to certain singular Calabi--Yau threefolds that do not admit smoothings. An example is a general degree $8$ hypersurface in the weighted projective space $\PP(1,1,1,2,3)$ (cf.~Corollary \ref{cor:x8}).
\end{example}


\subsection{Tilt-stability in the relative setting}

In the spirit of \cite{BLMNPS21,piyaratne2019moduli,toda:moduli-K3}, we also show that tilt-stability behaves well in a flat family (cf.~Theorem \ref{thm:tilt-HN-structure}), which generalizes \cite[Theorem 25.3]{BLMNPS21}. In particular, for a suitable flat projective family of $2$-dimensional schemes in characteristic $0$, the moduli stacks of tilt-semistable objects admit good moduli spaces that are proper over the base, generalizing results in \cite{toda:moduli-K3}. 

\begin{theorem}[{Corollary \ref{cor:moduli-surface}, absolute version}]
Let $X$ be a pure $2$-dimensional projective scheme over a field $\kk$ of characteristic $0$ that is either geometrically normal or lci. Then for any $w> \Phi_{X, H}(b)$, the moduli stack of $\nu_{b,w}$-semistable objects with a fixed class $(\bch_i(-).H^{2-i})_{0\leq i\leq 2}$ is an Artin stack of finite type over $\kk$, and admits a good moduli space which is proper over $\kk$.
\end{theorem}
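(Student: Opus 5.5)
The strategy is to deduce the absolute statement from the relative machinery over $S=\Spec \kk$, and then apply the standard existence criterion for good moduli spaces of Alper--Halpern-Leistner--Heinloth. The first task is to check that $(X,H)$ fits the hypotheses of the relative results. A pure $2$-dimensional projective scheme that is lci, or geometrically normal, is lci in codimension $2$ in the sense needed to define $\bch_0,\bch_1,\bch_2$. In the normal case one instead uses Langer's Mumford Chern character on normal surfaces, which is compatible with the paper's $\bch_i$. In both cases Theorem \ref{thm:intro-1}(b) shows that $\sigma_{b,w}=(\Coh^b_H(X),Z^{b,w})$ is a weak stability condition for $w>\Phi_{X,H}(b)$. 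Since $\dim X=2$, the kernel of $Z^{b,w}$ on the lattice $(\bch_i.H^{2-i})_{0\le i\le 2}$ is trivial once $w>\Phi_{X,H}(b)$. Hence $\sigma_{b,w}$ is in fact a Bridgeland stability condition on this numerical lattice, and it satisfies the support property by Theorem \ref{thm:intro-1}(a) and the wall-chamber structure.

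Next, invoke Theorem \ref{thm:tilt-HN-structure} (the relative HN structure) over the point. It provides three things. First, openness of the heart in families: for a finite type $\kk$-scheme $T$, the locus where a $T$-perfect complex lies in $\Coh^b_H$ fibrewise is open. Second, generic flatness of HN filtrations. Third, boundedness of $\nu_{b,w}$-semistable objects of fixed class. These give that the moduli stack $\cM_{\sigma}(v)$ of semistable objects of class $v$ is an open substack of Lieblich's stack of universally gluable objects (using $\Dperf$ or pseudo-coherent complexes as in \cite{BLMNPS21}). Boundedness then makes it an Artin stack of finite type over $\kk$. I expect boundedness to be the main obstacle, because $X$ is singular and objects need not be perfect. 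The first approach I would try is the strategy of \cite{toda:moduli-K3} and \cite{BLMNPS21}: reduce to a large-volume chamber via the wall-chamber structure, where finitely many walls for $v$ exist by Theorem \ref{thm:intro-1}(c). In the large-volume limit, semistable objects are controlled by Gieseker-type semistable sheaves (plus $\cH^{-1}$ torsion-free $\mu$-semistable pieces), and these are bounded by Langer's boundedness in arbitrary characteristic. Crossing each wall preserves boundedness, since JH factors have finitely many classes thanks to the support property.

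Finally, for the good moduli space, apply Alper--Halpern-Leistner--Heinloth in characteristic $0$. This requires $\Theta$-reductivity and S-completeness, which follow from the valuative arguments in \cite[Part~V]{BLMNPS21}. Those arguments need the existence of relative HN filtrations over DVRs and the openness/limit properties of the heart, and both are supplied by Theorem \ref{thm:tilt-HN-structure}. Characteristic $0$ is used to ensure that stabilizers are linearly reductive. Properness of the good moduli space follows from the existence part of the valuative criterion. This existence part is a semistable reduction statement: any semistable object over the generic point of a DVR extends to a semistable family. It is proven as in \cite[Theorem 21.24]{BLMNPS21}, using elementary modifications inside the tilted heart of $X_R$, and it terminates by the discreteness of $\Im Z$ on the finite lattice.
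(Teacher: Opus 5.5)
Your proposal is correct and takes the paper's route. Theorem \ref{thm:tilt-HN-structure}(d) shows that $\underline{\sigma}^{b,w}$ is a stability condition over $\Spec(\kk)$, with boundedness already proved in part (a), so your fallback wall-crossing argument from the large-volume limit is unnecessary; then \cite[Theorem 21.24]{BLMNPS21}, which packages the AHLH and valuative arguments you sketch, gives the conclusion. One small correction: $Z^{b,w}$ can have nontrivial kernel on the lattice (e.g.\ a multiple of $(1,b,w)$ when $(b,w)\in\QQ^2$), and what actually holds and suffices is that no nonzero object of $\Coh^b_H(X)$ has $Z^{b,w}=0$, which follows from Lemma \ref{lem:weakstabilityfunction}.
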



Another application is a semistable reduction result for tilt-semistable objects, which allows us to verify Conjecture \ref{intro:conj} via degeneration.

\begin{theorem}[{Theorem \ref{thm-degeneration}, simplified version}]
Let $X\to C$ be a projective flat lci morphism to a $1$-dimensional integral regular Noetherian scheme $C$ with the fraction field $K$ such that each fiber is equidimensional of dimension $3$. Fix a $C$-ample divisor $H$ on $X$ and $\Gamma\in \A^2(X)_{\QQ}$. Assume that there exists a constant $\mathsf{D}>0$ so that 
\[\Phi_{X_c, H_c}(x)\leq \frac{1}{2}x^2+\mathsf{D}\]
for any point $c\in C$. If for a closed point $p\in C$, Conjecture \ref{intro:conj} holds for $(X_p, H_p)$, $\Gamma_p$, $\mathsf{D}$, and fixed $(b,w)$ with $w>\frac{1}{2}b^2+\mathsf{D}$, then it also holds for $(X_K, H_K)$, $\Gamma_K$, and the same $\mathsf{D}$ and $(b,w)$.
\end{theorem}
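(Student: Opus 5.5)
We argue by contradiction, using a semistable reduction for tilt-semistable objects together with constancy of the relevant numerical invariants in flat families. Suppose $E_K\in \Coh^{b}_{H_K}(X_K)$ is $\nu_{b,w}$-semistable and violates the quadratic inequality of Conjecture \ref{intro:conj} for $\Gamma_K$. The steps are as follows.

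First, we extend $E_K$ to an object $\mathcal{E}$ on $X$ (after shrinking $C$ to a neighbourhood of $p$, which we may do since $C$ is a regular curve and the statement is local around $p$). The extension should be flat over $C$ and lie in the relative tilted heart $\Coh^b_H(X/C)$ given by the relative tilt construction (cf.\ the relative HN structure of Theorem \ref{thm:tilt-HN-structure}). Its restriction $\mathcal{E}_p$ to the special fibre should be $\nu_{b,w}$-semistable in $\Coh^b_{H_p}(X_p)$. This is the semistable reduction step, modelled on Langton's theorem and on \cite[Theorem 25.3]{BLMNPS21}.

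The procedure runs as follows. Take any flat extension in the relative heart, which exists by clearing $\cH^{-1}$ and $\cH^0$ torsion along the special fibre. If the restriction $\mathcal{E}_p$ is unstable, let $F$ be its maximal destabilizing subobject. Replace $\mathcal{E}$ by the elementary modification $\ker(\mathcal{E}\to \iota_*\mathcal{E}_p\to \iota_*(\mathcal{E}_p/F))$ in the relative heart. This produces a sequence of extensions.

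The main obstacle is termination of this sequence. We would mimic Langton's argument. The discrete invariants $\Im Z^{b,w}$ of the destabilizing pieces lie in a discrete set, because $\bch_1.H^2$ and $\bch_0.H^3$ are discrete. The quantity $\Re$ is bounded above using the assumed uniform bound $\Phi_{X_c,H_c}(x)\le \frac12x^2+\mathsf{D}$ on all fibres together with $w>\frac12 b^2+\mathsf{D}$. These give finiteness of possible HN polygons by support property arguments. If the process did not terminate, one would obtain an infinite descending chain that produces a nontrivial destabilizing subobject of $E_K$ over the completion $\widehat{\cO}_{C,p}$, and hence over $K$. That contradicts the semistability of $E_K$. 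A further possible complication is semistability versus the weak stability condition's kernel objects (those with $Z=0$, skyscraper-like). These are handled as in the smooth case, by treating $\nu=+\infty$ objects separately.

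Second, once $\mathcal{E}_p$ is $\nu_{b,w}$-semistable, we compare invariants. Because $X\to C$ is flat lci, the Chern characters $\bch_i$ are compatible with refined Gysin pullback to fibres. Therefore $\bch_i(\mathcal{E}_p).H_p^{3-i}=\bch_i(E_K).H_K^{3-i}$ for $i\le 3$, and $\Gamma_p.\bch_1(\mathcal{E}_p)=\Gamma_K.\bch_1(E_K)$, by conservation of number for the flat family and specialization of bivariant classes. Hence $\mathcal{E}_p$ violates the same inequality on $X_p$, which contradicts the hypothesis. Note that $\bch_3$ is only defined in the lci setting here, which is why we require $X\to C$ to be lci.
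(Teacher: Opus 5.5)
There is a genuine gap, and it sits in your Step 1. Tilt-stability on a threefold is only a weak stability condition. The kernel $\cA^Z_p\subset\cA^b(X_p)$ of $Z^{b,w}$ consists of $0$-dimensional sheaves, which have $\nu_{b,w}=+\infty$, so any nonzero map $\cO_x\to\mathcal{E}_p$ destabilizes $\mathcal{E}_p$.

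When the maximal destabilizing subobject lies in $\cA^Z_p$, the elementary modification changes none of the quantities a Langton-type argument uses to measure progress. Its phase and $\Im Z^{b,w}$ are unchanged, and the support property controls only $\bv_{\le2}$ of the pieces, not their $\bv_3$. So your finiteness and termination argument does not go through. Your remark that the $\nu=+\infty$ objects are ``handled as in the smooth case'' is exactly the missing step, not a side issue; tilt-stability is weak in the smooth case too.

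The paper explicitly warns that for weak HN structures, the special fibre of a $\sigma^{b,w}_C$-semistable $C$-torsion-free lift need not be semistable (cf.~\cite[Lemma 15.7]{BLMNPS21}). It never proves that a lift with generic fibre exactly $E_K$ and semistable special fibre exists.

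What the paper proves instead (Theorem \ref{thm-lift-tilt-ss}) is semistable reduction up to points:
\begin{enumerate}
\item Start from a $\sigma^{b,w}_C$-semistable, $C$-torsion-free lift $E$.
\item Check that $\Hom(G,E)=0$ for every sheaf $G$ on $X_C$ supported in codimension $\ge 2$.
\item Replace $E$ by $F=E^{\sharp\sharp}$, using Lemma \ref{lem-blms-2.19} on the Cohen--Macaulay total space.
\item Since $F=[C^{-1}\to C^0]$ with $C^{-1}$ Cohen--Macaulay and $C^0$ torsion-free $S_2$, one gets $\Hom(\cA^Z_p,F_p)=0$, so $F_p$ is $\nu_{b,w}$-semistable.
\end{enumerate}

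The price is an exact sequence $0\to E_K\to F_K\to T_K\to 0$ with $T_K$ a $0$-dimensional sheaf. Thus $\bv_{\le 2}(F_K)=\bv_{\le2}(E_K)$ but $\bv_3(F_K)\ge\bv_3(E_K)$. Consequently your Step 2, which asserts equality of all invariants, has to be replaced by a monotonicity argument that your proposal lacks:
\begin{enumerate}
\item Reduce to $\bv^b_1(E_K)>0$. If $\bv^b_1(E_K)=0$, then $Q^{\Gamma_K}_{b,w}(E_K)$ does not involve $\bv_3$ and is nonnegative by a direct computation.
\item When $\bv^b_1>0$, the coefficient of $\bv_3$ in $Q^{\Gamma_K}_{b,w}$ is $-6\bv^b_1<0$. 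Hence $Q^{\Gamma_K}_{b,w}(F_K)\le Q^{\Gamma_K}_{b,w}(E_K)<0$.
\item Only then pass to the special fibre, using constancy of the invariants in the flat lci family (Lemmas \ref{lem-constant-chM} and \ref{lem:constant-generalization}). This gives $Q^{\Gamma_p}_{b,w}(F_p)<0$, contradicting the hypothesis on $X_p$.
\end{enumerate}
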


Note that we allow $C$ to be of mixed characteristic. When $X\to C$ is smooth and $C$ has characteristic zero, the result is proved in \cite[Proposition 27.1]{BLMNPS21}. We expect this to be helpful for proving Conjecture \ref{intro:conj}, for instance via degeneration to toric schemes or reduction mod $p$ techniques. We will return to this point in future work.

\subsection{Stability conditions on singular Kuznetsov components}

For many Fano manifolds, the derived category contains distinguished semi-orthogonal components, called \emph{Kuznetsov components}. These subcategories have been studied extensively; see, for example, \cite{ku:hyperplane-section,ku:derived-cat-cubic4,kuz:fractional-CY,kuz09,kuznetsov2004derived,kuznetsov2018derived}.

Using a rotation of tilt-stability, stability conditions on a series of Fano manifolds are constructed in \cite{bayer2017stability}, which is later generalized to the relative setting by \cite[Section 26]{BLMNPS21}. Such a construction is applied to the moduli theory of sheaves and the geometry of Fano threefolds in \cite{bernardara2012categorical,PY20,LZ2021moduli,JLLZ2021gushelmukai,jLz2021brillnoether,feyzbakhsh2023new,jllz:inf-cat-torelli,FeyzbakhshPertusi2021stab}, and the study of hyper-K\"ahler manifolds in \cite{BLMNPS21,perry2019stability,shen-yin:k3-cat-bv,li2018twisted,li2020elliptic,GLZ2021conics,FGLZ,FGLZcube,guo-liu:atomic}.

On the other hand, Kuznetsov components can also be defined for many singular Fano varieties. In Theorem \ref{thm:stab-ox}, \ref{thm:stab-index-2}, and \ref{thm-ku-index-1}, we generalize results in \cite[Section 6]{bayer2017stability} and \cite[Corollary 26.2]{BLMNPS21} to Kuznetsov components of singular Fano threefolds.

As an application, we settle a singular variant of a conjecture of Kuznetsov--Shinder \cite[Conjecture 1.8]{kuznetsov:fano-threefold-degneration}. More precisely, for each $1\leq d\leq 5$, \cite[Theorem 3.6]{kuznetsov:fano-threefold-degneration} constructs a family $\cX\to B$ of Fano threefolds such that $B$ is a smooth complex curve, $\cX_{o}$ is a $1$-nodal index $1$ Fano threefold of genus $2d+2$ for a closed point $o\in B$, and $\cX_b$ is a smooth index $1$ Fano threefold of genus $2d+2$ for each $b\in B\setminus \{o\}$. Moreover, there exists a smooth proper category $\bar{\cA}_{\cX}\subset \Db(\cX)$ over $B$ whose fiber over $b\in B\setminus \{o\}$ is the Kuznetsov component of $\cX_b$, while the fiber over $o$ is equivalent to the Kuznetsov component of a smooth index $2$ Fano threefold of degree $d$.

In \cite[Conjecture 1.8]{kuznetsov:fano-threefold-degneration}, it is expected that $\bar{\cA}_{\cX}$ carries a stability condition over $B$. When $B$ is the spectrum of a complete DVR, this is proved by \cite{LMPSZ:deformation}. Using tilt-stability, we prove an analog of this conjecture for a slightly larger semi-orthogonal component $\cA_{\cX/B}\subset \Db(\cX)$, which differs from $\bar{\cA}_{\cX}$ only over $o\in B$.

\begin{corollary}[{Corollary \ref{cor-ks-conj}}]\label{cor:ku-intro}
In the above setting, if $d\geq 2$, then there exists a stability condition on $\cA_{\cX/B}$ over $B$.
\end{corollary}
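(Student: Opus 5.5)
The plan is to build the stability condition on $\cA_{\cX/B}$ over $B$ by the relative version of the Bayer--Lahoz--Macr\`i--Stellari construction, following \cite[Section 26]{BLMNPS21}. We start from relative tilt-stability on $\cX/B$ (Theorem \ref{thm:tilt-HN-structure}), tilt once more to get a weak stability condition, and then induce a stability condition on the semi-orthogonal component. First we identify $\cA_{\cX/B}$. Over $b\neq o$ the fiber $\cX_b$ is a smooth index $1$ Fano threefold of genus $2d+2$, whose Kuznetsov component is the right orthogonal of an exceptional collection $\langle \cO, \cU^\vee\rangle$ built from a Mukai bundle. By the construction in \cite{kuznetsov:fano-threefold-degneration}, this collection extends over $B$ as relatively exceptional objects $\cO_\cX, \cU^\vee_\cX$, and $\cA_{\cX/B}$ is their relative orthogonal. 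On the nodal fiber $\cX_o$, the fiber $\cA_{\cX_o}$ contains $\bar{\cA}_{\cX,o}$ together with one extra object supported at the node. The stability condition over $B$ is obtained by applying the fiberwise criterion (Theorem \ref{thm-ku-index-1}) uniformly in $b$, and then checking the remaining axioms of a stability condition over $B$ (openness of stability, generic flatness, boundedness) from the corresponding properties of relative tilt-stability.

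The key steps are as follows.

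(1) \emph{Uniform Bogomolov--Gieseker input.} Every fiber is a Fano threefold with at worst one node. The node is a rational Gorenstein singularity, so Theorem \ref{thm-bg-normal} gives $\Phi_{\cX_b,H_b}(x)\le \tfrac12 x^2$ with $\mathsf D=0$ for every $b$. Tilt-stability on each fiber is therefore defined on the same region $w>\tfrac12 b^2$. Since $\cX_o$ is lci, $\bch_3$ and the relative machinery apply.

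(2) \emph{Relative tilt and rotation.} Choose $(b,w)$ with $b$ slightly negative and $w$ close to the parabola, as in \cite{bayer2017stability}. Then $\cO_b,\cU_b^\vee,\cO_b[1],\ldots$ lie in the appropriate hearts with the needed slope inequalities. Next we tilt $\Coh^b_H$ at $\nu_{b,w}$-slope $0$ to obtain the weak stability condition $\sigma^0_{b,w}$. The crucial fiberwise input is the vanishing required by the criterion of \cite[Proposition 5.1]{bayer2017stability}: $\cO(-H)$, $\cU(-H)$ and their shifts must lie in the heart after Serre twisting. For $b\neq o$ this is the computation of \cite[Section 6]{bayer2017stability}. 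For the nodal fiber we invoke Theorem \ref{thm-ku-index-1}, which was proved using $\mathsf D=0$ and the stability of the restricted Mukai bundle.

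(3) \emph{Passing to the family.} Theorem \ref{thm:tilt-HN-structure} gives a HN structure over $B$ for relative tilt-stability. The second tilt and the restriction to $\cA_{\cX/B}$ are performed exactly as in \cite[Section 26]{BLMNPS21}, using that the exceptional collection is relative. Support property and boundedness then follow from the uniform quadratic form $\bDelta_H$ and step (1).

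We expect the main obstacle to be step (2) on the nodal fiber: showing that the Mukai-type objects remain $\nu_{b,w}$-stable and lie in the correct hearts with the same numerics as in the smooth case, uniformly in $b$. This is also where $d\ge 2$ should enter, because for $d=1$ the numerics fail. Our first attempt would be to deduce stability of $\cU_o^\vee$ on $\cX_o$ from its $\mu_H$-stability, which is openness of stability from nearby smooth fibers together with the semistable reduction of Theorem \ref{thm-degeneration}, and then use the wall structure (Theorem \ref{thm:wall-chamber-tilt}) to rule out walls near the parabola via a $\bch_2$ bound.
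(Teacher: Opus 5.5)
Your strategy (relative tilt, rotate, then induce on the orthogonal of the relative exceptional pair $\cO_{\cX},\cU^{\vee}_{\cX}$) is the machinery packaged in Theorem \ref{thm-ku-index-1}, via Theorem \ref{thm:tilt-HN-structure}, Proposition \ref{prop-rotate-tilt} and Theorem \ref{thm-induce-stab}. However, you have misread that theorem as a fiberwise criterion. It is a statement over the base. Applied to $f\colon\cX\to B$ itself, its conclusion is literally the corollary, so your steps (2)--(3) and your ``main obstacle'' are already contained in it. Conversely, invoking it for the nodal fiber alone ($S=\Spec(\kk)$) is not allowed, because it assumes $f$ has a smooth fiber. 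That hypothesis is exactly how the paper obtains $\bv_{\le 2}(\cU^{\vee}_{\cX_o})=(2H^3,H^3,\frac{g}{2}-2)$: by local constancy (Lemmas \ref{lem-constant-chM} and \ref{lem:constant-generalization}) from a smooth fiber.

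What the corollary actually requires is a check of the hypotheses of Theorem \ref{thm-ku-index-1}; your proposal only touches on this through the lci-ness of $\cX_o$. The checks are:
\begin{itemize}
\item $\cX\to B$ is projective and flat with fibers that are smooth or $1$-nodal, hence lci. So $f$ is lci, hence admissible by Definition \ref{def:adm}(1), and Cohen--Macaulay.
\item $B$ is a smooth curve in characteristic $0$.
\item Every fiber is an index $1$ prime Fano threefold of even genus $g=2d+2$. The condition $d\ge 2$ is precisely the hypothesis $g\ge 6$ of Theorem \ref{thm-ku-index-1}, not a failure of wall numerics as you suggest.
\item $\cU_{\cX}$ restricts to Mukai bundles, and there is a smooth fiber.
\item $\cA_{\cX/B}=\langle i_{o*}P_{\cX_o},\bar{\cA}_{\cX}\rangle$ is the right orthogonal of $\langle\alpha_{\cO_{\cX}}(\Db(B)),\alpha_{\cU^{\vee}_{\cX}}(\Db(B))\rangle$, i.e.\ the component $\cA_{X/S}$ of the theorem.
\end{itemize}
That is the whole proof in the paper.

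Separately, the route you sketch for your ``main obstacle'' would not work. First, $\mu$-stability of $\cU_{\cX_o}$ is part of the definition of a Mukai bundle, so there is nothing to deduce. Second, tilt-stability cannot be transported to $\cX_o$ from nearby fibers:
\begin{itemize}
\item Openness of stability \ref{c2} propagates stability from a fiber to an open neighbourhood of it, not from the generic fiber to the special one.
\item Semistable reduction (Theorem \ref{thm-lift-tilt-ss}) only produces some semistable lift $F$ whose generic fiber differs from the given object by torsion in codimension $\ge 3$. There is no reason for $F_o\cong\cU^{\vee}_{\cX_o}$.
\item Theorem \ref{thm-degeneration} transfers the inequality $Q\ge 0$ from the special fiber to the generic one, which is beside the point here.
\end{itemize}
Inside the proof of Theorem \ref{thm-ku-index-1}, this step is done directly on each fiber. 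One shows that $\cU^{\vee}_{X_s}$ and $\cU_{X_s}[1]$ are $\nu_{0,w}$-stable for all $w>0$ by a numerical wall computation, using Theorem \ref{thm:wall-chamber-tilt} and Lemma \ref{lem-delta-JH}. One then passes to $b=-\epsilon$ as in \cite[Lemma 6.11]{bayer2017stability}, using Lemma \ref{lem-delta-0-rk1}(c) in place of \cite[Proposition 2.14]{bayer2017stability}.
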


Since the orthogonal complement of $\bar{\cA}_{\cX}$ in $\cA_{\cX/B}$ is generated by an explicit object (cf.~\cite[Theorem 3.6]{kuznetsov:fano-threefold-degneration}), we expect that Corollary \ref{cor-ks-conj} can be used to induce stability conditions on $\bar{\cA}_{\cX}$, hence to prove \cite[Conjecture 1.8]{kuznetsov:fano-threefold-degneration} in full generality.

\subsection*{Motivations and further questions}\label{subsec:mot}

Here, we discuss some motivations and problems that are closely related to this paper.

\subsubsection*{BMT Conjecture for more threefolds}

A fundamental technique in algebraic geometry is degeneration. Concretely, one considers a family $\cX \to B$ together with two points $0,1 \in B$ such that the fiber $\cX_0$ is the variety of interest, while $\cX_1$ has richer geometry. This method is widely used in Gromov--Witten theory, for example, in the proof of the MNOP Conjecture \cite{PP12}. A key step is to establish an appropriate degeneration formula for the problem under consideration.

Motivated by this perspective, a main motivation for developing tilt-stability on singular schemes is to study the behavior of Conjecture \ref{intro:conj} under degeneration. By applying Theorem \ref{thm-degeneration} to a suitable degeneration, we expect that Conjecture \ref{intro:conj} can be verified for many smooth threefolds. Indeed, many such threefolds admit degenerations to unions of smooth threefolds for which Conjecture \ref{intro:conj} is already known, such as Fano threefolds. For instance, let $X$ be a smooth Calabi--Yau threefold given by the complete intersection of a quadric hypersurface and a codimension-$4$ linear section of $\Gr(2,6)$. Then $X$ admits a degeneration to a union $X'$ of two codimension-$5$ linear sections of $\Gr(2,6)$, each of which is a smooth Fano threefold of Picard number one. A Bogomolov--Gieseker-type inequality for semistable sheaves on $X'$ is proved in Theorem \ref{thm:exist-bg-function}. Consequently, the framework of this paper applies to $X'$. It is therefore natural to expect that Conjecture \ref{intro:conj} can be proved for such a union by combining the corresponding results for its Fano components, and that the conjecture for $X$ would then follow from Theorem \ref{thm-degeneration}. In the spirit of \cite{LMPSZ:deformation}, one may also consider the logarithmic derived category on the singular fiber; see \cite{hu:coherent-in-log,hu:log-derived-cat}.

We also note that our result for singular Fano threefolds in Corollary \ref{cor:fano3} is not expected to be optimal. For smooth Fano threefolds of Picard number one, Conjecture \ref{conj-2} with $\Gamma = 0$ is proved in \cite{li:fano-3fold}. For the higher Picard rank case, an effective choice of $\Gamma$ is obtained in \cite{macri:fano-threefold}. We expect that a more explicit version of Corollary \ref{cor:fano3} can be established by adapting the methods of \cite{li:fano-3fold,macri:fano-threefold}. Similar ideas may also apply to weak Fano threefolds.

\subsubsection*{Birational geometry}



Singularities arise naturally in the minimal model program. It is conjectured in \cite[Question 1.1]{toda:ex-contract} that each step of the minimal model program for a smooth threefold can be realized as a wall-crossing of moduli spaces of stable objects. This has been established in dimension $2$ in \cite{toda:mmp-surface}, and partial progress in dimension $3$ is made in \cite{toda:ex-contract}. In particular, using the perverse $t$-structure on $\Db(X)$ associated with a birational contraction $X \to Y$, \cite{toda:ex-contract} introduces a perverse version of tilt-stability and a corresponding form of Conjecture \ref{intro:conj} for $X$. Note that every threefold arising in the minimal model program of a smooth threefold has \emph{terminal $\QQ$-factorial singularities}, so it fits perfectly into our framework. It is therefore natural to generalize the constructions of \cite{toda:ex-contract} to singular threefolds and to compare the resulting perverse theory with the theory developed in this paper. We expect that such a comparison could lead to a solution of \cite[Question 1.1]{toda:ex-contract} in dimension $3$.

On the other hand, effective basepoint-freeness results play a central role in birational geometry. For a smooth threefold $X$, it is proved in \cite{bbmt14} that Conjecture \ref{intro:conj} implies an effective generation theorem for adjoint linear series, which in turn yields a version of Fujita’s conjecture. It is therefore an interesting problem to extend these results to singular threefolds.


\subsubsection*{Curve-counting theory}



Another natural source of singular threefolds is provided by quotients by finite groups, and more generally by coarse moduli spaces of Deligne--Mumford stacks. The crepant resolution conjecture relates the curve-counting invariants of a smooth $3$-dimensional Deligne--Mumford stack $\cX$ to those of a crepant resolution $Y$ of its coarse moduli space $X$. In the context of Donaldson--Thomas theory, the invariants of a smooth Calabi--Yau $3$-dimensional stack $\cX$ are related to those of $Y$ in \cite{BCR}. As explained in \cite[Section 2.2]{BCR}, the coarse moduli space $X$ has Gorenstein quotient singularities, and therefore has a well-behaved theory of tilt-stability. On the other hand, wall-crossing for tilt-stability and Conjecture \ref{intro:conj} have been used to study Donaldson--Thomas invariants in \cite{toda:bogomolov-counting,feyz:rank-r-dt-theory-from-0,feyz:rank-r-dt-theory-from-1,feyz:curve-counting,liu-ruan:cast-bound}. It is therefore natural to expect that the relationship between the Donaldson--Thomas invariants of $\cX$ and $Y$ can be understood through the wall-crossing behavior of tilt-stability on the coarse moduli space $X$.

A different approach to using Conjecture \ref{intro:conj} to study curves is developed in \cite{macri:space-curve,liu-ruan:cast-bound,feyz:physics-abelian-dt,liu:cast-bound-3fold}. In particular, Conjecture \ref{intro:conj} yields various bounds on the genus of curves in threefolds, which in turn imply vanishing theorems for curve-counting invariants of smooth Calabi--Yau threefolds. Using the framework of this paper, we expect that these results can also be extended to the singular setting.

\subsubsection*{Study of moduli spaces}



Moduli spaces of semistable objects are among the main geometric outputs of stability conditions. In the smooth setting, stability conditions on Kuznetsov components have led to the construction and study of many remarkable moduli spaces, including hyper-K\"ahler manifolds and their birational models, and moduli of sheaves on smooth Fano threefolds. Since we construct stability conditions on relative Kuznetsov components associated with singular families in Section \ref{sec:ku}, it is natural to ask how the corresponding moduli spaces behave in degenerating families, especially using Corollary \ref{cor:ku-intro}. In particular, one expects moduli spaces on smooth Kuznetsov components to admit natural degenerations to moduli spaces associated with the singular fibers. Such a picture would provide a categorical approach to studying degenerations of hyper-K\"ahler manifolds and moduli spaces of sheaves on Fano varieties.

On the other hand, wall-crossing for stability gives a powerful description of the birational geometry of moduli spaces on smooth K3 surfaces and related hyper-K\"ahler manifolds \cite{bayer:mmp,bayer:projectivity}. It is therefore natural to seek an analogous picture for moduli spaces on singular symplectic surfaces. One may then hope to use the resulting wall-crossing to study the minimal model program for the corresponding moduli spaces and, more generally, to extend the results of \cite{bayer:mmp,bayer:projectivity} from hyper-K\"ahler manifolds to singular hyper-K\"ahler varieties.

\subsection*{Organization}
In Section \ref{sec:pre}, we review preliminaries on derived categories and singular schemes. Then Section \ref{sec:intersect-ch} develops the intersection-theoretic and Chern character formalism needed in the singular setting. Sections \ref{sec:stab} and \ref{sec:stab-family} recall stability conditions and their relative versions in families. In Section \ref{sec:general-tilt}, we prove an abstract tilting theorem for weak stability conditions (cf.~Theorem \ref{thm:tilt-stability}) and establish its wall-chamber structure in Theorem \ref{thm:wall-chamber-abstract}. 

In Section \ref{sec:slope-bg}, we first introduce the necessary notation, then we study slope-stability and its relative version in Sections \ref{subsec:slope} and \ref{subsec:slope-family}. 

In Section \ref{sec:bg}, we first define the Le Potier function in the absolute and relative settings. In Theorem \ref{thm:exist-bg-function}, we show that it is always bounded above by a quadratic function, and in Theorem \ref{thm-bg-normal} we sharpen this bound for normal varieties.

Section \ref{sec:tilt-3} focuses on tilt-stability constructed from slope-stability. We begin with a rotation of slope-stability and its relative version in Proposition \ref{prop-rotate-slope-real-b}. We then establish the basic properties and useful lemmas for tilt-stability in Sections \ref{subsec-titl-from-slope} and \ref{subsec:lemma-about-tilt}. The main result on tilt-stability in families is Theorem \ref{thm:tilt-HN-structure}.



In Section \ref{sec:bmt}, we first formulate the BMT Conjecture on projective threefolds and prove that it is equivalent to two other weaker formulations (cf.~Theorem \ref{thm:equivalent-conj}). Using these conjectures, Theorem \ref{thm-stab} shows that we can construct an explicit family of stability conditions. We end this section by establishing a semistable reduction result, Theorem \ref{thm-lift-tilt-ss}, for tilt-semistable objects, which allows us to check the BMT Conjecture via degeneration (cf.~Theorem \ref{thm-degeneration}).

In Section \ref{sec:bmt-cy-fano}, we first generalize the reduction method in \cite{FKLR} to a more general setting in Theorem \ref{thm:ch2}. Using this, we verify the BMT Conjecture for Fano threefolds and a series of Calabi–Yau threefolds. Moreover, a singular Calabi--Yau threefold without smoothing is considered in Corollary \ref{cor:x8}.

In Section \ref{sec:ku}, we generalize \cite[Theorem 23.1]{BLMNPS21} to families of singular schemes. Using this, we construct stability conditions on Kuznetsov components associated with singular Fano threefolds. A singular variant of Kuznetsov--Shinder's conjecture \cite[Conjecture 1.8]{kuznetsov:fano-threefold-degneration} is proved in Corollary \ref{cor-ks-conj}.

Finally, in Appendix \ref{appendix:approach}, we discuss how to replace the Chern characters in the construction of tilt-stability with coefficients of Hilbert polynomials, and prove an analog of Theorem \ref{thm:intro-1}. In particular, a Bogomolov--Gieseker-type inequality for semistable sheaves on an arbitrary projective scheme is established in Theorem \ref{thm-bg-general}.

\subsection*{Acknowledgments}
We would like to thank Soheyla Feyzbakhsh, Chunyi Li, Emanuele Macr\`i, Alexander Perry, and Yukinobu Toda for discussions that greatly inspired this paper. Z.L. would like to thank his supervisor Yongbin Ruan for encouragement and many valuable suggestions. We would also like to thank Arend Bayer, Tomohiro Karube, Naoki Koseki, Peize Liu, Ziqi Liu, Shiji Lyu, Kenneth Ma, Nick Rekuski, Dongjian Wu, Chenyang Xu, Nantao Zhang, and Xiaolei Zhao for helpful discussions. Z.L. was supported by NSFC Grant 123B2002. T.M. was supported by World Premier International Research Center Initiative (WPI), MEXT, Japan.

\section{Preliminaries}\label{sec:pre}

In this section, we collect preliminaries on derived categories, singular schemes, and relative objects used later.

\subsection{Notations and conventions}

We begin by summarizing some basic notions.

\begin{itemize}
   \item Given a scheme $X$, a \emph{point} $t\to X$ means a morphism from the spectrum of a field, and it is called a \emph{geometric point} if the corresponding field is algebraically closed. We write $t\in X$ when $t\to X$ identifies the source field with the residue field of its image.

    \item We say a locally Noetherian scheme $X$ is \emph{equidimensional} if each irreducible component of $X$ has the same finite Krull dimension and there are no embedded components. 


    \item A \emph{Dedekind scheme} is an integral, Noetherian, one-dimensional regular scheme. For a Dedekind scheme $C$, we write $p\in C$ for a closed point, $c\in C$ for an arbitrary point, $\eta\in C$ for the generic point, and $K$ for its fraction field.

    \item We say a morphism $f\colon X \to Y$ between schemes is \emph{essentially of finite type} if $f$ is either of finite type, or $X$ is affine and $f$ factors as $\Spec(S^{-1}A)\to \Spec(A)\to Y$, where $\Spec(A)\to Y$ is a morphism of finite type and $S^{-1}A$ is a localization of $A$.

    \item A \emph{Nagata scheme} is a scheme $X$ such that we have an affine open cover $\bigcup_{i\in I} \Spec(A_i)=X$ with each $A_i$ a Nagata ring in the sense of \cite[\href{https://stacks.math.columbia.edu/tag/032R}{Tag 032R}]{stacks-project}. The Nagata property is preserved under morphisms essentially of finite type by \cite[\href{https://stacks.math.columbia.edu/tag/032U}{Tag 032U}]{stacks-project} and \cite[\href{https://stacks.math.columbia.edu/tag/0334}{Tag 0334}]{stacks-project}. Moreover, the normalization morphism of $X$ is finite by \cite[\href{https://stacks.math.columbia.edu/tag/035S}{Tag 035S}]{stacks-project}.

    \item A morphism $X\to Y$ between schemes is \emph{embeddable} if it can be written as a composition $X\hookrightarrow P\to Y$, where $P\to Y$ is a smooth morphism and $X\hookrightarrow P$ is a closed embedding. In this case, we also say that $X$ is embeddable over $Y$. 


    \item A variety over a field $\kk$ is a geometrically integral scheme of finite type over $\kk$. A threefold over $\kk$ is a variety over $\kk$ of dimension $3$.

    \item For a normal variety $X$ over a field of characteristic $0$, we say $X$ has \emph{rational singularities} if the natural map $\cO_X\to Rf_*\cO_Y$ is an isomorphism for a (hence any) resolution of singularities $f\colon Y\to X$. By \cite[Theorem 4.20, Corollary 5.24]{kollar-mori}, if $X$ has rational Gorenstein singularities and $\dim X=2$, then $X$ is a local complete intersection.

    \item We denote by $\KK(\cD)$ the K-group of a triangulated category or an abelian category $\cD$. 

    \item For a lattice $\Lambda$ and any subset $S\subset \Lambda$, we denote by $\langle S\rangle$ the saturation of the subgroup generated by $S$ in $\Lambda$. For any $\ZZ$-module $\A$, we denote by $\A_{\mathbb{F}}\coloneqq\A\otimes_{\ZZ}\mathbb{F}$, where $\mathbb{F}$ is any field extension of $\QQ$.

\end{itemize}

\subsection{Derived categories}

For a scheme $X$, we consider the following derived categories:

\begin{itemize}

   \item the unbounded derived category $\D(X)$ of sheaves of $\cO_X$-modules; for any object $E\in \D(X)$, we denote by $\cH^i(E)\in \mathrm{Mod}(\cO_X)$ the $i$-th cohomology sheaf,

   \item the derived category $\Dqc(X)$ (resp. $\Dqc^{+}(X)$, $\Dqc^-(X)$) of $\cO_X$-modules with (resp. bounded below, bounded above) quasi-coherent cohomology sheaves,

\item the category $\D_{\mathrm{pc}}(X)$ of pseudo-coherent complexes; here, we say a complex in $\D(X)$ is pseudo-coherent if affine locally, it is quasi-isomorphic to a bounded above complex of finitely generated locally free sheaves,

    \item the derived category $\Db(X)$ of pseudo-coherent complexes on $X$ with bounded cohomology sheaves, 
    
    \item when $X$ is Noetherian, the derived categories $\D^+(X)$ and $\D^-(X)$ of $\cO_X$-modules with bounded below and bounded above coherent cohomology sheaves, respectively; and

    \item the full triangulated subcategory $\Dperf(X)\subset \D(X)$ of perfect complexes on $X$. If $X$ is Noetherian, we have $\Dperf(X)\subset \Db(X)$. When $X$ is regular and quasi-compact, we have $\Dperf(X)=\Db(X)$ (cf.~\cite[\href{https://stacks.math.columbia.edu/tag/0FDC}{Tag 0FDC}]{stacks-project}).
\end{itemize}

According to \cite[\href{https://stacks.math.columbia.edu/tag/08E8}{Tag 08E8}]{stacks-project}, if $X$ is Noetherian, then $\D_{\mathrm{pc}}(X)=\D^-(X)$ coincides with the bounded above derived category of $\cO_X$-modules with coherent cohomology sheaves. In this case, we also have $\Db(X)=\Db(\Coh(X))$, $\D^-(X)=\D^-(\Coh(X))$, and $\Dqc(X)=\D(\mathrm{QCoh}(X))$. 

We denote by $\KK_0(X)$ and $\KK^0(X)$ the K-group of $\Db(X)$ and $\Dperf(X)$, respectively.

For a morphism between schemes $f\colon X\to Y$, we have the following derived functors:

\begin{itemize}
    \item the derived pushforward $$Rf_*\colon \Dqc(X)\to \Dqc(Y)$$ when $f$ is quasi-compact and quasi-separated. It induces a functor $$Rf_*\colon \Db(X)\to \Db(Y)$$ when $f$ is proper and $Y$ is Noetherian. If $f$ is furthermore perfect in the sense of \cite[\href{https://stacks.math.columbia.edu/tag/0687}{Tag 0687}]{stacks-project}, e.g.~$Y$ is regular (cf.~\cite[\href{https://stacks.math.columbia.edu/tag/068B}{Tag 068B}]{stacks-project}), or $f$ is flat and locally of finite presentation (cf.~\cite[\href{https://stacks.math.columbia.edu/tag/068A}{Tag 068A}]{stacks-project}), or $f$ is a local complete intersection morphism (cf.~\cite[\href{https://stacks.math.columbia.edu/tag/069H}{Tag 069H}]{stacks-project}), then it restricts to $$Rf_*\colon \Dperf(X)\to \Dperf(Y)$$ by \cite[\href{https://stacks.math.columbia.edu/tag/0B6G}{Tag 0B6G}]{stacks-project}, 

    \item the derived pullback $Lf^*\colon \Dqc(Y)\to \Dqc(X)$ and $Lf^*\colon \Dperf(Y)\to \Dperf(X)$. If $f$ is perfect, then it restricts to $$Lf^*\colon \Db(Y)\to \Db(X),$$ and

    \item the upper shriek $f^!\colon \Dqc^+(Y)\to \Dqc^+(X)$ and $f^!\colon \D^+(Y)\to \D^+(X)$ if $Y$ is Noetherian and $f$ is separated and of finite type (cf.~\cite[\href{https://stacks.math.columbia.edu/tag/0AA0}{Tag 0AA0}]{stacks-project} and \cite[\href{https://stacks.math.columbia.edu/tag/0AU1}{Tag 0AU1}]{stacks-project}). If $f$ is also proper, then $f^!$ is the right adjoint of $Rf_*$ (cf.~\cite[\href{https://stacks.math.columbia.edu/tag/0F42}{Tag 0F42}]{stacks-project}). If $f$ is perfect, then it induces $f^!\colon \Db(Y)\to \Db(X)$ by \cite[\href{https://stacks.math.columbia.edu/tag/0B6U}{Tag 0B6U}]{stacks-project}. If $f$ is a local complete intersection morphism, then it induces $$f^!\colon \Dperf(Y)\to \Dperf(X)$$ by \cite[\href{https://stacks.math.columbia.edu/tag/0B6V}{Tag 0B6V}]{stacks-project}.
\end{itemize}

We also have functors $$R\cH om_X(-,-)\colon \D(X)^{\mathrm{op}}\times \D(X)\to \D(X)$$ and
$$-\otimes^{\mathrm{L}} -\colon\D(X)\times \D(X)\to \D(X).$$
When $X$ is locally Noetherian, they restrict to functors $\D^-(X)^{\mathrm{op}}\times \D^+(X)\to \D^+(X)$ and $\D^-(X)\times \D^-(X)\to \D^-(X)$, respectively. By \cite[\href{https://stacks.math.columbia.edu/tag/08DJ}{Tag 08DJ}]{stacks-project}, for any $E,F,G\in \D(X)$, we have
\[R\cH om_X(E,R\cH om_X(F,G))\cong R\cH om_X(E\otimes^{\mathrm{L}}F,G).\]

When $Y$ is Noetherian and has the resolution property in the sense of \cite[\href{https://stacks.math.columbia.edu/tag/0F86}{Tag 0F86}]{stacks-project}, any object in $\Dperf(Y)$ is quasi-isomorphic to a bounded complex of finite locally free sheaves on $Y$ and any object in $\D^-(Y)$ is quasi-isomorphic to a bounded above complex of finite locally free sheaves on $Y$. In this case, if $f\colon X\to Y$ is a morphism from another Noetherian scheme $X$, then we have
\[Lf^*R\cH om_Y(E,F)\cong R\cH om_X(Lf^*E, Lf^*F)\]
for any $E\in \D^-(Y)$ and $F\in \D^+(Y)$ if either $E\in \Dperf(Y)$ or $f$ has finite Tor-dimension (cf.~\cite[\href{https://stacks.math.columbia.edu/tag/0GM7}{Tag 0GM7}]{stacks-project}).

For a locally Noetherian scheme $X$ and an object $E\in \D^-(X)$, we define the derived dual $$\mathbb{D}^X(E)\coloneqq R\cH om_X(E, \oh_X)\in \D^+(X)$$ and its shift  $\mathbb{D}^X_n(E)\coloneqq\mathbb{D}^X(E)[n]$, and the underived dual $$E^{\vee}=\cH om_X(E, \oh_X)\coloneqq\cH^0(\mathbb{D}^X(E))\in \Coh(X).$$
If $X=\Spec(\kk)$ for a field $\kk$, then $E$ is a complex of finite-dimensional $\kk$-vector spaces, and we write $E^*\coloneqq \mathbb{D}^X(E)$ to match the usual notation of dual vector spaces.

Let $f\colon X\to Y$ be a morphism between schemes over $S$. For any morphism $T\to S$, we denote the naturally induced morphism by $$f_T\colon X_T\coloneqq X\times_S T\to Y_T\coloneqq Y\times_S T.$$ Similarly, for any $E\in \Dqc(X)$, we denote by $E_T$ the derived pullback of $E$ along the natural projection $X_T\to X$.

For $x\in X$ and $E\in \Dqc(X)$, we denote by $E|_x\in \Dqc(\Spec \oh_{X,x})$ the (derived) pullback of $E$ along the natural flat morphism $\Spec \oh_{X,x}\to X$.

\subsection{Semi-orthogonal decompositions}

We now review semi-orthogonal decompositions in both absolute and relative settings. We refer to \cite[Section 3]{BLMNPS21} for a more detailed introduction. Let $\cD$ be a triangulated category.

\begin{definition}
A \emph{semi-orthogonal decomposition} of $\cD$ is a sequence of full triangulated subcategories $\cD_1, \dots, \cD_m$ of $\cD$, called the \emph{components} of the decomposition, such that
\[\cD=\langle \cD_1, \dots, \cD_m\rangle,\]
where $\langle -\rangle$ denotes the extension closure, satisfying
\begin{itemize}
    \item $\Hom_{\cD}(\cD_i, \cD_j)=0$ for $i>j$, and

    \item for any $E\in \cD$, there is a sequence of morphisms 
    \[0=E_m\to E_{m-1}\to \cdots \to E_0=E\]
    such that $\pr_i(E)\coloneqq\cone(E_i\to E_{i-1})\in \cD_i$ for each $1\leq i\leq m$.
\end{itemize}

Then we obtain a functor $\pr_i\colon \cD\to \cD_i\hookrightarrow  \cD$ for each $1\leq i\leq m$, which is called the \emph{projection functor} onto $\cD_i$.
\end{definition}

\begin{definition}
A semi-orthogonal decomposition $\cD=\langle \cD_1, \dots, \cD_m\rangle$ is called \emph{strong} if for each $i$ the inclusion functor $\cD_i\hookrightarrow \cD$ has a right adjoint. A full subcategory $\cD'\subset \cD$ is a \emph{strong semi-orthogonal component} if it is part of a strong semi-orthogonal decomposition of $\cD$.
\end{definition}

\begin{definition}
Let $X$ be a Noetherian scheme. We say a semi-orthogonal decomposition $\cD=\langle \cD_1, \dots, \cD_m\rangle$ of a triangulated subcategory $\cD\subset \Dqc(X)$ is of \emph{finite cohomological amplitude} if there exists $p,q\in \ZZ$ such that the projection functor $\pr_i$ onto each component $\cD_i$ satisfies
\[\pr_i(\cD\cap \Dqc^{[a,b]}(X))\subset \Dqc^{[a+p, b+q]}(X)\]
for all $a,b\in \ZZ$, where $\Dqc^{I}(X)$ denotes the full subcategory of $\Dqc(X)$ consisting of objects whose cohomology sheaves are concentrated in degrees in $I\subset \ZZ$.

We say a semi-orthogonal component $\cD$ of $\Dperf(X), \Db(X)$, or $\Dqc(X)$ is of \emph{finite cohomological amplitude} if the semi-orthogonal decomposition defining $\cD$ is of finite cohomological amplitude.
\end{definition}

By \cite[Lemma 3.10, 3.13]{BLMNPS21}, if $X$ is Noetherian, then any semi-orthogonal component $\cD\subset \Db(X)$ induces semi-orthogonal components $\cD_{\perf}\subset \Dperf(X)$ and $\cD_{\mathrm{qc}}\subset \Dqc(X)$.

We have the following definition when we work over a general base scheme.

\begin{definition}
Let $f\colon X\to S$ be a morphism between schemes. A triangulated subcategory $\cD\subset \Dqc(X)$ is called \emph{$S$-linear} if for any $E\in \cD$ and $F\in \Dperf(S)$, one has $E\otimes^{\LL} Lf^*F\in \cD$. A semi-orthogonal decomposition of $\cD$ is called $S$-linear if all of its components are $S$-linear.
\end{definition}

\subsection{Serre's conditions for sheaves and morphisms}

Let $X$ be a locally Noetherian scheme. Recall that the dimension and codimension of a coherent sheaf $E$ on $X$ are the dimension and codimension of its support $\Supp(E)$, which are denoted by $\dim(E)$ and $\codim_X(E)$, respectively. We say $E$ is \emph{pure of dimension $d$} if $\dim(F)=d$ for any non-zero subsheaf $F\subset E$. This is equivalent to saying that all associated points of $E$ have the same dimension. 

We say $E$ is a \emph{torsion sheaf} if $\Supp(E)$ is nowhere dense in $X$, or equivalently, the set $\Ass(E)$ of associated points of $E$ does not contain any generic point of $X$. For any $E\in \Coh(X)$, there is a largest torsion subsheaf $\tor(E)\subset E$, which is called the torsion part of $E$. We say $E$ is \emph{torsion-free} if $\tor(E)=0$, or equivalently, every point in $\Ass(E)$ is a generic point of $X$. If $X$ is equidimensional, it follows from the definition that a non-zero sheaf $E$ is torsion-free if and only if $E$ is pure of dimension $\dim(X)$.

\begin{remark}
In some papers, torsion-freeness is only defined for sheaves on integral schemes. It is clear that when $X$ is integral, the two definitions coincide; see for example, \cite[\href{https://stacks.math.columbia.edu/tag/0AUV}{Tag 0AUV}]{stacks-project}.
\end{remark}

Now we review Serre's conditions $S_n$. Recall that for a coherent sheaf $E$ on a locally Noetherian scheme $X$, the depth $\depth_x(E)$ at a point $x\in X$ is defined as the depth of $E|_x$ as a finite $\oh_{X,x}$-module (cf.~\cite[\href{https://stacks.math.columbia.edu/tag/00LI}{Tag 00LI}]{stacks-project}).

\begin{definition}[{\cite[\href{https://stacks.math.columbia.edu/tag/0341}{Tag 0341}]{stacks-project}}]
Let $X$ be a locally Noetherian scheme, $E\in \Coh(X)$, and $n\in \ZZ_{\geq 0}$. We say $E$ is $S_n$ if
\[\depth_x(E)\geq \min\{n, \dim(E|_x)\}\]
for any $x\in X$. We say $X$ is $S_n$ if $\oh_X$ is. We say $E$ is \emph{Cohen--Macaulay} if it is $S_n$ for all $n\geq 0$.
\end{definition}

In particular, $E$ is $S_1$ if and only if it has no embedded points (cf.~\cite[\href{https://stacks.math.columbia.edu/tag/0346}{Tag 0346}]{stacks-project}). Therefore, $E$ is pure if and only if $\Supp(E)$ is equidimensional and $E$ is $S_1$. If $X$ is normal and integral, then torsion-free $S_2$ sheaves on $X$ coincide with reflexive sheaves by \cite[\href{https://stacks.math.columbia.edu/tag/0AVB}{Tag 0AVB}]{stacks-project}. In the following, we will see that torsion-free $S_2$ sheaves share many useful properties with reflexive sheaves.

\begin{lemma}\label{lem-general-S2}
Let $X$ be a locally Noetherian scheme and $E$ be a coherent sheaf on $X$.

\begin{enumerate}
    \item If $F\in \Coh(X)$, then $\mathrm{Ass}(\cH om_X(E, F))\subset \mathrm{Ass}(F)$. In particular, if $F$ is pure of dimension $d$ (resp.~torsion-free), then $\cH om_X(E, F)$ is also pure of dimension $d$ (resp.~torsion-free).

    \item If $F$ is an $S_2$ sheaf on $X$, then $\cH om_X(E,F)$ is also $S_2$.

    \item If $E$ is torsion-free and $S_2$, then the natural map $E\to j_*j^*E$ is an isomorphism for any open embedding $j\colon U\hookrightarrow X$ with $\codim_X(X\setminus j(U))\geq 2$.

    \item If $E$ is torsion-free and $S_2$, then for any $T\in \Coh(X)$ with $\codim_X (T)\geq 2$, we have $$\Hom_X(T, E[1])=0.$$
\end{enumerate}
\end{lemma}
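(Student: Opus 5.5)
All four parts are local statements, so I would reduce to the stalk (or affine) case over a Noetherian local ring $A=\oh_{X,x}$, with $M=E|_x$ and $N=F|_x$ finite $A$-modules. Part (a) then comes before everything else, and (b)–(d) are deduced from depth properties of $\Hom$.

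For (a), take an associated prime $\mathfrak p$ of $\Hom_A(M,N)$. It is the annihilator of some $\varphi\neq 0$. Localize at $\mathfrak p$ and use that $\Hom$ commutes with localization for finitely presented $M$. Then $\depth \Hom_{A_{\mathfrak p}}(M_{\mathfrak p},N_{\mathfrak p})=0$. The standard lemma that $\depth \Hom(M,N)\ge \min\{2,\depth N\}$ (in particular $\depth N=0$ is forced by $\depth \Hom=0$) gives $\mathfrak p\in\Ass(N)$.

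To prove that depth lemma, take a presentation $A^m\to A^n\to M\to 0$. This gives an exact sequence
\[0\to \Hom(M,N)\to N^n\to N^m.\]
An $N$-regular element is then $\Hom(M,N)$-regular, which settles depth $\ge 1$. For depth $\ge 2$, let $C$ be the image of $N^n\to N^m$. Since $C\subset N^m$, it has depth $\ge 1$ whenever $\depth N\ge 1$. The depth lemma on the short exact sequence $0\to\Hom\to N^n\to C\to 0$ then gives $\depth\Hom\ge\min\{\depth N^n,\depth C+1\}\ge \min\{2,\depth N\}$. The purity and torsion-freeness statements in (a) follow immediately, since $\Ass(\Hom)\subset\Ass(F)$ and both properties are conditions on associated points.

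For (b), apply the depth inequality at each point $x$, which gives $\depth_x\cH om(E,F)\ge\min\{2,\depth_x F\}$. The remaining step is to compare dimensions: $\Supp\cH om(E,F)\subset\Supp F$, so $\dim \cH om(E,F)|_x\le \dim F|_x$. The $S_2$ condition for $F$ gives $\depth_x F\ge\min\{2,\dim F|_x\}$. Combining the two yields $\depth_x\cH om\ge\min\{2,\dim F|_x\}$. I expect this to be the main obstacle: if $\dim\cH om|_x<\dim F|_x$ we need the finer bound $\min\{2,\dim \cH om|_x\}$, which is smaller and therefore implied. So the inequality $\min\{2,\dim F|_x\}\ge\min\{2,\dim\cH om|_x\}$ settles it. (If the module is zero at $x$, there is nothing to check.)

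For (c), work locally and set $Z=X\setminus U$. The exact sequence $0\to\cH^0_Z(E)\to E\to j_*j^*E\to\cH^1_Z(E)\to 0$ reduces the claim to the vanishing of local cohomology $\cH^i_Z(E)$ for $i=0,1$. That vanishing holds when $\depth_{x}E\ge 2$ at every point $x\in Z$, by Grothendieck's depth criterion ($\mathrm{depth}_Z E\ge 2$).

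At such points $x\in Z$, we have $\dim\oh_{X,x}\ge 2$. Torsion-freeness means every associated point of $E$ is a generic point of $X$, so $E$ has full-dimensional support locally at $x$. Hence $\dim E|_x=\dim\oh_{X,x}\ge2$, and in particular $E|_x\neq0$. I would justify this by noting that some generic point of $X$ specializing to $x$ lies in $\Ass(E)$, which requires care: I would argue that $\dim E|_x$ equals the dimension of some component of $\Spec\oh_{X,x}$ through $x$. Then I would use that $\codim$ is measured as in the paper, so that $\dim E|_x\ge\codim_X(\overline{\{x\}})\ge 2$. The $S_2$ condition then gives depth $\ge2$.

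For (d), an extension $0\to E\to G\to T\to 0$ restricts to a split extension on $U=X\setminus\Supp T$, and $\codim(X\setminus U)\ge 2$. Apply $j_*j^*$ and use (c), which identifies $E\cong j_*j^*E$. Composing $G\to j_*j^*G\cong j_*j^*E\cong E$ then gives a retraction of $E\to G$, so the extension splits.
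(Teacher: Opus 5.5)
Your treatments of (a), (b) and (d) are fine. The inclusion $\Hom_A(M,N)\hookrightarrow N^{\oplus n}$ that you write down already gives $\Ass(\Hom)\subset\Ass(N)$ directly; that is exactly the paper's proof of (a), so the detour through depth is unnecessary. Part (b) reproduces the standard argument behind the reference the paper cites, and (d) is the paper's argument.

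The gap is in (c), at the step $\dim E|_x\ge\codim_X(\overline{\{x\}})$. Torsion-freeness only says that $\Supp(E)$ is a union of \emph{some} irreducible components of $X$. If the components through $x$ have different local dimensions at $x$, then $\dim E|_x$ can be strictly smaller than $\dim\cO_{X,x}=\codim_X(\overline{\{x\}})$. (Your aside that $E|_x\neq 0$ is also wrong, though harmless.)

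This cannot be patched, because (c) and (d) fail as literally stated. Let $X=V(xz,yz)\subset\mathbb{A}^3_{\kk}$ be the plane $\{z=0\}$ union the line $L=\{x=y=0\}$, let $o$ be the origin, and let $E=\cO_L$.
\begin{itemize}
\item $\dim\cO_{X,o}=2$.
\item The only associated point of $E$ is the generic point of $L$, which is a generic point of $X$, so $E$ is torsion-free.
\item $E$ is Cohen--Macaulay, hence $S_2$.
\item Yet $\Gamma(X\setminus\{o\},E)=\kk[z,z^{-1}]\neq\kk[z]$, so (c) fails.
\item The non-split extension $0\to\kk[z]\to z^{-1}\kk[z]\to\kappa(o)\to 0$ gives $\Ext^1_X(\kappa(o),E)\neq 0$, so (d) fails.
\end{itemize}

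So you must add a hypothesis and use it explicitly, instead of appealing to ``codimension as measured in the paper''. Two options work.
\begin{itemize}
\item Assume every irreducible component of $X$ through a point $x$ has local dimension $\dim\cO_{X,x}$ at $x$. This holds, e.g., for equidimensional schemes of finite type over a field, which is the kind of setting where the lemma is used later. Then near $x\in Z$ the support of $E$ is either empty or a nonempty union of such components, so $E|_x\neq 0$ forces $\dim E|_x=\dim\cO_{X,x}\ge 2$.
\item Replace the hypothesis by $\codim(Z\cap\Supp E,\Supp E)\ge 2$. This gives $\dim E|_x\ge 2$ directly for $x\in Z\cap\Supp E$.
\end{itemize}
In either case the $S_2$ condition yields $\depth_x E\ge 2$ for all $x\in Z$. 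Your local cohomology sequence then gives $E\cong j_*j^*E$, and your proof of (d) goes through unchanged.
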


\begin{proof}
Affine locally, we can take a surjection $\oh_X^{\oplus n}\twoheadrightarrow E$, which gives an inclusion $\cH om_X(E, F)\subset F^{\oplus n}$ and part (a) follows. Part (b) is \cite[\href{https://stacks.math.columbia.edu/tag/0AXQ}{Tag 0AXQ}]{stacks-project} and part (c) is \cite[\href{https://stacks.math.columbia.edu/tag/0E9I}{Tag 0E9I}]{stacks-project}.

For part (d), let $U\coloneqq X\setminus \mathrm{Supp}(T)$ and $j\colon U\hookrightarrow X$ be the inclusion. Let $F$ be the extension of $T$ and $E$ corresponds to an element of $\Hom_X(T, E[1])$. If $E$ is $S_2$, then $j_*j^*F\cong j_*j^*E\cong E$ by part (c). Therefore, the composition of natural maps $E\hookrightarrow F\to j_*j^*F\cong j_*j^*E$ is an isomorphism. Thus, $F$ splits and $\Hom_X(T, E[1])=0$ follows.
\end{proof}

We also have the following generalization of $S_n$ for morphisms:

\begin{definition}[{\cite[\href{https://stacks.math.columbia.edu/tag/045R}{Tag 045R}]{stacks-project}}]
Let $f\colon X\to Y$ be a morphism between schemes such that each fiber $X_y$ is locally Noetherian. We say $f$ is $S_n$ if $f$ is flat and every fiber of $f$ is $S_n$. We say $f$ is \emph{Cohen--Macaulay} if $f$ is $S_n$ for each $n\geq 0$. We say $f$ is \emph{Gorenstein} if $f$ is flat and every fiber of $f$ is Gorenstein.
\end{definition}

By \cite[\href{https://stacks.math.columbia.edu/tag/045U}{Tag 045U}]{stacks-project} and \cite[\href{https://stacks.math.columbia.edu/tag/0E0Q}{Tag 0E0Q}]{stacks-project}, Cohen--Macaulay morphisms and Gorenstein morphisms are stable under arbitrary base change. More generally, $S_n$ morphisms are also stable under base change by \cite[Proposition 6.7.1]{EGA-IV-2}.

We will also use the notion of \emph{local complete intersection (lci)} morphisms as in \cite[\href{https://stacks.math.columbia.edu/tag/069F}{Tag 069F}]{stacks-project}. By \cite[\href{https://stacks.math.columbia.edu/tag/068E}{Tag 068E}]{stacks-project}, the lci property is stable under composition and flat base change. By \cite{avramov:lci}, a flat morphism $f$ of finite type between Noetherian schemes is lci if and only if the cotangent complex $\mathbb{L}_f$ of $f$ is perfect with tor-amplitude in $[-1,0]$.

We say a morphism is \emph{syntomic} if it is flat and lci (cf.~\cite[\href{https://stacks.math.columbia.edu/tag/069K}{Tag 069K}]{stacks-project}). By \cite[\href{https://stacks.math.columbia.edu/tag/01UB}{Tag 01UB}]{stacks-project}, being syntomic is stable under compositions and arbitrary base changes. If $f$ is locally of finite presentation, then $f$ is syntomic if and only if $f$ is flat and each fiber of $f$ is lci.

We say a morphism $f\colon X\to Y$ is \emph{fiberwise lci in codimension $d$} if there exists a closed subset $Z\subset X$ with $\codim_{X_y}(Z_y)\geq d+1$ such that $X_y\setminus Z_y$ is lci over $\kappa(y)$ for each $y\in Y$. If $f$ is flat and locally of finite presentation, then by \cite[\href{https://stacks.math.columbia.edu/tag/06B8}{Tag 06B8}]{stacks-project} or \cite[\href{https://stacks.math.columbia.edu/tag/02V3}{Tag 02V3}]{stacks-project}, 
\[\mathrm{LCI}(X/Y)\coloneqq \{x\in X\colon X_{f(x)} \text{ is lci over } \kappa(f(x)) \text{ at }x\}\]
is open in $X$ and
\begin{equation}\label{eq:lci-locus-base-change}
\mathrm{LCI}(X_T/T)=g^{-1}(\mathrm{LCI}(X/Y))
\end{equation}
for any morphism $T\to Y$, where $g\colon X_T\to X$ is the induced morphism. In this case, $f$ is fiberwise lci in codimension $d$ if and only if $(X\setminus \mathrm{LCI}(X/Y))_y=X_y\setminus \mathrm{LCI}(X_y/\kappa(y))$ has codimension $\geq d+1$ in $X_y$ for each $y\in Y$. If $Y$ is the spectrum of a field $\kk$, then we simply say $X$ is \emph{lci in codimension $d$ (over $\kk$).}

\subsection{Relative perfect objects}

Next, we recall the definition of relative perfect objects, which can be viewed as a notion of families of bounded complexes of coherent sheaves over a base.

\begin{definition}[{\cite[\href{https://stacks.math.columbia.edu/tag/0DI0}{Tag 0DI0}]{stacks-project}}]
Let $f\colon X\to S$ be a morphism which is flat and locally of finite presentation. An object $E\in \Dqc(X)$ is \emph{perfect relative to $S$}, or \emph{$S$-perfect}, if $E$ is pseudo-coherent and locally of finite Tor-dimension over $f^{-1}\oh_S$. 
\end{definition}

Note that if $S=\Spec \kk$ for a field $\kk$ and $X$ is Noetherian, then $E\in \Dqc(X)$ is $S$-perfect if and only if $E\in \Db(X)$. More generally, we have the following useful lemma.

\begin{lemma}\label{lem-S-perf-lem-1}
Let $f\colon X\to S$ be a morphism which is flat and locally of finite presentation.

\begin{enumerate}
    \item If $E\in \Dqc(X)$ is $S$-perfect and $X$ is quasi-compact, then $E\in \Db(X)$.

    \item If $E\in \Dperf(X)$, then $E$ is $S$-perfect.

    \item If $S$ is regular of finite dimension, then any object $E\in \Db(X)$ is $S$-perfect.

    \item If $f$ is smooth, then any $S$-perfect object $E$ is perfect.
\end{enumerate}
\end{lemma}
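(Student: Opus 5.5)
Each of the four parts can be reduced to a standard statement about relatively perfect complexes, so the plan is to verify them one at a time and cite the relevant Stacks Project results from the definition in Tag 0DI0.

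For (a), $S$-perfectness is local, so first work on an affine open $U=\Spec A\subset X$ with $f(U)\subset \Spec R$. There $E$ is pseudo-coherent, so it is bounded above with coherent cohomology; this uses that $X$ is locally Noetherian, as in the paper's applications, or more generally the pseudo-coherence of its cohomology. Finite Tor-dimension over $R$ forces the cohomology to be bounded below, because $E\otimes^{\LL}_R R=E$ and a complex of finite Tor-amplitude $[a,b]$ over $R$ has cohomology in degrees $\leq b$ and $\geq a$. Since $X$ is quasi-compact, finitely many affines cover it, which gives uniform bounds. Hence $E\in\Db(X)$; this is also recorded in Tag 0DI1/0DI4.

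For (b), a perfect complex is pseudo-coherent. Locally it is a bounded complex of finite free $\cO_X$-modules, and these are flat over $f^{-1}\cO_S$ because $f$ is flat. So $E$ has finite Tor-dimension over $S$ (Tag 0DI2).

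For (c), pseudo-coherence is automatic for $E\in\Db(X)$ when $X$ is locally Noetherian; $X$ is locally of finite presentation over the Noetherian scheme $S$, hence locally Noetherian. For finite Tor-dimension, let $d=\dim S$. Every local ring $\cO_{S,s}$ is regular of dimension $\leq d$, so every $\cO_{S,s}$-module has flat dimension at most $d$. Each cohomology sheaf $\cH^i(E)$, viewed as a module over $\cO_{S,f(x)}$ at a point $x$, therefore has Tor-amplitude in $[-d,0]$. Since $E$ has finitely many nonzero cohomology sheaves, an induction on the number of them using the truncation triangles gives finite Tor-amplitude for $E$.

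For (d), this is the main point. If $f$ is smooth, then locally $X$ is étale over $\mathbb{A}^n_S$. Tag 0DI5/0DJD asserts that, for $f$ flat and of finite presentation, an $S$-perfect object is perfect as soon as $f$ has finite Tor-dimension in the appropriate sense for the diagonal, for instance when $f$ is smooth. To prove this directly, reduce to $X=\Spec A$ with $A$ smooth over $R$. The $A\otimes_R A$-module $A$ is perfect, since the diagonal of a smooth morphism is a regular immersion (Koszul). Write $E\cong E\otimes^{\LL}_{A\otimes_R A}A$, where $E\otimes_R A$ is formed via the second factor, so that it is $E$ viewed over $A\otimes_RA$. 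Finite Tor-dimension of $E$ over $R$ gives finite Tor-dimension of $E\otimes_R A$ over $A$, and combining this with the perfection of the diagonal gives finite Tor-dimension of $E$ over $A$. Together with pseudo-coherence, $E$ is perfect.

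The expected obstacle is exactly this diagonal bookkeeping, so I would simply cite the Stacks Project results wherever possible.
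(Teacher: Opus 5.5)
Your proposal is correct and follows the paper's route. The paper cites \cite[Lemma 8.3]{BLMNPS21} for (a)--(c), whose content is exactly your direct arguments, and for (d) it uses your reduction (for smooth $f$, a pseudo-coherent complex of finite Tor-amplitude over $S$ has finite Tor-amplitude over $X$, hence is perfect), which your diagonal argument justifies via $E\otimes^{\LL}_A N\cong (E\otimes^{\LL}_R N)\otimes^{\LL}_{A\otimes_R A}A$ for $A$-modules $N$ and the perfection of $A$ over $A\otimes_R A$; only your Stacks tags are off (in this paper 0DI4 and 0DI5 are the tensor-product and base-change statements, and (d) is attributed to Tag 09PC), which does not affect the mathematics.
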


\begin{proof}
Parts (a)-(c) are included in \cite[Lemma 8.3]{BLMNPS21}. 

Part (d) is already mentioned in \cite[Section 2.1]{lieblich:moduli-of-complex}. For a smooth morphism,
\(\mathcal O_X\) is perfect over \(f^{-1}\mathcal O_S\), and a
pseudo-coherent object on \(X\) of finite Tor-amplitude over \(S\) has finite Tor-amplitude over \(X\). Hence \(E\) is perfect. Equivalently, this is \cite[\href{https://stacks.math.columbia.edu/tag/09PC}{Tag 09PC}]{stacks-project}.
\end{proof}

One of the advantages of working with relative perfect objects is that they behave well under base change, proper pushforward, and derived internal hom.

\begin{lemma}\label{lem-S-perf-lem-2}
Let $f\colon X\to S$ be a morphism which is flat and locally of finite presentation and $E\in \Dqc(X)$ be a $S$-perfect object.

\begin{enumerate}
    \item If $f$ is proper, then $Rf_*E\in \Dperf(S)$ and its formation commutes with arbitrary base change.

    \item If $S'\to S$ is another morphism, then $E_{S'}\in \Dqc(X_{S'})$ is $S'$-perfect.

    \item If $F\in \Dperf(X)$, then $E\otimes^\mathrm{L} F$ is also $S$-perfect.

\end{enumerate}
\end{lemma}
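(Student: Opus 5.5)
Each of the three parts is a standard property of relative perfect complexes, so my plan is to reduce to the corresponding statements in the Stacks Project and in \cite[Section 8]{BLMNPS21}, checking only that the hypotheses match. In the whole argument, $E$ is pseudo-coherent on $X$ and has locally finite Tor-dimension over $f^{-1}\oh_S$.

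For part (a), I will use that $f$ is flat, proper and of finite presentation, so \cite[\href{https://stacks.math.columbia.edu/tag/0B91}{Tag 0B91}]{stacks-project} applies and gives $Rf_*E\in\Dperf(S)$. For the base change claim, let $g\colon S'\to S$ be arbitrary. Since $f$ is flat, $X$ and $S'$ are Tor-independent over $S$. The base change map $Lg^*Rf_*E\to Rf'_*E_{S'}$ is then an isomorphism for every $E\in\Dqc(X)$ by \cite[\href{https://stacks.math.columbia.edu/tag/08IB}{Tag 08IB}]{stacks-project}, because $f$ is quasi-compact and quasi-separated. If that tag turns out to need more hypotheses than we have, I will fall back on \cite[Lemma 8.3]{BLMNPS21}.

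For part (b), the base change $f'\colon X_{S'}\to S'$ is again flat and locally of finite presentation. Pseudo-coherence is preserved by derived pullback. Relative finite Tor-dimension is also preserved under base change when $f$ is flat, by \cite[\href{https://stacks.math.columbia.edu/tag/0DI5}{Tag 0DI5}]{stacks-project}. This is the step where flatness of $f$ is really used: it makes the derived fiber product the ordinary one. I expect it to be the main point to check carefully, namely that the Tor-amplitude bound over $S$ transfers to a bound over $S'$ after pulling back along $X_{S'}\to X$. I plan to cite the tag rather than redo the local computation.

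For part (c), the question is local on $X$, so I may assume $F$ is a bounded complex of finite free modules. Then $E\otimes^{\mathrm L}F$ is a finite iterated extension of shifts of finite direct sums of $E$. Both pseudo-coherence and finite Tor-amplitude over $f^{-1}\oh_S$ are stable under shifts, finite sums and cones, so $E\otimes^{\mathrm L}F$ is $S$-perfect. Equivalently, this is \cite[\href{https://stacks.math.columbia.edu/tag/0DI4}{Tag 0DI4}]{stacks-project}.
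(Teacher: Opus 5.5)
Your proposal is correct and follows the paper, which also proves each part by direct citation to the Stacks Project, using the same Tags 0DI5 and 0DI4 for (b) and (c). For (a) the paper cites Tag 0DJT, which states exactly that $Rf_*E$ is perfect for $S$-perfect $E$ and that its formation commutes with arbitrary base change. That tag packages your two steps: perfectness, and Tor-independent base change via Tag 08IB, which is valid since $f$ is flat and quasi-compact quasi-separated. Just make sure the perfectness result you invoke covers pseudo-coherent $E$ of finite Tor dimension over $f^{-1}\mathcal{O}_S$, not only perfect $E$.
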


\begin{proof}
Part (a) is \cite[\href{https://stacks.math.columbia.edu/tag/0DJT}{Tag 0DJT}]{stacks-project}, part (b) is \cite[\href{https://stacks.math.columbia.edu/tag/0DI5}{Tag 0DI5}]{stacks-project}, and part (c) is \cite[\href{https://stacks.math.columbia.edu/tag/0DI4}{Tag 0DI4}]{stacks-project}.
\end{proof}

We also have the following flattening stratification of relative perfect objects.

\begin{lemma}\label{lem:openness-standard-heart}
Let $X\to S$ be a flat projective morphism between Noetherian schemes and $E\in \Dqc(X)$ be an $S$-perfect object. Assume that $s_0\in S$ is a point such that $\cH^{i}(E_{s_0})=0$ for $i\notin [a,b]$, where $a\leq b$ and $a,b\in \ZZ$.

    \begin{enumerate}
        \item There is an open neighborhood $U\subset S$ of $s_0$ such that $\cH^i(E_U)=0$ and $\cH^{i}(E_{s})=0$ for all $i\notin [a,b]$ and $s\in U$.

        \item If $\cH^{i}(E_{s})=0$ for $i\notin [a,b]$ and for all $s\in S$, then we can find a finite set of locally closed subschemes $\{S_j\}_{j\in J}$ of $S$ so that $\cH^i(E_{S_j})=0$ for $i\notin [a,b]$, $\cH^i(E_{S_j})$ is flat over $S_j$ for $i\in [a,b]$, and $\bigcup_{j\in J}S_j=S$ as a set.
    \end{enumerate}
\end{lemma}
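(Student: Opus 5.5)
Part (a) rests on the semicontinuity of fiber cohomology of relative perfect complexes, and part (b) on Noetherian induction with generic flatness. Since $E$ is $S$-perfect and $X$ is quasi-compact, $E\in\Db(X)$ by Lemma \ref{lem-S-perf-lem-1}(a). The statement is local on $S$, so we may assume $S=\Spec A$ is affine and Noetherian, with $X\subset\PP^N_A$ and $\oh_X(1)$ relatively ample.

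For (a) we bound the amplitude from above and from below separately. For the upper bound, let $m$ be the largest index with $\cH^m(E)\neq 0$. Then $\cH^m(E)$ commutes with base change, since $\cH^m(E_s)=\cH^m(E)\otimes\kappa(s)$ by right exactness of the top cohomology under derived pullback. If $m>b$, then $\cH^m(E)\otimes\kappa(s_0)=0$, so by Nakayama $s_0\notin f(\Supp\cH^m(E))$. This set is closed because $f$ is proper. Shrinking $S$ to remove it kills $\cH^m(E)$, and descending induction on $m$ gives $\cH^i(E_U)=0$ and $\cH^i(E_s)=0$ for $i>b$.

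For the lower bound we cannot argue on the top-cohomology side directly, so we twist and push forward. Because $E$ is $S$-perfect it has bounded Tor-amplitude over $S$. For $n\gg0$, the objects $Rf_*(E(n))$ are perfect on $S$ and commute with base change (Lemma \ref{lem-S-perf-lem-2}(a)), and Serre vanishing holds uniformly. Then $\cH^i(E_s)=0$ for $i<a$ is equivalent to $H^i(Rf_*(E(n))\otimes^{\LL}\kappa(s))=0$ for $i<a$ and suitable large $n$, via the spectral sequence with $H^{>0}(\cH^j(E_s)(n))=0$. The main obstacle is that finitely many twists $n$ must suffice to detect all low cohomology sheaves. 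I would handle this using boundedness of the amplitude of $E$ on $X$ and a uniform Castelnuovo--Mumford regularity bound on the finitely many sheaves $\cH^j(E)$. Alternatively, we can dualize: $R\cH om(E,\omega_{X/S}^\bullet)$ is again $S$-perfect, and its fibers are the fiberwise Grothendieck duals. This turns the lower bound into an upper bound, which the first step already handles.

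Given this detection, the lower bound follows from perfect complexes over a local ring. If $P=Rf_*(E(n))$ has $H^i(P\otimes^{\LL}\kappa(s_0))=0$ for $i<a$, then $P$ is represented near $s_0$ by a minimal complex of free modules, which lives in degrees $\geq a$. Every fiber of this representative is then concentrated in degrees $\geq a$. Intersecting the open sets obtained for the finitely many relevant $n$ gives $U$; the claim $\cH^i(E_U)=0$ for $i<a$ follows similarly, since $E_U$ has Tor-amplitude $\geq a$.

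For (b), we use Noetherian induction on closed subsets of $S$, reducing to $S$ integral. By generic flatness applied to the finitely many coherent sheaves $\cH^i(E)$, $a\le i\le b$, there is a dense open $V$ over which all of them are flat. Flatness plus the fiber vanishing, via descending induction, gives base change: $\cH^i(E_V)_s=\cH^i(E_s)$. Before applying generic flatness we shrink $V$ using (a), so that $\cH^i(E_V)=0$ for $i\notin[a,b]$. Then set $S_1=V$ and apply induction to $S\setminus V$ with its reduced structure; $E$ restricted there is still $S\setminus V$-perfect by Lemma \ref{lem-S-perf-lem-2}(b). Noetherianity makes the resulting stratification finite.
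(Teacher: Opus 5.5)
Your upper bound in (a) (top cohomology commutes with restriction, then Nakayama and properness) and your Noetherian induction in (b) are fine; (b) is essentially the paper's argument. The gap is the lower bound in (a). One half of your twisting argument is automatic. $E_{s_0}\in D^{\geq a}$ gives $R\Gamma(X_{s_0},E_{s_0}(n))\in D^{\geq a}$ for every $n$, and your minimal-complex argument spreads this to $R\Gamma(X_s,E_s(n))\in D^{\geq a}$ for $s$ near $s_0$. Going back from this to $E_s\in D^{\geq a}$ is the whole content. If $j_0$ is the lowest index with $\cH^{j_0}(E_s)\neq 0$, then $H^{j_0}R\Gamma(X_s,E_s(n))=H^0(X_s,\cH^{j_0}(E_s)(n))$. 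So you need a single $n$ such that every nonzero $\cH^j(E_s)(n)$ has a section, uniformly for $s$ in a neighborhood of $s_0$. A regularity bound on the sheaves $\cH^j(E)$ on $X$ does not give this. In general only the top cohomology sheaf satisfies $\cH^j(E_s)=\cH^j(E)|_{X_s}$; the lower fiber cohomology sheaves involve $\mathcal{T}or$ terms and are not restrictions of any fixed coherent sheaf. Bounding the family $\{\cH^j(E_s)\}_s$ would need something like a flattening stratification of a flat model of $E$, which you have not supplied.

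The dualization shortcut also fails. On a fiber, $\mathbb{D}_{X_s}$ sends $D^{\geq a}$ into $D^{\leq -a}$, but not conversely. For $X_s$ smooth of dimension $n\geq 1$ and $E_s=\cO_{X_s}[n]$, one has $\mathbb{D}_{X_s}(E_s)=\omega_{X_s}\in D^{\leq 0}$, although $E_s\notin D^{\geq 0}$. So an upper bound on the dual only gives back $E_s\in D^{\geq a-n}$.

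The missing idea is a flat model. Over affine $S$, write $E\cong P^\bullet$ with $P^\bullet$ a bounded complex of $S$-flat coherent sheaves \cite[Corollary 2.1.7]{lieblich:moduli-of-complex}, so that $E_s$ is computed termwise by $P^\bullet_s$. The paper then applies \cite[Lemma 2.1.4]{lieblich:moduli-of-complex} and properness to the three-term complexes $[P^{m-1}\to P^m\to P^{m+1}]$ for $m\notin[a,b]$. For the lower bound this amounts to the following. If $P^\bullet$ starts in degree $c<a$, then $P^c_{s_0}\to P^{c+1}_{s_0}$ is injective. By the local criterion of flatness, $P^c\to P^{c+1}$ is then injective with $S$-flat cokernel near $X_{s_0}$, and by properness this holds over the preimage of some open $U\ni s_0$. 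Replace these two terms by the cokernel and repeat up to degree $a-1$. This gives both $\cH^i(E_U)=0$ and $\cH^i(E_s)=0$ for $i<a$ and $s\in U$.
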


\begin{proof}
By Lemma \ref{lem-S-perf-lem-1}, we have $E\in \Db(X)$. We may assume that $S$ is affine, so by \cite[Corollary 2.1.7]{lieblich:moduli-of-complex}, $E\cong P^{\bullet}$, where $P^{\bullet}$ is a bounded complex of coherent sheaves on $X$ such that each $P^i$ is flat over $S$. For each $m$, we consider the complex $$Q_m\coloneqq [P^{m-1}\to P^m\to P^{m+1}],$$ where $P^m$ sits in degree zero. When $m\notin [a,b]$, we know that $\cH^0((Q_m)_{s_0})=0$ by our assumption. Therefore, using \cite[Lemma 2.1.4]{lieblich:moduli-of-complex} and properness of $X\to S$, we get an open neighborhood $U_m\subset S$ of $s_0$ so that $\cH^0((Q_m)_{s})=0$ for any $s\in U_m$ and $\cH^0((Q_m)_{U_m})\cong 0$. Then we set $U\coloneqq \bigcap_{m\notin [a,b]} U_m$. Note that by the boundedness of $P^{\bullet}$, this intersection is a finite intersection, hence $U$ is open and part (a) follows.

For part (b), since the conclusion only requires a locally closed stratification covering $S$ as a set, we may replace $S$ by its reduction and take all strata with their reduced induced scheme structures. By part (a) and our assumption, we have $\cH^i(E)=0$ for $i\notin [a,b]$. Applying \cite[\href{https://stacks.math.columbia.edu/tag/052B}{Tag 052B}]{stacks-project} to $\bigoplus_{i=a}^b \cH^i(E)$, we can find a non-empty open subset $S_1$ of $S$ so that $\cH^i(E_{S_1})$ is flat over $S_1$ for each $i\in [a,b]$. Using part (a) to $E_{S\setminus S_1}$, we also have $\cH^i(E_{S\setminus S_1})=0$ for $i\notin [a,b]$. Applying \cite[\href{https://stacks.math.columbia.edu/tag/052B}{Tag 052B}]{stacks-project} again, we find an open subset $S_2$ of $S\setminus S_1$ so that $\cH^i(E_{S_2})$ is flat over $S_2$ for each $i\in [a,b]$. Continuing this process, for each $k\geq 1$, we get an open subset $S_k\subset S\setminus \bigcup_{t=1}^{k-1} S_t$ so that $\cH^i(E_{S_k})=0$ for $i\notin [a,b]$, $\cH^i(E_{S_k})$ is flat over $S_k$ for $i\in [a,b]$. Then we obtain a sequence of closed subschemes
\[\cdots \subset S\setminus \bigcup_{t=1}^{k-1} S_t\subset S\setminus \bigcup_{t=1}^{k-2} S_t\subset \cdots \subset S\setminus S_1\subset S.\]
Note that by \cite[\href{https://stacks.math.columbia.edu/tag/052B}{Tag 052B}]{stacks-project}, if $S\setminus \bigcup_{t=1}^{k-1} S_t$ is non-empty, then $S_k$ is also non-empty. So by Noetherian property of $S$, such sequence stabilizes at the empty set, i.e. $S=\bigcup_{j\in J} S_j$ for a finite set $J$. This proves part (b).
\end{proof}


\subsection{Dualizing complexes}

In this section, we review dualizing complexes of schemes. We mainly follow \cite{stacks-project}. Let $X$ be a Noetherian scheme that admits a dualizing complex $\omega^{\bullet}_X$ in the sense of \cite[\href{https://stacks.math.columbia.edu/tag/0A87}{Tag 0A87}]{stacks-project}. Then we have $\omega^{\bullet}_X\in \Db(X)$. In particular, there exists $j\in \ZZ$ so that $\cH^{j}(\omega^{\bullet}_X)\neq 0$ and $\cH^{<j}(\omega^{\bullet}_X)=0$. We then define the dualizing sheaf as $\omega_X\coloneqq \cH^{j}(\omega^{\bullet}_X)$. By \cite[\href{https://stacks.math.columbia.edu/tag/0AWK}{Tag 0AWK}]{stacks-project}, up to shifting $\omega^{\bullet}_X$ on each connected component of $X$, $\omega_X$ is torsion-free and $S_2$, and $\mathrm{Supp}(\omega_X)=X$. In this paper, we always choose $\omega^{\bullet}_X$ to satisfy this property.

Assume $X$ is equidimensional. Then by \cite[\href{https://stacks.math.columbia.edu/tag/0AWF}{Tag 0AWF}]{stacks-project}, after shifting on each connected component, we can normalize $\omega^{\bullet}_{X}$ so that $(\omega^{\bullet}_{X})_x\cong \omega^{\bullet}_{\oh_{X,x}}[\dim \overline{\{x\}}]$ for each point $x\in X$ (see also \cite[Lemma 2.65]{kollar-kovas:sing}) with $\omega^{\bullet}_{\oh_{X,x}}$ a normalized dualizing complex of $\oh_{X,x}$ in the sense of \cite[\href{https://stacks.math.columbia.edu/tag/0A7M}{Tag 0A7M}]{stacks-project}. In this case, we have
\begin{equation}\label{eq-Hi-omega}
    \cH^i(\omega^{\bullet}_X)=0 \text{ for }i\notin [-\dim X, 0]
\end{equation}
 and $\omega_X=\cH^{-\dim X}(\omega^{\bullet}_{X})$. Moreover, $X$ is Cohen--Macaulay if and only if $\omega^{\bullet}_{X}\cong \omega_{X}[\dim X]$, and in this case $\omega_X$ is also Cohen--Macaulay (cf.~\cite[\href{https://stacks.math.columbia.edu/tag/0AWT}{Tag 0AWT}]{stacks-project}). Similarly, $X$ is Gorenstein if and only if $\omega^{\bullet}_{X}[-\dim X]\cong \omega_{X}$ is a line bundle.

When $X$ is equidimensional, we set
\[\mathbb{D}_X\coloneqq R\cH om_X(-,\omega^{\bullet}_X)\]
and $\mathbb{D}^n_X\coloneqq \mathbb{D}_X[n]$ for any $n\in \ZZ$. By \cite[\href{https://stacks.math.columbia.edu/tag/0A89}{Tag 0A89}]{stacks-project}, $\mathbb{D}^n_X$ is an involutive anti-equivalence of $\Db(X)$. We also define
\[E^{\mathsf{d}}\coloneqq \cH om_X(E,\omega^{\bullet}_X[-\dim X])\in \Coh(X)\]
for any $E\in \Db(X)$.

For a scheme of finite type over a fixed base field $\kk$, we also denote by $\omega^{\bullet}_X$ the dualizing complex of $X$ normalized relative to $\kk[0]$ in the sense of \cite[\href{https://stacks.math.columbia.edu/tag/0AUA}{Tag 0AUA}]{stacks-project}. When $X$ is equidimensional, this coincides with the notion defined in the paragraph above. If $X$ is equidimensional and lci over $\kk$ with the resolution property, then $\mathbb{L}_{X/\kk}\in \Dperf(X)$ and $\det (\mathbb{L}_{X/\kk})\in \Pic(X)$ can be defined. In this case, we have
\begin{equation}\label{eq:det-LX-omega}
    \det (\mathbb{L}_{X/\kk})\cong \omega_X.
\end{equation}

More generally, for any morphism $f\colon X\to Y$ of finite type between Noetherian schemes, we denote by $\omega^{\bullet}_{X/Y}=\omega^{\bullet}_{f}$ the dualizing complex of $f$. By \cite[\href{https://stacks.math.columbia.edu/tag/0E2S}{Tag 0E2S}]{stacks-project}, $\omega^{\bullet}_{f}$ exists when $f$ is separated or flat. If $f$ is proper, then we can take $\omega^{\bullet}_{f}=f^!(\oh_Y)$. If $f$ is flat and proper, by \cite[\href{https://stacks.math.columbia.edu/tag/0C08}{Tag 0C08}]{stacks-project} and \cite[\href{https://stacks.math.columbia.edu/tag/0FPQ}{Tag 0FPQ}]{stacks-project}, we know that $f$ is Gorenstein if and only if $\omega^{\bullet}_f$ is invertible, i.e.~$\omega^{\bullet}_f \in \Dperf(X)$ and $-\otimes^{\mathrm{L}}\omega^{\bullet}_f$ is an auto-equivalence of $\Db(X)$.

The following result will be used repeatedly.

\begin{lemma}\label{lem-serre-duality}
Let $S$ be a Noetherian scheme with a dualizing complex $\omega^{\bullet}_{S}$ and $f\colon X\to S$ be a proper morphism. Then for any $E\in \D^-(X)$, we have
\[Rf_*R\cH om_X(E, \omega^{\bullet}_{X})\cong R\cH om_S(Rf_*E, \omega^{\bullet}_{S}),\]
where we take $\omega^{\bullet}_{X}=f^!\omega^{\bullet}_{S}$. If, in addition, $S$ is regular and $f$ is Gorenstein, then for any $E\in \Db(X)$ and $F\in \Dperf(X)$, we have
\[Rf_*R\cH om_X(E,F)\cong \mathbb{D}^S(Rf_*R\cH om_X(F, E\otimes^{\LL}\omega^{\bullet}_{f}))\in \Dperf(S)\]
and
\[Rf_*R\cH om_X(F,E)\cong \mathbb{D}^S(Rf_*R\cH om_X(E, F\otimes^{\LL}\omega_{f}^{\bullet}))\in \Dperf(S).\]
\end{lemma}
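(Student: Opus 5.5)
The first isomorphism is Grothendieck duality for the proper morphism $f$, combined with the choice $\omega^{\bullet}_X=f^!\omega^{\bullet}_S$. Since $f$ is proper and $S$ is Noetherian, $f^!$ is right adjoint to $Rf_*$ on $\Dqc^+$ (cf.~\cite[\href{https://stacks.math.columbia.edu/tag/0F42}{Tag 0F42}]{stacks-project}). The sheafified form of this adjunction is
\[Rf_*R\cH om_X(E, f^!K)\cong R\cH om_S(Rf_*E, K)\]
for $E\in \Dqc(X)$ and $K\in \Dqc^+(S)$ (cf.~\cite[\href{https://stacks.math.columbia.edu/tag/0A9Q}{Tag 0A9Q}]{stacks-project}). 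I will apply it with $K=\omega^{\bullet}_S\in \Db(S)$. This requires $Rf_*E\in \D^-(S)$ for $E\in \D^-(X)$, which is coherence of proper pushforward. The identification $f^!\omega^{\bullet}_S$ as a dualizing complex on $X$ is \cite[\href{https://stacks.math.columbia.edu/tag/0AU5}{Tag 0AU5}]{stacks-project}.

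For the second part, assume $S$ is regular and $f$ is Gorenstein, hence flat. Then $\omega^{\bullet}_S$ is locally a shifted line bundle, and since dualizing complexes are unique up to such twists we may take $\omega^{\bullet}_S=\cO_S$ on each connected component. Then $\omega^{\bullet}_X=f^!\cO_S=\omega^{\bullet}_f$ is invertible. For $E\in \Db(X)$ and $F\in \Dperf(X)$, set $G\coloneqq R\cH om_X(E,F)$. I will rewrite it as
\[G\cong R\cH om_X\bigl(R\cH om_X(F, E\otimes^{\LL}\omega^{\bullet}_f),\ \omega^{\bullet}_f\bigr)=\mathbb{D}_X\bigl(R\cH om_X(F,E\otimes^{\LL}\omega^{\bullet}_f)\bigr).\]
This follows from $R\cH om_X(F,E\otimes\omega^{\bullet}_f)\cong F^{\vee}\otimes E\otimes\omega^{\bullet}_f$ (as $F$ is perfect), together with biduality $\mathbb{D}_X\mathbb{D}_X\cong\mathrm{id}$ on $\Db(X)$ and $\mathbb{D}_X(E\otimes\omega^{\bullet}_f)\cong R\cH om_X(E,\cO_X)$ (as $\omega^{\bullet}_f$ is invertible). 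Tensoring with the perfect $F$ commutes with $R\cH om$ into anything.

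Applying the first part to $E'\coloneqq R\cH om_X(F,E\otimes\omega^{\bullet}_f)\in \Db(X)$ then gives
\[Rf_*G\cong R\cH om_S(Rf_*E',\cO_S)=\mathbb{D}^S(Rf_*E').\]
Because $S$ is regular, $Rf_*E'\in \Db(S)=\Dperf(S)$ (finite dimension of $S$ locally suffices, since we only need the local statement), so its dual is again perfect. The third formula follows in the same way, swapping the roles: take $E'=R\cH om_X(E,F\otimes\omega^{\bullet}_f)$ and use biduality $R\cH om_X(E'',\omega^{\bullet}_f)$ applied to $R\cH om_X(F,E)$. Concretely, $R\cH om_X(R\cH om_X(E,F\otimes\omega_f),\omega_f)\cong R\cH om_X(R\cH om_X(E,\omega_f)\otimes F,\omega_f)\cong F^{\vee}\otimes E=R\cH om_X(F,E)$.

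The main obstacle is bookkeeping rather than ideas. The points to check are that the normalization of $\omega^{\bullet}_S$ as $\cO_S$ is legitimate, that it is compatible with the convention $\omega^{\bullet}_X=f^!\omega^{\bullet}_S$, and that the boundedness hypotheses ($\D^-$ versus $\D^+$) hold so each isomorphism is valid. I would also check that local regularity, rather than finite Krull dimension, is enough for perfection of bounded coherent complexes on $S$.
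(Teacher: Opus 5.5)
Your proposal is correct and is essentially the paper's argument. The paper simply cites items (2) and (4) of Stacks Tag 0AU3, namely sheafified Grothendieck duality for the proper morphism $f$ and its Gorenstein consequence. You reprove the first item from the adjunction, and you unpack the second by twisting with the invertible complex $\omega^{\bullet}_f=f^!\cO_S$: perfectness of $F$ and biduality rewrite $R\cH om_X(E,F)$ and $R\cH om_X(F,E)$ as $\mathbb{D}_X$ of the appropriate bounded coherent objects, after which you apply the first isomorphism. Your normalization $\omega^{\bullet}_S=\cO_S$ is legitimate, because a Noetherian scheme with a dualizing complex has finite Krull dimension, so $\cO_S$ is dualizing on the regular $S$; in any case the second and third formulas do not depend on the choice of $\omega^{\bullet}_S$.
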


\begin{proof}
See (2) and (4) in \cite[\href{https://stacks.math.columbia.edu/tag/0AU3}{Tag 0AU3}]{stacks-project}.
\end{proof}

\begin{lemma}\label{lem-S-perf-lem-3}
Let $f\colon X\to S$ be a Gorenstein separated morphism between locally Noetherian schemes which is locally of finite presentation and $E\in \Dqc(X)$ be a $S$-perfect object. If $F\in \Dperf(X)$, then $R\cH om_X(E,F)$ is also $S$-perfect. 
\end{lemma}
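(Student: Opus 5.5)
The claim is local on $X$ and on $S$: pseudo-coherence and finite Tor-dimension over $f^{-1}\oh_S$ are both local conditions. So I would first reduce to $S=\Spec A$ and $X$ affine, flat and of finite presentation over $A$. Since $F$ is perfect, I would shrink $X$ further so that $F$ is represented by a bounded complex of finite free modules. Then
\[
R\cH om_X(E,F)\cong R\cH om_X(E,\oh_X)\otimes^{\LL}F,
\]
and Lemma \ref{lem-S-perf-lem-2}(c) together with the fact that $S$-perfect objects form a triangulated subcategory reduce the problem to the case $F=\oh_X$. Thus the key claim is that the derived dual $\mathbb{D}^X(E)$ of an $S$-perfect object $E$ is again $S$-perfect whenever $f$ is Gorenstein.

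To handle $\mathbb{D}^X(E)$, I would use the relative dualizing complex. Because $f$ is Gorenstein and flat of finite presentation, $\omega^{\bullet}_{f}$ is invertible, locally a shifted line bundle; this is the Gorenstein input recalled just before Lemma \ref{lem-serre-duality}, and its local version for separated morphisms follows from \cite[\href{https://stacks.math.columbia.edu/tag/0E2S}{Tag 0E2S}]{stacks-project} and \cite[\href{https://stacks.math.columbia.edu/tag/0C08}{Tag 0C08}]{stacks-project}. Locally, therefore,
\[
\mathbb{D}^X(E)\cong R\cH om_X(E,\omega^{\bullet}_f)\otimes^{\LL}(\omega^{\bullet}_f)^{-1},
\]
and by the same tensor reduction it suffices to show that $R\cH om_X(E,\omega^{\bullet}_f)$ is $S$-perfect.

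This is the relative duality statement for relatively perfect complexes. For $f$ flat and of finite presentation, $R\cH om_X(-,\omega^{\bullet}_f)$ preserves $S$-perfect objects and commutes with base change; I expect this to be available in the Stacks project's treatment of relative dualizing complexes (the section around \cite[\href{https://stacks.math.columbia.edu/tag/0E2S}{Tag 0E2S}]{stacks-project}, where $S$-perfect objects are shown to be reflexive with respect to $\omega^{\bullet}_f$).

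If I have to argue this step directly, I would proceed in two stages.
\begin{itemize}
\item \textbf{Pseudo-coherence.} This is standard, since $E$ is pseudo-coherent and $\omega^{\bullet}_f$ has coherent bounded cohomology.
\item \textbf{Finite Tor-amplitude.} By the local criterion, it suffices to bound the cohomological amplitude of
\[
R\cH om_X(E,\omega^{\bullet}_f)\otimes^{\LL}_{\oh_S}\kappa(s)\cong R\cH om_{X_s}(E_s,\omega^{\bullet}_{X_s})
\]
uniformly in $s\in S$. On each fiber, $E_s\in\Db(X_s)$, and Grothendieck duality on $X_s$ gives bounded amplitude controlled by the amplitude of $E$ and $\dim X_s$.
\end{itemize}

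I expect the main obstacle to be justifying the base-change isomorphism above for non-perfect $E$, rather than the amplitude bound. I would first try a reduction to the Noetherian case by absolute Noetherian approximation, replacing $A$ by a finitely generated $\ZZ$-algebra over which $X$, $E$ and $F$ descend. Over such a base the isomorphism can be checked using that $\omega^{\bullet}_f$ commutes with base change for flat $f$ \cite[\href{https://stacks.math.columbia.edu/tag/0E2S}{Tag 0E2S}]{stacks-project}, and the comparison can be tested after tensoring with a finite free resolution.
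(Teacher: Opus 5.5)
Your proposal is correct and is essentially the paper's argument. You twist by a perfect complex to reduce to $F=\cO_X$, use that $\omega^{\bullet}_f=f^!\cO_S$ is invertible for Gorenstein $f$ to reduce to showing $R\cH om_X(E,f^!\cO_S)$ is $S$-perfect, and then invoke reflexivity of $S$-perfect complexes with respect to $f^!\cO_S$, which the paper takes from \cite[Proposition 2.3.9]{lipman:reflex-ii} rather than the Stacks project. Your fallback direct sketch is not needed: its base-change step for non-perfect $E$ is exactly the content of that reference, and no Noetherian approximation is required since $X$ and $S$ are already locally Noetherian.
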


\begin{proof}
By Lemma \ref{lem-S-perf-lem-2}(c), after replacing $E$ with $E\otimes^{\mathrm{L}} \mathbb{D}^X(F)$, we may assume that $F=\cO_X$. Since the assertion is local, we can also assume that both $X$ and $S$ are Noetherian. As $f$ is Gorenstein, by \cite[\href{https://stacks.math.columbia.edu/tag/0C08}{Tag 0C08}]{stacks-project} and \cite[\href{https://stacks.math.columbia.edu/tag/0FNT}{Tag 0FNT}]{stacks-project}, it suffices to show $R\cH om_X(E,f^!\cO_S)$ is $S$-perfect. Now, this follows from \cite[Proposition 2.3.9]{lipman:reflex-ii}.
\end{proof}

\subsection{Duals and hulls of coherent sheaves}

We now study Ext sheaves and hulls of coherent sheaves.
For any coherent sheaf $E$ on an equidimensional Noetherian scheme $X$ that admits a dualizing complex $\omega^{\bullet}_X$, we define $$E^{H}\coloneqq\cH om_X(\cH om_X(E, \omega_X),\omega_X).$$ This is called \emph{the hull of $E$}, and has a natural map $q_E\colon E\to E^H$. The hull $E^H$ plays the same role in our paper as reflexive hulls for sheaves on normal schemes. Note that if $X$ is Gorenstein, then $$E^H=E^{\vee \vee}=\cH om_X(\cH om_X(E, \cO_X),\cO_X).$$

We need the following properties. See \cite{kollar:duality} for a more general treatment without using dualizing complexes.

\begin{lemma}\label{lem-S2-hull}
Let $X$ be an equidimensional Noetherian scheme that admits a dualizing complex and $E$ be a coherent sheaf on $X$. Then

\begin{enumerate}
    \item $\cH om_X(E, \omega_X)$ and $E^H$ are torsion-free and $S_2$,

    \item if $E$ is $S_2$ and torsion-free, then $q_E$ is an isomorphism,

    \item the map $q_E\colon E\to E^H$ satisfies $\ker(q_E)=\mathrm{tor}(E)$ and $\codim_X(\mathrm{cok}(q_E))\geq 2$, and

    \item If $E$ is torsion-free, then $E$ is $S_2$ if and only if for any $T\in \Coh(X)$ with $\codim_X (T)\geq 2$, we have $\Hom_X(T, E[1])=0$.

\end{enumerate}
\end{lemma}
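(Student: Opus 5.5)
Part (a) is immediate from Lemma \ref{lem-general-S2}: since $\omega_X$ is torsion-free and $S_2$ by our normalization of $\omega^{\bullet}_X$, parts (a) and (b) of that lemma show that $\cH om_X(E,\omega_X)$ is torsion-free and $S_2$. Applying the same statement with $E$ replaced by $\cH om_X(E,\omega_X)$ gives the claim for $E^H$.

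For parts (b) and (c), the key tool is duality with respect to $\omega^{\bullet}_X$ at points of small codimension. For a point $x$ with $\dim \oh_{X,x}\le 1$, the local ring is one-dimensional (or Artinian) and $\omega_{X,x}$ is a dualizing module. For a maximal Cohen--Macaulay module $M$ over such a ring, local duality gives $M\cong \Hom(\Hom(M,\omega),\omega)$ (Stacks Tag 0AWE and related). Over a ring of dimension $\le 1$, being maximal Cohen--Macaulay means having no embedded/torsion part, i.e.\ $M$ is pure of full dimension.

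\textbf{Part (c).} Both the kernel and the cokernel of $q_E$ are computed after localization. At generic points, and more generally at codimension $\le 1$ points, $E_x/\tor(E)_x$ is maximal Cohen--Macaulay. Since $\cH om(E,\omega_X)=\cH om(E/\tor(E),\omega_X)$ (any map $\tor(E)\to\omega_X$ vanishes by Lemma \ref{lem-general-S2}(a)), $q_E$ factors through $E/\tor(E)$. The induced map $E/\tor(E)\to E^H$ is then an isomorphism at all points of codimension $\le 1$. It is also injective, because $E/\tor(E)$ is torsion-free: its kernel is supported in codimension $\ge 2$, hence is torsion, hence is zero. Therefore $\ker(q_E)=\tor(E)$ and $\codim_X\cok(q_E)\ge 2$.

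\textbf{Part (b).} If $E$ is torsion-free and $S_2$, then $q_E$ is injective with cokernel $Q$ of codimension $\ge2$. This gives an extension class in $\Hom_X(Q,E[1])$, which vanishes by Lemma \ref{lem-general-S2}(d), so $E^H\cong E\oplus Q$. But $E^H$ is torsion-free by (a), hence $Q=0$. Alternatively, apply Lemma \ref{lem-general-S2}(c) on the open set $U$ where $q_E$ is an isomorphism: $E\cong j_*j^*E\cong j_*j^*E^H\cong E^H$.

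\textbf{Part (d).} One direction is Lemma \ref{lem-general-S2}(d). For the converse, suppose $E$ is torsion-free with $\Hom_X(T,E[1])=0$ for all $T$ of codimension $\ge2$. By (c), $0\to E\to E^H\to Q\to 0$ is exact with $\codim Q\ge2$. The hypothesis splits this sequence, so $Q$ is a subsheaf of the torsion-free sheaf $E^H$ and thus $Q=0$. Hence $E\cong E^H$, which is $S_2$ by (a).

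The main obstacle is the local duality input in (c): identifying $M\to\Hom(\Hom(M,\omega),\omega)$ as an isomorphism for pure modules over local rings of dimension $\le1$. Here we use that $\omega_{X,x}$, suitably shifted, is the dualizing module and that $\Ext^i(M,\omega)=0$ for $i>0$ when $M$ is maximal Cohen--Macaulay. This holds even though $\oh_{X,x}$ itself need not be Cohen--Macaulay, and we would cite Stacks Tag 0AWE for it.
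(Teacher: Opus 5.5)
Your proof is correct and is essentially the paper's. Both arguments rest on double duality with respect to $\omega_{X,x}$ for the maximal Cohen--Macaulay stalks at points of codimension $\le 1$ (the paper gets this from \cite[Proposition 2.66]{kollar-kovas:sing}); you merely reorder, proving (c) first and deducing (b) from the splitting in Lemma~\ref{lem-general-S2}(d) (or from Lemma~\ref{lem-general-S2}(c)), where the paper cites Stacks Tag 0AV8.

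One correction to your last paragraph. Over a local ring $A$ that is not Cohen--Macaulay, $\Ext^i_A(M,\omega_A)$ can be nonzero for some $i>0$ even when $M$ is maximal Cohen--Macaulay. For example, take $A=k[x,y]_{(x,y)}/(xy,y^2)$ and $M=\omega_A\cong A/(y)$; then $\Ext^2_A(M,\omega_A)\cong\Ext^1_A(k,M)\neq 0$. The correct general input is $R\Hom_A(M,\omega^{\bullet}_A)\cong\Hom_A(M,\omega_A)[\dim A]$. This is harmless in your setting, though: the no-embedded-components clause in the paper's definition of equidimensional makes $\oh_{X,x}$ Cohen--Macaulay whenever $\dim\oh_{X,x}\le 1$.
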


\begin{proof}
Part (a) follows from Lemma \ref{lem-general-S2}(a) and Lemma \ref{lem-general-S2}(b), since $\omega_X$ is $S_2$ and torsion-free.


For part (b), by \eqref{eq-Hi-omega}, we have
$$\cE xt_X^{-\dim X}(E, \omega^{\bullet}_X)\cong \cH om_X(E, \omega_X),$$
since $\cH^i(\omega^{\bullet}_X)=0$ for $i<-\dim(X)$. Moreover, as $E$ is $S_2$, by \cite[Proposition 2.66]{kollar-kovas:sing}, we have $(\cE xt_X^{-i}(E, \omega^{\bullet}_X))|_x=0$ for any $i\neq \dim(X)$ and point $x\in X$ of codimension at most one. Therefore, we get
$$(R\cH om_{X}(E, \omega^{\bullet}_{X}))|_x\cong R\cH om_{\oh_{X, x}}(E|_x, (\omega^{\bullet}_{X})|_x)\cong \cH om_{\oh_{X, x}}(E|_x, (\omega_{X})|_x)[\dim X].$$
Since $\cH om_{\oh_{X, x}}(E|_x, \omega_{\oh_{X, x}})$ is torsion-free and $S_2$ by part (a), and $(\omega^{\bullet}_{X})|_x\cong \omega^{\bullet}_{\oh_{X,x}}[\dim \overline{\{x\}}]$ by \cite[Lemma 2.65]{kollar-kovas:sing}, we get the following natural isomorphisms $$(E^H)|_x\cong(E|_x)^H\cong R\cH om_{\oh_{X, x}}(R\cH om_{\oh_{X, x}}(E|_x, \omega^{\bullet}_{\oh_{X, x}}), \omega^{\bullet}_{\oh_{X, x}})\cong E|_x.$$
In other words, $q_E$ is an isomorphism over any point $x\in X$ of codimension at most one. Since $q_E$ is a morphism between $S_2$ torsion-free sheaves, it is an isomorphism over the whole $X$ by \cite[\href{https://stacks.math.columbia.edu/tag/0AV8}{Tag 0AV8}]{stacks-project}.

Next, we prove part (c). Since $E^H$ is torsion-free, it is clear that $\mathrm{tor}(E)\subset \ker(q_E)$. So $q_E$ can be factored as $$E\to E/\mathrm{tor}(E)\to (E/\mathrm{tor}(E))^H\cong E^H.$$ Therefore, to prove $\codim_X(\mathrm{cok}(q_E))\geq 2$, we can assume that $E$ is torsion-free. In this case, $E$ is $S_1$, hence it is $S_2$ at any point of $X$ of codimension at most one, and the statement follows from part (b).

To show $\mathrm{tor}(E)= \ker(q_E)$, it remains to prove the injectivity of $q_E$ when $E$ is torsion-free. In this case, if $\ker(q_E)\neq 0$, then it is also torsion-free. Moreover, the composition of natural maps $$\ker(q_E)\to (\ker(q_E))^H\to E^H$$ is zero since it factors through $q_E$. However, by the above paragraph, $\ker(q_E)\to (\ker(q_E))^H$ and $E\to E^H$ is an isomorphism over an open subset $W$ of $X$, which implies that the zero map $$\ker(q_E)\to (\ker(q_E))^H\to E^H$$ is isomorphic to $\ker(q_E)\hookrightarrow E$ over $W$, a contradiction. This shows $\ker(q_E)=\mathrm{tor}(E)$.

Finally, the ``only if" part of (d) is proved in Lemma \ref{lem-general-S2}(d). Assume that for any $T\in \Coh(X)$ with $\codim_X (T)\geq 2$, we have $\Hom_X(T, E[1])=0$. Then by part (c), the natural exact sequence $$0\to E\to E^H\to \cok(q_E)\to 0$$ splits, which implies $\cok(q_E)\subset E^H$. Therefore, we obtain $\cok(q_E)=0$ by part (c) and the torsion-freeness of $E^H$ proved in (a). This proves that $E\cong E^H$ is $S_2$ by part (a).
\end{proof}

The following two lemmas give some bounds on the codimension of Ext-sheaves.

\begin{lemma}\label{lem-supp-ext}
Let $X$ be an equidimensional Noetherian scheme that admits a dualizing complex. Then for any $0\neq E\in \Coh(X)$ of codimension $c$, we have
$$\cE xt^{k}_X(E, \omega^{\bullet}_X[-\dim X])=0$$
for all $k<c$ and $\codim_X(\cE xt^{k}_X(E, \omega^{\bullet}_X[-\dim X]))\geq k$ for $k\geq c$. Moreover, we have $$\codim_X(\cE xt^{c}_X(E, \omega^{\bullet}_X[-\dim X]))=c.$$
\end{lemma}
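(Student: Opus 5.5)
The plan is to reduce everything to local duality at points of $X$ and to the standard depth and dimension bounds for $\Ext$-modules against a normalized dualizing complex. All the claims are local, and the formation of $R\cH om_X(E,\omega^{\bullet}_X)$ commutes with localization. So it suffices to compute stalks at a point $x\in X$. Set $A=\oh_{X,x}$ and $M=E|_x$. Since $X$ is equidimensional, we normalize as in the paragraph before \eqref{eq-Hi-omega}: $(\omega^{\bullet}_X)|_x\cong\omega^{\bullet}_A[\dim\overline{\{x\}}]$. Because $X$ is equidimensional and of finite type type (catenary with a dualizing complex), $\dim X=\dim A+\dim\overline{\{x\}}$. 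Hence
\[\cE xt^{k}_X(E,\omega^{\bullet}_X[-\dim X])|_x\cong \Ext^{k-\dim A}_A(M,\omega^{\bullet}_A)=\Ext^{-(\dim A-k)}_A(M,\omega^{\bullet}_A).\]

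The key input is local duality \cite[\href{https://stacks.math.columbia.edu/tag/0A84}{Tag 0A84}]{stacks-project} for the normalized dualizing complex $\omega^{\bullet}_A$ of the local ring $A$. It gives $\Ext^{-i}_A(M,\omega^{\bullet}_A)=0$ for $i>\dim M$, and nonvanishing for $i=\dim M$ when $M\neq0$; see also \cite[\href{https://stacks.math.columbia.edu/tag/0A7U}{Tag 0A7U}]{stacks-project}, or \cite[Proposition 2.66]{kollar-kovas:sing}. Now take $x$ in the support of $\cE xt^k$. Then $x\in\Supp E$, and $\dim M\le \dim A-\codim$ in the appropriate sense. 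More precisely, $\dim M=\dim A-c_x$ with $c_x\ge c$, where $c_x$ is the codimension of $\Supp E$ near $x$; equidimensionality again makes dimension and codimension additive. Nonvanishing requires $\dim A-k\le \dim M\le \dim A-c$, which forces $k\ge c$. This proves the vanishing for $k<c$.

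For the codimension bound with $k\ge c$, take $x$ a generic point of an irreducible component of $\Supp\cE xt^k$. Nonvanishing at $x$ needs $\dim A-k\ge 0$, since the $\Ext^{-i}$ vanish for $i<0$. Hence $\codim_X\overline{\{x\}}=\dim A\ge k$.

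For the equality at $k=c$, let $x$ be the generic point of a component $Z$ of $\Supp E$ of codimension exactly $c$. Then $M$ is a nonzero finite-length $A$-module with $\dim A=c$. By local duality, $\Ext^{0}_A(M,\omega^{\bullet}_A)$ is the Matlis dual of $M$, which is nonzero. So $x\in\Supp\cE xt^c$, and combined with the bound just proved, the codimension is exactly $c$.

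The main obstacle is the bookkeeping of shifts. One must check that the normalization of $\omega^{\bullet}_X$ together with equidimensionality gives $\dim\overline{\{x\}}+\dim\oh_{X,x}=\dim X$, so that the global index $k$ really corresponds to $-(\dim A-k)$ locally. I would settle this by citing \cite[\href{https://stacks.math.columbia.edu/tag/0AWF}{Tag 0AWF}]{stacks-project} and \cite[Lemma 2.65]{kollar-kovas:sing}, as in the proof of Lemma \ref{lem-S2-hull}.
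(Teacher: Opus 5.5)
Your proposal is correct and takes essentially the paper's route. Like the paper, you reduce to stalks using the normalization $(\omega^{\bullet}_X)_x\cong\omega^{\bullet}_{\oh_{X,x}}[\dim\overline{\{x\}}]$ and the identity $\codim_X\overline{\{x\}}+\dim\overline{\{x\}}=\dim X$. You then read off the vanishing for $k<c$, the bound $\codim\geq k$, and the nonvanishing at $k=c$ from the vanishing range and endpoint nonvanishing of $\Ext^{-i}$ against a normalized dualizing complex, which the paper quotes as \cite[Proposition 2.66]{kollar-kovas:sing}.

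One small slip: $X$ is not assumed to be of finite type. The additivity $\dim X=\dim\oh_{X,x}+\dim\overline{\{x\}}$ should instead be justified, as the paper does, from catenarity (which the dualizing complex provides) together with equidimensionality.
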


\begin{proof}
Since $X$ is catenary (cf.~\cite[\href{https://stacks.math.columbia.edu/tag/0AWF}{Tag 0AWF}]{stacks-project}) and equidimensional, by \cite[Lemma 2.4, Proposition 4.1]{Heinrich:dim-formula}, we have
\[\codim_X \overline{\{x\}}+\dim \overline{\{x\}}=\dim X\]
for any point $x\in X$.

For the first vanishing part, by \cite[Proposition 2.66]{kollar-kovas:sing}, it is enough to prove
\[c-\dim X\leq -\dim E|_x-\dim \overline{\{x\}}\]
for any $x\in \Supp(E)$. Note that
$$\dim X-\dim E|_x-\dim \overline{\{x\}}=\codim_X \overline{\{x\}}-\dim E|_x=\dim(\oh_{X, x})-\dim E|_x\geq c,$$
then the result follows.

When $k\geq c$, for any point $x\in X$ of codimension $l< k$, we have
\[\dim \overline{\{x\}}= \dim X- l.\]
Hence, if $x\in \Supp(E)$, then 
\[-(k-\dim X)<\dim \overline{\{x\}}\leq \depth_x E+\dim \overline{\{x\}}.\]
Therefore, $(\cE xt^{k}_X(E, \omega^{\bullet}_X[-\dim X]))|_x=0$ by \cite[Proposition 2.66]{kollar-kovas:sing} and $$\codim_X(\cE xt^{k}_X(E, \omega^{\bullet}_X[-\dim X]))\geq k$$ follows. The last statement follows directly from the non-vanishing part of \cite[Proposition 2.66]{kollar-kovas:sing}.
\end{proof}



\begin{lemma}\label{lem-codim-Sp}
Let $X$ be an equidimensional Noetherian scheme that admits a dualizing complex. Then for any nonzero coherent sheaf $E$ pure of codimension $c$ on $X$, if $E$ is $S_{p}$ for $p\geq 0$, then we have
\[\codim_X(\cE xt^{k}_X(E, \omega^{\bullet}_X[-\dim X]))\geq k+p\]
for $k> c$.
\end{lemma}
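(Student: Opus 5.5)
The plan is to argue pointwise via local duality, exactly as in the proof of Lemma \ref{lem-supp-ext}, using the vanishing criterion \cite[Proposition 2.66]{kollar-kovas:sing}. That criterion, combined with the normalization $(\omega^{\bullet}_X)_x\cong \omega^{\bullet}_{\oh_{X,x}}[\dim\overline{\{x\}}]$, says the following. For a point $x\in \Supp(E)$, the stalk $(\cE xt^{j}_X(E,\omega^{\bullet}_X))|_x$ vanishes unless
\[
-\dim E|_x-\dim\overline{\{x\}}\;\leq\; j\;\leq\; -\depth_x(E)-\dim\overline{\{x\}}.
\]
Here, by local duality over $\oh_{X,x}$, the nonzero $\cE xt$'s of $E|_x$ against the normalized dualizing complex sit in degrees $[-\dim E|_x,\,-\depth_x E]$.

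With $j=k-\dim X$, it therefore suffices to show the following: if $x$ is a point of codimension $l<k+p$ and $k>c$, then
\[
k-\dim X>-\depth_x(E)-\dim\overline{\{x\}},
\]
which is equivalent to $\depth_x(E)>l-k$. We use, as in Lemma \ref{lem-supp-ext}, that $X$ is catenary and equidimensional, so $\dim\overline{\{x\}}=\dim X-l$.

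We may assume $x\in\Supp(E)$. Since $E$ is pure of codimension $c$, every irreducible component of $\Supp(E)$ through $x$ has codimension $c$ in $X$. By catenarity and equidimensionality (again via \cite{Heinrich:dim-formula}), this gives $\dim E|_x=l-c$. The $S_p$ condition then gives $\depth_x(E)\geq\min\{p,\,l-c\}$, and we split into two cases.

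\begin{itemize}
\item If the minimum is $l-c$: since $k>c$, we get $l-c>l-k$, as required.
\item If the minimum is $p$: since $l<k+p$, we get $p>l-k$, as required.
\end{itemize}

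In both cases $\depth_x(E)>l-k$, so the stalk vanishes at every point of codimension $<k+p$. Hence
\[
\codim_X\big(\cE xt^{k}_X(E,\omega^{\bullet}_X[-\dim X])\big)\geq k+p.
\]

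The only delicate step is the identity $\dim E|_x=\codim_X\overline{\{x\}}-c$. This is where purity is used: a non-pure $E$ could have lower-dimensional components through $x$, which would lower $\dim E|_x$. The identity follows because $\Supp(E|_x)$ is equidimensional of codimension $c$ in the catenary equidimensional local ring $\oh_{X,x}$. I would justify it by the same dimension formula already invoked in the proof of Lemma \ref{lem-supp-ext}.
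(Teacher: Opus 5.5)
Your proposal is correct and is essentially the paper's proof. Like the paper, you kill the stalk at each point $x$ of codimension $l<k+p$ via \cite[Proposition 2.66]{kollar-kovas:sing}, reduce to $\depth_x(E)>l-k$, and conclude from $\depth_x(E)\geq\min\{p,\,l-c\}$ together with $k>c$ and $l<k+p$, with the only difference being that you spell out why purity gives $\dim E|_x=l-c$, which the paper uses tacitly.
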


\begin{proof}
Let $x\in X$ be a point of codimension $q<k+p$ and $x\in \Supp(E)$. Then by \cite[Proposition 2.66]{kollar-kovas:sing}, we know that $$(\cE xt^{k}_X(E, \omega^{\bullet}_X[-\dim X]))|_x=0$$ for
\[-k+\dim X<\mathrm{depth}_x E+\dim \overline{\{x\}}.\]
As $\mathrm{depth}_x E\geq \min\{p, q-c\}$ and $k>c$, we are done.
\end{proof}

\section{Intersection theory and Chern characters}\label{sec:intersect-ch}

In this section, we first review basic facts about intersection theory over a general base scheme, then we define and study Chern characters of bounded complexes of coherent sheaves on general schemes. We closely follow the treatment of Chow groups and bivariant Chow groups in \cite[\href{https://stacks.math.columbia.edu/tag/02P3}{Tag 02P3}]{stacks-project}.

\subsection{Basic notions}

We fix a locally Noetherian and universally catenary scheme $S$ and a dimension function $\delta\colon S\to \ZZ$, i.e.~we fix $(S, \delta)$ as in \cite[\href{https://stacks.math.columbia.edu/tag/02QL}{Tag 02QL}]{stacks-project}. 

For a morphism $f\colon X \to S$ of locally finite type, the \emph{$\delta$-dimension} is defined as in \cite[\href{https://stacks.math.columbia.edu/tag/02QP}{Tag 02QP}]{stacks-project}:
\[\dim_{\delta}(Z)\coloneqq \delta(f(x))+\mathrm{trdeg}_{\kappa(f(x))}\kappa(x),\]
where $Z\subset X$ is an irreducible closed subset and $x\in Z$ is the generic point. Using this, we can define the groups of cycles $\mathrm{Z}_{k}(X,\delta)$ of $\delta$-dimension $k$ and the Chow groups $\CH_k(X,\delta)$ for all $k\in \ZZ$ as in \cite[\href{https://stacks.math.columbia.edu/tag/02QQ}{Tag 02QQ}]{stacks-project} and \cite[\href{https://stacks.math.columbia.edu/tag/02RV}{Tag 02RV}]{stacks-project}. We set $\mathrm{Z}_*(X,\delta)\coloneqq \bigoplus_{k\in \ZZ} \mathrm{Z}_k(X,\delta)$ and $\CH_*(X,\delta)\coloneqq\bigoplus_{k\in \ZZ} \CH_k(X,\delta)$. 

In later sections, we will mostly work in the situations covered by the following examples, so the dimension function $\delta$ can be suppressed from the notation in most later applications.

\begin{example}\label{example-over-curve}
If $S$ is an integral Noetherian scheme of dimension $\leq 1$, then we can take $\delta(s)\coloneqq \dim \overline{\{s\}}$ for any $s\in S$ as in \cite[\href{https://stacks.math.columbia.edu/tag/02QN}{Tag 02QN}]{stacks-project}. We call this \emph{the standard dimension function}. If $X\to S$ is a closed morphism and locally of finite type, then the $\delta$-dimension of any integral closed subscheme $Z$ of $X$ is $\dim(Z)$ by \cite[\href{https://stacks.math.columbia.edu/tag/02JX}{Tag 02JX}]{stacks-project}. In this case, $\mathrm{Z}_k(X)\coloneqq \mathrm{Z}_k(X,\delta)$ and $\CH_k(X)\coloneqq \CH_k(X,\delta)$ coincide with the usual notions.
\end{example}

\begin{example}\label{rmk:dim-function-regular}
We fix an integral Noetherian Cohen--Macaulay scheme $S$ of finite Krull dimension. In this case, we always equip $S$ with the dimension function $\delta$ defined in \cite[\href{https://stacks.math.columbia.edu/tag/0F91}{Tag 0F91}]{stacks-project}, i.e.
\begin{equation}\label{eq:dim-function-regular}
\delta(s)\coloneqq -\dim (\mathcal{O}_{S,s}).
\end{equation}
By \cite[\href{https://stacks.math.columbia.edu/tag/02JX}{Tag 02JX}]{stacks-project}, we have
\[\dim_{\delta}(Z)\geq \dim(Z)-\dim(S),\]
for any morphism $X\to S$ locally of finite type and  integral closed subscheme $Z\subset X$, and equality holds if $X\to S$ is closed.

Note that if $\dim S\leq 1$, $\delta+\dim S$ coincides with the standard dimension function of Example \ref{example-over-curve}. So up to a degree shift, the different choices of $\delta$ in Example \ref{example-over-curve} and \ref{rmk:dim-function-regular}  do not affect any result.
\end{example}

As in the situation of \cite{fulton:intersection-theory}, the functor $\CH_*(-,\delta)$ is covariant with respect to proper morphisms and contravariant with respect to flat or embeddable lci morphisms between schemes of locally finite type over $S$, see e.g.~\cite[\href{https://stacks.math.columbia.edu/tag/02S0}{Tag 02S0}]{stacks-project} and \cite[\href{https://stacks.math.columbia.edu/tag/0FF3}{Tag 0FF3}]{stacks-project}.

Following \cite[\href{https://stacks.math.columbia.edu/tag/0B76}{Tag 0B76}]{stacks-project}, we can define \emph{the bivariant Chow group} $\A^p(X\to Y,\delta)$ of degree $p$ associated with every morphism $X\to Y$ of locally finite type over $S$. An element $c\in \A^p(X\to Y,\delta)$ is a rule that assigns to every morphism locally of finite type $Y'\to Y$ and every $k\in \ZZ$ a group homomorphism
\[c\cap -\colon \CH_k(Y',\delta)\to \CH_{k-p}(X',\delta),\]
where $X'\coloneqq X\times_Y Y'$, satisfying the usual compatibilities with flat pullback, proper pushforward, and Gysin maps of effective Cartier divisors. We set $$\A^p(X,\delta)\coloneqq \A^p(X\xra{\mathrm{id}} X,\delta)$$ and $\A^*(X,\delta)\coloneqq\bigoplus_{p\in \ZZ} \A^p(X,\delta)$. If $X$ is quasi-compact, then for any $E\in \Dperf(X)$, we can define the Chern classes, Chern characters, and Todd classes $$c_p(E),\ch(E),\td(E)\in \A^*(X,\delta)_{\QQ}$$ in the usual way (see \cite[\href{https://stacks.math.columbia.edu/tag/0GUE}{Tag 0GUE}]{stacks-project}).

By definition, for any morphism $f\colon Y'\to Y$ which is locally of finite type, we have a natural restriction homomorphism $$\A^p(X\to Y,\delta)\to \A^p(X'\to Y',\delta)$$ as in \cite[\href{https://stacks.math.columbia.edu/tag/0F9Z}{Tag 0F9Z}]{stacks-project}.

If $f\colon X\to Y$ is a flat or an embeddable lci morphism between schemes of locally finite type over $S$, we denote by $[f]\in \A^*(X\to Y,\delta)$ the bivariant class corresponding to flat pullback in the flat case, or to the Gysin map in the embeddable lci case.

For any two morphisms $f\colon X\to Y$ and $g\colon Y\to Z$ between schemes of locally finite type over $S$, we have a natural associative bilinear product map
\[\A^p(X\to Y,\delta)\times \A^q(Y\to Z,\delta)\to \A^{p+q}(X\to Z,\delta),\quad (c, c')\mapsto c\cdot c'.\]
If $f$ is proper, then we have a natural pushforward homomorphism $$f_*\colon \A^p(X\to Z,\delta)\to \A^p(Y\to Z,\delta)$$ as in \cite[\href{https://stacks.math.columbia.edu/tag/0EPK}{Tag 0EPK}]{stacks-project}.


\subsection{Flat base change}

Following \cite[\href{https://stacks.math.columbia.edu/tag/0FVF}{Tag 0FVF}]{stacks-project}, we recall the behavior of Chow groups under flat base change. Let $S'$ be a locally Noetherian and universally catenary scheme with a dimension function $\delta'$. We fix a flat morphism $g\colon S'\to S$ such that there exists $d\in \ZZ$ with
\[\delta'(s')=\delta(s)+d\]
for any $s\in S$ and any generic point $s'$ of any irreducible component of $g^{-1}(s)$.

\begin{example}
Assume that $S$ is an integral Noetherian scheme of dimension $\leq 1$ with the standard dimension function as in Example \ref{example-over-curve}. Let $S'$ be another integral Noetherian scheme of dimension $\leq 1$ with a flat morphism $g\colon S'\to S$ (not necessarily locally of finite type). Then by \cite[\href{https://stacks.math.columbia.edu/tag/08EI}{Tag 08EI}]{stacks-project}, $g$ is dominant and the integer $$d\coloneqq \dim S'-\dim S\in\{-1,0,1\}$$ satisfies $\dim(\overline{\{s'\}})=\dim(\overline{\{s\}})+d$ for any $s\in g(S')$ and any generic point $s'$ of $g^{-1}(\{s\})$.
\end{example}

The construction in \cite[\href{https://stacks.math.columbia.edu/tag/0FVH}{Tag 0FVH}]{stacks-project} gives a well-defined pullback homomorphism
\[g^*\colon \mathrm{Z}_{k}(X,\delta)\to \mathrm{Z}_{k+d}(X_{S'},\delta')\]
for each $k$, defined by $[Z]\mapsto [Z_{S'}]$ for each integral closed subscheme $Z\subset X$ with $\dim_{\delta}(Z)=k$. Moreover, it factors through rational equivalences and gives a homomorphism
\[g^*\colon \CH_{k}(X,\delta)\to \CH_{k+d}(X_{S'},\delta').\]

In the following, we collect some properties of $g^*$.

\begin{lemma}\label{lem-flat-base-change}
Let $h\colon Y\to X$ be a morphism which is locally of finite type.

\begin{enumerate}
    \item If $E_1,\dots, E_m$ are bounded complexes of finite locally free sheaves on $X$, then the diagram
\[\begin{tikzcd}
	{\CH_k(X,\delta)} & {\CH_{k+d}(X_{S'},\delta')} \\
	{\CH_{k-p}(X,\delta)} & {\CH_{k+d-p}(X_{S'},\delta')}
	\arrow["{g^*}", from=1-1, to=1-2]
	\arrow["{\prod^m_{i=1}c_{p_i}(E_i)\cap -}"', from=1-1, to=2-1]
	\arrow["{\prod^m_{i=1}c_{p_i}((E_i)_{S'})\cap -}", from=1-2, to=2-2]
	\arrow["{g^*}", from=2-1, to=2-2]
\end{tikzcd}\]
commutes for each $p_i\geq 0$, where $p=\sum^m_{i=1} p_i$.

\item If $h$ is proper, then the diagram
\[\begin{tikzcd}
	{\CH_k(Y,\delta)} & {\CH_{k+d}(Y_{S'},\delta')} \\
	{\CH_{k}(X,\delta)} & {\CH_{k+d}(X_{S'},\delta')}
	\arrow["{g^*}", from=1-1, to=1-2]
	\arrow["{h_*}"', from=1-1, to=2-1]
	\arrow["{(h_{S'})_*}", from=1-2, to=2-2]
	\arrow["{g^*}", from=2-1, to=2-2]
\end{tikzcd}\]
commutes.

\item If $h$ is flat of relative dimension $r$, then the diagram
\[\begin{tikzcd}
	{\CH_{k}(X,\delta)} & {\CH_{k+d}(X_{S'},\delta')} \\
	{\CH_{k+r}(Y,\delta)} & {\CH_{k+d+r}(Y_{S'},\delta')}
	\arrow["{g^*}", from=1-1, to=1-2]
	\arrow["{h^*}"', from=1-1, to=2-1]
	\arrow["{(h_{S'})^*}", from=1-2, to=2-2]
	\arrow["{g^*}", from=2-1, to=2-2]
\end{tikzcd}\]
commutes.
\end{enumerate}

\end{lemma}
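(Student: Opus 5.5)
The three statements are checked on generators of the cycle group, and the flat base change $g^*$ is compared with each of the three operations at the level of explicit cycles. Since $g^*$ is defined on $\mathrm{Z}_k(X,\delta)$ by $[Z]\mapsto [Z_{S'}]$ and descends to Chow groups, it suffices in each case to check commutativity on a class $[Z]$ with $Z\subset X$ integral of $\delta$-dimension $k$. Throughout, the hypothesis $\delta'(s')=\delta(s)+d$ on generic points of fibres guarantees that every component of $Z_{S'}$ has $\delta'$-dimension $k+d$. We also use that $g_X\colon X_{S'}\to X$ is flat, so cycles of subschemes pull back to cycles of their base changes (Tag 0FVH and the surrounding lemmas).

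For (b), take $Z\subset Y$ integral and write $h_*[Z]=\deg(Z/h(Z))[h(Z)]$, where the degree is zero if the dimension drops. Then $(h_{S'})_*[Z_{S'}]$ is computed by flat base change of the finite-over-generic-point situation. Concretely, pushforward of cycles of coherent sheaves commutes with flat base change, because $[h_*\cO_Z]_k$ represents $h_*[Z]$ and $h_{S'*}(\cO_{Z_{S'}})=(h_*\cO_Z)_{S'}$ by flat base change for cohomology. The residual point is that the higher direct images are supported in lower dimension, and that $g^*$ of the cycle $[\mathcal F]_k$ of a coherent sheaf equals $[\mathcal F_{S'}]_{k+d}$. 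The latter is the basic lemma that $g^*$ is compatible with cycles of coherent sheaves: Tag 0FVH proves it via lengths, using that flat local homomorphisms multiply lengths by the length of the fibre, which is computed at generic points.

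For (c), $h^*[Z]=[h^{-1}(Z)]_{k+r}$ and $(h_{S'})^*[Z_{S'}]=[(h^{-1}Z)_{S'}]_{k+r+d}$. Both are cycles associated to the structure sheaf of the same scheme $h^{-1}(Z)\times_S S'$. This is again the compatibility of $g^*$ with $[\mathcal F]$, applied to $\mathcal F=\cO_{h^{-1}Z}$; base change is transitive, so $(h^{-1}Z)_{S'}=h_{S'}^{-1}(Z_{S'})$.

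For (a), reduce to a single Chern class $c_p(E)$ of a finite locally free sheaf, since products are iterated capping and the classes of a complex are polynomials in those of its terms via the Whitney formula. Apply the splitting principle: pass to the flag bundle $\pi\colon P\to X$, which is flat, and note that $\pi^*$ is injective on Chow groups and commutes with $g^*$ by (c). This reduces the problem to first Chern classes of invertible sheaves $\cL$. By additivity we may take $\cL=\cO(D)$ restricted to $Z$, with $D$ an effective Cartier divisor on $Z$. Then $c_1(\cL)\cap[Z]=[D]$, and $D_{S'}$ is an effective Cartier divisor on $Z_{S'}$, since flatness preserves nonzerodivisors. The claim then follows by applying the cycle compatibility to $\cO_D$.

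The main obstacle is the first step of (a). An arbitrary invertible sheaf on $Z$ is only a difference of effective divisors after twisting, and the divisor $D$ must be chosen before base change. Since $c_1(\cL)\cap[Z]$ is defined via a rational section $s$ of $\cL|_Z$ as $\mathrm{div}_{\cL}(s)$, I will check directly that $g^*\mathrm{div}_{\cL}(s)=\mathrm{div}_{\cL_{S'}}(s_{S'})$ on each component of $Z_{S'}$. The orders of vanishing along codimension-one points are computed by lengths. These lengths are multiplied by the same factors under flat local homomorphisms, as in the proof of Tag 0FVH.
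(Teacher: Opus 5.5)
Your proposal is correct. It is essentially the argument behind the Stacks Project results that the paper simply cites (Tags 0FVK, 0FVL, 0FVN). The ingredients are the same: compatibility of $g^*$ with cycles of coherent sheaves, combined with $h_*[\mathcal{F}]_k=[h_*\mathcal{F}]_k$ and flat base change for (b), with $h^*[\mathcal{F}]_k=[h^*\mathcal{F}]_{k+r}$ for (c), and with a length computation for orders of vanishing plus a splitting-principle reduction for (a).

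Two minor points:
\begin{itemize}
\item In (b), nothing about higher direct images is needed, since only $h_*$ enters.
\item In (a), you should explicitly invoke the identity $c_1(\cL_{S'})\cap[Z_{S'}]_{k+d}=[\mathrm{div}_{\cL_{S'}}(s_{S'})]_{k+d-1}$ for the possibly non-reduced scheme $Z_{S'}$. Here $s_{S'}$ is still a regular meromorphic section, since flat pullback preserves nonzerodivisors. You should also note that codimension-one points of $Z_{S'}$ lying over the generic point of $Z$ contribute nothing.
\end{itemize}
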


\begin{proof}
Part (a) follows from \cite[\href{https://stacks.math.columbia.edu/tag/0FVn}{Tag 0FVN}]{stacks-project}. Parts (b) and (c) follow from \cite[\href{https://stacks.math.columbia.edu/tag/0FVL}{Tag 0FVL}]{stacks-project} and \cite[\href{https://stacks.math.columbia.edu/tag/0FVK}{Tag 0FVK}]{stacks-project}, respectively.
\end{proof}


We introduce the following notion for later use.

\begin{definition}\label{def-A-star}
Let $X\to B$ be a universally closed morphism of finite type. For any $p\in \ZZ$, we define $\A^p_{\star}(X/B)$ to be the abelian group such that each element $c$ in it corresponds to a rule that assigns to each morphism $D\to B$ from an integral Noetherian regular scheme $D$ of dimension $\leq 1$ a class $c_D\in \A^p(X_D)$, such that for any morphism $g\colon D'\to D$ from an integral Noetherian regular scheme $D'$ of dimension $\leq 1$, we have:

\begin{itemize}
    \item if $g$ is locally of finite type, then $c_{D'}\in \A^p(X_{D'})$ is the restriction of $c_D\in \A^p(X_{D})$, and

    \item if $g$ is a dominant morphism (hence flat), the diagram
\[\begin{tikzcd}
	{\CH_k(X_D)} & {\CH_{k+d}(X_{D'})} \\
	{\CH_{k-p}(X_D)} & {\CH_{k+d-p}(X_{D'})}
	\arrow["{g^*}", from=1-1, to=1-2]
	\arrow["{c_D\cap -}"', from=1-1, to=2-1]
	\arrow["{c_{D'}\cap -}", from=1-2, to=2-2]
	\arrow["{g^*}", from=2-1, to=2-2]
\end{tikzcd}\]
commutes, where $d\coloneqq \dim D'-\dim D$.
\end{itemize}
\end{definition}

We set $\A^*_{\star}(X/B)\coloneqq\bigoplus_{p\in \ZZ} \A^p_{\star}(X/B)$. By Lemma \ref{lem-flat-base-change}(a), it is clear that the usual characteristic classes of perfect complexes lie in $\A^*_{\star}(X/B)$. In particular, we have the following.

\begin{lemma}
Let $X\to B$ be a universally closed morphism of finite type between Noetherian schemes. Then for each $p\geq 0$ and $E\in \Dperf(X)$, we have a class
\[c_p(E)\in \A^p_{\star}(X/B).\]
Moreover, if $E_1, \dots, E_m\in \Dperf(X)$, then $\prod_{i=1}^m c_{p_i}(E_i)\in \A^{\sum^m_{i=1}p_i}_{\star}(X/B)$ for $p_i\geq 0$, so we also have a group homomorphism
\[\ch_k(-)\colon \KK^0(X)\to \A^k_{\star}(X/B)_{\QQ}\]
for each $k\geq 0$.
\end{lemma}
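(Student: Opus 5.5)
The plan is to check, for a single $E\in\Dperf(X)$ and $p\geq 0$, the two conditions of Definition \ref{def-A-star}, and then to extend to products and to $\KK^0(X)$ by bilinearity.

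For each morphism $D\to B$ with $D$ an integral Noetherian regular scheme of dimension $\leq 1$, set
\[c_D\coloneqq c_p(E_D)\in \A^p(X_D),\]
where $E_D\in\Dperf(X_D)$ is the derived pullback. This is defined because $X_D$ is quasi-compact, being of finite type over the Noetherian scheme $D$, and $D$ carries the standard dimension function of Example \ref{example-over-curve}.

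\textbf{First condition.} Let $g\colon D'\to D$ be locally of finite type. Chern classes of perfect complexes are compatible with restriction of bivariant classes, by the construction in \cite[\href{https://stacks.math.columbia.edu/tag/0GUE}{Tag 0GUE}]{stacks-project}, which is natural under base change. Hence the restriction of $c_p(E_D)$ along $X_{D'}\to X_D$ is $c_p(E_{D'})$, using $(E_D)_{D'}\cong E_{D'}$.

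\textbf{Second condition.} Let $g\colon D'\to D$ be dominant, hence flat. Because $D$ and $D'$ are integral of dimension $\leq1$, the previous example shows that the hypothesis $\delta'=\delta\circ g+d$ holds with $d=\dim D'-\dim D$. The commutativity of the square is then Lemma \ref{lem-flat-base-change}(a), applied over the base $S=D$, $S'=D'$. That lemma is stated for bounded complexes of finite locally free sheaves, so this is the one point that needs care: $E_D$ is only perfect.

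\begin{itemize}
\item \emph{Global resolution case.} If $X_D$ has the resolution property, for instance when $X$ is quasi-projective over $B$, then $E_D$ is quasi-isomorphic to such a complex and the lemma applies directly.
\item \emph{General case.} Reduce to the previous case by the splitting principle or envelope argument underlying the definition of $c_p$ for perfect complexes in \cite[\href{https://stacks.math.columbia.edu/tag/0GUE}{Tag 0GUE}]{stacks-project}. Since $c_p(E_D)\cap-$ is determined by its values after proper pushforward from schemes on which $E$ admits a resolution, we compute there and use Lemma \ref{lem-flat-base-change}(b) to commute $g^*$ with these proper pushforwards.
\end{itemize}

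\textbf{Products and $\ch_k$.} For products $\prod_i c_{p_i}(E_i)$ the same argument applies: Lemma \ref{lem-flat-base-change}(a) is already stated for products, and restriction is multiplicative, so the rule $D\mapsto \prod_i c_{p_i}((E_i)_D)$ lies in $\A^{\sum p_i}_\star(X/B)$. Hence $\A^*_\star(X/B)$ contains all polynomials in Chern classes. The component $\ch_k$ is a universal rational polynomial in the $c_i$, so $\ch_k(E)\in\A^k_\star(X/B)_\QQ$. Additivity of $\ch_k$ in exact triangles holds fiberwise on each $X_D$, so $\ch_k$ descends to a homomorphism $\KK^0(X)\to \A^k_\star(X/B)_\QQ$.

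The main obstacle is the second condition: extending Lemma \ref{lem-flat-base-change}(a) from bounded complexes of finite locally free sheaves to arbitrary perfect complexes. This is the step I would check carefully against the Stacks construction.
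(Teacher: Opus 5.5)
Your proposal is correct and follows the paper's proof. The paper sets $c_D=c_p(E_D)$ via the Stacks Project construction (Tag 0GUE) and invokes Lemma \ref{lem-flat-base-change}(a) for the flat base change square. Your extra step passes from perfect complexes to bounded complexes of finite locally free sheaves by using an envelope $\pi\colon Y\to X_D$, surjectivity of $\pi_*$ on Chow groups, and Lemma \ref{lem-flat-base-change}(b) to commute $g^*$ with $\pi_*$. This step is sound, and it fills in a point the paper passes over silently, since Lemma \ref{lem-flat-base-change}(a) is only stated for bounded complexes of locally free sheaves.
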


\begin{proof}
In this case, any perfect complex on $X_{B'}$ has well-defined Chern classes for every morphism $B'\to B$ from a Noetherian scheme $B'$ by \cite[\href{https://stacks.math.columbia.edu/tag/0GUE}{Tag 0GUE}]{stacks-project}. Hence, we get a class $c_p(E)\in \A^p_{\star}(X/B)$ by Lemma \ref{lem-flat-base-change}(a) that satisfies the desired properties.
\end{proof}

\subsection{Localized Chern characters}

We also need localized Chern characters constructed in \cite[\href{https://stacks.math.columbia.edu/tag/0FB0}{Tag 0FB0}]{stacks-project}, which generalize \cite[Section 18.1]{fulton:intersection-theory}. As above, fix a locally Noetherian and universally catenary scheme $S$ and a dimension function $\delta\colon S\to \ZZ$. 

\begin{lemma}\label{lem-local-ch}
Let $i\colon X\hookrightarrow P$ be a closed embedding between schemes locally of finite type over $S$ with $P$ quasi-compact. Then for any perfect complex $E$ on $P$ such that $E_{P\setminus X}=0$, there exists a canonical class $$\ch^P_X(E)\in \A^*(X\to P,\delta)_{\QQ}$$ such that
\begin{equation}\label{eq-local-ch}
    i_*\ch^P_X(E)=\ch(E)\in \A^*(P,\delta)_{\QQ}
\end{equation}
and satisfies the following properties.

\begin{enumerate}
    \item Assume $P'\to P$ is a morphism locally of finite type and $E'$ and $X'$ are the pullbacks of $E$ and $X$ to $P'$, respectively. Then $\ch^{P'}_{X'}(E')\in  \A^*(X'\to P',\delta)_{\QQ}$ is the restriction of $\ch^P_X(E)$.

    \item $\ch^P_X(E)\cap i_*\alpha=\ch(Li^*E)\cap \alpha$ for any $\alpha\in \CH_*(X,\delta)_{\QQ}$.

    \item If $i$ is a composition $X\hookrightarrow X'\hookrightarrow P$, where $j\colon X\hookrightarrow X'$ and $X'\hookrightarrow P$ are both closed embeddings, then
    \[\ch_{X'}^P(E)=j_*\ch^P_X(E)\in \A^*(X'\to P,\delta)_{\QQ}.\]

    \item Let $Y\to P$ be a morphism locally of finite type and $c\in \A^*(Y\to P,\delta)_{\QQ}$. Then
    \[c\cdot \ch^P_X(E)=\ch^P_X(E)\cdot c\]
    in $\A^*(Y\times_P X\to P,\delta)_{\QQ}$.

    \item Let $E_1\to E\to E_2$ be an exact triangle of perfect complexes on $P$ such that $E_i|_{P\setminus X}=0$ for $i=1,2$. Then
    \[\ch^P_X(E)=\ch^P_X(E_1)+ \ch^P_X(E_2).\]
    
    \item Let $F$ be another perfect complex on $P$ such that $F|_{P\setminus X'}=0$ for a closed subscheme $X'\subset P$. Then
    \[\ch^P_{X\cap X'}(E\otimes^{\LL} F)=\ch^P_X(E)\cdot \ch^P_{X'}(F).\]
    
\end{enumerate}
\end{lemma}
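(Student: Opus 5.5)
This lemma is a packaging of the localized Chern character of \cite[\href{https://stacks.math.columbia.edu/tag/0FB0}{Tag 0FB0}]{stacks-project}, the analogue of \cite[Section 18.1]{fulton:intersection-theory}. The plan is to recall the construction and then read off each property from the corresponding tag, checking only that our hypotheses match.

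\emph{Construction and \eqref{eq-local-ch}.} Since $P$ is quasi-compact, $E$ is represented locally by a bounded complex of finite locally free sheaves that is exact off $X$. Stacks builds $\ch^P_X(E)$ via the graph construction, i.e.\ MacPherson's deformation to the Grassmannian bundle: for any $Y'\to P$ one deforms the cycle over $\PP^1$ and takes the limit. This yields bivariant classes $P_p(E)\in \A^p(X\to P)$, which by definition are compatible with base change $P'\to P$; this is part~(a). Canonicity, meaning independence of the choice of complex and of the local representatives, is part of that tag. Pushing forward along $i$ recovers the usual $\ch(E)$, which gives \eqref{eq-local-ch}.

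\emph{Properties (b)--(f).}
\begin{itemize}
\item Part (b) holds because restricting the construction to cycles supported on $X$ involves no deformation: the classes act as the Chern classes of $Li^*E$.
\item Part (c) is the compatibility of the construction with enlarging the support: we view the deformation over $X$ inside $X'$ and apply $j_*$.
\item Part (d) is the commutativity of localized Chern classes with arbitrary bivariant classes. Here I would reduce, via the defining properties of $\A^*$ (compatibility with flat pullback, proper pushforward and Gysin maps), to the statement that $c$ commutes with the deformation-to-the-limit procedure.
\item Parts (e) and (f) are additivity and multiplicativity. Stacks proves them for the homogeneous pieces $P_p$; I would obtain the $\ch$ statements by the universal polynomial relations, after rationalizing.
\end{itemize}

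\emph{Main obstacle.} The only nonformal point is making sure the Stacks hypotheses are met. In particular, it requires $E$ to be globally, or at least locally, a finite complex of locally free sheaves to run the graph construction. Quasi-compactness of $P$ plus the locality of bivariant classes over a finite cover should suffice, together with the gluing of canonical classes. I would cite the relevant tags for (e) and (f) in this generality rather than reprove them.
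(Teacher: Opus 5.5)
Your proposal takes the same approach as the paper: the paper's proof consists entirely of citing \cite[\href{https://stacks.math.columbia.edu/tag/0FB0}{Tag 0FB0}]{stacks-project} and \cite[\href{https://stacks.math.columbia.edu/tag/0FB9}{Tag 0FB9}]{stacks-project}, which together give the existence of the class, \eqref{eq-local-ch}, and properties (a)--(f). Your account of what the citation contains is inaccurate, though this does not affect the argument. Stacks builds the class with a blow-up construction, the one the paper invokes in the proof of Lemma \ref{lem-flat-local-ch}, not MacPherson's Grassmannian graph construction; and the cited results apply directly to perfect complexes on $P$, so the gluing over a finite cover that you list as the main obstacle is not needed.
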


\begin{proof}
These are the contents of \cite[\href{https://stacks.math.columbia.edu/tag/0FB0}{Tag 0FB0}]{stacks-project} and \cite[\href{https://stacks.math.columbia.edu/tag/0FB9}{Tag 0FB9}]{stacks-project}.
\end{proof}

\begin{lemma}\label{lem-homotopy}
Let $X\subset P$ be a closed embedding of schemes locally of finite type over $S$ and let $E$ be a perfect complex on $P\times_S \bA^1_S$ with $P$ quasi-compact and $E|_{(P\setminus X)\times_S \bA^1_S}=0$. Let $i_0, i_1\colon S\to \bA^1_S$ be two sections of $\bA^1_S\to S$. Then for any $\alpha\in \CH_*(P,\delta)_{\QQ}$,
\[\ch^P_X(E_0)\cap \alpha=\ch^P_X(E_1)\cap \alpha,\]
where $E_t$ is the derived restriction of $E$ along $i_t\times_S \mathrm{id}_{P}$ for each $t\in \{0,1\}$.
\end{lemma}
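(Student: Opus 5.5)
We reduce to the defining properties of the localized Chern character in Lemma \ref{lem-local-ch}, and in particular to its compatibility with restriction along morphisms of finite type. Set $\pi\colon P\times_S\bA^1_S\to P$ for the projection and $X_{\bA^1}\coloneqq X\times_S\bA^1_S$. Since $E$ vanishes off $X_{\bA^1}$, Lemma \ref{lem-local-ch} gives a class
\[c\coloneqq \ch^{P\times_S\bA^1_S}_{X_{\bA^1}}(E)\in \A^*(X_{\bA^1}\to P\times_S\bA^1_S,\delta)_{\QQ}.\]
For $t\in\{0,1\}$, the morphism $j_t\coloneqq i_t\times_S\mathrm{id}_P\colon P\to P\times_S\bA^1_S$ is a closed immersion, hence of finite type. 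The pullback of $X_{\bA^1}$ along $j_t$ is $X$, so Lemma \ref{lem-local-ch}(a) identifies $\ch^P_X(E_t)$ with the restriction of $c$ along $j_t$.

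The key step is to express capping with this restricted class through the Gysin map of $j_t$. The section $i_t$ is a regular immersion of codimension $1$ with trivial normal bundle: it is cut out by $T-a_t$, where $a_t\in\Gamma(S,\oh_S)$ is the coordinate of the section. Hence $j_t(P)\subset P\times_S\bA^1_S$ is an effective Cartier divisor $D_t$. Take $\alpha\in\CH_k(P,\delta)$ and the flat pullback $\pi^*\alpha\in\CH_{k+1}(P\times_S\bA^1_S,\delta)$. The Gysin map of $D_t$ satisfies $j_t^!\pi^*\alpha=\alpha$, because $\pi\circ j_t=\mathrm{id}$ and the normal bundle is trivial; on cycles this is a direct check, since $D_t$ meets $Z\times\bA^1$ properly in $Z$. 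Bivariant classes commute with Gysin maps of effective Cartier divisors, so
\[\ch^P_X(E_t)\cap\alpha=(j_t^*c)\cap j_t^!\pi^*\alpha=j_t'^{\,!}\bigl(c\cap\pi^*\alpha\bigr),\]
where $j_t'\colon X\hookrightarrow X_{\bA^1}$ is the induced section and $j_t'^{\,!}$ is the Gysin map for the pulled-back divisor $X\times_S D_t$.

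It therefore suffices to show $j_0'^{\,!}\beta=j_1'^{\,!}\beta$ for every $\beta\in\CH_*(X_{\bA^1},\delta)$. This is the homotopy property of Chow groups. The divisors $X\times D_0$ and $X\times D_1$ are both pulled back from divisors $D_0,D_1\subset\bA^1_S$, and the associated line bundles $\oh(D_0)$ and $\oh(D_1)$ are both trivial. By the standard fact that the Gysin map of an effective Cartier divisor $D$, followed by pushforward into the ambient scheme, equals $c_1(\oh(D))\cap-$, the two maps agree after pushing into $X_{\bA^1}$. To conclude on $X$ itself, compose with $\pi_X^*$ and use $\pi_X^*$-surjectivity: flat pullback $\CH_*(X)\to\CH_{*+1}(X_{\bA^1})$ is surjective, so $\beta=\pi_X^*\gamma$. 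Then $j_t'^{\,!}\pi_X^*\gamma=\gamma$ for both $t$, by the same cycle-level check as above.

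I expect the main obstacle to be the surjectivity of $\pi_X^*$ for $\bA^1_S$ over a general universally catenary base $S$, which is needed precisely to make this last step clean. I would cite the Stacks Project homotopy result for vector bundles in the $(S,\delta)$ setting. Alternatively, the argument follows the Stacks Project's own proof of homotopy invariance of localized Chern classes, which proceeds exactly by this reduction.
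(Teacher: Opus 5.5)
Your proposal is correct and is essentially the paper's argument. The paper also uses Lemma~\ref{lem-local-ch}(a) to write $\ch^P_X(E_t)\cap\alpha$ as the Gysin pullback along the $t$-section of $\ch^{P\times_S\bA^1_S}_{X\times_S\bA^1_S}(E)\cap p^*\alpha$, and then concludes from homotopy invariance of Chow groups for the trivial line bundle, which gives $i_0^*=i_1^*$. You make explicit two steps the paper leaves implicit, namely that the bivariant class commutes with the Gysin map of the Cartier divisor $D_t$ and that $j_t^!\pi^*=\mathrm{id}$, and you rightly note that surjectivity of $\pi_X^*$ already suffices; your detour through $c_1(\oh(D_t))$ is not needed.
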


\begin{proof}
By Lemma \ref{lem-local-ch}(a), for each $t\in \{0,1\}$ and $\alpha\in \CH_*(P,\delta)_{\QQ}$, we have
\[\ch^P_X(E_t)\cap \alpha=i_t^*(\ch^{P\times_S \bA^1_S}_{X\times_S \bA^1_S}(E)\cap p^*\alpha),\]
where $$i_t^*\colon \CH_*(X\times_S \bA^1_S,\delta)_{\QQ}\to \CH_*(X,\delta)_{\QQ}$$ and $p\colon P\times_S \bA^1_S\to P$ is the projection. Then the result follows from the fact that $i_0^*=i_1^*$ by \cite[\href{https://stacks.math.columbia.edu/tag/02TY}{Tag 02TY}]{stacks-project}.
\end{proof}

Using Lemma \ref{lem-homotopy} and the deformation to the normal cone \cite[\href{https://stacks.math.columbia.edu/tag/0FBG}{Tag 0FBG}]{stacks-project}, the following lemma follows from the same proof as \cite[Corollary 18.1.2]{fulton:intersection-theory}.

\begin{lemma}\label{lem-tau-XYZ}
Let $i\colon X\to Y$ and $j\colon Y\to Z$ be closed embeddings of schemes locally of finite type over $S$, where $j$ is a regular embedding with the normal bundle $\cN$ and $Z$ is quasi-compact. Fix $E\in \Coh(X)$ so that $i_*E$ and $j_*i_*E$ have resolutions by bounded complexes of finite rank locally free sheaves. Then
\[\ch^Z_X(j_*i_*E)=\ch^Y_X(i_*E)\cdot \td^{-1}(\cN)\cdot [j]\in \A_*(X\to Z,\delta)_{\QQ}.\]
\end{lemma}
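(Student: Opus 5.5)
The plan is to follow Fulton's proof of Corollary 18.1.2, replacing his constructions with the Stacks Project versions. We use the deformation to the normal cone for $j$ (Tag 0FBG) and the homotopy invariance of localized Chern characters in Lemma \ref{lem-homotopy}.

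First we reduce to the case where $Y$ is the zero section of a vector bundle. Let $M\to \bA^1_S$ be the deformation space of $Y\subset Z$, i.e.\ the complement of the strict transform in the blow-up of $Z\times_S\bA^1_S$ along $Y\times_S 0$. It contains $Y\times_S\bA^1_S$ as a closed subscheme, which is regularly embedded with normal bundle the pullback of $\cN$. The fiber of $M$ over $1$ is $Z$, and the fiber over $0$ is the normal bundle $C=\mathbb{V}(\cN)$, with $Y$ sitting inside it as the zero section $s$.

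Since $Y\times\bA^1\hookrightarrow M$ is a regular embedding, the sheaf $(i\times \mathrm{id})_*E\boxtimes\cO_{\bA^1}$ pushed forward to $M$ admits a finite locally free resolution. This uses the assumption that $i_*E$ has one, together with the fact that the Koszul complex of a regular embedding is locally resolving, so that local resolutions glue to perfectness. Its derived restrictions to the fibers over $1$ and $0$ are $j_*i_*E$ and $s_*i_*E$ respectively, because the embedding is transversal to the fibers. Both sides of the claimed identity are bivariant classes, and capping is compatible with restriction (Lemma \ref{lem-local-ch}(a)). Applying the specialization argument of Lemma \ref{lem-homotopy}, with the usual flat pullback and specialization maps $\CH_*(Z)\to \CH_*(C)$, identifies $\ch^Z_X(j_*i_*E)\cap\alpha$ with $\ch^C_X(s_*i_*E)\cap\sigma(\alpha)$, where $\sigma$ is specialization. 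Since the Gysin map satisfies $[j]\cap\alpha=s^*\sigma(\alpha)$, the identity reduces to the case of the zero section $s\colon Y\to C$.

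Next we treat the zero section. On $C$, with projection $\pi\colon C\to Y$, we have $s_*i_*E\simeq \pi^*(i_*E)\otimes^{\LL}s_*\cO_Y$. We resolve $s_*\cO_Y$ by the Koszul complex $K(\pi^*\cN^\vee)$, which is exact off the zero section. Lemma \ref{lem-local-ch}(f), applied with the closed subschemes $\pi^{-1}(X)$ and $Y$, gives
\[
\ch^C_X(s_*i_*E)=\ch^C_{\pi^{-1}X}(\pi^*i_*E)\cdot\ch^C_Y(K(\pi^*\cN^\vee)).
\]
The standard computation (Fulton Example 18.1.3 / Tag 0FBG-type statement) gives $\ch^C_Y(K)=\td^{-1}(\cN)\cdot[s]$; equivalently, $\ch^C_Y(K)\cap \beta=\td(\cN)^{-1}\cap s^*\beta$. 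Combining this with the commutativity in Lemma \ref{lem-local-ch}(d) and the compatibility of localized Chern characters with flat pullback along $\pi$ (Lemma \ref{lem-local-ch}(a)) yields $\ch^Y_X(i_*E)\cdot\td^{-1}(\cN)\cdot[s]$.

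We expect the main obstacle to be the identity $\ch^C_Y(K(\pi^*\cN^\vee))=\td^{-1}(\cN)\cdot[s]$ in this generality. Our first approach would be the splitting principle together with multiplicativity (Lemma \ref{lem-local-ch}(f)), reducing to the line bundle case, where $K=[\pi^*L^\vee\to\cO]$ with $Y$ an effective Cartier divisor. The line bundle case we would compute directly. A second point needing care is justifying the resolution hypotheses on the deformation space, so that Lemma \ref{lem-homotopy} genuinely applies.
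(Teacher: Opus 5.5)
Your proposal follows the same route as the paper, which simply invokes Fulton's proof of Corollary 18.1.2. Deformation to the normal cone (Tag 0FBG) plus the homotopy argument of Lemma~\ref{lem-homotopy} reduces to the zero section $s\colon Y\to\mathbb{V}(\cN)$; that argument has to be run on $X\times_S\bA^1_S$, since the deformation space is not itself a product. Multiplicativity with the Koszul resolution then finishes. On the step you flag, note that the splitting principle only gives a filtration of $\cN$, not a direct sum, so multiplicativity does not apply to the Koszul complex directly. The standard fix is to work on $p\colon\PP(\cN\oplus\cO_Y)\to Y$ with universal quotient $Q$ of rank $r=\rk\cN$, where $\Lambda^\bullet Q^\vee$ resolves $s_*\cO_Y$ globally, $\sum_i(-1)^i\ch(\Lambda^iQ^\vee)=c_r(Q)\td^{-1}(Q)$, $c_r(Q)\cap p^*\alpha=s_*\alpha$ and $p_*s_*=\mathrm{id}$, which yields $\ch^{\PP(\cN\oplus\cO_Y)}_Y(\Lambda^\bullet Q^\vee)\cap p^*\alpha=\td^{-1}(\cN)\cap\alpha$.
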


The localized Chern characters also behave well under flat base change of $S$.

\begin{lemma}\label{lem-flat-local-ch}
Let $S'$ be a locally Noetherian and universally catenary scheme with a dimension function $\delta'$, and $g\colon S'\to S$ be a flat morphism such that there exists $d\in \ZZ$ with $\delta'(s')=\delta(s)+d$ for any $s\in S$ and any generic point $s'$ of $g^{-1}(s)$. Let $i\colon X\hookrightarrow P$ be a closed embedding between schemes locally of finite type over $S$ with $P$ and $P_{S'}$ quasi-compact.

Then for any perfect complex $E$ on $P$ such that $E|_{P\setminus X}=0$, the formation of $\ch^P_X(E)$ commutes with the base change $S'\to S$, i.e.~we have
\[g^*(\ch^P_X(E)\cap \alpha)=\ch^{P_{S'}}_{X_{S'}}(E_{S'})\cap g^*\alpha \in \CH_*(X_{S'},\delta')_{\QQ}\]
for any $\alpha\in \CH_*(P,\delta)_{\QQ}$.
\end{lemma}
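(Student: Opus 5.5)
The plan is to reduce to the case of cycles, using the construction of $\ch^P_X(E)$ in \cite[\href{https://stacks.math.columbia.edu/tag/0FB0}{Tag 0FB0}]{stacks-project}. Since both sides are linear in $\alpha$ and $g^*$ is defined on cycles compatibly with rational equivalence, it suffices to treat $\alpha=[W]$ for an integral closed subscheme $W\subset P$. By Lemma \ref{lem-local-ch}(a), replacing $P$ by $W$ and $X$ by $X\cap W$, the class $\ch^P_X(E)\cap[W]$ is the pushforward of $\ch^W_{X\cap W}(E|_W)\cap[W]$. Lemma \ref{lem-flat-base-change}(b) lets us commute $g^*$ with the proper pushforward along $X\cap W\hookrightarrow X$. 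Since $g^*[W]=[W_{S'}]$, we are reduced to the case $P=W$ integral and $\alpha=[P]$.

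In the Stacks construction, $\ch^P_X(E)\cap[P]$ is built as follows. One takes a representation of $E$ locally by a bounded complex of finite locally free sheaves, exact off $X$. One forms the Grassmannian/graph construction over $P\times\PP^1$ (Fulton's MacPherson graph construction). One then pushes forward the localized Chern classes of the limiting complex on the special fiber $\infty$, supported over $X$. Each ingredient is built from a small list of operations: flat pullback along $P\times\PP^1\to P$ and along Grassmannian bundles, closure of a locally closed subscheme, Gysin pullback along the effective Cartier divisor $P\times\{\infty\}$, capping with Chern classes of locally free sheaves, and proper pushforward.

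The step is then to check that each operation commutes with $g^*$. For Chern classes of locally free sheaves this is Lemma \ref{lem-flat-base-change}(a), for proper pushforward Lemma \ref{lem-flat-base-change}(b), and for flat pullback Lemma \ref{lem-flat-base-change}(c). For Gysin maps of effective Cartier divisors, I would cite the compatibility in \cite[\href{https://stacks.math.columbia.edu/tag/0FVF}{Tag 0FVF}]{stacks-project}.

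The main obstacle is the closure step. The closure of the graph in $P\times\PP^1$ must base change to the closure of the base-changed graph at the level of cycles. This should hold because $g$ is flat, so $g$-pullback preserves scheme-theoretic closures: flat base change commutes with taking scheme-theoretic image of a quasi-compact immersion. Alternatively, I would sidestep this by citing the stated properties of $\ch^P_X$ together with the uniqueness characterization in \cite[\href{https://stacks.math.columbia.edu/tag/0FB0}{Tag 0FB0}]{stacks-project}. Quasi-compactness of $P_{S'}$ guarantees that the construction makes sense after base change.
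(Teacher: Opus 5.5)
This is essentially the paper's argument: the paper also proves the lemma by going through the Stacks construction of $\ch^P_X(E)$ and observing, via Lemma \ref{lem-flat-base-change}, that each ingredient (proper pushforward, the classes of Tags 0F9H and 0F9J, and the blow-up at infinity of Tag 0F8Z) commutes with flat base change. Two small corrections: the Stacks construction uses this blow-up of $\PP^1_P$ rather than a Grassmannian graph, and blow-ups commute with flat base change just as scheme-theoretic closures do; and your fallback via a ``uniqueness characterization'' should be dropped, since the properties in Lemma \ref{lem-local-ch} do not determine $\ch^P_X(E)\cap[P]$ (for instance, $i_*$ need not be injective).
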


\begin{proof}
This follows from the construction of $\ch^P_X(E)$. Indeed, by Lemma \ref{lem-flat-base-change}, we see that the constructions in \cite[\href{https://stacks.math.columbia.edu/tag/0F9H}{Tag 0F9H}]{stacks-project} and \cite[\href{https://stacks.math.columbia.edu/tag/0F9J}{Tag 0F9J}]{stacks-project} commute with flat base change. Therefore, the class constructed in \cite[\href{https://stacks.math.columbia.edu/tag/0F9K}{Tag 0F9K}]{stacks-project} commutes with flat base change, since the assumptions are preserved under flat base change and the formation only uses proper pushforward, \cite[\href{https://stacks.math.columbia.edu/tag/0F9H}{Tag 0F9H}]{stacks-project}, and \cite[\href{https://stacks.math.columbia.edu/tag/0F9J}{Tag 0F9J}]{stacks-project}. Moreover, the blow-up construction in \cite[\href{https://stacks.math.columbia.edu/tag/0F8Z}{Tag 0F8Z}]{stacks-project} is compatible with flat base change as well. Then we can conclude that the formation of $\ch^P_X(E)$ in \cite[\href{https://stacks.math.columbia.edu/tag/0FB2}{Tag 0FB2}]{stacks-project} commutes with flat base change.
\end{proof}


\subsection{Riemann--Roch homomorphism}

We fix an integral Noetherian regular scheme $S$ of finite Krull dimension. In this case, we always equip $S$ with the dimension function $\delta$ defined in \eqref{eq:dim-function-regular}.

Using localized Chern characters, we can construct a well-behaved Riemann--Roch homomorphism for any quasi-projective scheme over $S$. The following result is already mentioned in \cite[pp. 395]{fulton:intersection-theory}.

\begin{theorem}\label{thm-tau}
Let $X$ be a scheme quasi-projective over $S$. Then there exists a homomorphism
\[\tau_{X/S}\colon \KK_0(X)\to \CH_*(X,\delta)_{\QQ}\]
that satisfies the following properties. Let $Y$ be another scheme quasi-projective over $S$.

\begin{enumerate}
    \item If $f\colon X\to Y$ is proper, then for any class $\xi\in \KK_0(X)$, we have
    \[\tau_{Y/S}(f_*\xi)=f_*\tau_{X/S}(\xi).\]

    \item If $f\colon X\to Y$ is lci, then for any class $\xi\in \KK_0(Y)$, we have
    \[\tau_{X/S}(f^*\xi)=\td(\mathbb{T}_f)\cap f^*\tau_{Y/S}(\xi),\]
where $\mathbb{T}_f\coloneqq\mathbb{D}^X(\mathbb{L}_f)$ is the tangent complex of $f$ and $\mathbb{L}_f$ is the cotangent complex of $f$.

    \item If $\alpha\in \KK^0(X)$ and $\xi\in \KK_0(X)$, then
    \[\tau_{X/S}(\alpha\otimes \xi)=\ch(\alpha)\cap \tau_{X/S}(\xi).\]

    \item If $X$ is lci over $S$, then
    \[\tau_{X/S}(\oh_X)=\Td(X/S)\coloneqq\td(X/S)\cap [X],\]
where $\td(X/S)\coloneqq \td(\mathbb{T}_{X/S})$.

    \item If $V\subset X$ is a closed subscheme pure of $\delta$-dimension, then
    \[\tau_{X/S}(\cO_V)=[V]+\text{ terms of lower }\delta\text{-dimensions}.\]

\end{enumerate}
\end{theorem}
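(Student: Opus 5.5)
The plan is to follow Fulton's construction of the Riemann--Roch map via localized Chern characters (Fulton, Chapter 18), transplanted to the base $S$ using Lemma \ref{lem-local-ch}, Lemma \ref{lem-tau-XYZ}, Lemma \ref{lem-homotopy} and the Stacks Project bivariant formalism. Since $X$ is quasi-projective over $S$, we choose a closed embedding $i\colon X\hookrightarrow P$ into an open subscheme $P$ of some $\PP^N_S$; in particular $P$ is smooth over the regular scheme $S$, hence regular. Thus every $E\in\Coh(X)$ has $i_*E\in\Dperf(P)$, acyclic off $X$. We then define
\[\tau_{X/S}(E)\coloneqq \ch^P_X(i_*E)\cap\big(\td(\mathbb{T}_{P/S})\cap[P]\big)\in \CH_*(X,\delta)_\QQ .\]
Additivity on short exact sequences, and hence on $\KK_0(X)=\KK(\Coh(X))$, follows from Lemma \ref{lem-local-ch}(e).

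The first and main step is independence of the embedding. Given two embeddings $X\hookrightarrow P_1$ and $X\hookrightarrow P_2$, we use the diagonal embedding $X\hookrightarrow P_1\times_S P_2$. Via the projection, which is smooth, this reduces to comparing $P$ with a smooth $P'\to P$ admitting a section through $X$; one can further factor through a regular embedding into a vector bundle. For a regular embedding $j\colon P\hookrightarrow P'$ with normal bundle $\cN$, Lemma \ref{lem-tau-XYZ} gives $\ch^{P'}_X(j_*i_*E)=\ch^P_X(i_*E)\cdot\td^{-1}(\cN)\cdot[j]$. Combining this with $\td(T_{P'})|_P=\td(T_P)\td(\cN)$ and $[j]\cap[P']=[P]$ yields equality. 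For the smooth-projection case one deforms to the normal cone (Stacks 0FBG) and applies Lemma \ref{lem-homotopy}, exactly as in Fulton Lemma 18.3. I expect this step to be the main obstacle, because one must check that the Stacks Project versions of the compatibilities (commutation of bivariant localized classes with Gysin maps and flat pullbacks, Lemma \ref{lem-local-ch}(a),(d)) hold over a regular base with the dimension function \eqref{eq:dim-function-regular}.

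The listed properties then follow in turn.

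\begin{itemize}
\item \textbf{Property (c).} This is Lemma \ref{lem-local-ch}(f) together with $\ch^P_X$ of a pulled-back perfect complex; we extend $\alpha$ to a neighborhood using the resolution property on quasi-projective schemes, or use Lemma \ref{lem-local-ch}(b).
\item \textbf{Property (a).} For proper $f$, we embed $X\hookrightarrow \PP^M_S\times_S P_Y$ compatibly with $Y\hookrightarrow P_Y$. The projective-bundle projection $p$ reduces (a) to $\tau$ commuting with $p_*$ for $\PP^M\times P_Y\to P_Y$. Exactly as in Fulton Theorem 18.2, one checks this on generators $\cO(-k)\otimes p^*F$ via the computation $p_*(\ch(\cO(-k))\td(T_p))$ on projective space, together with the closed-embedding case, which is Lemma \ref{lem-local-ch}(c).
\item \textbf{Property (d).} This follows since for lci $X$ the class $\ch^P_X(i_*\cO_X)$ equals $\td^{-1}(\cN_{X/P})\cdot[i]$ (Lemma \ref{lem-tau-XYZ} with $Y=X$), and $\mathbb{T}_{X/S}=[T_P|_X\to\cN]$.
\item \textbf{Property (b).} For lci $f$, factor $f$ as a regular embedding into $X\times_S P_Y$ followed by a smooth projection. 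The smooth case is handled by flat pullback compatibility of $\ch^P_X$ (Lemma \ref{lem-local-ch}(a)), and the regular-embedding case by Lemma \ref{lem-tau-XYZ} and the excess formula. The Todd factors assemble to $\td(\mathbb{T}_f)$ by multiplicativity on the triangle of cotangent complexes.
\item \textbf{Property (e).} This holds because the top-degree part of $\ch^P_V(\cO_V)\cap[P]$ is the fundamental cycle $[V]$. This is local at generic points of $V$, where it reduces to the length computation (Fulton Example 18.3.11, Stacks 0FB0), and lower terms have smaller $\delta$-dimension by degree reasons.
\end{itemize}
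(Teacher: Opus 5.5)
Your overall plan (Fulton's embedded construction) matches the paper's, and your direct proof of (d) via Lemma~\ref{lem-tau-XYZ} applied to the regular embedding $X\hookrightarrow P$ is fine. However, the step you call the main one, independence of the embedding, does not go through as written.

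After passing to the diagonal $X\hookrightarrow P_1\times_S P_2\to P_1$, there is in general no section $P_1\to P_1\times_S P_2$ through $X$, not even over a Zariski neighbourhood of $X$. Such a section would extend $g\colon X\to P_2$, and this can fail. For example, take a smooth plane cubic $X\subset\PP^2$ re-embedded in $\PP^2$ by a degree-three line bundle other than $\cO(1)|_X$: every open $U\supset X$ has finite complement, so $\Pic(U)=\ZZ\cdot\cO(1)$.

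The only section available is the graph $X\hookrightarrow X\times_S P_2$. Its target is in general not regularly embedded in $P_1\times_S P_2$, so Lemma~\ref{lem-tau-XYZ} does not apply. Deforming $X\subset P_1\times_S P_2$ to its normal cone gives a singular special fibre on which the pushforward of $E$ need not be perfect. Lemma~\ref{lem-homotopy} only treats a constant family $P\times_S\bA^1_S$ with fixed support. A homotopy inside a non-proper ambient such as $X\times\bA^m$ is useless anyway, since pushforward along a section $\CH_*(X)\to\CH_*(X\times\bA^m)$ is zero for $m\geq 1$.

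What is missing is compactness. As in the paper (Fulton's Steps~5--6):
\begin{enumerate}
\item replace $P_2\subset\PP^m_S$ by $\PP^m_S$ using Lemma~\ref{lem-local-ch}(a), noting that $X$ stays closed in $P_1\times_S\PP^m_S$;
\item prove $p_*\tau^{P_1\times\PP^m}_{\PP^m_X}(\mathcal F)=\tau^{P_1}_X(Rp_*\mathcal F)$ on the generators $\cO(-k)\otimes p^*\mathcal G$ of $\KK_0(\PP^m_X)$, via the projective bundle formulas and the computation of $p_*(\ch(\cO(-k))\td(T_p)\cap p^*\beta)$;
\item apply this to $\mathcal F=\Gamma_{g*}E$.
\end{enumerate}
This is exactly the computation you give for (a), but it must be done with supports and before independence, not after.

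Two later steps also rely on tools that do not apply.

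In (c), a class $\alpha\in\KK^0(X)$ generally cannot be extended to a neighbourhood of $X$ in $P$. The same cubic over $\CC$ with a non-torsion degree-zero line bundle is a counterexample. Lemma~\ref{lem-local-ch}(b) does not help, since it still needs a complex on $P$. The paper instead cites Fulton's Steps~7--9 and his Lemma~18.2. There, independence of the embedding is used to re-embed $X$ so that the given bundle is restricted from the smooth ambient (e.g.\ after a twist, via a Grassmannian), after which Lemma~\ref{lem-local-ch}(f) applies.

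In (b), the correct factorization is $X\hookrightarrow P_X\times_S Y\to Y$, not into $X\times_S P_Y$. Its first arrow is a regular embedding into a singular scheme. Lemma~\ref{lem-tau-XYZ} needs the embedding into the smooth ambient to be the regular one, so it does not relate $\tau_X(f^*\xi)$ to $f^!\tau_Y(\xi)$. Nor is there a Koszul complex on $P_X\times_S P_Y$ to feed into Lemma~\ref{lem-local-ch}(f). This is exactly where the paper (Fulton's Step~10) deforms to the normal cone (Stacks, Tag 0FBG), reducing to the zero section of the normal bundle.
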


\begin{proof}
We factor $X\to S$ as $X\hookrightarrow P\to S$, where $i\colon X\hookrightarrow P$ is a closed embedding and $P$ is smooth over $S$. Then $P$ is also regular. Therefore, for any $E\in \Db(X)$, we have $i_*E\in \Dperf(P)$. We define 
\[\tau_{X/S}(E)\coloneqq\tau^P_X(E)\coloneqq\ch^P_X(i_*E)\cdot \td(P/S)\cap [P].\]
The proof of independence of the choice of $P$, and the proofs of (a)-(c), follow from the argument of \cite[Theorem 18.2]{fulton:intersection-theory}. Indeed, Step 1 of \cite[Theorem 18.2]{fulton:intersection-theory} follows from Lemma \ref{lem-tau-XYZ} and Step 2 is a consequence of Lemma \ref{lem-local-ch}(f). Similarly, Step 3 of \cite[Theorem 18.2]{fulton:intersection-theory} can be deduced from \eqref{eq-local-ch}. Step 4 follows from Lemma \ref{lem-local-ch}(a). Next, Step 5 follows from the same calculation as in \cite[Theorem 18.2]{fulton:intersection-theory} by using the projective bundle formula of K-groups and Chow groups (cf.~\cite[\href{https://stacks.math.columbia.edu/tag/02TX}{Tag 02TX}]{stacks-project}). Then the claim that $\tau^P_X(E)$ is independent of the choice of $P$, which is Step 6 of \cite[Theorem 18.2]{fulton:intersection-theory}, can be argued in the same way as in \cite[Theorem 18.2]{fulton:intersection-theory} using previous steps. Similarly, relying on previous steps, the arguments of Steps 7, 8 and 9, together with \cite[Lemma 18.2]{fulton:intersection-theory}, apply verbatim, which proves (a) and (c). Finally, Step 10 also follows from the same argument by using the deformation to the normal cone in \cite[\href{https://stacks.math.columbia.edu/tag/0FBG}{Tag 0FBG}]{stacks-project}, which proves (b).

For part (d), when $X$ is smooth over $S$, we see $\ch^X_X(\oh_X)=\ch(\oh_X)=1\in \A^0(X)$. Therefore, we have $\tau_{X/S}(\oh_X)=\Td(X/S)$ in this case. When $X$ is lci over $S$, we embed $X$ into a scheme smooth over $S$, and the result follows from the smooth case and part (b).

Finally, we prove (e). By part (b), we may assume that $X\to S$ is projective. When $X$ is a projective bundle over $S$, we have $\oh_V\in \Db(X)=\Dperf(X)$, hence 
$$\tau_{X/S}(\oh_V)=\ch(\oh_V)\cdot \td(X/S)\cap [X]=[V]+\text{ terms of lower }\delta\text{-dimensions}$$
as $(\ch(\oh_V)\cap [X])_{\dim_{\delta} V}=[V]$. For the general case, we can compactify $X$ over $S$ and find a surjective finite morphism $X\to P$ to a projective bundle $P$ over $S$, then the rest of the argument is the same as Step 8 of \cite[pp.359]{fulton:intersection-theory}.
\end{proof}

When $S$ is a field $\kk$, we always omit $S$ from the notation. If $X$ is also proper over $\kk$, following \cite[Example 19.1.5(b)]{fulton:intersection-theory}, we define
\[\CH^i_{\num}(X)_{\QQ}\]
as the quotient of $\CH_{\dim X-i}(X)_{\QQ}$ by the subgroup of numerically trivial classes, i.e. those $\xi\in \CH_{\dim X-i}(X)_{\QQ}$ with $\int_X \ch(\alpha)\cap \xi=0$ for any $\alpha\in \KK^0(X)$. By Theorem \ref{thm-tau}(a) and (c), we know that $\tau_{X}$ factors through the natural surjection $\KK_0(X)\to \Knum(X)$, where $\Knum(X)$ is the numerical K-group of $\Db(X)$ (cf.~Section \ref{subsec:num-k}).

The Riemann--Roch homomorphism commutes with flat base change of $S$.

\begin{lemma}\label{lem-tau-flat-base-change}
Let $S'$ be an integral Noetherian regular scheme of finite Krull dimension with the dimension function $\delta'$ as in \eqref{eq:dim-function-regular}, and $g\colon S'\to S$ be a flat morphism such that there exists $d\in \ZZ$ with $\delta'(s')=\delta(s)+d$ for any $s\in S$ and any generic point $s'$ of $g^{-1}(s)$. Let $X$ be a quasi-projective scheme over $S$. Then
\[g^*\tau_{X/S}(\xi)=\tau_{X_{S'}/S'}(g^*_X\xi)\in \CH_*(X_{S'},\delta')_{\QQ}\]
for any $\xi\in \KK_0(X)$.
\end{lemma}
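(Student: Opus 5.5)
We unwind the definition of $\tau_{X/S}$ from the proof of Theorem \ref{thm-tau} and check that each ingredient commutes with the flat base change $g$. Choose a factorization $X\xrightarrow{i} P\to S$ with $i$ a closed embedding and $P$ smooth and quasi-projective over $S$; for instance, take $P$ to be an open subscheme of some $\PP^N_S$. Its base change $X_{S'}\xrightarrow{i_{S'}} P_{S'}\to S'$ is again a closed embedding into a scheme smooth over $S'$. Since $S'$ is regular, $P_{S'}$ is regular, and since $\tau$ does not depend on the choice of ambient space, we may compute $\tau_{X_{S'}/S'}$ using $P_{S'}$. For $E\in \Db(X)$ representing $\xi$, we have
\[
\tau_{X/S}(E)=\ch^P_X(i_*E)\cdot \td(P/S)\cap [P].
\]
The class $g_X^*\xi$ is represented by $E_{S'}$, because $g$ is flat and so the derived and underived pullbacks agree. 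Flat base change gives $(i_*E)_{S'}\cong (i_{S'})_*E_{S'}$ as perfect complexes on $P_{S'}$. Moreover, $(i_*E)_{S'}$ restricts to zero off $X_{S'}$.

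The key steps, in order, are as follows.

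\begin{enumerate}
\item Show that $g^*[P]=[P_{S'}]$ in $\CH_*(P_{S'},\delta')$. Since $P\to S$ is smooth, it is flat, so the irreducible components of $P_{S'}$ are the irreducible components of the preimages of the generic points of the components of $P$. Their multiplicities are $1$ because $P_{S'}$ is regular, hence reduced. The dimension shift is exactly $d$ by the hypothesis on $\delta,\delta'$.
\item Show that $g^*(\td(P/S)\cap[P])=\td(P_{S'}/S')\cap[P_{S'}]$. Here $\td(P/S)$ is a polynomial in the Chern classes of the locally free sheaf $\mathbb{T}_{P/S}$, and its pullback to $P_{S'}$ is $\mathbb{T}_{P_{S'}/S'}$. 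So this follows from Lemma \ref{lem-flat-base-change}(a) applied with $X$ replaced by $P$, together with step 1.
\item Apply Lemma \ref{lem-flat-local-ch} to the perfect complex $i_*E$ and the class $\alpha=\td(P/S)\cap[P]$. This gives
\[
g^*\tau_{X/S}(E)=\ch^{P_{S'}}_{X_{S'}}\bigl((i_*E)_{S'}\bigr)\cap g^*\alpha.
\]
By step 2 and the identification $(i_*E)_{S'}\cong(i_{S'})_*E_{S'}$, the right-hand side equals $\tau_{X_{S'}/S'}(E_{S'})$.
\end{enumerate}

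Finally, both sides of the claimed identity are additive in $\xi$, so it suffices to verify it on classes of objects $E$, which is what the steps above do.

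The step I expect to need the most care is the bookkeeping in steps 1 and 3. One must confirm that the hypotheses of Lemma \ref{lem-flat-local-ch} hold, namely that $P$ and $P_{S'}$ are quasi-compact; this is fine because $X$ is quasi-projective and we may choose $P$ quasi-compact. One must also check that the dimension shift $d$ is compatible with the fundamental class $[P]$ when $P$ may be disconnected or of varying relative dimension over $S$. To handle this, I would work one connected component of $P$ at a time; on each such component the relative dimension over $S$ is constant. The remaining verifications are formal consequences of Lemma \ref{lem-flat-base-change} and Lemma \ref{lem-flat-local-ch}.
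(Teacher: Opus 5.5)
Your proposal is correct and follows the paper's proof. Both embed $X$ in a smooth $P$ over $S$, use Lemma~\ref{lem-flat-base-change}(a) for the Todd class and Lemma~\ref{lem-flat-local-ch} for the localized Chern character, and conclude with flat base change $(i_*E)_{S'}\cong (i_{S'})_*E_{S'}$. Your explicit check that $g^*[P]=[P_{S'}]$, using regularity of $P$ and $P_{S'}$, is a step the paper uses silently, so it is a useful addition rather than a different route.
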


\begin{proof}
Let $i\colon X\hookrightarrow P$ be a closed embedding such that $P$ is smooth over $S$. Then
\[g^*\tau_{X/S}(\xi)=g^*(\ch^P_X(i_*\xi)\cdot \td(P/S)\cap [P])\]
\[=\ch^{P_{S'}}_{X_{S'}}(g^*_Pi_*\xi)\cdot \td(P_{S'}/S')\cap [P_{S'}]\]
where the first equality is the definition of $\tau_{X/S}$ and the second one follows from Lemma \ref{lem-flat-base-change}(a) and Lemma \ref{lem-flat-local-ch}. Then the result follows from $\ch^{P_{S'}}_{X_{S'}}(g^*_Pi_*\xi)=\ch^{P_{S'}}_{X_{S'}}(i_{S' *}g^*_X\xi)$ by flat base change theorem.
\end{proof}

Next, we describe the behavior of the Riemann--Roch homomorphism under the specialization. Assume that $S$ is the spectrum of a DVR with the closed point $p\in S$ and the fraction field $K$. Let $X$ be a quasi-projective scheme over $S$. Then we have an open embedding $i_K\colon X_K\hookrightarrow X$ and a closed embedding $i_p\colon X_p\hookrightarrow X$, which induce exact sequences
\[\KK_0(X_p)\xra{i_{p*}}\KK_0(X)\xra{i_K^*} \KK_0(X_K)\to 0\]
and 
\[\CH_*(X_p)\xra{i_{p*}}\CH_*(X)\xra{i_K^*} \CH_*(X_K)\to 0\]
by \cite[Exercise II.6.10]{hartshorne:gtm52} and \cite[\href{https://stacks.math.columbia.edu/tag/02RX}{Tag 02RX}]{stacks-project}. As the normal bundle of $p$ in $S$ is trivial, we see that $i_p^*\circ i_{p_*}=0$ both at the level of K-groups and Chow groups. In other words, there are canonical homomorphisms $\mathrm{sp}$, which fill the dashed arrows in the diagrams
\[\begin{tikzcd}
	{\KK_0(X_p)} & {\KK_0(X)} & {\KK_0(X_K)} & 0 \\
	& {\KK_0(X_p)}
	\arrow["{i_{p*}}", from=1-1, to=1-2]
	\arrow["{i_K^*}", from=1-2, to=1-3]
	\arrow["{i_p^*}", from=1-2, to=2-2]
	\arrow[from=1-3, to=1-4]
	\arrow["{\mathrm{sp}}", dashed, from=1-3, to=2-2]
\end{tikzcd}\]
and
\[\begin{tikzcd}
	{\CH_*(X_p)} & {\CH_*(X)} & {\CH_*(X_K)} & 0 \\
	& {\CH_*(X_p)}
	\arrow["{i_{p*}}", from=1-1, to=1-2]
	\arrow["{i_K^*}", from=1-2, to=1-3]
	\arrow["{i_p^*}", from=1-2, to=2-2]
	\arrow[from=1-3, to=1-4]
	\arrow["{\mathrm{sp}}", dashed, from=1-3, to=2-2]
\end{tikzcd}\]
which are called \emph{specialization maps}. When $X=S$, it is clear from the above definition that $$\mathrm{sp}\colon \KK_0(\Spec(K))\to \KK_0(\Spec(\kappa(p)))$$ and
$$\quad\mathrm{sp}\colon \CH_*(\Spec(K))\to \CH_*(\Spec(\kappa(p)))$$ are isomorphisms.

A standard diagram chase gives the following lemma (see also \cite[Proposition 20.3]{fulton:intersection-theory}).

\begin{lemma}\label{lem-sp-push}
If $X\to S$ is proper, then we have a commutative diagram
\[\begin{tikzcd}
	{\CH_*(X_K)} & {\CH_*(X_p)} \\
	{\CH_*(\Spec(K))} & {\CH_*(\Spec(\kappa(p)))}
	\arrow["{\mathrm{sp}}", from=1-1, to=1-2]
	\arrow[from=1-1, to=2-1]
	\arrow[from=1-2, to=2-2]
	\arrow["{\mathrm{sp}}", from=2-1, to=2-2]
\end{tikzcd}\]
where vertical arrows are proper pushforwards.
\end{lemma}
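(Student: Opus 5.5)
We prove the commutativity of the diagram in Lemma \ref{lem-sp-push} by lifting classes to the total space $X$, pushing forward, and restricting to the special fibre. Let $f\colon X\to S$ be the proper structure morphism; write $f_K\colon X_K\to\Spec(K)$ and $f_p\colon X_p\to\Spec(\kappa(p))$ for its base changes. Both are proper, so all vertical pushforwards exist.

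The plan has three steps.

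\textbf{Step 1 (lift).} Take $\alpha\in\CH_*(X_K)$. By the exact sequence
\[
\CH_*(X_p)\xra{i_{p*}}\CH_*(X)\xra{i_K^*}\CH_*(X_K)\to 0,
\]
choose $\beta\in\CH_*(X)$ with $i_K^*\beta=\alpha$. By definition of the specialization map, $\mathrm{sp}(\alpha)=i_p^*\beta$.

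\textbf{Step 2 (push forward on $S$).} Consider $f_*\beta\in\CH_*(S)$. Restriction to an open subscheme commutes with proper pushforward, since the cartesian square for $X_K\subset X$ over $\Spec(K)\subset S$ is a flat base change. Hence
\[
i_K^*f_*\beta=f_{K*}i_K^*\beta=f_{K*}\alpha .
\]
Thus $f_*\beta$ is a lift of $f_{K*}\alpha$, and so $\mathrm{sp}(f_{K*}\alpha)=i_p^*f_*\beta$, where $i_p^*$ is now the Gysin map for $\Spec(\kappa(p))\hookrightarrow S$.

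\textbf{Step 3 (Gysin pullback versus pushforward).} It remains to show
\[
i_p^*f_*\beta=f_{p*}i_p^*\beta .
\]
The Gysin map $i_p^*$ attached to the effective Cartier divisor $p\subset S$ commutes with proper pushforward. Concretely, in the Stacks Project formalism $i_p^*$ on $X$ is the Gysin map for the effective Cartier divisor $X_p=f^{-1}(p)$, which is the pullback of $p$. The compatibility of Gysin maps of effective Cartier divisors with proper pushforward (Stacks Project, Tag 02T1 or the bivariant property of $[i_p]$; cf.~\cite[Proposition 20.3]{fulton:intersection-theory}) gives exactly this equality. Combining with Steps 1 and 2,
\[
\mathrm{sp}(f_{K*}\alpha)=i_p^*f_*\beta=f_{p*}i_p^*\beta=f_{p*}\mathrm{sp}(\alpha),
\]
which is the required commutativity.

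The only point needing care is Step 3, where we must make sure that the $i_p^*$ used to define $\mathrm{sp}$ is the Gysin map for the divisor $X_p$. It is, because $p\subset S$ is cut out by a uniformizer, so $X_p$ is the effective Cartier divisor it defines on $X$. Independence of the choice of lift $\beta$ is already built into the definition of $\mathrm{sp}$, so no further check is needed.
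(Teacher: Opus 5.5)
Your argument is correct and is exactly the standard diagram chase the paper invokes (cf.~\cite[Proposition 20.3]{fulton:intersection-theory}): lift to $X$, push forward to $S$, and use that restriction to the open generic fibre and the Gysin map for the pulled-back divisor $p$ both commute with proper pushforward. One small caveat: since the lemma does not assume $X\to S$ flat, $X_p$ need not be an effective Cartier divisor on $X$. Even then, the Gysin map for the zero scheme of the uniformizer (a pseudo-divisor) is defined and commutes with proper pushforward, so your Step 3 goes through unchanged.
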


The following result shows that specialization maps are compatible with the Riemann--Roch homomorphism.

\begin{lemma}[{\cite[Example 20.3.4]{fulton:intersection-theory}}]\label{lem-tau-special}
Let $X$ be a quasi-projective scheme over $S$. Assume that $S$ is the spectrum of a DVR with the closed point $p\in S$ and the fraction field $K$. Then the diagram
\[\begin{tikzcd}
	{\KK_0(X_K)} & {\CH_*(X_K)_{\QQ}} \\
	{\KK_0(X_p)} & {\CH_*(X_p)_{\QQ}}
	\arrow["{\tau_{X_K/K}}", from=1-1, to=1-2]
	\arrow["{\mathrm{sp}}"', from=1-1, to=2-1]
	\arrow["{\mathrm{sp}}", from=1-2, to=2-2]
	\arrow["{\tau_{X_p/\kappa(p)}}", from=2-1, to=2-2]
\end{tikzcd}\]
commutes.
\end{lemma}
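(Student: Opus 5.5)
The plan is to reduce everything to the construction $\tau_{X/S}(E)=\ch^P_X(i_*E)\cdot\td(P/S)\cap[P]$ from the proof of Theorem \ref{thm-tau}, with $S=\Spec$ of the DVR. Choose a closed embedding $i\colon X\hookrightarrow P$ with $P$ smooth and quasi-projective over $S$ (for instance an open subscheme of some $\PP^N_S$). Base change gives embeddings $X_K\hookrightarrow P_K$ and $X_p\hookrightarrow P_p$ over $K$ and $\kappa(p)$, and these compute $\tau_{X_K/K}$ and $\tau_{X_p/\kappa(p)}$. Take $\xi\in\KK_0(X_K)$ and lift it to $\tilde\xi\in\KK_0(X)$ with $i_K^*\tilde\xi=\xi$. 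By definition, $\mathrm{sp}(\xi)=i_p^*\tilde\xi$, where $i_p^*$ is the derived restriction to the closed fibre. This pullback is well defined on $\KK_0$ because $X_p\subset X$ is cut out by the pullback of a uniformizer $t$, so $i_p^*$ is cap/tensor with the two-term complex $[\oh_X\xrightarrow{t}\oh_X]$. Likewise, $\mathrm{sp}(\tau_{X_K/K}(\xi))=i_p^*\beta$ for any $\beta\in\CH_*(X)_{\QQ}$ restricting to $\tau_{X_K/K}(\xi)$, with $i_p^*$ the Gysin map of the Cartier divisor $X_p$.

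\textbf{Step 1 (generic fibre).} Lemma \ref{lem-tau-flat-base-change}, applied to the flat morphism $\Spec K\to S$, gives $i_K^*\tau_{X/S}(\tilde\xi)=\tau_{X_K/K}(\xi)$. Hence we may take $\beta=\tau_{X/S}(\tilde\xi)$, and it remains to prove
\[i_p^*\,\tau_{X/S}(\tilde\xi)=\tau_{X_p/\kappa(p)}(i_p^*\tilde\xi).\]

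\textbf{Step 2 (closed fibre).} Write $\tau_{X/S}(\tilde\xi)=\ch^P_X(i_*\tilde\xi)\cdot\td(P/S)\cap[P]$. The Gysin map $i_p^*$ is the bivariant class $[p\to S]$ restricted to $P$. By Lemma \ref{lem-local-ch}(d) it commutes with the localized Chern character, and it commutes with the Todd class of the bundle $T_{P/S}$. This gives
\[i_p^*\,\tau_{X/S}(\tilde\xi)=\ch^{P_p}_{X_p}\big((i_*\tilde\xi)|_{P_p}\big)\cdot\td(P_p/\kappa(p))\cap i_p^![P].\]
Here we use Lemma \ref{lem-local-ch}(a) to identify the restriction of $\ch^P_X$, and the identity $T_{P/S}|_{P_p}=T_{P_p}$. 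Since $P$ is flat over $S$, we have $i_p^![P]=[P_p]$. Finally, $(i_*\tilde\xi)|_{P_p}=i_{p*}(i_p^*\tilde\xi)$ in $\KK_0(P_p)$ by Tor-independent base change: $P\to S$ is flat and $P_p\subset P$ is a Cartier divisor, so $Li^*_{P_p}Ri_*\cong Ri_*Li_p^*$. Together these give $\tau_{X_p/\kappa(p)}(i_p^*\tilde\xi)$.

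\textbf{Main obstacle.} The delicate point is commuting the Gysin map of $P_p\subset P$ with $\ch^P_X$, which is a bivariant class on $X\to P$ rather than a class on $P$. I would first try to deduce this from Lemma \ref{lem-local-ch}(a) and (d), viewing the Gysin map as restriction of the bivariant class $[P_p\to P]$; this is the standard commutativity of bivariant classes with the classes of effective Cartier divisors. If that fails, I would instead pull back along $P\times_S\bA^1$ and use Lemma \ref{lem-homotopy}, in the style of \cite[Example 20.3.4]{fulton:intersection-theory}. The remaining work is the dimension-function bookkeeping from Example \ref{rmk:dim-function-regular}, which only shifts degrees.
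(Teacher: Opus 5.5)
Your argument is correct. The paper gives no proof beyond citing \cite[Example 20.3.4]{fulton:intersection-theory}, and what you have written is essentially the standard argument behind that example. You restrict $\tau_{X/S}(\tilde\xi)$ to the open generic fibre, then move the Gysin map of $p\to S$ past $\ch^P_X$ using Lemma \ref{lem-local-ch}(a),(d). That lemma already disposes of your ``main obstacle'', so no homotopy argument is needed.

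One justification should be fixed. When $X$ is not flat over $S$, $X_p\subset X$ need not be Cartier, and the square formed by $X_p\to X$ and $j\colon P_p\hookrightarrow P$ is not Tor-independent. With your Koszul definition of $i_p^*$, however, the projection formula gives $j_*Lj^*i_*F\simeq i_*(F\otimes^{\LL}[\oh_X\xrightarrow{t}\oh_X])$ for $F\in\Db(X)$, since $t$ is a nonzerodivisor on $P$. This yields equality of the classes in $\KK_0(X_p)$, that is, with supports. That is what $\ch^{P_p}_{X_p}$ requires; equality merely in $\KK_0(P_p)$, as you wrote, would not suffice.
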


\subsection{Chern characters}\label{subsec-chern}

We now introduce a well-behaved notion of Chern characters for bounded coherent complexes. We fix an integral Noetherian regular scheme $S$ of finite Krull dimension with the dimension function $\delta$ as in \eqref{eq:dim-function-regular}.

Recall that for a scheme $X$ that is quasi-projective and lci over $S$, we define the \emph{virtual Todd class of $X$ over $S$} as
\[\td(X/S)\coloneqq\td(\mathbb{T}_{X/S})\in \A^*(X,\delta)_{\QQ}.\]
Since $X$ is quasi-projective and lci over $S$, the cotangent complex $\mathbb{L}_{X/S}$ and the tangent complex $\mathbb{T}_{X/S}$ are perfect complexes. Moreover, $X$ is quasi-compact, so $\td(\mathbb{T}_{X/S})\in \A^*(X,\delta)_{\QQ}$ is well-defined.

It is clear that when $X$ is smooth over $S$, the class $\td(X/S)$ is exactly the usual Todd class of $X$ over $S$ defined by the relative tangent bundle. More generally, the class $[\mathbb{T}_{X/S}]\in \KK^0(X)$ is the same as the virtual tangent bundle in the sense of \cite[B.7.6]{fulton:intersection-theory}, and $\td(X/S)$ is the same as the notion of virtual Todd class used in \cite{fulton:intersection-theory}.

Note that we can always take the formal inverse for classes in $\A^*(X,\delta)_{\QQ}$ whose degree-zero part is $1$ (cf.~\cite[\href{https://stacks.math.columbia.edu/tag/0ESY}{Tag 0ESY}]{stacks-project}). Then motivated by the Grothendieck--Riemann--Roch Theorem, we define the following notion.

\begin{definition}\label{def:ch-lci}
Let $X$ be a scheme quasi-projective and lci over $S$. For any class $\xi\in \KK_0(X)$, we define the \emph{Chern character of $\xi$ over $S$} as
\[\bch(\xi)_S\coloneqq\td^{-1}(X/S)\cap \tau_{X/S}(\xi) \in \CH_*(X,\delta)_{\QQ}.\]
We denote the component of $\bch(\xi)_S$ of $\delta$-dimension $\dim_{\delta} X-i$ by 
\[\bch_i(\xi)_S\in \CH_{\dim_{\delta} X-i}(X,\delta)_{\QQ}.\]
\end{definition}

Therefore, we get a homomorphism
\[\bch_i(-)_S\colon \KK_0(X)\to \CH_{\dim_{\delta} X-i}(X,\delta)_{\QQ}.\]
We will omit $S$ in the notation if it is clear.

Using Theorem \ref{thm-tau}, we can prove some general properties of Chern characters.

\begin{lemma}\label{lem-chM-general}
Let $X$ and $Y$ be schemes quasi-projective and lci over $S$.

\begin{enumerate}
    \item If $f\colon X\to Y$ is proper, then for any class $\xi\in \KK_0(X)$, we have
    \[f_*(\td(X/S)\cap \bch(\xi)_S)=\td(Y/S)\cap \bch(f_*\xi)_S.\]

    \item If $f\colon X\to Y$ is lci or flat, then for any class $\xi\in \KK_0(Y)$, we have
    \[\bch(f^*\xi)_S=f^*\bch(\xi)_S.\]

    \item If $\alpha\in \KK^0(X)$ and $\xi\in \KK_0(X)$, then
    \[\bch(\alpha\otimes \xi)_S=\ch(\alpha)\cap \bch(\xi)_S.\]

    \item If $E$ is a coherent sheaf, then
    \[\bch(E)_S=\sum \left(\mathrm{length}_{\cO_{X,\eta}}E|_{\eta}\right) [\overline{\{\eta\}}]+\text{ terms of lower } \delta\text{-dimensions},\]
    where the sum runs over the generic points of irreducible components of $\mathrm{Supp}(E)$ with top $\delta$-dimension.
\end{enumerate}


\end{lemma}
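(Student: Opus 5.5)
Each of the four parts follows by unwinding Definition \ref{def:ch-lci}, $\bch(\xi)_S=\td^{-1}(X/S)\cap\tau_{X/S}(\xi)$, and applying the matching part of Theorem \ref{thm-tau}. Throughout, I will use that $\td^{\pm1}(X/S)$ are invertible bivariant classes and that cap products with characteristic classes of perfect complexes commute and are compatible with flat/lci pullback (Lemma \ref{lem-flat-base-change} and the standard bivariant formalism).

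For (a), I rewrite the left side as $f_*(\td(X/S)\cap\td^{-1}(X/S)\cap\tau_{X/S}(\xi))=f_*\tau_{X/S}(\xi)$. By Theorem \ref{thm-tau}(a) this equals $\tau_{Y/S}(f_*\xi)=\td(Y/S)\cap\bch(f_*\xi)_S$.

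For (c), Theorem \ref{thm-tau}(c) gives $\tau_{X/S}(\alpha\otimes\xi)=\ch(\alpha)\cap\tau_{X/S}(\xi)$. Capping with $\td^{-1}(X/S)$ and commuting the two characteristic classes of perfect complexes yields the claim.

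For (b), in the lci case Theorem \ref{thm-tau}(b) gives $\tau_{X/S}(f^*\xi)=\td(\mathbb{T}_f)\cap f^*\tau_{Y/S}(\xi)$. The key input is the distinguished triangle of cotangent complexes $Lf^*\mathbb{L}_{Y/S}\to\mathbb{L}_{X/S}\to\mathbb{L}_f$, all perfect since $X,Y$ are lci over $S$ and $f$ is lci. Dualizing gives $[\mathbb{T}_{X/S}]=[\mathbb{T}_f]+f^*[\mathbb{T}_{Y/S}]$ in $\KK^0(X)$, so by multiplicativity of the Todd class $\td(X/S)=\td(\mathbb{T}_f)\cdot f^*\td(Y/S)$. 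Then
\[\bch(f^*\xi)_S=\td^{-1}(X/S)\cap\td(\mathbb{T}_f)\cap f^*\tau_{Y/S}(\xi)=f^*\td^{-1}(Y/S)\cap f^*\tau_{Y/S}(\xi)=f^*\bch(\xi)_S,\]
where the last step uses that the Gysin (or flat) pullback commutes with capping by Chern classes of perfect complexes. If $f$ is flat but not lci, I will use the following fallback. Since $X$ and $Y$ are lci over $S$, $f$ is automatically lci (a flat morphism between lci $S$-schemes of finite type has perfect cotangent complex of amplitude $[-1,0]$ by the triangle above and Avramov's criterion). Otherwise, I would factor $f$ as a closed embedding into $\PP^N_Y$ followed by the smooth projection and treat the two pieces separately. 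I also need the caveat that the pullback $f^*$ on Chow groups must be the one matching Theorem \ref{thm-tau}(b), namely the refined Gysin map for embeddable lci morphisms (which agrees with flat pullback for flat lci morphisms). Quasi-projectivity guarantees embeddability.

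For (d), Theorem \ref{thm-tau}(e) applies when $E=\cO_V$. For a general coherent $E$, I would take a filtration of $E$ with subquotients $\cO_{V_j}$-modules supported on integral closed subschemes; the top-dimensional pieces of such a filtration count exactly the length of $E$ at each generic point $\eta$. Additivity of $\tau$ in $\KK_0$ then gives $\tau_{X/S}(E)=\sum\mathrm{length}(E_\eta)[\overline{\{\eta\}}]+\text{lower}$. Since $\td^{-1}(X/S)=1+\text{(positive degree)}$ only lowers $\delta$-dimension beyond the leading term, the leading term of $\bch(E)_S$ is unchanged. The main obstacle is (b), specifically justifying the Todd class identity from the cotangent triangle in the bivariant setting and confirming that the pullback in Theorem \ref{thm-tau}(b) commutes with $\td^{-1}(Y/S)\cap-$. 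I expect both to follow from the functoriality of Chern classes of perfect complexes under pullback (Stacks Tag 0GUE).
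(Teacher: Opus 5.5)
Your proposal is correct and follows essentially the same proof as the paper. Parts (a)--(c) come from unwinding $\bch(\xi)_S=\td^{-1}(X/S)\cap\tau_{X/S}(\xi)$ against Theorem \ref{thm-tau}(a)--(c); for (b) you use $[\mathbb{T}_{X/S}]=[\mathbb{T}_f]+f^*[\mathbb{T}_{Y/S}]$ from the cotangent triangle, together with the fact that a flat morphism between lci $S$-schemes is automatically lci (the paper cites Stacks Project Tag 09RL for this, so your factorization fallback is not needed). Part (d) follows from Theorem \ref{thm-tau}(e) for $\cO_V$ plus d\'evissage; the standard d\'evissage (Stacks Project Tag 01YG) has subquotients that are ideal sheaves on integral closed subschemes $Z_j$, whose classes differ from $[\cO_{Z_j}]$ only by lower-dimensional terms, and this is exactly what makes your length count at the generic points go through.
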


\begin{proof}
Parts (a)-(c) follow from the definition of $\bch$ and Theorem \ref{thm-tau}. Note that in part (b), if $f$ is flat, then it is lci by \cite[\href{https://stacks.math.columbia.edu/tag/09RL}{Tag 09RL}]{stacks-project}. Moreover, we have $[\mathbb{T}_f]+[f^*\mathbb{T}_{Y/S}]=[\mathbb{T}_{X/S}]\in \KK^0(X)$ by the standard exact triangle of cotangent complexes.

For (d), when $E=\cO_V$, the result follows from Theorem \ref{thm-tau}(e). Now, the general statement can be deduced by d\'evissage (cf.~\cite[\href{https://stacks.math.columbia.edu/tag/01YG}{Tag 01YG}]{stacks-project}).
\end{proof}

Besides Lemma \ref{lem-chM-general}, there are further compatibilities of $\bch$.

\begin{lemma}\label{lem-chM-lci}
Let $X$ be a scheme quasi-projective and lci over $S$.

\begin{enumerate}
    \item We have
    \[\bch(\oh_X)_S=[X].\]

    \item For any class $\xi\in \KK^0(X)$, we have
    \[\ch(\xi)\cap [X]=\bch(\xi)_S.\]

\end{enumerate}
\end{lemma}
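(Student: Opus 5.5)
Both parts follow directly from Definition \ref{def:ch-lci} once we know what $\tau_{X/S}$ does on the structure sheaf and on perfect classes. The key inputs are Theorem \ref{thm-tau}(c) and (d).

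For part (a), Theorem \ref{thm-tau}(d) applies because $X$ is lci over $S$:
\[\tau_{X/S}(\oh_X)=\td(X/S)\cap [X].\]
Capping with $\td^{-1}(X/S)$, which is the formal inverse in $\A^*(X,\delta)_{\QQ}$ of a class with degree-zero part $1$, gives
\[\bch(\oh_X)_S=\td^{-1}(X/S)\cap\big(\td(X/S)\cap[X]\big)=\big(\td^{-1}(X/S)\cdot\td(X/S)\big)\cap[X]=[X].\]
The middle equality uses that capping is a module action of the ring $\A^*(X,\delta)_{\QQ}$ on $\CH_*(X,\delta)_{\QQ}$. This is part of the bivariant formalism: composition of bivariant classes is compatible with cap product.

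For part (b), write any $\xi\in\KK^0(X)$ as $\xi\otimes[\oh_X]$, where $[\oh_X]\in\KK_0(X)$. Then apply Lemma \ref{lem-chM-general}(c), which itself comes from Theorem \ref{thm-tau}(c):
\[\bch(\xi)_S=\bch(\xi\otimes\oh_X)_S=\ch(\xi)\cap\bch(\oh_X)_S=\ch(\xi)\cap[X],\]
where the last step is part (a). The only point to check is that the $\KK^0(X)$-module structure on $\KK_0(X)$ sends $\xi\otimes[\oh_X]$ to the image of $\xi$ under the natural map $\KK^0(X)\to\KK_0(X)$. This holds because $E\otimes^{\LL}\oh_X\cong E$ for any perfect $E$.

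The only place with any potential subtlety is commuting $\td^{-1}(X/S)$ past $\ch(\xi)$ inside the definition of $\bch$. This is already packaged in Lemma \ref{lem-chM-general}(c). If one unwinds it, it uses that Chern classes of perfect complexes commute in $\A^*(X,\delta)_{\QQ}$, and that holds since both are polynomial in Chern classes of perfect complexes (Stacks Project, Tag 0GUE).
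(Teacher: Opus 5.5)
Your proposal is correct and follows the paper's proof exactly: part (a) is Theorem \ref{thm-tau}(d) combined with Definition \ref{def:ch-lci}, and part (b) follows from Lemma \ref{lem-chM-general}(c) together with part (a).
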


\begin{proof}
By our definition of $\bch$, part (a) is equivalent to Theorem \ref{thm-tau}(d). Then part (b) follows directly from Lemma \ref{lem-chM-general}(c) and part (a). 
\end{proof}

\begin{lemma}\label{lem-chM-flat-base-change}
Let $S'$ be an integral Noetherian regular scheme of finite Krull dimension with the dimension function $\delta'$ as in \eqref{eq:dim-function-regular}, and $g\colon S'\to S$ be a flat morphism such that there exists $d\in \ZZ$ with $\delta'(s')=\delta(s)+d$ for any $s\in S$ and any generic point $s'$ of $g^{-1}(s)$. Let $X$ be a quasi-projective lci scheme over $S$. Then
\[g^*\bch(\xi)_S=\bch(g^*_X\xi)_{S'}\in \CH_*(X_{S'}, \delta')_{\QQ}\]
for any $\xi\in \KK_0(X)$.
\end{lemma}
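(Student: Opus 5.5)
By Definition \ref{def:ch-lci}, $\bch(\xi)_S=\td^{-1}(X/S)\cap\tau_{X/S}(\xi)$. The plan is to commute $g^*$ separately past the Riemann--Roch homomorphism and past capping with the inverse virtual Todd class, and then compare the two results.

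\textbf{Step 1 (the Riemann--Roch part).} Lemma \ref{lem-tau-flat-base-change} applies verbatim, since $X$ is quasi-projective over $S$ and $g$ satisfies the same dimension hypothesis. It gives
\[g^*\tau_{X/S}(\xi)=\tau_{X_{S'}/S'}(g_X^*\xi).\]

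\textbf{Step 2 (the Todd class part).} Write $\td^{-1}(X/S)$ as a polynomial with $\QQ$-coefficients in the Chern classes $c_p(\mathbb{T}_{X/S})$. Only finitely many degrees matter for a fixed cycle class, since the formal inverse is truncated. Because $X$ is quasi-projective over the regular scheme $S$, it has the resolution property. Hence the perfect complex $\mathbb{T}_{X/S}$ is represented by a bounded complex of finite locally free sheaves. Lemma \ref{lem-flat-base-change}(a) then shows that capping with any monomial in these Chern classes commutes with $g^*$, with the base-changed complex $(\mathbb{T}_{X/S})_{S'}$ on the right. Therefore
\[g^*\bigl(\td^{-1}(X/S)\cap\beta\bigr)=\td^{-1}\bigl((\mathbb{T}_{X/S})_{S'}\bigr)\cap g^*\beta \quad\text{for all } \beta\in\CH_*(X,\delta)_{\QQ}.\]

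\textbf{Step 3 (identifying the tangent complex).} It remains to show that $(\mathbb{T}_{X/S})_{S'}\cong\mathbb{T}_{X_{S'}/S'}$, at least in $\KK^0(X_{S'})$. This is the only point needing care. Flat base change of the cotangent complex, namely the Tor-independence of $X$ and $S'$ over $S$ since $g$ is flat, gives $L g_X^*\mathbb{L}_{X/S}\cong\mathbb{L}_{X_{S'}/S'}$. Dualizing commutes with pullback for perfect complexes, so the tangent complexes also agree. Since lci is stable under flat base change, $X_{S'}$ is quasi-projective and lci over $S'$, so the right-hand side of the statement is defined.

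\textbf{Step 4 (combining).} Putting Steps 1--3 together,
\[g^*\bch(\xi)_S=\td^{-1}(X_{S'}/S')\cap\tau_{X_{S'}/S'}(g_X^*\xi)=\bch(g_X^*\xi)_{S'}.\]
Finally, $g^*$ shifts $\delta$-dimension uniformly by $d$ and $\dim_{\delta'}X_{S'}=\dim_\delta X+d$. So the graded pieces $\bch_i$ correspond to each other as well.
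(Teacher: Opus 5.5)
Your proposal is correct and follows the paper's proof, which likewise combines Lemma \ref{lem-tau-flat-base-change} with flat base change of the cotangent complex \cite[\href{https://stacks.math.columbia.edu/tag/08QQ}{Tag 08QQ}]{stacks-project} to identify $(\mathbb{T}_{X/S})_{S'}$ with $\mathbb{T}_{X_{S'}/S'}$; your Step 2 spells out the use of Lemma \ref{lem-flat-base-change}(a) that the paper leaves implicit. The one thing to change is the appeal to the resolution property in Step 2, which is not automatic when the regular scheme $S$ is not separated; instead, take the locally free presentation of $\mathbb{T}_{X/S}$ directly from an embedding $X\hookrightarrow P$ with $P$ smooth over $S$, namely the two-term complex $[T_{P/S}|_X\to \cN_{X/P}]$.
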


\begin{proof}
This follows from Lemma \ref{lem-tau-flat-base-change} and \cite[\href{https://stacks.math.columbia.edu/tag/08QQ}{Tag 08QQ}]{stacks-project}.
\end{proof}

We end this subsection with the following key lemmas, which enable us to compute the Chern characters of dual sheaves. For an element $e$ in $\CH_*(X,\delta)_{\QQ}$ (resp.~$\A^*(X\to Y,\delta)_{\QQ}$), we define
\[e^{\vee}\coloneqq\sum (-1)^i e_i,\]
where $e_i$ is the component of $e$ in $\CH_{\dim_{\delta} X-i}(X,\delta)_{\QQ}$ (resp.~$\A^i(X\to Y,\delta)_{\QQ}$).

\begin{lemma}\label{lem-derived-dual-ch}
Let $X$ be an equidimensional quasi-projective lci scheme over $S$. Then
\[\bch(E)_S=(\bch(\mathbb{D}^X(E))_S)^{\vee}\in \CH_*(X,\delta)_{\QQ}\]
for any $E\in \Db(X)$.
\end{lemma}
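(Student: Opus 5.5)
The plan is to reduce the statement to a duality formula for localized Chern characters on a smooth ambient scheme. By the proof of Theorem \ref{thm-tau}, it is enough to compare $\tau_{X/S}(E)$ with $\tau_{X/S}(\mathbb{D}^X(E))$ and then divide by $\td(X/S)$.

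First, choose a closed embedding $i\colon X\hookrightarrow P$ with $P$ smooth over $S$. Since $X$ is lci over $S$, $i$ is a regular embedding of constant codimension $c$ (using equidimensionality), with normal bundle $\cN$. Grothendieck duality for the finite morphism $i$ gives
\[
i_*R\cH om_X(E,i^!\oh_P)\cong R\cH om_P(i_*E,\oh_P),
\qquad i^!\oh_P\cong \det(\cN)[-c].
\]
Hence
\[
i_*\mathbb{D}^X(E)\cong \mathbb{D}^P(i_*E)\otimes \det(\cN)^{-1}[c]
\]
as perfect complexes on $P$ supported on $X$. Applying $\ch^P_X$ and Lemma \ref{lem-local-ch}(f) gives
\[
\ch^P_X(i_*\mathbb{D}^X(E))=(-1)^c\,\ch^P_X(\mathbb{D}^P(i_*E))\cdot \ch(\det\cN)^{-1}.
\]

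The key input is a localized duality formula:
\[
\ch^P_X(\mathbb{D}^P(F))=\ch^P_X(F)^{\vee}
\]
for perfect $F$ on $P$ supported on $X$. I would try to prove this from the construction in \cite[Tag 0FB0]{stacks-project}. That construction is built from ordinary Chern classes of complexes on blow-ups (or on the graph construction) that are acyclic off $X$. Dualizing such a complex changes each $c_k$ into $(-1)^kc_k$, and this sign twist is compatible with every step of the construction. If that route becomes messy, the fallback is deformation to the normal cone, using Lemma \ref{lem-homotopy} and Lemma \ref{lem-tau-XYZ}. This reduces the claim to the zero section of the vector bundle $\cN$, where $\ch^P_X$ of a Koszul complex has the explicit form $\td^{-1}(\cN)\cdot[j]$, and its dual is the Koszul complex shifted and twisted by $\det\cN$. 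I expect this step to be the main obstacle.

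Next come the Todd-class identities. From the triangle $i^*\mathbb{L}_{P/S}\to\mathbb{L}_{X/S}\to \cN^\vee[1]$ we get
\[
\td(X/S)=i^*\td(P/S)\cdot\td(\cN)^{-1}.
\]
We also use $\td(V)^{\vee}=\td(V^\vee)=\td(V)\,\ch(\det V)^{-1}$. For a degree-$k$ class $a\in\A^k(X\to P)$, capping with $[P]$ lands in $\delta$-dimension $\dim_\delta X-(k-c)$. So the involution ${}^{\vee}$ taken on $\CH_*(X)$ relative to $\dim_\delta X$ differs from the one on bivariant classes by the sign $(-1)^c$, and this cancels the $(-1)^c$ coming from the shift. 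Putting this together,
\[
\tau_{X/S}(\mathbb{D}^XE)=\bigl(\tau_{X/S}(E)\bigr)^{\vee}\cdot \ch(\det\cN)^{-1}\,\ch(\det T_{P/S})^{-1}.
\]
Then dividing by $\td(X/S)=\td(T_P)\,\td(\cN)^{-1}$ and again applying $\td(V)^\vee=\td(V)\ch(\det V)^{-1}$, the determinant factors cancel exactly, leaving
\[
\bch(\mathbb{D}^XE)=\bch(E)^\vee .
\]
Since ${}^{\vee}$ is an involution, this is the claim. The last step is routine bookkeeping of degrees; the only delicate points are the sign of the shift and the orientation convention for $\det\cN$ versus $\det\cN^{\vee}$. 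These can be checked on the test case $E=\oh_X$, where both sides equal $[X]$ by Lemma \ref{lem-chM-lci}(a).
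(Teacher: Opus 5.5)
Your proposal follows the paper's proof. You embed $X$ in a smooth $P$ and use duality for the closed immersion to express $i_*\mathbb{D}^X(E)$ through $\mathbb{D}^P(i_*E)$. You then apply the localized duality $\ch^P_X(\mathbb{D}^P F)_l=(-1)^l\ch^P_X(F)_l$, and finish with $\td(V)^{\vee}=\td(V)\ch(\det V)^{-1}$. The paper does not reprove the localized duality; it simply cites \cite[Example 18.1.2]{fulton:intersection-theory}, so your planned argument for it is extra but not needed. The paper also routes the twist through $\omega_X$ and $\omega_Y$ rather than $\det\cN$, which is equivalent since $\det\cN\cong\omega_X\otimes i^*\omega_P^{-1}$.

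There are two small corrections.

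\begin{itemize}
\item The intermediate formula should read $\tau_{X/S}(\mathbb{D}^XE)=\ch(\det\cN)^{-1}\ch(\det T_{P/S})\cap\tau_{X/S}(E)^{\vee}=\ch(\omega_X)^{-1}\cap\tau_{X/S}(E)^{\vee}$, with a positive exponent on $\ch(\det T_{P/S})$. With your sign the determinant factors do not cancel, as the test case $X=P$, $E=\oh_X$ already shows.
\item Pulling out $\det\cN^{-1}$, which is a line bundle only on $X$, should be justified at the level of $\tau_{X/S}$ by Theorem~\ref{thm-tau}(c). Lemma~\ref{lem-local-ch}(f) does not apply, since it requires a perfect complex on $P$.
\end{itemize}
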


\begin{proof}
Since $X$ is lci over $S$, it is Gorenstein. Hence $\omega_X^{\bullet}[-\dim X]\cong \omega_X$ is a line bundle. In particular, we have
\begin{equation}\label{eq:two-dual}
\mathbb{D}^X(E)\otimes^{\LL} \omega_X[\dim X]\cong \mathbb{D}_X(E).
\end{equation}

Let $j\colon X\hookrightarrow Y$ be a closed embedding into a scheme $Y$ smooth over $S$ of codimension $c$. Since $X$ is lci over $S$, \cite[\href{https://stacks.math.columbia.edu/tag/069M}{Tag 069M}]{stacks-project} implies that $j$ is also lci. Then $\mathbb{L}_j\cong \cN_{X/Y}^{\vee}[1]$, where the normal sheaf $\cN_{X/Y}$ is locally free. From \cite[Theorem III.7.11]{hartshorne:gtm52}, we see
\[\omega_X\otimes j^*\omega_Y^{-1}\cong \det(\cN_{X/Y}).\]

By definition, we have
\[\tau_{X/S}(E)=j^*\td(Y/S)\cdot \ch^Y_X(j_*E)\cap [Y].\]
Hence,
\[\bch(E)_S=\td^{-1}(X/S)\cdot j^*\td(Y/S)\cdot \ch^Y_X(j_*E)\cap [Y]\]
\[=\td^{-1}(\mathbb{T}_j)\cdot \ch^Y_X(j_*E)\cap [Y].\]

We also have
\[\tau_{X/S}(\mathbb{D}^X(E))=(-1)^{\dim X}\tau_{X/S}(\mathbb{D}_X(E)\otimes \omega_X^{-1})=(-1)^{\dim X}\ch(\omega_X^{-1})\cap \tau_{X/S}(\mathbb{D}_X(E))\]
\[=(-1)^{\dim X}\ch(\omega_X^{-1})\cdot j^*\td(Y/S)\cdot \ch^Y_X(j_*\mathbb{D}_X(E))\cap [Y]\]
\[=(-1)^{\dim X}\ch(\omega_X^{-1})\cdot j^*\td(Y/S)\cdot \ch^Y_X(\mathbb{D}_Y(j_*E))\cap [Y]\]
\[=(-1)^c\ch(\omega_X^{-1}\otimes j^*\omega_Y)\cdot j^*\td(Y/S)\cdot \ch^Y_X(\mathbb{D}^Y(j_*E))\cap [Y],\]
where the first equality follows from \eqref{eq:two-dual}, the second equality follows from Theorem \ref{thm-tau}(c), the fourth one follows from \cite[\href{https://stacks.math.columbia.edu/tag/0GEW}{Tag 0GEW}]{stacks-project}, and the last one follows from \cite[\href{https://stacks.math.columbia.edu/tag/0FBF}{Tag 0FBF}]{stacks-project} (see also \cite[Proposition 18.1(c)]{fulton:intersection-theory}). In particular, we see
\[\bch(\mathbb{D}^X(E))_S=(-1)^c\td^{-1}(X/S)\cdot \ch(\omega_X^{-1}\otimes j^*\omega_Y)\cdot j^*\td(Y/S)\cdot \ch^Y_X(\mathbb{D}^Y(j_*E))\cap [Y]\]
\[=(-1)^c\td^{-1}(\mathbb{T}_j)\cdot \ch(\omega_X^{-1}\otimes j^*\omega_Y)\cdot \ch^Y_X(\mathbb{D}^Y(j_*E))\cap [Y]\]
\[=(-1)^c(\td^{-1}(\mathbb{T}_j))^{\vee}\cdot \ch^Y_X(\mathbb{D}^Y(j_*E))\cap [Y],\]
where the last equality follows from Lemma \ref{lem-formula-td} and $\det(\mathbb{T}_j)\cong \omega_X^{-1}\otimes j^*\omega_Y$. Thus, using \cite[Example 18.1.2]{fulton:intersection-theory}, we can compute
\[\bch_i(\mathbb{D}^X(E))_S=(-1)^c\sum_{k+l=c+i} (-1)^k\td^{-1}_k(\mathbb{T}_j)\cdot (\ch^Y_X(\mathbb{D}^Y(j_*E)))_l\cap [Y]\]
\[=(-1)^c\sum_{k+l=c+i} (-1)^k\td^{-1}_k(\mathbb{T}_j)\cdot (-1)^l(\ch^Y_X(j_*E))_l\cap [Y]\]
\[=(-1)^i\sum_{k+l=c+i}\td^{-1}_k(\mathbb{T}_j)\cdot(\ch^Y_X(j_*E))_l\cap [Y]\]
\[=(-1)^i\bch_i(E)_S\]
and the result follows.
\end{proof}

\begin{lemma}\label{lem-formula-td}
Let $X$ be a quasi-projective scheme over $S$ and $E$ be a perfect complex on $X$. Then
\[\td^{-1}(E)\cdot \ch(\det(E))=(\td^{-1}(E))^{\vee}\in \A^*(X,\delta)_{\QQ}.\]
\end{lemma}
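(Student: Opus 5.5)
The plan is to reduce to the case of a single line bundle using the splitting principle, and then check an identity of power series. Both sides of the claimed identity are multiplicative in exact triangles of perfect complexes. Indeed, $\td^{-1}$ and $\ch\circ\det$ are multiplicative: $\det$ takes triangles to tensor products, and $\ch$ of a line bundle product is the product of the $\ch$'s. The operation $e\mapsto e^{\vee}$ is a ring homomorphism on the graded ring $\A^*(X,\delta)_{\QQ}$, since it multiplies degree $i$ by $(-1)^i$. Hence it suffices to treat classes in $\KK^0(X)$ additively. For a perfect complex $E$ on the quasi-projective (hence resolution-property) scheme $X$, I would write $[E]=\sum(-1)^i[E^i]$ with $E^i$ finite locally free, reducing to $E$ a vector bundle. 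The case of $-[V]$ then follows formally from multiplicativity and from inverting: $\td^{-1}$ of the inverse is $\td$, $\det$ is inverted, and $\vee$ commutes with inversion.

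For a vector bundle $E$ of rank $r$, I would invoke the splitting principle for bivariant classes over $(S,\delta)$ (cf.~the Stacks project treatment of Chern classes via flag bundles). Pulling back to the flag bundle $\pi\colon F\to X$ is injective on the relevant operational classes, because $\pi^*$ on Chow groups of every $X'\to X$ is split injective, by the projective bundle formula applied iteratively. After this pullback, $E$ has a filtration with line bundle quotients $L_1,\dots,L_r$. By multiplicativity again, it suffices to prove the identity for a single line bundle $L$ with $x=c_1(L)$.

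In this case the identity becomes the power series identity
\[
\frac{1-e^{-x}}{x}\cdot e^{x}=\frac{e^{x}-1}{x}=\frac{1-e^{-(-x)}}{-x},
\]
which is exactly $\td^{-1}(x)e^{x}=\td^{-1}(-x)$. Since $(\,\cdot\,)^{\vee}$ replaces $x$ by $-x$ in any polynomial in $c_1(L)$, this is precisely the claim. There is no convergence issue, as $c_1(L)$ acts nilpotently on the Chow groups of each finite-type $X'$ over $X$ (quasi-compactness bounds the $\delta$-dimensions).

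The main obstacle I expect is the bookkeeping in the splitting-principle step. One must justify that the identity can be checked after flag-bundle pullback in the bivariant group $\A^*(X,\delta)_{\QQ}$ over a general base $(S,\delta)$, rather than only on $\CH_*(X)$. I would handle this by evaluating both sides on an arbitrary $\alpha\in\CH_*(X')$ for $X'\to X$, pulling back along the flag bundle of $E|_{X'}$, and using that flat pullback is injective there. The identity $\pi^*(c\cap\alpha)=\pi^*c\cap\pi^*\alpha$ then finishes the reduction.
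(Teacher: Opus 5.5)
Your proposal is correct and follows the paper's own proof: reduce to a locally free sheaf, apply the splitting principle, and check for each Chern root $x$ that $\frac{1-e^{-x}}{x}\,e^{x}=\frac{e^{x}-1}{x}=\frac{1-e^{x}}{-x}$, which is exactly the paper's computation with the product over $c_1,\dots,c_r$. Your extra bookkeeping only spells out the paper's ``without loss of generality'' and ``by the splitting principle'' steps: multiplicativity in triangles and filtrations lets you pass from perfect complexes to line bundles, and injectivity of flag-bundle pullback on each $\CH_*(X')$ justifies the splitting principle.
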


\begin{proof}
Without loss of generality, we can assume that $E$ is a locally free sheaf. By the splitting principle \cite[\href{https://stacks.math.columbia.edu/tag/02UL}{Tag 02UL}]{stacks-project}, we can assume that $E$ is a direct sum of line bundles on $X$ with Chern roots $c_1,...,c_r$. Then
\[\td^{-1}(E)\cdot \ch(\det(E))=\exp(\sum^r_{i=1} c_i)\cdot\prod^r_{i=1} \frac{1-\exp(-c_i)}{c_i}=\prod^r_{i=1} \frac{\exp(c_i)-1}{c_i}\]
\[=\prod^r_{i=1} \frac{1-\exp(-(-c_i))}{-c_i}=(\td^{-1}(E))^{\vee}\]
and the result follows.
\end{proof}

\subsection{A generalization}\label{subsec:generalization-ch}

If $X \to S$ is a quasi-projective flat morphism with equidimensional fibers which is fiberwise lci in codimension $d$, then using the exact sequence
\[\CH_k(X\setminus \mathrm{LCI}(X/S),\delta)\to \CH_k(X,\delta)\to \CH_k(\mathrm{LCI}(X/S),\delta)\to 0\]
from \cite[\href{https://stacks.math.columbia.edu/tag/02RX}{Tag 02RX}]{stacks-project}, we have a natural identification $$\CH_k(X,\delta)=\CH_k(\mathrm{LCI}(X/S),\delta)$$ for each $k\geq \dim_{\delta} X-d$. Therefore, for \(i\le d\), we define \(\bch_i(\xi)_S\) to be the
unique class in \(\CH_{\dim_\delta X-i}(X,\delta)_{\mathbb Q}\) whose
restriction to \(\mathrm{LCI}(X/S)\) is
\[
\bch_i(\xi|_{\mathrm{LCI}(X/S)})_S\in \CH_{\dim_\delta X-i}(\mathrm{LCI}(X/S),\delta)_{\mathbb Q}.
\]


From the definition, for any $\xi\in \CH_{\dim_{\delta} X-j}(X,\delta)_{\QQ}$ with $i+j\leq d$, if we denote by $\td_i(X/S)\cap\xi$ the unique lift of $$\td_i(\mathrm{LCI}(X/S)/S)\cap (\xi)|_{\mathrm{LCI}(X/S)}\in \CH_{\dim_{\delta} X-i-j}(\mathrm{LCI}(X/S),\delta)_{\QQ},$$ then we have
\begin{equation}\label{eq:tau_k}
[\tau_{X/S}(-)]_{\dim_{\delta} X-k}=\sum_{i+j=k}\td_i(X/S)\cap\bch_j(-)_S\in \CH_{\dim_{\delta} X-k}(X,\delta)_{\QQ}
\end{equation}
for any $k\leq d$, where $[-]_t$ denotes the component of $\delta$-dimension $t$.

In this case, we have the following general properties.

\begin{lemma}\label{lem:chi-property}
Assume that $Y \to S$ is also a quasi-projective flat morphism with equidimensional fibers that is fiberwise lci in codimension $d$. Let $f\colon X\to Y$ be a morphism over $S$.

\begin{enumerate}
    \item If $f\colon X\to Y$ is proper and $f^{-1}(\mathrm{LCI}(Y/S))=\mathrm{LCI}(X/S)$, then for any class $\xi\in \KK_0(X)$, we have
    \[f_*\left(\sum_{i+j=k}\td_i(X/S)\cap \bch_j(\xi)_S\right)=\sum_{i+j=k+\dim_{\delta}Y-\dim_{\delta}X}\td_i(Y/S)\cap \bch_j(f_*\xi)_S\]
for any $k\leq d$.

\item If $S=\Spec(\kk)$ for a field $\kk$, $\kk_1$ an extension of $\kk$, and $g\colon X_{\kk_1}\to X$ is the base change morphism, then for any class $\xi\in \KK_0(X)$, we have
    \[\bch_{\leq d}(g^*\xi)=g^*\bch_{\leq d}(\xi).\]

    \item If $X$ and $Y$ are equidimensional and $f\colon X\to Y$ is either lci or flat of a fixed relative dimension, then for any class $\xi\in \KK_0(Y)$, we have
    \[\bch_{\leq d}(f^*\xi)_S=f^*\bch_{\leq d}(\xi)_S.\]

    \item If $\alpha\in \KK^0(X)$ and $\xi\in \KK_0(X)$, then
    \[\bch_{\leq d}(\alpha\otimes \xi)_S=\sum_{i+j\leq d} \ch_{i}(\alpha)\cap \bch_j(\xi)_S.\] 

    \item If $E$ is a coherent sheaf with $\dim_{\delta}(\mathrm{Supp}(E))\geq \dim_{\delta} X-d$, then
    \[\bch_{\leq d}(E)_S=\sum \left(\mathrm{length}_{\cO_{X,\eta}}E|_{\eta}\right) [\overline{\{\eta\}}]+\text{ terms of lower } \delta\text{-dimensions},\]
    where the sum runs over the generic points of irreducible components of $\mathrm{Supp}(E)$ with top $\delta$-dimension.
    

\end{enumerate}

\end{lemma}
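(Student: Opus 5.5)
The plan is to reduce every statement to the lci case (Lemma \ref{lem-chM-general}, Lemma \ref{lem-chM-flat-base-change}) by restricting to the open subschemes $U\coloneqq \mathrm{LCI}(X/S)$ and $V\coloneqq\mathrm{LCI}(Y/S)$. By the definition in Section \ref{subsec:generalization-ch}, $\bch_{\le d}$ on $X$ is characterized by its restriction to $U$, and restriction $\CH_k(X,\delta)\to \CH_k(U,\delta)$ is an isomorphism for $k\ge \dim_\delta X-d$. So for each part it suffices to check three things: both sides land in the range of degrees where this identification holds; both sides restrict compatibly to $U$; and on $U$ the identity is one already proved. Restriction to an open subset is flat pullback, and it commutes with proper pushforward (by flat base change of the cycle-level definitions), with capping by $\ch(\alpha)$ (bivariant classes commute with flat pullback), and with $\tau$ (Theorem \ref{thm-tau}(b) for the open embedding, whose tangent complex is zero).

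For (d), restrict to $U$ and apply Lemma \ref{lem-chM-general}(c), then truncate to degrees $\le d$. Here $\ch_i(\alpha)\cap\bch_j(\xi)$ with $i+j\le d$ is a class in a degree where restriction is injective. For (e), apply Lemma \ref{lem-chM-general}(d) on $U$. The hypothesis $\dim_\delta\Supp(E)\ge \dim_\delta X-d$ implies that the top-dimensional components of $\Supp(E)$ have codimension $\le d$, and since $X\setminus U$ has codimension $\ge d+1$ they meet $U$, so their generic points lie in $U$ and the lengths are unchanged. For (b), note that $\mathrm{LCI}(X_{\kk_1}/\kk_1)=g^{-1}(\mathrm{LCI}(X/\kk))$ by \eqref{eq:lci-locus-base-change}. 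Then apply Lemma \ref{lem-chM-flat-base-change} on the lci locus ($\Spec\kk_1\to\Spec\kk$ is flat with $d=0$), using that flat pullback commutes with restriction to opens.

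For (c), first check that $f^{-1}(V)\cap U$ has complement of codimension $\ge d+1$ in $X$. In the flat case with equidimensional fibers, $f^{-1}(Y\setminus V)$ has codimension $\ge d+1$ by the dimension formula for flat morphisms. In the lci case, the local complete intersection property is preserved: if $Y$ is lci at $f(x)$ and $f$ is lci, then $X$ is lci at $x$. Hence $f^{-1}(V)\subset U$, and lci morphisms are perfect, so the codimension bound follows from Tor-dimension considerations (lci implies codim preserved up to the relative dimension). This codimension check is the step I expect to be the main obstacle, particularly in the lci (non-flat) case; there I would use that $f$ factors as a regular embedding followed by a smooth morphism, and that a regular embedding cuts out codimension locally. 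Granting it, restrict to $W=f^{-1}(V)\cap U$, apply Lemma \ref{lem-chM-general}(b), and use the identification of Chow groups in degrees $\ge\dim_\delta X-d$.

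For (a), use \eqref{eq:tau_k}. The left side is the degree-$(\dim_\delta X-k)$ part of $\tau_{X/S}(\xi)$, i.e.\ of $\tau$ computed on $U$ and lifted. By Theorem \ref{thm-tau}(a), $f_*[\tau_{X/S}(\xi)]_{t}=[\tau_{Y/S}(f_*\xi)]_t$, with $t=\dim_\delta X-k$. By \eqref{eq:tau_k} on $Y$, this equals the right side once $t\ge\dim_\delta Y-d$. The hypothesis $f^{-1}(V)=U$ is what guarantees that the lift of the $U$-class matches the genuine $\tau$ component, since $\tau$ is defined globally on $X$ via an embedding and restricts correctly to $U$ and $V$ by Theorem \ref{thm-tau}(b). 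Concretely, $f_*$ commutes with restriction $U\to V$ because $U=f^{-1}(V)$ (proper base change for cycles), so both sides can be compared on $V$, where Theorem \ref{thm-tau}(a) applies directly.
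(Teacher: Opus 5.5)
Parts (a), (b), (d), (e) follow the paper's route: restrict to the lci locus and invoke \eqref{eq:tau_k}, Theorem \ref{thm-tau}(a), Lemma \ref{lem-chM-flat-base-change}, and Lemma \ref{lem-chM-general}(c),(d). The flat case of (c) is also fine. There your dimension count for $f^{-1}(Y\setminus V)$ is a legitimate alternative to the paper's descent argument, because you restrict to $W=f^{-1}(V)\cap U$, whose complement has codimension $\geq d+1$.

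The gap is in the lci case of (c). You prove $f^{-1}(V)\subset U$, so $W=f^{-1}(V)$, and you then need $\codim_X f^{-1}(Y\setminus V)\geq d+1$. This cannot come from dimension theory or Tor-amplitude. For a regular immersion $X\hookrightarrow P$ of codimension $c$ and a closed $Z\subset P$, Krull only gives $\dim(X\cap Z)\geq \dim Z-c$. That is an \emph{upper} bound $\codim_X(X\cap Z)\leq \codim_P Z$, and $X\cap Z$ can be all of $X$ (take $X\subset Z$). So the claims that a regular embedding ``cuts out codimension'' or that lci maps preserve codimension up to relative dimension give nothing here.

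The missing idea is the reverse inclusion $U\subset f^{-1}(V)$: if $X_s$ is lci at $x$, then $Y_s$ is lci at $f(x)$. For $f$ lci this does hold. Factor $f$ locally as a regular immersion $X\hookrightarrow P$ followed by a smooth $P\to Y$; this factorization restricts to fibers because $X$ and $P$ are $S$-flat. Then use two facts:
\begin{itemize}
\item a local ring is a complete intersection iff its quotient by a regular sequence is;
\item the complete-intersection property descends along flat local homomorphisms (Avramov).
\end{itemize}
The paper packages this as ``$f$ is perfect'' via Tags 09RL and 069J. With it, $f$ restricts to $f'\colon U\to V$. You work on $U$ itself, whose complement has codimension $\geq d+1$ by hypothesis, and apply Lemma \ref{lem-chM-general}(b) to $f'$ together with $f^*\CH_i(Y)\subset\CH_{i+\dim_\delta X-\dim_\delta Y}(X)$. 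In the lci case this even gives $f^{-1}(V)=U$, so the codimension bound you wanted is exactly the hypothesis on $X\to S$.
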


\begin{proof}
Part (a) follows from \eqref{eq:tau_k} and Theorem \ref{thm-tau}(a). For part (b), note that
\[g^{-1}(\mathrm{LCI}(X/\kk))=\mathrm{LCI}(X_{\kk_1}/\kk_1)\]
by \eqref{eq:lci-locus-base-change}, so the result follows from Lemma \ref{lem-chM-flat-base-change}. Parts (d) and (e) can be deduced directly from Lemma \ref{lem-chM-general}(c) and (d).

Finally, we prove part (c). Note that $f$ is perfect, so we have an induced morphism $$f'\colon \mathrm{LCI}(X/S)\to  \mathrm{LCI}(Y/S)$$ by \cite[\href{https://stacks.math.columbia.edu/tag/09RL}{Tag 09RL}]{stacks-project} and \cite[\href{https://stacks.math.columbia.edu/tag/069J}{Tag 069J}]{stacks-project}. Moreover, by our assumption, we have $$f^*\xi\in \CH_{i+\dim_{\delta} X-\dim_{\delta} Y}(X, \delta)$$ for any $i$ and any $\xi\in \CH_{i}(Y, \delta)$. Therefore, the result follows from the corresponding result for $f'$ in Lemma \ref{lem-chM-general}(b).
\end{proof}

\begin{lemma}\label{lem-chM-special}
Assume that $S$ is the spectrum of a DVR with the closed point $p\in S$ and the fraction field $K$. Then the diagram
\[\begin{tikzcd}
	{\KK_0(X_K)} & {\CH_{\dim X-1-i}(X_K)_{\QQ}} \\
	{\KK_0(X_p)} & {\CH_{\dim X-1-i}(X_p)_{\QQ}}
	\arrow["{\bch_{i}(-)}", from=1-1, to=1-2]
	\arrow["{\mathrm{sp}}"', from=1-1, to=2-1]
	\arrow["{\mathrm{sp}}", from=1-2, to=2-2]
	\arrow["{\bch_{i}(-)}", from=2-1, to=2-2]
\end{tikzcd}\]
commutes for any $i\leq d$.
\end{lemma}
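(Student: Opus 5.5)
The plan is to reduce to the compatibility of the Riemann--Roch homomorphism with specialization (Lemma \ref{lem-tau-special}) via formula \eqref{eq:tau_k}. The setting is the one of Section \ref{subsec:generalization-ch}: $X\to S$ is quasi-projective, flat, with equidimensional fibers, and fiberwise lci in codimension $d$. Write $U\coloneqq\mathrm{LCI}(X/S)$. By \eqref{eq:lci-locus-base-change}, $U_K=\mathrm{LCI}(X_K/K)$ and $U_p=\mathrm{LCI}(X_p/\kappa(p))$, and both complements have codimension $\geq d+1$ in their fibers.

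\textbf{Step 1 (specialization of Todd classes).} I first show that specialization commutes with capping by the Todd classes of $U_K$ and $U_p$. Since $U\to S$ is flat and lci, it is syntomic, so $\mathbb{T}_{U/S}$ is a perfect complex. Its derived restrictions to the fibers are $\mathbb{T}_{U_K/K}$ and $\mathbb{T}_{U_p/\kappa(p)}$, by base change of the cotangent complex for flat morphisms.

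For $\alpha\in\CH_*(U_K)$, choose a lift $\tilde\alpha\in\CH_*(U)$. Then $\td(\mathbb{T}_{U/S})\cap\tilde\alpha$ is a lift of $\td(\mathbb{T}_{U_K/K})\cap\alpha$, because Chern classes of perfect complexes commute with flat pullback (Lemma \ref{lem-flat-base-change}(a)). Moreover, $i_p^*$ commutes with Chern classes of perfect complexes, since bivariant classes commute with Gysin maps of effective Cartier divisors. Hence $\mathrm{sp}(\td(U_K/K)\cap\alpha)=\td(U_p/\kappa(p))\cap\mathrm{sp}(\alpha)$, and the same holds for $\td^{-1}$.

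\textbf{Step 2 (compatibility with restriction to $U$).} Specialization maps are compatible with the flat restriction $X\supset U$ on both the $K$-side and the $p$-side. This holds because $i_{p*}$, $i_K^*$ and $i_p^*$ all commute with restriction to the open set $U$ (Gysin maps commute with flat pullback). The same compatibility holds for $\KK_0$.

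\textbf{Step 3 (conclusion by induction on $i$).} Since restriction to $U$ identifies $\CH_{\dim X-1-i}$ for $i\leq d$ (dimension computed on $X$, so fibers have dimension $\dim X-1$), I may replace $X$ by $U$. Then $X_K$ and $X_p$ are lci. On the lci fibers, $\bch(-)=\td^{-1}\cap\tau(-)$ by Definition \ref{def:ch-lci}.

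For $\xi\in\KK_0(X_K)$, apply Lemma \ref{lem-tau-special} to $U$ over $S$ to get $\mathrm{sp}\,\tau_{X_K}(\xi)=\tau_{X_p}(\mathrm{sp}\,\xi)$. Then cap with $\td^{-1}$ and use Step 1 to get $\mathrm{sp}\,\bch(\xi)=\bch(\mathrm{sp}\,\xi)$ in every degree. Taking the component of dimension $\dim X-1-i$ finishes the proof, since $\mathrm{sp}$ preserves dimension.

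Equivalently, without discarding the non-lci locus, one can argue by induction on $k\leq d$ using \eqref{eq:tau_k}. Specialization of the $\dim X-1-k$ component of $\tau$ equals $\sum_{i+j=k}\td_i\cap\bch_j$ on both sides. The terms with $j<k$ match by induction and Step 1, which leaves $\bch_k$.

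\textbf{Main obstacle.} The main obstacle is Step 1, specifically the claim that $i_p^*$ commutes with $\td(\mathbb{T}_{U/S})\cap-$ and that $\mathbb{T}_{U/S}$ restricts to the fiber tangent complexes. The argument uses that $U$ is syntomic over the DVR, and that Chern classes of perfect complexes, being bivariant, commute with the Gysin map of the principal Cartier divisor $X_p\subset X$. I also need Lemma \ref{lem-tau-special} to apply to $U$, which requires only quasi-projectivity of $U$ over $S$; this holds because $U$ is open in $X$.
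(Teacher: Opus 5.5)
Your proposal is correct and follows essentially the same route as the paper. The paper restricts to the lci locus via \eqref{eq:lci-locus-base-change}, then concludes from Lemma \ref{lem-tau-special} together with the base change isomorphisms $(\mathbb{L}_{X/S})_K\cong\mathbb{L}_{X_K/K}$ and $(\mathbb{L}_{X/S})_p\cong\mathbb{L}_{X_p/\kappa(p)}$. Your Steps 1 and 2 just make explicit the two compatibilities the paper leaves implicit: specialization commutes with restriction to $\mathrm{LCI}(X/S)$, and it commutes with capping by $\td^{-1}$ of the tangent complexes.
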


\begin{proof}
By \eqref{eq:lci-locus-base-change}, we have
\[(\mathrm{LCI}(X/S))_c=\mathrm{LCI}(X_c/\kappa(c))\]
for any point $c\in S$. So we may assume that $X\to S$ is lci. In this case, the result follows from Lemma \ref{lem-tau-special}, as $(\mathbb{L}_{X/S})_K\cong \mathbb{L}_{X_K/K}$ and $(\mathbb{L}_{X/S})_p\cong \mathbb{L}_{X_p/\kappa(p)}$ by \cite[\href{https://stacks.math.columbia.edu/tag/08QQ}{Tag 08QQ}]{stacks-project}.
\end{proof}

The following lemma shows the local constancy of Chern characters of relative perfect objects.

\begin{lemma}\label{lem-constant-chM}
Let $f\colon X\to S$ be a projective flat morphism between Noetherian schemes and $E\in \Dqc(X)$ be a $S$-perfect object. Assume that $S$ has finite Krull dimension and $f$ is fiberwise lci in codimension $d$ with equidimensional fibers. Then for any $Z\in \A_{\star}^*(X/S)_{\QQ}$ and $0\leq i\leq d$, the function
\[s\mapsto \int_{X_s} Z_s\cap \bch_i(E_s)\]
is a locally constant function on $S$. Moreover, 
\begin{equation}\label{eq:base-change}
\int_{X_s} Z_s\cap \bch_{i}(E_s)=\int_{X_t} Z_t\cap \bch_{i}(E_t)
\end{equation}
for any point $t\to S$ over $s$.
\end{lemma}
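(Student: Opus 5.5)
We first prove the base change identity \eqref{eq:base-change}, and then reduce local constancy to the case where the base is the spectrum of a DVR.

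\textbf{Step 1: the identity \eqref{eq:base-change}.} Let $t\to S$ lie over $s$, with field $\kappa(t)\supset\kappa(s)$. Then $X_t=(X_s)_{\kappa(t)}$ and $E_t=g^*E_s$, where $g\colon X_t\to X_s$ is the base change morphism. Lemma \ref{lem:chi-property}(b) gives $\bch_i(E_t)=g^*\bch_i(E_s)$ for $i\le d$. By the definition of $\A^*_\star(X/S)$, capping with $Z$ commutes with flat pullback along dominant maps of one-point bases, since $\Spec\kappa(t)\to\Spec\kappa(s)$ is dominant with $d=0$. Degrees of zero-cycles are preserved under field extension. Hence \eqref{eq:base-change} holds.

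\textbf{Step 2: reduction to a DVR.} Since $S$ is Noetherian, a constructible function that is constant along specializations is locally constant. So it suffices to show two things: the function is constructible, and it takes equal values at $s$ and at any specialization $s\rightsquigarrow s'$.

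For constructibility, use Lemma \ref{lem:openness-standard-heart}-type flattening together with Noetherian induction. Alternatively, it is enough to check equality along specializations that are realized by DVRs, and this handles constructibility as well. Given a specialization $s\rightsquigarrow s'$, choose a DVR $R$ and a morphism $D=\Spec R\to S$ sending the generic point to $s$ and the closed point to $s'$ (\cite[Tag 054F]{stacks-project}). By Step 1, we may replace $S$ by $D$, $X$ by $X_D$, $E$ by $E_D$, and $Z$ by $Z_D$. Lemma \ref{lem-S-perf-lem-2}(b) keeps $E_D$ $D$-perfect, and the fiberwise lci locus is compatible with base change by \eqref{eq:lci-locus-base-change}.

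\textbf{Step 3: the DVR case.} Now $S$ is the spectrum of a DVR with fraction field $K$ and closed point $p$, and $E\in\Db(X)$ by Lemma \ref{lem-S-perf-lem-1}. Because $E$ is $S$-perfect, the derived restriction $E_p=Li_p^*E$ represents $\mathrm{sp}([E_K])$ in $\KK_0(X_p)$; here we use the definition of $\mathrm{sp}$ via $i_p^*$, and the fact that $i_p^*$ on $\KK_0$ is given by derived pullback for $S$-perfect objects. Lemma \ref{lem-chM-special} then gives $\bch_i(E_p)=\mathrm{sp}(\bch_i(E_K))$. We also need $Z_p\cap\mathrm{sp}(\alpha)=\mathrm{sp}(Z_K\cap\alpha)$. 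This follows because $Z_S\in\A^*(X)$ restricts to $Z_K$ and $Z_p$, and bivariant classes commute with $i_K^*$ and with the Gysin map $i_p^*$ by the defining compatibilities. Finally, Lemma \ref{lem-sp-push} shows that degrees are preserved under $\mathrm{sp}$, and $\mathrm{sp}$ is the identity on $\CH_0$ of the base. This yields the desired equality.

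\textbf{Main obstacle.} The delicate step is identifying $[E_p]$ with $\mathrm{sp}[E_K]$ in $\KK_0(X_p)$ when $X$ is not regular. I would handle it by using the lift $[E]\in\KK_0(X)$, whose restriction to $X_K$ is $[E_K]$, and observing that $i_p^*$ on $\KK_0(X)$ (defined through the finite Tor-dimension of $X_p\hookrightarrow X$, a Cartier divisor since $X$ is flat over $S$) sends $[E]$ to $[Li_p^*E]$. A secondary point is passing from specialization-constancy to local constancy. Here I would use the fact that on a Noetherian scheme every point has finitely many generizations through an open neighbourhood, invoking constructibility obtained by the flattening stratification of Lemma \ref{lem:openness-standard-heart}(b) on strata where $\bch_i$ varies flatly.
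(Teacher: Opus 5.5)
Your proposal is correct and follows the paper's proof: \eqref{eq:base-change} from Definition~\ref{def-A-star} and Lemma~\ref{lem:chi-property}(b), then a reduction of local constancy to the spectrum of a DVR, and finally the conclusion via Lemma~\ref{lem-chM-special} and Lemma~\ref{lem-sp-push}. For the DVR you take one realizing the specialization directly, where the paper restricts to a codimension-one closure, normalizes and localizes; your Step~3 also usefully spells out the identifications $[E_p]=\mathrm{sp}[E_K]$ and $Z_p\cap\mathrm{sp}(-)=\mathrm{sp}(Z_K\cap -)$ that the paper leaves implicit. The constructibility detour is unnecessary, and the remark about finitely many generizations is false: on a Noetherian scheme, a function constant along every specialization is constant on each irreducible component, hence on each connected component, and these are open.
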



\begin{proof}
The equality \eqref{eq:base-change} follows from Definition \ref{def-A-star} and Lemma \ref{lem:chi-property}(b).

For the local constancy part, without loss of generality, we can assume that 
$S$ is connected. Since $S$ is of finite Krull dimension, it is enough to show that the value of the above function at any two points $s,s'\in S$ with $s\in \overline{\{s'\}}$ and $\codim_{\overline{\{s'\}}} \overline{\{s\}}=1$ is the same. Since the claim is local, by Lemma \ref{lem-S-perf-lem-2}(b), after restricting to closed subschemes and localization, we can assume that $S$ is a one-dimensional integral scheme. By \eqref{eq:base-change}, we can replace $S$ by its normalization and then localize further. In particular, we may assume that $S$ is the spectrum of a DVR and $Z\in \A^*(X)_{\QQ}$. Therefore, the constancy follows from Lemma \ref{lem-chM-special} and Lemma \ref{lem-sp-push}.
\end{proof}

\begin{lemma}\label{lem-dual-ch}
Let $X$ be an equidimensional quasi-projective scheme over a field that is lci in codimension $d$. If $E$ is a torsion-free $S_k$ sheaf (hence $k\geq 1$), then
\[\mathbf{ch}_{i}(E)=(-1)^i\cdot\mathbf{ch}_{i}(E^{\vee})\]
for any $i\leq \min\{k,d\}$.
\end{lemma}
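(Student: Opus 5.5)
The plan is to reduce to the lci locus and then combine Lemma \ref{lem-derived-dual-ch} with the codimension bounds on Ext sheaves from Lemma \ref{lem-codim-Sp}. Let $U\coloneqq \mathrm{LCI}(X/\kk)\subset X$, which is open with complement of codimension $\geq d+1$. For $i\leq d$, $\bch_i$ on $X$ is by definition the unique lift of $\bch_i$ computed on $U$, and $\CH_{\dim X-i}(X)_\QQ=\CH_{\dim X-i}(U)_\QQ$. Restriction to $U$ commutes with $\cH om(-,\cO)$ and preserves torsion-freeness and $S_k$. It therefore suffices to prove $\bch_i(E|_U)=(-1)^i\bch_i((E|_U)^\vee)$ for $i\leq k$. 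We may thus assume $X$ is lci, equidimensional and quasi-projective, hence Gorenstein, and prove the identity for $i\leq k$.

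\textbf{Step 1 (derived dual).} By Lemma \ref{lem-derived-dual-ch}, $\bch_i(E)=(-1)^i\bch_i(\mathbb{D}^X(E))$. Since $\bch$ is additive on $\KK_0(X)$, filtering $\mathbb{D}^X(E)$ by its cohomology sheaves gives
\[\bch_i(\mathbb{D}^X(E))=\bch_i(E^\vee)+\sum_{m\geq 1}(-1)^m\bch_i(\cE xt^m_X(E,\cO_X)).\]
So we must show that $\bch_i(\cE xt^m_X(E,\cO_X))=0$ for $m\geq 1$ and $i\leq k$.

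\textbf{Step 2 (codimension of Ext sheaves).} Since $X$ is Gorenstein, $\omega^\bullet_X[-\dim X]\cong\omega_X$ is a line bundle. Hence $\cE xt^m_X(E,\cO_X)\cong \cE xt^m_X(E,\omega^\bullet_X[-\dim X])\otimes\omega_X^{-1}$, which has the same support as $\cE xt^m_X(E,\omega^\bullet_X[-\dim X])$. The sheaf $E$ is torsion-free, so it is pure of codimension $c=0$, and it is $S_k$. Lemma \ref{lem-codim-Sp} with $p=k$ then gives, for $m>0$,
\[\codim_X\cE xt^m_X(E,\cO_X)\geq m+k\geq k+1.\]

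\textbf{Step 3 (vanishing, the main point).} We need: a coherent sheaf $F$ supported on a closed subset $W$ of codimension $\geq k+1$ satisfies $\bch_i(F)=0$ for $i\leq k$. I expect this to be the step requiring the most care, since Lemma \ref{lem-chM-general}(d) only identifies the leading term. The argument: by dévissage, $[F]$ lies in the image of $\KK_0(W)\to\KK_0(X)$. By Theorem \ref{thm-tau}(a), $\tau_X(F)$ is then a pushforward from $\CH_*(W)_\QQ$, which vanishes in dimensions $>\dim W$, i.e. in codimensions $\leq k$. Capping with $\td^{-1}(X)\in\A^*(X)_\QQ$ preserves the property of being pushed forward from $W$, because bivariant classes commute with proper pushforward, and it only lowers dimension. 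Hence $\bch(F)$ has no components in codimension $\leq k$.

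Combining the three steps gives $\bch_i(E)=(-1)^i\bch_i(E^\vee)$ on $U$ for all $i\leq \min\{k,d\}$. Lifting back to $X$ via the identification of Chow groups proves the statement.
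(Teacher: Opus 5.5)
Your proposal is correct and is the paper's argument: restrict to $\mathrm{LCI}(X/\kk)$, then apply Lemma \ref{lem-derived-dual-ch} and Lemma \ref{lem-codim-Sp} to $E|_{\mathrm{LCI}(X/\kk)}$, with the details written out. The step you flag as delicate is already immediate from Lemma \ref{lem-chM-general}(d), which states that every term of $\bch(F)$ has $\delta$-dimension at most $\dim\Supp(F)$; your bivariant argument for it is also fine.
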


\begin{proof}
This immediately follows from applying Lemma \ref{lem-derived-dual-ch} and Lemma \ref{lem-codim-Sp} to $\mathrm{LCI}(X/\kk)$ and $E|_{\mathrm{LCI}(X/\kk)}$.
\end{proof}


\subsection{Mumford intersection numbers}\label{subsec:mumford}

We now review Mumford's intersection theory of Weil divisors on normal varieties constructed in \cite{langer:intersection-normal} and \cite{enokizono}.

\begin{theorem}[{\cite[Theorem 0.1]{langer:intersection-normal}}]\label{thm-mumford-intersection}
Let $X$ be a geometrically normal geometrically integral projective variety of dimension $n$ over a field $\kk$. There is a unique $\ZZ$-multilinear form
\[\CH^1(X)\times \CH^1(X)\times\Pic(X)^{\times (n-2)}\to \QQ,\quad 
(D_1,D_2,L_1,\dots, L_{n-2})\mapsto L_1.\dots L_{n-2}.D_1.D_2\]
that satisfies:

\begin{enumerate}
    \item It is symmetric in $D_1$ and $D_2$.

    \item It is symmetric in $L_1,\dots, L_{n-2}$.

    \item If $D_1$ and $D_2$ are Cartier divisors, then
    \[L_1.\dots L_{n-2}.D_1.D_2=\int_X D_1\cdot D_2\cdot c_1(L_1)\cdot\dots \cdot c_1(L_{n-2})\cap [X].\]

    \item If $D_2$ is Cartier, then
    \[L_1.\dots L_{n-2}.D_1.D_2=\int_X D_2\cdot c_1(L_1)\cdot\dots \cdot c_1(L_{n-2})\cap [D_1].\]

    \item If $\kk$ is algebraically closed and $L_1,\dots, L_{n-2}$ are all very ample, then for a general complete intersection surface $S$ cut out by divisors in  $L_1,\dots, L_{n-2}$, we have
    \[L_1.\dots L_{n-2}.D_1.D_2=(D_1)_S.(D_2)_S,\]
    where the right-hand side is the Mumford intersection number on normal surfaces as in \cite[Example 8.3.11]{fulton:intersection-theory} and \cite{mumford:normal-surface}.

    \item The multilinear form is compatible with base change to any field extension of $\kk$.
\end{enumerate}
\end{theorem}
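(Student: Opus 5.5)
This statement is quoted from \cite[Theorem 0.1]{langer:intersection-normal}, so the aim is to recall the standard construction rather than give a new argument. The plan is to reduce to Mumford's intersection pairing on normal surfaces by slicing with general hyperplane sections. Existence and uniqueness are handled separately.

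\emph{Uniqueness.} Properties (a)--(c) and (f) let us pass to $\bar{\kk}$ and assume $\kk$ algebraically closed. By multilinearity and (b), we may write each $L_i$ as a difference of very ample line bundles, so it suffices to know the form when all $L_i$ are very ample. Property (e) then pins the value down as the Mumford pairing on a general complete intersection surface, so uniqueness follows.

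\emph{Existence.} Over $\bar{\kk}$, for very ample $L_1,\dots,L_{n-2}$, I would \emph{define} the form by the right-hand side of (e). The steps are:
\begin{itemize}
\item[(1)] By Bertini and Seidenberg's theorem, a general complete intersection surface $S$ is a normal projective surface. Restricting Weil divisors to $S$ is well defined, because $S$ meets the singular locus and the supports of $D_1,D_2$ properly.
\item[(2)] The resulting number is independent of the general choice of $S$. I would argue this by a deformation argument in the family of complete intersections over an open subset of the product of the linear systems. The key input is that the Mumford pairing on normal surfaces is locally constant in flat families of normal surfaces, since it can be computed on a simultaneous partial resolution over a dense open subset of the parameter space. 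Alternatively, one can express the pairing as $(\pi^*D_1).(\pi^*D_2).\pi^*L_1\cdots\pi^*L_{n-2}$ on a resolution or alteration $\pi\colon Y\to X$, with $\pi^*$ the Mumford pullback defined by a negative-definite correction on exceptional divisors.
\item[(3)] The number is multilinear in the $L_i$. Additivity $L=L'\otimes L''$ follows by degenerating a section of $L$ to a reducible divisor $S'\cup S''$ and using (2). This extends the definition to all of $\Pic(X)$.
\item[(4)] Properties (a)--(d) reduce to the surface case. On a normal surface, Mumford's pairing is symmetric and agrees with the usual intersection product when one divisor is Cartier.
\item[(5)] For (f), and for descent to a general $\kk$, the $\bar{\kk}$-definition is Galois invariant and compatible with extensions of algebraically closed fields.
\end{itemize}

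The main obstacle is step (2), the independence of the choice of the general surface $S$, and the related additivity in step (3). The Mumford correction terms live on exceptional curves of resolutions of $S$, and these vary with $S$. To handle this, I would use the pullback-to-a-resolution description: it expresses the pairing as a genuine intersection number on a fixed smooth model $Y$, and that number is manifestly independent of choices.
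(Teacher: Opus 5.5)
Your uniqueness argument is fine. Defining the form through Mumford's pairing on a general complete-intersection surface is also a reasonable alternative to the construction the paper recalls. However, the existence part has a genuine gap at step (3), and that step carries the real content of existence. You need additivity in each $L_i$ on very ample bundles before you can extend to all of $\Pic(X)$ by writing $L_i$ as a difference of very ample bundles. Without it, the extension is not even well defined.

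Your argument degenerates a general $S$ to $S'\cup S''$ and invokes (2). But (2), even as you justify it, only gives constancy over a dense open set of parameters, namely where the complete intersection is a normal surface admitting a simultaneous partial resolution. The reducible member $S'\cup S''$ lies outside that set, Mumford's pairing is not defined on it, and there is no specialization statement to appeal to. In fact your claim in (2) that Mumford's pairing is locally constant in flat families of normal surfaces is false in general. The paper points this out right after Lemma \ref{lem:constant-generalization}, and it is the reason for the $\QQ$-Gorenstein hypothesis in the surface case of Definition \ref{def:adm}. Generic constancy suffices for independence of the general $S$, but not for additivity.

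The fallback via a resolution $\pi\colon Y\to X$ does not close the gap as stated. For $n\geq 3$, a non-$\QQ$-Cartier Weil divisor has no canonical Mumford pullback. The form $(E,E')\mapsto \pi^*L_1\cdots\pi^*L_{n-2}.E.E'$ on exceptional divisors vanishes identically on those lying over centres of codimension $\geq 3$. The correction it determines on divisors over codimension-two centres therefore a priori depends on $L_1,\dots,L_{n-2}$. Showing that it does not is exactly the multilinearity you need, and it is not manifest. One way to do it is to perform Mumford's construction once and for all over the two-dimensional normal local rings $\cO_{X,\zeta}$ at the generic points $\zeta$ of these centres. In addition, resolutions of $X$ are not available in positive characteristic, while the statement is over an arbitrary field. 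An alteration would require dividing by its degree and comparing Mumford pullbacks under finite covers.

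The construction the paper recalls from \cite{langer:intersection-normal} avoids all of this. It sets
$L_1\cdots L_{n-2}.D^2=-\lim_{m\to\infty}2\chi(X,c_1(L_1)\cdots c_1(L_{n-2})\cdot\cO_X(-mD))/m^2$,
where $c_1(L)$ acts on $\KK_0(X)$ by $[E]\mapsto[E]-[L^{-1}\otimes E]$, and then polarizes. There, multilinearity in the $L_i$ is essentially formal: $c_1(L'\otimes L'')=c_1(L')+c_1(L'')-c_1(L')c_1(L'')$, and the cross term involves $n-1$ operators, so it is of lower order in $m$. Compatibility with field extensions is just flat base change, since geometric normality makes $\cO_X(D)$ commute with it. The real inputs are the convergence of the limit, \cite[Proposition 2.3]{langer:intersection-normal}, and restriction to general hyperplane sections (compare Lemma \ref{lem:intersect-pairing-restrict}), which yields (e).
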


The construction can be described as follows. Recall that for any Weil divisor $D$ on a geometrically normal variety $X$ over $\kk$, we have an associated rank one reflexive sheaf $\cO_X(D)$. We define a class $$\cO_{D}\coloneqq \cO_X-\cO_X(-D)\in \KK_0(X).$$
If $D$ is a prime Weil divisor, then $\cO_X(-D)=\cI_D$ and $\cO_D$ is the same as the class of the structure sheaf of $D$. As in \cite[Section 2.2]{langer:intersection-normal}, if $X$ is also projective, then we define
\begin{equation}\label{eq:def-pair}
    L_1.\dots L_{n-2}.D^2\coloneqq \lim_{m\to +\infty} 2\frac{\chi(X, c_1(L_1) \cdots c_1(L_{n-2})\cdot \cO_{mD})}{m^2}
\end{equation}
\[=-\lim_{m\to +\infty} 2\frac{\chi(X, c_1(L_1) \cdots c_1(L_{n-2})\cdot \cO_X(-mD))}{m^2},\]
where $c_1(L)$ is the endomorphism of $\KK_0(X)$ given by $[E]\mapsto [E]-[L^{-1}\otimes E]$. Note that using \cite[Proposition 2.3]{langer:intersection-normal} after base change to $\overline{\kk}$, this limit converges to a rational number. If $D_1, D_2$ are two Weil divisors, then we set
\[L_1.\dots L_{n-2}.D_1.D_2\coloneqq \frac{1}{2}(L_1.\dots L_{n-2}.(D_1+D_2)^2-L_1.\dots L_{n-2}.D_1^2-L_1.\dots L_{n-2}.D_2^2).\]
In particular, if $X$ is a geometrically normal projective surface, then
\begin{equation}\label{eq:rr-normal-surface}
    D^2=-\lim_{m\to +\infty} 2\frac{\chi(X, \cO_X(-mD))}{m^2}.
\end{equation}
By Lemma \ref{lem-flat-base-change}, it is clear that the above construction is compatible with any field extension of $\kk$.

We also have the Hodge index theorem on normal varieties.

\begin{lemma}[{\cite[Corollary 1.21]{langer:higgs-normal}}]\label{lem-hodeg-index}
Let $X$ be a geometrically normal projective variety of dimension $n$. Fix a collection $L_1,\dots, L_{n-2}$ of nef line bundles on $X$. Assume that
there exists a nef line bundle $L$ such that $L.L_1\dots L_{n-2}$ is \emph{numerically non-trivial},
i.e., there exists a Weil divisor $D$ such that $$L.L_1\dots L_{n-2}.D\neq 0.$$

If $H$ is an ample line bundle on $X$, then 
\[H.L.L_1\dots L_{n-2}>0.\]
Moreover, if $L.L_1\dots L_{n-2}.D=0$ for a Weil divisor $D$, then
\[L_1.\dots L_{n-2}.D.D\leq 0.\]
\end{lemma}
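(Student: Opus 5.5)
The plan is to reduce both assertions to the Hodge index theorem on normal surfaces, using Theorem \ref{thm-mumford-intersection}(e), and then pass to nef line bundles by a perturbation argument. First, by Theorem \ref{thm-mumford-intersection}(f) and the compatibility of \eqref{eq:def-pair} with field extensions, we may assume $\kk$ is algebraically closed. For nef $L_1,\dots,L_{n-2}$ we write
\[q(D_1,D_2)\coloneqq L_1.\dots L_{n-2}.D_1.D_2,\]
a symmetric bilinear form on Weil divisor classes. The key intermediate statement is a Cauchy--Schwarz inequality:
\begin{equation}\label{eq:plan-CS}
q(a,D)^2\geq q(a,a)\,q(D,D)\quad\text{for every ample }a\text{ and every Weil divisor }D.
\end{equation}

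To prove \eqref{eq:plan-CS}, first suppose all $L_i$ are ample. By multilinearity and homogeneity we may replace $a$ and the $L_i$ by very ample multiples. Theorem \ref{thm-mumford-intersection}(e) then computes $q$ on a general complete intersection normal surface $S$. There $a|_S$ is ample, so Mumford's Hodge index theorem on the normal surface $S$ says the pairing has signature $(1,\ast)$. Inequality \eqref{eq:plan-CS} is the standard consequence: $q$ is negative semidefinite on $a^\perp$, so apply this to $D-\tfrac{q(a,D)}{q(a,a)}a$. For nef $L_i$, apply the ample case to $L_i+\varepsilon H$ and let $\varepsilon\to 0$. Both sides of \eqref{eq:plan-CS} are polynomials in $\varepsilon$, so the non-strict inequality survives the limit. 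We also record two easy facts. Intersection numbers of nef bundles with $n-2$ further nef classes are $\geq 0$, being limits of ample ones; in particular $q(L,L)\geq 0$.

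For the first assertion, I would argue by induction on the number $k$ of the factors $L_1,\dots,L_{n-2}$ that have been replaced by $H$, aiming to show $H^{k+2}.L_{k+1}\cdots L_{n-2}>0$ and then working back. The cleanest single step is as follows. Suppose that $q(H,H)>0$ but $q(H,L)=0$. Applying \eqref{eq:plan-CS} with $a=H$ and $D=L$ gives $q(L,L)\leq 0$, hence $q(L,L)=0$. Now apply \eqref{eq:plan-CS} with $a=H$ to $x=L+tD$:
\[t^2q(H,D)^2\geq q(H,H)\bigl(2t\,q(L,D)+t^2q(D,D)\bigr).\]
Take $t$ small with the sign of $q(L,D)\neq 0$. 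The left side is $O(t^2)$, while the right side is linear in $t$ and positive, which is a contradiction. The positivity $q(H,H)=H^2.L_1\cdots L_{n-2}>0$ itself is obtained by the same argument with fewer nef factors. The numerical non-triviality of $L.L_1\cdots L_{n-2}$ implies that of $H.L_1\cdots L_{n-2}$ (or of $L_1\cdots L_{n-3}$ paired against suitable classes), so we get the needed non-vanishing at each stage. Organizing this induction so that the non-triviality hypothesis propagates correctly is the step I expect to be the main obstacle. If it becomes awkward, the fallback is to perturb only the factors that vanish against $H$ and track the leading order in $\varepsilon$.

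For the second assertion, suppose $q(L,D)=0$. Apply \eqref{eq:plan-CS} with the ample class $a=L+\delta H$ for $\delta>0$. Then $q(a,D)=\delta\,q(H,D)$, and
\[q(a,a)=q(L,L)+2\delta\,q(H,L)+\delta^2q(H,H)\geq 2\delta\,q(H,L),\]
which is strictly positive by the first assertion. Hence
\[q(D,D)\leq \frac{\delta^2q(H,D)^2}{2\delta\,q(H,L)},\]
and letting $\delta\to 0$ gives $L_1.\dots L_{n-2}.D.D\leq 0$.
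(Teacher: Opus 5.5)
The parts you actually carry out are fine. These are the Cauchy--Schwarz inequality for your form $q(D_1,D_2)=L_1.\dots L_{n-2}.D_1.D_2$ (via Theorem~\ref{thm-mumford-intersection}(e), Mumford's Hodge index on the normal surface $S$, and the $\varepsilon$-perturbation), the step ``$q(H,H)>0$ and $q(H,L)=0$ contradicts $q(L,D)\neq 0$'', and the deduction of the second assertion from the first. The gap is in the first assertion. Everything rests on $q(H,H)=H^2.L_1\cdots L_{n-2}>0$ and on the claim that numerical non-triviality of $L.L_1\cdots L_{n-2}$ passes to $H.L_1\cdots L_{n-2}$ (and to the intermediate products in your induction). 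You assert this but never prove it. It also cannot come out of the Hodge-index inequality. If $q(H,H)=0$, then $0\le q(N,N)\le q(mH,mH)=0$ for every nef $N$ (take $m$ with $mH-N$ nef), so $q(a,a)=0$ for every ample $a$ and your inequality becomes vacuous. The nonnegativity you recorded concerns only Cartier classes, while the hypothesis is tested against arbitrary Weil divisors. The missing ingredient is positivity against \emph{effective} Weil divisors.

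That ingredient comes from Theorem~\ref{thm-mumford-intersection}(d). For a prime divisor $D_0$ and line bundles $N,M_1,\dots,M_{n-2}$ one has $N.M_1\cdots M_{n-2}.D_0=\int_{D_0}c_1(N)c_1(M_1)\cdots c_1(M_{n-2})\cap[D_0]$, which is $\ge 0$ when all of them are nef. Write $D=\sum n_iD_i$ with $D_i$ prime; then some $D_0=D_i$ has $L.L_1\cdots L_{n-2}.D_0>0$. Choose $m\gg 0$ and $E\in|mH|$ with $D_0\subseteq E$. Then $c_1(\cO_X(mH))\cap[X]=[E]$ is an effective cycle in which $D_0$ appears with some multiplicity $e\ge 1$. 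Every component of $[E]$ contributes nonnegatively, since nef bundles restrict to nef bundles, so
\[
m\,H.L.L_1\cdots L_{n-2}=\int_X c_1(L)c_1(L_1)\cdots c_1(L_{n-2})\cap[E]\ \ge\ e\,L.L_1\cdots L_{n-2}.D_0>0 .
\]
This proves the first assertion directly, with no induction and no Hodge index. The same monotonicity, $0\le N.M_1\cdots M_{n-2}.D_0\le mH.M_1\cdots M_{n-2}.D_0$ when $mH-N$ is nef, is exactly what your induction would need to propagate non-triviality. Once $q(H,L)>0$ is established this way, your argument for the second assertion with $a=L+\delta H$ goes through as written.
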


It is not known in general whether such an intersection pairing commutes with lci pullback. However, the following lemma is enough for our purpose.

\begin{lemma}\label{lem:intersect-pairing-restrict}
Let $X$ be a geometrically normal projective variety of dimension $n\geq 3$. Let $L_1,\dots, L_{n-2}\in \Pic(X)$. If $D$ is a Weil divisor on $X$ and $T\in |L_{n-2}|$ is a normal integral divisor such that $T$ intersects $D$ properly and $\cO_X(mD)|_T$ is reflexive for each $m\in \ZZ$. Then we have
\[L_1.\cdots.L_{n-2}.D^2=(L_1)|_T.\cdots.(L_{n-3})|_T.D|_T^2.\]
\end{lemma}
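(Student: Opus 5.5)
The plan is to compute both sides through the Euler characteristic formula \eqref{eq:def-pair}. By definition,
\[L_1.\cdots.L_{n-2}.D^2=-\lim_{m\to+\infty}\frac{2}{m^2}\,\chi\bigl(X, c_1(L_1)\cdots c_1(L_{n-2})\cdot \cO_X(-mD)\bigr),\]
where $c_1(L)$ acts on $\KK_0(X)$ by $[E]\mapsto [E]-[L^{-1}\otimes E]$. Since the operators $c_1(L_i)$ commute, I will apply $c_1(L_{n-2})$ first. Writing $L_{n-2}=\cO_X(T)$, the exact sequence $0\to \cO_X(-T)\to\cO_X\to\cO_T\to 0$ shows that $c_1(L_{n-2})$ acts as $[E]\mapsto [\cO_T\otimes^{\LL}E]$. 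So the first step is to identify $\cO_T\otimes^{\LL}\cO_X(-mD)$ with the sheaf $\cO_X(-mD)|_T$ sitting in degree zero. This holds because $\cO_X(-mD)$ is torsion-free on the integral scheme $X$, so multiplication by a local equation of the Cartier divisor $T$ is injective on it and $\mathcal{T}or_1$ vanishes. Consequently
\[c_1(L_{n-2})\cdot[\cO_X(-mD)]=i_*[\cO_X(-mD)|_T],\]
where $i\colon T\hookrightarrow X$ is the inclusion.

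The second step is to identify $\cO_X(-mD)|_T$ with $\cO_T(-mD|_T)$. By hypothesis $\cO_X(-mD)|_T$ is reflexive of rank one on the normal variety $T$. Because $T$ meets $D$ properly, away from a closed subset of codimension $\geq 2$ in $T$ both $X$ and $T$ are regular, and $D$ is Cartier there. On that open set the restriction is the line bundle of $-mD|_T$. Two rank-one reflexive sheaves on a normal variety that agree off codimension two are isomorphic by Lemma~\ref{lem-general-S2}(c), so $\cO_X(-mD)|_T\cong \cO_T(-mD|_T)$. I expect the codimension bookkeeping here to be the main point requiring care. Specifically, I need that $X_{\mathrm{sing}}\cap T$ and $T_{\mathrm{sing}}$ have codimension $\geq2$ in $T$ and that the divisor $D|_T$ is well defined with the correct multiplicities. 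This uses normality of $T$ and the properness of the intersection with each component of $D$; a component of $D$ could a priori be contained in $X_{\mathrm{sing}}$ along a divisor of $T$, but this is excluded since $X$ is normal and $T$ is normal.

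The third step is the projection formula. For line bundles $L$ on $X$, we have $c_1(L)\cdot i_*[F]=i_*(c_1(L|_T)\cdot[F])$, since $L^{-1}\otimes i_*F\cong i_*(L|_T^{-1}\otimes F)$. Applying the remaining operators $c_1(L_1),\dots,c_1(L_{n-3})$ and using $\chi(X,i_*-)=\chi(T,-)$ gives
\[\chi\bigl(X,c_1(L_1)\cdots c_1(L_{n-2})\cdot\cO_X(-mD)\bigr)=\chi\bigl(T,c_1(L_1|_T)\cdots c_1(L_{n-3}|_T)\cdot\cO_T(-mD|_T)\bigr).\]
Dividing by $-m^2/2$ and letting $m\to\infty$ then yields the claimed equality, by the definition \eqref{eq:def-pair} on the $(n-1)$-dimensional normal projective variety $T$. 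When $n=3$ the right-hand side reduces to the surface formula \eqref{eq:rr-normal-surface}.

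Geometric normality of $T$, needed for the intersection pairing on $T$ to be defined, can be checked after base change to $\overline{\kk}$. Both sides are compatible with field extension by Theorem~\ref{thm-mumford-intersection}(f), so I may reduce to the case $\kk=\overline{\kk}$ at the outset.
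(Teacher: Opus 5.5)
Your argument is correct. Its core is the same as the paper's: torsion-freeness of $\cO_X(-mD)$ kills $\mathcal{T}or_1$, giving the exact sequence $0\to\cO_X(-mD)\otimes L_{n-2}^{-1}\to\cO_X(-mD)\to\cO_X(-mD)|_T\to 0$, and reflexivity identifies the last term with $\cO_T(-mD|_T)$.

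You differ from the paper in how you handle $L_1,\dots,L_{n-3}$. The paper passes to $\overline{\kk}$ and writes these bundles as differences of very ample ones using multilinearity. It then cuts down by a general complete intersection via \cite[Corollary 2.8]{langer:intersection-normal} to reduce to $n=3$, so that on $T$ only the surface formula \eqref{eq:rr-normal-surface} is needed. You keep all of them and move them through $i_*$ with the projection formula $c_1(L)\cdot i_*[F]=i_*(c_1(L|_T)\cdot[F])$. You then compare the two Euler characteristics term by term in $m$ and apply \eqref{eq:def-pair} directly on the $(n-1)$-dimensional $T$. Your route is more elementary and self-contained. It needs no very ampleness and no genericity, and it avoids checking that normality of $T$, proper intersection, and reflexivity of the restrictions survive restriction to a general complete intersection.

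On your codimension bookkeeping, the clean reason is this: if $T$ is regular at a point $x$, then so is $X$, because $T$ is cut out locally by a single nonzerodivisor. Hence $X_{\mathrm{sing}}\cap T\subset T_{\mathrm{sing}}$ has codimension $\geq 2$ in $T$. So $D$ is Cartier on a neighbourhood of $T_{\mathrm{reg}}$, and its restriction there agrees with $D|_T$ because $T$ is not a component of $D$. This is exactly what your appeal to Lemma~\ref{lem-general-S2}(c) requires.
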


\begin{proof}
After base change to the algebraic closure, we may assume that the base field is algebraically closed. By writing $L_1,\dots, L_{n-3}$ as differences of very ample line bundles and the multilinearity of the intersection pairing, we may assume that $L_1,\dots, L_{n-3}$ are all very ample. Therefore, using \cite[Corollary 2.8]{langer:intersection-normal}, after restricting to a general complete intersection of divisors in $L_1,\dots, L_{n-3}$, we may also assume $n=3$. In this case, $T$ is a normal surface, and we need to show
\[L_1.D^2=(D|_T)^2.\]
Since $T$ intersects properly with $D$, $D|_T$ is also a Weil divisor on $T$. Therefore, $\cO_X(-mD)|_T$ and $\cO_T(-mD|_T)$ coincide at all codimension one points of $T$. By the assumption that $\cO_X(-mD)|_T$ is reflexive, we have $\cO_X(-mD)|_T=\cO_T(-mD|_T)$. Therefore, we have an exact sequence
\[0\to \cO_X(-mD)\otimes L_1^{-1}\to \cO_X(-mD)\to \cO_T(-mD|_T)\to 0.\]
This together with \eqref{eq:rr-normal-surface} implies
\[L_1.D^2=-\lim_{m\to +\infty} 2\frac{\chi(X, c_1(L_1)\cdot \cO_X(-mD))}{m^2}=-\lim_{m\to +\infty} 2\frac{\chi(T,\cO_T(-mD|_T))}{m^2}=(D|_T)^2\]
as desired.
\end{proof}

\subsection{Alternative definitions of top Chern characters}\label{subsec:chn}

Although normal surfaces are only lci in codimension $1$, we can use the above intersection numbers and the Riemann--Roch homomorphism to formally define $\bch_2$ as follows.

\begin{definition}[{\cite{langer:normal-surface}}]\label{def:ch2-surface}
Let $X$ be a geometrically normal projective surface over a field $\kk$. For any $\xi\in \KK_0(X)$, we define $\bch_2(\xi)\in \QQ$ as 
\[\bch_2(\xi)\coloneqq \chi(\xi)+\frac{1}{2}K_X.\bch_1(\xi)-\rk(\xi)\chi(\cO_X).\]
\end{definition}

One may expect to define $\bch_n(-)$ for any $n$-dimensional variety which is lci in codimension $n-1$ analogously to Definition \ref{def:ch2-surface}. However, this involves the intersection of higher codimension cycles, which is not well-defined in general. But in the case of threefolds, we can define it as follows. Recall that for a quasi-projective threefold $X$ over a field which is lci in codimension $2$, the cycle $$\Td_2(X)\coloneqq [\tau_{X}(\cO_X)]_1\in \CH_1(X)_{\QQ}$$ is the image of
\[\td_2(\mathrm{LCI}(X/\kk))\cap [\mathrm{LCI}(X/\kk)]\in \CH_1(\mathrm{LCI}(X/\kk))_{\QQ}\]
under the identification $\CH_1(\mathrm{LCI}(X/\kk))_{\QQ}=\CH_1(X)_{\QQ}$. By abuse of notation, we also write $\td_2(X)$ for $\Td_2(X)$.

\begin{definition}\label{def:ch3}
Let $X$ be a geometrically normal projective threefold over a field $\kk$ which is $\QQ$-factorial and lci in codimension $2$. We define a homomorphism
\[\bch_3(-)\colon \KK_0(X)\to \QQ\]
by
\[\bch_3(E)\coloneqq \chi(E)+\frac{1}{2}K_X.\bch_2(E)-\td_2(X).\bch_1(E)-\chi(\cO_X)\rk(E).\]
\end{definition}

Note that the intersection numbers $K_X.\bch_2(E)$ and $\td_2(X).\bch_1(E)$ make sense by the $\QQ$-factorial assumption of $X$.

\begin{lemma}\label{lem:property-ch3}
Let $X$ be a projective threefold over a field $\kk$ such that $X_{\overline{\kk}}$ is normal, $\QQ$-factorial, and lci in codimension $2$. Let $E\in \Db(X)$. We have

\begin{enumerate}
    \item $\bch_3(E\otimes \cL)=\bch_3(E)+\bch_2(E).\cL+\frac{1}{2}\cL^2.\bch_1(E)+\frac{1}{6}\cL^3\rk(E)$ for any $\cL\in \Pic(X)$, and 

    \item if $X$ is also Gorenstein, then $\bch_i(E)=(-1)^i\bch_i(\mathbb{D}^X(E))$ for any $0\leq i\leq 3$.
\end{enumerate}

\end{lemma}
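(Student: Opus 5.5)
The plan is to reduce both statements to Riemann--Roch bookkeeping with $\tau_X$, Theorem \ref{thm-tau}, and the lower Chern characters $\bch_{\leq 2}$. First we base change to $\overline{\kk}$: all quantities involved, namely $\chi$, $\bch_{\leq 2}$ (Lemma \ref{lem:chi-property}(b)), the Mumford-type intersection numbers, and $\td_2(X)$, are compatible with field extension. So we may assume $X$ is normal, $\QQ$-factorial and lci in codimension $2$. Let $U=\mathrm{LCI}(X/\kk)$, whose complement has dimension $\leq 0$, so that $\CH_k(X)_\QQ=\CH_k(U)_\QQ$ for $k\geq 1$.

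\emph{Step 1 (expansion of $\tau_X$).} By \eqref{eq:tau_k}, for $E\in\Db(X)$ we have
\begin{gather*}
[\tau_X(E)]_3=\rk(E)[X],\qquad [\tau_X(E)]_2=\bch_1(E)+\td_1(X)\rk(E),\\
[\tau_X(E)]_1=\bch_2(E)+\td_1(X)\cap\bch_1(E)+\td_2(X)\rk(E),
\end{gather*}
and $\int_X[\tau_X(E)]_0=\chi(E)$ by Theorem \ref{thm-tau}(a) pushed to a point. On $U$ we have $\td_1(U)=\frac12 c_1(\mathbb T_U)=-\frac12 c_1(\omega_U)$ by \eqref{eq:det-LX-omega}. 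Hence, after lifting, $\td_1(X)\cap\alpha=-\frac12 K_X.\alpha$ for $\alpha\in\CH_2(X)_\QQ$, where the right-hand side is the $\QQ$-Cartier intersection coming from $\QQ$-factoriality. I expect this identification to be the main technical point. The plan is to take $m$ with $mK_X$ Cartier, compare the two classes on $U$ where both are given by $c_1$ of the line bundle $\omega_U^{\otimes m}$, and use that restriction $\CH_1(X)\to\CH_1(U)$ is an isomorphism.

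\emph{Step 2 (part (a)).} By Theorem \ref{thm-tau}(c),
\[\chi(E\otimes\cL)=\int_X\ch(\cL)\cap\tau_X(E)=\chi(E)+\cL.[\tau_X(E)]_1+\tfrac12\cL^2.[\tau_X(E)]_2+\tfrac16\cL^3\rk(E).\]
By Lemma \ref{lem:chi-property}(d),
\[\bch_1(E\otimes\cL)=\bch_1(E)+\cL\rk(E),\qquad \bch_2(E\otimes\cL)=\bch_2(E)+\cL.\bch_1(E)+\tfrac12\cL^2\rk(E).\]
Substituting these into Definition \ref{def:ch3} and using Step 1, the remaining terms cancel in pairs. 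The $K_X.\cL.\bch_1$ terms cancel, the $K_X.\cL^2\rk$ terms cancel, and $\td_2(X).\cL\rk$ cancels against the term $-\td_2(X).\bch_1(E\otimes\cL)$. This cancellation uses that $\cL$ is Cartier, so that $\cL.(K_X.\alpha)=K_X.(\cL.\alpha)$ by commutativity of Cartier with $\QQ$-Cartier intersections. What remains is exactly the claimed formula.

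\emph{Step 3 (part (b)).} For $i\leq 2$ we apply Lemma \ref{lem-derived-dual-ch} on the Gorenstein lci scheme $U$ and transport the result through $\CH_{3-i}(U)_\QQ=\CH_{3-i}(X)_\QQ$. Restriction to $U$ commutes with $\mathbb D$ and with $\bch$ by Lemma \ref{lem:chi-property}(c) for the open immersion. For $i=3$, $\omega_X$ is a line bundle and $\omega^\bullet_X=\omega_X[3]$, so Lemma \ref{lem-serre-duality} over $\Spec\kk$ gives
\[\chi(\mathbb D^X(E)\otimes\omega_X)=-\chi(E),\qquad \chi(\cO_X)=-\chi(\omega_X).\]
Set $F=\mathbb D^X(E)$. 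Part (a) with $\cL=\omega_X$ expresses $\bch_3(F\otimes\omega_X)$ through $\bch_{\leq 3}(F)$. On the other hand, Definition \ref{def:ch3} applied to $F\otimes\omega_X$ expresses it through $-\chi(E)$ and $\bch_{\leq2}(F\otimes\omega_X)$. We then rewrite every occurrence of $\bch_i(F)$ with $i\leq 2$ as $(-1)^i\bch_i(E)$, and $\chi(E)$ through $\bch_3(E)$ via Definition \ref{def:ch3}. After this, the identity $\bch_3(F)=-\bch_3(E)$ should follow by direct comparison of the $K_X$-, $K_X^2$-, $K_X^3$- and $\td_2$-terms.

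The $K_X^3\rk$ and $\td_2.K_X\rk$ terms do not cancel formally in this comparison. For them I plan to use the identity $\chi(\cO_X)=-\chi(\omega_X)$ together with part (a) applied to $E=\cO_X$, $\cL=\omega_X$, which reads
\[\bch_3(\omega_X)=\bch_3(\cO_X)+\tfrac16K_X^3.\]
Combining this with Definition \ref{def:ch3} for $\omega_X$, where $\bch_3(\cO_X)=0$ by definition, yields the relation
\[\td_2(X).K_X=\text{(a multiple of $\chi(\cO_X)$, via }\chi(\omega_X)=-\chi(\cO_X)\text{)},\]
which is exactly what is needed to close the computation. Checking that these leftover terms match is the step I expect to require the most care, though it is routine once Step 1 is in place.
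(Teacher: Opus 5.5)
Your proposal is correct.

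\textbf{Part (b).} You follow the paper's route: duality on $\mathrm{LCI}(X/\kk)$ for $i\le 2$, then Serre duality combined with part (a) for $i=3$. You also make explicit what the paper's phrase ``these together with part (a) imply part (b)'' leaves implicit. The leftover rank terms close up exactly because of
\[\td_2(X).K_X=\tfrac{1}{12}K_X^3-2\chi(\cO_X).\]
This is more than ``a multiple of $\chi(\cO_X)$'', but it is precisely what your computation produces. It follows from part (a) applied to $(\cO_X,\omega_X)$, together with $\bch_3(\cO_X)=0$ and $\chi(\omega_X)=-\chi(\cO_X)$.

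\textbf{Part (a).} Here your route genuinely differs from the paper's.
\begin{itemize}
\item The paper first reduces to $\cL$ very ample. It then takes a general $D\in|\cL|$, which misses the finitely many non-lci points and is a normal lci surface. It writes $\chi(E\otimes\cL)-\chi(E)=\chi((E\otimes\cL)|_D)$ and evaluates this by Riemann--Roch on $D$ (Lemma \ref{lem-chM-general}) and the pushforward compatibility of Lemma \ref{lem:chi-property}(a).
\item You instead use the global identity $\tau_X(E\otimes\cL)=\ch(\cL)\cap\tau_X(E)$ from Theorem \ref{thm-tau}(c), expanded via \eqref{eq:tau_k} in $\delta$-dimensions $3,2,1$. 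This works for arbitrary $\cL\in\Pic(X)$, with no Bertini argument and no reduction to very ample bundles. It succeeds because the part of $\tau_X(E)$ not controlled by $\bch_{\le 2}$, namely its degree-zero component, enters only through $\chi(E)$, and that term cancels.
\end{itemize}

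\textbf{The trade-off.} You must identify $\td_1(X)\cap\alpha$ with $-\frac12K_X.\alpha$ on $\CH_2(X)_\QQ$ by hand. The paper's computation on $D$ needs an analogous link between $\td_1$ and the canonical class, but it takes place on an honest lci surface. Your sketch of this identification is sound: use $\cO_X(mK_X)|_U\cong\omega_U^{\otimes m}$ and $c_1(\mathbb{T}_U)=-c_1(\omega_U)$, then lift through the isomorphism $\CH_1(X)_\QQ\cong\CH_1(U)_\QQ$, which holds because $X\setminus U$ is finite. The remaining step, $\cL.(K_X.\alpha)=K_X.(\cL.\alpha)$, is just commutativity of first Chern classes.
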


\begin{proof}
After base change to $\overline{\kk}$, we may assume that $\kk$ is algebraically closed. By writing $\cL$ as a difference of two very ample line bundles, we may assume that $\cL$ is very ample. Let $D\in |\cL|$ be a general divisor, so it is a normal projective lci surface. From the definition, Lemma \ref{lem:chi-property}(d), and $\chi((E\otimes \cL)|_D)=\chi(E\otimes \cL)-\chi(E)$, we have
\[\bch_3(E\otimes \cL)-\bch_3(E)=\chi((E\otimes \cL)|_D)-\rk(E)\td_2(X).\cL+\frac{1}{2}K_X.\left(\bch_1(E).\cL+\frac{1}{2}\rk(E)\cL^2\right).\]
Moreover, since $D$ is lci, by Lemma \ref{lem-chM-general}, we have $$\bch((E\otimes \cL)|_D)=\ch(\cL_D)\cap \bch(E_D)\in \CH_*(D)_{\QQ}$$
and
\[\chi((E\otimes \cL)|_D)=\td(D)\cap \bch((E\otimes \cL)|_D).\]
Therefore, part (a) follows from a direct computation using Lemma \ref{lem:chi-property}(a).

Applying Lemma \ref{lem-derived-dual-ch} to $E|_{\mathrm{LCI}(X/\kk)}$, we have $\bch_i(E)=(-1)^i\bch_i(\mathbb{D}^X(E))$ for $0\leq i\leq 2$. Moreover, using Lemma \ref{lem-serre-duality}, we obtain $$\chi(E\otimes \omega_X)=-\chi(E,\cO_X)=-\chi(\mathbb{D}^X(E)).$$ These together with part (a) imply part (b).
\end{proof}

For later convenience, we define a class of morphisms as follows.

\begin{definition}\label{def:adm}
We say that a projective flat morphism $f\colon X\to S$ of relative dimension $n\geq 1$ between Noetherian schemes is \emph{admissible} if it satisfies one of the following assumptions.

\begin{enumerate}[(1)]
    \item $f$ is lci.

    \item $n=2$, each geometric fiber of $f$ is either lci or a normal $\QQ$-Gorenstein surface, and for any morphism $D\to S$ essentially of finite type from the spectrum $D$ of a DVR, the total space $X_D$ is $\QQ$-Gorenstein.

    \item $n=2$, $S=\Spec(\kk)$ for a field $\kk$, and $X_{\overline{\kk}}$ is a normal surface.

    \item $n=3$, each geometric fiber of $f$ is a normal $\QQ$-factorial threefold that is lci in codimension $2$, and for any morphism $D\to S$ essentially of finite type from the spectrum $D$ of a DVR, the total space $X_D$ is $\QQ$-factorial.

\end{enumerate}

\end{definition}

By definition, being admissible is stable under base change essentially of finite type. In the following, we give some non-trivial examples of admissible morphisms.

\begin{example}\label{ex:admissible}
Given a projective flat morphism $f\colon X\to S$ of relative dimension $n\geq 1$ between Noetherian schemes.
\begin{enumerate}
    \item If $f$ is Gorenstein, or more generally, naively $\QQ$-Gorenstein in the sense of \cite[Definition 7.1]{nakayama:q-gorenstein}, and each geometric fiber of $f$ is lci or a normal surface, then $f$ is admissible. Indeed, this follows from \cite[Lemma 7.20]{nakayama:q-gorenstein}.

    \item If $S=\Spec(\kk)$ for an algebraically closed field $\kk$ which is not the algebraic closure of a finite field and $X$ is a normal $\QQ$-factorial projective threefold over $\kk$ that is lci in codimension $2$, then $X\to S$ is admissible by \cite[Théorème 6.5]{gabber:q-factor}.

    \item Using \cite[Théorème 6.5]{gabber:q-factor} and \cite[Theorem 2.91]{kollar:book-general-type}, it follows that if $S$ has characteristic $0$ and each geometric fiber of $f$ is a normal projective threefold with isolated $\QQ$-factorial canonical singularities, then $f$ is admissible.
\end{enumerate}
\end{example}

A key property of admissible morphisms is the constancy of the top Chern character in a flat family.


\begin{lemma}\label{lem:constant-generalization}
Let $f\colon X\to S$ be an admissible morphism and $E\in \Dqc(X)$ be a $S$-perfect object. Assume that $S$ is Nagata and has finite Krull dimension. Then the function
\[s\mapsto \bch_n(E_s)\in \QQ\]
is a locally constant function on $S$ and $\bch_n(E_s)=\bch_n(E_t)$ for any point $t\to S$ with the image $s\in S$.
\end{lemma}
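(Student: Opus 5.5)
We follow the reduction used in the proof of Lemma \ref{lem-constant-chM}. The equality $\bch_n(E_s)=\bch_n(E_t)$ for a point $t\to S$ over $s$ is compatibility of $\bch_n$ with field extensions. For $\bch_n=\bch_n$ with $n\leq$ the lci codimension, this is Lemma \ref{lem:chi-property}(b). In the normal surface case (Definition \ref{def:ch2-surface}) and the threefold case (Definition \ref{def:ch3}), each term is base-change invariant: $\chi$ by flat base change of cohomology, $\rk$ and $\chi(\cO_X)$ trivially, $\bch_{n-1}$ and $\td_2$ by Lemma \ref{lem:chi-property}(b), and the Mumford intersection numbers $K_X.\bch_1$ and $K_X.\bch_2$ by Theorem \ref{thm-mumford-intersection}(f) (for curve classes on a $\QQ$-factorial threefold, via Cartier multiples). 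In case (1) the claim is just Lemma \ref{lem-constant-chM} with $d=n$ and $Z=1$, so only cases (2)--(4) remain.

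For local constancy, as in Lemma \ref{lem-constant-chM}, we may assume $S$ is connected. Since $S$ has finite Krull dimension, it suffices to compare a point $s'$ with a specialization $s$ of codimension one in $\overline{\{s'\}}$. We restrict to $\overline{\{s'\}}$ using Lemma \ref{lem-S-perf-lem-2}(b), then localize. Because $S$ is Nagata, the normalization is finite, so we may pass to a DVR $D\to S$ essentially of finite type dominating the local ring. By the base-change invariance above, it suffices to treat $S=\Spec$ of a DVR with closed point $p$ and generic point $K$. In cases (2) and (4), admissibility then says that the total space $X_D$ is $\QQ$-Gorenstein, respectively $\QQ$-factorial.

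Over the DVR we write $\bch_n$ as a combination of invariants that are constant in flat families. First, $\chi(E_s)$ is locally constant, because $Rf_*E$ is perfect and commutes with base change (Lemma \ref{lem-S-perf-lem-2}(a)). Second, $\rk(E_s)$ and $\chi(\cO_{X_s})$ are constant. Third, $\bch_{n-1}(E_s)$ is compatible with specialization by Lemma \ref{lem-chM-special}, since the fibers are lci in codimension $n-1$. It remains to show that pairings such as $K_{X_s}.\bch_1(E_s)$ (for $n=2$) or $K_{X_s}.\bch_2(E_s)$ and $\td_2(X_s).\bch_1(E_s)$ (for $n=3$) are preserved by $\mathrm{sp}$.

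We expect this last step to be the main obstacle. Here we use the $\QQ$-Cartier hypothesis on the total space. In case (2), $mK_{X_D/D}$ is Cartier for some $m>0$, its restriction to each fiber is $mK_{X_s}$, and a Cartier class on $X_D$ commutes with the Gysin/specialization maps. Combined with Lemma \ref{lem-sp-push}, this gives $mK_{X_K}.\bch_1(E_K)=mK_{X_p}.\bch_1(E_p)$. In case (4), $\QQ$-factoriality of $X_D$ lets us lift the curve class $\bch_2(E_K)$, and the cycle $\td_2$, against $\QQ$-Cartier divisor classes $m\bch_1$ and $mK$ on $X_D$. Specializing the resulting bivariant classes and again applying Lemma \ref{lem-sp-push} gives the equality. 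The delicate point is checking that the specialization of the Weil divisor $\bch_1(E_K)$ agrees with $\bch_1(E_p)$ as a $\QQ$-Cartier class on $X_p$, so that the Mumford pairing on the fiber matches the restriction of the Cartier pairing on $X_D$. We would verify this codimension-one statement directly using Lemma \ref{lem-chM-special}. In case (3), $S$ is a point and there is nothing to prove.
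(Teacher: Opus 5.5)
Your proposal is correct and follows essentially the same route as the paper. In both, the lci case is Lemma \ref{lem-constant-chM}, base-change invariance of $\chi$, $\rk$, $\chi(\cO_{X_s})$ and the Mumford pairings gives the second claim, and local constancy is reduced to a DVR $D$ essentially of finite type over $S$, where one uses that $m\bch_1(\omega^{\bullet}_{X_D/D})$ (and, in case (4) of Definition \ref{def:adm}, also $m\bch_1(E_D)$) is Cartier on the total space, so that Lemma \ref{lem-sp-push} applies.

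The ``delicate point'' you flag is settled exactly by taking $m\bch_1(E_D)$ Cartier on $X_D$: its restrictions to the two fibers represent $m\bch_1(E_K)$ and $m\bch_1(E_p)$.
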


\begin{proof}
When $f$ is lci, the result follows from Lemma \ref{lem-constant-chM}. In the remaining cases, it is clear that $\chi(E_s)$, $\rk(E_s)$, $\chi(\cO_{X_s})$, and the Mumford intersection numbers are constant and compatible with field extensions. Therefore, the last statement follows from the definition, Lemma \ref{lem-constant-chM}, and Lemma \ref{lem-flat-base-change}.

To prove the local constancy, as in the proof of Lemma \ref{lem-constant-chM}, we can base change $S$ to the spectrum $D$ of a DVR that $D\to S$ is essentially of finite type by \cite[Lemma 11.19]{BLMNPS21}.

The result for case (3) is obvious. In the case (2) in Definition \ref{def:adm}, by the $\QQ$-Gorenstein assumption, we can take $m\in \ZZ_{>0}$ so that $m\bch_1(\omega^{\bullet}_{X_D/D})$ is Cartier. Hence, the result follows from Lemma \ref{lem-sp-push}. In the case (4) in Definition \ref{def:adm}, by the $\QQ$-factorial assumption, we can take $m\in \ZZ$ so that $m\bch_1(\omega^{\bullet}_{X_D/D})$ and $m\bch_1(E_D)$ are both Cartier. Then the result also follows from Lemma \ref{lem-sp-push}.
\end{proof}

We do not expect Mumford intersection numbers to be locally constant for all flat families of Weil divisors in normal projective surfaces, and This is the reason for imposing the $\QQ$-Gorenstein condition in the surface case of Definition \ref{def:adm}.

\section{Stability conditions}\label{sec:stab}

In this section, we recall definitions and basic properties of (weak) stability conditions on triangulated categories. We follow \cite{bridgeland:stability,BLMNPS21}. Fix a triangulated category $\cD$.

\begin{definition}
A \emph{slicing} $\cP$ on $\cD$ is a collection $\{\cP(\phi)\}_{\phi\in \mathbb{R}}$ of full additive subcategories of $\cD$ that satisfies

\begin{itemize}
    \item $\cP(\phi+1)=\cP(\phi)[1]$ for all $\phi\in \mathbb{R}$,

    \item $\Hom_{\cD}(\cP(\phi_1), \cP(\phi_2))=0$ for $\phi_1>\phi_2$, and

    \item {(Harder--Narasimhan filtrations)} for any non-zero $E\in \cD$, there exists a finite sequence of morphisms
    \[0=E_0\xra{s_1}E_1\to \cdots \xra{s_m} E_m=E\]
    such that the cone $\mathrm{cone}(s_i)$ is in $\cP(\phi_i)$ for a sequence $\phi_1>\phi_2>\cdots>\phi_m$ of real numbers.
\end{itemize}
\end{definition}

If $0 \neq E \in \mathcal{D}$, we set $$\HN_{\cP}^-(E)\coloneqq \cone(s_m), \quad \HN_{\cP}^+(E)\coloneqq \cone(s_1)$$ and $$\phi_{\cP}^+(E)\coloneqq\phi_1, \quad \phi_{\cP}^-(E)\coloneqq\phi_m.$$ For an interval $I\subset \mathbb{R}$, we define $$\cP(I)\coloneqq\{E \colon \phi_{\cP}^+(E), \phi_{\cP}^-(E)\in I\}\cup \{0\}=\langle \cP(\phi) \rangle_{\phi\in I},$$ where $\langle -\rangle$ denotes the extension closure.

\begin{definition}\label{def:stability-condition}
Let $\Lambda$ be a finite rank lattice and $\bv\colon \KK(\cD)\to \Lambda$ be a group homomorphism.

\begin{itemize}
    \item A \emph{weak pre-stability condition} on $\cD$ (with respect to $\Lambda$) is a pair $\sigma=(Z, \cP)$ where $\cP$ is a slicing of $\cD$ and $Z\colon \Lambda\to \CC$ is a group homomorphism, called the \emph{central charge}, such that for all non-zero objects $E\in \cP(\phi)$, we have $Z(\bv(E))\in \mathbb{R}_{> 0}\cdot e^{\mathfrak{i}\pi \phi}$ for $\phi\notin \ZZ$ and $Z(\bv(E))\in \mathbb{R}_{\geq 0}\cdot e^{\mathfrak{i}\pi \phi}$ for $\phi\in \ZZ$.

    If, moreover, for all $\phi\in \RR$ and non-zero objects $E\in \cP(\phi)$, we have $Z(\bv(E))\neq 0$, then $\sigma$ is called a \emph{pre-stability condition}.

    By abuse of notation, we write $Z(E)$ for $Z(\bv(E))$. The non-zero objects of $\cP(\phi)$ are called \emph{$\sigma$-semistable of phase $\phi$}. Simple objects of $\cP(\phi)$ are called \emph{$\sigma$-stable}.

    \item A weak pre-stability condition $\sigma=(Z, \cP)$ satisfies the \emph{support property} if there exists a quadratic form $Q$ on $(\Lambda/\Lambda^{Z})_{\RR}$ such that $Q|_{(\ker(Z)/\Lambda^{Z})_{\RR}}$ is negative definite and $Q(\overline{\bv}(E))\geq 0$ for any $\sigma$-semistable object $E\in \cD$. Here, $$\Lambda^{Z}\coloneqq\langle \bv(E) \colon E\in \cP(0,1], Z(E)=0 \rangle\subset \Lambda,$$
where $\langle S \rangle$ denotes the saturated sublattice generated by a subset $S\subset \Lambda$, and $\overline{\bv}$ is the induced homomorphism $\overline{\bv}\colon \KK(\cD)\to \Lambda/\Lambda^{Z}$.

    \item A (weak) pre-stability condition $\sigma=(Z, \cP)$ is called a \emph{(weak) stability condition} if $\sigma$ satisfies the support property.
\end{itemize}
\end{definition}

\begin{remark}
Given a weak stability condition $\sigma=(Z, \cA)$, by \cite[Lemma 11.4]{bayer2016space}, the support property is equivalent to saying that there is a constant $C>0$ such that for all $\sigma$-semistable objects $E\in \cP(0,1]$, we have
\[\|\overline{\bv}(E)\|\leq C|Z(E)|,\]
where $\|-\|$ is a norm on $(\Lambda/\Lambda^Z)_{\RR}$. In fact,  we only need to take
\[
    Q(v)=C^2\left|Z(v)\right|^{2}-\|v\|^2.
\]
This implies that for any constant $D>0$, there exist only finitely many classes $v\in \Lambda/\Lambda^Z$ such that $|Z(v)|\leq D$ and $Q(v)\geq 0$, as these lattice points are contained in a compact subset in $(\Lambda/\Lambda^Z)_{\RR}$.
\end{remark}

\begin{remark}
Our definition of weak stability conditions is the same as \cite[Definition 2.3]{piyaratne2019moduli}, which differs slightly from \cite{BLMNPS21}. The definition in \cite{BLMNPS21} requires the quadratic form $Q$ in the definition of support property to be defined on the bigger space $\Lambda_{\RR}$. Note that a weak stability condition $\sigma=(Z,\cP)$ with respect to $\bv\colon \KK(\cD)\to \Lambda$ can also be regarded as a weak stability condition with respect to $\overline{\bv}\colon \KK(\cD)\to \Lambda/\Lambda^{Z}$.
\end{remark}

If $\sigma=(Z, \cP)$ is a weak pre-stability condition on $\cD$ and $0 \neq E \in \mathcal{D}$, we set $$\HN_{\sigma}^-(E)\coloneqq \HN_{\cP}^-(E)\quad \text{ and }\quad \HN_{\sigma}^+(E)\coloneqq \HN_{\cP}^+(E).$$
Similarly, we set
$$\phi_{\sigma}^+(E)\coloneqq \phi_{\cP}^+(E)\quad \text{ and }\quad \phi_{\sigma}^-(E)\coloneqq\phi_{\cP}^-(E).$$

We denote by $\Stab^{\mathsf{wp}}_{\Lambda}(\cD)$ the set of all weak pre-stability conditions on $\cD$ with respect to $\Lambda$. In the following, we equip $\Stab^{\mathsf{wp}}_{\Lambda}(\cD)$ with the coarsest topology such that the maps
\[\cZ\colon \Stab^{\mathsf{wp}}_{\Lambda}(\cD)\to \Hom_{\ZZ}(\Lambda, \CC), \quad \sigma \mapsto Z\]
and
\[\Stab^{\mathsf{wp}}_{\Lambda}(\cD)\to \RR^2, \quad \sigma \mapsto (\phi_{\sigma}^+(E), \phi_{\sigma}^-(E))\]
are continuous for any $0\neq E\in \cD$. The corresponding subspaces consisting of weak stability conditions and stability conditions are written as $\Stab^{\mathsf{w}}_{\Lambda}(\cD)$ and $\Stab_{\Lambda}(\cD)$, respectively. One of the most important results in the theory of stability conditions is the following.

\begin{theorem}[\cite{bridgeland:stability}]
The continuous map \[\cZ\colon \Stab_{\Lambda}(\cD)\to \Hom_{\ZZ}(\Lambda, \CC), \quad \sigma \mapsto Z\]
is a local homeomorphism, so $\Stab_{\Lambda}(\cD)$ naturally has the structure of a complex manifold of complex dimension $\rk(\Lambda)$.
\end{theorem}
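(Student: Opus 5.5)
This is Bridgeland's deformation theorem, and I would follow his original argument, with the support-property refinement of Bayer--Macrì--Stellari / Bayer's short proof. The statement has two halves: continuity of $\cZ$ is built into the chosen topology, and local injectivity plus local surjectivity are the real content.

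\emph{Metric and local injectivity.} I would equip $\Stab_{\Lambda}(\cD)$ with the generalized metric
\[
d(\sigma_1,\sigma_2)=\sup_{0\neq E}\max\bigl\{|\phi^+_{\sigma_1}(E)-\phi^+_{\sigma_2}(E)|,\,|\phi^-_{\sigma_1}(E)-\phi^-_{\sigma_2}(E)|,\,\|Z_1-Z_2\|\bigr\}.
\]
I would then check that the induced topology agrees with the coarsest topology defined above, using the support property to make the suprema uniform. Next comes the key lemma: if $Z_1=Z_2$ and $d(\sigma_1,\sigma_2)<1$, then $\sigma_1=\sigma_2$. To prove it, take $E\in\cP_1(\phi)$. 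Its $\sigma_2$-HN factors have phases in $(\phi-1,\phi+1)$. Since the central charges agree and semistable objects of a fixed phase have charges on a fixed ray, one compares the HN filtrations and uses $\Hom$-vanishing to force $E\in\cP_2(\phi)$. This gives injectivity of $\cZ$ on small balls.

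\emph{Local surjectivity (deformation).} Fix $\sigma=(Z,\cP)$ with support constant $C$ (a quadratic form $Q$). For $W\in\Hom(\Lambda,\CC)$ with $\|W-Z\|_\sigma<\epsilon$ small (say $\sin\pi\epsilon$ relative to $C$), I would construct a slicing $\cP'$ with central charge $W$. The construction goes through thin subcategories. For an interval $(a,b)$ of length less than $1-2\epsilon$, the category $\cP((a,b))$ is quasi-abelian. Inside it, $W$ gives a slope function for which one defines $W$-semistable objects of phase $\psi$, declaring $\cP'(\psi)$ to consist of the $W$-semistable objects in $\cP((\psi-\epsilon,\psi+\epsilon))$. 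The steps are then:
\begin{enumerate}
\item independence of the choice of interval;
\item $\Hom$-vanishing, which follows from the phase bounds;
\item existence of HN filtrations.
\end{enumerate}

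\emph{Main obstacle: HN filtrations.} The crux is step 3, namely that HN filtrations exist in the quasi-abelian thin categories. For this one needs finiteness of chains of strict subobjects and quotients with bounded $|W|$. The support property provides it: only finitely many classes $v$ with $|Z(v)|\le D$ and $Q(v)\ge 0$ are $\sigma$-semistable classes (by the remark after Definition~\ref{def:stability-condition}). Hence the classes of $\sigma$-HN factors of subobjects in the thin category form a finite set, and Harder--Narasimhan existence follows from a standard ascending/descending chain argument.

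\emph{Support property for the deformed condition.} Finally I would show that $\sigma'=(W,\cP')$ satisfies a support property. I would use the deformed form $Q'=Q+\mathrm{const}\cdot|W-Z|^2$-type adjustment, as in Bayer's proof: the negative-definiteness of $Q$ on $\ker Z$ is an open condition, and this also bounds $d(\sigma,\sigma')<$ const$\cdot\epsilon$. Together with injectivity, this shows that $\cZ$ is a local homeomorphism onto an open subset of the complex vector space $\Hom_{\ZZ}(\Lambda,\CC)$, which gives the complex manifold structure of dimension $\rk\Lambda$.
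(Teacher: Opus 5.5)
Your deformation argument is Bridgeland's proof, which is exactly what the paper cites, and it works. It consists of thin subcategories $\cP((a,b))$, $W$-semistable objects enveloped by them, $\Hom$-vanishing, HN filtrations from the finite-length property that the support property supplies as in Lemma~\ref{lem:supp-property-finite-length}, and the lemma that two stability conditions with the same central charge and $d<1$ coincide. It proves that $\cZ$ is a local homeomorphism for the topology of the generalized metric $d$.

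The step that fails is your plan to identify this topology with the coarsest topology making $\cZ$ and all $\phi^{\pm}_\sigma(E)$ continuous ``using the support property to make the suprema uniform''. A basic open set of the coarse topology constrains only finitely many objects. The support property only bounds the classes $\bv(E)$ of semistable objects in terms of $|Z(E)|$, so it gives no uniformity over infinitely many non-isomorphic objects.

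In fact the identification is false in the stated generality, and for the coarse topology $\cZ$ is not even locally injective. Take $\cD=\bigoplus_{n\geq 1}\Db(\kk)$ (objects with finitely many non-zero components), $\Lambda=\ZZ$, and $\bv$ the total Euler characteristic. A stability condition $\sigma$ amounts to $Z(1)\in\RR_{>0}e^{\mathfrak{i}\pi\phi_0}$ together with phases $\phi_n\in\phi_0+2\ZZ$ of the object $\kk_n$ in the $n$-th summand. The support property holds with $Q=0$, since $\ker Z=0$. Let $\sigma^{(N)}$ be obtained by replacing $\phi_n$ with $\phi_n+2$ for all $n\geq N$. Then $\cZ(\sigma^{(N)})=\cZ(\sigma)$ and $d(\sigma^{(N)},\sigma)=2$. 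But every object lives in finitely many summands, so $\sigma^{(N)}\to\sigma$ in the coarse topology.

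So this step cannot be carried out. The theorem has to be read with Bridgeland's metric topology, which is the one in the cited result, and you should adopt that convention rather than attempt the comparison.

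A smaller point concerns the support property of the deformed $\sigma'=(W,\cP')$. It does not need a form like $Q+\mathrm{const}\cdot|W-Z|^2$. That is not what Bayer does, and you would still have to prove its non-negativity on $\sigma'$-semistable classes. There are two simpler routes:
\begin{enumerate}
\item As Bayer shows, the original $Q$ keeps working as long as it remains negative definite on $\ker W$.
\item Use the norm form. A $\sigma'$-semistable $E$ lies in $\cP((\psi-\epsilon,\psi+\epsilon))$. Sum $\|\bv(E_i)\|\leq C|Z(E_i)|$ over its $\sigma$-HN factors and apply the sector estimate from the proof of Lemma~\ref{lem:supp-property-finite-length}. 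This gives $\|\bv(E)\|\leq C\cos(\pi\epsilon)^{-1}|Z(E)|$, which is at most a constant times $|W(E)|$ once $\|W-Z\|_\sigma<\sin(\pi\epsilon)$ with $\epsilon$ small.
\end{enumerate}
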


In the following, we recall an alternative description of stability conditions.

\begin{definition}
A \emph{stability function (resp.~weak stability function)} $Z$ on an abelian category $\cA$ is a group homomorphism $Z\colon \KK(\cA)\to \CC$ such that for all non-zero $E\in \cA$, we have $\Im(Z(E))\geq 0$, and if $\Im(Z(E))=0$ then $\Re(Z(E))<0$ (resp.~$\Re(Z(E))\leq 0$).

For any non-zero $E\in \cA$, we define its \emph{phase} by $$\phi(E)\coloneqq\phi(Z(E))=\frac{1}{\pi} \arg Z(E)$$ if $Z(E)\neq 0$ and $\phi(E)=1$ otherwise. An object $E\in \cA$ is called \emph{$Z$-semistable} if for any non-trivial subobject $F\subset E$, we have $\phi(F)\leq \phi(E/F)$.
\end{definition}

\begin{definition}
Given any homomorphism $Z\colon \KK(\cA)\to \CC$, we define $\cA^{Z}\subset \cA$ as the subcategory of objects $E\in \cA$ with $Z(E)=0$.
\end{definition}

It is clear from the definition that a weak stability function $Z$ on $\cA$ is a stability function if and only if $\cA^{Z}=0$.

\begin{definition}
We say a weak stability function $Z$ on an abelian category $\cA$ satisfies the \emph{HN property} if every non-zero object $E\in \cA$ admits an HN filtration: a sequence 
\[0=E_0\hookrightarrow E_1\hookrightarrow E_2\hookrightarrow\cdots \hookrightarrow E_m=E\]
such that the factor $E_i/E_{i-1}$ is $Z$-semistable for each $1\leq i\leq m$ with
\[\phi(E_1/E_0)>\phi(E_2/E_1)>\cdots >\phi(E_m/E_{m-1}).\]
\end{definition}

Given a (weak) pre-stability condition $\sigma=(Z, \cP)$, we know that $Z$ is a (weak) stability function on the heart $\cP(0,1]$. Conversely, we also have:

\begin{lemma}[{\cite{bridgeland:stability}}]
To give a (weak) pre-stability condition on $\cD$ is equivalent to giving the heart of a bounded t-structure $\cA\subset \cD$ and a (weak) stability function on $\cA$ with the HN property.
\end{lemma}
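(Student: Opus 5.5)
We prove both directions of the correspondence by making the standard Bridgeland construction explicit; the only point needing care is the weak case, where nonzero objects may have $Z=0$.

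\emph{From a slicing to a heart.} Given a (weak) pre-stability condition $\sigma=(Z,\cP)$, set $\cA\coloneqq\cP(0,1]$. We will show that $\cA$ is the heart of the bounded t-structure with aisle $\cP(>0)$ and coaisle $\cP(\leq 0)$:
\begin{itemize}
\item Hom-vanishing between the aisle and coaisle follows from the second slicing axiom.
\item The truncation triangle of an object $E$ is obtained by splitting its HN filtration at the first index $i$ with $\phi_i\leq 0$.
\item Boundedness holds because every object has finitely many HN factors.
\end{itemize}
For $0\neq E\in\cA$, each HN factor satisfies $Z\in\RR_{\geq0}e^{\mathfrak{i}\pi\phi}$ with $\phi\in(0,1]$. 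Summing over the factors, $Z(E)$ lies in the closed upper half plane, and it is strictly negative real if it is real and nonzero. In the non-weak case every factor has $Z\neq 0$, so $Z(E)\neq0$.

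Hence $Z$ restricts to a (weak) stability function on $\cA$. The HN filtration of $E\in\cA$ with respect to $\cP$ has all its factors in $\cA$, so it is a filtration by subobjects of $\cA$. It remains to check that objects of $\cP(\phi)$ are $Z$-semistable in $\cA$, which is the only nonformal point. Suppose $0\to F\to E\to G\to 0$ is exact in $\cA$ with $E\in\cP(\phi)$.
\begin{itemize}
\item $\phi^-(G)\geq\phi$: otherwise $E\to G\to\HN^-(G)$ is a nonzero map, contradicting Hom-vanishing.
\item $\phi^+(F)\leq\phi$: this follows dually.
\end{itemize}
Summing over HN factors then gives $\phi(F)\leq\phi\leq\phi(G)$. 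The convention $\phi=1$ when $Z=0$ is exactly compatible here, since an object with $Z=0$ lying in $\cP(0,1]$ has all its HN factors with $Z=0$, hence all in $\cP(1)$. The HN property in $\cA$ follows.

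\emph{From a heart to a slicing.} Conversely, given a heart $\cA$ and a (weak) stability function $Z$ with the HN property, we construct the slicing as follows.
\begin{itemize}
\item For $\phi\in(0,1]$, let $\cP(\phi)$ consist of $0$ and the $Z$-semistable objects of $\cA$ of phase $\phi$.
\item Extend to all $\phi\in\RR$ by $\cP(\phi+1)=\cP(\phi)[1]$.
\item The central charge condition holds by construction. In particular, objects with $Z=0$ land in $\cP(1)$, consistent with $Z\in\RR_{\geq0}e^{\mathfrak{i}\pi}$.
\end{itemize}
Hom-vanishing for phases $\phi_1>\phi_2$ reduces to two cases.
\begin{itemize}
\item Both phases lie in the same shift of $(0,1]$. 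Then this is the usual argument that a nonzero map $f\colon A\to B$ between semistable objects forces $\phi(A)\leq\phi(\mathrm{im} f)\leq\phi(B)$. In the weak case we use that the phase function satisfies the see-saw property on short exact sequences, with phase $1$ assigned to objects with $Z=0$.
\item The phases lie in different shifts. Then the vanishing follows from negative Ext-vanishing in a heart.
\end{itemize}
For the HN filtration, take $E\in\cD$ and filter it by its cohomology objects $H^i_\cA(E)$ with respect to the bounded t-structure. Refine each $H^i_\cA(E)[-i]$ by its $Z$-HN filtration in $\cA$, shifted. Concatenating these refinements through the octahedral axiom gives a filtration whose phases strictly decrease, because the phases of distinct shifts lie in disjoint intervals $(k,k+1]$.

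Finally, the two constructions are mutually inverse: $\cP(0,1]=\cA$, and the semistable objects agree by the semistability check above. The main obstacle is the weak-case bookkeeping of objects with $Z=0$, which enters both in the semistability check and in the see-saw argument for Hom-vanishing.
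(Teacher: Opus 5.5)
Your proposal is correct and is essentially Bridgeland's argument for \cite[Proposition 5.3]{bridgeland:stability}, which the paper simply cites, with the weak-case bookkeeping added. Two small points to tighten.
\begin{enumerate}
\item In the weak case only the betweenness form of the see-saw survives: $\phi(E)$ lies between $\phi(F)$ and $\phi(E/F)$. The strict form fails when $Z(F)=0$ or $Z(E/F)=0$. Betweenness, combined with the definition $\phi(F)\le\phi(E/F)$, still gives $\phi(A)\le\phi(\im f)\le\phi(B)$, so your Hom-vanishing argument goes through.
\item Your semistability check only shows $\cP(\phi)\subset\{Z\text{-semistable objects of phase }\phi\}$. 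For mutual inverseness you should add the converse. It follows because the $\cP$-HN filtration of $E\in\cA$ has strictly decreasing $Z$-phases, so its first step destabilizes $E$ unless the filtration has length one.
\end{enumerate}
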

In particular, we also denote by $(\cA, Z)$ for a weak pre-stability condition $\sigma=(Z, \cP)$, where $\cA=\cP(0,1]$.

For any weak pre-stability condition $\sigma=(\cA, Z)$ and $0\neq E\in \cD$, we set
\[\mu_{\sigma}(E)\coloneqq -\frac{\Re Z(E)}{\Im Z(E)}\]
if $\Im Z(E)\neq 0$, and $+\infty$ otherwise. This is called the slope of $E$ associated with $\sigma$. For $0\neq E\in \cA$, we define
$$\mu_{\sigma}^+(E)\coloneqq \mu_{\sigma}(\HN_{\sigma}^+(E))\quad \text{ and }\quad \mu_{\sigma}^-(E)\coloneqq\mu_{\sigma}(\HN_{\sigma}^-(E)).$$

The support property can be checked at the level of the abelian category with the same definition. Thus, the above lemma can be rephrased in terms of (weak) stability conditions as well.

\subsection{Tilting property}\label{subsec-tilting-property}

We now discuss the tilting property, which allows us to rotate a weak stability condition. We follow \cite[Section 14, 19]{BLMNPS21}.

\begin{definition}
Let $\cB \subset \cA$ be an abelian subcategory of an abelian category $\cA$. We say that $\cB$ is a \emph{Noetherian torsion subcategory} of $\cA$ if $\cB$ is a Noetherian abelian category, and if there exists a torsion pair $(\cB, \cB^{\perp})$ in $\cA$.
\end{definition}

\begin{remark}\label{rmk:serre-torsion}
    It is straightforward to check that a Serre subcategory \(\mathcal{B} \subset \mathcal{A}\) is a Noetherian torsion subcategory if and only if for any \(E \in \mathcal{A}\), there is no infinite sequence in \(\mathcal{A}\):
    \[
        F_{1} \subsetneq \cdots \subsetneq F_{n} \subsetneq \cdots \subsetneq E,
    \] where \(F_{n} \in \mathcal{B}\) for all \(n \in \mathbb{N}^{+}\).
\end{remark}

\begin{definition}\label{def-tilting}
A weak pre-stability condition $\sigma=(\cA, Z)$ (with respect to $\Lambda$) has the \emph{tilting property} if

\begin{enumerate}[label=(t\arabic*)]
    \item\label{t1} $\cA^{Z}\subset \cA$ is a Noetherian torsion subcategory, and

    \item\label{t2} for every $E\in \cA$ with $\mu_{\sigma}^+(E)<+\infty$, there exists  a short exact sequence $$0\to E\to \sT(E)\to \sT(E)/E\to 0$$ in $\cA$ with $\sT(E)/E\in \cA^{Z}$ and $\Hom_{\cD}(\cA^{Z}, \sT(E)[1])=0$.
\end{enumerate}
\end{definition}

For any $b\in \RR$, we define a torsion pair $(\cT^{b},\cF^{b})$ as
\[\cT^{b}\coloneqq\langle E\in \cA\colon E \text{ is } \sigma\text{-semistable with }\mu_{\sigma}(E)>b \rangle\]
and
\[\cF^{b}\coloneqq\langle E\in \cA\colon E \text{ is } \sigma\text{-semistable with }\mu_{\sigma}(E)\leq b \rangle.\]
The corresponding \emph{tilted heart} is defined by $\cA^{b}\coloneqq\langle \cF^{b}[1],\cT^{b} \rangle$. We also define
\begin{equation}\label{eq:Zb-def}
Z^{b}\coloneqq \Im Z+\mathfrak{i}(-\Re Z-b\Im Z) \colon \Lambda\to \CC.
\end{equation}

\begin{lemma}[{\cite[Proposition 14.16]{BLMNPS21}}]\label{lem-tilt-weak-stab}
Let $\sigma=(\cA, Z)$ be a weak stability condition on $\cD$ with respect to $\Lambda$. If it has the tilting property, then for any $b\in \RR$, $\sigma^{b}\coloneqq (\cA^{b}, Z^{b})$ is a weak stability condition on $\cD$  with respect to $\Lambda$ and the support property is given by the same quadratic form as for $\sigma$. Moreover, $(\cA^{b})^{Z^{b}}=\cA^Z\subset \cA^{b}$ is a Noetherian torsion subcategory.
\end{lemma}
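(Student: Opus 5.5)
The plan is to follow the standard tilting argument of Bridgeland, Happel--Reiten--Smal\o, and \cite[Section 14]{BLMNPS21}, in four steps.

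\textbf{Step 1: the tilted heart.} First I check that $(\cT^{b},\cF^{b})$ is a torsion pair in $\cA$. This uses the HN property of $\sigma$: truncate the HN filtration of $E\in\cA$ at slope $b$, and use the Hom-vanishing between semistable objects of decreasing slope. By Happel--Reiten--Smal\o, $\cA^{b}=\langle\cF^{b}[1],\cT^{b}\rangle$ is then the heart of a bounded t-structure on $\cD$.

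\textbf{Step 2: $Z^{b}$ is a weak stability function on $\cA^{b}$.} I check positivity on $\cT^{b}$ and on $\cF^{b}[1]$ separately.
\begin{itemize}
\item For $E\in\cT^{b}$, we have $\Im Z^{b}(E)=-\Re Z(E)-b\Im Z(E)\ge 0$ because $\mu_\sigma>b$ on the HN factors. Equality forces the factors to have $\Im Z=0$ with $\mu=+\infty$, hence $\Re Z\le 0$, so $\Re Z^{b}=\Im Z\ge 0$.
\item I need to be careful with signs here. The prescribed formula is $Z^{b}=\Im Z+\mathfrak{i}(-\Re Z-b\Im Z)$. On objects with $\Im Z^{b}=0$ the required condition is $\Re Z^{b}\le0$, but on $\cT^{b}$ the real part $\Im Z$ is nonnegative. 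So with this convention the phases are effectively rotated, and I will verify that the correct statement is $Z^{b}$ (up to the rotation convention of \cite{BLMNPS21}) being a weak stability function.
\item Objects of $\cT^{b}$ with $\Im Z^{b}=0$ are exactly $\cA^{Z}$ together with objects of slope $+\infty$. Their $Z^{b}$ is $\Im Z=0$, which gives the claimed identity $(\cA^{b})^{Z^{b}}=\cA^{Z}$.
\item For $F[1]$ with $F\in\cF^{b}$, the sign flips give $\Im Z^{b}(F[1])=\Re Z(F)+b\Im Z(F)\ge 0$, because $\mu_\sigma(F)\le b$ factorwise. When this vanishes, $\Re Z^{b}(F[1])=-\Im Z(F)\le 0$.
\end{itemize}

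\textbf{Step 3: the HN property for $Z^{b}$ on $\cA^{b}$.} This is the main obstacle. The heart $\cA^{b}$ need not be Noetherian, and the kernel $\cA^{Z}$ may be nonzero. I follow \cite[Proposition 14.16]{BLMNPS21}, using the tilting property.
\begin{itemize}
\item (t1) says $\cA^{Z}$ is a Noetherian torsion subcategory. This lets me split off the maximal subobject in $\cA^{Z}$, which has phase $1$.
\item (t2) replaces $E\in\cF^{b}$ by $\sT(E)$. Then $\Hom(\cA^{Z},\sT(E)[1])=0$, so there is no infinite chain of subobjects of $\sT(E)[1]$ in $\cA^{b}$ with quotients in $\cA^{Z}$.
\item Then I verify the two chain conditions of \cite[Proposition 2.4]{bridgeland:stability}: no infinite increasing chain of subobjects with increasing phase, and no infinite decreasing chain of quotients with decreasing phase.
\item For these I use the discreteness provided by support-property finiteness. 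By the Remark after Definition \ref{def:stability-condition}, only finitely many classes have bounded $|Z|$ and $Q\ge0$. I also use that $\Im Z^{b}$ is bounded in terms of $\cA$-data via the long exact cohomology sequence with respect to $\cA$.
\end{itemize}

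\textbf{Step 4: the support property.} I will show that every $\sigma^{b}$-semistable object $E$ satisfies $Q(\overline{\bv}(E))\ge0$ for the same quadratic form $Q$. Note that $\ker Z^{b}=\ker Z$ and $\Lambda^{Z^{b}}=\Lambda^{Z}$, so negativity of $Q$ on the kernel is automatic. For semistable objects, I argue by the standard induction on $\Im Z^{b}$, as in \cite[Lemma A.6/Proposition 14.16]{BLMNPS21}:
\begin{itemize}
\item a $\sigma^{b}$-semistable object either is in $\cA^{Z}$ up to shift (where $Q\ge0$ trivially modulo $\Lambda^{Z}$), or is destabilized along a wall for a continuous deformation of $b$;
\item the semistable factors on the other side have $Q\ge0$ by induction;
\item the convexity of the cone $\{Q\ge0\}$ on the half-plane where $Z$ lies then gives $Q(E)\ge 0$.
\end{itemize}
Alternatively, I compare the $\cA$-cohomology objects directly, following Bayer--Macr\`i--Stellari's tilting argument.
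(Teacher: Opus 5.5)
The gap is in Step~3. Bridgeland's chain conditions cannot be obtained from ``discreteness'' here, for two reasons.
\begin{itemize}
\item Nothing in the hypotheses makes $Z(\Lambda)$ discrete or $\cA$ Noetherian.
\item The finiteness remark after Definition~\ref{def:stability-condition} only controls classes $v\in\Lambda/\Lambda^Z$ with $Q(v)\ge 0$, i.e.\ classes of $\sigma$-semistable objects. The terms of your chains are arbitrary objects of $\cA^b$: their $|Z|$ need not be bounded, and their $\cA^Z$-parts are invisible in $\Lambda/\Lambda^Z$.
\end{itemize}

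The idea you are missing is that no chain conditions are needed. We have $\cA^b=\cP(\phi_b,\phi_b+1]$, and $Z^b$ is $Z$ followed by the orientation-preserving map $(x,y)\mapsto(y,-x-by)$. So the $\sigma^b$-HN filtration can be assembled from the $\sigma$-HN filtrations of $\cH^{-1}(E)$ and $\cH^0(E)$. The only defect is that $\cA^Z\subset\cP(1)$ has to jump to phase $1$.

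To repair this, you first need that $\cA^Z$ is a torsion class in $\cA^b$. This is the ``moreover'' assertion, and your proposal never proves it: (t1) is a statement in $\cA$, and $\cA^Z$-subobjects of $E\in\cA^b$ can also come from $\Hom(\cA^Z,\cH^{-1}(E)[1])$. The argument is as follows. Given $\cA^Z$-subobjects $S_1\subsetneq S_2\subsetneq\cdots$ of $E$ in $\cA^b$, their images in $\cH^0(E)$ stabilize by (t1). After that, $\cH^{-1}(E/S_n)$ is a strictly increasing chain in $\cF^b$ with quotients $S_{n+1}/S_n\in\cA^Z$, which contradicts Lemma~\ref{lem:tilt-property-no-infty-seq}.

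The HN filtration then has three stages:
\begin{itemize}
\item Phase $1$: let $E_1$ be the preimage of the maximal $\cA^Z$-subobject of $E/(F_b[1])$, where $F_b\subset\cH^{-1}(E)$ is the $\sigma$-HN piece of slope $b$ (possibly $0$).
\item Phases in $(\frac12,1)$: for the terms $F'_i$ of the $\sigma$-HN filtration of $\cH^{-1}(E/E_1)$ (all of slope $<b$), take the preimages of the maximal $\cA^Z$-subobjects of $(E/E_1)/(F'_i[1])$.
\item Phases $\le\frac12$: take the $\sigma$-HN filtration of $\cH^0(E/E_1)$ modulo its maximal $\cA^Z$-subobject.
\end{itemize}
Each factor is $\sigma^b$-semistable by the elementary description of $\sigma^b$-semistable objects in Lemma~\ref{lem:classify-rotate-stable-abstract}. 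That description is a direct check against the definition and does not presuppose the HN property.

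Step~4 does not work as written either. An induction over walls in $b$ presupposes that the nearby $\sigma^{b'}$ are already weak stability conditions with a wall structure, which is what is being proved; indeed Section~\ref{sec:general-tilt} is built on this lemma. Your fallback of comparing $\cA$-cohomology is the right route, and it is short. By the description above, a $\sigma^b$-semistable $E\notin\cA^Z$ is either $\sigma$-semistable, or sits in an exact sequence $0\to A[1]\to E\to B\to 0$ with $A$ $\sigma$-semistable and $B\in\cA^Z$. In either case $\overline{\bv}(E)=\pm\overline{\bv}(A)$ for a $\sigma$-semistable $A$, so $Q(\overline{\bv}(E))=Q(\overline{\bv}(A))\ge 0$. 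Since $\ker Z^b=\ker Z$ and $\Lambda^{Z^b}=\Lambda^Z$, the same $Q$ works.

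Finally, correct Step~2. An HN factor of slope $+\infty$ with $\Re Z<0$ has $\Im Z^b=-\Re Z>0$. So the objects of $\cT^b$ with $\Im Z^b=0$ are exactly those in $\cA^Z$, where $Z^b=0$; there is no sign conflict and no rotation convention to invoke. To get $(\cA^b)^{Z^b}=\cA^Z$ you also need that $\Re Z^b(F[1])=-\Im Z(F)<0$ for $0\neq F\in\cF^b$. Hence an object of $\cA^b$ with $Z^b=0$ has vanishing $\cH^{-1}$.
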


\begin{lemma}\label{lem:tilt-property-no-infty-seq}
Let $\sigma=(\cA, Z)$ be a weak pre-stability condition on $\cD$. If it satisfies \ref{t2} and $\cA^Z$ is Noetherian, then there is no infinite sequence
	\[
		G_{1}  \subsetneq \cdots \subsetneq G_{n}  \subsetneq \cdots
	\]
    in \(\mathcal{A}\) with \(\mu_{\sigma}^+(G_{n})<+\infty\) and \(0 \neq G_{n+1}/G_{n} \in \mathcal{A}^{Z}\) for any \(n \in \mathbb{N}^{+}\).
\end{lemma}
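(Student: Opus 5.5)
Suppose for contradiction that such an infinite chain $G_1\subsetneq G_2\subsetneq\cdots$ exists. The plan is to use \ref{t2} to embed every $G_n$ into one fixed object whose quotient lies in $\cA^Z$, and then let Noetherianity of $\cA^Z$ bound the chain.

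First I reduce to an object $\sT\coloneqq\sT(G_1)$. Since $\mu_\sigma^+(G_1)<+\infty$, \ref{t2} gives a short exact sequence $0\to G_1\to \sT\to \sT/G_1\to 0$ with $\sT/G_1\in\cA^Z$ and $\Hom_{\cD}(\cA^Z,\sT[1])=0$. Next I show inductively that $G_1\hookrightarrow\sT$ extends to monomorphisms $G_n\hookrightarrow \sT$ compatible with the inclusions. Suppose $G_n\hookrightarrow \sT$ has been constructed, and consider the extension $0\to G_n\to G_{n+1}\to Q_n\to 0$ with $Q_n\coloneqq G_{n+1}/G_n\in\cA^Z$. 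Push it out along $G_n\to\sT$. The resulting extension of $Q_n$ by $\sT$ is a class in $\Hom(Q_n,\sT[1])=0$, so it splits. Hence $G_n\to \sT$ extends to a map $f_{n+1}\colon G_{n+1}\to \sT$.

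The main obstacle is injectivity of $f_{n+1}$. Let $K=\ker f_{n+1}$. Then $K\cap G_n=0$, so $K\hookrightarrow Q_n$, and $K\in\cA^Z$ because $\cA^Z$ is closed under subobjects: $Z$ is a weak stability function, and both $\Im Z\geq0$ and $\Re Z\le 0$ on $\cA^{Z}$-type quotients. So $K\subset G_{n+1}$ is a subobject with $Z(K)=0$. Any nonzero such $K$ has phase $1$, whereas $\mu_\sigma^+(G_{n+1})<+\infty$ means every HN factor of $G_{n+1}$ has phase $<1$, i.e.\ $\Hom(\cA^Z,G_{n+1})=0$. Therefore $K=0$. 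Compatibility with the inclusions is automatic from the construction.

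With all $G_n$ embedded in $\sT$ containing $G_1$, the quotients $G_n/G_1\subset \sT/G_1$ form a strictly increasing chain of subobjects of $\sT/G_1\in\cA^Z$. Each $G_n/G_1$ lies in $\cA^Z$, being a subobject of an object of $\cA^Z$ (closure under subobjects as above). An infinite strictly ascending chain in the Noetherian category $\cA^Z$ is a contradiction, which proves the lemma.
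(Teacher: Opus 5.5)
Your proof is correct and is essentially the paper's argument: you use $\Hom_{\cD}(\cA^Z,\sT(G_1)[1])=0$ to extend $G_1\hookrightarrow\sT(G_1)$ over the $G_n$, which yields an infinite strictly ascending chain $G_n/G_1$ inside $\sT(G_1)/G_1\in\cA^Z$. Your step-by-step extension, together with your check that the kernel vanishes because $\mu_\sigma^+(G_n)<+\infty$ forces $\Hom_{\cD}(\cA^Z,G_n)=0$, simply makes explicit the compatibility and injectivity that the paper's one-line proof leaves implicit.
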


\begin{proof}
Assume such a sequence exists. Since $\Hom_{\cD}(\cA^Z, \sT(G_1)[1])=0$ and $G_i/G_1\in \cA^Z$, the inclusion $G_1\hookrightarrow \sT(G_1)$ factors via $G_i$, hence $G_i/G_1$ is an increasing sequence of subobjects of $\sT(G_1)/G_1\in \cA^Z$, which contradicts the Noetherian property of $\cA^Z$.
\end{proof}

\subsection{Jordan--H\"older filtrations}

\begin{definition}
Let $\sigma=(\cA, Z)$ be a weak pre-stability condition on $\cD$. A \emph{Jordan--H\"older (JH) filtration} of a non-zero $\sigma$-semistable object $E\in \cA$ is a filtration
\[0=E_0\subset E_1\subset E_2\subset \cdots \subset E_n=E\]
of subobjects of $E$ such that the factor $E_{i+1}/E_i$ is $\sigma$-stable with $\phi_{\sigma}(E_{i+1}/E_i)=\phi_{\sigma}(E)$ for each $0\leq i\leq n-1$.
\end{definition}

Unlike HN filtrations, the existence of JH filtrations is not part of the definition. However, it follows from the support property, as explained below.

In the following, we will freely use the notion and properties of \emph{quasi-abelian categories} and \emph{strict subobjects/quotient objects}, see \cite[Section 4]{bridgeland:stability} for an overview. A typical example of a quasi-abelian category is $\cP(I)$ for any slicing $\cP$ on $\cD$ and any interval $I\subset \RR$ of length $<1$. We say that a quasi-abelian category is of \emph{finite length} if, for every object, every ascending chain of strict subobjects and every descending chain of strict quotients stabilizes. In an abelian category this is equivalent to being both Noetherian and Artinian.

\begin{lemma}[{\cite[Lemma 4.4]{bridgeland:stab-on-K3}}]\label{lem:supp-property-finite-length}
Let $\sigma=(Z, \cP)$ be a weak stability condition. Fix $0<\epsilon<\frac{1}{2}$. Then for any interval $I\subset \RR$ of length $\leq 2\epsilon$ and any $0\neq E\in \cP(I)$, the set 
\begin{equation}\label{eq:class-subquotient}
\{\overline{\bv}(A)\in \Lambda/\Lambda^Z\colon A \text{ is a strict subquotient of }E \text{ in }\cP(I)\}
\end{equation}
is finite. 

In particular, if $I\cap \ZZ=\varnothing$ or $\sigma$ is a stability condition, the quasi-abelian category $\cP(I)$ is a finite length category.
\end{lemma}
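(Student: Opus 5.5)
The plan follows Bridgeland's argument for \cite[Lemma 4.4]{bridgeland:stab-on-K3}, adapted to weak stability conditions by working in $\Lambda/\Lambda^Z$. First I reduce to a finiteness statement about classes in a compact region. Let $I$ have length $\leq 2\epsilon<1$ and $0\neq E\in\cP(I)$. Every strict subquotient $A$ of $E$ in $\cP(I)$ has its HN factors with phases in $I$. Hence $Z(A)$, and $Z$ of each HN factor of $A$, lies in the closed sector $\{re^{\mathfrak{i}\pi\phi}\colon r\geq 0,\ \phi\in \overline I\}$. This sector is strictly convex because its opening angle is at most $2\pi\epsilon<\pi$.

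Now use strict short exact sequences $0\to A\to E\to B\to 0$ in $\cP(I)$, and iterate for subquotients. Additivity of $Z$ together with strict convexity of the sector gives $|Z(A)|\leq C_\epsilon |Z(E)|$ for a constant $C_\epsilon$ depending only on $\epsilon$; one can take $C_\epsilon = 1/\cos(\pi\epsilon)$, by projecting onto the bisector of the sector. The same bound holds for every HN factor $A_j$ of $A$: we have $\sum_j|Z(A_j)|\leq C_\epsilon|Z(A)|$, since all the $Z(A_j)$ lie in the same sector.

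Next I apply the support property to the semistable factors $A_j$, in the normed form of the remark after Definition~\ref{def:stability-condition}. This gives $\|\overline{\bv}(A_j)\|\leq C|Z(A_j)|$, where the semistable factors are shifted into $\cP(0,1]$, which does not change $|Z|$ or the norm. Summing over $j$,
\[\|\overline{\bv}(A)\|\leq \sum_j \|\overline{\bv}(A_j)\|\leq C\,C_\epsilon^2\,|Z(E)|.\]
So the classes $\overline{\bv}(A)$ lie in a bounded subset of the lattice $\Lambda/\Lambda^Z$, and are therefore finite in number. This proves the first claim.

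For the finite-length claim, take an ascending chain of strict subobjects $A_1\subsetneq A_2\subsetneq\cdots$ of $E$; descending chains of strict quotients are handled dually. The quotients $A_{k+1}/A_k$ are nonzero objects of $\cP(I)$. I will show that a nonzero $F\in\cP(I)$ has $Z(F)\neq 0$ in either of the two cases.
\begin{itemize}
\item If $\sigma$ is a stability condition, this is the definition.
\item If $I\cap\ZZ=\varnothing$, every phase in $I$ is non-integral. Each HN factor then has $Z$ in $\RR_{>0}e^{\mathfrak{i}\pi\phi}$, which is nonzero, and these values all lie in a strictly convex sector, so their sum is nonzero.
\end{itemize}
Hence $\Im$ of $Z$, after rotating the sector so that it lies in the upper half-plane, strictly increases along the chain. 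Since the classes $\overline{\bv}(A_k)$ range over a finite set, the values $Z(A_k)$ do too. A strictly increasing sequence taking only finitely many values must stabilize, so the chain stops.

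The main obstacle I expect is the convexity estimate $|Z(A)|\leq C_\epsilon|Z(E)|$ in the weak setting. Objects with $Z=0$ can occur at integral phases, so I must check that the projection-onto-the-bisector argument still works when some terms vanish. It does, because vanishing terms contribute $0$ to both sides. I also need $\Lambda^Z$-classes to be invisible in the norm, which is why everything is phrased in $\Lambda/\Lambda^Z$.
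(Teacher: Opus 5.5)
Your proposal is correct and follows the paper's proof essentially step for step: you project onto the bisector via $f(z)=\Re(e^{-\mathfrak{i}\pi\phi}z)$ to get $|Z(A)|\leq |Z(E)|/\cos(\pi\epsilon)$, you apply the support property to the semistable factors to bound $\|\overline{\bv}(A)\|$ by $C|Z(E)|/\cos^2(\pi\epsilon)$ in the discrete lattice $\Lambda/\Lambda^Z$, and you then use $Z(A_{k+1}/A_k)\neq 0$ to force distinct classes along any chain. One small tightening: for a stability condition the definition only gives $Z\neq 0$ on nonzero semistable objects, so for a general nonzero $F\in\cP(I)$ you should invoke the same convex-sector argument as in your second case (i.e.\ $f(Z(F))>0$).
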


\begin{proof}
Let $\phi$ be the middle point of $I$. Then we have
\[Z(F)\in S_{I}\coloneqq \{r\cdot \exp{(\mathfrak{i}\pi\psi)}\colon r\in \RR_{\geq 0}, \phi-\epsilon\leq \psi \leq \phi+\epsilon\}\]
for any non-zero object $F\in \cP(I)$. Set $f(z)\colon \CC\to \RR$, where $f \coloneqq \Re (\exp(-\mathfrak{i}\pi \phi)z)$. Then from $0<\epsilon<\frac{1}{2}$, we obtain $f(F)\geq 0$ for any non-zero object $F\in \cP(I)$. Moreover, $f(-)$ is additive with respect to strict exact sequences in $\cP(I)$. Therefore, for an object $0\neq E\in \cP(I)$, the central charge of any strict subquotient of $E$ in $\cP(I)$ lies in the set
\[\{z\in \CC\colon f(z)\leq f(E)\}.\]
Note that for $z\in S_I$, we have
\[|z|\geq f(z)\geq \cos(\pi\epsilon)|\exp(-\mathfrak{i}\pi \phi)z|=\cos(\pi\epsilon)|z|.\]
Moreover, for any $z_i,z\in S_I$ with $\sum_i z_i=z$, we have
\begin{equation}\label{eq:sum-SI}
    \sum_i |z_i|\leq \frac{1}{\cos(\pi\epsilon)}\sum_i f(z_i)=\frac{1}{\cos(\pi\epsilon)} f(z) \leq \frac{1}{\cos(\pi\epsilon)}|z|.
\end{equation} 
Therefore, for any non-zero strict subquotient $A$ of $E$ in $\cP(I)$, we get
\[|Z(A)|=|\exp(-\mathfrak{i}\pi \phi)Z(A)|\leq \frac{1}{\cos(\pi\epsilon)}|\exp(-\mathfrak{i}\pi \phi)Z(E)|=\frac{1}{\cos(\pi\epsilon)}|Z(E)|.\]
If $A_1,\cdots, A_m$ are the factors of a filtration of $A$ with each $A_i$ semistable, then by the support property, we obtain
\[\|\overline{\bv}(A)\|\leq \sum_{i=1}^m \|\overline{\bv}(A_i)\|\leq \sum_{i=1}^m C|Z(A_i)|\]
\[=\sum_{i=1}^m C|\exp(-\mathfrak{i}\pi \phi)Z(A_i)|\leq C\frac{1}{\cos(\pi\epsilon)}|\exp(-\mathfrak{i}\pi \phi)Z(A)|=C\frac{1}{\cos(\pi\epsilon)}|Z(A)|,\]
where the last inequality follows from \eqref{eq:sum-SI}, $\|-\|$ is a norm on $(\Lambda/\Lambda^Z)_{\RR}$, and $C>0$ is a constant. So we finally get
\begin{equation}\label{eq:bound-norm-JH}
\|\overline{\bv}(A)\|\leq C\frac{1}{(\cos(\pi\epsilon))^2}|Z(E)|.
\end{equation}
Since $\Lambda/\Lambda^Z$ is discrete, the set \eqref{eq:class-subquotient} is finite. 

When $I\cap \ZZ=\varnothing$ or $\cA^{Z}=0$, any two strict subobjects $E_1\subset E_2\subset E$ in $\cP(I)$ with $E_1\neq E_2$ satisfy $Z(E_2/E_1)\neq 0$, so $\overline{\bv}(E_1)\neq  \overline{\bv}(E_2)$. This finishes the proof.
\end{proof}

The category $\cP(1)$ does not have finite length in general if $\cA^{Z}$ is not of finite length.

JH filtrations may not be unique. If $\sigma$ is a stability condition, the set of isomorphism classes of graded objects of JH filtrations of a fixed $\sigma$-semistable object is unique.

In the following, we establish a weak uniqueness property of JH factors for weak stability conditions. We need to consider the following stronger version of \ref{t2}: 

\begin{definition}
A weak pre-stability condition $\sigma=(\cA, Z)$ satisfies \ref{t3} if

\begin{enumerate}[label=(t3)]
    \item\label{t3} for every $E\in \cA$ with $\mu_{\sigma}^+(E)<+\infty$, there exists  a short exact sequence $$0\to E\to \sT(E)\to \sT(E)/E\to 0$$ in $\cA$ with $\sT(E)/E\in \cA^{Z}$ and $\Hom_{\cD}(\cA^{Z}, \sT(E))=\Hom_{\cD}(\cA^{Z}, \sT(E)[1])=0$.
\end{enumerate}
\end{definition}

The object $\sT(E)$ behaves similarly to the double-dual of a torsion-free sheaf. Note that if \(\sT(E)\) exists, then it is unique up to a unique isomorphism. Moreover, if $F\to E$ is a morphism between objects in $\cA$ with $\mu_{\sigma}^+(F),\mu_{\sigma}^+(E)<+\infty$, then we have a unique induced morphism $\sT(F)\to \sT(E)$. We begin with a collection of general properties.

\begin{lemma}\label{lem:tilt-property-basic}
Let $\sigma=(\cA, Z)$ be a weak pre-stability condition on $\cD$ that satisfies \ref{t3}. Fix $E\in \cA$ with $\mu^+_{\sigma}(E)<+\infty$.

\begin{enumerate}
    \item We have $\mu^+_{\sigma}(\sT(E))<+\infty$ and $\Hom_{\cD}(\cA^Z, \sT(E)[i])=0$ for $i\leq 1$.

    \item The injection $\sT(E)\hookrightarrow \sT(\sT(E))$ is an isomorphism.

    \item If $F\subset E$ such that $E/F\in \cA^Z$, then $\Hom_{\cD}(E, \sT(F))=\Hom_{\cD}(F, \sT(F))$. In particular, the injection $F\hookrightarrow \sT(F)$ can be factored as $F\hookrightarrow E\to \sT(F)$ and the second map is unique and injective.

    \item If $F\subset E$ such that $\sT(E)\cong E$ and $E/F\in \cA^Z$. Then the object $\sT(F)\cong E$ and the injection $F\hookrightarrow \sT(F)$ is unique up to unique isomorphism of $\sT(F)$.

\end{enumerate}

\end{lemma}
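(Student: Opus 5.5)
We prove the four parts in order, working throughout in the abelian category $\cA$ and using only \ref{t3}, the torsion-pair behaviour of $\cA^Z$, and the long exact $\Hom$-sequences attached to short exact sequences in $\cA$ (recall $\Hom_{\cD}(\cA,\cA[i])=0$ for $i<0$).

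For (a), set $Q\coloneqq \sT(E)/E\in\cA^Z$. Any nonzero quotient of $\sT(E)$ with $\mu_\sigma=+\infty$ would make $\HN^+$ have infinite slope; instead we argue on the maximal destabilising subobject. Take the HN filtration of $\sT(E)$ and suppose $\mu^+_\sigma(\sT(E))=+\infty$. Then its first factor $A$ satisfies $Z(A)\in\RR_{\le 0}$. Consider $A\cap E$, i.e.\ the kernel of $A\to Q$. If $A\cap E\neq 0$, it is a subobject of $E$ whose phase is $1$, since it is a subobject of a semistable object of phase $1$ and is a subobject of $E$; this contradicts $\mu^+_\sigma(E)<+\infty$. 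If instead $A\cap E=0$, then $A\hookrightarrow Q$, so $A\in\cA^Z$ because $\cA^Z$ is a Serre subcategory (it is closed under subobjects, by additivity of $Z$ and the weak-stability-function condition). But then $A\in\cA^Z$ maps nontrivially to $\sT(E)$, contradicting $\Hom(\cA^Z,\sT(E))=0$. Hence $\mu^+_\sigma(\sT(E))<+\infty$. The vanishing for $i\le 1$ follows from \ref{t3} for $i=0,1$, and from the fact that both objects lie in the heart for $i<0$.

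For (b), (a) says that $\sT(E)$ already satisfies the defining property of \ref{t3}. The sequence $0\to\sT(E)\xrightarrow{\mathrm{id}}\sT(E)\to 0\to 0$ therefore works, and the uniqueness of $\sT$ gives the claimed isomorphism.

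For (c), apply $\Hom(-,\sT(F))$ to $0\to F\to E\to E/F\to0$. Both $\Hom(E/F,\sT(F))$ and $\Ext^1(E/F,\sT(F))$ vanish by (a), so restriction $\Hom(E,\sT(F))\to\Hom(F,\sT(F))$ is an isomorphism. Hence the inclusion $F\hookrightarrow\sT(F)$ extends uniquely to a map $g\colon E\to\sT(F)$. To see $g$ is injective, note that $\ker g\cap F=0$, so $\ker g\hookrightarrow E/F$ and thus $\ker g\in\cA^Z$. But $\ker g\subset E$, and the remaining task is to show $\ker g=0$. I expect this step to be the main obstacle, because $E$ itself need not satisfy $\Hom(\cA^Z,E)=0$. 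The way around it is to compose with $E\to\sT(E)$: apply the argument of (c) with $\sT(E)$ in place of $\sT(F)$ to obtain the unique map $\sT(F)\to\sT(E)$ mentioned after \ref{t3}, so that $E\to\sT(E)$ factors as $E\xrightarrow{g}\sT(F)\to\sT(E)$. Since $E\to\sT(E)$ is injective, so is $g$.

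For (d), assume $\sT(E)\cong E$. Then $E$ satisfies $\Hom(\cA^Z,E[i])=0$ for $i=0,1$, and $E/F\in\cA^Z$. Thus $0\to F\to E\to E/F\to0$ is exactly a sequence as in \ref{t3} for $F$; note that $\mu^+_\sigma(F)<+\infty$ since $F\subset E$. The uniqueness of $\sT(F)$ up to unique isomorphism, which follows from the computation in (c), then gives $\sT(F)\cong E$ compatibly with the inclusions.
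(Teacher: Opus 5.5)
Your proof is correct. Part (a) is essentially the paper's argument, but (b), (d) and the injectivity step in (c) go a different way.

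For (b) and (d) you use the universal property. In (b), $\sT(E)$ with the trivial extension already satisfies \ref{t3}; in (d), so does $E$ with $0\to F\to E\to E/F\to 0$. Uniqueness of $\sT(-)$ up to unique isomorphism compatible with the inclusion then finishes the job. That uniqueness is exactly the computation $\Hom_{\cD}(T_1,T_2)\cong \Hom_{\cD}(E,T_2)$ you carry out in (c); the paper states it without proof just before the lemma.

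The paper instead splits extensions directly. In (b), the sequence $0\to \sT(E)\to \sT(\sT(E))\to Q\to 0$ with $Q\in\cA^Z$ splits because $\Hom_{\cD}(\cA^Z,\sT(E)[1])=0$. Then $Q=0$ because $\mu^+_\sigma(\sT(\sT(E)))<+\infty$ by (a). In (d), it first uses (c) to get $E\hookrightarrow \sT(F)$, and then splits off $\sT(F)/E\in \cA^Z$ in the same way. Your route is more conceptual and reuses one computation. The paper's route is self-contained and never invokes the uniqueness remark.

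In (c), however, the reason you give for the detour is wrong. The hypothesis $\mu^+_\sigma(E)<+\infty$ does force $\Hom_{\cD}(\cA^Z,E)=0$. The image of a nonzero map from an object of $\cA^Z$ is a nonzero subobject of $E$ lying in $\cA^Z$, hence of phase $1$ --- the very principle you use in (a). So $\ker g\in\cA^Z$ and $\ker g\subset E$ give $\ker g=0$ immediately, which is the paper's one-line argument.

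Your detour through $h\colon \sT(F)\to\sT(E)$ remains valid. Both $h\circ g$ and $E\hookrightarrow\sT(E)$ restrict to the same map on $F$, and restriction $\Hom_{\cD}(E,\sT(E))\to\Hom_{\cD}(F,\sT(E))$ is injective since $\Hom_{\cD}(E/F,\sT(E))=0$. But the detour is unnecessary: $E\hookrightarrow\sT(E)$ together with $\Hom_{\cD}(\cA^Z,\sT(E))=0$ already refutes your worry.

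Likewise, the opening sentence of your (a) is false as stated: quotients of infinite slope do not force $\mu^+=+\infty$. You discard that sentence, though, and the actual argument via the first HN factor is fine.
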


\begin{proof}
For part (a), it suffices to show $\mu^+_{\sigma}(\sT(E))<+\infty$. For any subobject $F\subset \sT(E)$ with $F\notin \cA^Z$, the kernel $F'\subset F$ of $F\to \sT(E)/E$ has the same non-zero central charge as $F$. Moreover, by the snake lemma, we have $F'\subset E$. Therefore, the result follows from $\mu^+_{\sigma}(E)<+\infty$.

For part (b), we have a commutative diagram
\[\begin{tikzcd}
	0 & E & {\sT(E)} & {\sT(E)/E} & 0 \\
	0 & \sT(E) & {\sT(\sT(E))} & {\sT(\sT(E))/\sT(E)} & 0
	\arrow[from=1-1, to=1-2]
	\arrow[from=1-2, to=1-3]
	\arrow[hook, from=1-2, to=2-2]
	\arrow[from=1-3, to=1-4]
	\arrow[hook, from=1-3, to=2-3]
	\arrow[from=1-4, to=1-5]
	\arrow[from=1-4, to=2-4]
	\arrow[from=2-1, to=2-2]
	\arrow[from=2-2, to=2-3]
	\arrow[from=2-3, to=2-4]
	\arrow[from=2-4, to=2-5]
\end{tikzcd}\]
with rows exact and $\sT(\sT(E))/\sT(E)\in \cA^Z$. Then from $\Hom_{\cD}(\cA^Z, \sT(E)[1])=0$, we get $$\sT(\sT(E))=\sT(E)\oplus \sT(\sT(E))/\sT(E),$$ which gives $\sT(\sT(E))/\sT(E)\cong 0$ by $\mu^+_{\sigma}(\sT(\sT(E)))<+\infty$ proved in (a). Therefore, the injection $$\sT(E)\hookrightarrow \sT(\sT(E))$$ is an isomorphism.

Applying $\Hom_{\cD}(-,\sT(F))$ to the exact sequence $0\to F\to E\to E/F\to 0$ and using (a), we get a natural identification $$\Hom_{\cD}(E, \sT(F))=\Hom_{\cD}(F, \sT(F)).$$ For the injectivity of $E\to \sT(F)$, note that its kernel is contained in $E/F$, which is in $\cA^Z$, hence is zero by $\mu^+_{\sigma}(E)<+\infty$. This proves (c).

For part (d), by (c), we have an injection $E\to \sT(F)$. Note that the induced morphism $$\sT(F)/F\twoheadrightarrow \sT(F)/E$$ is surjective, hence $\sT(F)/E\in \cA^Z$. Then from $E\cong \sT(E)$, we get $\Hom_{\cD}(\cA^Z, E[1])=0$ and $$\sT(F)\cong E\oplus \sT(F)/E.$$ Since $\mu^+_{\sigma}(\sT(F))<+\infty$ by (a), we obtain $\sT(F)/E\cong 0$.
\end{proof}

\begin{lemma}\label{lem:wt-inj}
Let $\sigma=(\cA, Z)$ be a weak pre-stability condition on $\cD$ that satisfies \ref{t3}. Let $E, F\in \cA$ be objects with $\mu^+_{\sigma}(E)<+\infty$ and $0\neq F\subset E$. 

\begin{enumerate}
    \item We have $\sT(F)\subset \sT(E)$, which is an isomorphism if $E/F\in \cA^Z$.

    \item If $\Hom_{\cD}(\cA^Z, E/F)=0$, we have $E/F\subset \sT(E)/\sT(F)$. If we also have $\mu^+_{\sigma}(E/F)<+\infty$, then $\sT(\sT(E)/\sT(F)) \cong \sT(E/F)$.

    \item If $E\hookrightarrow \sT(E)$ is an isomorphism and $\Hom_{\cD}(\cA^Z, E/F)=0$, then $\sT(F)\cong F$.

    \item If $\mu_{\sigma}^+(E/F)<+\infty$ and $E/F\hookrightarrow \sT(E/F)$ is an isomorphism, then the induced sequence 
    \[0\to \sT(F)\to \sT(E)\to \sT(E/F)\to 0\]
    is exact.
\end{enumerate}

\end{lemma}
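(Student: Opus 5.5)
The plan is to handle the four parts in order, using repeatedly the two vanishings of Lemma \ref{lem:tilt-property-basic}(a), namely $\Hom_{\cD}(\cA^Z,\sT(G))=\Hom_{\cD}(\cA^Z,\sT(G)[1])=0$, together with the rule that any object $G$ with $\mu^+_\sigma(G)<+\infty$ has $\Hom_{\cD}(\cA^Z,G)=0$.

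\emph{Part (a).} Since $F\subset E$, we have $\mu^+_\sigma(F)<+\infty$, so $\sT(F)$ is defined and there is an induced map $\sT(F)\to\sT(E)$. Its kernel $K$ meets $F$ trivially, because $F\to\sT(F)\to\sT(E)$ equals the injection $F\hookrightarrow E\hookrightarrow \sT(E)$. Hence $K$ embeds into $\sT(F)/F\in\cA^Z$. The category $\cA^Z$ is closed under subobjects, since $\Im Z\geq 0$ and the weak stability function forces both pieces of the central charge to vanish. So $K\in\cA^Z$, and then $K=0$ because $\Hom_{\cD}(\cA^Z,\sT(F))=0$. If moreover $E/F\in\cA^Z$, then $E/F\hookrightarrow \sT(E)/F$ has cokernel $\sT(E)/E\in\cA^Z$, so $\sT(E)/F\in\cA^Z$ by extension closure. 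Lemma \ref{lem:tilt-property-basic}(d), applied to $F\subset\sT(E)$ together with (b), then gives $\sT(F)\cong\sT(E)$.

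\emph{Part (b).} The map $E/F\to\sT(E)/\sT(F)$ has kernel $(E\cap\sT(F))/F$, which sits inside $\sT(F)/F\in\cA^Z$. The hypothesis $\Hom_{\cD}(\cA^Z,E/F)=0$ makes this kernel zero, which gives the inclusion. Its cokernel is a quotient of $\sT(E)/E\in\cA^Z$, so it also lies in $\cA^Z$. For the second claim, I would apply part (a) to the pair $E/F\subset \sT(E)/\sT(F)$. This needs $\mu^+_\sigma(\sT(E)/\sT(F))<+\infty$, which I will get from the same kernel argument as in Lemma \ref{lem:tilt-property-basic}(a): a subobject not in $\cA^Z$ has the same central charge as its intersection with $E/F$.

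\emph{Part (c).} By part (b), $E/F\subset E/\sT(F)$ with $\sT(F)\subset \sT(E)=E$. So $\sT(F)/F\subset E/F$, and the hypothesis forces $\sT(F)/F=0$.

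\emph{Part (d).} I expect this to be the main obstacle. Set $Q=E/F$, so $\sT(Q)=Q$. The composite $E\to Q$ extends uniquely to $\sT(E)\to Q$, because $\Hom(\sT(E)/E,Q)=0$ and $\Ext^1(\sT(E)/E,Q)=0$ by Lemma \ref{lem:tilt-property-basic}(a); this map is surjective. Let $K$ be its kernel. Then $F\subset K$ and $K/F\cong\sT(E)/E\in\cA^Z$, and $K$ contains the image of $\sT(F)$. Since $\Hom(\cA^Z,Q)=0$, we get $\Hom(\cA^Z,K[1])\hookrightarrow\Hom(\cA^Z,\sT(E)[1])=0$, and $\Hom(\cA^Z,K)=0$ as well. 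Lemma \ref{lem:tilt-property-basic}(d) then yields $\sT(F)\cong K$, compatibly with the inclusion, which gives exactness of $0\to\sT(F)\to\sT(E)\to\sT(E/F)\to0$.
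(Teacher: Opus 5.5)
Your proposal is correct, with one step in (b) to complete. For (a)--(c) it follows the paper. The kernel of $\sT(F)\to\sT(E)$ embeds in $\sT(F)/F\in\cA^Z$. The map $E/F\to\sT(E)/\sT(F)$ is analysed with the same snake-lemma diagram. Part (c) is read off from (a).

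The only cosmetic difference is in the isomorphism case of (a). You recognize $\sT(E)$ itself as $\sT(F)$ via Lemma~\ref{lem:tilt-property-basic}(b),(d). The paper instead shows $\sT(E)/\sT(F)\in\cA^Z$ and kills it by splitting.

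Part (d) is where you genuinely diverge. The paper deduces it from (b): when $E/F=\sT(E/F)$, the inclusion $E/F\subset\sT(E)/\sT(F)$ with $\cA^Z$-cokernel, together with $\sT(\sT(E)/\sT(F))\cong\sT(E/F)$, forces $E/F\cong\sT(E)/\sT(F)$. You instead extend $E\to E/F$ to $\pi\colon\sT(E)\to E/F$ and identify $\ker\pi$ with $\sT(F)$. Your route does not depend on the second half of (b), and it makes the compatibility of the maps explicit. The paper's route is shorter because it recycles (b).

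In (d) you cite Lemma~\ref{lem:tilt-property-basic}(d) without stating its hypothesis $\sT(K)\cong K$. You can bypass it: $K/F\in\cA^Z$ and $\Hom_{\cD}(\cA^Z,K)=\Hom_{\cD}(\cA^Z,K[1])=0$ say exactly that $F\subset K$ is a $\sT(F)$. Uniqueness of $\sT(F)$ then gives $K\cong\sT(F)$ compatibly with $F$.

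The step to complete in (b) is the claim $\mu^+_\sigma(\sT(E)/\sT(F))<+\infty$. Your intersection argument rules out nonzero subobjects $A$ with $\Im Z(A)=0\neq Z(A)$, using $\mu^+_\sigma(E/F)<+\infty$. It says nothing about nonzero subobjects lying in $\cA^Z$, since such a subobject can meet $E/F$ trivially.

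You need $\Hom_{\cD}(\cA^Z,\sT(E)/\sT(F))=0$, which is immediate. For $T\in\cA^Z$, the sequence $\Hom_{\cD}(T,\sT(E))\to\Hom_{\cD}(T,\sT(E)/\sT(F))\to\Hom_{\cD}(T,\sT(F)[1])$ is exact. Both outer terms vanish by Lemma~\ref{lem:tilt-property-basic}(a). The paper applies part (a) at this point without checking this hypothesis at all, so with this line added your (b) is more complete than the original.
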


\begin{proof}
For part (a), let $K$ be the kernel of $\sT(F)\to \sT(E)$. Applying the snake lemma to 
\[\begin{tikzcd}
	0 & F & {\sT(F)} & {\sT(F)/F} & 0 \\
	0 & E & {\sT(E)} & {\sT(E)/E} & 0
	\arrow[from=1-1, to=1-2]
	\arrow[from=1-2, to=1-3]
	\arrow[hook, from=1-2, to=2-2]
	\arrow[from=1-3, to=1-4]
	\arrow[from=1-3, to=2-3]
	\arrow[from=1-4, to=1-5]
	\arrow[from=1-4, to=2-4]
	\arrow[from=2-1, to=2-2]
	\arrow[from=2-2, to=2-3]
	\arrow[from=2-3, to=2-4]
	\arrow[from=2-4, to=2-5]
\end{tikzcd}\]
and using $\sT(F)/F \in \cA^Z$, we get $K\in \cA^Z$, which contradicts Lemma \ref{lem:tilt-property-basic}(a). Thus, we obtain an injection $\sT(F)\to \sT(E)$. When $E/F\in \cA^Z$, we also have $\sT(E)/\sT(F)\in \cA^Z$. Thus, $\sT(E)/\sT(F)=0$ and part (a) follows from Lemma \ref{lem:tilt-property-basic}(a).

For part (b), we consider the exact sequence $0\to F\to E\to E/F\to 0$. By (a), we get a commutative diagram
\[\begin{tikzcd}
	0 & F & E & {E/F} & 0 \\
	0 & {\sT(F)} & {\sT(E)} & {\sT(E)/\sT(F)} & 0
	\arrow[from=1-1, to=1-2]
	\arrow[from=1-2, to=1-3]
	\arrow[hook, from=1-2, to=2-2]
	\arrow[from=1-3, to=1-4]
	\arrow[hook, from=1-3, to=2-3]
	\arrow[from=1-4, to=1-5]
	\arrow[from=1-4, to=2-4]
	\arrow[from=2-1, to=2-2]
	\arrow[from=2-2, to=2-3]
	\arrow[from=2-3, to=2-4]
	\arrow[from=2-4, to=2-5]
\end{tikzcd}\]
with rows exact. Since $\Hom_{\cD}(\cA^Z, E/F)=0$ and $\sT(F)/F,\sT(E)/E\in \cA^Z$, we see that the induced map in the above diagram $E/F\to \sT(E)/\sT(F)$ is injective and its cokernel is in $\cA^Z$. If we also have $\mu^+_{\sigma}(E/F)<+\infty$, the result follows from part (a).

For part (c), using the diagram in the proof of part (b), the snake lemma, and the injectivity of $E/F\to \sT(E)/\sT(F)$ in (b), we see that $\sT(F)/F=0$ and the result follows.

Finally, part (d) follows from the diagram in the proof of part (b) and the statement of (b), as in this case, $E/F\cong \sT(E)/\sT(F)\cong \sT(E/F)$.
\end{proof}

Analogously to the double-dual of torsion-free sheaves, the operator $\sT(-)$ also preserves stability.

\begin{lemma}\label{lem:wtE-stable}
Let $\sigma=(\cA, Z)$ be a weak pre-stability condition on $\cD$ such that for every $E\in \cA$ with $\mu_{\sigma}^+(E)<+\infty$, there exists a short exact sequence $$0\to E\to \sT(E)\to \sT(E)/E\to 0$$ in $\cA$ with $\sT(E)/E\in \cA^{Z}$ and $\Hom_{\cD}(\cA^{Z}, \sT(E))=0$. Then $E$ is $\sigma$-(semi)stable if and only if $\sT(E)$ is $\sigma$-(semi)stable.
\end{lemma}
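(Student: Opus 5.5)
Since $\sT(E)/E\in\cA^Z$, we have $Z(\sT(E))=Z(E)$, so $E$ and $\sT(E)$ have the same slope $\mu\coloneqq\mu_\sigma(E)$. Note $\mu<+\infty$: $\mu^+_\sigma(E)<+\infty$ is standing for $\sT(E)$ to exist, and if $\Im Z(E)=0$ then all of $E$ would have slope $+\infty$. The plan is to compare subobjects of $E$ and of $\sT(E)$ via intersection and saturation, using two facts. (i) Objects of $\cA^Z$ have slope $+\infty$, so they destabilize any object of finite slope containing them as a subobject. (ii) Quotients in $\cA^Z$ do not change the central charge.

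\emph{$\sT(E)$ (semi)stable $\Rightarrow$ $E$ (semi)stable.} Let $0\neq F\subsetneq E$ be a proper subobject. Then $F\subset E\subset \sT(E)$, and $F$ is still proper in $\sT(E)$. Hence $\mu(F)\leq\mu(\sT(E)/F)$, with strict inequality in the stable case. Now $\sT(E)/F$ is an extension of $\sT(E)/E\in\cA^Z$ by $E/F$, so $Z(\sT(E)/F)=Z(E/F)$ and the slopes agree. This gives $\mu(F)\le\mu(E/F)$, with strict inequality in the stable case. Since $Z(E/F)\ne0$ in the stable case (or the phase convention handles it), $E$ is (semi)stable.

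\emph{$E$ (semi)stable $\Rightarrow$ $\sT(E)$ (semi)stable.} Let $0\neq G\subsetneq\sT(E)$ and set $F\coloneqq G\cap E=\ker(G\to \sT(E)/E)$. Then $G/F\hookrightarrow \sT(E)/E$, so $G/F\in\cA^Z$ because $\cA^Z$ is closed under subobjects: $Z$ is a weak stability function and $\Im Z\ge0$, so any subobject $A$ of an object with $Z=0$ has $\Im Z(A)=0$ and $\Re Z(A)\le0$, and likewise for the quotient, forcing $Z(A)=0$. Therefore $Z(G)=Z(F)$. If $F=0$, then $G\in\cA^Z$ is a nonzero subobject of $\sT(E)$, contradicting $\Hom(\cA^Z,\sT(E))=0$. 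So $F\neq0$. If $F=E$, then $\sT(E)/G$ is a quotient of $\sT(E)/E$, hence lies in $\cA^Z$ and has slope $+\infty\geq\mu(G)$, which is the required inequality. In the stable case, to get strictness we must rule out $\sT(E)/G\in\cA^Z$ with $G\ne\sT(E)$. Here I expect the main obstacle: the phase convention $\phi=1$ for $Z=0$ makes the comparison $\phi(G)\le\phi(\sT(E)/G)$ automatic and strict when $\phi(G)<1$, and since $\mu<+\infty$ we do have $\phi(G)<1$. So the case $F=E$ is fine in both settings. Otherwise $F$ is a proper nonzero subobject of $E$, and (semi)stability of $E$ gives $\mu(F)\le(<)\,\mu(E/F)$, equivalently $\mu(F)\le(<)\,\mu(E)$ by the see-saw property, which applies since all the relevant imaginary parts are finite or handled as above. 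Then $\mu(G)=\mu(F)\le(<)\,\mu(E)=\mu(\sT(E))$, and the see-saw again converts this into the inequality against $\sT(E)/G$.

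The main care needed is bookkeeping of slope $+\infty$ objects in the see-saw step. If $\Im Z(F)=0$, then $F$ has slope $+\infty$ and would destabilize $E$, a contradiction. So all the slopes involved are finite, and the see-saw comparison between subobject, quotient, and total object is legitimate. Finally, the hypothesis $\Hom(\cA^Z,\sT(E))=0$ is used exactly once, to exclude $F=0$.
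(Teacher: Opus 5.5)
Your argument is the paper's, in more detail. You intersect a subobject $G\subset\sT(E)$ with $E$ and use $Z(G)=Z(G\cap E)$ and $Z(\sT(E)/G)=Z(E/(G\cap E))$. For the converse you view subobjects of $E$ as subobjects of $\sT(E)$. Your treatment of the edge cases $G\cap E=0$ (via $\Hom_{\cD}(\cA^Z,\sT(E))=0$) and $G\cap E=E$ (via $\mu(E)<+\infty$) fills in what the paper calls ``direct''. The direction $\sT(E)\Rightarrow E$ is correct as written.

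There is one incorrect step, in the stable case of the direction $E\Rightarrow\sT(E)$. From $\mu(F)<\mu(E/F)$ you cannot conclude $\mu(F)<\mu(E)$ by the see-saw when $Z(E/F)=0$, because then $Z(F)=Z(E)$ and $\mu(F)=\mu(E)$. This case really occurs. Stability here only requires $\mu(F)<\mu(E/F)$, so quotients in $\cA^Z$ are allowed. For slope-stability on a smooth surface, take $E=\cI_x$, $\sT(E)=\cO_X$, and $G=\cI_y$ with $y\neq x$. Then $F=\cI_{\{x,y\}}$, $E/F\cong\cO_y$, and $\mu(F)=\mu(E)$, so your chain $\mu(G)=\mu(F)<\mu(E)$ is false, although the conclusion holds. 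Your bookkeeping paragraph rules out $\Im Z(F)=0$ but not $Z(E/F)=0$.

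The fix, which is what the paper's proof does, is to drop the detour through $\mu(E)$. You have $Z(\sT(E)/G)=Z(\sT(E))-Z(G)=Z(E)-Z(F)=Z(E/F)$, and phases depend only on the central charge. Hence $\phi(G)=\phi(F)\le(<)\,\phi(E/F)=\phi(\sT(E)/G)$ directly, with no see-saw needed.
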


\begin{proof}
Let $F\subset\sT(E)$ be a non-zero subobject and $F_1\in \cA^Z$ be the image of $F\to \sT(E)/E$. Set $F_2\coloneqq \ker(F\twoheadrightarrow F_1)$. Then we have a diagram 
\[\begin{tikzcd}
	0 & {F_2} & F & {F_1} & 0 \\
	0 & E & {\sT(E)} & {\sT(E)/E} & 0
	\arrow[from=1-1, to=1-2]
	\arrow[from=1-2, to=1-3]
	\arrow[hook, from=1-2, to=2-2]
	\arrow[from=1-3, to=1-4]
	\arrow[hook, from=1-3, to=2-3]
	\arrow[from=1-4, to=1-5]
	\arrow[hook, from=1-4, to=2-4]
	\arrow[from=2-1, to=2-2]
	\arrow[from=2-2, to=2-3]
	\arrow[from=2-3, to=2-4]
	\arrow[from=2-4, to=2-5]
\end{tikzcd}\]
with rows exact. Thus, from the construction, we have $Z(F_2)=Z(F)$ and $Z(E/F_2)=Z(\sT(E)/F)$, and it is direct to get the statement.
\end{proof}

Now, we can state the result: the set of JH factors is unique up to applying $\sT(-)$.

\begin{proposition}\label{prop:unique-JH}
Let $\sigma=(\cA, Z)$ be a weak stability condition on $\cD$ that satisfies \ref{t3}. Let $E\in \cA$ be a $\sigma$-semistable object with $\mu_{\sigma}(E)<+\infty$. Then the isomorphism class of the object $$\bigoplus_{0\leq i\leq n-1} \sT(E_{i+1}/E_i)$$ is independent of the choice of JH filtrations $$0=E_0\subset E_1\subset \cdots \subset E_n=E.$$
\end{proposition}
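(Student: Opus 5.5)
We argue by induction on the minimal length of a JH filtration of $E$, adapting the classical Schreier/Jordan--H\"older argument. The main difference from the classical setting is that quotients by subobjects in $\cA^Z$ do not change the central charge. So stable factors are only determined up to $\sT(-)$, and Lemmas \ref{lem:tilt-property-basic}--\ref{lem:wtE-stable} are what make the comparison possible.

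First we record some facts about the factors. Each JH factor $A=E_{i+1}/E_i$ is $\sigma$-stable with $\mu_\sigma(A)=\mu_\sigma(E)<+\infty$, so $Z(A)\neq 0$. We claim that stability with finite slope gives $\Hom_{\cD}(\cA^Z,A)=0$ and $\mu^+_\sigma(A)<+\infty$. Indeed, a nonzero subobject $T\subset A$ with $T\in\cA^Z$ would have phase $1$, which is larger than the phase of $A/T$, contradicting semistability. Hence $\sT(A)$ exists, and by Lemma \ref{lem:wtE-stable} it is stable of the same phase. Moreover $\sT(A)\cong\sT(\sT(A))$ by Lemma \ref{lem:tilt-property-basic}(b).

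Next we compare two JH filtrations $0=E_0\subset\cdots\subset E_n=E$ and $0=F_0\subset\cdots\subset F_m=E$. We may take $E_1$ to be the first term of the first filtration. Let $j$ be minimal with $E_1\subset F_j$, and consider the composition $E_1\to F_j/F_{j-1}$. This map is nonzero by minimality of $j$.
\begin{itemize}
\item Its kernel $K$ satisfies $K\subset E_1$, and $K\subsetneq E_1$ because the map is nonzero. By stability of $E_1$, either $K=0$ or $K$ has strictly smaller phase. If $K\neq 0$, this is incompatible with $K\subset F_{j-1}$, since $F_{j-1}$ is semistable of the same phase; the exception is $K\in\cA^Z$, which is excluded by the claim above. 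So $K=0$.
\item Its image is a subobject of the stable object $F_j/F_{j-1}$ of the same phase. The cokernel must therefore have $Z=0$, i.e.\ it lies in $\cA^Z$. This follows because a subobject of a stable object with equal phase and nonzero $Z$-complement contradicts stability.
\end{itemize}
Thus $E_1\hookrightarrow F_j/F_{j-1}$ has cokernel in $\cA^Z$. By Lemma \ref{lem:wt-inj}(a), this gives $\sT(E_1)\cong\sT(F_j/F_{j-1})$.

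Then we reduce to a shorter filtration. Set $E'=E/E_1$; it is semistable of the same phase with $\mu_\sigma(E')<+\infty$. The filtration $E_i/E_1$ is a JH filtration of $E'$ of length $n-1$. From the $F_\bullet$ we build a second JH filtration of $E'$ from the images $(F_k+E_1)/E_1$. For $k<j$ we have $F_k\cap E_1=0$, so these factors are unchanged. For $k\geq j$ the factors are $F_k/F_{k-1}$, except at index $j$: there the factor $F_j/(F_{j-1}+E_1)$ is the cokernel above and lies in $\cA^Z$. We must discard or absorb this $\cA^Z$-piece. The factor $F_{j+1}/F_{j}$ does not change under quotienting, but the successive quotient $(F_{j+1}+E_1)/(F_j+E_1)$ may be an extension of $F_{j+1}/F_j$ by that $\cA^Z$-piece, so it need not be stable.

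This is the main obstacle. To handle it, I would replace the image filtration by its strict version: drop the index $j$, and at index $j+1$ use instead the saturation via $\sT$. By Lemma \ref{lem:wt-inj}(b),(d) and Lemma \ref{lem:tilt-property-basic}(c),(d), the successive quotients differ from $F_k/F_{k-1}$ only by objects of $\cA^Z$, so their $\sT$ agree. Concretely, I would show that the modified filtration has stable factors $G_k$ with $\sT(G_k)\cong\sT(F_k/F_{k-1})$. Lemma \ref{lem:wtE-stable} gives stability, since an extension of a stable object by an $\cA^Z$-object with no $\cA^Z$-subobjects is stable. It also gives the $\sT$-comparison, via Lemma \ref{lem:wt-inj}(a) applied to the inclusion with $\cA^Z$-cokernel.

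Finally, the induction hypothesis applied to $E'$ yields
\[\bigoplus_{i\geq 1}\sT(E_{i+1}/E_i)\cong\bigoplus_{k\neq j}\sT(F_k/F_{k-1}).\]
Adding $\sT(E_1)\cong\sT(F_j/F_{j-1})$ to both sides concludes the proof. Lemma \ref{lem:supp-property-finite-length} ensures that the lengths are finite, so the induction is well founded.
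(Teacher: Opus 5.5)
\textbf{The reduction step fails at exactly the point you flag, and the proposed repair does not work.} You also misplace the defect. Since $E_1\subset F_j$, one has $(F_k+E_1)/(F_{k-1}+E_1)=F_k/F_{k-1}$ exactly for $k\ge j+1$. So the only bad factor of the image filtration $G_k=(F_k+E_1)/E_1$ of $E'=E/E_1$ is $G_j/G_{j-1}=C\in\cA^Z$.

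Deleting $G_j$ produces the factor $G_{j+1}/G_{j-1}$, which contains $C$ as a \emph{subobject}. An object of finite slope with a nonzero subobject in $\cA^Z$ is never semistable. Moreover, $\sT$ is not even defined on this factor, since its $\mu^+_\sigma$ is $+\infty$. If $j=m$ there is no index $j+1$ at all, and merging downward fails too.

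Here is a concrete example. Take slope stability on a smooth projective surface $X$, so $\cA^Z$ is the category of $0$-dimensional sheaves and $\sT(-)=(-)^{\vee\vee}$. Fix a point $p$ and let $E=\ker(\cO_X^{\oplus 3}\to\cO_p)$, $(a,b,c)\mapsto a(p)-c(p)$. Intersecting $E$ with coordinate flags gives two JH filtrations: $I_p\oplus0\oplus0\subset I_p\oplus\cO_X\oplus0\subset E$, and one starting with $E_1=0\oplus0\oplus I_p$. Then:
\begin{itemize}
\item $j=m=3$ and $C=\cO_p$;
\item $E'\cong\cO_X^{\oplus2}$ via $(a,b,c)\mapsto(a,b)$;
\item the image filtration is $I_p\oplus0\subset I_p\oplus\cO_X\subset\cO_X^{\oplus2}$, and deleting its middle term leaves the unstable factor $\cO_p\oplus\cO_X$.
\end{itemize}
The filtration of $E'$ that actually works is $\cO_X\oplus0\subset\cO_X^{\oplus2}$. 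The $\cA^Z$-piece has to be absorbed by \emph{enlarging} terms below index $j$, which your argument never does.

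\textbf{The missing idea is the normalization used in the paper's proof.} By Lemma \ref{lem:wt-inj}(b) and Lemma \ref{lem:wtE-stable}, $\sT(E_\bullet)$ is a JH filtration of $\sT(E)$ with the same $\sT$-factors. So it suffices to treat $E=\sT(E)$ with $E_i=\sT(E_i)$, and then any other JH filtration is also saturated by Lemma \ref{lem:wt-inj}(c).

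Now $\Hom_{\cD}(\cA^Z,E_1[1])=0$ by Lemma \ref{lem:tilt-property-basic}(a), so $0\to E_1\to F_j/F_{j-1}\to C\to 0$ splits. Since the stable finite-slope object $F_j/F_{j-1}$ has no subobjects in $\cA^Z$, this forces $C=0$. Hence $F_j=F_{j-1}\oplus E_1$, and the induced filtration of $E/E_1$ has exactly the factors $F_k/F_{k-1}$ for $k\neq j$. The induction then closes; the paper runs this argument with the roles of the two filtrations swapped.

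Alternatively, without normalizing, you could replace each $G_k$ with $k<j$ by the preimage of the maximal $\cA^Z$-subobject of $E'/G_k$. Then the saturated $G_{j-1}$ becomes $G_j$. Each new factor contains $F_k/F_{k-1}$ with cokernel in $\cA^Z$ and has no $\cA^Z$-subobjects, so it is stable with the same $\sT$.

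\textbf{A smaller error:} your argument for $K=0$ is wrong as stated, because a semistable $F_{j-1}$ may contain subobjects of smaller phase. The correct reason is that $K\neq0$ would give $\phi(E_1/K)>\phi(E)$, even though $E_1/K\subset F_j/F_{j-1}$.
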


\begin{proof}
Consider the statement: if $E\in \cA$ is a $\sigma$-semistable object with $\mu_{\sigma}(E)<+\infty$ and has a JH filtration $$0=E_0\subset E_1\subset \cdots \subset E_n=E$$ of length $n$, then the isomorphic class of the object $\bigoplus_{0\leq i\leq n-1}\sT(E_{i+1}/E_i)$ is independent of the choice of JH filtrations of $E$.

We proceed by induction on $n$ to prove this statement. When $n=1$, $E$ is $\sigma$-stable, and the result is clear, so we assume that $n\geq 2$ and the statement holds for $\sigma$-semistable objects with $\mu_{\sigma}<+\infty$ and have a JH filtration of length $\leq n-1$. By Lemma \ref{lem:wt-inj}(b) and \ref{lem:wtE-stable}, the filtration
$$0\subset \sT(E_1)\subset \sT(E_2)\subset \cdots \subset \sT(E_n)=\sT(E)$$
is a JH filtration of $\sT(E)$ of length $n$, so we may assume that $\sT(E_i)=E_i$ for each $1\leq i\leq n$.

Let $$0=F_0\subset F_1\subset \cdots \subset F_m=E$$ be another JH filtration of $E$ of length $m$. Then $\sT(F_j)=F_j$ for $1\leq j\leq m$ by Lemma \ref{lem:wt-inj}(c). Let $1\leq k\leq n$ be the smallest integer that $E_k$ contains $F_1$. Then we have a non-zero map $F_1\to E_{k}/E_{k-1}$, which is injective and has a cokernel in $\cA^Z$ as they are $\sigma$-stable with the same finite slope. Since $\sT(F_1)\cong F_1$, we know that $F_1\to E_{k}/E_{k-1}$ is an isomorphism, hence we get a splitting 
\[E_k\cong E_{k-1}\oplus F_1.\]
Therefore, 
$$0=F_1/F_1\subset F_2/F_1\subset \cdots \subset F_{m-1}/F_1 \subset F_m/F_1=E/F_1$$
is a JH filtration of $E/F_1$ of length $m-1$. If we define $E_j'\coloneqq E_j$ for $1\leq j\leq k-1$, and $E_j'\coloneqq E_j/F_1$ for $k\leq j\leq n$, then by the minimality of $k$, we have $E'_k\cong E_{k-1}=E'_{k-1}$ and
\[0=E_0'\subset E_1'\subset \cdots \subset E'_{k-1}\subset E'_{k+1} \subset \cdots \subset  E_{n-1}'\subset E'_{n}=E/F_1\]
is a JH filtration of $E/F_1$ of length $n-1$. Therefore, the induction hypothesis applies to $E/F_1$ and we have
\[\bigoplus_{0\leq i\leq n-1,i\neq k-1}\sT(E'_{i+1}/E_i')\cong\bigoplus_{1\leq j\leq m-1}\sT((F_{j+1}/F_1)/(F_j/F_1)).\]
Therefore,
\[\bigoplus_{0\leq i\leq n-1, i\neq k-1}\sT(E_{i+1}/E_i)\cong\bigoplus_{1\leq j\leq m-1}\sT(F_{j+1}/F_j).\]
As $E_k/E_{k-1}\cong F_1$, we get
\[\bigoplus_{0\leq i\leq n-1}\sT(E_{i+1}/E_i)\cong\bigoplus_{0\leq j\leq m-1}\sT(F_{j+1}/F_j)\]
and the result follows.
\end{proof}

\section{Stability conditions in families}\label{sec:stab-family}

In this section, we collect definitions and properties needed in the relative setting. We mainly follow \cite{BLMNPS21}.

\subsection{Numerical K-groups}\label{subsec:num-k}

Let $X$ be a projective scheme over a field $\kk$ and $\cD\subset \Db(X)$ be a strong semi-orthogonal component. Recall that we have the Euler pairing
\[\chi_{\kk}(-,-)\colon \KK(\cD_{\perf})\times \KK(\cD)\to \ZZ,\]
where $\cD_{\perf}\coloneqq\cD\cap \Dperf(X)$. When the base field is clear from the context, we will always omit it from the notation. We define the \emph{numerical K-group} of $\cD$ to be the quotient of the usual K-group $\KK(\cD)$ by the null space of $\chi$ on the right, which is denoted by $\Knum(\cD)$. Similarly, we denote by $\Knum(\cD_{\perf})$ the quotient of $\KK(\cD_{\perf})$ by the null space of $\chi$ on the left. By \cite[Lemma 12.7]{BLMNPS21}, $\Knum(\cD)$ and $\Knum(\cD_{\perf})$ are finite rank lattices.

Now given a field extension $\kk\subset \kk_1$, the pullback functor $\cD\to \cD_{\kk_1}$ induces homomorphisms $\KK(\cD)\to \KK(\cD_{\kk_1})$ and $\KK(\cD_{\perf})\to \KK(\cD_{\kk_1,\perf})$. By \cite[Lemma 12.14]{BLMNPS21}, it also induces
\[\eta_{\kk_1/\kk}\colon \Knum(\cD_{\perf})\to \Knum(\cD_{\kk_1,\perf}).\]
Therefore, as explained in \cite[Proposition and Definition 12.15]{BLMNPS21}, dualizing this map gives the following pushforward map
\[\eta^{\vee}_{\kk_1/\kk}\colon \Knum(\cD_{\kk_1})\hookrightarrow\Hom(\Knum(\cD_{\kk_1,\perf}),\ZZ)\to \Hom(\Knum(\cD_{\perf}),\ZZ)\to \Knum(\cD)\otimes \QQ,\]
such that the image $\Knum(\cD)_{\kk_1}\coloneqq\im(\eta^{\vee}_{\kk_1/\kk})$ contains $\Knum(\cD)$ as a subgroup of finite index; moreover, it is contained in $\Knum(\cD)_{\overline{\kk}}$.

Next, we are going to discuss numerical K-groups in the relative setting. Following \cite{BLMNPS21}, we will work in the following situation in the rest of this section.

\begin{Assum}\label{assum-stab}
The morphism $f\colon X\to S$ is a flat projective morphism between Noetherian schemes such that $X$ has finite Krull dimension and $S$ is a Nagata scheme which is quasi-projective over a Noetherian affine scheme. The subcategory $\cD\subset \Db(X)$ is a $S$-linear strong semi-orthogonal component of finite cohomological amplitude.
\end{Assum}

\begin{definition}[{\cite[Definition 21.1]{BLMNPS21}}]
Fix a morphism $f\colon X\to S$ and a semi-orthogonal component $\cD\subset \Db(X)$ satisfying Assumption \ref{assum-stab}. We define the \emph{relative numerical K-group} $\Knum(\cD/S)$ as the quotient of $\bigoplus_{s\in S} \Knum(\cD_s)_{\overline{s}}$ by the saturation of the subgroup generated by the elements of the form
\[\eta^{\vee}_{t_1/g(t_1)}(E_{t_1})-\eta^{\vee}_{t_2/g(t_2)}(E_{t_2})\]
for all tuples $(g,E,t_1,t_2)$ where $g\colon T\to S$ is a morphism from a connected scheme $T$, $E\in \Dqc(X_T)$ is a $T$-perfect object such that $E_t\in \cD_t$ for all $t\in T$, and $t_1,t_2\in T$.
\end{definition}

\begin{remark}
By \cite[Remark 21.2]{BLMNPS21}, it is enough to consider morphisms $f\colon T\to S$ of finite type from a connected affine scheme $T$.
\end{remark}

In particular, for $E\in \Dqc(X_T)$ as in the definition, we see that the image of $[E_t]\in \Knum(\cD_t)$ under the composition
\[\Knum(\cD_t)\to \Knum(\cD_{f(t)})_t\hookrightarrow \Knum(\cD_{f(t)})_{\overline{f(t)}}\to \Knum(\cD/S)\]
is independent of $t\in T$. In this case, we denote this image by $[E]\in \Knum(\cD/S)$.


\subsection{Base change}

Let $f\colon X\to S$ be a morphism of schemes that are quasi-compact with affine diagonal, where $X$ is Noetherian of finite Krull dimension. Let
\[\langle \cD_1,\dots ,\cD_m\rangle=\Db(X)\]
be a strong $S$-linear semi-orthogonal decomposition. Then by \cite[Theorem 3.17]{BLMNPS21}, for any $g\colon T\to S$ from a scheme $T$ which is quasi-compact with affine diagonal such that $g$ is faithful with respect to $f$ (e.g.~$f$ and $g$ are Tor-independent), there is an induced $T$-linear semi-orthogonal decomposition
\[\langle (\cD_{1,\mathrm{qc}})_T,\dots ,(\cD_{m, \mathrm{qc}})_T\rangle=\Dqc(X_T).\]
Furthermore, if the starting decomposition has finite cohomological amplitude, then it induces a $T$-linear semi-orthogonal decomposition
\[\langle (\cD_1)_T,\dots ,(\cD_m)_T\rangle=\Db(X_T).\]
Moreover, if $g$ is proper, the pushforward functor induces $(\cD_i)_T\to \cD_i$; if $g$ has finite Tor-dimension, the pullback functor induces $\cD_i\to (\cD_i)_T$.

\begin{definition}
We say a t-structure on a semi-orthogonal component $\cD\subset \Db(X)$ is \emph{$S$-local} if for every quasi-compact open subset $U\subset S$, there exists a t-structure on $\cD_U$ such that the restriction functor $\cD\to \cD_U$ is t-exact. 

Similarly, we say a slicing $\cP$ of $\cD$ is \emph{$S$-local} if for every quasi-compact open subset $U\subset S$, there exists a slicing $\cP_U$ of $\cD_U$ such that the restriction sends $\cP(\phi)$ to $\cP_U(\phi)$ for each $\phi\in \mathbb{R}$.
\end{definition}

Note that by \cite[Theorem 4.13]{BLMNPS21}, if $S$ is affine, then any bounded t-structure on $\cD\subset \Db(X)$ is $S$-local.

By \cite[Theorem 5.3]{BLMNPS21}, for any $S$-linear strong semi-orthogonal component $\cD\subset \Db(X)$ and any $g\colon T\to S$ from a scheme $T$ which is quasi-compact with affine diagonal such that $g$ is faithful with respect to $f$, a t-structure $(\cD^{\leq 0}, \cD^{\geq 0})$ on $\cD$ induces a t-structure $((\cD_{\mathrm{qc}})_T^{\leq 0}, (\cD_{\mathrm{qc}})_T^{\geq 0})$ on $(\cD_{\mathrm{qc}})_T$. If $\cA$ is the heart of $(\cD^{\leq 0}, \cD^{\geq 0})$, then we denote by $(\cA_{\mathrm{qc}})_T$ the heart of $((\cD_{\mathrm{qc}})_T^{\leq 0}, (\cD_{\mathrm{qc}})_T^{\geq 0})$. In the case of change of base fields, we have:

\begin{lemma}[{\cite[Theorem 5.3, Proposition 5.7]{BLMNPS21}}]\label{lem:base-change-t-structure}
Assume that $S=\Spec(\kk)$ and $T=\Spec(\kk_1)$ for two fields $\kk\subset \kk_1$. Given a $S$-linear strong semi-orthogonal component $\cD\subset \Db(X)$ that has finite cohomological amplitude with a bounded t-structure $(\cD^{\leq 0}, \cD^{\geq 0})$ on $\cD$ and can be obtained from a Noetherian t-structure on $\cD$ by tilting. Denote by $\pi\colon X_{\kk_1}\to X$ the base change morphism.

\begin{itemize}
    \item  The t-structure $((\cD_{\mathrm{qc}}^{\leq 0})_{\kk_1}, (\cD_{\mathrm{qc}}^{\geq 0})_{\kk_1})$ on $(\cD_{\mathrm{qc}})_{\kk_1}$ restricts to a bounded t-structure $(\cD_{\kk_1}^{\leq 0}, \cD_{\kk_1}^{\geq 0})$ on $\cD_{\kk_1}$.

    \item Functors $L\pi^*\colon \cD_{\mathrm{qc}}\to (\cD_{\mathrm{qc}})_{\kk_1}$ and $R\pi_*\colon (\cD_{\mathrm{qc}})_{\kk_1}\to \cD_{\mathrm{qc}}$ are t-exact with respect to t-structures $((\cD_{\mathrm{qc}}^{\leq 0})_{\kk_1}, (\cD_{\mathrm{qc}}^{\geq 0})_{\kk_1})$ and $(\cD_{\mathrm{qc}}^{\leq 0}, \cD_{\mathrm{qc}}^{\geq 0})$.

    \item $(\cD_{\mathrm{qc}}^{\leq 0})_{\kk_1}$ is the smallest full subcategory of $(\cD_{\mathrm{qc}})_{\kk_1}$ that contains $L\pi^*(\cD^{\leq 0})$ and is closed under extensions and small colimits.

    \item If $\kk=\kk_1$, then we have $(\cD_{\kk_1}^{\leq 0}, \cD_{\kk_1}^{\geq 0})=(\cD^{\leq 0}, \cD^{\geq 0})$.

    \item  For any $a,b\in \ZZ\cup\{\pm\infty\}$, we have
\[(\cD_{\mathrm{qc}})_{\kk_1}^{[a,b]}=\{E\in (\cD_{\mathrm{qc}})_{\kk_1}\colon R\pi_*E\in (\cD_{\mathrm{qc}})^{[a,b]}\}.\]
\end{itemize}
\end{lemma}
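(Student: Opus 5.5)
This statement is a citation of \cite[Theorem 5.3, Proposition 5.7]{BLMNPS21}, so the plan is to reduce each bullet to the corresponding assertion there, after checking that our setting meets its hypotheses. The component $\cD$ is a strong $S$-linear component of finite cohomological amplitude, and $\Spec(\kk_1)\to\Spec(\kk)$ is flat, hence Tor-independent of $f$ and faithful. By the base-change discussion above (\cite[Theorem 3.17]{BLMNPS21}) we therefore get the induced decompositions of $\Dqc(X_{\kk_1})$ and $\Db(X_{\kk_1})$. In particular $(\cD_{\mathrm{qc}})_{\kk_1}$ and $\cD_{\kk_1}$ are defined, $L\pi^*$ sends $\cD$ to $\cD_{\kk_1}$, and $R\pi_*$ (which is affine) sends $(\cD_{\mathrm{qc}})_{\kk_1}$ to $\cD_{\mathrm{qc}}$.

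First, following \cite[Theorem 5.3]{BLMNPS21}, define $(\cD_{\mathrm{qc}}^{\leq 0})_{\kk_1}$ as the smallest subcategory containing $L\pi^*(\cD^{\leq 0})$ and closed under extensions and small colimits. It is presentable and generated by compact objects because $\cD^{\leq 0}$ extends to $\cD_{\mathrm{qc}}^{\leq 0}$ by colimits. This gives the third bullet and produces a t-structure by the standard generation argument. Second, prove t-exactness. Right t-exactness of $L\pi^*$ holds by construction. For $R\pi_*$, use the projection formula $R\pi_*L\pi^*E\cong E\otimes_{\kk}\kk_1$; as a $\kk$-vector space, $\kk_1$ is a filtered colimit (indeed a direct sum) of copies of $\kk$, and the coaisle is closed under such colimits since $\cD_{\mathrm{qc}}$ is compactly generated. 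The adjunction then gives left t-exactness of $L\pi^*$, and exactness of $R\pi_*$ follows since $\pi$ is affine and faithfully flat. The fifth bullet follows: one inclusion is t-exactness, and for the other, $R\pi_*$ is conservative and exact, so the truncations of $E$ are detected after pushforward. The fourth bullet is immediate, since for $\kk=\kk_1$ the generated aisle is just $\cD_{\mathrm{qc}}^{\leq 0}$, whose restriction to $\cD$ is $\cD^{\leq 0}$.

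The main obstacle is the first bullet: showing that the truncation functors preserve $\cD_{\kk_1}$ (coherence) and that the result is bounded. Boundedness follows from the fifth bullet together with the finite cohomological amplitude of $\cD$, since $R\pi_*$ of an object of $\cD_{\kk_1}$ lies in a bounded range of the original t-structure. Coherence of truncations is exactly where the hypothesis that the t-structure is obtained by tilting a Noetherian one enters, as in \cite[Proposition 5.7]{BLMNPS21}. The plan is to first handle the Noetherian heart, using openness and approximation arguments (the truncation of $L\pi^*E$ is computed by descending along finitely generated subextensions $\kk\subset\kk'\subset\kk_1$, where for finite extensions $R\pi_*$ reflects coherence). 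The second step is to pass to the tilt, using that the torsion pair is defined via HN-type data which is stable under base change. This step I would take essentially verbatim from \cite{BLMNPS21}.
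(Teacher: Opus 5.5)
The paper gives no argument of its own here: the lemma is quoted from \cite[Theorem 5.3, Proposition 5.7]{BLMNPS21}. Your plan of checking the hypotheses and invoking that reference is therefore what the paper does. Your deductions of the fourth bullet, and of the fifth bullet once t-exactness is available (via conservativity of $R\pi_*$ for the affine morphism $\pi$), are fine.

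The independent sketch you give for the first bullet, however, does not work.

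\begin{itemize}
\item You do not need to worry about objects $L\pi^*E$ with $E\in\cD$: t-exactness gives $\tau^{\le 0}L\pi^*E\cong L\pi^*\tau^{\le 0}E$, which is coherent. The issue is an arbitrary object of $\cD_{\kk_1}$, which need not come from $\cD$.
\item Approximation only descends such an object to a \emph{finitely generated} subextension $\kk\subset\kk'\subset\kk_1$, not a \emph{finite} one.
\item So your reduction handles algebraic extensions, where the Noetherian hypothesis plays no role. It leaves untouched the essential case where $\kk'$ is transcendental over $\kk$, e.g.\ $\kk'=\kk(t)$. There $R\pi_*$ neither preserves nor reflects coherence.
\end{itemize}

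That transcendental case is precisely where the hypothesis that the t-structure is a tilt of a Noetherian one enters. The standard route is: spread the object out over an integral $\kk$-variety with function field $\kk'$; use Polishchuk's constant families of t-structures, available for Noetherian and ``close to Noetherian'' hearts, to get coherent truncations over that variety; then restrict to the generic point.

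Several secondary points in the sketch are also inaccurate.

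\begin{itemize}
\item Passing to the tilt is not a matter of HN data, since the hypothesis allows an arbitrary torsion pair in the Noetherian heart. What is used is that the tilted aisle is sandwiched between the Noetherian aisle and its shift by one.
\item In the t-exactness step, compact generation of $\cD_{\mathrm{qc}}$ does not give closure of the coaisle under direct sums. Objects of $\cD^{\le 0}\subset\Db(X)$ are not compact in $\Dqc(X)$ when $X$ is singular, since only perfect complexes are. That closure is a genuine property of the extended t-structure that has to be established; it is part of what the cited construction provides.
\item Boundedness needs the given t-structure to be at finite distance from the standard one. Finite cohomological amplitude of the semiorthogonal decomposition only controls the projection functors.
\end{itemize}

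Since you ultimately defer the hard step to the reference, the proposal stands as a citation. These sketches should not, however, be presented as the mechanism behind it.
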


If $\cA$ is the heart of $(\cD^{\leq 0}, \cD^{\geq 0})$, then we denote by $(\cA_{\mathrm{qc}})_{\kk_1}$ and $\cA_{\kk_1}$ the heart of $((\cD_{\mathrm{qc}}^{\leq 0})_{\kk_1}, (\cD_{\mathrm{qc}}^{\geq 0})_{\kk_1})$ and $(\cD_{\kk_1}^{\leq 0}, \cD_{\kk_1}^{\geq 0})$, respectively.

Now we discuss the base change of numerical K-groups. Let $X$ be a projective scheme over a field $\kk$ and $\cD\subset \Db(X)$ be a strong semi-orthogonal component.

\begin{definition}
We say a weak pre-stability condition $\sigma=(Z,\cP)$ on $\cD$ with respect to $\bv\colon \KK(\cD)\to \Lambda$ is \emph{numerical} if $\bv$ factors through the natural surjection $\KK(\cD)\twoheadrightarrow \Knum(\cD)$. We still denote by the induced homomorphisms $\Knum(\cD)\to \Lambda$ and $\Knum(\cD)\to \CC$ by $\bv$ and $Z$, respectively.
\end{definition}

Given a numerical weak pre-stability condition $\sigma=(Z,\cP)$ on $\cD$ with respect to $\bv\colon \KK(\cD)\to \Lambda$ and a field extension $\kk\subset \kk_1$, we write $Z_{\kk_1}$ for the composition $Z\circ \eta^{\vee}_{\kk_1/\kk}$. We set 
\[\Lambda_{\kk_1}\subset \Lambda_{\QQ}\]
to be the subgroup generated by $\Lambda$ and the image of $\bv$ extended to $\Knum(\cD)_{\kk_1}$. Therefore, $\Lambda_{\kk_1}$ contains $\Lambda$ as a subgroup of finite index. We denote by 
\[\bv_{\kk_1}\colon \Knum(\cD_{\kk_1})\to \Lambda_{\kk_1}\]
the induced map. Hence, $Z_{\kk_1}$ can be factored as the composition $\Knum(\cD_{\kk_1})\to \Knum(\cD)_{\kk_1}\to \Lambda_{\kk_1}$.

Let $\sigma$ be a numerical weak pre-stability condition on $\cD$. Then for any field extension $\kk\subset \kk_1$, by Lemma \ref{lem:base-change-t-structure} and the discussion above, we get a pair $\sigma_{\kk_1}=(Z_{\kk_1}, \cA_{\kk_1})$, which is called \emph{the base change of $\sigma$ to $\kk_1$}.

\begin{definition}\label{def:geo-stable}
Assume that $\sigma_{\overline{\kk}}=(Z_{\overline{\kk}}, \cA_{\overline{\kk}})$ is also a weak pre-stability condition. We say $E\in \cD$ is \emph{geometrically $\sigma$-stable} if $E_{\overline{\kk}}$ is $\sigma_{\overline{\kk}}$-stable.
\end{definition}

\subsection{Torsion and torsion-free objects}

We fix a morphism $f\colon X\to S$ and a semi-orthogonal component $\cD\subset \Db(X)$ satisfying Assumption \ref{assum-stab}. Moreover, we assume that $C\coloneqq S$ is a Dedekind scheme, i.e. $C$ is integral, regular, and one-dimensional.

Recall that for a Dedekind scheme $C$, we write $p\in C$ for a closed point, $c\in C$ for an arbitrary point, and $K$ for its fraction field.

The following notions of torsion and torsion-free objects relative to $C$ are crucial.

\begin{definition}
An object $E\in \cD$ is called \emph{$C$-torsion} if it is the pushforward of an object in $\cD_Z$ for a proper closed subschemes $Z\subset C$. We denote by $\cD_{C\text{-}\tor}$ the subcategory of $C$-torsion objects in $\cD$.
\end{definition}

According to \cite[Lemma 6.4]{BLMNPS21}, an object $E\in \cD$ is $C$-torsion if and only if $E_K=0$. Moreover, we have an exact triangle of triangulated categories:
\[\cD_{C\text{-}\tor}\to \cD\to \cD_{K}.\]

\begin{definition}
Let $\cA_C\subset \cD$ be the heart of a $C$-local t-structure. We say $E\in \cA_C$ is \emph{$C$-torsion-free} if it does not contain any non-zero $C$-torsion subobject. We denote by $\cA_{C\text{-}\tor}\subset \cA_C$ the subcategory of $C$-torsion objects, and by $\cA_{C\text{-}\mathrm{tf}}\subset \cA_C$ the subcategory of $C$-torsion-free objects.
\end{definition}

The following result proved in \cite[Lemma 6.6]{BLMNPS21} will be useful.

\begin{lemma}
The subcategory $\cA_{C\text{-}\mathrm{tf}}\subset \cA_C$ is closed under subobjects and extensions. The subcategory $\cA_{C\text{-}\tor}\subset \cA_C$ is closed under subobjects, quotients, and extensions.
\end{lemma}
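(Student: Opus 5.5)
The plan is to deduce both assertions from the exact sequence of the torsion theory $\cA_{C\text{-}\tor}\to\cA_C$, using only that $\cA_{C\text{-}\tor}$ is exactly the set of objects of $\cA_C$ lying in $\cD_{C\text{-}\tor}$.

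\textbf{Closure of $\cA_{C\text{-}\tor}$.} By \cite[Lemma 6.4]{BLMNPS21}, an object $E\in\cD$ is $C$-torsion iff $E_K=0$. The t-structure is $C$-local, and $K$ is a filtered limit of open subsets of $C$. Hence the restriction $\cD\to\cD_K$ is t-exact onto the induced heart (using \cite[Theorem 5.3]{BLMNPS21} for the base change $\Spec K\to C$, which is flat). So $(-)_K$ sends short exact sequences $0\to A\to E\to B\to0$ in $\cA_C$ to short exact sequences in $\cA_K$. Consequently $E_K=0$ iff $A_K=B_K=0$. This gives closure of $\cA_{C\text{-}\tor}$ under subobjects, quotients and extensions at once.

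\textbf{Closure of $\cA_{C\text{-}\tf}$ under subobjects.} This is immediate: a torsion subobject of $A\subset E$ is also a torsion subobject of $E$.

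\textbf{Closure of $\cA_{C\text{-}\tf}$ under extensions.} Take $0\to A\to E\to B\to0$ with $A,B$ torsion-free, and let $T\subset E$ be a torsion subobject. The image of $T$ in $B$ is a quotient of $T$, hence torsion by the first part, hence zero. So $T\subset A$, and therefore $T=0$.

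\textbf{Main obstacle.} The only delicate point is t-exactness of restriction to the generic point, since $\Spec K\to C$ is not an open immersion. I would handle it by writing $\cA_K$ via the induced t-structure on $(\cD_{\mathrm{qc}})_K$ and using that cohomology objects commute with the filtered colimit over opens $U\ni\eta$, on each of which restriction is t-exact by $C$-locality.
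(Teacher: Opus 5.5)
Your argument is correct. The paper gives no proof of its own but simply quotes \cite[Lemma 6.6]{BLMNPS21}, and your route is the standard argument for it: use \cite[Lemma 6.4]{BLMNPS21} to identify $\cA_{C\text{-}\tor}$ as the kernel of the exact restriction $\cA_C\to\cA_K$, then deduce the torsion-free statements formally, as you did. The ``main obstacle'' (t-exactness at the generic point) can be avoided altogether. Since $E\in\Db(X)$ and $f$ is proper, $E_K=0$ if and only if $E_U=0$ for some nonempty open $U\subset C$, because the image of $\Supp(E)$ is a closed set missing $\eta$. Moreover, t-exactness of $\cD\to\cD_U$ is exactly the definition of a $C$-local t-structure, and closure under extensions needs no exactness at all, only the triangle $A_K\to E_K\to B_K$.
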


Recall that for the heart $\cA_C\subset \cD$ of a $C$-local t-structure, we have an induced heart $\cA_c$ for any $c\in C$ by \cite[Theorem 5.6, 5.7]{BLMNPS21}. 

\begin{lemma}[{\cite[Lemma 6.12]{BLMNPS21}}]\label{lem:6.12}
If $E\in \cA_C$ is $C$-torsion-free, then $E_c\in \cA_c$ for each $c\in C$.
\end{lemma}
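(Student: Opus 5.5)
The statement to prove is Lemma \ref{lem:6.12}: if $E\in\cA_C$ is $C$-torsion-free, then $E_c\in\cA_c$ for every $c\in C$. The generic point is immediate, since the restriction $\cD\to\cD_K$ is t-exact for the induced t-structure (it is a localization, hence a colimit of restrictions to open subsets, on each of which the t-structure is $C$-local). So $E_K\in\cA_K$, and the real content is at a closed point $p\in C$. By $C$-locality we may shrink $C$ to an affine open neighbourhood of $p$ on which the ideal of $p$ is principal, generated by a uniformizer $t$. Then $E_p$ sits in an exact triangle
\[E\xra{t}E\to i_{p*}E_p\]
in $\cD$, where $i_p\colon X_p\hookrightarrow X$ is the fibre inclusion; this follows from the Koszul resolution $\cO_C\xra{t}\cO_C\to\cO_p$ and $C$-linearity of $\cD$.

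The plan is to read off the cohomology of $i_{p*}E_p$ with respect to the heart $\cA_C$ from the long exact sequence of this triangle. Since $E\in\cA_C$, we have $\cH^j_{\cA_C}(i_{p*}E_p)=0$ for $j\notin\{-1,0\}$, and
\[\cH^{-1}_{\cA_C}(i_{p*}E_p)=\ker(t\colon E\to E),\qquad \cH^{0}_{\cA_C}(i_{p*}E_p)=\cok(t\colon E\to E).\]
The kernel of $t$ is a subobject of $E$ killed by $t$, so its restriction to $K$ vanishes. By \cite[Lemma 6.4]{BLMNPS21} it is $C$-torsion, and $C$-torsion-freeness of $E$ forces it to be zero. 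Hence $i_{p*}E_p\in\cA_C$.

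It remains to transfer this to the fibre: we must show that $F\in\cD_p$ lies in $\cA_p$ if and only if $i_{p*}F\in\cA_C$. This is where I expect the main obstacle. The induced heart $\cA_p$ is constructed in \cite[Theorems 5.3, 5.6, 5.7]{BLMNPS21} via base change along $\Spec\kappa(p)\to C$. The natural route is to use the analogue of Lemma \ref{lem:base-change-t-structure} for a closed point: pushforward $i_{p*}$ along the finite morphism is t-exact, and it is conservative on the relevant truncations, with $\cD_{\mathrm{qc},p}^{[a,b]}$ characterized as the objects whose pushforward lies in $\cD_{\mathrm{qc}}^{[a,b]}$. I would cite this characterization directly from \cite[Section 5]{BLMNPS21}.

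If the citation is not available in exactly this form, I would argue adjointly. The functor $Li_p^*$ is right t-exact, since $\cD_p^{\le0}$ is generated by pullbacks of $\cD^{\le0}$. Hence $i_{p*}$ is left t-exact. Combining left t-exactness of $i_{p*}$ with $i_{p*}E_p\in\cA_C$ and faithfulness of $i_{p*}$ on cohomology gives $E_p\in\cA_p$. Applying this at every closed point, together with the generic case above, completes the argument.
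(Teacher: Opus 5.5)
The paper gives no proof of this statement and only cites \cite[Lemma 6.12]{BLMNPS21}. Your main line is the standard argument: write $i_{p*}E_p$ as the cone of $E\otimes f^*\cO_C(-p)\to E$ (locally, multiplication by a uniformizer), use torsion-freeness to kill $\cH^{-1}$, and conclude from t-exactness and conservativity of $i_{p*}$ for the base change along $\Spec\kappa(p)\to C$ \cite[Section 5]{BLMNPS21}. This is correct, but two points need tightening.

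First, after shrinking $C$ to $U$ you use that $E_U$ is $U$-torsion-free. This does not follow verbatim from the hypothesis; you would have to extend a $U$-torsion subobject of $E_U$ by zero to a $C$-torsion subobject of $E$. It is simpler not to shrink at this step. The kernel of $E\otimes f^*\cO_C(-p)\to E$ in $\cA_C$ restricts to your $\ker(t)$ over $U$ and to $0$ over $C\setminus\{p\}$; here the twist lies in $\cA_C$ by $C$-locality. So this kernel is $C$-torsion by \cite[Lemma 6.4]{BLMNPS21}, and therefore zero.

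Second, your fallback argument is misstated. Left t-exactness of $i_{p*}$ says $F\in\cD_p^{\ge 0}\Rightarrow i_{p*}F\in\cD^{\ge 0}$, which is the wrong direction, and ``faithfulness on cohomology'' presupposes t-exactness. The argument that works is as follows.
\begin{itemize}
\item $E_p=Li_p^*E$ lies in $(\cD_{\mathrm{qc}})_p^{\le 0}$ by right t-exactness of $Li_p^*$; no torsion-freeness is needed for this half.
\item $E_p\in(\cD_{\mathrm{qc}})_p^{\ge 0}$ because $\Hom(Li_p^*G,E_p)=\Hom(G,i_{p*}E_p)=0$ for every $G\in\cD_{\mathrm{qc}}^{\le -1}$, using $i_{p*}E_p\in\cA_C$. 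Such objects $Li_p^*G$ generate $(\cD_{\mathrm{qc}})_p^{\le -1}$ under extensions and colimits.
\end{itemize}
Intersecting with $\Db(X_p)$ then gives $E_p\in\cA_p$.
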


\begin{definition}
We say $\cA_C$ has a \emph{$C$-torsion theory} if the pair of subcategories $(\cA_{C\text{-}\tor},\cA_{C\text{-}\mathrm{tf}})$ forms a torsion pair.
\end{definition}

\begin{remark}\label{rmk:noether-C-torsion-pair}
By \cite[Remark 6.16]{BLMNPS21}, if $\cA_C$ is Noetherian, then it has a $C$-torsion theory.
\end{remark}

To check the existence of a $C$-torsion theory, we need the following criterion.

\begin{lemma}[{\cite[Theorem 17.1]{BLMNPS21}}]\label{lem:17.1}
Let $\cA_C$ be the heart of a $C$-local t-structure. Assume that 

\begin{enumerate}
    \item $\cA_C$ universally satisfies openness of flatness, i.e. for every morphism of finite presentation $T\to S$ from an affine scheme $T$ and every $T$-perfect object $E\in \Db(X_T)$, the set
    \[\{t\in T \colon E_t\in \cA_t\}\]
    is open, and

    \item for every closed point $p\in C$, there exists a weak stability condition $\sigma_p=(\cA_p, Z_p)$ so that $\cA_p^{Z_p}\subset \cA_p$ is a Noetherian torsion subcategory.
\end{enumerate}

Then $\cA_C$ admits a $C$-torsion theory.
\end{lemma}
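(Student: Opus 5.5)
The plan is to construct, for every $E\in\cA_C$, a maximal $C$-torsion subobject $E_{\tor}\subset E$ with $E/E_{\tor}$ being $C$-torsion-free. Since $\cA_{C\text{-}\tor}$ is closed under subobjects, quotients and extensions, the sum of two $C$-torsion subobjects is again $C$-torsion. So it suffices to show that every ascending chain of $C$-torsion subobjects of $E$ stabilizes. Given this, $E_{\tor}$ is the eventual (maximal) member of such a chain. Then $E/E_{\tor}$ has no nonzero $C$-torsion subobject, because its preimage in $E$ would be a larger torsion subobject. Hence $(\cA_{C\text{-}\tor},\cA_{C\text{-}\mathrm{tf}})$ is a torsion pair.

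\textbf{Step 1: localize the torsion to finitely many closed points.} We work Zariski locally on $C$, which is allowed since the t-structure is $C$-local. Apply hypothesis (a) to the $C$-perfect object $E$. The set $U\coloneqq\{c\in C\colon E_c\in\cA_c\}$ is open, and I expect it to contain the generic point $\eta$. Indeed, $E_K$ lies in the heart $\cA_K$, because restriction to $K$ is t-exact (this uses the exact triangle $\cD_{C\text{-}\tor}\to\cD\to\cD_K$). So $C\setminus U$ is a finite set of closed points.

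Next, every $C$-torsion object $T$ is an iterated extension of objects $i_{p*}A$ with $A\in\cA_p$. For $p\in U$ we have, by adjunction,
\[\Hom(i_{p*}A,E)=\Hom(A,i_p^!E),\qquad i_p^!E\cong E_p[-1],\]
and this vanishes since $E_p\in\cA_p$. Hence $E$ has no torsion subobject supported at such $p$. Consequently every $C$-torsion subobject of $E$ is supported on the finite set $C\setminus U$, and we may localize at a single closed point $p$, with uniformizer $t$.

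\textbf{Step 2: reduce to a chain in $\cA_p$.} A $p$-torsion subobject $T\subset E$ is filtered by its $t^n$-torsion pieces $T[t^n]$. The first piece satisfies
\[T[t]\subset E[t]\cong i_{p*}\cH^{-1}_{\cA_p}(E_p),\]
where the isomorphism comes from the triangle $E\xrightarrow{t}E\to i_{p*}E_p$ and the long exact sequence in the heart. So the $t$-torsion subobjects of $E$ correspond to subobjects of the fixed object $F\coloneqq\cH^{-1}_{\cA_p}(E_p)\in\cA_p$.

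Replacing $E$ by $E/E[t]$ and iterating, an ascending chain of torsion subobjects of $E$ stabilizes provided two things hold. First, ascending chains of subobjects of $F$ stabilize. Second, the iteration terminates, i.e.\ the torsion has bounded $t$-exponent. For the latter I would show that $t^N$ kills all torsion of $E$ for some $N$. To do this, compare the pieces $E[t^n]/E[t^{n-1}]$ with successive subobjects of $F$, using that multiplication by $t$ embeds $E[t^{n}]/E[t^{n-1}]$ into $E[t^{n-1}]/E[t^{n-2}]$. This produces a descending-then-ascending pattern inside $F$ that must stabilize.

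\textbf{Step 3 (main obstacle): ascending chains of subobjects of $F$ stabilize.} This is where hypothesis (b) enters, since $\cA_p$ itself need not be Noetherian. Let $G_1\subset G_2\subset\cdots\subset F$ be a chain; write $Z=Z_p$.

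First, the values $\Im Z(G_n)$ are non-decreasing and bounded by $\Im Z(F)$, and they lie in a discrete set because $Z$ factors through a lattice. So $\Im Z(G_n)$ eventually stabilizes.

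Second, the values $\Re Z$ must also be controlled. I would use the HN filtration together with the support property, as in Lemma~\ref{lem:supp-property-finite-length}. The aim is to show that the classes $\overline{\bv}(G_n)$ take only finitely many values once $\Im Z$ is fixed. After that, the quotients $G_{n+1}/G_n$ eventually lie in $\cA_p^{Z}$.

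Finally, a chain whose successive quotients lie in $\cA_p^{Z}$ stabilizes. For this I would argue as in Lemma~\ref{lem:tilt-property-no-infty-seq}, using Remark~\ref{rmk:serre-torsion} and the fact that $\cA_p^{Z}$ is a Noetherian torsion subcategory. I expect controlling $\Re Z$ to be the delicate point. The objects of slope $+\infty$ are exactly where finiteness could fail, so I would first split off the $\cA_p^{Z}$-torsion part of $F$ via the torsion pair $(\cA_p^{Z},(\cA_p^{Z})^{\perp})$, and then argue on each part separately.
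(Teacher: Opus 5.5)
Your Step 1 is fine: the bad locus is a finite set of closed points, and $E[t]\cong i_{p*}\cH^{-1}_{\cA_p}(E_p)$. The problems start after that.

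\textbf{Your reduction targets a statement you cannot prove, and do not need.} You reduce to showing that every ascending chain of $C$-torsion subobjects of every $E\in\cA_C$ stabilizes. Applied to $E=i_{p*}F$, this already says that $\cA_p$ is Noetherian, which is not part of the hypotheses. Step 3 tries to obtain it from the claim that $\Im Z_p$ takes values in a discrete set ``because $Z$ factors through a lattice''. That claim is false in general, since a homomorphism from a lattice to $\RR$ can have dense image. This case actually occurs: the paper applies this lemma in Corollary \ref{cor:C-torsion-theory-real-b} to $\sigma^b$ for every real $b$, where $\Im Z^b=\bv_1-b\bv_0$ is dense for irrational $b$. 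You do not need this chain condition anyway. The largest subobject of $E$ killed by $t^n$ is just $E[t^n]=\ker(t^n)$. So $E$ has a maximal $C$-torsion subobject if and only if the chain $E[t]\subset E[t^2]\subset\cdots$ stabilizes at each of the finitely many bad points. When it stabilizes at $N$, the quotient $E/E[t^N]$ is automatically $C$-torsion-free.

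\textbf{Termination is the whole content of the lemma, and your argument for it does not close.} Multiplication by $t$ gives injections $E[t^{n+1}]/E[t^n]\hookrightarrow E[t^n]/E[t^{n-1}]$. These form a descending chain $F=F_1\supseteq F_2\supseteq\cdots$ in $\cA_p$ with $i_{p*}F_n\cong E[t^n]/E[t^{n-1}]$, and unbounded exponent means exactly that every $F_n\neq 0$. Even if this chain stabilized (and you have no DCC in $\cA_p$ either), it could stabilize at a nonzero object, so no contradiction arises. Some further input is needed to force $F_n=0$ for $n\gg 0$. Your plan never brings the hypotheses to bear on this point: (a) is used only over $T=C$ to make the bad locus finite, and (b) only in the flawed Step 3.

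\textbf{How the paper handles it.} Supplying the termination argument is what \cite[Theorem 17.1]{BLMNPS21} does, and the paper does not reprove it. It invokes that theorem and only checks that hypothesis (a) implies the openness condition of \cite[Definition 10.4]{BLMNPS21}. Hypothesis (a) is stated for $T$-perfect objects of $\Db(X_T)$ over finitely presented affine $T$. The check goes through \cite[Lemma 10.6]{BLMNPS21}, Lemma \ref{lem-S-perf-lem-1}, and the identity $(\cA_{\mathrm{qc}})_t\cap\Db(X_t)=\cA_t$. Since the statement is attributed to \cite{BLMNPS21}, that reduction is all that is required here.
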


\begin{proof}
By \cite[Theorem 17.1]{BLMNPS21}, it suffices to show that the condition (a) implies the property in \cite[Definition 10.4]{BLMNPS21}, i.e. for every morphism $T\to S$ from a scheme $T$ and every $T$-perfect object $E$ on $X_T$, the set
\[\{t\in T \colon E_t\in (\cA_{\mathrm{qc}})_t\}\]
is open. Indeed, by \cite[Lemma 10.6]{BLMNPS21}, it suffices to check it for all morphisms of finite presentation $T\to S$ from affine schemes. By Assumption \ref{assum-stab}, we know that $T$ is Noetherian and $X_T\to T$ is flat projective. Therefore, any $T$-perfect object $E$ on $X_T$ lies in $\Db(X_T)$ by Lemma \ref{lem-S-perf-lem-1}. Moreover, $E_t\in \Db(X_t)$, so we have an identification
\[\{t\in T \colon E_t\in (\cA_{\mathrm{qc}})_t\}=\{t\in T \colon E_t\in (\cA_{\mathrm{qc}})_t\cap \Db(X_t)\}.\]
Now the result follows from $(\cA_{\mathrm{qc}})_t\cap \Db(X_t)=\cA_t$ by the construction of $(\cA_{\mathrm{qc}})_t$ in \cite[Theorem 5.3]{BLMNPS21}.
\end{proof}

\subsection{Harder--Narasimhan structures over curves}\label{sec:HN-structure}

Next, we review the theory of Harder--Narasimhan (HN) structures over curves introduced in \cite{BLMNPS21} in order to do semistable reduction for objects.

We fix a morphism $f\colon X\to S$ and a semi-orthogonal component $\cD\subset \Db(X)$ satisfying Assumption \ref{assum-stab}. Moreover, we assume that $C\coloneqq S$ is a Dedekind scheme.

\begin{definition}\label{def-central-charge-over-C}
A \emph{central charge on $\cD$ over $C$} is a pair $Z_C=(Z_K, Z_{C\text{-}\tor})$ where
\[Z_K\colon \KK(\cD_K)\to \CC\quad \text{and}\quad Z_{C\text{-}\tor}\colon \KK(\cD_{C\text{-}\tor})\to \CC\]
are group homomorphisms such that for each $E\in \cD$ and each proper closed subscheme $i_W\colon W\hookrightarrow C$, we have
\[Z_K(E_K)=\frac{1}{\mathrm{length} (W)}Z_{C\text{-}\tor}(i_{W*}E_W).\]
\end{definition}

For any $E\in \cD$, we set $Z_C(E)$ to be $Z_K(E_K)$ if $E_K\neq 0$, and $Z_{C\text{-}\tor}(E)$ otherwise.

\begin{definition}
A \emph{weak Harder--Narasimhan (HN) structure} on $\cD$ over $C$ is a triple $\sigma_C=(Z_K, Z_{C\text{-}\tor}, \cP)$ where
\begin{itemize}
    \item $\cP$ is a $C$-local slicing over $C$, and

    \item $Z_C=(Z_K, Z_{C\text{-}\tor})$ is a central charge on $\cD$ over $C$,
\end{itemize}

satisfying

\begin{itemize}
    \item for any $\phi\in \mathbb{R}$ and any non-zero $E\in \cP(\phi)$, we have either 
    
    \begin{itemize}
        \item  $E_K\neq 0$ and $Z_K(E_K)\in \mathbb{R}_{>0}\cdot e^{\mathfrak{i}\pi\phi}$ (for $\phi\notin \ZZ$) or $Z_K(E_K)\in \mathbb{R}_{\geq 0}\cdot e^{\mathfrak{i}\pi\phi}$ (for $\phi\in \ZZ$), or

        \item $0\neq E\in \cD_{C\text{-}\tor}$ and $Z_{C\text{-}\tor}(E)\in \mathbb{R}_{>0}\cdot e^{\mathfrak{i}\pi\phi}$ (for $\phi\notin \ZZ$) or $Z_{C\text{-}\tor}(E)\in \mathbb{R}_{\geq 0}\cdot e^{\mathfrak{i}\pi\phi}$ (for $\phi\in \ZZ$).
    \end{itemize}

\end{itemize}

We say $\sigma_C$ is a \emph{Harder--Narasimhan (HN) structure} on $\cD$ over $C$ if it further satisfies

\begin{itemize}
    \item for any $\phi\in \mathbb{R}$ and any non-zero $E\in \cP(\phi)$, we have either

    \begin{itemize}
        \item $E_K\neq 0$ and $Z_K(E_K)\in \mathbb{R}_{>0}\cdot e^{\mathfrak{i}\pi\phi}$, or

        \item $0\neq E\in \cD_{C\text{-}\tor}$ and $Z_{C\text{-}\tor}(E)\in \mathbb{R}_{>0}\cdot e^{\mathfrak{i}\pi\phi}$.
    \end{itemize}
    
\end{itemize}
\end{definition}

In the above setting, we set $\cA_C\coloneqq\cP(0,1]$, hence $\cA_C$ is a heart of a bounded $C$-local t-structure. We also denote $\sigma_C$ by $(Z_C, \cP)$ or $(\cA_C, Z_C)$ for simplicity, where $Z_C=(Z_K, Z_{C\text{-}\tor})$ is the central charge. We say a central charge $Z_C$ on $\cA_C$ is a \emph{(weak) stability function} if $Z_K$ and $Z_{C\text{-}\tor}$ are (weak) stability functions on $\cA_K$ and $\cA_{C\text{-}\tor}$, respectively.

Given a weak HN structure $\sigma_C=(Z_K, Z_{C\text{-}\tor}, \cP)$, we can define the slope $\mu_{\sigma_C}(E)$ for any $E\in \cA_C$ by using $Z_K(E_K)$ if $E_K\neq 0$ and by using $Z_{C\text{-}\tor}(E)$ if $E_K=0$ as in \cite[Definition 13.7]{BLMNPS21}. Therefore, we can also define \emph{$\sigma_C$-(semi)stability} and phase analogously. Note that $\sigma_C$-(semi)stability here is the same as $Z_C$-(semi)stability defined in \cite[Definition 13.9]{BLMNPS21}. The maximal and minimal slopes of HN factors of an object $E\in \cA_C$ with respect to $\sigma_C$ are denoted by $\mu^+_{\sigma_C}(E)$ and $\mu^-_{\sigma_C}(E)$.

\begin{lemma}\label{lem-HN-induce}
A (weak) HN structure $\sigma_C$ on $\cD$ over $C$ induces a (weak) pre-stability condition $\sigma_c=(Z_c, \cP_c)$ on $\cD_c$ for every $c\in C$.
\end{lemma}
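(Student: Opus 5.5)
The construction follows \cite[Lemma 13.13, Proposition 13.14]{BLMNPS21}; I would go through it and check that nothing depends on $\sigma_C$ being an honest HN structure rather than a weak one.

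Fix $c\in C$. There are two cases.

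\emph{Case $c=\eta$, the generic point.} Here $\cD_c=\cD_K$ and the central charge is $Z_c\coloneqq Z_K$. The $C$-local slicing $\cP$ induces a slicing $\cP_K$ on $\cD_K$: restrict to open subsets of $C$ and pass to the colimit. Concretely, $\cP_K(\phi)$ is the essential image of $\cP(\phi)$ under $\cD\to\cD_K$. Using the exact triangle of categories $\cD_{C\text{-}\tor}\to\cD\to\cD_K$, every object of $\cD_K$ lifts to $\cD$, so HN filtrations in $\cD$ restrict to HN filtrations in $\cD_K$. The Hom-vanishing between phases is checked through the description of $\Hom$ in the Verdier quotient as a colimit over shrinking open subsets, where it holds by $C$-locality. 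The phase condition is the first bullet in the definition of a (weak) HN structure, applied to objects with $E_K\neq 0$; for the strict version we get $Z_K(E_K)\neq 0$.

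\emph{Case $c=p$, a closed point.} Take $Z_p\coloneqq Z_{C\text{-}\tor}\circ i_{p*}$ on $\KK(\cD_p)$. For the slicing, set
\[\cP_p(\phi)\coloneqq\{F\in\cD_p\colon i_{p*}F\in\cP(\phi)\}.\]
The key input is that $i_{p*}\colon\cD_p\to\cD$ is conservative and $t$-exact for the induced $t$-structures, by \cite[Theorem 5.6, 5.7]{BLMNPS21}. One then proves that the HN filtration of $i_{p*}F$ in $\cD$ descends to $\cD_p$. I would argue this as in loc. cit.:
\begin{enumerate}
\item Consider the thickening $W=\Spec\cO_{C,p}/\mathfrak m^n$. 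Objects of $\cD_{C\text{-}\tor}$ supported at $p$ carry a finite filtration, via powers of a uniformizer, with graded pieces pushed forward from $\cD_p$.
\item The HN pieces of $i_{p*}F$ are stable under the $\cO_C$-action, being canonical. The uniformizer $t$ acts on $i_{p*}F$ by zero, so it acts by zero on each HN piece.
\item An object of $\cD_{C\text{-}\tor}$ on which $t$ acts by zero lies in the image of $i_{p*}$. This is the step I expect to be the main obstacle, since the $t$-action is only a homotopy action. I would try to handle it using the description of $\cD_p$ as the base change of $\cD$ along $i_p$, from \cite[Theorem 3.17]{BLMNPS21}.
\end{enumerate}
Hom-vanishing then follows from adjunction together with the $t$-exactness of $i_p^*$ on $C$-torsion objects.

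Finally, the phase condition for $\sigma_p$ comes from the second bullet in the definition of a (weak) HN structure, applied to $i_{p*}F\in\cD_{C\text{-}\tor}$. Strict positivity in the non-weak case gives $Z_p(F)\neq 0$, so $\sigma_p$ is a pre-stability condition.
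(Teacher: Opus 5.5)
Your generic-point case is fine. So is the definition $\cP_p(\phi)=\{F\in\cD_p\colon i_{p*}F\in\cP(\phi)\}$, and so is the way you read off the phase conditions from the two bullets (strict positivity gives $Z_c\neq 0$ in the non-weak case).

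\textbf{The gap is Step~3.} This is the step that is supposed to produce HN filtrations in $\cD_p$. The claim that a $C$-torsion object $E$ with $t\cdot\mathrm{id}_E=0$ lies in the essential image of $i_{p*}$ is false in general. Lying in that image amounts to an $\cO_C/t$-module structure. The equation $t\cdot\mathrm{id}_E=0$ only supplies a null-homotopy of $t$, and its higher coherences can be obstructed once the first-order thickening of $X_p$ is non-trivial.

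Here is a counterexample. Let $X\to C$ be a family of surfaces with $X_p$ smooth, and let $P$ be a sheaf on $X_p$. Let $\kappa_P\colon P\to P[2]$ be the connecting map of the triangle $P[1]\xrightarrow{a}Li_p^*i_{p*}P\to P$; this is the obstruction to extending $P$ to the first-order thickening of $X_p$. Suppose $\kappa_P\neq 0$.
\begin{itemize}
\item Since $\Ext^4(P,P)=0$, there is $\delta\in\Hom(i_{p*}P,i_{p*}P[3])\cong\Hom(Li_p^*i_{p*}P,P[3])$ whose adjoint restricts along $a$ to $\kappa_P$.
\item Let $M$ be the cocone of $\delta$. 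Then $t\cdot\mathrm{id}_M$ factors through $i_{p*}(\kappa_P)$, which is $0$.
\item However $M\not\cong i_{p*}N$ for any $N$. For an object of that form, the connecting map of the canonical truncation triangle has adjoint vanishing on $a$, and this property is preserved under the isomorphisms of cohomology sheaves.
\end{itemize}
The same example shows that $i_{p*}$ is not faithful. Also, $i_p^*$ is only right $t$-exact, since $Li_p^*i_{p*}F$ is an extension of $F$ by $F[1]$. So your Hom-vanishing via adjunction does not go through either.

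\textbf{The fix.} Apply the base-change theorem you already quote for $\cA_C$ to every phase at once, not just to the heart.
\begin{itemize}
\item Since $\cP$ is a $C$-local slicing, each pair $(\cP(>\phi),\cP(\le\phi))$ is a $C$-local $t$-structure on $\cD$.
\item So \cite[Theorem 5.6, 5.7]{BLMNPS21} gives, for every $\phi$, a $t$-structure on $\cD_p$ for which $i_{p*}$ is $t$-exact. By conservativity its aisle is $\{F\colon i_{p*}F\in\cP(>\phi)\}$.
\item Hence $i_{p*}$ commutes with all HN truncation functors. The truncations of $F$ therefore push forward to the HN filtration of $i_{p*}F$.
\item Conservativity shows this filtration of $F$ has finitely many jumps, with factors in $\cP_p(\phi)$.
\item Hom-vanishing between phases is just orthogonality for these $t$-structures.
\end{itemize}
Your Steps 1--3, and any analysis of the $t$-action, become unnecessary. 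With this replacement your argument is complete. The paper itself gives no argument beyond citing \cite[Lemma 13.11]{BLMNPS21} and \cite[Lemma 15.6]{BLMNPS21}.
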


\begin{proof}
This is a combination of \cite[Lemma 13.11]{BLMNPS21} and \cite[Lemma 15.6]{BLMNPS21}.
\end{proof}

\subsection{Tilting property for HN structures}\label{subsec-tilting-property-HN}

The strategy in Section \ref{subsec-tilting-property} also works for weak HN structures.

\begin{definition}
Given a weak HN structure $\sigma_C=(\cA_C, Z_K, Z_{C\text{-}\tor})$ on $\cD$ over $C$, we write $\cA^{Z_C}_C\subset \cA_C$ for the subcategory of objects $E$ with $Z_C(F)=0$ for every \emph{subquotient} $F$ of $E$ in $\cA_C$.
\end{definition}

\begin{definition}\label{def-tilting-hn}
A weak HN structure $\sigma_C=(\cA_C, Z_K, Z_{C\text{-}\tor})$ has the \emph{tilting property} if

\begin{enumerate}[label=(tc\arabic*)]
    \item\label{t'1} $\cA^{Z_C}_C\subset \cA_C$ is a Noetherian torsion subcategory, and

    \item\label{t'2} for every $E\in \cA_C$ with $\mu^+_{\sigma_C}(E)<+\infty$, there exists a short exact sequence $0\to E\to \wt{E}\to E^0\to 0$ in $\cA_C$ with $E^0\in \cA^{Z_C}_C$ and $\Hom_{\cD}(\cA^{Z_C}_C, \wt{E}[1])=0$.
\end{enumerate}

\end{definition}

Similarly, starting with a weak HN structure $\sigma_C=(\cA_C, Z_K, Z_{C\text{-}\tor})$, we can define a torsion pair $(\cT^{b}_C,\cF^{b}_C)$ as
\[\cT^{b}_C\coloneqq\langle E\in \cA_C\colon E \text{ is } \sigma_{C}\text{-semistable with }\mu_{\sigma_C}(E)>b \rangle\]
and
\[\cF^{b}_C\coloneqq\langle E\in \cA_C \colon E \text{ is } \sigma_{C}\text{-semistable with }\mu_{\sigma_C}(E)\leq b \rangle.\]
Then we get the tilted heart $\cA^{b}_C\coloneqq\langle \cF^{b}_C[1],\cT^{b}_C \rangle$. We also define 
\[Z^b_C\coloneqq (Z^{b}_K, Z^{b}_{C\text{-}\tor}),\]
where $$Z^{b}_K\coloneqq \Im Z_K+\mathfrak{i}(-\Re Z_K-b\Im Z_K)$$ 
and 
$$Z^{b}_{C\text{-}\tor}\coloneqq\Im Z_{C\text{-}\tor}+\mathfrak{i}(-\Re Z_{C\text{-}\tor}-b\Im Z_{C\text{-}\tor}).$$

\begin{lemma}[{\cite[Proposition 19.5]{BLMNPS21}}]\label{lem-tilt-weak-hn}
Let $\sigma_C=(\cA_C, Z_K, Z_{C\text{-}\tor})$ be a weak HN structure on $\cD$ over $C$. If it has the tilting property, and the induced $\sigma_c$ has the tilting property for each $c\in C$, then for any $b\in \RR$, $\sigma^{b}_C=(\cA^{b}_C, Z^{b}_C)$ is a weak HN structure on $\cD$ over $C$ and $(\cA^{b}_C)^{Z^{b}_C}\subset \cA^{b}_C$ is a Noetherian torsion subcategory.
\end{lemma}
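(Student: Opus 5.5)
The statement is the relative analogue of Lemma \ref{lem-tilt-weak-stab}, and the plan is to transport that absolute argument to $C$. It has four steps: show that $(\cT^b_C,\cF^b_C)$ is a torsion pair in $\cA_C$; show that $\cA^b_C$ is the heart of a $C$-local bounded t-structure; verify that $Z^b_C$ is a weak central charge over $C$ with the HN property on $\cA^b_C$; and identify $(\cA^b_C)^{Z^b_C}$ with $\cA^{Z_C}_C$ to get the Noetherian torsion statement.

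\textbf{Steps 1 and 2: the torsion pair and the heart.} The torsion pair comes from the HN property of $\sigma_C$ on $\cA_C$. For each $E\in\cA_C$, split its HN filtration at slope $b$. The Hom-vanishing $\Hom(\cT^b_C,\cF^b_C)=0$ follows from semistability of the factors. Tilting then gives a bounded t-structure with heart $\cA^b_C$. For $C$-locality, use the fact that restriction to a quasi-compact open $U\subset C$ is compatible with the HN filtrations of $\sigma_C$. This compatibility holds because $C$-local slicings restrict (\cite[Section 13]{BLMNPS21}). Consequently the torsion pair restricts, so the tilt of $\cA_U$ at $b$ is the restriction of $\cA^b_C$.

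\textbf{Step 3: the central charge and HN filtrations.} First, $Z^b_C$ is a central charge over $C$: the compatibility relation between $Z^b_K$ and $Z^b_{C\text{-}\tor}$ is linear, so it is inherited from that of $Z_C$. Next, $Z^b_C$ is a weak stability function on $\cA^b_C$. This is checked on generators, exactly as in the absolute case: objects of $\cT^b_C$ have $\Im Z^b\ge 0$, and objects of $\cF^b_C[1]$ have $\Im Z^b\ge0$ with $\Re Z^b\le 0$ when $\Im Z^b=0$. The check is done separately on the generic fiber via $Z_K$ and on $C$-torsion objects via $Z_{C\text{-}\tor}$.

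The main obstacle is the HN property for $Z^b_C$ on $\cA^b_C$. I would follow \cite[Proposition 14.16]{BLMNPS21} and use the criterion that rules out infinite chains of subobjects with increasing slope and infinite chains of quotients with decreasing slope. The argument splits into two parts. On the generic fiber, and on each fiber $\cD_c$, the HN property follows from the tilting property of $\sigma_c$ via Lemma \ref{lem-tilt-weak-stab}. To lift this to $C$, I would use the $C$-torsion theory, which exists by Remark \ref{rmk:noether-C-torsion-pair} or Lemma \ref{lem:17.1}. This reduces HN filtrations to two cases: $C$-torsion objects, which are supported over finitely many closed points where the fiberwise statement applies, and $C$-torsion-free objects, whose HN filtration is determined by the generic fiber. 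For the latter, I would take the saturation of the generic-fiber filtration inside $\cA^b_C$. Potential infinite chains whose successive quotients lie in $\cA^{Z_C}_C$ are excluded by the argument of Lemma \ref{lem:tilt-property-no-infty-seq}, using \ref{t'2} and the Noetherianity in \ref{t'1}.

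\textbf{Step 4: the Noetherian torsion subcategory.} If $E\in\cA^b_C$ has $Z^b_C(F)=0$ for every subquotient $F$, then $\cH^{-1}_{\cA_C}(E)\in\cF^b_C$ must vanish, since a nonzero object of $\cF^b_C$ has nonzero $\Im Z^b$ or is excluded by \ref{t'2}. Hence $E\in\cT^b_C$ with $Z_C=0$, which gives $(\cA^b_C)^{Z^b_C}=\cA^{Z_C}_C$. This category is Noetherian by \ref{t'1}. It is a torsion class in $\cA^b_C$ because $\Hom(\cA^{Z_C}_C,\wt{E}[1])=0$ from \ref{t'2} provides the torsion-free parts.
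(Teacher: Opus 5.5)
Your Steps 1--2 and the identification $(\cA^b_C)^{Z^b_C}=\cA^{Z_C}_C$ in Step 4 are essentially fine. Steps 1--2 are even immediate: if $\phi_b$ is the phase of slope $b$, then $\cT^b_C=\cP(\phi_b,1]$ and $\cF^b_C=\cP(0,\phi_b]$, so $\cA^b_C=\cP(\phi_b,\phi_b+1]$ is $C$-local because $\cP$ is. The gap is the HN property in Step 3.

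\textbf{The $C$-torsion theory is not available.} Remark~\ref{rmk:noether-C-torsion-pair} needs a Noetherian heart. That is not assumed for $\cA_C$, and it is not expected for $\cA^b_C$ when $b$ is irrational. Lemma~\ref{lem:17.1} needs universal openness of flatness, a property of geometric families that an abstract weak HN structure does not carry. In this paper that input has to be proved separately and geometrically (Proposition~\ref{prop-rotate-slope-real-b}(b), Corollary~\ref{cor:C-torsion-theory-real-b}) before \cite[Theorem 18.7]{BLMNPS21} can be used.

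\textbf{Generic-fiber saturation does not give HN filtrations.} Even granting a $C$-torsion theory, the HN filtration of a $C$-torsion-free object is not determined by the generic fiber. Semistability for $Z^b_C$ over $C$ also tests $C$-torsion quotients against $Z^b_{C\text{-}\tor}$. So the saturation of a $Z^b_K$-semistable generic factor can be destabilized by a $C$-torsion quotient; a vector bundle with semistable generic fiber and unstable special fiber is the model case. The genuine filtration interleaves $C$-torsion factors, and producing it is a Langton-type problem. That is what the generic-plus-fiberwise criteria of \cite{BLMNPS21} do (\cite[Corollary 16.5, Theorem 18.7]{BLMNPS21}, as used in Proposition~\ref{prop-slope-family} and Theorem~\ref{thm:tilt-HN-structure}). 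They require extra hypotheses such as generic openness of semistability and openness of flatness.

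\textbf{Other gaps in Step 3.} $C$-torsion objects come from possibly non-reduced $\cD_W$, so HN filtrations for $Z^b_{C\text{-}\tor}$ do not follow from statements on the fibers $\cD_p$. Also, Lemma~\ref{lem-tilt-weak-stab} is stated for weak stability conditions, whereas Lemma~\ref{lem-HN-induce} only gives weak pre-stability conditions on the fibers.

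\textbf{What you should use instead.} You are not using that $\sigma_C$ is already a weak HN structure. Its $C$-local slicing $\cP$ gives $Z_C$-HN filtrations over $C$ of every object, with the Langton-type lattices and the $C$-torsion factors already built in. Moreover, $Z^b_C$ is $Z_C$ followed by the orientation-preserving linear map $x+\mathfrak{i}y\mapsto y+\mathfrak{i}(-x-by)$, which sends phase $\phi_b$ to $0$ and phase $1$ to $\tfrac12$. So $\cP$ already organizes the new filtrations, up to the treatment of zero-charge objects. The objects of $\cA^{Z_C}_C\subset\cP(1)$ must now have phase $1$ rather than $\tfrac12$. Hence one has to split off maximal subobjects lying in $\langle\cP(\phi_b+1),\cA^{Z_C}_C\rangle$, and control maps from $\cA^{Z_C}_C$ into the remaining pieces, for example into $\cF^b_C[1]$.

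This is where the tilting hypotheses enter, through the chain condition of Lemma~\ref{lem:tilt-property-no-infty-seq}, as in the absolute Lemma~\ref{lem-tilt-weak-stab}. The same chain condition, not just $\Hom(\cA^{Z_C}_C,\wt{E}[1])=0$, is what is needed to make $\cA^{Z_C}_C$ a torsion class in $\cA^b_C$ in your Step 4. The paper gives no argument of its own here; it simply cites \cite[Proposition 19.5]{BLMNPS21}.
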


\subsection{Flat families of fiberwise stability conditions}

Now, we can define the notion of flat families of fiberwise stability conditions. We fix a morphism $f\colon X\to S$ and a semi-orthogonal component $\cD\subset \Db(X)$ satisfying Assumption \ref{assum-stab}.

\begin{definition}\label{def-flat-collection}
A \emph{flat family of fiberwise stability conditions} on $\cD$ over $S$ is a collection of numerical stability conditions $\underline{\sigma}=(\sigma_s=(Z_s, \cP_s))_{s\in S}$ on $\cD_s$ for every point $s\in S$ such that:

\begin{enumerate}[label=(c\arabic*)]
    \item\label{c1} $\underline{\sigma}$ \emph{universally has locally constant central charges}, i.e.~for every morphism $T\to S$ and every $T$-perfect object $E\in \Dqc(X_T)$ such that $E_t\in \cD_t$ for all $t\in T$, the function $T\to \CC$ given by $t\mapsto Z_t(E_t)$ is locally constant.

    \item\label{c2} $\underline{\sigma}$ \emph{universally satisfies openness of geometric stability}, i.e.~for every morphism $T\to S$ and every $T$-perfect object $E\in \Dqc(X_T)$, the set 
    \[\{t\in T \colon E_t\in \cD_t \text{ and is geometrically }\sigma_t\text{-stable}\}\]
    is open in $T$. 

    \item\label{c3} For any morphism $C\to S$ which is essentially of finite type from a Dedekind scheme $C$, the stability condition $\sigma_c$ for any $c\in C$ is induced by an HN structure $\sigma_C$ on $\cD_C$ over $C$ in the sense of Lemma \ref{lem-HN-induce}.
\end{enumerate}
\end{definition}

\begin{remark}
By \cite[Lemma 20.3]{BLMNPS21}, to check \ref{c1} and \ref{c2}, we only need to consider finite type morphisms $T\to S$ from an affine scheme $T$.
\end{remark}

\begin{definition}\label{def-flat-collection-weak}
A \emph{flat family of fiberwise weak stability conditions} on $\cD$ over $S$ is a collection of numerical weak stability conditions $\underline{\sigma}=(\sigma_s=(Z_s, \cP_s))_{s\in S}$ on $\cD_s$ for every point $s\in S$ such that $\underline{\sigma}$ satisfies \ref{c1} and \ref{c2} with the following additional assumptions:

\begin{enumerate}[label=(w\arabic*)]
    \item\label{wc1} For each $s\in S$, the central charge $Z_s$ is defined over $\QQ[\mathfrak{i}]$ and $\cA^{Z}_s\subset \cA_s$ is a Noetherian torsion subcategory.

    \item\label{wc2} For any morphism $T\to S$ essentially of finite type with $T$ integral and any $T$-perfect $E\in \cD_T$ whose generic fiber $E_{K(T)}$ is $\sigma_{K(T)}$-semistable, there exists a nonempty open subset $U\subset T$ such that $E_t$ is $\sigma_t$-semistable for all $t\in U$.

    \item\label{wc3} For any morphism $C\to S$ which is essentially of finite type from a Dedekind scheme $C$, the weak stability condition $\sigma_c$ for any $c\in C$ is induced by a weak HN structure $\sigma_C=(\cA_C, Z_C)$ on $\cD_C$ over $C$ in the sense of Lemma \ref{lem-HN-induce} such that $\cA^{Z_C}_C\subset \cA_C$ is a Noetherian torsion subcategory.
\end{enumerate}
\end{definition}

By the universal property of $\Knum(\cD/S)$, for any flat family of fiberwise weak stability conditions, there exists a central charge $Z\colon \Knum(\cD/S)\to \CC$ such that for any $s\in S$, the central charge $Z_s$ factors as $Z_s\colon \Knum(\cD_s)\to \Knum(\cD/S)\xra{Z} \CC$.

We will always restrict our attention to flat families of fiberwise weak stability conditions where $Z\colon \Knum(\cD/S)\to \CC$ factors via a group homomorphism $\bv\colon \Knum(\cD/S)\to \Lambda$ to a finite rank lattice $\Lambda$, which is called a \emph{relative Mukai homomorphism}.

\begin{definition}
Given a flat family of fiberwise weak stability conditions $\underline{\sigma}$, for which $Z$ factors via a relative Mukai homomorphism $\bv$, we let $\Lambda^{Z}$ be the saturated subgroup of $\Lambda$ generated by $\bv([E_t])$ for all $E_t\in \cA^{Z}_t$ and all points $t$ over $S$. We write $\overline{\bv}$ for the composition of $\bv$ with the quotient map $\Lambda\twoheadrightarrow \Lambda/\Lambda^{Z}$.
\end{definition}

To check openness properties, we need the following lemma.

\begin{lemma}[{\cite[Proposition 20.8]{BLMNPS21}}]\label{lem:openness-flat}
Let $\underline{\sigma}=(\sigma_s=(Z_s, \cP_s))_{s\in S}$ be a collection of numerical weak stability conditions on $\cD_s$ for every point $s\in S$. Assume that 

\begin{enumerate}
    \item for any field $t\colon \Spec(\kk_1)\to S$, the induced pair $(Z_t, \cA_t)$ is a weak stability condition and the pullback $\cA_s\to \cA_t$ preserves semistability, and

    \item $\underline{\sigma}$ satisfies \ref{c1}, \ref{wc2}, \ref{wc3}, and $\cA^Z_s\subset \cA_s$ is a Noetherian torsion subcategory for each $s\in S$.
\end{enumerate}

Then for any morphism $T\to S$ of finite type and any $T$-perfect object $E$ on $X_T$, the functions $$\phi^+_E\colon T\to \RR\cup\{-\infty\},\quad t\mapsto \phi^+_{\sigma_t}(E_t)$$ and $$\phi^-_E\colon T\to \RR\cup\{+\infty\},\quad t\mapsto \phi^-_{\sigma_t}(E_t)$$ are, respectively, upper and lower semicontinuous constructible functions on $T$. Here, we set $\phi^{\pm}$ of the zero object to be $\mp\infty$ for convenience.
\end{lemma}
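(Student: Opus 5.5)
The plan is to first reduce to the case where $T$ is affine and Noetherian, and then to split the statement into two parts: constructibility of $\phi^{\pm}_E$, and the behaviour of $\phi^{\pm}_E$ under specialization. A constructible function on a Noetherian scheme is upper (resp.~lower) semicontinuous as soon as it can only jump up (resp.~down) along specializations. Moreover, every specialization $t\rightsquigarrow t_0$ in $T$ can be realized by a morphism $C\to T$ essentially of finite type from a Dedekind scheme, with the generic point mapping to $t$ and a closed point $p$ mapping to $t_0$, as in \cite[Lemma 11.19]{BLMNPS21}. By assumption (a), the phases $\phi^{\pm}_{\sigma_t}(E_t)$ do not change under field extensions, because pullback preserves semistability and hence HN filtrations. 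So it will suffice to work after such base changes. I treat only $\phi^+$; the case of $\phi^-$ is dual, using the last HN factor and quotients instead of subobjects.

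\textbf{Constructibility, by Noetherian induction on $T$.} Reduce to $T$ integral with generic point $\eta$. If $E_\eta=0$, then $E$ vanishes on an open set by Lemma \ref{lem:openness-standard-heart}(a). Otherwise I take the HN filtration of $E_\eta$ with respect to $\sigma_\eta$, which has finitely many steps, and spread it out: after shrinking $T$ to a dense open $U$, the filtration is realized by $U$-perfect objects $E^i\to E_U$ whose cones $F^i$ satisfy two properties. First, after shifting, $F^i_t\in\cA_t$ for all $t\in U$; this uses generic openness of the heart, i.e.~the same flattening argument as in Lemma \ref{lem:openness-standard-heart}. Second, $F^i_t$ is $\sigma_t$-semistable, which is exactly \ref{wc2}. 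By \ref{c1}, $Z_t(F^i_t)$ is constant on $U$. The filtrations $E^i_t\to E_t$ are then HN filtrations with the same phases as at $\eta$, so $\phi^{+}_E$ is constant on $U$. Applying induction to $T\setminus U$ gives constructibility.

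\textbf{Specialization.} Let $C\to T$ be as above, with generic fiber over $K$ and closed point $p$. I need $\phi^+_{\sigma_p}(E_p)\geq \phi^+_{\sigma_K}(E_K)$. By \ref{wc3}, the family $(\sigma_c)_{c\in C}$ is induced by a weak HN structure $\sigma_C=(\cA_C,Z_C)$ with $\cA_C^{Z_C}$ a Noetherian torsion subcategory. After shifting, we may assume $E_C$ lies in $\cP_C(\phi-1,\phi]$ for suitable $\phi$. Let $F_K\subset E_K$ be the first HN factor, of phase $\phi^+_{\sigma_K}(E_K)$. I extend $F_K$ to a $C$-torsion-free subobject $F\subset E_C$ in the heart, taking the saturation, i.e.~the kernel of $E_C\to E_K/F_K$ intersected appropriately and then killing $C$-torsion using the $C$-torsion theory from Lemma \ref{lem:17.1} (or Remark \ref{rmk:noether-C-torsion-pair}). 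By Lemma \ref{lem:6.12}, $F_p$ and $(E_C/F)_p$ lie in $\cA_p$, and the sequence $F_p\to E_p\to (E_C/F)_p$ is exact in $\cA_p$. By \ref{c1}, $Z_p(F_p)=Z_K(F_K)$, so $\phi^+_{\sigma_p}(F_p)\geq\phi_{\sigma_K}(F_K)$. Since $F_p\hookrightarrow E_p$, this gives $\phi^+_{\sigma_p}(E_p)\geq \phi^+_{\sigma_p}(F_p)\geq \phi^+_{\sigma_K}(E_K)$.

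\textbf{Expected main obstacle.} The delicate case is when $F_K$ has phase $1$ with $Z_K(F_K)=0$, so that $F_K$ has objects of $\cA^Z$ among its subquotients. Here the phase is not detected by the central charge, and one must rule out that $F_p$ lands in a lower-phase part. For this I would use that $\cA^{Z}_p$ and $\cA_C^{Z_C}$ are Noetherian torsion subcategories, in the form of Lemma \ref{lem:tilt-property-no-infty-seq}. The aim is to show that the torsion part of $F$ with respect to $\cA_C^{Z_C}$ specializes to a nonzero object of $\cA^Z_p$ inside $E_p$. That object has phase $1$, which gives $\phi^+_{\sigma_p}(E_p)=1$. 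I expect this step to require the most care, since the degenerate case $Z=0$ is exactly where a weak stability condition can differ from a genuine one.
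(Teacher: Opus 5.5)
Your architecture is the shape of \cite[Proposition 20.8]{BLMNPS21}, to which the paper simply defers: spread out the generic HN filtration using \ref{wc2} and \ref{c1}, test specializations along DVRs using \ref{wc3}, and use (a) to ignore residue field extensions. The paper only checks that rationality of $Z_s$ enters there solely through \cite[Proposition 14.20]{BLMNPS21} (replaced by (a)) and that \ref{c2} is never used.

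However, two of your steps use tools the hypotheses do not provide. In this paper these tools are in fact \emph{outputs} of the lemma: openness of flatness and the $C$-torsion theory for $\cA^b(X_C)$ are deduced from it in Proposition \ref{prop-rotate-slope-real-b}(b) and Corollary \ref{cor:C-torsion-theory-real-b}.
\begin{itemize}
\item In the constructibility step, $F^i_t\in\cA_t$ up to shift does not follow from the flattening argument of Lemma \ref{lem:openness-standard-heart}. That argument concerns only the standard heart $\Coh(X_t)$, and for an abstract heart (e.g.\ a tilt) no generic flatness is available.
\item In the DVR step, saturating $F$ requires a $C$-torsion theory on $\cA_C$. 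Lemma \ref{lem:17.1} needs openness of flatness and Remark \ref{rmk:noether-C-torsion-pair} needs $\cA_C$ Noetherian; neither is assumed, and \ref{wc3} only gives a weak HN structure with $\cA^{Z_C}_C$ Noetherian.
\item A smaller point: you cannot move an arbitrary $E_C$ into $\cP_C(\phi-1,\phi]$ by a shift.
\end{itemize}

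For $\phi^+$, the DVR step can be done without saturation.
\begin{itemize}
\item Lift the first HN factor $G_K$ of $E_K$ (phase $\phi_0$) to $G\in\cP_C(\phi_0)$, taken as the unique non-$C$-torsion factor in the $\cP_C$-HN filtration of any lift.
\item Lift a nonzero $G_K\to E_K$ to $g\colon G\to E_C$ not divisible by the uniformizer; this is possible because the Hom-module is finite over the DVR. Then $g_p\neq 0$.
\item After shifting so that $\phi_0\in(0,1]$, $\cH^{-1}_{\cA_p}(G_p)[1]\in\cP_p(1,2]$. Every quotient of $\cH^0_{\cA_p}(G_p)$ pushes forward to a quotient of $G$ in $\cA_C$, so it has phase $\geq\phi_0$. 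Hence $G_p\in\cP_p(\geq\phi_0)$ and $\phi^+(E_p)\geq\phi_0$.
\end{itemize}

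This repair does not dualize. For $\phi^-$ one needs an \emph{upper} bound on the phases of $B_p$, where $B$ lifts the last HN factor. But $B_p$ can acquire subobjects in $\cA^Z_p$ (phase $1$; the phenomenon behind \cite[Lemma 15.7]{BLMNPS21}) and also shifted $C$-torsion of $B$. Your proposal gives no argument for this, and it is the genuinely weak-specific difficulty.

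By contrast, the case you flagged as the main obstacle is harmless. A nonzero object of $\cA_p$ with $Z_p=0$ lies in $\cP_p(1)$ automatically, since the central charges of its HN factors lie in $\{\Im z>0\}\cup\RR_{\leq 0}$ and sum to zero.

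Finally, once both inequalities hold along DVRs, they repair your constructibility step. The spread-out factors $F^i$ are semistable on an open $U$ by \ref{wc2}, and every point of $U$ specializes from $\eta$. So $\phi^+(F^i_t)\geq\phi_i\geq\phi^-(F^i_t)$, which pins their phase to $\phi_i$.
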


\begin{proof}
The statement can be found in \cite[Proposition 20.8]{BLMNPS21} under additional assumptions \ref{c2} and the central charges $Z_s$ are defined over $\QQ[\mathfrak{i}]$, i.e. $\underline{\sigma}$ is a flat family of fiberwise weak stability conditions on $\cD$ over $S$. However, in the proof of \cite[Proposition 20.8]{BLMNPS21}, the rationality of $Z_s$ is only used to apply \cite[Proposition 14.20]{BLMNPS21}, whose statement is part of our assumption (a). Moreover, \ref{c2} is never used. Therefore, the same proof as \cite[Proposition 20.8]{BLMNPS21} works in our setting.
\end{proof}

\subsection{Stability conditions in families}

Finally, we can define stability conditions over a base scheme.

\begin{definition}
Let $f\colon X\to S$ be a flat, proper, finitely presented morphism of schemes. An $S$-perfect object $E\in \Dqc(X)$ is \emph{universally gluable} if for every $s\in S$ we have
\[\Ext^i_{X_s}(E_s, E_s)=0\]
for every $i<0$. We denote by $\Dpug(X/S)\subset \Dqc(X)$ the full subcategory of universally gluable $S$-perfect objects.
\end{definition}

From now on, we fix a morphism $f\colon X\to S$ and a semi-orthogonal component $\cD\subset \Db(X)$ satisfying Assumption \ref{assum-stab}.

\begin{definition}
We denote by
\[\cM_{\mathrm{pug}}(\cD/S)\colon (\mathrm{Sch}/S)^{\mathrm{op}}\to \mathrm{Grpd}\]
the functor whose value on $T\in (\mathrm{Sch}/S)$ consists of all $E\in \Dpug(X_T/T)$ such that $E_t\in \cD_t$ for all $t\in T$.
\end{definition}

\begin{definition}
A subfunctor $\cM \subset \cM_{\mathrm{pug}}$ is \emph{bounded} if there is a pair $(B,\cE)$ where $B$ is a scheme of finite type over $S$ and $\cE\in \cM(B)$ is an object such that for every geometric point $\bar{s}$ over $S$ and $E\in \cM(\kappa(\bar{s}))$, there exists a $\kappa(\bar{s})$-rational point $b$ of $B\times_S \Spec(\kappa(\bar{s}))$ such that $\cE_b\cong E$.
\end{definition}

\begin{definition}
Let $\underline{\sigma}=(\sigma_s)_{s\in S}$ be a flat family of fiberwise weak stability conditions on $\cD$ over $S$ with respect to a relative Mukai homomorphism $\bv\colon \Knum(\cD/S)\to \Lambda$. Fix a vector $v\in \Lambda$ and $\psi\in \mathbb{R}$ such that $\phi(Z(v))=\psi$. We denote by 
\[\cM^{\st}_{\underline{\sigma}}(v)\colon (\mathrm{Sch}/S)^{\mathrm{op}}\to \mathrm{Grpd}\]
the functor whose value on $T\in (\mathrm{Sch}/S)$ consists of all $T$-perfect objects $E\in \Dpug(X_T/T)$ such that for all $t\in T$, we have $E_t\in \cD_t$, $E_t$ is geometrically $\sigma_t$-stable of phase $\psi$, and $\bv([E_t])=v$ in $\Lambda$.

Similarly, we define $\cM_{\underline{\sigma}}(v)$ to be the corresponding moduli functor parameterizing $\sigma_t$-semistable of phase $\psi$ and class $v$.
\end{definition}

\begin{definition}
Let $\underline{\sigma}$ be a flat family of fiberwise weak stability conditions on $\cD$ over $S$. We say $\underline{\sigma}$ \emph{satisfies boundedness (with respect to $\Lambda$)} if $\cM^{\st}_{\underline{\sigma}}(v)$ is bounded for every $v\in \Lambda$.
\end{definition}

\begin{definition}\label{def-stab-family}
Let $\underline{\sigma}$ be a flat family of fiberwise (weak) stability conditions on $\cD$ over $S$ with respect to a relative Mukai homomorphism $\bv\colon \Knum(\cD/S)\to \Lambda$. We say $\underline{\sigma}$ satisfies the \emph{support property with respect to $\Lambda$} if:

\begin{enumerate}[label=(b\arabic*)]
    \item\label{b1} There exists a quadratic form $Q$ on $(\Lambda/\Lambda^Z)_{\mathbb{R}}$ such that $Q|_{(\ker(Z_s)/\Lambda^{Z})_{\RR}}$ is negative definite for every $s\in S$, and for every $\sigma_s$-semistable object $E\in \cD_s$, we have $Q(\overline{\bv}(E))\geq 0$.

    \item\label{b2} $\underline{\sigma}$ satisfies boundedness with respect to $\Lambda$.
\end{enumerate}
In this case, we call $\underline{\sigma}$ a \emph{(weak) stability condition on $\cD$ over $S$ (with respect to $\Lambda$)}.
\end{definition}

\section{Tilt-stability on triangulated categories}\label{sec:general-tilt}

In this section, we describe the construction of a family of weak stability conditions from a given weak stability condition. The proofs in this section are rather technical, and we suggest skipping this section on a first reading since we will only apply the results of Theorem \ref{thm:tilt-stability} and \ref{thm:wall-chamber-abstract} in later sections.

We fix a triangulated category $\cD$ and a group homomorphism $\bv\colon \KK(\cD)\to \Lambda$, where \(\Lambda\) is a finite rank lattice. Let \(\sigma=(\mathcal{A}), Z\) be a weak pre-stability condition on $\cD$ with respect to \(\bv\). Denote by \(\mu(-)\) the slope function associated with $\sigma$. Throughout this section, we denote by $\cH^i(E)\in \cA$ the $i$-th cohomology object with respect to the heart $\cA$.

Recall that $$\mathcal{A}^{Z} =\{E \in \mathcal{A} \colon Z(E)=0\}$$ is the full subcategory of \(\mathcal{A}\) consisting of objects with zero central charge. Also recall that for \(0 \neq E \in \mathcal{A}\), we denote by $\HN^-_{\sigma}(E)$ and $\HN^+_{\sigma}(E)$ the Harder--Narasimhan factors of \(E\) of minimal slope $\mu^-(E)$ and maximal slope $\mu^+(E)$, respectively.

For any \(b \in \mathbb{R}\), we have the following full subcategories of \(\mathcal{A}\):
\begin{align*}
	&\mathcal{T}^{b}=\{ E\in \mathcal{A} \colon \mu^-(E)>b \}\cup\{0\}, \\
	&\mathcal{F}^{b}=\{ E\in \mathcal{A} \colon \mu^+(E) \leqslant b \}\cup\{0\}.
\end{align*} 
As discussed in Section \ref{subsec-tilting-property}, we have the heart of a bounded t-structure on \(\mathcal{D}\) given by 
\[ 
    \mathcal{A}^{b}= \langle\mathcal{F}^{b}[1],\mathcal{T}^{b}\rangle.
\]

In the rest of this section, we fix \(W \in \mathrm{Hom}_{\mathbb{Z}}(\Lambda,\mathbb{R})\) such that

\begin{itemize}
    \item \(W(E) \leqslant 0\) for any \(E \in \mathcal{A}^{Z}\),

    \item \(W(E)<0\) for some \(E \in \mathcal{A}^{Z}\), and

    \item the group homomorphisms \(\Im Z,\Re Z,W \in \mathrm{Hom}_{\mathbb{Z}}(\Lambda,\mathbb{R})\) are \(\mathbb{R}\)-linearly independent.
\end{itemize}

For any \(b,w \in \mathbb{R}\), we define
\begin{equation*}
Z^{b,w}\coloneqq W+w\Im Z+\mathfrak{i}(-\Re Z-b\Im Z) \colon \Lambda \to \mathbb{C}.
\end{equation*}
Let $\Phi_{W} \colon \mathbb{R} \to [-\infty,+\infty]$ be a function defined by
\[\Phi_{W}(x) \coloneqq \limsup_{t\to x} \left\{-\frac{W(E)}{\Im Z(E)} \colon E\text{ is }\sigma\text{-semistable and }\mu(E)=t\right\}.\]
Here, we define $\sup$ of an empty set as $-\infty$. Thus, $\Phi_{W}(x)$ is well-defined and is upper semicontinuous near the points with finite values.

We set
\[\Lambda_W\coloneqq\langle \bv(E) \in \Lambda \colon E \in \mathcal{A}^{Z}, W(E)=0\rangle.\]

The main result of this section is the following.

\begin{theorem}\label{thm:tilt-stability}
Assume that $\sigma=(\cA, Z)$ is a weak stability condition defined over $\QQ[\mathfrak{i}]$\footnote{This means $Z$ factors through $\QQ\oplus \QQ \mathfrak{i}\hookrightarrow \CC$.} that has the tilting property. If
\[(\Lambda_W)_{\RR}=(\ker \Im Z)_{\mathbb{R}} \cap (\ker \Re Z)_{\mathbb{R}} \cap (\ker W)_{\mathbb{R}},\]
then 
\[U_W\coloneqq\{(b, w)\in \RR^2\colon w>\Phi_W(b)\}\to \Stab^{\mathsf{w}}_{\Lambda}(\cD),\quad (b,w)\mapsto \sigma^{b,w}=(\cA^{b}, Z^{b,w})\]
defines a continuous family of weak stability conditions on $\cD$.
\end{theorem}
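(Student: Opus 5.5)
The plan is to reduce to the first tilt. By Lemma \ref{lem-tilt-weak-stab}, for each $b$ the pair $\sigma^b=(\cA^b,Z^b)$ is already a weak stability condition. Moreover, $(\cA^b)^{Z^b}=\cA^Z$ is a Noetherian torsion subcategory, and the support property holds with the quadratic form $Q$ of $\sigma$. We view $Z^{b,w}=Z^b+(W+w\Im Z)$ as a deformation of $Z^b$ by a real-valued term added to the \emph{real} part. The imaginary part $-\Re Z-b\Im Z$ is unchanged, so the heart $\cA^b$ stays fixed. We must show three things for $w>\Phi_W(b)$: $Z^{b,w}$ is a weak stability function on $\cA^b$, it has HN filtrations, and it has the support property; finally we check continuity in $(b,w)$.

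\textbf{Step 1: weak stability function.} Take $E\in\cA^b$ with $\Im Z^{b,w}(E)=0$, i.e.\ $\Im Z^b(E)=0$. Filtering $E$ via $\cH^{-1}$ and $\cH^0$ and the $\sigma$-HN filtrations, such $E$ is an extension of three kinds of pieces. The first are objects of $\cA^Z$ in degree $0$, where $\Re Z^{b,w}=W\le 0$. The second are shifts $F[1]$ with $F$ $\sigma$-semistable of slope exactly $b$ and $\Im Z(F)>0$. The third are objects with $Z=0$ in degree $-1$, which are excluded from $\cF^b$ by the tilting property. For the second kind, $\Re Z^{b,w}(F[1])=-(W(F)+w\Im Z(F))<0$ precisely because $w>-W(F)/\Im Z(F)$. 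Here we use $w>\Phi_W(b)\ge -W(F)/\Im Z(F)$; a little care is needed since $\Phi_W$ is a limsup over punctured neighbourhoods, so I would first show that $\Phi_W(b)$ dominates the value at $b$ itself via upper semicontinuity, arguing with small deformations $b\pm\epsilon$.

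\textbf{Step 2: HN property.} I would follow the standard argument of \cite[Prop.~14.16]{BLMNPS21}/Bridgeland. Since $\Im Z^{b,w}$ is discrete (rationality over $\QQ[\mathfrak{i}]$), it suffices to rule out infinite chains of quotients with decreasing slope and infinite chains of subobjects with increasing slope. The imaginary part bounds these chains up to pieces with $\Im Z^{b,w}=0$. Those pieces are controlled by the Noetherianity of $\cA^Z$, Lemma \ref{lem:tilt-property-no-infty-seq}, and Noetherianity of $\cF^b[1]\cap\ker\Im Z^b$ coming from the finite-length Lemma \ref{lem:supp-property-finite-length}.

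\textbf{Step 3: support property (main obstacle).} I expect this to be the hardest step. The approach is to use $Q_K(v)=Q(v)+K\cdot(\text{quadratic term})$ of Bayer--Macr\`i--Stellari type. First, on a compact region $\{(b',w')\}$ near $(b,w)$, the inequality $w'>\Phi_W(b')$ gives a uniform linear inequality $W(E)+w_0\Im Z(E)\ge c\,|\Re Z(E)+b_0\Im Z(E)|$ type bound for $\sigma$-semistable $E$ of slope near $b$. Combining this with $Q$ yields a quadratic form $Q'$ that is negative definite on $\ker Z^{b,w}$. Negative definiteness on $(\ker Z^{b,w}/\Lambda^{Z^{b,w}})_\RR$ uses the hypothesis $(\Lambda_W)_\RR=\ker\Im Z\cap\ker\Re Z\cap\ker W$: it identifies $\Lambda^{Z^{b,w}}$ with $\Lambda_W$, so the kernel modulo $\Lambda_W$ meets $\ker Z$ only trivially. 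Second, nonnegativity of $Q'$ on $\sigma^{b,w}$-semistable objects would be proven by wall-crossing induction as in \cite[Thm.~3.5]{bms:abelian-threefolds}. At the large-$w$ limit, semistable objects are $\sigma^b$-semistable up to $\cA^Z$ pieces, where $Q\ge0$. Decreasing $w$ across finitely many walls — finite locally thanks to Lemma \ref{lem:supp-property-finite-length} — the JH factors at a wall satisfy $Q'\ge0$ by induction on $|\Im Z^b|$, and $Q'\ge0$ is preserved under extensions of same-phase classes because $Q'$ is negative semi-definite on the kernel.

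\textbf{Step 4: continuity.} The map to $\Stab^{\mathsf w}_\Lambda(\cD)$ is continuous by a Bridgeland deformation argument: with the uniform support property on compact subsets of $U_W$, small changes of $Z^{b,w}$ with $b$ fixed keep the heart, and varying $b$ changes slicings by a bounded amount controlled by $\sigma$-slopes.
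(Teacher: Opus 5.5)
\textbf{The gap: HN filtrations for irrational $b$.} Your Step 2 assumes that $\Im Z^{b,w}$ has discrete image because $Z$ is defined over $\QQ[\mathfrak{i}]$. But $\Im Z^{b,w}=-\Re Z-b\,\Im Z$. Since $\Re Z$ and $\Im Z$ take values in $\frac{1}{N}\ZZ$, this is discrete only when $b\in\QQ$; for irrational $b$ its values on $\cA^b$ are in general not discrete. So for $b\notin\QQ$ you have neither Noetherianity of $\cA^b$ (Corollary \ref{cor:Noetherian} needs exactly this discreteness as hypothesis) nor the chain conditions that yield HN filtrations. Your argument therefore only produces $\sigma^{b,w}$ for rational $b$, where it is essentially the paper's argument via Corollary \ref{cor:tilt-noetherian}.

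The irrational case is the bulk of the paper's proof. Around a rational point $(b,w)$ it takes a nearby $(b',w')$ with $b'\notin\QQ$. It defines $\cQ(\psi)$ as the $Z^{b',w'}$-semistable objects of phase $\psi$ in thin subcategories of $\cP_{b,w}$, proves that $\cQ$ is a slicing, and shows $\cQ(0,1]=\cA^{b'}$ (Lemmas \ref{lem:deformation-1}--\ref{lem:Q-slicing} and the comparison lemma after them). You cannot replace this by quoting Bridgeland's deformation theorem, as your Step 4 does. Since $\sigma^{b,w}$ is only a weak stability condition, $\cP_{b,w}(I)$ need not have finite length when $I$ contains an integer. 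Lemma \ref{lem:deformation-bms} is needed to cut such objects into pieces where Bridgeland's finite-length arguments apply. So the deformation step is what gives existence of HN filtrations for irrational $b$, not merely continuity.

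\textbf{Step 3 also does not go through as written.} Under the hypothesis, $(\Lambda^Z)_\RR$ contains $(\Lambda_W)_\RR$ and a class with $W<0$. Hence $(\Lambda^Z)_\RR=(\ker\Re Z)_\RR\cap(\ker\Im Z)_\RR$, and the original form $Q$ lives on a $2$-dimensional space on which $Z$ is injective, so it carries no information. All the content is in the $W$-direction. There, $w>\Phi_W(b)$ only controls $\sigma$-semistable objects of slope near $b$, whereas $\nu_{b,w}$-semistable objects can have $\sigma$-HN factors of arbitrary slope, where $\Phi_W$ may even be $+\infty$. Your wall-crossing induction has two further problems. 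It is circular, since local finiteness of walls via Lemma \ref{lem:supp-property-finite-length} presupposes the support property of $\sigma^{b,w}$ itself. And for $b\notin\QQ$ the induction on $|\Im Z^{b}|$ is not well-founded.

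The paper avoids any quadratic form. By the hypothesis, $(\Lambda/\Lambda_W)_\RR\cong\RR^3$ via $(\Im Z,\Re Z,W)$, so it suffices to bound $|\Im Z(E)|$ by a multiple of $|Z^{b,w}(E)|$. Since $|Z^{b,w}(E)/\Im Z(E)|=d((b,w),\Pi_W(E))$, Lemma \ref{lem:support-property} shows this distance is at least $d(R^{b,w},S_W)>0$. It does so by comparing $E$ with its quotient $\HN^-_\sigma(\cH^0(E))$ or its subobject $\HN^+_\sigma(\cH^{-1}(E))[1]$ in $\cA^b$. This holds for all $(b,w)\in U_W$ at once, with a constant continuous in $(b,w)$, which is exactly what the deformation step needs.

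A minor point: the paper's $\limsup$ is taken over unpunctured neighbourhoods, so the extra care in your Step 1 is unnecessary.
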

Moreover, we have a wall-chamber structure in this case, as described in Theorem \ref{thm:wall-chamber-abstract}. 

For simplicity, we denote the associated slope function by $\nu_{b,w}(-)$, and $\sigma^{b,w}$-(semi)stable objects are also called \emph{$\nu_{b,w}$-(semi)stable objects}.

The rest of this section is devoted to proving Theorem \ref{thm:tilt-stability} and Theorem \ref{thm:wall-chamber-abstract}. We start with two easy lemmas.

\begin{lemma}\label{lem:Tb-Az}
If $T\in \cT^b$ with $\Re Z(T)+b\Im Z(T)=0$, then $T\in \cA^Z$.
\end{lemma}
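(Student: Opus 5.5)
The plan is to pass to the Harder--Narasimhan filtration of $T$ with respect to $\sigma$ and argue factor by factor. Set $f(-)\coloneqq \Re Z(-)+b\Im Z(-)$; this is additive on $\cA$. Since $T\in\cT^b$, every HN factor $G_i$ of $T$ satisfies $\mu(G_i)\geq\mu^-(T)>b$. We will show that $f(G_i)\leq 0$ for every $i$, with equality only when $Z(G_i)=0$. The hypothesis $f(T)=\sum_i f(G_i)=0$ then forces $f(G_i)=0$ for all $i$, hence $Z(G_i)=0$ for all $i$, hence $Z(T)=0$ and $T\in\cA^Z$.

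Take a nonzero $\sigma$-semistable $G$ with $\mu(G)>b$.

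\begin{itemize}
\item If $\Im Z(G)>0$, then $\mu(G)=-\Re Z(G)/\Im Z(G)>b$. This gives $\Re Z(G)+b\Im Z(G)<0$, i.e.\ $f(G)<0$, which is a strict inequality.
\item If $\Im Z(G)=0$, then $\mu(G)=+\infty$ and $\Re Z(G)\leq 0$, because $Z$ is a weak stability function on $\cA$. So $f(G)=\Re Z(G)\leq 0$, with equality exactly when $Z(G)=0$.
\end{itemize}

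Combining the cases, $f(G_i)\leq 0$ for each factor, and $f(G_i)=0$ forces $\Im Z(G_i)=0=\Re Z(G_i)$.

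Finally, we have $Z(T)=0$ and $T\in\cA$, which is exactly membership in $\cA^Z=\{E\in\cA\colon Z(E)=0\}$. If the HN factors are taken in $\cA$ (via the equivalence of weak pre-stability conditions with hearts plus a weak stability function satisfying the HN property), no further subtlety arises.

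There is no real obstacle here. The only point that needs care is the infinite-slope factor, where one must use $\Re Z\leq 0$ from the weak stability function axiom rather than a slope inequality.
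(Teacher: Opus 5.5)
Your proof is correct and is essentially the paper's argument. The paper splits on the sign of $\Im Z(T)$ and, when $\Im Z(T)>0$, derives the contradiction $b=\mu(T)\geq\mu^-(T)$. The inequality $\mu(T)\geq\mu^-(T)$ used there is exactly what your factor-by-factor computation of $\Re Z+b\Im Z$ establishes, and your use of $\Re Z\leq 0$ for the infinite-slope factors makes explicit what the paper leaves implicit.
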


\begin{proof}
Since $T\in \cT^b$, we know that $T\in \cA$ and $\Im Z(T)\geq 0$. However, if $\Im Z(T)>0$, then $b=\mu(T)\geq \mu^-(T)$ and $T \notin \cT^b$, a contradiction. Hence $\Im Z(T)=\Re Z(T)=0$, i.e., $T\in \cA^Z$.
\end{proof}

\begin{lemma}\label{imaginarypart}
    For any non-zero object \(E \in \mathcal{A}^{b}\), we have \(\Re Z(E)+b\Im Z(E) \leqslant 0\). The equality holds if and only if there exists a short exact sequence in \(\mathcal{A}^{b}\):
    \[
        0 \to F[1] \to E \to T \to 0,
    \]where \(T \in \mathcal{A}^{Z}\) and \(F \in \mathcal{A}\) is either a \(\sigma\)-semistable object with slope \(\mu(F)=b\) or \(F=0\). 
\end{lemma}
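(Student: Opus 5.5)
Every $E\in\cA^b$ sits in a short exact sequence $0\to F[1]\to E\to T\to 0$ in $\cA^b$ with $F=\cH^{-1}(E)\in\cF^b$ and $T=\cH^0(E)\in\cT^b$, taking cohomology with respect to $\cA$. The function $g(-)\coloneqq\Re Z(-)+b\Im Z(-)$ is additive, so
\[g(E)=g(T)-g(F),\]
and the plan is to show $g(T)\le 0$ and $g(F)\ge 0$ separately, each by reducing to HN factors with respect to $\sigma$.

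For $T\in\cT^b$, take its $\sigma$-HN filtration with factors $T_i$; all have $\mu(T_i)>b$. If $\Im Z(T_i)>0$, then $-\Re Z(T_i)/\Im Z(T_i)>b$, which gives $g(T_i)<0$. If $\Im Z(T_i)=0$, then $\Re Z(T_i)\le 0$ because $Z$ is a weak stability function, so $g(T_i)\le 0$, with equality exactly when $T_i\in\cA^Z$. Summing over the factors gives $g(T)\le 0$. Equality forces every factor to lie in $\cA^Z$, and since $\cA^Z$ is closed under extensions, $T\in\cA^Z$; alternatively one can invoke Lemma \ref{lem:Tb-Az} directly.

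For $F\in\cF^b$, every HN factor $F_j$ has $\mu(F_j)\le b<+\infty$, hence $\Im Z(F_j)>0$, and $-\Re Z(F_j)\le b\Im Z(F_j)$ gives $g(F_j)\ge 0$. Summing gives $g(F)\ge 0$, with equality iff every factor has slope exactly $b$. In that case $F$ has a single HN slope, so it is $\sigma$-semistable of slope $b$ when $F\neq 0$. This proves the inequality $g(E)\le 0$.

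For the equality case, $g(E)=0$ forces both $g(T)=0$ and $g(F)=0$, and the previous two paragraphs then give the stated description. Conversely, if $E$ is such an extension with $T\in\cA^Z$ and $F$ either zero or semistable of slope $b$, then $g(T)=0$ and $g(F)=0$, so $g(E)=0$. There is one subtle point in the converse: the sequence in the statement need not be the cohomology sequence a priori. To handle it, note that $F$ semistable of slope $b$ lies in $\cF^b$ and $T\in\cA^Z\subset\cT^b$ (the HN filtration of an object of $\cA^Z$ is trivial with slope $+\infty$). Hence $F[1]\in\cF^b[1]$ and $T\in\cT^b$, so any such sequence is the canonical torsion-pair decomposition of $E$ in $\cA^b$, and in any case additivity already gives $g(E)=0$.

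The main care needed is in the boundary bookkeeping: using the convention that objects with $\Im Z=0$ have slope $+\infty$ (so they lie in $\cT^b$), and using that $\cF^b$-objects have no factors with $\Im Z=0$. Beyond that the argument is routine.
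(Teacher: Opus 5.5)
Your proposal is correct and follows essentially the same route as the paper: you split $E$ via the torsion pair $(\cF^b[1],\cT^b)$ and show that $\Re Z+b\Im Z$ is $\ge 0$ on $F$, with equality iff $F$ is $\sigma$-semistable of slope $b$, and $\le 0$ on $T$, with equality iff $T\in\cA^Z$ (the paper gets this last equality case from Lemma~\ref{lem:Tb-Az}). Your HN-factor bookkeeping just spells out what the paper compresses into $\mu(F)\le\mu(\HN^+_{\sigma}(F))\le b$, and your handling of the converse is fine, since additivity alone gives equality.
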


\begin{proof}
    By the definition of \(\mathcal{A}^{b}\), for any non-zero object \(E \in \mathcal{A}^{b}\), there exists a short exact sequence
    \[
        0 \to F[1] \to E \to T \to 0
    \]
    in \(\mathcal{A}^{b}\), where \(T \in \mathcal{T}^{b}\) and \(F \in \mathcal{F}^{b}\). 
    
    If $F\neq 0$, then $F \in \mathcal{F}^{b}$ implies that
    \[\mu(F) \leqslant \mu(\HN^+_{\sigma}(F)) \leqslant b,\]
    which gives \( \Re Z(F)+b\Im Z(F) \geqslant 0\), and with equality if and only if \(F\) is a \(\sigma\)-semistable object with slope \(\mu(F)=b\). Similarly, if \(T \in \mathcal{T}^{b}\), then we have \(\Re Z(T)+b\Im Z(T) \leqslant 0\), with equality if and only if \(T \in \mathcal{A}^{Z}\) by Lemma \ref{lem:Tb-Az}. This completes the proof.
\end{proof}

\subsection{Large volume limit}

The following results classify semistable objects at the large volume limit.

Fix a weak stability condition $\sigma=(\cA, Z)$ on $\cD$ with respect to $\Lambda$ such that $\sigma$ is defined over $\QQ[\mathfrak{i}]$ and satisfies the tilting property. Recall that for any $b\in \RR$,
\[Z^{b}\coloneqq \Im Z+\mathfrak{i}(-\Re Z-b\Im Z) \colon \Lambda\to \CC\]
is a weak stability function on $\cA^b$. By Lemma \ref{lem-tilt-weak-stab}, we know that $\sigma^b=(\cA^b, Z^b)$ is a weak stability condition on $\cD$ with respect to $\Lambda$.

\begin{lemma}\label{lem:classify-rotate-stable-abstract}
Fix an object $E\in \cA^b$ with $Z(E)\neq 0$. Then $E$ is $\sigma^b$-semistable if and only if either

\begin{enumerate}
    \item $\Im Z(E)\geq 0$, and either

\begin{itemize}
    \item $E$ is a $\sigma$-semistable object with $\Im Z(E)>0$ and $\mu(E)>b$, or 
    
    \item $\Im Z(E)=0$ with no subobjects in $\cA^Z$;
\end{itemize}
     or

    \item $\Im Z(E)<0$ and we have a short exact sequence $$0\to A[1]\to E\to B\to 0$$ in $\cA^b$ such that $A$ is a $\sigma$-semistable object with $\Im Z(A)>0$, $\mu(A)\leq b$, and $B\in \cA^Z$, so that $\Hom_{\cD}(\cA^Z, E)=0$ when $\mu(A)<b$.
\end{enumerate}

Similarly, $E$ is $\sigma^b$-stable if and only if either

\begin{enumerate}[(1)]
    \item $\Im Z(E)\geq 0$, and

    \begin{itemize}
        \item $E$ is a $\sigma$-stable object with $\Im Z(E)>0$ and $\mu(E)>b$; or

        \item $\Im Z(E)=0$ with no subobjects in $\cA^Z$ and no proper quotient $E\twoheadrightarrow F$ in $\cA$ with $\Im Z(F)=0$ and $F\notin \cA^Z$;
        
    \end{itemize}
    or

    \item $E$ satisfies (b), the object $A$ in (b) is $\sigma$-stable with $\Hom(\cA^Z, A[1])=0$, and either $\mu(A)<b$ or $\mu(A)=b$ and $B=0$.
\end{enumerate}
\end{lemma}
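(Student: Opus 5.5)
The plan is to work inside the tilted heart $\cA^b=\langle \cF^b[1],\cT^b\rangle$ using the canonical sequence $0\to \cH^{-1}(E)[1]\to E\to \cH^0(E)\to 0$, where $\cH^{-1}(E)\in\cF^b$ and $\cH^0(E)\in\cT^b$. Under $Z^b$ we have $\Im Z^b=-\Re Z-b\Im Z\ge 0$ on $\cA^b$ (Lemma \ref{imaginarypart}), and $\Re Z^b=\Im Z$. Thus the $\sigma^b$-slope is $\nu=-\Im Z/(-\Re Z-b\Im Z)$, and objects with $\Im Z^b=0$ have slope $+\infty$. Since $Z(E)\neq 0$, the sign of $\Im Z(E)$ splits the argument into the cases (a) and (b).

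\emph{Case $\Im Z(E)\ge 0$.} First show that $\cH^{-1}(E)$ must vanish. If $\Im Z(\cH^{-1}(E))>0$, then the subobject $\cH^{-1}(E)[1]$ has $\nu>\nu(E)$, or $\Im Z^b=0$ and slope $+\infty$; either way it destabilises. If $\Im Z(\cH^{-1}(E))=0$, then $\cH^{-1}(E)$ would have slope $+\infty$ in $\cA$, which contradicts $\cH^{-1}(E)\in\cF^b$ unless it lies in $\cA^Z$; and $\cF^b\cap\cA^Z=0$. Hence $E=\cH^0(E)\in\cT^b$.

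If $\Im Z(E)>0$, a $\sigma$-destabilising quotient $E\twoheadrightarrow G$ in $\cA$ with $\mu(G)<\mu(E)$ lies in $\cT^b$, so it is a quotient in $\cA^b$. The function $x\mapsto -1/(x-b)$ is increasing for $x>b$, so comparing $\nu$ with $\mu$ gives $\nu(G)<\nu(E)$. Conversely, a $\sigma^b$-destabilising subobject can be replaced by its $\cH^0$ part, which reduces to a $\sigma$-destabilisation. Quotients in $\cA^Z$ have $\nu=+\infty$ in both theories, so they cause no trouble.

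If $\Im Z(E)=0$, then $\nu(E)=0$ and $\Im Z^b(E)>0$. A subobject $T\in\cA^Z$ would have $\nu=+\infty$, so it destabilises; this gives the condition that $E$ has no subobjects in $\cA^Z$. Conversely, every other subobject $F$ of $E$ in $\cA^b$ has $\Im Z(F)\le 0$, because its $\cT^b$ part has nonnegative $\Im Z$ bounded by that of $E$ and its $\cF^b[1]$ part contributes $\le 0$. This gives semistability.

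\emph{Case $\Im Z(E)<0$.} Set $A=\cH^{-1}(E)$ and $B=\cH^0(E)$. The quotient $B$ has $\Im Z(B)\ge 0$, hence $\nu(B)\ge 0>\nu(E)$ unless $\Im Z^b(B)=0$, which forces $B\in\cA^Z$ by Lemma \ref{lem:Tb-Az}. Therefore $B$ must lie in $\cA^Z$. Semistability of $A$ comes from the fact that $\sigma$-destabilising subobjects of $A$ of slope $\le b$ give subobjects $A'[1]\subset A[1]\subset E$. We compare slopes again, now using the increasing function $-1/(x-b)$ on $x\le b$. The remaining potential destabilisers come from $\cA^Z$ mapping into $E$; these matter exactly when $\Im Z^b(E)>0$, i.e.\ when $\mu(A)<b$. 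When $\mu(A)=b$ we have $\nu=+\infty$ for $E$ and nothing can destabilise. Conversely, given the data in (b), one checks semistability by taking $F\subset E$ and analysing $\cH^{-1}(F)\subset A$ together with the induced map $\cH^0(F)\to B$.

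The main obstacle is the stable version. In case (1) with $\Im Z=0$, a proper quotient $F$ with $\Im Z(F)=0$ and $F\notin\cA^Z$ also has $\nu=0$, so stability must exclude it; this is the stated condition. In case (2), stability forces $B=0$ when $\mu(A)=b$, since $A[1]$ is then a proper subobject of the same phase. Stability also requires $\Hom(\cA^Z,A[1])=0$: a nonzero extension class yields an object $A'$ with $A[1]\subset A'[1]\subset E$-type subobjects of equal slope. I would invoke the tilting property (t2) in the form of $\sT(-)$ to produce this equal-slope subobject, and then verify the converse by checking that subobjects of $E$ with $\cH^0$ in $\cA^Z$ are controlled by this Hom vanishing.
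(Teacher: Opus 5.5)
Your overall route is the paper's: split by the sign of $\Im Z(E)$ and read everything off the $\cA$-cohomology sequence. The paper cites \cite[Lemma 2.19]{piyaratne2019moduli} and \cite[Lemma 14.17]{BLMNPS21} for the semistable half. It proves the stable half by deciding which of (a), (b) each term of an equal-phase sequence $0\to F\to E\to G\to 0$ falls into. However, several of your steps fail as written.

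\emph{Subcase $\Im Z(E)=0$.} The claim that every subobject $F\subset E$ in $\cA^b$ has $\Im Z(F)\le 0$ is false. For example, on a smooth projective variety with an integral divisor $D$ and $\mu_H(\cO_X(-D))\le b<0$, there is a short exact sequence $0\to \cO_X\to \cO_D\to \cO_X(-D)[1]\to 0$ in $\Coh^b_H(X)$, and $\Im Z(\cO_X)>0$. Also, $F\to E$ need not be injective in $\cA$. Worse, by your own formula $\nu=-\Im Z/\Im Z^b$, the inequality $\Im Z(F)\le 0$ would give $\nu(F)\ge 0=\nu(E)$, which is the destabilising direction. The correct reason is this: $\cH^{-1}(F)\subset\cH^{-1}(E)=0$, so $F\in\cT^b\subset\cA$. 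Hence $\Im Z(F)\ge 0$ and $\nu(F)\le 0$, unless $\Im Z^b(F)=0$, which means $F\in\cA^Z$ by Lemma \ref{lem:Tb-Az}.

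\emph{Case $\Im Z(E)<0$.} Here $\nu(B)\le 0<\nu(E)$, not $\nu(B)\ge 0>\nu(E)$. As you wrote it, $B$ would not destabilise.

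\emph{Case $\Im Z(E)>0$, and the subobject of $A$ in case (b).} An arbitrary $\sigma$-destabilising quotient of $E$, or subobject of $A$, need not give a short exact sequence in $\cA^b$. Use the last (respectively first) HN factor instead. Also, ``replacing $F$ by its $\cH^0$ part'' does nothing, since $F=\cH^0(F)$ here. What works is to pass to the image $F'$ of $F\to E$ in $\cA$. The sequence $0\to F\to F'\to \cH^{-1}(E/F)[1]\to 0$ in $\cA^b$ then gives $\nu(F)\le\nu(F')\le\nu(E)$.

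\emph{The stable half has a genuine gap.} Your mechanism for $\Hom(\cA^Z,A[1])=0$ does not work. Suppose $0\neq T\to A[1]$ with $T\in\cA^Z$ corresponds to $0\to A\to A'\to T\to 0$. The rotated triangle $T\to A[1]\to A'[1]$ shows that $T$ maps \emph{into} $A[1]$, and $A'[1]$, when it lies in $\cA^b$, is a \emph{quotient} of $A[1]$. So there is no chain $A[1]\subset A'[1]\subset E$, and $\sT(-)$ plays no role. The right argument runs as follows.
\begin{itemize}
\item The image $Q$ of $T\to A[1]$ in $\cA^b$ is a nonzero quotient of $T$.
\item Since $Z^b$ is a weak stability function and $Z^b(T)=0$, we get $Z^b(Q)=0$.
\item So $Q\in(\cA^b)^{Z^b}=\cA^Z$ is a proper subobject of $E$ of phase $1$.
\item This contradicts semistability if $\mu(A)<b$, and stability if $\mu(A)=b$.
\end{itemize}

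More seriously, you never address the $\sigma$-stability of $A$, which is the heart of (2).
\begin{itemize}
\item It is necessary. If $A$ is strictly semistable, a Jordan--H\"older subobject $A'\subsetneq A$ has $A/A'$ $\sigma$-semistable of slope $\mu(A)\le b$. Then $A/A'\in\cF^b$, so $A'[1]\subset E$ with $\nu(A'[1])=\nu(E)$.
\item It drives sufficiency. For an equal-phase sequence $0\to F\to E\to G\to 0$ you must exclude $F$ of type (a); this is where $\Hom(\cA^Z,A[1])=0$ is used.
\item You must also exclude $F$ and $G$ both of type (b). The sequence $0\to\cH^{-1}(F)\to A\to\cH^{-1}(G)\to\cH^0(F)\to B$, with $\cH^0(F),B\in\cA^Z$, makes $0\to\cH^{-1}(F)\to A\to A/\cH^{-1}(F)\to 0$ a sequence of $\sigma$-semistable objects of slope $\mu(A)$. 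So $\sigma$-stability of $A$ forces $\cH^{-1}(F)\in\{0,A\}$, i.e.\ $F$ or $G$ lies in $\cA^Z$.
\end{itemize}
Your closing remark about subobjects ``controlled by this Hom vanishing'' does not supply this.

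The converse for the $\Im Z(E)=0$ stable case, and the whole $\Im Z(E)>0$ stable case, are also left implicit. Both follow once you note that an equal-phase sequence with $\Im Z(E)\ge 0$ forces $\cH^{-1}(G)=0$, so it is a short exact sequence in $\cA$.
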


\begin{proof}
The statement for semistability follows from \cite[Lemma 2.19]{piyaratne2019moduli} or \cite[Lemma 14.17]{BLMNPS21}. It remains to consider the statement for stability. Let $0\to F\to E\to G\to 0$ be any exact sequence of $\sigma^b$-semistable objects in $\cA^b$. Then they are either in case (a) or (b). 

If $E$ is in case (a), then $\cH^{-1}(F)=\cH^{-1}(E)=0$ and we have an exact sequence
\[0\to \cH^{-1}(G)\to F\to E\to \cH^0(G)\to 0.\]
When $\cH^{-1}(G)\neq 0$, $G$ is in case (b), so $\cH^{-1}(G)$ is a $\sigma$-semistable object with slope $\leq b$ and $\Im Z(\cH^{-1}(G))>0$, and $\cH^0(G)\in \cA^Z$. In particular, $\mu(G)\leq b$. In this case, both $\mu(E)$ and $\mu(F)$ are bigger than $b$, hence we cannot have $\mu_{\sigma^b}(E)=\mu_{\sigma^b}(F)=\mu_{\sigma^b}(G)$. So $\cH^{-1}(G)=0$ and we get an exact sequence $$0\to F\to E\to G\to 0$$ of objects in $\cA$. 

If $E$ is a $\sigma$-semistable object with $\Im Z(E)>0$, then $\mu_{\sigma^b}(E)$ only depends on $\mu(E)$. So from the above discussion, it is clear that $E$ is $\sigma^b$-stable if and only if it is $\sigma$-stable. If $\Im Z(E)=0$ and has no subobjects in $\cA^Z$, then $\mu_{\sigma^b}(E)=0$ does not depend on $E$. Therefore, we see that $E$ is $\sigma^b$-stable if and only if it has no quotient $F$ in $\cA$ with $\Im Z(F)=0$ and $\Re Z(F)\neq 0$.




If $E$ is in case (b), then we have an exact sequence
\[0\to \cH^{-1}(F)\to A\to \cH^{-1}(G)\to \cH^0(F)\to B\to \cH^0(G)\to 0.\]
Assume that $E$ satisfies (2). If $G$ is in case (a), then $\cH^{-1}(G)=0$ and $\cH^0(G)\in \cA^Z$. In this case, the only possibility for $\mu_{\sigma^b}(F)=\mu_{\sigma^b}(E)=\mu_{\sigma^b}(G)=+\infty$ is $\mu(A)=b$, which implies $G=\cH^0(G)=0$ by $B=0$. If $F$ is in case (a), then we have an exact sequence
\[0\to A\to \cH^{-1}(G)\to F\to B\to \cH^0(G)\to 0.\]
Since $\Hom(\cA^Z, A[1])=0$, we see that $F=0$ or $F\notin \cA^Z$. If $F\neq 0$, then $\mu(F)>b\geq \mu(G)$, so $\mu_{\sigma^b}(F)\neq \mu_{\sigma^b}(E)$. Therefore, we have $F=0$. If $F$ and $G$ are both in case (b), then the $\sigma$-stability of $A$ implies $\mu(F)< \mu(G)\leq b$, so $\mu_{\sigma^b}(F)<\mu_{\sigma^b}(G)$. Therefore, in any case, $E$ is $\sigma^b$-stable.

Finally, assume that $E$ is in case (b) and $\sigma^b$-stable. Then $\Hom(\cA^Z, A[1])=0$. Moreover, $A$ is $\sigma$-stable, otherwise there exists a subobject $A'\subset A$ with $\mu(A')=\mu(A)$, so $A'[1]$ is a subobject of $E$ and $\mu_{\sigma^b}(A')=\mu_{\sigma^b}(E)$. If $\mu(A)=b$, then $\mu_{\sigma^b}(A)=+\infty$. So $A[1]$ is a subobject of $E$ with $\mu_{\sigma^b}(A)=\mu_{\sigma^b}(E)=+\infty$. Then the $\sigma^b$-stability of $E$ gives $B=0$ as desired.
\end{proof}

\begin{lemma}\label{lem:lvl}
Let $E\in \cA^{b}$ be a $\nu_{b, w}$-semistable object for all sufficiently large $w \gg 0$. Then $E$ is $\sigma^b$-semistable and we have the following possibilities:

\begin{itemize}
    \item $\cH^{-1}(E)=0$ and $E$ is $\sigma$-semistable with $\Im Z(E)>0$, 

    \item $\cH^{-1}(E)=0$ and $E$ is $\sigma$-semistable with $\Im Z(E)=0$, such that $\Re Z(E)<0$ and $E$ has no subobject in $\cA^{Z}$,

    \item $\cH^{-1}(E)=0$ and $E\in \cA^Z$, or

    \item $\cH^{-1}(E)\neq 0$ is $\sigma$-semistable with $\Im Z(\cH^{-1}(E))>0$, $\mu(\cH^{-1}(E))\leq b$, and $\cH^0(E)\in \cA^Z$.
\end{itemize}

\end{lemma}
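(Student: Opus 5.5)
Our plan is to compare $\nu_{b,w}$ with the $\sigma^b$-slope as $w\to+\infty$ and then read off the four cases from Lemma \ref{lem:classify-rotate-stable-abstract}. Recall
\[Z^{b,w}=W+w\Im Z+\mathfrak{i}(-\Re Z-b\Im Z),\qquad Z^b=\Im Z+\mathfrak{i}(-\Re Z-b\Im Z),\]
so that $\nu_{b,w}(F)=\bigl(-W(F)-w\Im Z(F)\bigr)/\bigl(-\Re Z(F)-b\Im Z(F)\bigr)$, while $\mu_{\sigma^b}(F)=-\Im Z(F)/(-\Re Z(F)-b\Im Z(F))$. Thus for every $F$ with nonzero denominator we have $\nu_{b,w}(F)/w\to \mu_{\sigma^b}(F)$ as $w\to\infty$.

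\textbf{Step 1: $E$ is $\sigma^b$-semistable.} Suppose it is not. Take the first $\sigma^b$-HN factor $F\subset E$ in $\cA^b$, so that $\mu_{\sigma^b}(F)>\mu_{\sigma^b}(E)$.

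If both slopes are finite, dividing by $w$ gives $\nu_{b,w}(F)>\nu_{b,w}(E)$ for $w\gg0$. This contradicts the assumption, but only once $F$ is fixed independently of $w$, which it is, since the HN filtration for $\sigma^b$ does not depend on $w$.

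If $\mu_{\sigma^b}(F)=+\infty$, then $\Re Z(F)+b\Im Z(F)=0$. By Lemma \ref{imaginarypart}, $F$ is an extension of some $T\in\cA^Z$ by $A[1]$ with $A$ semistable of slope $b$. If $\Im Z(A)>0$ then $\nu_{b,w}(F)=+\infty$, and $E$ being destabilized forces $\mu_{\sigma^b}(E)=+\infty$ as well, which is impossible. If $F\in\cA^Z$, its $\nu_{b,w}$-slope is $+\infty$ only when $W(F)\neq0$; when $W(F)=0$ we must argue separately. Here $W\le 0$ on $\cA^Z$ is what makes this work.

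This boundary bookkeeping, where objects with $Z^b=0$ or with $\Re Z+b\Im Z=0$ have slopes that do or do not depend on $W$, is the main obstacle. To handle it we would fix $w$ large enough that $\Phi_W(b)<w$, so the results of Theorem \ref{thm:tilt-stability}, in particular the existence of $\nu_{b,w}$-HN filtrations, are available.

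\textbf{Step 2: classification.} Apply Lemma \ref{lem:classify-rotate-stable-abstract}.

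If $Z(E)\neq0$ and $\Im Z(E)>0$, case (a) gives that $E$ is $\sigma$-semistable with $\cH^{-1}=0$, which is the first bullet.

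If $\Im Z(E)=0$ and $Z(E)\neq0$, then $\Re Z(E)<0$ because $E\in\cA$, and $E$ has no subobject in $\cA^Z$. It remains to check $\sigma$-semistability. Any subobject $F\subset E$ in $\cA$ with $\Im Z(F)>0$ would lie in $\cT^b$ only if all its HN slopes exceed $b$, which forces $\mu=+\infty$, a contradiction. Otherwise $\Im Z(F)=0$ and $\mu(F)=+\infty=\mu(E)$. In either case $E$ is $\sigma$-semistable, which is the second bullet.

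If $\Im Z(E)<0$, case (b) gives the fourth bullet.

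If $Z(E)=0$, then $E\in\cA^b$ with $\Re Z+b\Im Z=0$. By Lemma \ref{imaginarypart}, $E$ is an extension $0\to A[1]\to E\to B\to0$ with $Z(A)=0$ and $B\in\cA^Z$. Since $A\in\cF^b$ with $Z(A)=0$ and $\cA^Z$ is a torsion part of $\mathcal T^b$, we get $A=0$. Hence $E\in\cA^Z$, which is the third bullet.
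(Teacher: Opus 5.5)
Your proposal follows the paper's route. You get $\sigma^b$-semistability from the limit $\nu_{b,w}(F)/w\to\mu_{\sigma^b}(F)$, and then read off the cases from Lemma \ref{lem:classify-rotate-stable-abstract}. Your explicit treatment of $Z(E)=0$ via Lemma \ref{imaginarypart} is a small improvement: that lemma assumes $Z(E)\neq 0$, and the paper leaves this case implicit. Two points need correcting, though neither changes the outcome.

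First, the ``boundary bookkeeping'' you call the main obstacle is not an obstacle.
\begin{itemize}
\item By the paper's conventions, a nonzero object whose central charge has vanishing imaginary part has slope $+\infty$ and phase $1$. This holds even when the central charge is $0$.
\item Moreover, $\Im Z^{b,w}=\Im Z^b$ for every $w$.
\item Hence any nonzero $F\subset E$ with $\mu_{\sigma^b}(F)=+\infty$ has $\nu_{b,w}(F)=+\infty$. It therefore destabilizes $E$ whenever $\mu_{\sigma^b}(E)<+\infty$.
\end{itemize}
No case split on $A$ or on whether $W(F)=0$ is needed, and no appeal to Theorem \ref{thm:tilt-stability}. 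This is exactly the paper's argument: if $\Im Z^b(E)\neq0$, then $\nu_{b,w}$-semistability forces $\Im Z^b(F)\neq0$ for every nonzero subobject $F$, and one then passes to the limit.

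Second, in the case $\Im Z(E)=0$, your justification for excluding subobjects $F\subset E$ in $\cA$ with $\Im Z(F)>0$ is not valid. Two things go wrong: $\cT^b$ is not closed under subobjects, and HN slopes $>b$ do not force slope $+\infty$. Such $F$ simply cannot exist, because $\Im Z\geq 0$ on $\cA$ while $\Im Z(E/F)=-\Im Z(F)$. So every subquotient of $E$ in $\cA$ has phase $1$, and $\sigma$-semistability is automatic.
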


\begin{proof}
If $\Im Z^b(E)=0$, then it is clear that $E$ is $\sigma^b$-semistable. If $\Im Z^b(E)\neq 0$, then the $\nu_{b,w}$-semistability of $E$ implies that any non-zero subobject $F$ of $E$ satisfies $\Im Z^b(F)\neq 0$ and
\[\lim_{w\to +\infty}\frac{\nu_{b,w}(F)}{w}=\frac{1}{b-\mu(F)}=\mu_{\sigma^b}(F).\]
So $E$ is also $\sigma^b$-semistable. Now, the remaining statement follows from Lemma \ref{lem:classify-rotate-stable-abstract}.
\end{proof}

\begin{lemma}\label{lem:slope-stable-lvl}
Assume furthermore that $\Phi_W<+\infty$ and $E\in \cA^b$ is a $\sigma^b$-stable object. Then $E$ is $\nu_{b,w}$-stable for any $w\gg 0$.
\end{lemma}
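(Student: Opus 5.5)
The plan is to argue by contradiction and to use the rationality of $Z$ together with the bound $\Phi_W<+\infty$, so that the threshold on $w$ is uniform over all subobjects. First I dispose of the degenerate case $\Re Z(E)+b\Im Z(E)=0$. Then $\nu_{b,w}(E)=+\infty$ for every $w$. By Lemma \ref{imaginarypart} and the classification in Lemma \ref{lem:classify-rotate-stable-abstract}, $E$ is either $A[1]$ with $A$ $\sigma$-stable of slope $b$, or an object of $\cA^Z$ (then $Z^b(E)=0$). In either case the subobjects $F\subset E$ in $\cA^b$ with $\nu_{b,w}(F)=+\infty$ are exactly those with $\Im Z^b(F)=0$. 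The $\sigma^b$-stability of $E$, read off from Lemma \ref{lem:classify-rotate-stable-abstract}(2), already excludes these, independently of $w$. Here I also use that $W\le 0$ on $\cA^Z$ with $W<0$ on nonzero such objects modulo $\Lambda_W$.

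So assume $\Im Z^b(E)>0$. For a nonzero proper subobject $F\subset E$ in $\cA^b$ with quotient $G$, I write
\[
\nu_{b,w}(F)=\frac{W(F)+w\Im Z(F)}{\Im Z^b(F)} .
\]
Since $E$ is $\sigma^b$-stable, every such $F$ with $\Im Z^b(F)>0$ satisfies $\mu_{\sigma^b}(F)<\mu_{\sigma^b}(E)$. Equivalently,
\[
\frac{\Im Z(F)}{\Im Z^b(F)}<\frac{\Im Z(E)}{\Im Z^b(E)} .
\]
Subobjects with $\Im Z^b(F)=0$ have $\mu_{\sigma^b}(F)=+\infty$ and are therefore forbidden by stability. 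Because $Z$ is defined over $\QQ[\mathfrak{i}]$, the values $\Im Z(F)$ and $\Im Z^b(F)$ lie in $\frac1N\ZZ$ for a fixed $N$ (if $b$ is irrational, I treat $b\Im Z$ via a nearby rational bound). Moreover $0<\Im Z^b(F)\le \Im Z^b(E)$. So only finitely many values of $\Im Z^b(F)$ occur.

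The key point is to bound $\Im Z(F)$ from both sides. Taking cohomology in $\cA$, $\cH^{-1}(F)\subset \cH^{-1}(E)$ and $\cH^0(F)$ lies in $\cT^b$. Hence $\Im Z(F)\ge -\Im Z(\cH^{-1}(E))$. Symmetrically, applying the same reasoning to the quotient $G$ gives $\Im Z(G)\ge -\Im Z(\cH^{-1}(E))$, which bounds $\Im Z(F)=\Im Z(E)-\Im Z(G)$ from above. Combined with the discreteness above, the set of pairs $(\Im Z^b(F),\Im Z(F))$ is finite. Consequently there is a $\delta>0$ with
\[
\frac{\Im Z(E)}{\Im Z^b(E)}-\frac{\Im Z(F)}{\Im Z^b(F)}\ge \delta
\]
for all such $F$.

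Next I bound $W$. For a $\sigma$-semistable $A$ with $\Im Z(A)>0$, the bound $\Phi_W<+\infty$ gives $W(A)\ge -\Phi_W(\mu(A))\Im Z(A)$, and $W\le 0$ on $\cA^Z$. I apply this to the $\sigma$-HN factors of $\cH^{-1}(F)$ and $\cH^0(F)$, and likewise for $G$. I expect this to give a constant $C$, depending only on $E$, such that $W(F)/\Im Z^b(F)\le C$. I use $W(F)=W(E)-W(G)$ together with a lower bound on $W(G)$ to control the sign. I also need $\Phi_W$ bounded on the relevant range of slopes, which is compact: slopes of HN factors of subobjects of $\cH^{-1}(E)$ lie in $[\,\cdot\,,b]$ by finiteness, and upper semicontinuity controls the rest. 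Then $\nu_{b,w}(E)-\nu_{b,w}(F)\ge w\delta-C'$, which is positive for $w>C'/\delta$. This proves that $E$ is $\nu_{b,w}$-stable for all $w\gg0$.

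The main obstacle I expect is this uniform bound on $W(F)$. The slopes of HN factors of $\cH^{0}(F)$ can a priori be unbounded above, and $\Phi_W$ is only bounded on compact sets. If this fails directly, I would fall back on the support property of $\sigma^b$ from Lemma \ref{lem-tilt-weak-stab}, via the argument of Lemma \ref{lem:supp-property-finite-length}. That argument bounds $\|\overline{\bv}(F)\|$ by a multiple of $|Z^b(E)|$, so only finitely many classes $\overline{\bv}(F)$ occur. The bound on $W(F)$ then follows, provided $W$ factors through $\Lambda/\Lambda^Z$ up to the sign control on $\cA^Z$.
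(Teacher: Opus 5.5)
Your treatment of the case $\Im Z^b(E)=0$ is fine: there $\sigma^b$-stability means $E$ is simple in $\cA^b$. The main case, however, has two genuine gaps.

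\textbf{First gap: the uniform $\delta$.} It is partly hidden by sign slips. With the paper's conventions, $\nu_{b,w}(F)=-\bigl(W(F)+w\Im Z(F)\bigr)/\Im Z^b(F)$ and $\mu_{\sigma^b}(F)=-\Im Z(F)/\Im Z^b(F)$. So $\sigma^b$-stability gives $\Im Z(F)/\Im Z^b(F)>\Im Z(E)/\Im Z^b(E)$; when $\Im Z(E),\Im Z(F)>0$ this says $\mu(F)<\mu(E)$. The two slips cancel in $\nu_{b,w}(E)-\nu_{b,w}(F)$, but they send you after upper bounds on $\Im Z(F)$ and on $W(F)$.

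Your $\delta$ rests on finiteness of the pairs $(\Im Z^b(F),\Im Z(F))$, and the upper bound on $\Im Z(F)$ is false. For $G=E/F$, the sequence $0\to\cH^{-1}(F)\to\cH^{-1}(E)\to\cH^{-1}(G)\to\cH^0(F)\to\cH^0(E)\to\cH^0(G)\to 0$ shows that $\cH^{-1}(G)$ is not a subobject of $\cH^{-1}(E)$. The only control is $\Im Z(G)\le\Im Z(\cH^0(E))$, which is the same lower bound on $\Im Z(F)$ again. In fact $\Im Z(F)$ is unbounded. Take slope stability on an elliptic curve $C$, $b=0$ and $E=\cO_C(p)$. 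Every stable bundle $F$ of rank $R\ge 2$ and degree $1$ admits a surjection onto $\cO_C(p)$ whose kernel $K$ is semistable of degree $0$. Hence $0\to F\to E\to K[1]\to 0$ is exact in $\cA^0$ for every $R$; this step of your argument does not involve $W$. Moreover, for irrational $b$ the values of $\Im Z^b(F)$ are not discrete, and a ``nearby rational bound'' does not fix this.

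\textbf{Second gap: the bound on $W$.} You flag it but leave it open, and the additive form $w\delta-C'$ with $C'$ independent of $F$ cannot hold. What $\Phi_W<+\infty$ gives is an \emph{upper} bound on $-W(F)$ of the form $X\cdot\Im Z(F)$. Since $\Im Z(F)$ is unbounded while $0<\Im Z^b(F)\le\Im Z^b(E)$, the term $W(F)/\Im Z^b(F)$ has no uniform bound. The fallback fails too: a subobject of $E$ in $\cA^b$ is not a strict subobject inside a thin slice $\cP_{\sigma^b}(I)$, so Lemma \ref{lem:supp-property-finite-length} says nothing about it. Also, $W$ is nonzero on $\Lambda^Z$, so finiteness in $\Lambda/\Lambda^Z$ would not control $W(F)$.

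\textbf{What is missing (the paper's route).} Split by the sign of $\Im Z(E)$.

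If $\Im Z(E)>0$, then $E\in\cA$ is $\sigma$-stable with $\mu(E)>b$. A subobject $F$ lies in $\cT^b$ and is an extension of its image $F'\subset E$ by $\cH^{-1}(E/F)\in\cF^b$. So all $\sigma$-HN slopes of $F$ lie in $(b,\mu(E)]$, which is exactly the compactness your obstacle needs. It follows that $-W(F)\le X_E\Im Z(F)$ with $X_E=\sup_{[b,\mu(E)]}\Phi_W<+\infty$.

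Keep $w$ and $W$ together: $\nu_{b,w}(F)\le(X_E-w)/(\mu(F)-b)$, and take $w>X_E$. Then bound $\mu(F)$ away from $\mu(E)$:
\begin{itemize}
\item If $\Im Z(F)>2\Im Z(E)$, then $\Im Z^b(F)\le\Im Z^b(F')\le\Im Z(F')(\mu(E)-b)$ forces $\mu(F)<(b+\mu(E))/2$.
\item Otherwise, rationality of $Z$ leaves finitely many possible values of $\mu(F)$. These are $<\mu(E)$ unless $E/F\in\cA^Z$, and such $F$ never destabilize.
\end{itemize}
This uses only discreteness of $Z$, so it works for irrational $b$.

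If $\Im Z(E)=0$, the same works with slopes in $[b,b-\Re Z(E)/D]$, where $D$ is the least positive value of $\Im Z$.

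If $\Im Z(E)<0$, subobjects really are uncontrolled, as you feared, and one tests quotients $E\twoheadrightarrow F$ instead. Then $\cH^0(F)$ is a quotient of $B\in\cA^Z$, so $W(\cH^0(F))\le 0$. By $\sigma$-stability of $A$, $\cH^{-1}(F)$ has $\sigma$-HN slopes in $[\mu(A),b]$.
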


\begin{proof}
If $E\in \cA^Z$, then it is straightforward to see that $E$ is a simple object in $\cA^b$. In this case, it is also $\nu_{b,w}$-stable for any $w$. Therefore, in the following, we may assume that $E\notin \cA^Z$.

We first assume that $\Im Z(E)> 0$. By Lemma \ref{lem:classify-rotate-stable-abstract}, $E$ is a $\sigma$-stable object with $\mu(E)>b$. For any subobject $0\neq F\subsetneq E$ in $\cA^b$, we have an exact sequence
\[0\to \cH^{-1}(E/F)\to F\to E\to \cH^0(E/F)\to 0\]
in $\cA$. It is clear that $\Im Z(F)>0$ and $\mu^-(F)>b$. Moreover, if we denote by $F'$ the image of $F\to E$ in $\cA$, since $E$ is $\sigma$-stable, we get $\mu^+(F')\leq \mu(E)$. Then from $\mu^+(\cH^{-1}(E/F))\leq b<\mu(E)$, we obtain $b<\mu^-(F)\leq \mu^+(F)<\mu(E)$. If $F'=$ As $\Phi_W$ is upper semicontinuous, we know that
\[-\frac{W(F)}{\Im Z(F)}\leq X_E\coloneqq \sup\{\Phi_W(x)\colon x\in [b, \mu(E)]\}<+\infty.\]
We also have 
\[\mu(F)<b+\frac{(\mu(E)-b)\Im Z(F')}{\Im Z(F)}.\]
From $\mu(F)\in (b, \mu(E)]$ and the assumption that $\sigma$ is defined over $\QQ[\mathfrak{i}]$, we know that the set of $\mu(F)$ with $\Im Z(F)\leq D$ for any fixed constant $D>0$ is finite.
Since $\Im Z(F')\leq \Im Z(E)$, we get
\begin{align*}
\mu(F)&<Y_E  \\
&\coloneqq \max\left\{\frac{b+\mu(E)}{2},\max\{\mu(G)\colon \Im Z(G)\leq 2\Im Z(E), G\subsetneq E\in \cA^b\}\right\}\\
&<\mu(E).
\end{align*}
Therefore, for any subobject $0\neq F\subsetneq E$ in $\cA^b$ and any $w> X_E$, we see that
\[\nu_{b,w}(F)<\frac{X_E-w}{Y_E-b}.\]
By $Y_E-b<\mu(E)-b$, we get $\nu_{b,w}(F)<\nu_{b,w}(E)$ for $w\gg 0$. Therefore, $E$ is $\nu_{b,w}$-stable for $w\gg 0$.

Now, we assume that $\Im Z(E)=0$. If $\Im Z(F)=0$ as well, then $\cH^{-1}(E/F)=0$ and we have an exact sequence $0\to F\to E\to E/F\to 0$ in both $\cA$ and $\cA^b$. In this case, we have $\nu_{b,w}(F)<\nu_{b,w}(E)< \nu_{b,w}(E/F)$ by Lemma \ref{lem:classify-rotate-stable-abstract}(1). If $\Im Z(F)>0$, then
\[b<\mu^-(F)\leq \mu^+(F)\leq b+\frac{-\Re Z(E)}{D},\]
where $D\coloneqq \min \{\Im Z(G)\colon G\in \cA, \Im Z(G)>0\}>0$, which exists since $\sigma$ is defined over $\QQ[\mathfrak{i}]$.
As $\Phi_W$ is upper semicontinuous, we know that
\[-\frac{W(F)}{\Im Z(F)}\leq X'_E\coloneqq \sup\left\{\Phi_W(x)\colon x\in \left[b, b+\frac{-\Re Z(E)}{D}\right]\right\}<+\infty.\]
Therefore, there exists a constant $C>0$ so that for $w\geq C$, we obtain
\[\nu_{b,w}(F)\leq \frac{D(X'_E-w)}{-\Re Z(E)}<\nu_{b,w}(E)=\frac{W(E)}{\Re Z(E)}.\]
Hence, $E$ is $\nu_{b,w}$-stable for $w\gg 0$ in this case as well.

Finally, we assume that $\Im Z(E)<0$. Then it satisfies Lemma \ref{lem:classify-rotate-stable-abstract}(2). For any exact sequence $0\to K\to E\to F \to 0$ in $\cA^b$ with $F\notin \cA^Z$, we get a long exact sequence
\[0\to \cH^{-1}(K)\to A\to \cH^{-1}(F)\to \cH^0(K)\to B\to \cH^{0}(F)\to 0.\]
Since $B\in \cA^Z$, we also have $\cH^0(F)\in \cA^Z$. Therefore, $\cH^{-1}(F)\neq 0$. If we set $F_1\coloneqq A/\cH^{-1}(K)$ and $F_2\coloneqq \im (\cH^{-1}(F)\to \cH^0(K))$, then we have an exact sequence $$0\to F_1\to \cH^{-1}(F)\to F_2\to 0.$$ By the $\sigma$-stability of $A$, we have $\mu^-(F_1)\geq \mu(A)=\mu(E)$. From $B\in \cA^Z$, we also get $\mu^+(F_2)=\mu^+(\cH^0(K))>b$. Thus, from $W(\cH^0(F))\leq 0$, we obtain
\[-\frac{W(F)}{\Im Z(F)}\leq-\frac{W(\cH^{-1}(F))}{\Im Z(\cH^{-1}(F))}\leq X''_E\coloneqq \sup\{\Phi_W(x)\colon x\in [\mu(E), b]\}<+\infty\]
as above. Moreover, we have
\[\mu(F)=\mu(\cH^{-1}(F))>b-\frac{\Im Z(F_1)(b-\mu(E))}{\Im Z(\cH^{-1}(F))}>\mu(E)\]
and $0<\Im Z(F_1)\leq \Im Z(A)$. So
\begin{align*}
\mu(F)>& Y''_E\\ 
\coloneqq &\min\left\{\frac{b+\mu(E)}{2},\min\{\mu(G)\colon -\Im Z(G)\leq -2\Im Z(E),E\twoheadrightarrow G\in \cA^b \text{ with non-zero kernel}\}\right\}\\
>&\mu(E).
\end{align*}
Now, we can conclude that
\[\nu_{b,w}(F)>\nu_{b,w}(E)\]
for any $w\gg 0$, hence $E$ is $\nu_{b,w}$-stable.
\end{proof}

\subsection{Noetherian property}

Our goal is to define some weak stability conditions on \(\mathcal{D}\) with hearts \(\mathcal{A}^{b}\). To achieve this, we need to define some weak stability functions \(Z'\) on \(\mathcal{A}^{b}\) and then verify that they satisfy the Harder--Narasimhan property and support property. Among these, verifying the Harder--Narasimhan property is not straightforward. However, if \(\mathcal{A}^{b}\) is Noetherian and the image of the imaginary part of \(Z'\) is discrete in \(\mathbb{R}\), then \(Z'\) has the Harder--Narasimhan property. In the next subsection, we will discuss the properties of \(Z'\), while in this subsection, we prove that under the following conditions, \(\mathcal{A}^{b}\) is Noetherian, following \cite{piyaratne2019moduli,bayer2011space}.

For any \(b \in \mathbb{R}\), we set
\[
    \mathcal{I}^{b}\coloneqq \{E \in \mathcal{A}^{b} \colon \Re Z(E)+b\Im Z(E)=0\}.
\]

\begin{lemma}\label{Noetherianlem}
    Let \(b \in \mathbb{R}\). Assume that
        \begin{enumerate}
            \item the abelian category \(\mathcal{A}\) is Noetherian, and
            \item there is no infinite sequence in \(\mathcal{A}\):
	           \[
		            G_{1} \subsetneq \cdots \subsetneq G_{n} \subsetneq \cdots
	           \] where \(G_{n} \in \mathcal{F}^{b}\) and \(G_{n+1}/G_{n} \in \mathcal{A}^{Z}\) for any \(n \in \mathbb{N}^{+}\). 
        \end{enumerate}
    Then \(\mathcal{I}^{b}\) is a Noetherian torsion subcategory of \(\mathcal{A}^{b}\).
\end{lemma}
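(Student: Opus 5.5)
By Lemma \ref{imaginarypart}, an object $E\in\cA^b$ lies in $\cI^b$ exactly when it sits in a short exact sequence $0\to F[1]\to E\to T\to 0$ in $\cA^b$, where $T\in\cA^Z$ and $F$ is either zero or $\sigma$-semistable with $\mu(F)=b$. The plan is to establish three things in turn: $\cI^b$ is a Serre subcategory of $\cA^b$; $\cI^b$ is Noetherian; and $(\cI^b,(\cI^b)^\perp)$ is a torsion pair in $\cA^b$.

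\emph{Serre property.} The function $-(\Re Z+b\Im Z)$ is additive and nonnegative on $\cA^b$ by Lemma \ref{imaginarypart}. In a short exact sequence in $\cA^b$ it therefore vanishes on the middle term if and only if it vanishes on both outer terms. Hence $\cI^b$ is closed under subobjects, quotients and extensions, so it is an abelian Serre subcategory.

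\emph{Reduction via Remark \ref{rmk:serre-torsion}.} Since $\cI^b$ is Serre, Remark \ref{rmk:serre-torsion} shows it suffices to rule out a strictly increasing chain
\[F_1\subsetneq F_2\subsetneq\cdots\subsetneq E\]
in $\cA^b$ with every $F_n\in\cI^b$. This single statement gives both the Noetherian property of $\cI^b$ (take $E\in\cI^b$) and the existence of the torsion pair (the maximal $\cI^b$-subobject of any $E$ exists). So suppose such a chain exists and derive a contradiction.

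\emph{Step 1: stabilize the $\cH^{-1}$ parts.} Take cohomology with respect to $\cA$. The sequences $0\to F_n\to E\to E/F_n\to0$ give exact sequences
\[0\to\cH^{-1}(F_n)\to\cH^{-1}(E)\to\cH^{-1}(E/F_n)\to\cH^0(F_n)\to\cH^0(E).\]
Thus $\cH^{-1}(F_n)$ is an increasing chain of subobjects of $\cH^{-1}(E)$ in $\cA$. By hypothesis (a), $\cA$ is Noetherian, so this chain stabilizes. Passing to quotients $E/F_N$ and $F_n/F_N$, which is allowed because $\cI^b$ is Serre, we may assume $\cH^{-1}(F_n)=0$ for all $n$. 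Each $F_n$ is then an object of $\cA^Z$, and the successive quotients $F_{n+1}/F_n$ are nonzero objects of $\cA^Z$.

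\emph{Step 2: translate into a chain forbidden by (b).} Put $G_n:=\cH^{-1}(E/F_n)$. Each $G_n$ lies in $\cF^b$, and since $\cH^{-1}(F_n)=0$ the map $\cH^{-1}(E)\to G_n$ is injective. Applying the snake lemma to the sequences $0\to F_n\to F_{n+1}\to F_{n+1}/F_n\to0$ and $0\to F_n\to E\to E/F_n\to 0$ gives $0\to F_{n+1}/F_n\to E/F_n\to E/F_{n+1}\to0$ in $\cA^b$. Its long exact cohomology sequence is
\[0\to G_n\to G_{n+1}\to F_{n+1}/F_n\to\cH^0(E/F_n)\to\cdots.\]
Hence $G_n\subset G_{n+1}$, and the quotient $G_{n+1}/G_n$ is a subobject of $F_{n+1}/F_n$. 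The category $\cA^Z$ is closed under subobjects in $\cA$, because its objects have $Z=0$ and $Z$ is a weak stability function. Therefore $G_{n+1}/G_n\in\cA^Z$.

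\emph{Step 3: control the $\cH^0$ parts (expected main obstacle).} Consider the images $K_n$ of $F_n\to\cH^0(E)$; these form an increasing chain in $\cA$. Writing $\cH^0(F_n)=F_n$ and $I_n:=\operatorname{im}(G_n\to F_n)$, the cohomology sequence reads
\[0\to\cH^{-1}(E)\to G_n\to F_n\to\cH^0(E),\]
so $K_n=F_n/I_n$. Since $\cA$ is Noetherian, the chain $K_n\subset\cH^0(E)$ stabilizes, say $K_n=K$ for $n\gg0$. For such $n$ we have
\[\operatorname{length}\text{-type invariant: } F_n/I_n\cong K \text{ is fixed},\]
and the strict growth $F_n\subsetneq F_{n+1}$ must then be absorbed by growth of $I_n$, i.e.\ of $G_n$. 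The precise claim to prove is that, once $K_n$ has stabilized, $G_n\subsetneq G_{n+1}$ for infinitely many $n$; this should follow from the same snake-lemma diagrams used in Step 2.

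Granting that, after discarding the repeated terms we obtain a strictly increasing chain $G_{n_1}\subsetneq G_{n_2}\subsetneq\cdots$ with all $G_{n_i}\in\cF^b$ and all quotients in $\cA^Z$. This is exactly a chain excluded by hypothesis (b), which gives the desired contradiction.
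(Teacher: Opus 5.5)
Your outline is the same as the paper's proof: the Serre property via Lemma \ref{imaginarypart}, the reduction through Remark \ref{rmk:serre-torsion}, stabilizing $\cH^{-1}(F_n)$, and the chain $G_n=\cH^{-1}(E/F_n)$ in $\cF^b$ with quotients in $\cA^{Z}$. However, the step that actually produces the contradiction is missing. Hypothesis (b) only forbids \emph{strictly} increasing chains, and you never prove $G_n\subsetneq G_{n+1}$: Step 3 ends with ``this should follow'' and ``granting that''. The heuristic you give there (``$F_n/I_n\cong K$ is fixed, so the growth must be absorbed by $I_n$'') is a length count. A general Noetherian abelian category has no length function, so this is not an argument. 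As written, the proof is incomplete at its crux.

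The fix uses exactly your $K_n$. Because $\cH^{1}(F_n)=0$, the cohomology sequence of $0\to F_n\to E\to E/F_n\to 0$ shows that $\cH^0(E)\to\cH^0(E/F_n)$ is surjective with kernel $K_n$. Compatibly with $E/F_n\twoheadrightarrow E/F_{n+1}$, the map $\cH^0(E/F_n)\to\cH^0(E/F_{n+1})$ is therefore the projection $\cH^0(E)/K_n\to\cH^0(E)/K_{n+1}$, which is an isomorphism once $K_n=K_{n+1}$. Now consider the exact sequence
\[0\to G_n\to G_{n+1}\to F_{n+1}/F_n\to\cH^0(E/F_n)\to\cH^0(E/F_{n+1})\to 0.\]
Since the last map is injective, the map $F_{n+1}/F_n\to\cH^0(E/F_n)$ is zero. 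Hence $G_{n+1}/G_n\cong F_{n+1}/F_n\neq 0$ for \emph{all} $n\gg 0$, not just infinitely many. This is the paper's argument, which phrases it as stabilizing the chain of epimorphisms $\cH^0(E)\twoheadrightarrow\cH^0(E_1)\twoheadrightarrow\cdots$.

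Separately, your Step 1 reduction ``we may assume $\cH^{-1}(F_n)=0$'' needs justification. Stabilization alone only makes $\cH^{-1}(F_N)\to\cH^{-1}(F_n)$ an isomorphism. That embeds $\cH^{-1}(F_n/F_N)$ into $\cH^0(F_N)\in\cA^{Z}$. Since $\cH^{-1}(F_n/F_N)$ also lies in $\cF^b$, and $\cF^b\cap\cA^{Z}=0$ (nonzero objects of $\cA^{Z}$ have $\mu^+=+\infty$), it vanishes.
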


\begin{proof}
    By Lemma \ref{imaginarypart}, \(\mathcal{I}^{b}\) is a Serre subcategory of \(\mathcal{A}^{b}\). Assume that \(\mathcal{I}^{b}\) is not a Noetherian torsion subcategory. Then by Remark \ref{rmk:serre-torsion}, there is an infinite sequence in \(\mathcal{A}^{b}\):
    \[
        F_{1} \subsetneq \cdots \subsetneq F_{n} \subsetneq \cdots \subsetneq E,
    \]where \(\Re Z(F_{n})+b\Im Z(F_{n})=0\) for any \(n \geq 1\).
	
    Applying the cohomology functor with respect to the heart \(\mathcal{A}\), we obtain an infinite sequence in \(\mathcal{A}\):
    \[
        \mathcal{H}^{-1}(F_{1}) \subseteq \cdots \subseteq \mathcal{H}^{-1}(F_{n}) \subseteq \cdots \subseteq \mathcal{H}^{-1}(E).
	\] 
    Since \(\mathcal{A}\) is Noetherian, this sequence stabilizes. Therefore, we may assume that
    \[\mathcal{H}^{-1}(F_{n}) = \mathcal{H}^{-1}(F_{n+1}) \subseteq \mathcal{H}^{-1}(E)\]
    for any \(n \in \mathbb{N}^{+}\). Thus, by taking the cohomology long exact sequence associated to the triangle 
    \[F_{n}\to F_{n+1}\to F_{n+1}/F_n,\]
    we get an exact sequence in \(\mathcal{A}\) for any \(n \geq 1\):
    \begin{equation}\label{eq:long-cA}
        0 \to \mathcal{H}^{-1}(F_{n+1}/F_{n}) \to  \mathcal{H}^{0}(F_{n}) \to \mathcal{H}^{0}(F_{n+1}) \to \mathcal{H}^{0}(F_{n+1}/F_{n}) \to 0.
    \end{equation}
    By Lemma \ref{imaginarypart} and \(\Re Z(F_{n})+b\Im Z(F_{n})=0\), we see that \(\mathcal{H}^{-1}(F_{n})\) is either a \(\sigma\)-semistable object with slope \(b\) or \(\mathcal{H}^{-1}(F_{n})=0\), and \(\mathcal{H}^{0}(F_{n}) \in \mathcal{A}^{Z}\). Combining this with \eqref{eq:long-cA}, we know that the object \(\mathcal{H}^{-1}(F_{n+1}/F_{n})\) also belongs to \(\mathcal{A}^{Z}\). Therefore, we must have \(\mathcal{H}^{-1}(F_{n+1}/F_{n})=0\) for any \(n \geq 1\) as \(\mathcal{H}^{-1}(F_{n+1}/F_{n}) \in \mathcal{F}^{b}\). This together with Lemma \ref{imaginarypart} implies $F_{n+1}/F_{n}\in \cA^Z$.

    For any \(n \geq 1\), let \(E_{n}\) be the cokernel of \(F_{n} \to E\) in \(\mathcal{A}^{b}\). Then we have an infinite sequence of epimorphisms in \(\mathcal{A}^{b}\):
    \[
        E \twoheadrightarrow E_{1} \twoheadrightarrow \cdots \twoheadrightarrow E_{n}  \twoheadrightarrow \cdots
    \] with \(\ker (E_{n} \twoheadrightarrow E_{n+1}) \cong F_{n+1}/F_{n}\). Similarly, after taking the cohomology functor with respect to the heart \(\mathcal{A}\), we obtain an infinite sequence in \(\mathcal{A}\):
    \[
        \mathcal{H}^{0}(E) \twoheadrightarrow \mathcal{H}^{0}(E_{1}) \twoheadrightarrow  \cdots \twoheadrightarrow \mathcal{H}^{0}(E_{n}) \twoheadrightarrow \cdots.
    \]Because \(\mathcal{A}\) is Noetherian, the above sequence stabilizes. Thus, after discarding finitely many terms, we may also assume that \(\mathcal{H}^{0}(E_{n}) = \mathcal{H}^{0}(E_{n+1})\). Combining this with \(\mathcal{H}^{-1}(F_{n+1}/F_{n})=0\), we get an exact sequence
    \[0\to \mathcal{H}^{-1}(E_{n}) \to \mathcal{H}^{-1}(E_{n+1})\to F_{n+1}/F_n\to 0\]
in $\cA$ for each \(n \geq 1\).
    
    Now, we set \(G_{n}\coloneqq\mathcal{H}^{-1}(E_{n}) \in \mathcal{F}^{b}\). We then get an infinite sequence in \(\mathcal{A}\):
	\[
		G_{1} \subseteq \cdots \subseteq G_{n} \subseteq \cdots
	\] where \(G_{n} \in \mathcal{F}^{b}\) and \(G_{n+1}/G_{n} \cong F_{n+1}/F_{n} \in \mathcal{A}^{Z} \setminus \{0\}\) for any \(n \geq 1\). This contradicts our second assumption on \(\mathcal{A}\). Thus, \(\mathcal{I}^{b}\) is a Noetherian torsion subcategory of \(\mathcal{A}^{b}\).
\end{proof}

\begin{corollary}\label{cor:Noetherian}
    Let \(b \in \mathbb{R}\) and we assume that
    \begin{enumerate}
        \item the set \(\{\Re Z(E)+b\Im Z(E) \in \mathbb{R} \colon E \in \mathcal{A}^{b}\}\) is discrete in \(\mathbb{R}\),
        \item the abelian category \(\mathcal{A}\) is Noetherian, and
        \item there is no infinite sequence in \(\mathcal{A}\):
	\[
		            G_{1} \subsetneq \cdots \subsetneq G_{n} \subsetneq \cdots
	           \] where \(G_{n} \in \mathcal{F}^{b}\) and \(G_{n+1}/G_{n} \in \mathcal{A}^{Z}\) for any \(n \in \mathbb{N}^{+}\). 
    \end{enumerate}
    Then the abelian category \(\mathcal{A}^{b}\) is Noetherian and \(\mathcal{I}^{b}\) is a Noetherian torsion subcategory of \(\mathcal{A}^{b}\).
\end{corollary}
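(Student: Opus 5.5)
Lemma \ref{Noetherianlem} already gives that \(\mathcal{I}^{b}\) is a Noetherian torsion subcategory of \(\mathcal{A}^{b}\), using (b) and (c). So the only new claim is that \(\mathcal{A}^{b}\) itself is Noetherian. The plan is the standard argument of \cite{piyaratne2019moduli,bayer2011space}: combine the discreteness in (a) with Noetherianity of \(\mathcal{A}\) and of \(\mathcal{I}^b\).

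Suppose, for a contradiction, that there is an infinite strictly ascending chain \(E_1\subsetneq E_2\subsetneq\cdots\subsetneq E\) in \(\mathcal{A}^{b}\). Put \(r(-)\coloneqq -(\Re Z(-)+b\Im Z(-))\). By Lemma \ref{imaginarypart}, \(r\geq 0\) on \(\mathcal{A}^b\) and \(r\) is additive on short exact sequences. Applying this to \(0\to E_n\to E\to E/E_n\to 0\) and to \(0\to E_n\to E_{n+1}\to E_{n+1}/E_n\to 0\), the values \(r(E_n)\) form a non-decreasing sequence bounded above by \(r(E)\). By (a) these values lie in a discrete subset of \(\RR\), so the sequence is eventually constant. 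After discarding finitely many terms, every quotient \(E_{n+1}/E_n\) has \(r=0\), that is, lies in \(\mathcal{I}^b\).

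Now pass to the quotients \(Q_n\coloneqq E/E_n\). These form a chain of surjections \(Q_1\twoheadrightarrow Q_2\twoheadrightarrow\cdots\) whose kernels \(\ker(Q_1\twoheadrightarrow Q_n)\cong E_n/E_1\) all lie in \(\mathcal{I}^b\), because \(\mathcal{I}^b\) is a Serre subcategory. These kernels form a strictly ascending chain inside \(Q_1\). By Remark \ref{rmk:serre-torsion}, a Serre subcategory is a Noetherian torsion subcategory exactly when no object of \(\mathcal{A}^b\) contains an infinite strictly ascending chain of subobjects from that subcategory. Lemma \ref{Noetherianlem} says \(\mathcal{I}^b\) is such a subcategory, so this chain inside \(Q_1\) is impossible. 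This contradiction shows \(\mathcal{A}^b\) is Noetherian.

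The main obstacle would be making sure that hypotheses (b) and (c), rather than a direct cohomology argument, are what control chains with \(r\)-constant quotients. That control is exactly what Lemma \ref{Noetherianlem} supplies; it works through the cohomology sequences of \(\mathcal{H}^{-1}\) and \(\mathcal{H}^0\) with respect to \(\mathcal{A}\). Once that lemma is in hand, the remaining reduction via discreteness is routine.
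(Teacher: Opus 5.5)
Your proposal is correct and is essentially the paper's argument. The paper works with a chain of epimorphisms \(E_0\twoheadrightarrow E_1\twoheadrightarrow\cdots\), uses discreteness of \(-\Re Z-b\Im Z\) to force the kernels of \(E_0\to E_n\) into \(\mathcal{I}^b\), and then invokes Lemma \ref{Noetherianlem}; this is your chain \(E_n/E_1\subset E/E_1\) written in quotient form, and the statement about \(\mathcal{I}^b\) is, as you say, directly Lemma \ref{Noetherianlem}.
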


\begin{proof}
    Suppose that there is an infinite sequence of epimorphisms in \(\mathcal{A}^{b}\):
    \[
        E_{0} \twoheadrightarrow E_{1} \twoheadrightarrow \cdots \twoheadrightarrow E_{n}  \twoheadrightarrow \cdots
    \] By Lemma \ref{imaginarypart}, we have
	\begin{align*}
		-\Re Z(E_{0})-b\Im Z(E_{0}) \geqslant -\Re Z(E_{1})-b\Im Z(E_{1}) \geqslant \cdots \geqslant 0.
	\end{align*}Since \(\{-\Re Z(E)-b\Im Z(E) \in \mathbb{R} \mid E \in \mathcal{A}^{b}\}\) is discrete in \(\mathbb{R}\), we may assume that \(-\Re Z(E_{n})-b\Im Z(E_{n})\) is constant after reindexing. 
    
    For any \(n \in \mathbb{N}^{+}\), let \(F_{n}\) be the kernel of \(E_{0} \to E_{n}\). Therefore, we have an infinite sequence in \(\mathcal{A}^{b}\):
    \[
        F_{1} \subseteq \cdots \subseteq F_{n} \subseteq \cdots \subseteq E_{0}.
    \]
    It is clear that \(F_{n} \in \mathcal{I}^{b}\) and \(F_{n+1}/F_{n} \cong \ker(E_{n} \twoheadrightarrow E_{n+1})\).
    Therefore, by Lemma \ref{Noetherianlem}, we know that the above infinite sequence stabilizes. Thus, \(\mathcal{A}^{b}\) is Noetherian. 
\end{proof}

If $b\in \QQ$ and $\sigma$ satisfies the tilting property, we know that $\cA$ is Noetherian by \cite[Lemma 14.8]{BLMNPS21}. Combining this with Corollary \ref{cor:Noetherian} and Lemma \ref{lem:tilt-property-no-infty-seq}, we get

\begin{corollary}\label{cor:tilt-noetherian}
Assume that \(b \in \mathbb{Q}\), and $\sigma$ is a weak stability condition defined over $\QQ[\mathfrak{i}]$ and satisfies the tilting property. Then $\cA^{b}$ is Noetherian and \(\mathcal{I}^{b}\) is a Noetherian torsion subcategory of \(\mathcal{A}^{b}\).
\end{corollary}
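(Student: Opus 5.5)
The plan is to check the three hypotheses of Corollary \ref{cor:Noetherian} for the given rational $b$. That corollary then gives both conclusions at once: $\cA^b$ is Noetherian, and $\cI^b$ is a Noetherian torsion subcategory of $\cA^b$.

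\emph{Hypothesis (a), discreteness.} Since $\sigma$ is defined over $\QQ[\mathfrak{i}]$, we have $Z(\Lambda)\subset \QQ\oplus\QQ\mathfrak{i}$. Because $\Lambda$ has finite rank, $Z(\Lambda)$ is a finitely generated subgroup. Hence there is an integer $N>0$ with $N\Re Z(\Lambda)\subset \ZZ$ and $N\Im Z(\Lambda)\subset\ZZ$. Write $b=p/q$ with $p,q\in\ZZ$ and $q>0$. Then every value $\Re Z(E)+b\Im Z(E)$, for $E\in\cA^b$, lies in $\frac{1}{Nq}\ZZ$, which is discrete in $\RR$.

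\emph{Hypothesis (b), $\cA$ Noetherian.} I would simply quote \cite[Lemma 14.8]{BLMNPS21}: a weak stability condition defined over $\QQ[\mathfrak{i}]$ with the tilting property has a Noetherian heart.

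\emph{Hypothesis (c), no infinite chains.} Suppose there were a chain $G_1\subsetneq G_2\subsetneq\cdots$ in $\cA$ with every $G_n\in\cF^b$ and every quotient $G_{n+1}/G_n\in\cA^Z$. Each $G_n$ with $n\geq 2$ is nonzero and lies in $\cF^b$, so $\mu^+(G_n)\leq b<+\infty$. Discarding $G_1$ if necessary, all terms then satisfy $\mu^+<+\infty$, and the successive quotients are nonzero objects of $\cA^Z$. By \ref{t1}, $\cA^Z$ is a Noetherian torsion subcategory, in particular Noetherian, and $\sigma$ satisfies \ref{t2}. Lemma \ref{lem:tilt-property-no-infty-seq} therefore rules out such a chain.

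All three hypotheses hold, so Corollary \ref{cor:Noetherian} applies. None of the steps is a real obstacle. The only points needing care are the bounded-denominator argument in (a), which is exactly where the rationality of both $b$ and $Z$ is used, and the small adjustment in (c) of discarding a possibly zero $G_1$ so that $\mu^+(G_n)<+\infty$ holds for every term.
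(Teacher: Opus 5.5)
Your proposal is correct and is the paper's argument: the paper also obtains the corollary by verifying the hypotheses of Corollary \ref{cor:Noetherian}. It quotes \cite[Lemma 14.8]{BLMNPS21} for the Noetherianity of $\cA$ and Lemma \ref{lem:tilt-property-no-infty-seq} for the chain condition, and leaves implicit the discreteness hypothesis and the harmless removal of a possibly zero $G_1$, both of which you check explicitly.
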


\subsection{Central charge}

In the following, we aim to determine for which pairs \((b,w) \in \mathbb{R}^{2}\) the homomorphism \(Z^{b,w}\) is a weak stability function on the heart \(\mathcal{A}^{b}\).

\begin{lemma}\label{lem:weakstabilityfunction}
    The homomorphism \(Z^{b,w}\) is a weak stability function on \(\mathcal{A}^{b}\) if \(w \geqslant \Phi_W(b)\). Moreover, when \(w>\Phi_W(b)\), we have
	\[
            (\cA^{b})^{Z^{b, w}}=\{E \in  \mathcal{A}^{Z} \colon W(E)=0\},
	\]
so $\Lambda^{Z^{b,w}}=\Lambda_W$.
\end{lemma}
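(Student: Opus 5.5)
The plan is to check the two defining conditions of a weak stability function directly on $\cA^b$. Recall that $\Im Z^{b,w}=-\Re Z-b\Im Z$, and that $\Re Z^{b,w}=W+w\Im Z$. By Lemma \ref{imaginarypart}, every nonzero $E\in\cA^b$ satisfies $\Im Z^{b,w}(E)\geq 0$. The same lemma shows that equality holds exactly when there is a short exact sequence $0\to F[1]\to E\to T\to 0$ in $\cA^b$ with $T\in\cA^Z$ and $F$ either zero or $\sigma$-semistable with $\mu(F)=b$. It therefore remains to show $\Re Z^{b,w}(E)\leq 0$ for such $E$, and to identify the kernel.

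For such $E$, additivity gives
\[\Re Z^{b,w}(E)=W(T)-W(F)-w\Im Z(F).\]
The first term satisfies $W(T)\leq 0$ by the standing assumption on $W$.

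If $F=0$, the inequality is immediate. If $F\neq 0$, then $F$ is $\sigma$-semistable of finite slope $b$, so $\Im Z(F)>0$; if instead $\Im Z(F)=0$, the slope would be $+\infty$. By the definition of $\Phi_W$ as a limsup, we have $-W(F)/\Im Z(F)\leq \Phi_W(b)$. The one point to watch here is whether the limsup over unpunctured neighborhoods includes the value at $t=b$ itself. The paper's footnote says neighborhoods are unpunctured, so the value at $b$ is included. Hence
\[-W(F)-w\Im Z(F)\leq (\Phi_W(b)-w)\Im Z(F)\leq 0\]
whenever $w\geq\Phi_W(b)$, and this gives the weak stability function property.

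For the kernel, suppose $w>\Phi_W(b)$ and $Z^{b,w}(E)=0$. Both summands above are nonpositive, so both vanish. If $F\neq0$, the strict inequality $(\Phi_W(b)-w)\Im Z(F)<0$ is a contradiction, so $F=0$. Then $E=T\in\cA^Z$ and $W(E)=0$. Conversely, any $E\in\cA^Z$ with $W(E)=0$ lies in $\cA^b$: it is in $\cT^b$, since objects of $\cA^Z$ have slope $+\infty$, and then $Z^{b,w}(E)=0$ directly. This establishes the set equality.

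The lattice statement then follows from the definitions. By definition, $\Lambda^{Z^{b,w}}$ is the saturation of the classes of $(\cA^b)^{Z^{b,w}}$, which equals the saturation of the classes of $\{E\in\cA^Z\colon W(E)=0\}$, namely $\Lambda_W$.

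The main obstacle is the point about the limsup at $b$ itself. If the limsup were punctured, one would instead need an upper semicontinuity argument. Since the paper uses unpunctured neighborhoods, this step is immediate.
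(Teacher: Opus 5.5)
Your proof is correct and follows the paper's argument. Both use Lemma \ref{imaginarypart} to reduce to extensions $0\to F[1]\to E\to T\to 0$ with $T\in\cA^Z$ and $F$ either zero or $\sigma$-semistable of slope $b$. Both then compare $-W(F)/\Im Z(F)$ with $\Phi_W(b)$ and use $W(T)\leq 0$, which gives the weak stability function and, for $w>\Phi_W(b)$, identifies the kernel. The paper leaves implicit the reverse inclusion $\{E\in\cA^Z\colon W(E)=0\}\subset(\cA^b)^{Z^{b,w}}$ and the role of the unpunctured $\limsup$; your proof checks both explicitly.
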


\begin{proof}
    By Lemma \ref{imaginarypart}, we know that \(\Im Z^{b,w}(E)=-(\Re Z(E)+b\Im Z(E)) \geqslant 0\) for any \(E \in \mathcal{A}^{b}\). Moreover, if \(\Re Z(E)+b\Im Z(E)=0\), there exists a short exact sequence in \(\mathcal{A}^{b}\):
	\[
		0 \to F[1] \to E \to T \to 0,
	\]where \(F,T \in \mathcal{A}\) with \(Z(T)=0\) and \(F\) is a \(\sigma\)-semistable object with slope \(b\) or \(F=0\). Therefore, \(Z^{b,w}\) is a weak stability function on \(\mathcal{A}^{b}\) if and only if \(\Re Z^{b,w}(F) \geqslant 0\) for all \(\sigma\)-semistable objects with slope \(b\). This is equivalent to
    \[
        w \geqslant -\frac{W(F)}{\Im Z(F)} 
    \] for any \(\sigma\)-semistable object \(F\) with slope \(b\). Moreover, if \(w>\Phi_W(b)\) and \(Z^{b,w}(E)=0\), then \(F=0\) and \(E \cong T \in \mathcal{A}^{Z}\) with \(W(E)=0\).
\end{proof}

\subsection{Support property}

Now, we discuss the support property of $\sigma^{b,w}$. Unlike the known approach \cite[Section 12]{bayer2016space}, which first proves the support property for $b\in \mathbb{Q}$ and then uses a deformation argument to extend it to all $b\in \mathbb{R}$, we use an argument that proves the support property for all $\sigma^{b,w}$ simultaneously. This is motivated by \cite[Section 4]{bayer2011space}.

Keep the settings in Theorem \ref{thm:tilt-stability}. Recall that
\[U_W= \{(b,w)\in \RR^2\colon w>\Phi_W(b)\}\]
is an open subset of $\RR^2$. For any $(b,w)\in \RR^2$, we define a ray
\[R^{b,w}\coloneqq \{(b,w+t)\in \RR^2\colon t\geq 0\}.\]
We define a map
\[\Pi_W\colon \KK(\cD)\setminus \{v\colon \Im Z(v)=0\}\to \RR^2\]
\[v\mapsto \left( -\frac{\Re Z(v)}{\Im Z(v)}, -\frac{W(v)}{\Im Z(v)} \right)\]
and set
\[S_W\coloneqq \left\{\Pi_W(E)\in \RR^2 \colon E \text{ is }\sigma\text{-semistable with }\Im Z(E)\neq 0 \right\}.\]

\begin{lemma}\label{lem:support-property}
Assume that
\[(\Lambda_W)_{\RR}=(\ker \Im Z)_{\mathbb{R}} \cap (\ker \Re Z)_{\mathbb{R}} \cap (\ker W)_{\mathbb{R}}.\]
Then the map
	\[
	(\Lambda/\Lambda_{W})_{\mathbb{R}} \cong \mathbb{R}^{3}, \quad v \to (\Im Z(v),\Re Z(v),W(v))
	\] 
is an isomorphism of \(\mathbb{R}\)-vector spaces. Moreover, there exists a continuous function $C_{b,w}\colon U_W\to \RR_{>0}$ such that for any $\nu_{b,w}$-semistable object $E\in \cA^b$, we have
\[\|\overline{\bv}(E)\|_{\max}\leq C_{b,w} |Z^{b,w}(E)|\]
for any $(b,w)\in U_W$, where the norm $\|-\|_{\max}$ on $(\Lambda/\Lambda_{W})_{\mathbb{R}}$ is defined by
\[\|v\|_{\max}\coloneqq \max\{|\Im Z(v)|, |\Re Z(v)|, |W(v)|\}.\]
\end{lemma}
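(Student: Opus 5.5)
The isomorphism part is linear algebra. The map $v\mapsto(\Im Z(v),\Re Z(v),W(v))$ on $\Lambda_{\RR}$ is surjective, because $\Im Z$, $\Re Z$, $W$ are $\RR$-linearly independent. Its kernel is $(\ker\Im Z)_{\RR}\cap(\ker\Re Z)_{\RR}\cap(\ker W)_{\RR}$, which equals $(\Lambda_W)_{\RR}$ by hypothesis. So it descends to an isomorphism $(\Lambda/\Lambda_W)_{\RR}\cong\RR^3$, and $\|-\|_{\max}$ is a norm there. The estimate then amounts to bounding $|\Im Z(E)|$, $|\Re Z(E)|$, $|W(E)|$ by $C_{b,w}|Z^{b,w}(E)|$, where
\[Z^{b,w}(E)=W(E)+w\Im Z(E)+\mathfrak{i}(-\Re Z(E)-b\Im Z(E)).\]

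The plan is geometric, in the spirit of \cite[Section 4]{bayer2011space}. If $\Im Z(E)=0$ and $E$ is $\nu_{b,w}$-semistable, Lemma \ref{imaginarypart} and the structure of $\cA^b$ give $\Re Z(E)\le 0$ and $W(E)\le 0$. Then $|Z^{b,w}(E)|\ge\max(|\Re Z(E)|,|W(E)|)$ and the bound holds with constant $1$. If $\Im Z(E)\neq0$, I use the point $\Pi_W(E)\in\RR^2$. The condition $Z^{b,w}(E)\in\RR_{\le0}$ (slope $+\infty$) happens exactly when $(b,w)$ equals $\Pi_W(E)$ in the $b$-coordinate. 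Otherwise $\nu_{b,w}(E)$ is the slope of the line through $(b,w)$ and $\Pi_W(E)$, up to sign conventions: indeed $Z^{b,w}(E)=\Im Z(E)\bigl((w-\Pi_W(E)_2)+\mathfrak{i}(b-\Pi_W(E)_1)\bigr)$. Hence
\[|Z^{b,w}(E)|=|\Im Z(E)|\cdot\operatorname{dist}\bigl((b,w),\Pi_W(E)\bigr).\]
Also $\|\overline{\bv}(E)\|_{\max}\le|\Im Z(E)|\max(1,|\Pi_W(E)_1|,|\Pi_W(E)_2|)$. So it suffices to show, for $\nu_{b,w}$-semistable $E$ with $\Im Z(E)\neq0$, that $\Pi_W(E)$ stays at distance $\ge\delta(b,w)>0$ from $(b,w)$, and that the ratio $\max(1,|\Pi_W(E)|)/\operatorname{dist}((b,w),\Pi_W(E))$ is bounded by a continuous function. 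Both follow once the point $\Pi_W(E)$ lies outside a fixed open neighbourhood of $(b,w)$ inside $U_W$. For points far away, the ratio is bounded by $1+|(b,w)|/\delta$.

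The key claim is therefore that $\Pi_W(E)\notin U_W$ for every $\nu_{b,w}$-semistable $E$ with $\Im Z(E)\neq0$. Granting it, we may take $\delta(b,w)$ to be the distance from $(b,w)$ to the closed set $\RR^2\setminus U_W$, which is continuous and positive. This gives a continuous $C_{b,w}$, after adding the trivial constant from the $\Im Z=0$ case.

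To prove the claim I argue by contradiction along the ray $R^{b,w}$ and the line through $(b,w)$ and $\Pi_W(E)$, using the wall structure.
\begin{itemize}
\item First, for $\sigma$-semistable $F$ the point $\Pi_W(F)$ lies on or below the graph of $\Phi_W$, by definition of $\Phi_W$ as a limsup, so $\Pi_W(F)\notin U_W$.
\item For a $\nu_{b,w}$-semistable $E$, I move $(b',w')$ along the segment joining $(b,w)$ and $\Pi_W(E)$. The slope $\nu_{b',w'}(E)$ is constant there in the projective sense, and semistability is preserved as long as no wall is crossed. A destabilizing subobject would have to have $\Pi_W$ on that same line, and one checks that walls for $E$ are lines through $\Pi_W(E)$.
\item Approaching $\Pi_W(E)$ while staying in $U_W$ makes $Z^{b',w'}(E)\to0$ with $E\notin\cA^Z$. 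Here I induct on $\Im Z^{b}(E)=-(\Re Z(E)+b\Im Z(E))$, which is discrete by the $\QQ[\mathfrak{i}]$ hypothesis, or on the HN/JH factors. This reduces to objects that remain semistable up to the large volume limit.
\item By Lemma \ref{lem:lvl}, those objects are $\sigma$-semistable or of the form $\cH^{-1}(E)$ $\sigma$-semistable with $\cH^0(E)\in\cA^Z$. For these, $W(\cH^0)\le0$ pushes $\Pi_W$ further below, so $\Pi_W(E)\notin U_W$.
\end{itemize}
The main obstacle is making this wall-crossing/deformation argument rigorous without already knowing the support property, which is needed to have locally finite walls. I would first try to use the $\QQ[\mathfrak{i}]$-discreteness of $\Im Z^{b}$ together with Noetherianity (Corollary \ref{cor:tilt-noetherian}) to get finiteness of destabilizing classes along a segment.
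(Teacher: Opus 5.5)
Your reduction is sound up to one point. The isomorphism part is correct. The identity $|Z^{b,w}(E)|=|\Im Z(E)|\cdot d\bigl((b,w),\Pi_W(E)\bigr)$ is correct. So is the observation that it suffices to bound $d\bigl((b,w),\Pi_W(E)\bigr)$ below by a positive continuous function of $(b,w)$. In the case $\Im Z(E)=0$ you claim $W(E)\le 0$, which is not justified, but it is also not needed, since then $|Z^{b,w}(E)|^2=\Re Z(E)^2+W(E)^2$.

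\textbf{The gap.} You never prove your key claim, $\Pi_W(E)\notin U_W$ for every $\nu_{b,w}$-semistable $E$ with $\Im Z(E)\neq0$, and your proposed route to it is circular. The tools you invoke are all contents of Theorem \ref{thm:wall-chamber-abstract}: that walls for $E$ are lines through $\Pi_W(E)$, local finiteness of walls, and closedness or openness of semistability along segments. The proof of that theorem uses Theorem \ref{thm:tilt-stability} in Step 1 and the present lemma in Step 2. Theorem \ref{thm:tilt-stability} is itself proved assuming the support bound you are trying to establish.

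Even granting walls, the segment from $(b,w)$ to $\Pi_W(E)$ may leave $U_W$ when $\Phi_W$ is not convex. In that case nothing keeps $E$ semistable up to $\Pi_W(E)$. This is why the paper proves the statement $\Pi_W(E)\notin U_{\mathsf g}$ only for a convex $\mathsf g\ge\Phi_W$, and only at the end of Theorem \ref{thm:wall-chamber-abstract}.

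Your fallback induction on $-\Re Z(E)-b\Im Z(E)$ does not rescue this. That quantity is not discrete for irrational $b$ (Corollary \ref{cor:tilt-noetherian} needs $b\in\QQ$), yet the lemma must hold on all of $U_W$.

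\textbf{The missing idea.} You do not need any global information about $\Pi_W(E)$; one comparison with a single $\sigma$-semistable piece suffices.

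Suppose $\Im Z(E)>0$ and $\nu_{b,w}(E)<+\infty$. Set $G\coloneqq\HN^-_\sigma(\cH^0(E))$. Then $G$ is a quotient of $E$ in $\cA^b$ with $\mu(E)\ge\mu(G)>b$, so semistability gives $\nu_{b,w}(E)\le\nu_{b,w}(G)$. Now $\nu_{b,w}(-)$ is the slope of the line from $(b,w)$ to $\Pi_W(-)$. Hence $\Pi_W(E)$ lies to the right of $\Pi_W(G)$ and below the line through $(b,w)$ and $\Pi_W(G)$. Elementary plane geometry then gives
$d\bigl((b,w),\Pi_W(E)\bigr)\ge d\bigl(R^{b,w},\Pi_W(G)\bigr)\ge d(R^{b,w},S_W)$.
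Measuring to the vertical ray $R^{b,w}$ rather than to the point $(b,w)$ is exactly what handles the case where that line has positive slope.

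If instead $\Im Z(E)<0$ or $\nu_{b,w}(E)=+\infty$, argue the same way with the subobject $\HN^+_\sigma(\cH^{-1}(E))[1]$.

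Finally, $d(R^{b,w},S_W)\ge d\bigl(R^{b,w},\{y\le\Phi_W(x)\}\bigr)>0$, by upper semicontinuity of $\Phi_W$ and $w>\Phi_W(b)$. This distance is $1$-Lipschitz in $(b,w)$, so $C_{b,w}=(1+|b|+|w|)/\min\{1,d(R^{b,w},S_W)\}$ works. The argument uses only the definition of semistability, so it is valid at every $(b,w)\in U_W$ and involves no circularity.
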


\begin{proof}
The isomorphism $(\Lambda/\Lambda_{W})_{\mathbb{R}} \cong \mathbb{R}^{3}$ is clear from the assumption.

Let \(M_{b,w}\) be the matrix
	\[
	   \begin{pmatrix}
		1 & b & w\\
		0 & 1 & 0\\
		0 & 0 & 1\\
	   \end{pmatrix}
        \]
    and \(N_{b,w}\) be the operator norm of $M_{b,w}^{-1}\colon \RR^3\to \RR^3$, where $\RR^3$ is equipped with the norm $\|-\|_{\max}$. Then for any \(v=(v_{0},v_{1},v_{2}) \in \mathbb{R}^{3}\), we have:
	\[
		(v_{0}, v_{1}, v_{2})=(v_{0}, v_{1}+b v_{0}, v_{2}+w v_{0})M_{b,w}^{-1}.
	\]
	Then
	\begin{align*}
		\|\overline{\bv}(E)\|_{\max}&=\max\{|\Im Z(E)|, |\Re Z(E)|, |W(E)|\} \\
        & \leq N_{b,w}\max\{|\Im Z(E)|, |\Re Z(E)+b \Im Z(E)|, |W(E)+w \Im Z(E)|\} \\
		& \leqslant N_{b,w}\max\{|\Im Z(E)|,|Z^{b,w}(E)|\}
	\end{align*} for any \(E \in \mathcal{D}\). If $\Im Z(E)=0$, then it is clear that
    \[\|\overline{\bv}(E)\|_{\max}\leq N_{b,w}|Z^{b,w}(E)|\leq \frac{N_{b,w}}{\min\{1, d(R^{b,w}, S_W)\}}|Z^{b,w}(E)|.\]

We claim that for any $(b,w)\in U_W$ and $\nu_{b,w}$-semistable object $E\in \cA^b$ with $\Im Z(E)\neq 0$, we have
\[\left|\frac{Z^{b,w}(E)}{\Im Z(E)}\right|\geq d(R^{b,w}, S_W)>0,\]
where $d(-,-)$ denotes the Euclidean distance. Indeed, by definition, we have
\[\left|\frac{Z^{b,w}(E)}{\Im Z(E)}\right|=d((b,w), \Pi_W(E)).\]
So it suffices to prove 
\[d((b,w), \Pi_W(E))\geq d(R^{b,w}, S_W)>0.\]
Note that
\[d(R^{b,w}, S_W)\geq d(R^{b,w}, \{y\leq \Phi_W(x)\})\]
which is strictly positive since $\Phi_W$ is upper semicontinuous and $w>\Phi_W(b)$.

To prove $d((b,w), \Pi_W(E))\geq d(R^{b,w}, S_W)$, we consider $T \coloneqq \cH^0(E)$ and $F\coloneqq \cH^{-1}(E)$. Assume that $\nu_{b,w}(E)<+\infty$. If $\Im Z(E)>0$, then $\mu(E)>b$ and $T\neq 0$. Since $T\in \cA^b$ and $\mu^{-}(T)>b$ by definition, we know that $\HN_{\sigma}^-(T)\in \cA^b$ is a quotient of $E$ in $\cA^b$. In particular, $\nu_{b,w}(E)\leq \nu_{b,w}(\HN_{\sigma}^-(T))$ and $\mu(E)\geq \mu(\HN_{\sigma}^-(T))>b$. Therefore, we have
\[d((b,w), \Pi_W(E))\geq d(R^{b,w}, \Pi_W(\HN_{\sigma}^-(T))) \geq d(R^{b,w}, S_W) \]
where the first inequality follows from $\nu_{b,w}(E)\leq \nu_{b,w}(\HN_{\sigma}^-(T))$ and $\mu(E)\geq\mu(\HN_{\sigma}^-(T))>b$, and the last one follows from the definition of $S_W$. If $\Im Z(E)<0$ or $\nu_{b,w}(E)=+\infty$, then by Lemma \ref{imaginarypart}, $\mu(E)\leq b$ and $F\neq 0$. A similar argument applies to $F$, giving the result.

By the above claim, we get 
\[\|\overline{\bv}(E)\|_{\max}\leq C_{b,w}|Z^{b,w}(E)|\coloneqq \frac{N_{b,w}}{\min\{1, d(R^{b,w}, S_W)\}}|Z^{b,w}(E)|\leq \frac{1+|b|+|w|}{\min\{1, d(R^{b,w}, S_W)\}}|Z^{b,w}(E)|\]
for any $(b,w)\in U_W$ and $\nu_{b,w}$-semistable object $E\in \cA^b$. Since \(N_{b,w}\) and \(d(R^{b,w},S_W)\) are continuous functions of $(b,w)$, we reach our conclusion.
\end{proof}

\subsection{Skewed weak stability functions}

In this subsection, we review necessary definitions and results from \cite{bridgeland:stability}. We will freely use the notions and properties of \emph{quasi-abelian categories}, \emph{strict subobjects, strict quotient objects}, and \emph{strict morphisms}, see \cite[Section 4]{bridgeland:stability} for an overview. A typical example of a quasi-abelian category is $\cP(I)$ for any slicing $\cP$ on $\cD$ and any interval $I\subset \RR$ of length $<1$.

\begin{definition}
Let \(\mathcal B\subset\mathcal D\) be a quasi-abelian subcategory and fix
\(\alpha\in\mathbb R\). A \emph{skewed weak stability function} on \(\mathcal B\)
with phase interval \((\alpha,\alpha+1]\) is a group homomorphism
\[
P\colon \KK(\mathcal B)\to\mathbb C
\]
such that for any $0\neq E\in \cB$, we have
\[P(E)\in \RR_{>0}\cdot\exp(\mathfrak{i}\pi\psi), \quad\alpha<\psi\leq \alpha+1\]
if $\psi \notin \ZZ$ and
\[P(E)\in \RR_{\geq 0}\cdot\exp(\mathfrak{i}\pi\psi), \quad\alpha<\psi\leq \alpha+1\]
if $\psi\in \ZZ$. The number $\psi=\psi(E)$ is called the \emph{phase} of $E$. 

We say $E$ is \emph{$P$-semistable} if $\psi(F)\leq \psi(E)\leq \psi(E/F)$ for any non-zero strict proper subobject $F\hookrightarrow E$. An \emph{HN filtration of $0\neq E\in \cB$ with respect to $P$} is a filtration of strict subobjects
\[0=E_0\subset E_1\subset E_2\subset \cdots \subset E_m=E\]
so that $E_{i}/E_{i-1}$ is $P$-semistable with $\psi(E_1/E_0)>\psi(E_2/E_1)>\cdots >\psi(E_m/E_{m-1})$.
\end{definition}

We will apply the theory of skewed weak stability functions in the following setting. Fix a weak stability condition $\sigma=(\cP, Z)$ on $\cD$. Fix a number $0<\epsilon_0<\frac{1}{8}$ and a group homomorphism $P\colon \KK(\cD)\to \CC$ such that
\[|P(E)-Z(E)|\leq \sin(\pi\epsilon_0)|Z(E)|\]
for any $\sigma$-semistable object $E\notin \cA^Z$ and $Z(F)=0$ implies $P(F)=0$ for any $F\in \cA$.

\begin{lemma}[{\cite[Lemma 7.3]{bridgeland:stability}}]
Assume that $E\in \cP(a,b)$ is $P$-semistable with $0<b-a<1-2\epsilon_0$, then $E\in \cP(\psi(E)-\epsilon_0,\psi(E)+\epsilon_0)$.
\end{lemma}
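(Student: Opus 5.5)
The plan is to adapt Bridgeland's argument for \cite[Lemma 7.3]{bridgeland:stability} to the weak setting. Objects of $\cA^Z$ have zero central charge, and by assumption also $P(F)=0$, so they neither affect phases nor obstruct the estimates. Fix $E\in \cP(a,b)$ that is $P$-semistable with $0<b-a<1-2\epsilon_0$. Since every nonzero $\sigma$-semistable $G\notin \cA^Z$ satisfies $|P(G)-Z(G)|\le \sin(\pi\epsilon_0)|Z(G)|$, the phase of $P(G)$ differs from $\phi_\sigma(G)$ by less than $\epsilon_0$. For $E$ itself, $Z(E)$ is a sum of the $Z(G_i)$ of its HN factors, all with phases in $(a,b)$. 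Each $P(G_i)$ then lies in the sector of phases $(a-\epsilon_0,b+\epsilon_0)$, which has width $<1$, so $\psi(E)$ is well defined and lies in this sector. This makes $P$ a skewed weak stability function on the quasi-abelian category $\cP(a,b)$ with a suitable phase interval.

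Next I will compare $\psi(E)$ with the extreme HN phases $\phi^+_\sigma(E)$ and $\phi^-_\sigma(E)$. Take the first HN factor $E_1=\HN^+_\sigma(E)\subset E$ in the slicing. Because $E\in\cP(a,b)$ and $E_1\in \cP(\phi^+)$, the inclusion $E_1\hookrightarrow E$ is a strict monomorphism in $\cP(a,b)$. Its cokernel lies in $\cP(a,\phi^+)$, which is contained in $\cP(a,b)$. By $P$-semistability, $\psi(E_1)\le\psi(E)$.

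There is one subtlety: if $E_1\in\cA^Z$ up to shift, then $P(E_1)=0$ and the comparison carries no information. In that case I pass to the first factor not in $\cA^Z$. This case can only occur at integer phase. To deal with it, I would take the subobject $E'\subset E$ given by the sum of the HN factors of phase $\ge\phi$ for the largest $\phi$ whose factor has nonzero central charge. The factors with zero $Z$ contribute nothing to $P$, so $\psi(E')$ is determined by the nonzero factors. Convexity then gives $\phi^+_{\mathrm{eff}}(E)-\epsilon_0<\psi(E')\le\psi(E)$. Symmetrically, using the last HN quotient gives $\psi(E)\le\psi(\text{quotient})<\phi^-_{\mathrm{eff}}(E)+\epsilon_0$.

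Combining the two bounds yields $\phi^+<\psi(E)+\epsilon_0$ and $\phi^->\psi(E)-\epsilon_0$, so $E\in\cP(\psi(E)-\epsilon_0,\psi(E)+\epsilon_0)$. Here the ``effective'' phases must still be handled: factors in $\cA^Z[k]$ sit only at integer phases inside $(a,b)$. I expect the main obstacle to be showing that such factors also lie within $\epsilon_0$ of $\psi(E)$. If $E$ has a factor in $\cA^Z[k]$ at phase $k\in(a,b)$, it sits between factors of nonzero central charge, so its phase lies between the effective extremes. The exception is when it is itself the extreme factor. In that case I would argue directly: if the top factor $E_1$ lies in $\cA^Z[k]$, then $\psi(E)\le\psi(E/E_1)$ holds formally since $P(E_1)=0$, which gives no contradiction. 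So I would instead use the quotient $E\twoheadrightarrow E/E_1$ and the strict inequality defining $\cP$ to bound $k$ relative to $\psi(E)$, possibly using that $P$-semistability must also be tested against strict subobjects with zero $P$. If this fails, I would interpret the claim for such $E$ via the convention that phases of $\cA^Z$-objects are defined to be $1$ (shifted to $k$), and check the inequalities case by case.
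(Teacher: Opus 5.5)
Your main argument is Bridgeland's. You test $P$-semistability of $E$ against the strict subobject $\HN^+_\sigma(E)$ and the strict quotient $\HN^-_\sigma(E)$ in $\cP(a,b)$, and you use that the $P$-phase of a $\sigma$-semistable object is close to its $\sigma$-phase. That part is fine, and it is all the paper relies on, since it only cites \cite[Lemma 7.3]{bridgeland:stability}.

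\textbf{The gap.} The one step that is new in the weak setting is an HN factor with $Z=0$ at an end of the filtration. You leave this case unresolved, and your reason for treating it as a problem is mistaken. You say that if $Z(E_1)=0$ then ``the comparison carries no information''. But under the paper's definition of a skewed weak stability function with phase interval $(\alpha,\alpha+1]$, an object $F$ with $P(F)=0$ still has a phase: the unique integer in $(\alpha,\alpha+1]$.
\begin{itemize}
\item Choose the phase interval to contain $(a-\epsilon_0,b+\epsilon_0)$; this is possible since $b-a+2\epsilon_0<1$.
\item A nonzero $F\in\cP(a,b)$ with $Z(F)=0$ lies in $\cP(k)$ for the unique integer $k\in(a,b)$, so $\psi(F)=k=\phi_\sigma(F)$.
\item Hence if $Z(E_1)=0$, the subobject condition $\psi(E_1)\le\psi(E)$ gives directly $\phi^+_\sigma(E)=k\le\psi(E)$.
\item Dually, if $Z(\HN^-_\sigma(E))=0$, the quotient condition gives $\psi(E)\le\phi^-_\sigma(E)$.
\end{itemize}
The paper's definition of $P$-semistability imposes both $\psi(F)\le\psi(E)$ and $\psi(E)\le\psi(E/F)$, and these are not equivalent when $P(F)=0$. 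You examined the vacuous one, $\psi(E)\le\psi(E/E_1)$, instead of the informative one. With this, the phase estimate holds for every nonzero $\sigma$-semistable $G\in\cP(a,b)$ (trivially when $Z(G)=0$), and Bridgeland's two-line argument goes through verbatim.

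Your detour through ``effective'' phases only controls the factors with $Z\neq 0$. Your last paragraph ends with tentative fallbacks rather than an argument. So as written, the proposal does not cover exactly the case that distinguishes this statement from Bridgeland's lemma.

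\textbf{A smaller point.} From the hypothesis $|P(G)-Z(G)|\le\sin(\pi\epsilon_0)|Z(G)|$ you only get $|\psi(G)-\phi_\sigma(G)|\le\epsilon_0$, not the strict inequality you assert. For example, $P=\cos(\pi\epsilon_0)e^{\mathfrak{i}\pi\epsilon_0}Z$ satisfies the hypothesis and shifts every phase by exactly $\epsilon_0$. Then any nonzero $E\in\cP(\phi)$ with $0<a<\phi<b<1$ is $P$-semistable but does not lie in $\cP(\psi(E)-\epsilon_0,\psi(E)+\epsilon_0)$.

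With the non-strict hypothesis, your argument yields only the closed interval; the open interval needs the strict hypothesis, as in Bridgeland. This does not affect the paper's use of the lemma. There $|Z^{b',w'}-Z^{b,w}|<\frac{1}{C}\sin(\pi\epsilon)$ is strict, which gives the strict inequality for objects with $Z^{b,w}\neq 0$.
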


\begin{definition}
Suppose $0<b-a<1-2\epsilon_0$. A non-zero object $E\in \cP(a,b)$ is said to be \emph{enveloped by $\cP(a,b)$} if $a+\epsilon_0\leq \psi(E)\leq b-\epsilon_0$.

A \emph{thin subcategory} of $\cD$ is a full subcategory of the form $\cP(a,b)$ where $0<b-a<1-2\epsilon_0$.
\end{definition}


The following property is important in our later proof.

\begin{lemma}[{\cite[Lemma 7.5]{bridgeland:stability}}]\label{lem:envelop}
Suppose an object $0\neq E\in \cD$ is enveloped by both thin categories $\cP(a,b)$ and $\cP(a',b')$. Then $E$ is $P$-semistable in $\cP(a,b)$ if and only if $E$ is $P$-semistable in $\cP(a',b')$.
\end{lemma}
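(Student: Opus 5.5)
The plan is to follow Bridgeland's argument for \cite[Lemma 7.5]{bridgeland:stability} and check at each step that it survives in the weak setting. We use the standing hypotheses on $P$: $|P(E)-Z(E)|\le\sin(\pi\epsilon_0)|Z(E)|$ for $\sigma$-semistable $E\notin\cA^Z$, and $P$ vanishes wherever $Z$ does.

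\textbf{Step 1: reduce to nested thin subcategories.} Set $c=\max\{a,a'\}$ and $d=\min\{b,b'\}$. Since $E$ is enveloped by both thin subcategories, we have $c+\epsilon_0\le\psi(E)\le d-\epsilon_0$, so $d-c\ge 2\epsilon_0>0$. Hence $\cP(c,d)=\cP(a,b)\cap\cP(a',b')$ is a thin subcategory that envelops $E$. It therefore suffices to prove the claim for a pair of thin subcategories $\cP(a',b')\subset\cP(a,b)$ such that $E$ is enveloped by both.

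\textbf{Step 2: the easy direction.} Let $\cP(a',b')\subset\cP(a,b)$. A strict short exact sequence in $\cP(a',b')$ is an exact triangle whose three vertices lie in $\cP(a',b')$, hence in $\cP(a,b)$. So it is also a strict short exact sequence in $\cP(a,b)$. Consequently $P$-semistability in $\cP(a,b)$ implies $P$-semistability in $\cP(a',b')$.

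\textbf{Step 3: the converse, set-up.} Suppose $E$ is $P$-semistable in $\cP(a',b')$. By \cite[Lemma 7.3]{bridgeland:stability}, $E\in\cP(\psi(E)-\epsilon_0,\psi(E)+\epsilon_0)\subset\cP(a',b')$. Assume $E$ is not semistable in $\cP(a,b)$. Then there is a strict sequence $F\to E\to G$ in $\cP(a,b)$ with $\psi(F)>\psi(E)$; we may equivalently work with a quotient $G$ of smaller phase.

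\textbf{Step 4: truncation, the main obstacle.} We must move this destabilizing piece into $\cP(a',b')$. Take the $\sigma$-HN truncation triangle
\[F_{>}\to F\to F_{\le},\qquad F_{>}\in\cP(>\psi(E)-\epsilon_0),\quad F_{\le}\in\cP(\le\psi(E)-\epsilon_0).\]
Every map $F_{\le}\to E$ vanishes, but that is the wrong vanishing for factoring $F\to E$. Instead I would argue symmetrically on the quotient side, using $\Hom(E,G_{<})=0$ for the part $G_{<}\in\cP(<\psi(E)-\epsilon_0)$. Following Bridgeland, the strategy is as follows. First, replace $F$ by the image-type object obtained from the composite $F_{>}\to E$, and take its cone inside the abelian-like window $\cP(\psi(E)-\epsilon_0,\psi(E)+\epsilon_0+\text{small})$. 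Second, check that this object lies in $\cP(a',b')$; the envelope hypothesis, namely that $[\psi(E)-\epsilon_0,\psi(E)+\epsilon_0]\subset(a',b')$ with room to spare since $\epsilon_0<1/8$, is exactly what makes this work. Third, check that its $P$-phase is still $>\psi(E)$.

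\textbf{Step 5: the phase estimate.} The phase comparison uses the sector estimate: each $\sigma$-semistable piece $A\notin\cA^Z$ has $P$-phase within $\epsilon_0$ of its $\sigma$-phase. So discarding the pieces of $\sigma$-phase $\le\psi(E)-\epsilon_0$ can only raise the $P$-phase. In the weak setting, pieces in $\cA^Z$ occur only at integer phases and satisfy $P=0$, so they do not affect any $P$-phase. This produces a strict subobject of $E$ inside $\cP(a',b')$ of larger phase, contradicting semistability there.

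The delicate point is Step 4: constructing the strict subobject inside $\cP(a',b')$ while controlling its phase with the approximate inequality $|P-Z|\le\sin(\pi\epsilon_0)|Z|$.
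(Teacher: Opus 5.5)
Steps 1--3 are fine. So is your truncation of the destabilizing subobject. Since $F_>\hookrightarrow F\hookrightarrow E$ is a composite of strict monomorphisms in $\cP(a,b)$, $F_>$ is already a strict subobject of $E$ there, so no ``image'' is needed. It lies in $\cP(\psi(E)-\epsilon_0,\psi(E)+\epsilon_0)\subset\cP(a',b')$ because $\phi^+(F)\le\phi^+(E)$, and $\psi(F_>)\ge\psi(F)>\psi(E)$ by the sector estimate. (Your claim that every map $F_\le\to E$ vanishes is false, though you do not use it: Hom-vanishing only goes from higher to lower phases, and $F_\le$ sits below $E$.)

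The gap is the quotient. A strict short exact sequence in $\cP(a',b')$ needs its third vertex in $\cP(a',b')$ too. For a strict quotient $G$ of $E$ in $\cP(a,b)$ you only get $\phi^-(G)\ge\phi^-(E)$. The upper phases of $G$ are bounded by $b$, not by $b'$, because the envelope hypothesis constrains $E$, not its quotients. For example, for slope stability on a curve, $\cO_C\to\cO_C(x)\to k_x$ is a strict sequence in $\cP(\tfrac25,\tfrac{21}{20})$, yet $k_x$ has phase $1>\psi(\cO_C(x))+\epsilon_0$. Since $E/F_>$ sits in a triangle $F_\le\to E/F_>\to G$, it can have $\sigma$-phases anywhere in $[b',b)$. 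So it does not lie in the narrow window you name, $F_>\hookrightarrow E$ need not be strict in $\cP(a',b')$, and Step 5 produces no contradiction.

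What is missing is a second truncation, on the quotient side.
\begin{itemize}
\item Put $Q\coloneqq E/F_>$. As a strict quotient of $E$ in $\cP(a,b)$ it satisfies $\phi^-(Q)\ge\phi^-(E)>\psi(E)-\epsilon_0$. Split it as $Q_{\ge b'}\to Q\to Q_{<b'}$.
\item The composite $E\twoheadrightarrow Q\twoheadrightarrow Q_{<b'}$ is a strict epimorphism in $\cP(a,b)$. By the octahedral axiom its kernel $F'$ sits in a triangle $F_>\to F'\to Q_{\ge b'}$.
\item $F'\in\cP(a',b')$: the lower bound comes from this extension, and $\phi^+(F')\le\phi^+(E)<\psi(E)+\epsilon_0\le b'$ because $F'\hookrightarrow E$. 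Also $Q_{<b'}\in\cP(a',b')$.
\item $\psi(F')>\psi(E)$, because $P(Q_{\ge b'})$ is zero or has phase $\ge b'-\epsilon_0\ge\psi(E)$. This uses the right-hand envelope inequality exactly as your truncation of $F$ used $a'+\epsilon_0\le\psi(E)$.
\item Hence $F'\to E\to Q_{<b'}$ is a strict destabilizing sequence in $\cP(a',b')$. Here $Q_{<b'}\neq0$, since otherwise $F'\cong E$.
\end{itemize}
If by ``image-type object'' you meant the image of $F_>\to E$ in the quasi-abelian category $\cP(a',b')$, that image is precisely this $F'$. But identifying it and proving its phase bound is the actual content of the lemma, and your proposal does neither.
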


\begin{definition}
A \emph{maximal destabilizing quotient (mdq)} of an object $0\neq E\in \cP(a,b)$ is a non-zero strict quotient $E\twoheadrightarrow B$ in $\cP(a,b)$ such that any non-zero strict quotient $E\twoheadrightarrow B'$ satisfies $\psi(B')\geq \psi(B)$, with equality only if $E\twoheadrightarrow B'$ factors via $E\twoheadrightarrow B$.
\end{definition}

\subsection{Proof of Theorem \ref{thm:tilt-stability}}

Now, we prove the main theorem of this section, which is divided into several steps and lemmas. We will prove the result by replacing the assumption
\[(\Lambda_W)_{\RR}=(\ker \Im Z)_{\mathbb{R}} \cap (\ker \Re Z)_{\mathbb{R}} \cap (\ker W)_{\mathbb{R}}\]
in Theorem \ref{thm:tilt-stability} by a more general assumption:

\begin{itemize}
    \item Assume that for any \((b,w) \in U_W\), the pair (\(Z^{b,w},\mathcal{A}^{b})\) satisfies the support property such that there exists a continuous function \(C \colon U_W \to \mathbb{R}_{>0}\) with the property that for any \((b,w) \in U_W\) and any \(\nu_{b,w}\)-semistable object \(E \in \mathcal{A}^{b}\) with \(Z^{b,w}(E) \neq 0\), we have
    \[
        \left\|\overline{\bv}(E)\right\| \leqslant C_{b,w}|Z^{b,w}(E)|.
    \]Here \(\left\|\cdot\right\|\) is a fixed norm on the vector space \((\Lambda/\Lambda_{W})_{\mathbb{R}}\).
\end{itemize}
Under the hypotheses of Theorem \ref{thm:tilt-stability}, this condition is ensured by Lemma \ref{lem:support-property}.

By Lemma \ref{lem:weakstabilityfunction}, $Z^{b, w}$ is a weak stability function on $\cA^{b}$.

If $b\in \QQ$, by Corollary \ref{cor:tilt-noetherian}, we know that $\cA^{b}$ is Noetherian. Then \cite[Proposition 4.10]{macri:lecture-bridgeland-stability} shows that $Z^{b, w}$ satisfies the HN property. Moreover, the pair $\sigma^{b,w}$ satisfies the support property by Lemma \ref{lem:support-property}. This means $\sigma^{b, w}$ is a weak stability condition when $b\in \QQ$. 

To treat the case $b\in \RR$ and glue them into a continuous family, we use a similar argument as in \cite[Section 12]{bayer2016space} and \cite[Section 7]{bridgeland:stability}. 

Fix $(b,w)\in U_W$ with $b\in \QQ$. Denote by $\cP_{b,w}$ the associated slicing of $\sigma^{b,w}$ and by $\phi_{b,w}(-)$ the phase function. We take $0<\epsilon<\frac{1}{20}$ so that $\epsilon\leq d(R^{b,w}, S_W)$ and we have

\begin{itemize}

    \item for any $(b',w')\in U_W$ in the open ball $B_{\epsilon}((b,w))$ and any \(\nu_{b',w'}\)-semistable object \(E \in \cA^{b'}\) with \(Z^{b',w'}(E) \neq 0\), we have \(\|\overline{\bv}(E)\| \leqslant C|Z^{b',w'}(E)|\), where $C>0$ is the maximum of $C_{b,w}$ on the closed ball $\overline{B}_{\epsilon}((b,w))$.
\end{itemize}
Note that the existence of $C$ comes from the continuity of $C_{b,w}$ by our assumption. In the rest of the proof, we fix a point $(b',w')\in U_W\cap B_{\epsilon}((b,w))$ so that

\begin{itemize}
    \item $|Z^{b',w'}-Z^{b,w}|<\frac{1}{C}\sin{(\pi\epsilon)}$.
\end{itemize}
In particular, the set of such $(b',w')$ forms an open neighborhood of $(b,w)$. We further assume that $b'\notin \QQ$.

Note that we always equip $\Hom_{\ZZ}(\Lambda, \CC)$ with the operator norm. So the assumption above and Lemma \ref{lem:weakstabilityfunction} imply that for any \(\nu_{b,w}\)-semistable object \(E \in \cA^{b}\), we have
\begin{equation}\label{eq:diff-phase-epsilon}
    |Z^{b',w'}(E)-Z^{b,w}(E)|\leq |Z^{b',w'}-Z^{b,w}|\cdot \|\overline{\bv}(E)\|\leq \sin{(\pi\epsilon)}|Z^{b,w}(E)|.
\end{equation}

In the following, we may assume that $b'<b$, since the argument for $b'>b$ is similar. Therefore, we have
\begin{equation}\label{eq:b-b'}
b-d(R^{b,w}, S_W)<b'<b.
\end{equation}

We will use the following result without mentioning it.

\begin{lemma}[{\cite[Lemma 1.1.2]{polishchuk2007constant}}]
The abelian category $\cA^{b}$ has a torsion pair $(\cA^{b'}[1]\cap \cA^b, \cA^{b'}\cap \cA^b)$. The abelian category $\cA^{b'}$ has a torsion pair $(\cA^{b}\cap \cA^{b'}, \cA^{b}[-1]\cap \cA^{b'})$.
\end{lemma}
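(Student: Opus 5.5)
The plan is to deduce both torsion pairs from one general fact about two hearts in a tilting position. The only input specific to our situation is that $\cA^{b}\subset\langle \cA^{b'}[1],\cA^{b'}\rangle$ when $b'<b$.

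\textbf{Step 1: tilting position.} Since $b'<b$, every $\sigma$-semistable object of slope $>b$ also has slope $>b'$, so $\cT^{b}\subset\cT^{b'}\subset\cA^{b'}$. Now take $F\in\cF^{b}$. Using the HN filtration with respect to $\sigma$ (equivalently, the torsion pair $(\cT^{b'},\cF^{b'})$ in $\cA$), we get a short exact sequence $0\to T\to F\to F'\to 0$ in $\cA$ with $T\in\cT^{b'}$ and $F'\in\cF^{b'}$. Hence $F[1]$ is an extension of $F'[1]\in\cA^{b'}$ by $T[1]\in\cA^{b'}[1]$.

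The subcategory $\langle\cA^{b'}[1],\cA^{b'}\rangle$ consists of the objects whose $\cA^{b'}$-cohomology is concentrated in degrees $-1,0$, so it is extension closed. Therefore $\cA^{b}=\langle\cF^{b}[1],\cT^{b}\rangle\subset\langle\cA^{b'}[1],\cA^{b'}\rangle$. This is equivalent to $\cA^{b'}\subset\langle\cA^{b},\cA^{b}[-1]\rangle$.

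\textbf{Step 2: the general fact.} Let $\cA',\cB$ be hearts of bounded t-structures on $\cD$ with $\cA'\subset\langle\cB[1],\cB\rangle$. We claim $(\cB[1]\cap\cA',\ \cB\cap\cA')$ is a torsion pair in $\cA'$.

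First, $\Hom(\cB[1],\cB)=0$, so the Hom-vanishing condition holds. Next, for $E\in\cA'$, truncate with respect to the $\cB$-t-structure to get a triangle $X\to E\to Y\to X[1]$ with $X\in\cB[1]$ and $Y\in\cB$; this uses that $E$ has $\cB$-cohomology only in degrees $-1,0$. It remains to show $X,Y\in\cA'$.

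The inclusion $\cA'\subset\langle\cB[1],\cB\rangle$ is equivalent to $\cB\subset\langle\cA',\cA'[-1]\rangle$. So $X$ has $\cA'$-cohomology in degrees $[-1,0]$ and $Y$ has $\cA'$-cohomology in degrees $[0,1]$. The long exact sequence of $\cA'$-cohomology gives:
\begin{itemize}
\item $0=H^{-2}_{\cA'}(Y)\to H^{-1}_{\cA'}(X)\to H^{-1}_{\cA'}(E)=0$, hence $H^{-1}_{\cA'}(X)=0$;
\item $0=H^{1}_{\cA'}(E)\to H^{1}_{\cA'}(Y)\to H^{2}_{\cA'}(X)=0$, hence $H^{1}_{\cA'}(Y)=0$.
\end{itemize}
Thus $X,Y\in\cA'$, and the triangle is a short exact sequence in $\cA'$ with torsion part $X$ and torsion-free part $Y$.

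\textbf{Step 3: conclusion.} Applying Step 2 with $\cA'=\cA^{b}$ and $\cB=\cA^{b'}$ gives the first torsion pair. Applying it with $\cA'=\cA^{b'}$ and $\cB=\cA^{b}[-1]$ gives the second, $(\cA^{b}\cap\cA^{b'},\ \cA^{b}[-1]\cap\cA^{b'})$; here the hypothesis $\cA^{b'}\subset\langle\cA^{b},\cA^{b}[-1]\rangle$ is exactly the reformulation from Step 1.

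The only step needing care is Step 1, namely the decomposition of $F\in\cF^{b}$. It relies on the HN property of $\sigma$ on $\cA$, which is available because $\sigma$ is a weak stability condition. Everything else is formal t-structure bookkeeping.
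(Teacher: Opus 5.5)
Your proof is correct and follows the same route as the paper. The paper simply cites Polishchuk's Lemma 1.1.2, which is exactly your Step 2 for two hearts in tilting position. Your Step 1 supplies the hypothesis $\cA^{b}\subset\langle \cA^{b'}[1],\cA^{b'}\rangle$, which holds because $b'<b$ in the paper's setup and which the paper leaves implicit.
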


The following lemma is useful.

\begin{lemma}\label{lem:deformation-1}
Let $0\neq E\in \cD$.

\begin{enumerate}
    \item If $E\in \cP_{b,w}(0,\frac{1}{2}]$, then $E\in \cA^{b'}$ and $\Im Z^{b',w'}(E)>0$.

    \item If $E\in \cA^{b'}[1]\cap \cA^b$, then $\phi^-_{b,w}(E)>1-\epsilon$ and $\Im Z^{b',w'}(E)<0$.

\end{enumerate}

\end{lemma}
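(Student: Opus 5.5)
Both parts reduce to comparing the real parts $\Re Z^{b',w'}$ and $\Re Z^{b,w}$ (which are linear in $W$, $\Im Z$, $\Re Z$), and the imaginary parts
\[\Im Z^{b',w'}=-\Re Z-b'\Im Z=\Im Z^{b,w}+(b-b')\Im Z.\]
Throughout, we use the HN filtration of $E$ with respect to $\sigma^{b,w}$ and the estimate \eqref{eq:diff-phase-epsilon} for each HN factor.

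\textbf{Part (a).} Let $E\in\cP_{b,w}(0,\frac12]$. First we show $E\in\cA^{b'}$. The torsion pair $(\cA^{b'}[1]\cap\cA^b,\cA^{b'}\cap\cA^b)$ on $\cA^b$ gives a sequence $0\to T\to E\to Q\to 0$ in $\cA^b$ with $T\in\cA^{b'}[1]\cap\cA^b$. If $T\neq 0$, part (b) gives $\phi^-_{b,w}(T)>1-\epsilon$. But $T$ is a subobject of $E$ in $\cA^b$, so $\phi^+_{b,w}(T)\le\phi^+_{b,w}(E)\le\frac12$. This is a contradiction, so $E\in\cA^{b'}$.

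For the positivity of $\Im Z^{b',w'}(E)$, take each HN factor $E_i$ of $E$, of phase in $(0,\frac12]$. By \eqref{eq:diff-phase-epsilon}, $Z^{b',w'}(E_i)$ lies within angle $\epsilon$ of $Z^{b,w}(E_i)$. When $Z^{b,w}(E_i)\neq 0$, this puts it in phase $(-\epsilon,\frac12+\epsilon)$, which only gives $\Im\ge$ something possibly negative. So we need the sharper formula above.

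Since $E\in\cA^{b'}$, we already have $\Im Z^{b',w'}(E)\ge 0$. Suppose equality holds. Lemma \ref{imaginarypart} for $b'$ then gives $E$ as an extension of $T\in\cA^Z$ by $F[1]$, where $F$ is $\sigma$-semistable of slope $b'$ or $F=0$. If $F\neq0$, then $F[1]\subset E$ in $\cA^{b'}$. Moreover $\Pi_W(F)=(b',\ast)$, and the distance constraint \eqref{eq:b-b'} combined with $\epsilon\le d(R^{b,w},S_W)$ shows $\Pi_W(F)$ lies below the curve. This forces $\Im Z^{b,w}(F[1])=(b'-b)\Im Z(F)<0$, contradicting $F[1]\in\cA^b$. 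Otherwise $E=T\in\cA^Z$, which has phase $1$ in $\sigma^{b,w}$, contradicting $E\in\cP_{b,w}(0,\frac12]$.

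\textbf{Part (b).} Let $E\in\cA^{b'}[1]\cap\cA^b$. Then $E[-1]\in\cA^{b'}$, so $\Im Z^{b',w'}(E)\le 0$. Write $E[-1]$ via $\cH^{-1},\cH^0$ relative to $\cA$. Membership in $\cA^b$ forces $\cH^{-1}(E[-1])=0$ and $\cH^0(E[-1])\in\cF^b$, and also $\mu^-_\sigma\ge\ldots$. Concretely, $E=G[1]$ with $G\in\cA$, $\mu^+_\sigma(G)\le b$ and $\mu^-_\sigma(G)>b'$, so every HN factor $G_j$ of $G$ has slope in $(b',b]$. Strictness of $\Im Z^{b',w'}(E)<0$ comes from $G\notin\cA^Z$: if $G\in\cA^Z$ then $G[1]\notin\cA^b$. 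Then
\[\Im Z^{b',w'}(G_j[1])=-(b'\,-\mu(G_j))\Im Z(G_j)\cdot(-1)<0.\]

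For the phase bound, $\Pi_W(G_j)$ has first coordinate in $(b',b]$, which is within $d(R^{b,w},S_W)$ of $b$, and second coordinate at most $\Phi_W<w$. Hence $Z^{b,w}(G_j[1])=-(W+w\Im Z)(G_j)+\mathfrak i(\mu(G_j)-b)\Im Z(G_j)$ has real part $<0$ and small relative imaginary part. So its phase is close to $1$.

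The key step is to quantify this closeness through the distance $d((b,w),\Pi_W(G_j))\ge d(R^{b,w},S_W)\ge\epsilon$, as in the proof of Lemma \ref{lem:support-property}. I expect this geometric estimate to be the main obstacle. Once it is in hand, an HN-filtration argument in $\cP_{b,w}$ (the minimal phase factor is a quotient of $E$, and is again of the form $G'[1]$ with slopes in $(b',b]$) gives $\phi^-_{b,w}(E)>1-\epsilon$.
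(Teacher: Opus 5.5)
Your proposal has two genuine gaps.

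\textbf{Gap in (a): the positivity step has a sign error.} With $\mu(F)=b'$,
\[
\Im Z^{b,w}(F[1])=\Re Z(F)+b\Im Z(F)=(b-b')\Im Z(F)>0 .
\]
Moreover, $F[1]\in\cA^b$ holds automatically, because $\mu(F)=b'<b$ puts $F$ in $\cF^b$. So your argument produces no contradiction.

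The contradiction has to come from the phase of $E$. Since $T\in\cA^Z\subset\cA^b$ and $F\in\cF^b$, the sequence $0\to F[1]\to E\to T\to 0$ is exact in $\cA^b$, not only in $\cA^{b'}$. Hence $\phi_{b,w}(F[1])\le\phi^+_{b,w}(E)\le\frac12$. This means $\Re Z^{b,w}(F[1])\ge 0$, i.e.\ $-W(F)/\Im Z(F)\ge w$. So $\Pi_W(F)\in S_W$ sits at height $\ge w$ over $b'$. Then $d(R^{b,w},\Pi_W(F))=b-b'<d(R^{b,w},S_W)$ by \eqref{eq:b-b'}, which is absurd.

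Your remark that $\Pi_W(F)$ lies below height $w$ is correct. Its consequence, however, is $\Re Z^{b,w}(F[1])<0$ (phase $>\frac12$), not a negative imaginary part. The paper runs the same distance argument on $\HN^+_\sigma(\cH^{-1}(E))$ for $\nu_{b,w}$-semistable $E$. There it yields both $E\in\cA^{b'}$ and the strict positivity at once.

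\textbf{Gap in (b): the phase bound $\phi^-_{b,w}(E)>1-\epsilon$ is not proved.} The route you indicate does not close with the constraints you cite. Take $\mu(G_j)\in(b',b]$ with $\Pi_W(G_j)$ at distance $\ge d\coloneqq d(R^{b,w},S_W)$ from $(b,w)$. You only get $\sin(\pi(1-\phi))<(b-b')/d$ for the phase $\phi$ of $Z^{b,w}(G_j[1])$. But \eqref{eq:b-b'} together with $\epsilon\le d$ gives merely $(b-b')/d<1$. To reach $1-\epsilon$ you would need $b-b'\le d\sin(\pi\epsilon)$, which forces you back to the choice $|Z^{b',w'}-Z^{b,w}|<\frac1C\sin(\pi\epsilon)$ and a comparison of $C$ with $d$. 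Also, the $G_j[1]$ need not be $\nu_{b,w}$-semistable, so \eqref{eq:diff-phase-epsilon} does not apply to them.

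The missing idea is to apply \eqref{eq:diff-phase-epsilon} directly to the $\nu_{b,w}$-semistable object $A\coloneqq\HN^-_{\sigma^{b,w}}(E)$:
\begin{itemize}
\item $\cA^{b'}[1]\cap\cA^b$ is a torsion class in $\cA^b$, hence closed under quotients, so $A$ lies in it.
\item $Z^{b',w'}(A)\neq 0$; otherwise $A\in\cA^Z$ by Lemma \ref{lem:weakstabilityfunction}, which is impossible inside $\cA^{b'}[1]$.
\item So $Z^{b',w'}(A)$ is within angle $\pi\epsilon$ of $Z^{b,w}(A)$, while $\Im Z^{b',w'}(A)\le 0\le \Im Z^{b,w}(A)$.
\item Your height observation, applied to the $\sigma$-HN factors of $A[-1]$ (slopes in $(b',b]$), gives $\Re Z^{b,w}(A)<0$. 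Hence $\phi_{b,w}(A)>1-\epsilon$.
\end{itemize}

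Your deduction of $E\in\cA^{b'}$ in (a) via the torsion pair only needs $\phi^-_{b,w}(T)>\frac12$. The same height observation already gives this, so that step does not depend on the full estimate in (b). Your proof of $\Im Z^{b',w'}(E)<0$ in (b) is fine.
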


\begin{proof}
To prove (a), it suffices to assume that $E$ is $\nu_{b,w}$-semistable with $\nu_{b,w}(E)\leq 0$. In this proof, we set $F\coloneqq \cH^{-1}(E)$. Then $\HN^+_{\sigma}(F)[1]$ is a subobject of $E$ in $\cA^b$. In particular, by the $\nu_{b,w}$-semistability, we have $\nu_{b,w}(\HN^+_{\sigma}(F))\leq \nu_{b,w}(E)$ and $\mu(\HN^+_{\sigma}(F))<b$. Therefore, we get
\[-\frac{W(\HN^+_{\sigma}(F))}{\Im Z(\HN^+_{\sigma}(F))}\geq \nu_{b,w}(E)(\mu(\HN^+_{\sigma}(F))-b)+w\geq w,\]
which implies $d(R^{b,w}, \Pi_W(\HN^+_{\sigma}(F)))=b-\mu(\HN^+_{\sigma}(F))$. In particular, we have
\[d(R^{b,w}, S_W)\leq d(R^{b,w}, \Pi_W(\HN^+_{\sigma}(F)))=b-\mu(\HN^+_{\sigma}(F))\]
and hence
\[b-b'< b-\mu(\HN^+_{\sigma}(F))\]
by \eqref{eq:b-b'} and gives $F[1]\in \cA^{b'}$. Since $b'<b$, we also have $\cH^0(E)\in \cA^{b'}$, which together implies $E\in \cA^{b'}$.

If $\Im Z^{b',w'}(E)=0$, then by Lemma \ref{imaginarypart}, $Z(\cH^0(E))=0$ and either $F=0$ or $F$ is $\sigma$-semistable with $\mu(F)=b'$, which is impossible since $\mu(\HN^+_{\sigma}(F))<b'$ as we proved above. This completes the proof of (a).

To prove part (b), note that $\cA^b$ has a torsion pair $(\cA^{b'}[1] \cap \cA^b, \cA^{b'} \cap \cA^b)$, so $\cA^{b'}[1] \cap \cA^b$ is closed under the quotient of objects in $\cA^b$. In particular, $\HN^-_{\sigma^{b,w}}(E)\in \cA^{b'}[1] \cap \cA^b$. Note that $Z^{b',w'}(\HN^-_{\sigma^{b,w}}(E))\neq 0$, otherwise $\HN^-_{\sigma^{b,w}}(E)\in \cA^Z$ by Lemma \ref{lem:weakstabilityfunction} and contradicts $\HN^-_{\sigma^{b,w}}(E)\in \cA^{b'}[1]$. Therefore, by \eqref{eq:diff-phase-epsilon} and $\HN^-_{\sigma^{b,w}}(E)\in \cA^{b'}[1] \cap \cA^b$, we get
\[\phi^-_{b,w}(E)=\phi_{b,w}(\HN^-_{\sigma^{b,w}}(E))>1-\epsilon\]
and the result follows.
\end{proof}

By \eqref{eq:diff-phase-epsilon}, $Z^{b',w'}$ is a skewed weak stability function on $\cP_{b,w}(I)$ for any interval $I\subset \RR$ of length $< 1-2\epsilon$. The phase of $E\in \cP_{b,w}(I)$ under $Z^{b',w'}$ is denoted by $\psi_{b',w',I}(E)$. Note that if $J$ is another interval of length $< 1-2\epsilon$ containing $I$, then $$\psi_{b',w',I}(E)=\psi_{b',w',J}(E)\in I+(-\epsilon, \epsilon)$$ for $E\in \cP_{b,w}(I)$. We will drop $I$ in the subscript of $\psi_{b',w',I}$ if it is clear.

We need a crucial technical result.

\begin{lemma}\label{lem:deformation-2}
Suppose that $x,y$ are real numbers with $-\frac{1}{2}<x\leq -\epsilon$, $0<y\leq \frac{1}{2}$, and $2\epsilon<y-x<1-2\epsilon$. Then for any $0\neq E\in \cP_{b,w}(x,y)$, there is a strict subobject $E_1\hookrightarrow E$ in $\cP_{b,w}(x,y)$ such that $E_1\in \cA^{b'}$ and $E/E_1\in \cP_{b,w}(x,0]\cap \cA^{b'}[-1]$. Moreover, if $E_1\neq 0$, then $\Im Z^{b',w'}(E_1)>0$.

Moreover, if $x', y'$ is another pair of real numbers satisfying the same numerical condition and $E\in \cP_{b,w}(x',y')$, then the resulting strict subobject remains the same in \(\mathcal{P}_{b,w}(x',y')\).
\end{lemma}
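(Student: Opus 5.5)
The plan is to build $E_1$ from the HN filtration of $\sigma^{b,w}$, cutting at phase $0$, and then to refine the part of phase $\le 0$ using the torsion pair $(\cA^{b'}[1]\cap \cA^b, \cA^{b'}\cap \cA^b)$ on $\cA^b$ from \cite[Lemma 1.1.2]{polishchuk2007constant}. The uniqueness statement will then follow because the resulting triangle is a truncation triangle for the t-structure with heart $\cA^{b'}$.

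\textbf{Step 1: cut $E$ at phase $0$.} Given $0\neq E\in \cP_{b,w}(x,y)$, the slicing $\cP_{b,w}$ gives a triangle $E_{>0}\to E\to E_{\le 0}$ with $E_{>0}\in \cP_{b,w}(0,y)$ and $E_{\le 0}\in\cP_{b,w}(x,0]$. Since $y\le \frac12$, Lemma \ref{lem:deformation-1}(a) gives $E_{>0}\in \cA^{b'}$, and $\Im Z^{b',w'}(E_{>0})>0$ whenever $E_{>0}\neq 0$.

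\textbf{Step 2: split the non-positive part.} Next, $E_{\le 0}[1]\in \cP_{b,w}(x+1,1]\subset \cA^b$. The torsion pair yields a short exact sequence $0\to T\to E_{\le 0}[1]\to Q\to 0$ in $\cA^b$ with $T\in \cA^{b'}[1]\cap\cA^b$ and $Q\in \cA^{b'}\cap\cA^b$. By Lemma \ref{lem:deformation-1}(b), $\phi^-_{b,w}(T)>1-\epsilon$, and since $T\in\cA^b$ we also have $\phi^+_{b,w}(T)\le 1$. Hence $T[-1]\in\cP_{b,w}(-\epsilon,0]\subset \cP_{b,w}(x,0]$; this is exactly where the hypothesis $x\le -\epsilon$ is used. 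Moreover $Q$ is a quotient in $\cA^b$ of an object of $\cP_{b,w}(x+1,1]$, so $\phi^-_{b,w}(Q)\ge \phi^-_{b,w}(E_{\le 0}[1])>x+1$. Therefore
\[Q[-1]\in \cP_{b,w}(x,0]\cap \cA^{b'}[-1].\]

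\textbf{Step 3: define $E_1$ and check the properties.} Define $E_1$ via the octahedral axiom as the object fitting into triangles $E_{>0}\to E_1\to T[-1]$ and $E_1\to E\to Q[-1]$. Both $E_{>0}$ and $T[-1]$ lie in $\cA^{b'}$ and in $\cP_{b,w}(x,y)$, so $E_1$ lies in both categories by extension-closedness. The triangle $E_1\to E\to Q[-1]$ has all three terms in $\cP_{b,w}(x,y)$, so it is a strict short exact sequence in this quasi-abelian category, and $E_1\hookrightarrow E$ is a strict subobject. For the imaginary part, we have
\[\Im Z^{b',w'}(E_1)=\Im Z^{b',w'}(E_{>0})-\Im Z^{b',w'}(T).\]
The first term is $>0$ unless $E_{>0}=0$, and $-\Im Z^{b',w'}(T)>0$ unless $T=0$, by Lemma \ref{lem:deformation-1}(b). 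So $\Im Z^{b',w'}(E_1)>0$ whenever $E_1\neq0$.

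\textbf{Step 4: independence of $(x,y)$.} The triangle $E_1\to E\to E/E_1$ has $E_1\in\cA^{b'}=\cA^{b'}[0]$ and $E/E_1\in\cA^{b'}[-1]$. Hence it is the truncation triangle $\tau^{\le 0}E\to E\to \tau^{\ge 1}E$ for the bounded t-structure with heart $\cA^{b'}$ (up to the usual shift convention). Because $\Hom(\cA^{b'},\cA^{b'}[-1])=0$, this triangle is unique up to unique isomorphism and depends only on $E$. Consequently, the subobject produced from another admissible pair $(x',y')$ coincides with $E_1$, and it is still a strict subobject in $\cP_{b,w}(x',y')$ by the Step 3 argument applied there.

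The main obstacle is the phase control in Step 2, namely showing that $T[-1]$ and $Q[-1]$ stay inside $\cP_{b,w}(x,0]$. This rests on Lemma \ref{lem:deformation-1}(b), on $x\le-\epsilon$, and on the monotonicity of $\phi^-$ under quotients in $\cA^b$.
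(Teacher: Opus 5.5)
Your proposal is correct and follows essentially the paper's proof. Both arguments cut $E$ at phase $0$ using Lemma \ref{lem:deformation-1}(a), split the non-positive part with the torsion pair coming from $\cA^{b'}$ while controlling phases via Lemma \ref{lem:deformation-1}(b), and then glue the pieces back together (your octahedral construction is the paper's ``pullback''). Your independence argument, which identifies $E_1\to E\to E/E_1$ as the truncation triangle for the t-structure with heart $\cA^{b'}$, is a slightly more explicit version of the paper's remark that the construction depends only on the slicing and the torsion pair.
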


\begin{proof}
Since $\cP_{b,w}$ is a slicing on $\cD$, we have a strict exact sequence
\[0\to A\to E\to B\to 0\]
in $\cP_{b,w}(x,y)$ such that $A\in \cP_{b,w}(0,y)$ and $B\in \cP_{b,w}(x,0]$. By Lemma \ref{lem:deformation-1}(a), if $A\neq 0$, then $A\in \cA^{b'}$ and $\Im Z^{b',w'}(A)>0$. Therefore, if the statement holds for $B$, then by pullback to $E$, the statement also holds for $E$. So we may assume that $E=B\in \cP_{b,w}(x,0]\subset \cA^b[-1]$.

Note that $\cA^b[-1]$ has the torsion pair $(\cA^{b'} \cap \cA^b[-1], \cA^{b'}[-1] \cap \cA^b[-1])$. Therefore, we have an exact sequence
\[0\to E_1\to E\to E_2\to 0 \]
in $\cA^b[-1]$ with $E_1\in \cA^{b'} \cap \cA^b[-1]$ and $E_2\in \cA^{b'}[-1] \cap \cA^b[-1]$. From the construction and $E\in \cP_{b,w}(x,0]$, we have
\[x<\phi_{b,w}^-(E)\leq \phi^-_{b,w}(E_2)\leq \phi^+_{b,w}(E_2)\leq 0,\]
which implies $E_2\in \cP_{b,w}(x,0]$. Moreover, by Lemma \ref{lem:deformation-1}(b), we also have $$E_1\in \cP_{b,w}(-\epsilon, 0]\subset  \cP_{b,w}(x,0].$$ Hence, the exact sequence above is also a strict exact sequence in $\cP_{b,w}(x,y)$, and the result follows.

The independence statement follows because all these objects are constructed from the slicing $\cP_{b,w}$ and the torsion pair $(\cA^{b}[-1] \cap \cA^{b'}, \cA^{b}[-1] \cap \cA^{b'}[-1])$, which does not depend on $x,y$.
\end{proof}

Now, we prove an abstract generalization of \cite[Lemma 12.4]{bayer2016space}.

\begin{lemma}\label{lem:deformation-bms}
Suppose that $x,y$ are real numbers with $-\frac{1}{2}<x\leq -\epsilon$, $0<y\leq \frac{1}{2}$, and $2\epsilon<y-x<1-2\epsilon$. Then for any $0\neq E\in \cP_{b,w}(x,y)$, there is a strict filtration of subobjects $$0=E_0\subset E_1\subset E_2\subset E_3=E$$ in $\cP_{b,w}(x,y)$ so that
\begin{enumerate}
		\item we have \(E_{1} \in \mathcal{A}^{b'}\), and \(\Im Z^{b',w'}(N)>0\) for any nonzero strict quotient \(N\) of \(E_{1}\) in \(\mathcal{P}_{b,w}(x,y)\);
		\item \(E_{2}/E_{1} \in  \mathcal{P}_{b,w}(-\epsilon,0] \cap \mathcal{A}^{b'}[-1]\) is \(Z^{b',w'}\)-semistable in \(\mathcal{P}_{b,w}(x,y)\), and \(\Im Z^{b',w'} (E_{2}/E_{1})=0\);
		\item \(E_{3}/E_{2} \in  \mathcal{P}_{b,w}(x,0)\cap \mathcal{A}^{b'}[-1]\), and for any nonzero strict subobject \(M\) of \(E_{3}/E_{2}\) in \(\mathcal{P}_{b,w}(x,y)\), we have \(\Im Z^{b',w'}(M)<0\).
	\end{enumerate}
    Moreover, if $x', y'$ is another pair of real numbers satisfying the same numerical condition and $E\in \cP_{b,w}(x',y')$, then the resulting filtration remains the same in \(\mathcal{P}_{b,w}(x',y')\).
\end{lemma}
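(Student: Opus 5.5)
We build the filtration by starting from Lemma \ref{lem:deformation-2} and then refining its quotient, in the spirit of \cite[Lemma 12.4]{bayer2016space}. First apply Lemma \ref{lem:deformation-2} to $E$. This gives a strict subobject $E'\hookrightarrow E$ in $\cP_{b,w}(x,y)$ with $E'\in \cA^{b'}$ and $Q\coloneqq E/E'\in \cP_{b,w}(x,0]\cap \cA^{b'}[-1]$.

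On $\cA^{b'}[-1]$ we have $\Im Z^{b',w'}\leq 0$. Since $E'\in\cA^{b'}$, we have $\Im Z^{b',w'}\geq 0$ on all its $\cA^{b'}$-quotients. The two conditions we must control are therefore the strict quotients of $E_1$ with $\Im Z^{b',w'}=0$ and the strict subobjects of $E_3/E_2$ with $\Im Z^{b',w'}=0$. We treat them in the following steps.

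\textbf{Step 1: constructing $E_2/E_1$.} Inside $Q$, we take the maximal strict subobject $Q_0\subset Q$ in $\cP_{b,w}(x,y)$ that lies in $\cA^{b'}[-1]$ and has $\Im Z^{b',w'}(Q_0)=0$. Such a subobject should be the part of $Q$ of $Z^{b',w'}$-phase exactly $0$ in the skewed sense. To show it exists and is unique, we use the Noetherian-type property. Objects with $\Im Z^{b',w'}=0$ in $\cA^{b'}[-1]$ are, by Lemma \ref{imaginarypart}, extensions of $F$ with $F$ $\sigma$-semistable of slope $b'$ by shifts of objects of $\cA^Z$. Since $b'\notin\QQ$ while $\sigma$ is defined over $\QQ[\mathfrak{i}]$, the only such objects have $Z=0$, i.e. they are of the form $T[-1]$ with $T\in\cA^Z$. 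Because $\cA^Z$ is a Noetherian torsion subcategory (the tilting property), a maximal such subobject exists. This is the key use of $b'\notin\QQ$. By Lemma \ref{lem:deformation-1}(b) applied to $T$, this piece lies in $\cP_{b,w}(-\epsilon,0]$. It is automatically $Z^{b',w'}$-semistable, since every strict subquotient also has $\Im Z^{b',w'}=0$ and so has phase $0$.

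\textbf{Step 2: moving torsion into $E_2/E_1$ and defining the filtration.} Pieces of $E'$ that admit a quotient with $\Im Z^{b',w'}=0$ must also be accounted for. Such quotients are again of the form $T$ or $T[-1]$ with $T\in \cA^Z$, by the same argument. If $E'$ has a strict quotient $E'\twoheadrightarrow N$ with $\Im Z^{b',w'}(N)=0$, then $N\in\cA^Z$ would have $Z^{b,w}$-phase $1$, which is outside $(x,y)$; hence $N$ cannot be a nonzero strict quotient inside $\cP_{b,w}(x,y)$. This argument should give (a) for $E_1=E'$. We then set $E_2$ to be the preimage of $Q_0$ and $E_3=E$.

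\textbf{Step 3: verifying (c).} Any nonzero strict subobject $M$ of $Q/Q_0$ with $\Im Z^{b',w'}(M)=0$ would enlarge $Q_0$, contradicting its maximality. So $\Im Z^{b',w'}(M)<0$. The phase bound $Q/Q_0\in\cP_{b,w}(x,0)$ follows because any phase-$0$ part of $Q$ has $Z^{b,w}\in \RR_{>0}$, hence has $\Im Z^{b',w'}$ near $0$. Such a part is forced into $Q_0$, using Lemma \ref{lem:deformation-1} and \eqref{eq:diff-phase-epsilon}.

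\textbf{Step 4: independence of $(x,y)$.} Every construction above uses only the slicing $\cP_{b,w}$, the torsion pairs from \cite[Lemma 1.1.2]{polishchuk2007constant}, and the intrinsic maximal subobject. The filtration is therefore the same for any other admissible pair $(x',y')$, as in Lemma \ref{lem:deformation-2}.

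The main obstacle is Step 1: proving that the maximal $\Im Z^{b',w'}=0$ subobject exists and is strict in the quasi-abelian category $\cP_{b,w}(x,y)$. This requires carefully translating strict monomorphisms in $\cP_{b,w}(x,y)$ into monomorphisms in $\cA^{b}[-1]$ and $\cA^{b'}[-1]$, and then invoking the Noetherian property of $\cA^Z$ via Lemma \ref{lem:tilt-property-no-infty-seq}.
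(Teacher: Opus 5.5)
Your overall route is the paper's: apply Lemma~\ref{lem:deformation-2}, then split the quotient $Q\in\cP_{b,w}(x,0]\cap\cA^{b'}[-1]$ by a torsion pair inside $\cA^{b'}[-1]$.

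Your observation that $b'\notin\QQ$ and the rationality of $\sigma$ force $\cI^{b'}=\cA^Z$ (via Lemma~\ref{imaginarypart}) is correct. It genuinely simplifies the paper's Step~2, which works with the general pair $(\cI^{b'},(\cI^{b'})^{\perp})$. With it, $Q_0=T[-1]$ is just the $\cA^Z$-torsion part of $Q[1]$ in $\cA^{b'}$, and its existence is immediate from Lemma~\ref{lem-tilt-weak-stab}. So the ``main obstacle'' you name is not where the difficulty lies. Also, $Q_0\in\cP_{b,w}(0)$ holds because $Z^{b,w}(T)=W(T)\leq 0$; Lemma~\ref{lem:deformation-1}(b) concerns $\cA^{b'}[1]\cap\cA^b$ and does not apply to $T$.

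The genuine gap is that you treat strict subobjects and quotients in the quasi-abelian category $\cP_{b,w}(x,y)$ as if they were subobjects or quotients in $\cA^{b'}$ or $\cA^{b'}[-1]$. As a result you only examine the case $\Im Z^{b',w'}=0$. Nothing you wrote excludes any of the following:
\begin{itemize}
\item a strict subobject $M$ of $E_3/E_2$ with $\Im Z^{b',w'}(M)>0$;
\item a strict quotient $N$ of $E_1$ with $\Im Z^{b',w'}(N)<0$;
\item a destabilizing strict subobject of $Q_0$ with positive imaginary part.
\end{itemize}
So ``automatically semistable'' in (b), and the deduction of (c) from the maximality of $Q_0$, are unjustified.

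The missing idea (the paper's Steps~3--4) is to apply Lemma~\ref{lem:deformation-2} to $M$ itself. Its $\cA^{b'}$-part $M_1\subset M$ composes to a monomorphism $M_1\to E_3/E_2$, which must vanish since $\Hom(\cA^{b'},\cA^{b'}[-1])=0$. Hence $M\in\cP_{b,w}(x,0]\cap\cA^{b'}[-1]$ and $\Im Z^{b',w'}(M)\leq 0$. Equality would force $M[1]\in\cA^Z$, contradicting $(E_3/E_2)[1]\in(\cA^Z)^{\perp}$. The same argument applied to a destabilizing $F_3\subset Q_0$ gives (b).

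Dually, apply Lemma~\ref{lem:deformation-2} to a strict quotient $N$ of $E_1$. The epimorphism $E_1\to N\to N/N_1$, with $N/N_1\in\cA^{b'}[-1]$, is zero, so $N\in\cA^{b'}$. Only then does your phase-$1$ argument for $\cA^Z$ finish (a).

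Finally, strictness of $Q_0\subset Q$, i.e.\ $Q/Q_0\in\cP_{b,w}(x,0)$, needs the torsion-pair argument rather than ``$\Im Z^{b',w'}$ near $0$''. First, $Q\to Q/Q_0$ is also an epimorphism in $\cA^b[-1]$, which gives $\phi^-_{b,w}(Q/Q_0)>x$. Second, a phase-$0$ subobject $G'$ of $Q/Q_0$ in $\cA^b[-1]$ lies in $\cA^{b'}[-1]$. By Lemma~\ref{imaginarypart} and $b'<b$ it then equals $T'[-1]$ with $T'\in\cA^Z$, again contradicting torsion-freeness.
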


\begin{proof} We divide the proof into several steps.

\textbf{Step 1.}

Since $\cP_{b,w}$ is a slicing on $\cD$, we have a strict exact sequence
\[0\to A\to E\to B\to 0\]
in $\cP_{b,w}(x,y)$ such that $A\in \cP_{b,w}(0,y)$ and $B\in \cP_{b,w}(x,0]$. By Lemma \ref{lem:deformation-1}(a), if $A\neq 0$, then $A\in \cA^{b'}$ and $\Im Z^{b',w'}(A)>0$. Moreover, any non-zero strict quotient \(N\) of \(A\) in \(\mathcal{P}_{b,w}(x,y)\) is contained in $\cP_{b,w}(0,y)$, so $\Im Z^{b',w'}(N)>0$ by Lemma \ref{lem:deformation-1}(a) again. Therefore, if the statement holds for $B$, then by pulling back the filtration to $E$, the statement also holds for $E$. So we may assume that $E=B\in \cP_{b,w}(x,0]\subset \cA^b[-1]$.

We take $E_1\subset E$ as in Lemma \ref{lem:deformation-2}. Therefore, we only need to prove the statement to $E/E_1$. So we reduce to the case $E_1=0$ and $E\in \mathcal{P}_{b,w}(x, 0] \cap \mathcal{A}^{b'}[-1]$.

\textbf{Step 2.}

Recall that $\cA^{b'}$ has a torsion pair $(\cI^{b'}, (\cI^{b'})^{\perp})$ by the tilting property of $\sigma$, Lemma \ref{Noetherianlem}, and Lemma \ref{lem:tilt-property-no-infty-seq}. Therefore, we have an exact sequence
\[0\to F\to E\to G\to 0\]
in $\cA^{b'}[-1]$ so that $F\in \cI^{b'}[-1]$ and $G\in (\cI^{b'})^{\perp}[-1]$. In this step, we aim to show $F\in \mathcal{P}_{b,w}(-\epsilon, 0]$ and $G\in \mathcal{P}_{b,w}(x, 0)$. We may assume that both $F$ and $G$ are non-zero.

First, we show $F\in \cA^b[-1]\cap \cA^{b'}[-1]$. We know that there exists an exact sequence
\[0\to F_1\to F\to F_2\to 0\]
in $\cA^{b'}[-1]$ such that $F_1\in \cA^b[-1]\cap \cA^{b'}[-1]$ and $F_2\in \cA^b[-2]\cap \cA^{b'}[-1]$. Since $F\in \cI^{b'}[-1]$, we get
\[\Im Z^{b',w'}(F_1)=\Im Z^{b',w'}(F)=\Im Z^{b',w'}(F_2)=0.\]
If $F_2\neq 0$, then $\Im Z^{b',w'}(F_2)<0$ by Lemma \ref{lem:deformation-1}(b), which makes a contradiction. So $F_2=0$ and $F_1=F\in \cA^b[-1]$ as desired.

Next, we prove that $F\in \mathcal{P}_{b,w}(-\epsilon, 0]$, i.e.~$\phi^-_{b,w}(F)>-\epsilon$. Using the torsion pair $(\cA^b[-1]\cap \cA^{b'}, \cA^b[-1]\cap \cA^{b'}[-1])$ of $\cA^b[-1]$, we have an exact sequence
\[0\to F_1'\to \HN^-_{\sigma^{b,w}}(F)\to F_2'\to 0\]
in $\cA^b[-1]$, where $F_1'\in \cA^b[-1]\cap \cA^{b'}$ and $F_2'\in \cA^b[-1]\cap \cA^{b'}[-1]$. If $F_1'\neq 0$, then by Lemma \ref{lem:deformation-1}(b), we have
\[-\epsilon<\phi^-_{b,w}(F_1')\leq \phi^+_{b,w}(F_1')\leq \phi_{b,w}(\HN^-_{\sigma^{b,w}}(F))\]
and $\phi^-_{b,w}(F)>-\epsilon$ follows. If $F_1'=0$, then $\HN^-_{\sigma^{b,w}}(F)=F_2'\in \cA^b[-1]\cap \cA^{b'}[-1]$. By the property of the torsion pair $(\cA^b[-1]\cap \cA^{b'}, \cA^b[-1]\cap \cA^{b'}[-1])$ of $\cA^b[-1]$, we know that the kernel of $F\to \HN^-_{\sigma^{b,w}}(F)$ is in $\cA^b[-1]\cap \cA^{b'}[-1]$, so $F\to \HN^-_{\sigma^{b,w}}(F)$ is also surjective in $\cA^{b'}[-1]$. Therefore, from $F\in \cI^{b'}[-1]$, we get $$\Im Z^{b',w'}(F)=\Im Z^{b',w'}(\HN^-_{\sigma^{b,w}}(F))=0.$$ Combining this with $\HN^-_{\sigma^{b,w}}(F)\in \cA^b[-1]\cap \cA^{b'}[-1]$ and \eqref{eq:diff-phase-epsilon}, we get $\phi^-_{b,w}(F)>-\epsilon$ as desired.

Finally, we prove $G\in \cP_{b,w}(x,0)$. Since $E\in \cA^b[-1]\cap \cA^{b'}[-1]$, by the property of the torsion pair $(\cA^b[-1]\cap \cA^{b'}[-1], \cA^b[-2]\cap \cA^{b'}[-1])$ of $\cA^{b'}[-1]$, the surjection $E\to G$ in $\cA^{b'}[-1]$ is also surjective in $\cA^b[-1]$. Therefore, we obtain
\[x<\phi^-_{b,w}(E)\leq \phi^-_{b,w}(G)\]
and thus $G\in \cP_{b,w}(x,0]$. Note that if $G$ has a subobject $G'$ in $\cA^b[-1]$ with $\Im Z^{b,w}(G')=0$, by the property of the torsion pair $(\cA^b[-1]\cap \cA^{b'}, \cA^b[-1]\cap \cA^{b'}[-1])$ of $\cA^b[-1]$, we have $G'\in \cA^b[-1]\cap \cA^{b'}[-1]$. However, this contradicts Lemma \ref{imaginarypart} and $b'<b$. This proves $G\in \cP_{b,w}(x,0)$.

\textbf{Step 3.} In this step, we prove that $F$ is \(Z^{b',w'}\)-semistable in \(\mathcal{P}_{b,w}(x,y)\).

If $F$ is not \(Z^{b',w'}\)-semistable in \(\mathcal{P}_{b,w}(x,y)\), then there is a non-trivial strict exact sequence
\[0\to F_3\to F\to F_4\to 0\]
in \(\mathcal{P}_{b,w}(x,y)\) so that
\begin{equation}\label{eq:psi-inequal}
    \psi_{b',w'}(F_3)>\psi_{b',w'}(F)=0>\psi_{b',w'}(F_4).
\end{equation}
Since $F\in \cA^b[-1]\cap \cA^{b'}[-1]$, we have $\Hom_{\cD}(\cA^{b'}, F)=0$. Therefore, by applying Lemma \ref{lem:deformation-2} to $F_3$, we get $F_3\in \cP_{b,w}(x,0]\cap \cA^{b'}[-1]$. But this contradicts \eqref{eq:psi-inequal} as $\Im Z^{b',w'}(F)=0$.

\textbf{Step 4.} Finally, we prove that for any non-zero strict subobject $M$ of $G$ in $\cP_{b,w}(x,y)$, we have $\Im Z^{b',w'}(M)<0$.

Indeed, since $G\in \cP_{b,w}(x,0)\cap\cA^{b'}[-1]$, by applying Lemma \ref{lem:deformation-2} to $M$, we get $M\in \cP_{b,w}(x,0]\cap \cA^{b'}[-1]$, so $\Im Z^{b',w'}(M)\leq 0$. If $\Im Z^{b',w'}(M)= 0$, then $M\in \cI^{b'}[-1]$, which contradicts $G\in (\cI^{b'})^{\perp}$. This proves $\Im Z^{b',w'}(M)<0$ and completes the proof of the lemma.
\end{proof}

For any \(\psi \in \mathbb{R}\), we define:
\[
	\mathcal{Q}(\psi)\coloneqq \{E \in \mathcal{D} \colon E \text{ is } Z^{b',w'}\text{-semistable in } \mathcal{P}_{b,w}(\psi-\epsilon,\psi+\epsilon) \text{ with phase }\psi\} \bigcup \{0\}.
\] 
By Lemma \ref{lem:envelop}, we can alternatively write
\[
	\mathcal{Q}(\psi)= \{E \in \mathcal{D} \colon E \text{ is } Z^{b',w'}\text{-semistable in a thin subcategory with phase }\psi\} \bigcup \{0\}.
\] 

In the following, we will show that $\cQ$ is a slicing with $\cQ(0,1]=\cA^{b'}$.

\begin{lemma}\label{lem:slicing-Q-}
We have \(\mathcal{Q}(\psi)[1]=\mathcal{Q}(\psi+1)\) and \(\mathrm{Hom}_{\mathcal{D}}(\mathcal{Q}(\psi_{1}),\mathcal{Q}(\psi_{2}))=0\) for \(\psi_{1}>\psi_{2}\).
\end{lemma}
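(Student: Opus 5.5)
The shift property is formal, and I would dispose of it first. Shifting by $[1]$ carries the thin subcategory $\cP_{b,w}(\psi-\epsilon,\psi+\epsilon)$ isomorphically onto $\cP_{b,w}(\psi+1-\epsilon,\psi+1+\epsilon)$. It preserves strict exact sequences, and it sends $Z^{b',w'}(E)$ to $-Z^{b',w'}(E)$, so every skewed phase $\psi_{b',w'}$ increases by exactly $1$. Hence $E$ is $Z^{b',w'}$-semistable of phase $\psi$ in the first thin category if and only if $E[1]$ is $Z^{b',w'}$-semistable of phase $\psi+1$ in the second. This gives $\cQ(\psi)[1]=\cQ(\psi+1)$. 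I will use Lemma \ref{lem:envelop} whenever I need to move between thin categories, since it lets me regard a semistable object in any thin category that envelops it.

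For the Hom-vanishing, take nonzero $E_1\in\cQ(\psi_1)$ and $E_2\in\cQ(\psi_2)$ with $\psi_1>\psi_2$, and a morphism $f\colon E_1\to E_2$. By the shift property I may normalise $\psi_2$. I split into two cases according to the size of $\psi_1-\psi_2$.

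\begin{itemize}
\item If $\psi_1-\psi_2$ is large, say $\psi_1-\psi_2\geq 1-4\epsilon$, then $E_i\in\cP_{b,w}(\psi_i-\epsilon,\psi_i+\epsilon)$ by Bridgeland's Lemma 7.3 recalled above. The $\cP_{b,w}$-phases satisfy $\phi^-_{b,w}(E_1)>\psi_1-\epsilon>\psi_2+\epsilon>\phi^+_{b,w}(E_2)$, once $\epsilon<1/20$ is used to absorb the gap. Then $f=0$ by the slicing axiom for $\cP_{b,w}$.
\item If $\psi_1-\psi_2<1-4\epsilon$, both objects lie in the single thin category $\cP_{b,w}(\psi_2-\epsilon,\psi_1+\epsilon)$, whose length is $<1-2\epsilon$, and both are enveloped by it. By Lemma \ref{lem:envelop}, both are $Z^{b',w'}$-semistable there.
\end{itemize}

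In the second case I would argue as in Bridgeland's proof of \cite[Lemma 7.6]{bridgeland:stability}. Suppose $f\neq 0$. Being a morphism in the quasi-abelian category $\cP_{b,w}(I)$, $f$ factors as $E_1\twoheadrightarrow I_0\to E_2$, where $I_0$ is the strict image (coimage) and $I_0\to E_2$ is a monomorphism whose strict closure gives a strict subobject $\overline{I}\subset E_2$. Semistability of $E_1$ gives $\psi(I_0)\geq\psi_1$. Semistability of $E_2$, together with the fact that passing from $I_0$ to the strict closure $\overline{I}$ only changes the class by an object in the appropriate cone, gives $\psi(I_0)\leq\psi_2$. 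This is a contradiction.

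The main obstacle is the weak setting: objects with $Z^{b',w'}=0$, i.e.\ objects in $\cA^Z$ with $W=0$, have undefined or ambiguous phase. The strict image $I_0$ might have zero central charge, so neither inequality for $\psi(I_0)$ would be usable. I would handle this by showing that zero-charge strict quotients and subobjects force integer phase. Since $\psi_1\neq\psi_2$, at most one of the $E_i$ can sit at an integer phase, which reduces the argument to a direct check using Lemma \ref{lem:weakstabilityfunction} and the support-property inequality $\|\overline{\bv}(E)\|\leq C|Z^{b',w'}(E)|$. That inequality guarantees that an object with $Z^{b',w'}=0$ has class in $\Lambda_W$, hence lies in $\cA^Z$ up to shift and carries phase $1$ (mod $2$) by convention. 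I would then treat that residual integer-phase case separately, using $\Hom$-vanishing of $\cA^Z$ against $\sT$-type objects.
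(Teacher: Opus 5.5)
Your shift argument, the case split, and the coimage--image argument in the second case are the paper's proof. The paper simply says that Bridgeland's proof of \cite[Lemma 7.6]{bridgeland:stability} goes through, and that is what you reproduce.

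One imprecision in the image step: it does not give $\psi(I_0)\le\psi_2$; what it gives is $\psi(\overline{I})\ge\psi_1$. Put $a=\psi_2-\epsilon$ and $c=\psi_1+\epsilon$ (note $c\le a+1$ in your second case). Then $\cP_{b,w}(a,c)$ is the torsion-free part of the torsion pair $(\cP_{b,w}[c,a+1],\cP_{b,w}(a,c))$ of the heart $\cP_{b,w}(a,a+1]$. Hence the cone of the monic and epic map $I_0\to\overline{I}$ lies in $\cP_{b,w}[c,a+1]$. Applying \eqref{eq:diff-phase-epsilon} to its $\cP_{b,w}$-HN factors (factors with $Z^{b,w}=0$ also have $Z^{b',w'}=0$), its $Z^{b',w'}$ is zero or has phase in $[\psi_1,\psi_2+1]$. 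The same holds for $Z^{b',w'}(I_0)$, since $\psi(I_0)\in[\psi_1,\psi_1+2\epsilon)$. This sector has angle less than $\pi$, and $\psi(\overline I)\in(\psi_2-2\epsilon,\psi_1+2\epsilon)$, so $\psi(\overline{I})\ge\psi_1>\psi_2$, which is the contradiction. The cone may have smaller phase than $I_0$, so your intermediate inequality is not available in general.

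The weak-setting paragraph should be discarded: it is not needed, and as written it does not work.

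\emph{Why it is not needed.} The definition of a skewed weak stability function already gives every nonzero object of zero charge the unique integer phase $k$ in the phase interval. $P$-semistability is defined by inequalities over all nonzero strict subobjects and quotients, so both inequalities still hold for $I_0$ and $\overline{I}$ when their charge vanishes. If $Z^{b',w'}(\overline I)=0$, both summands above must vanish, so $\psi(\overline I)=\psi(I_0)=k\ge\psi_1>\psi_2$, again a contradiction. So Bridgeland's argument covers the weak case verbatim, which is why the paper adds nothing.

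\emph{Why your fix fails.}
The bound $\|\overline{\bv}(E)\|\le C|Z^{b',w'}(E)|$ is only assumed for $\nu_{b',w'}$-semistable objects of $\cA^{b'}$ with \emph{nonzero} charge. Your $I_0$ is merely a strict quotient in $\cP_{b,w}(a,c)$, so the bound does not apply to it.
Having class in $\Lambda_W$ does not force an object into $\cA^Z$ up to shift.
The phase would be $k$, not ``$1$ mod $2$''.
No $\sT$-type vanishing $\Hom(\cA^Z,E_2)=0$ is available for an arbitrary $E_2\in\cQ(\psi_2)$.

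If you want the structural fact you were aiming for, apply \eqref{eq:diff-phase-epsilon} to $\cP_{b,w}$-HN factors. A nonzero object of a thin subcategory has $Z^{b',w'}=0$ only if it lies in $\cP_{b,w}(k)$ with $Z^{b,w}=0$. By Lemma \ref{lem:weakstabilityfunction}, that means it lies in $\{F\in\cA^Z\colon W(F)=0\}[k-1]$.
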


\begin{proof}
This follows from the same proof as \cite[Lemma 7.6]{bridgeland:stability}.
\end{proof}

\begin{lemma}\label{lem:Q-slicing}
The collection of full subcategories $\{\cQ(\psi)\}_{\psi\in \RR}$ is a slicing of $\cD$.
\end{lemma}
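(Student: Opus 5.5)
Given Lemma \ref{lem:slicing-Q-}, the only remaining axiom is the existence of Harder--Narasimhan filtrations with respect to $\cQ$. The plan is to follow \cite[Section 7]{bridgeland:stability} and \cite[Lemma 12.4]{bayer2016space}: we construct $\cQ$-HN filtrations inside thin subcategories of $\cP_{b,w}$ and glue them along the $\cP_{b,w}$-HN filtration. Since $\cQ(\psi+1)=\cQ(\psi)[1]$ and every object has a finite $\cP_{b,w}$-HN filtration, it suffices to do two things. First, show that every $0\neq E\in\cP_{b,w}(t-\eta,t+\eta)$ has a $\cQ$-HN filtration, for $\eta$ slightly larger than $\epsilon$ (say $\eta=2\epsilon$ with $4\epsilon<1-2\epsilon$). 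Second, show that concatenating these filtrations along a fine enough $\cP_{b,w}$-filtration gives a filtration whose phases decrease. For the concatenation step, we first chop the $\cP_{b,w}$-HN filtration of $E$ into pieces with phases in intervals of length $\epsilon$. We then refine each piece inside an enveloping thin subcategory of length $<1-2\epsilon$. The $\cQ$-phases of a piece in $\cP_{b,w}(I)$ lie in $I+(-\epsilon,\epsilon)$ by \cite[Lemma 7.3]{bridgeland:stability}. Consecutive pieces may therefore produce phases that are out of order, and we merge them exactly as in the proof of \cite[Theorem 7.1]{bridgeland:stability}, using Lemma \ref{lem:slicing-Q-} and Lemma \ref{lem:envelop} to show the merged factors remain $Z^{b',w'}$-semistable.

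The core step is existence of HN filtrations for the skewed weak stability function $Z^{b',w'}$ on a thin quasi-abelian category $\cP_{b,w}(a,a+1-3\epsilon)$. Bridgeland's argument needs two things: finiteness of chains of strict subobjects, and the existence of maximal destabilizing quotients. Chain finiteness comes from Lemma \ref{lem:supp-property-finite-length}. Since $(b,w)$ has $b\in\QQ$, $\sigma^{b,w}$ is a genuine weak stability condition with support property and $(\cA^b)^{Z^{b,w}}=\{E\in\cA^Z: W(E)=0\}$ by Lemma \ref{lem:weakstabilityfunction}. Hence the set of classes $\overline{\bv}$ of strict subquotients of $E$ is finite. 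When the interval avoids $\ZZ$, $\cP_{b,w}(I)$ has finite length, and the standard argument of \cite[Lemma 7.7]{bridgeland:stability} produces mdq's and then HN filtrations. When the interval contains an integer, we shift so that it is $(x,y)$ with $x\le-\epsilon<0<y$, as in Lemma \ref{lem:deformation-bms}. That lemma gives a canonical three-step filtration of $E$. The part $E_2/E_1$ is already $Z^{b',w'}$-semistable of phase $0$, while $E_1$ and $E_3/E_2$ have all strict quotients (resp.\ subobjects) of phase in $(0,y+\epsilon)$ (resp.\ $(x-\epsilon,0)$). Each of these two outer pieces lives, for the purposes of semistability testing, in a phase range avoiding the integer $0$. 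There the factors with $Z^{b',w'}=0$, which is where weak stability could break finiteness, cannot appear as destabilizers. So we run the finite-length mdq argument on $E_1$ and on $E_3/E_2$ separately and concatenate the three filtrations. The independence statements in Lemmas \ref{lem:deformation-2} and \ref{lem:deformation-bms} guarantee the result does not depend on the chosen thin subcategory.

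The main obstacle is this integer-phase case, because $\cP_{b,w}(I)$ need not have finite length there: objects of $\cA^Z$ with $W=0$ have $Z^{b,w}=0$, so ascending chains of strict subobjects need not stop. I would handle it by combining Lemma \ref{lem:deformation-bms} with the Noetherianity of $\cA^{b'}$ modulo $\cI^{b'}$ (Corollary \ref{cor:Noetherian}, Lemma \ref{Noetherianlem}) to control $E_1$. For $E_3/E_2$ I would use the finiteness of $\overline{\bv}$-classes together with the fact that phase-$0$ strict subobjects are excluded. If a chain of strict subobjects stabilizes only up to classes in $\Lambda_W$, the factors are in $\cA^Z$ and have phase exactly $0$ or $1$. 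These factors can be absorbed into $E_2/E_1$ or into the top factor of $E_1$ without changing the HN phases.
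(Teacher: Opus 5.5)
The overall skeleton matches the paper: reduce to Harder--Narasimhan filtrations for $Z^{b',w'}$ inside thin subcategories of $\cP_{b,w}$, settle the case where the interval avoids $\ZZ$ by Lemma \ref{lem:supp-property-finite-length} and \cite[Lemma 7.7]{bridgeland:stability}, and invoke Lemma \ref{lem:deformation-bms} when the interval contains an integer. The gap is the integer-phase case, which you flag as the main obstacle but do not actually resolve.

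After splitting off $E_2/E_1$, you propose to run the finite-length mdq argument on $E_1$ and $E_3/E_2$. These still live in $\cP_{b,w}(x,y)$ with $0\in(x,y)$, which is not of finite length. Lemma \ref{lem:deformation-bms} controls only the strict \emph{quotients} of $E_1$ (all with $\Im Z^{b',w'}>0$), not its strict subobjects. A destabilizing strict subobject $F\subset E_1$ may itself have strict quotients with $\Im Z^{b',w'}\le 0$, including zero-charge ones. So once the argument recurses to $F$, the claim that you stay in a phase range avoiding $0$ fails. Chains whose successive quotients have zero charge have non-strictly monotone phases and need not terminate.

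Your fallbacks do not repair this. Corollary \ref{cor:Noetherian} requires $\{\Re Z(E)+b'\Im Z(E)\colon E\in\cA^{b'}\}$ to be discrete, which fails precisely because $b'\notin\QQ$; that failure is why the paper deforms from rational $b$ at all. Strict subobjects of $E_1$ in $\cP_{b,w}(x,y)$ need not lie in $\cA^{b'}$ anyway. And absorbing $\Lambda_W$-factors of a chain inside $E_3/E_2$ into $E_2/E_1$ or $E_1$ is not a meaningful operation, since they are subquotients of $E_3/E_2$. A smaller point: Lemma \ref{lem:deformation-bms} needs $-\frac12<x$ and $y\le\frac12$, which an interval of length $1-3\epsilon$ with the integer near an endpoint violates. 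Work with much shorter intervals, as the paper does with $E\in\cP_{b,w}(t-\epsilon,t+\epsilon)$ placed in $J=(-\frac12+4\epsilon,\frac12-5\epsilon)$.

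The missing idea has two parts.

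First, finite length is unnecessary. The mdq argument only uses that there is no infinite chain of strict subobjects with strictly increasing $\psi_{b',w'}$ and no infinite chain of strict quotients with strictly decreasing $\psi_{b',w'}$ in $\cP_{b,w}(J')$. This follows directly from the finiteness of $\overline{\bv}$-classes of strict subquotients in the first part of Lemma \ref{lem:supp-property-finite-length}, which holds for any interval of length $<1$, whether or not it contains an integer.

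Second, to keep zero-charge objects from spoiling these chain conditions, work in the class $\cC$ of objects all of whose strict quotients satisfy $\Im Z^{b',w'}>0$. This class is closed under strict quotients. For $E\in\cC$, every destabilizing strict subobject $A$ satisfies $\psi(A)>\psi(E)>\psi(E/A)>0$. The key step: if a destabilizing subobject $F$ of $E\in\cC$ is not in $\cC$, apply Lemma \ref{lem:deformation-bms}(a) to $F$ itself. This gives $0\neq A\subset F$ with $A\in\cC$ and $\psi(A)\ge\psi(F)$, because $F/A$ has $\Im Z^{b',w'}<0$ or $Z^{b',w'}(F/A)\in\RR_{\ge 0}$. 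With this, Bridgeland's classes $\cH$ and $\cG$, now taken inside $\cC$, yield mdq's whose strict kernels stay in $\cG$. That produces the HN filtration of $E_1$, and the argument for $E_3/E_2$ is dual.
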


\begin{proof}
The proof is similar to the arguments in \cite[Section 7]{bridgeland:stability} and \cite[Appendix 2]{bayer2016space}. By Lemma \ref{lem:slicing-Q-}, it remains to prove that for any $0\neq E\in \cD$ and any $t\in \RR$, we have a triangle
\[A\to E \to B\]
with $A\in \cQ(>t)$ and $B\in \cQ(\leq t)$. By \eqref{eq:diff-phase-epsilon}, we get $\cP_{b,w}(\geq t+\epsilon)\subset \cQ(>t)$ and $\cP_{b,w}(\leq t-\epsilon)\subset \cQ(\leq t)$. Therefore, we may assume that $E\in \cP_{b,w}(t-\epsilon, t+\epsilon)$. Up to shift, we can assume that $|t|\leq \frac{1}{2}$. Then $Z^{b',w'}$ is a skewed weak stability function on $\cP_{b,w}(t-\epsilon, t+\epsilon)$. To prove the statement, it suffices to prove the existence of HN filtrations of objects in $\cP_{b,w}(t-\epsilon, t+\epsilon)$ with respect to $Z^{b',w'}$ such that each factor is enveloped by a thin subcategory. If $(t-3\epsilon, t+5\epsilon)\cap \ZZ=\varnothing$, then $\cP_{b,w}(t-3\epsilon, t+5\epsilon)$ is of finite length by Lemma \ref{lem:supp-property-finite-length}. Therefore, the result follows from \cite[Lemma 7.7]{bridgeland:stability}. From now on, we assume that $(t-3\epsilon, t+5\epsilon)\cap \ZZ\neq \varnothing$. Hence, $0\in (t-3\epsilon, t+5\epsilon)$ and $-5\epsilon<t<3\epsilon$.

\textbf{Step 1.} By our assumption on $\epsilon$, we can consider $E$ as an object in a larger category $\cP_{b,w}(J)$, where
\[J\coloneqq \left(-\frac{1}{2}+4\epsilon, \frac{1}{2}-5\epsilon\right).\]
It suffices to prove the existence of HN filtration of any non-zero $E\in \cP_{b,w}(J)$ with respect to $Z^{b',w'}$ such that each factor is enveloped by a thin subcategory. Let $$E_1\subset E_2\subset E$$ be the strict filtration of $E$ constructed in Lemma \ref{lem:deformation-bms}. Then $E_1\in \cA^{b'}$, $E_2/E_1\in \cA^{b'}[-1]$ is $Z^{b',w'}$-semistable with $\psi_{b',w'}(E_2/E_1)=0$, and $$E/E_2\in \cA^{b'}[-1]\cap \cP_{b,w}(-\frac{1}{2}+4\epsilon,0).$$ Therefore, we only need to show $E_1$ and $E/E_2$ have HN filtrations with respect to $Z^{b',w'}$, regarded as a skewed weak stability function on $\cP_{b,w}(J)$, such that each factor is enveloped by a thin subcategory. In the following, we only deal with $E_1$, as the argument for $E/E_2$ is completely the same after switching the roles of quotient objects and subobjects in the following steps.

\textbf{Step 2.} Set \[J'\coloneqq \left(-\frac{1}{2}+2\epsilon, \frac{1}{2}-\epsilon\right).\]
By Lemma \ref{lem:supp-property-finite-length}, we know that there are no infinite sequences of strict subobjects 
\[\cdots\subset E^{j+1}\subset E^j\subset \cdots \subset E^2\subset E^1\]
in $\cP_{b,w}(J')$ with $\psi_{b',w'}(E^{j+1})>\psi_{b',w'}(E^j)$ for any $j$ and no infinite sequences of strict quotient objects 
\[E^1\twoheadrightarrow E^2\twoheadrightarrow\cdots \twoheadrightarrow E^j\twoheadrightarrow E^{j+1}\twoheadrightarrow \cdots\]
in $\cP_{b,w}(J')$ with $\psi_{b',w'}(E^{j+1})<\psi_{b',w'}(E^j)$ for any $j$. Also note that 
\begin{equation}\label{eq:phase-psi}
    \psi_{b',w'}(E)\in \left( -\frac{1}{2}+\epsilon,\frac{1}{2} \right)
\end{equation}
for any $0\neq E\in \cP_{b,w}(J')$ by \eqref{eq:diff-phase-epsilon}.

We define a class of objects
\[\cC\coloneqq \{0\neq E\in \cP_{b,w}(J')\colon \Im Z^{b',w'}(N)>0\text{ for any strict quotient } N \text{ of } E \text{ in }\cP_{b,w}(J')\}.\]
Then it is clear that $\cC$ is closed under strict quotient in $\cP_{b,w}(J')$.  Note that by definition, for any $E\in \cC$ and any strict subobject $A\subset E$ in $\cP_{b,w}(J')$ with $\psi_{b',w'}(A)>\psi_{b',w'}(E/A)$, we have
\[\psi_{b',w'}(A)>\psi_{b',w'}(E)>\psi_{b',w'}(E/A)>0.\]
Since $\cC$ is closed under strict quotient, the non-existence of quotient filtrations as above shows that for any $0\neq E\in \cC$ which is not $Z^{b',w'}$-semistable, there always exists a $Z^{b',w'}$-semistable strict quotient $E\twoheadrightarrow B$ in $\cP_{b,w}(J')$ with $\psi_{b',w'}(E)>\psi_{b',w'}(B)$.

Similarly, we claim that for any $0\neq E\in \cC$ which is not $Z^{b',w'}$-semistable, we can find a $Z^{b',w'}$-semistable strict subobject $A\subset E$ in $\cP_{b,w}(J')$ with $\psi_{b',w'}(A)>\psi_{b',w'}(E)$ and $A\in \cC$. Indeed, by the non-existence of filtrations by subobjects as above, it suffices to prove that there exists a strict subobject $A\subset E$ with $\psi_{b',w'}(A)>\psi_{b',w'}(E)$ and $A\in \cC$. To this end, let $$0\to F\to E\to G \to 0$$ be a strict exact sequence with $\psi_{b',w'}(F)>\psi_{b',w'}(E)>\psi_{b',w'}(G)$. If $F\in \cC$, then we are done. Otherwise, let $A\subset F$ be the strict subobject constructed in Lemma \ref{lem:deformation-bms}(a). Hence, we have $A\in \cC$ and either $\Im Z^{b',w'}(F/A)<0$ or $Z^{b',w'}(F/A)\in \RR_{\geq 0}$. In both cases, we get $$\psi_{b',w'}(A)\geq \psi_{b',w'}(F)>\psi_{b',w'}(E),$$ and the claim follows.

\textbf{Step 3.} Similar to \cite[Lemma 7.7]{bridgeland:stability}, we define $\cH$ to be the class of objects $E$ in $\cP_{b,w}(J')$ with $\psi_{b',w'}(E)<\frac{1}{2}-4\epsilon$ and every non-zero strict quotient $B$ of $E$ in $\cP_{b,w}(J')$ satisfies $\psi_{b',w'}(B)>0>-\frac{1}{2}+3\epsilon$. In particular, $\cH\subset \cC$. It is clear that if $E\in \cH$ and $E\twoheadrightarrow E'$ is a strict quotient with $\psi_{b',w'}(E)\geq \psi_{b',w'}(E')$, then $E'\in \cH$.

Now, we show that any $0\neq E\in \cH$ has an mdq. Consider a strict exact sequence $$0\to A\to E\to E^1\to 0$$ with $A\in \cP_{b,w}(\geq \psi_{b',w'}(E)+\epsilon)$ and $E^1\in \cP_{b,w}(< \psi_{b',w'}(E)+\epsilon)$. Note that by $E\in \cH$, we have $E^1\in \cH$ and $\psi_{b',w'}(A)>\psi_{b',w'}(E)>\psi_{b',w'}(E^1)$. Then as in the proof of \cite[Lemma 7.7]{bridgeland:stability}, the mdq of $E^1$ is also the mdq of $E$.

If $E^1$ is $Z^{b',w'}$-semistable, then we are done. Otherwise, from $E^1\in \cH$ and $E^1\in \cP_{b,w}(< \psi_{b',w'}(E)+\epsilon)$, we know that every $Z^{b',w'}$-semistable strict subobject or quotient object of $E^1$ is enveloped by $\cP_{b,w}(J')$. Therefore, by Step 2, Lemma \ref{lem:slicing-Q-}, and the argument in \cite[Proposition 2.4]{bridgeland:stability}, we can find a strict exact sequence $0\to A^1\to E^1\to E^2\to 0$ such that $A^1$ is $Z^{b',w'}$-semistable with $$\psi_{b',w'}(A^1)>\psi_{b',w'}(E^1)>\psi_{b',w'}(E^2)$$ so that the mdq of $E^2$ is the mdq of $E^1$. Consider a strict exact sequence $0\to A^2\to E^2\to E^3\to 0$ with $A^2\in \cP_{b,w}(\geq \psi_{b',w'}(E^2)+\epsilon)$ and $E^3\in \cP_{b,w}(< \psi_{b',w'}(E^2)+\epsilon)$ as above, if $E^3$ is not $Z^{b',w'}$-semistable, then we continue this process. By Step 2, this will stop in finite steps. Therefore, we get the existence of the mdq of $E$ as desired.

\textbf{Step 4.} Similar to \cite[Lemma 7.7]{bridgeland:stability}, we define $\cG$ to be the class of objects $E$ in $\cC$ with $\phi_{b,w}^+(E)<\frac{1}{2}-5\epsilon$.

We claim that if $E\in \cG$ and $E\twoheadrightarrow B$ is the mdq, then its strict kernel $E'$ is in $\cG$ as well. It is clear that $\phi_{b,w}^+(E')<\frac{1}{2}-5\epsilon$. If $E'\twoheadrightarrow B'$ is the mdq, we have a commutative diagram
\[\begin{tikzcd}
	0 & {E'} & E & B & 0 \\
	0 & {B'} & Q & B & 0
	\arrow[from=1-1, to=1-2]
	\arrow[from=1-2, to=1-3]
	\arrow[from=1-2, to=2-2]
	\arrow[from=1-3, to=1-4]
	\arrow[from=1-3, to=2-3]
	\arrow[from=1-4, to=1-5]
	\arrow[shift left, no head, from=1-4, to=2-4]
	\arrow[no head, from=1-4, to=2-4]
	\arrow[from=2-1, to=2-2]
	\arrow[from=2-2, to=2-3]
	\arrow[from=2-3, to=2-4]
	\arrow[from=2-4, to=2-5]
\end{tikzcd}\]
whose rows are strict exact sequences. By definition, we have $\psi_{b',w'}(Q)>\psi_{b',w'}(B)>0$. Moreover, either $\Im Z^{b',w'}(B')\neq 0$ or $Z^{b',w'}(B')\in \RR_{\geq 0}$ holds. Therefore, using \eqref{eq:phase-psi}, we get $$\psi_{b',w'}(B')> \psi_{b',w'}(Q)> \psi_{b',w'}(B)>0$$ and the claim follows.


\textbf{Step 5.} Note that $\cG\subset \cH$. Therefore, for any $0\neq E\in \cG$ which is not $Z^{b',w'}$-semistable, we know that the mdq $E\twoheadrightarrow E'$ exists by Step 3 and satisfies $\psi_{b',w'}(E)>\psi_{b',w'}(E')>0.$ Thus, by Step 4, we get 
\[\psi_{b',w'}(A)>\psi_{b',w'}(E)>\psi_{b',w'}(E')>0,\]
where $A\in \cG$ is the strict kernel of $E\twoheadrightarrow E'$. Therefore, the remaining proof of \cite[Proposition 2.4]{bridgeland:stability} applies in this case, and we can conclude that any $0\neq E\in \cG$ has an HN filtration with respect to $Z^{b',w'}$, regarded as a skewed weak stability function on $\cP_{b,w}(J')$, such that each factor is enveloped by the thin subcategory $\cP_{b,w}(J')$. In particular, this applies to $E_1$ in Step 1.
\end{proof}

From Lemma \ref{lem:Q-slicing}, we know that $(Z^{b',w'},\cQ)$ is a weak pre-stability condition on $\cD$. By the following lemma, we get $(Z^{b',w'},\cQ)=\sigma^{b',w'}$, which finishes the proof of Theorem \ref{thm:tilt-stability}.

\begin{lemma}\label{lem:compare-Q-Ab'}
We have $\cQ(0,1]=\cA^{b'}$.
\end{lemma}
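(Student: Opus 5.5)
Both $\cQ(0,1]$ and $\cA^{b'}$ are hearts of bounded t-structures on $\cD$: the first because $\cQ$ is a slicing by Lemma \ref{lem:Q-slicing}, the second by construction as a tilt. So it suffices to prove one inclusion, $\cQ(0,1]\subset \cA^{b'}$, since nested hearts of bounded t-structures coincide. Because $\cA^{b'}$ is extension-closed and $\cQ(0,1]=\langle \cQ(\psi)\rangle_{\psi\in(0,1]}$, it is enough to show that every $Z^{b',w'}$-semistable object $E\in\cQ(\psi)$ with $0<\psi\leq 1$ lies in $\cA^{b'}$. I will treat the cases $\psi\in(0,\tfrac12]$ and $\psi\in(\tfrac12,1]$ separately. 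The second case follows from the first after shifting by $[-1]$, using the torsion pairs of \cite[Lemma 1.1.2]{polishchuk2007constant} and the symmetric version of Lemma \ref{lem:deformation-bms}.

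First case, $\psi\in(0,\tfrac12]$. By \eqref{eq:diff-phase-epsilon}, $E\in\cP_{b,w}(\psi-\epsilon,\psi+\epsilon)$. If $\psi\geq\epsilon$, then $E\in\cP_{b,w}(0,\tfrac12+\epsilon)$; the part in $\cP_{b,w}(0,\tfrac12]$ lies in $\cA^{b'}$ by Lemma \ref{lem:deformation-1}(a), and the part of phase $>\tfrac12$ is handled by the same argument. If $\psi<\epsilon$, choose $x,y$ with $-\tfrac12<x\leq-\epsilon$, $0<y\leq\tfrac12$ and $E\in\cP_{b,w}(x,y)$, and apply Lemma \ref{lem:deformation-bms}. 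This gives a filtration $E_1\subset E_2\subset E$ whose quotients $E_2/E_1$ and $E/E_2$ lie in $\cA^{b'}[-1]$ and satisfy $\Im Z^{b',w'}\le 0$, with $E/E_2$ even having $\Im Z^{b',w'}<0$ when nonzero. By semistability of $E$ at phase $\psi>0$, every nonzero strict quotient has phase $\geq\psi>0$. A quotient of phase $0$ (i.e. $Z^{b',w'}$ real nonnegative) or with negative imaginary part would contradict this, so $E_2=E_1$ and $E=E_1\in\cA^{b'}$. The case where $Z^{b',w'}$ of the quotient vanishes needs care, because weak stability allows phase-$1$ conventions for such objects. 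Here I would use that objects in $\cA^{b'}[-1]$ with zero central charge lie in $\cA^Z[-1]$ by Lemma \ref{lem:weakstabilityfunction}, and that these have phase $0$ in $\cQ$, not phase in $(0,1]$.

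Second case, $\psi\in(\tfrac12,1]$. The boundary phase $\psi=1$ is the main obstacle: $E$ may lie near $\cP_{b,w}(1)$ and contain pieces of $\cA^{b'}[1]\cap\cA^b$ or of $\cA^Z$. I would write $E$ via the torsion pair $(\cA^{b'}[1]\cap\cA^b,\cA^{b'}\cap\cA^b)$ on the part in $\cA^b$. Lemma \ref{lem:deformation-1}(b) says the torsion part $T$ has $\Im Z^{b',w'}(T)<0$, so $T$ has $Z^{b',w'}$-phase $>1$. This forces $T=0$ by $\Hom(\cQ(>1),\cQ(\psi))=0$, which is Lemma \ref{lem:slicing-Q-}. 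Similarly, a component of $E$ in $\cP_{b,w}(>1)$ would produce a subobject of phase $>1$, so it also vanishes. Hence $E\in\cA^{b'}$, and the inclusion, and with it the equality, follows.
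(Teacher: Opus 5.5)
Your reduction to $\cQ(\psi)\subset\cA^{b'}$ and your treatment of $0<\psi<\epsilon$ are sound. Using Lemma~\ref{lem:deformation-bms} there is a legitimate variant of the paper's argument: $E/E_2$ (with $\Im Z^{b',w'}<0$) and then $E/E_1=E_2/E_1$ (of phase $0$) would each be destabilizing strict quotients. The paper instead splits $E$ directly into $F\in\cP_{b,w}(0,\psi+\epsilon)$ and $G\in\cP_{b,w}(\psi-\epsilon,0]$ and applies the torsion pair on $\cA^b[-1]$. For your version, choose e.g.\ $x=-\epsilon$, $y=\psi+\epsilon$, so that $E$ is enveloped by $\cP_{b,w}(x,y)$ and Lemma~\ref{lem:envelop} transfers semistability.

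There are two smaller inaccuracies. First, phases in $(\frac12,\frac12+\epsilon)$ are not covered by ``the same argument'' as Lemma~\ref{lem:deformation-1}(a), whose proof needs $\nu_{b,w}\leq 0$. What works is that the $\cA^{b'}[1]$-torsion part $T$ of $E$ in $\cA^b$ satisfies $\phi^+_{b,w}(T)<\frac12+\epsilon<1-\epsilon<\phi^-_{b,w}(T)$, so $T=0$. Second, your opening remark that the second case follows from the first by a shift is unsupported, since a shift by $[-1]$ lands in phases $(-\frac12,0]$. You never actually use it.

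The genuine gap is in the case $\psi$ near $1$. You conclude $T=0$ from $\Im Z^{b',w'}(T)<0$ via $\Hom(\cQ(>1),\cQ(\psi))=0$. But $\Im Z^{b',w'}(T)<0$ only says that the total $Z^{b',w'}$-phase of $T$ exceeds $1$. It does not place $T$ in $\cQ(>1)$: $T$ need not be $Z^{b',w'}$-semistable, and you say nothing about its $\cQ$-HN factors.

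What works is the semistability of $E$ itself. For that, $T\hookrightarrow E$ must be a \emph{strict} monomorphism in a thin subcategory in which $E$ is semistable, and the other half of Lemma~\ref{lem:deformation-1}(b) provides this. Since $\phi^-_{b,w}(T)>1-\epsilon\geq\psi-\epsilon$, the objects $T$, $E$ and $E/T$ all lie in $\cP_{b,w}(\psi-\epsilon,1]\subset\cP_{b,w}(\psi-\epsilon,\psi+\epsilon)$. The sequence is therefore strict there, and $\psi_{b',w'}(T)>1\geq\psi$ contradicts semistability.

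This presupposes $E\in\cA^b$, which must be established first rather than ``similarly'' afterwards. The HN truncation gives a strict subobject $F\in\cP_{b,w}(1,\psi+\epsilon)$ of $E$. Lemma~\ref{lem:deformation-1}(a), applied to $F[-1]\in\cP_{b,w}(0,\epsilon)$, gives $\Im Z^{b',w'}(F)<0$. Again it is semistability of $E$, not Hom-vanishing in $\cQ$, that forces $F=0$. With these repairs your second case is exactly the paper's argument.
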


\begin{proof}
By the property of t-structures, it suffices to show that $\cQ(\psi)\subset \cA^{b'}$ for any $\psi\in (0,1]$.

We first assume that $0<\psi< \epsilon$. Let $0\neq E\in \cQ(\psi)$. Then we have an exact sequence
\[0\to F\to E\to G\to 0\]
in $\cP_{b,w}(\psi-\epsilon, \psi+\epsilon)$ with $F\in \cP_{b,w}(0,\psi+\epsilon)$ and $G\in \cP_{b,w}(\psi-\epsilon,0]$. Since $(\cA^b\cap \cA^{b'}[1], \cA^b\cap \cA^{b'})$ is a torsion pair of $\cA^b$, we have an exact sequence
\[0\to F_1\to F\to F_2\to 0\]
in $\cA^b$ with $F_1\in \cA^b\cap \cA^{b'}[1]$ and $F_2\in \cA^b\cap \cA^{b'}$. Note that $F_1=0$, otherwise by Lemma \ref{lem:deformation-1}(b), we have $$\psi+\epsilon<1-\epsilon <\phi^-_{b,w}(F_1)\leq \phi^+_{b,w}(F_1)\leq \phi^+_{b,w}(F),$$
contradicts $F\in \cP_{b,w}(0,\psi+\epsilon)$. Thus $F=F_2\in \cA^b\cap \cA^{b'}$. Similarly, we have an exact sequence
\[0\to G_1\to G\to G_2\to 0\]
in $\cA^b[-1]$ with $G_1\in \cA^b[-1]\cap \cA^{b'}$ and $G_2\in \cA^b[-1]\cap \cA^{b'}[-1]$. By Lemma \ref{lem:deformation-1}(b), we have $$G_1\in \cP_{b,w}(-\epsilon, 0]\subset \cP_{b,w}(\psi-\epsilon, 0].$$ If $G_2=0$, then it is clear that $G\in \cA^b[-1]\cap \cA^{b'}[-1]$, and we get $E\in \cA^{b'}$. Therefore, we may assume that $G_2\neq 0$. Since $G_2\in \cP_{b,w}(\psi-\epsilon, 0]$, the above sequence is also a strict exact sequence in $\cP_{b,w}(\psi-\epsilon, 0]\subset \cP_{b,w}(\psi-\epsilon, \psi+\epsilon)$. Therefore, $G_2$ is a strict quotient of $E$ in $\cP_{b,w}(\psi-\epsilon, \psi+\epsilon)$ and its kernel $F'$ is an extension of $F$ and $G_1$, which is in $\cA^{b'}$. However, we then have $$\psi_{b',w'}(G_2)\leq 0 < \psi=\psi_{b',w'}(E)$$ and $\psi_{b',w'}(F')>0$, which by \eqref{eq:diff-phase-epsilon} and together $Z^{b',w'}(E)=Z^{b',w'}(F')+Z^{b',w'}(G_2)$ implies
\[\psi_{b',w'}(G_2)< \psi=\psi_{b',w'}(E)<\psi_{b',w'}(F')\]
and contradicts the $Z^{b',w'}$-semistability of $E$.

Next, we assume that $\epsilon\leq \psi\leq 1$. We claim that $E\in \cA^b$. If $\epsilon\leq \psi\leq 1-\epsilon$, then it is clear since $E\in \cP^{b,w}(\psi-\epsilon, \psi+\epsilon)$. If $1-\epsilon<\psi \leq 1$, then we have a strict exact sequence
\[0\to F\to E\to G\to 0\]
in $\cP_{b,w}(\psi-\epsilon, \psi+\epsilon)$ such that $F\in \cP_{b,w}(1, \psi+\epsilon)$ and $G\in \cP_{b.w}(\psi-\epsilon, 1]$. If $F\neq 0$, then by Lemma \ref{lem:deformation-1}(a), we have $\Im Z^{b',w'}(F)<0$, hence $\psi_{b',w'}(F)>1$. However, by \eqref{eq:diff-phase-epsilon}, we can check $\psi_{b',w'}(F)>1\geq \psi=\psi_{b',w'}(E)>\psi_{b',w'}(G)$, contradicts the $Z^{b',w'}$-semistability of $E$.

Therefore, by the torsion pair of $\cA^b$ used above, we have an exact sequence
\[0\to F\to E\to G\to 0\]
in $\cA^b$ with $F\in \cA^b\cap \cA^{b'}[1]$ and $G\in \cA^b\cap \cA^{b'}$. If $F=0$, then $E=G$ and we are done. If $F\neq 0$, then by Lemma \ref{lem:deformation-1}(b), we know that
$$\psi-\epsilon\leq 1-\epsilon <\phi^-_{b,w}(F),$$
which gives $F\in \cP_{b,w}(\psi-\epsilon, 1]$. Since $E,G\in  \cP_{b,w}(\psi-\epsilon, 1]$, we know that the above sequence is a strict exact sequence in $\cP_{b,w}(\psi-\epsilon, 1]\subset \cP_{b,w}(\psi-\epsilon, \psi+\epsilon)$. However, this contradicts the $Z^{b',w'}$-semistability of $E$ since $\psi_{b',w'}(F)>1\geq \psi=\psi_{b',w'}(E)$.

This finishes the proof of $\cQ(\psi)\subset \cA^{b'}$ for $\psi\in (0,1]$. 
\end{proof}

\subsection{Wall-chamber structure}

We end this section by proving the wall-chamber structure result for weak stability conditions constructed in Theorem \ref{thm:tilt-stability}.


For a function $\mathsf{g}\colon \RR\to \RR$, we set
\[U_{\mathsf{g}}\coloneqq \{(b,w)\in \RR^2 \colon w>\mathsf{g}(b)\}.\]
We also define 
\[\cU\coloneqq \left\{(b,w)\in \RR^2\colon w>\frac{1}{2}b^2\right\}.\]



A similar argument as in \cite[Proposition 12.5]{bayer2016space} and \cite[Proposition 4.1]{fey:application-to-NL} gives the following wall-chamber structure.

\begin{theorem}\label{thm:wall-chamber-abstract}
Keep the settings in Theorem \ref{thm:tilt-stability}. Assume furthermore that $\Phi_W<+\infty$. 
Fix a class $v\in \Lambda$ such that $(\Im Z(v), \Re Z(v), W(v))\neq (0,0,0)$. Then there is a locally finite collection \(\mathcal{W}_v\) of connected
components of intersections of affine lines with \(U_W\), called ``\emph{walls}",
such that 
\begin{enumerate}
	    \item If $\Im Z(v)\ne0$, then all lines generated by $\ell_i\in \mathcal{W}_v$ pass through $\Pi_W(v)$.
	    \item If $\Im Z(v)=0$ and $\Re Z(v)\neq 0$, then all $\ell_i\in \mathcal{W}_v$ are parallel of slope $\frac{W(v)}{\Re Z(v)}$.
	   		\item The $\nu_{b,w}$-(semi)stability of any object $E$ with $\bv(E)=v$ is unchanged as $(b,w)$ varies within any connected component (called a ``\emph{chamber}") of  $U_W \setminus \bigcup_{\ell_i \in \mathcal{W}_v}\ell_i$.
		\item For any wall $\ell_i\in \mathcal{W}_v$, there is a map $f\colon F\to E$ in $\cA^b$ such that
\begin{itemize}
\item $E$ is $\nu_{b,w}$-semistable of class $v$ with $\nu_{b,w}(E)=\nu_{b,w}(F)=\,\mathrm{slope}\,(\ell_i)$ constant for any  $(b,w)\in \ell_i$, and
\item $f$ is an injection $F\hookrightarrow E $ in $\cA^b$ which strictly destabilizes $E$ for $(b,w)$ in at least one of the two chambers adjacent to the wall $\ell_i$.
\end{itemize} 
	\end{enumerate}

Moreover, if $\mathsf{g}$ is a convex function with $\Phi_W\leq \mathsf{g}$, then $\Pi_W(E)\notin U_{\mathsf{g}}$ for any $(b,w)\in U_{\mathsf{g}}$ and any $\nu_{b,w}$-semistable object $E$ with $\Im Z(E)\neq 0$.
\end{theorem}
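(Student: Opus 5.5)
I will follow the standard argument of \cite[Proposition 12.5]{bayer2016space}, adapted to the abstract setting using the support property from Lemma \ref{lem:support-property} and the large volume limit results. First, I fix $v$ and determine the shape of potential walls. A numerical wall for $v$ is the locus in $U_W$ where $\nu_{b,w}(u)=\nu_{b,w}(v)$ for some class $u$ not proportional to $v$ (modulo $\Lambda_W$). Writing $\nu_{b,w}(u)=\frac{W(u)+w\Im Z(u)}{\Re Z(u)+b\Im Z(u)}$, the equation $\nu_{b,w}(u)=\nu_{b,w}(v)$ becomes, after clearing denominators, an equation in $(b,w)$. The $bw$ terms cancel, so the locus is an affine line. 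If $\Im Z(v)\neq 0$, then $Z^{b,w}(v)=0$ exactly at $(b,w)=\Pi_W(v)$, so every such line passes through $\Pi_W(v)$; this gives (a). If $\Im Z(v)=0$, then $\nu_{b,w}(v)=W(v)/\Re Z(v)$ is constant, and the line has slope $\frac{W(v)}{\Re Z(v)}$; this gives (b). Along each line, both $\nu_{b,w}(u)$ and $\nu_{b,w}(v)$ equal the slope of the line, which is the constancy claim in (d).

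Next comes local finiteness, which I expect to be the main obstacle. I take a compact subset $K\subset U_W$. By Lemma \ref{lem:support-property}, $C_{b,w}$ is bounded on $K$. Suppose $(b,w)\in K$ and $E$ of class $v$ is strictly $\nu_{b,w}$-semistable, destabilized by $F\hookrightarrow E$ with $\nu_{b,w}(F)=\nu_{b,w}(E)$. Then $F$ and $E/F$ have Jordan--H\"older-type factors of the same phase, so $|Z^{b,w}(F)|\le|Z^{b,w}(v)|$, where I use the phase argument from Lemma \ref{lem:supp-property-finite-length}. The support property then bounds $\|\overline{\bv}(F)\|$ uniformly on $K$. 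Since $\Lambda/\Lambda_W$ is discrete, only finitely many classes $\overline{\bv}(F)$ occur, and hence only finitely many lines meet $K$.

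For (c), I would argue as follows. Inside a chamber, if the semistability of $E$ changed, the continuity of the family from Theorem \ref{thm:tilt-stability} would force a point where $E$ is strictly semistable with a destabilizing subobject of a different class. That point lies on a numerical wall with the bounded classes above, which is a contradiction. The subtlety is objects whose phase reaches $1$, i.e.\ when $\Im Z^{b,w}$ vanishes, because then the heart changes in $b$. I would handle this by noting that $\Im Z^{b,w}(E)>0$ persists within chambers, except for classes where the wall condition is automatic. I would also use Lemma \ref{lem:slope-stable-lvl} together with the phase-continuity built into the topology of $\Stab^{\mathsf{w}}_\Lambda$. Only the finitely many numerical walls that are actually realized by a destabilizing subobject are kept in $\mathcal W_v$, which gives (d).

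Finally, for the convexity statement, suppose $\Pi_W(E)\in U_{\mathsf g}$ with $E$ $\nu_{b,w}$-semistable. Following the argument in Lemma \ref{lem:support-property}, I replace $E$ by $\HN^-_\sigma(\cH^0(E))$ or $\HN^+_\sigma(\cH^{-1}(E))$, whose image under $\Pi_W$ lies below $\mathsf g$, since $\Phi_W\le\mathsf g$. The semistability inequality on $\nu_{b,w}$ places $\Pi_W(E)$ on the far side of the line through $(b,w)$ and $\Pi_W(\HN(\cdot))$. The points $(b,w)$ and $\Pi_W(E)$ lie in $U_{\mathsf g}$, which is convex because $\mathsf g$ is convex, while the comparison point lies outside it. Walking along this segment and repeating the argument at the walls by induction on $|\Im Z|$ gives a contradiction, as in \cite[Proposition 4.1]{fey:application-to-NL}.
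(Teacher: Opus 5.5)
Your treatment of (a), (b) and the support-property bound on destabilizing classes matches the paper. However, local finiteness does not follow from what you prove.

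\textbf{The gap in local finiteness.} You bound $\overline{\bv}(F)$ only for destabilizing pairs $F\subset E$ that occur at points of $K$. This shows that finitely many numerical walls carry a destabilizing point \emph{inside} $K$. It does not show that finitely many walls of $\mathcal{W}_v$ meet $K$. A wall is a whole connected component of $\ell(v,v')\cap U_W$, and nothing in your argument stops a component from crossing $K$ while its destabilizing points lie near $\partial U_W$, where $C_{b,w}$ is unbounded.

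\textbf{The missing propagation lemma.} The missing ingredient is the lemma at the core of the paper's proof (its Step 1). If $E$ is $\nu_{b_0,w_0}$-semistable and $\ell\subset U_W$ is a connected segment of the line $\{\nu_{b,w}(E)=\nu_{b_0,w_0}(E)\}$, then $E\in\cA^b$ and $E$ is $\nu_{b,w}$-semistable for every $(b,w)\in\ell$; moreover $\Pi_W(v)\notin\ell$.

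This is exactly where the change of heart with $b$ is resolved; you flag that issue but do not handle it. Suppose $E$ left $\cA^b$ along $\ell$. Then $\ell$ would cross the vertical line $b=\mu(H)$ for $H=\HN^+_\sigma(\cH^{-1}(E))$ (or for $\HN^-_\sigma(\cH^0(E))$ on the other side). The inequality $\nu_{b_0,w_0}(H[1])\le\nu_{b_0,w_0}(E)$ puts the crossing point on or below $\Pi_W(H)$, hence outside $U_W$, a contradiction. Once $E$ stays in the heart, semistability is both open and closed along $\ell$.

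\textbf{What the lemma gives.} Applying it to $E$, $F$ and $E/F$ moves a destabilizing configuration to any point of the wall, in particular into $K$. That is what local finiteness needs, and through it your chamber argument for (c), which tacitly uses that a stable object stays stable nearby. It also gives $\Im Z^{b,w}(E),\Im Z^{b,w}(F)>0$ along the whole wall. The paper uses this to show that $\nu_{b,w}(F)-\nu_{b,w}(E)$ changes sign across $\ell$, which is the second bullet of (d). Your proposal does not address that bullet. The remark that $\Im Z^{b,w}(E)>0$ ``persists within chambers'' is not a substitute.

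\textbf{The final statement.} Here you take a genuinely different route. The paper deduces it from the propagation lemma: running along the segment from $(b,w)$ to $\Pi_W(E)$, semistability would reach $\Pi_W(E)$, contradicting Lemma \ref{lem:weakstabilityfunction}. Your route works and is more direct, but it closes in one step, with no walking along the segment and no induction on $|\Im Z|$.
\begin{itemize}
\item If $\Im Z(E)>0$, set $G=\HN^-_\sigma(\cH^0(E))$. Then $b<\mu(G)\le\mu(E)$ and $\nu_{b,w}(E)\le\nu_{b,w}(G)$. So the point of the segment from $(b,w)$ to $\Pi_W(E)$ with first coordinate $\mu(G)$ lies on or below $\Pi_W(G)$, hence outside $U_{\mathsf g}$. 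This contradicts the convexity of $U_{\mathsf g}$.
\item If $\Im Z(E)<0$, the argument is symmetric with $\HN^+_\sigma(\cH^{-1}(E))$.
\item If $\nu_{b,w}(E)=+\infty$, Lemma \ref{imaginarypart} gives $\Pi_W(E)\notin U_W$ directly.
\end{itemize}
This same comparison, applied at the points of a wall segment, is exactly what proves the propagation lemma you are missing.
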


\begin{proof}
For a class $v'\in \Lambda$ such that $(\Im Z(v'), \Re Z(v'), W(v'))$ is not proportional to $(\Im Z(v), \Re Z(v), W(v))$, we define
\[\ell(v,v')\coloneqq \{(b,w)\in \RR^2\colon \nu_{b,w}(v)=\nu_{b,w}(v')\}.\]
Then it is clear that $\ell(v,v')$ is a line that satisfies (a) and (b). In the following, we only deal with the case $\Im Z(v)\neq 0$, as the remaining cases can be treated analogously, but easier.

\textbf{Step 1}. Let $\ell\subset U_W$ be a connected line segment and fix a point $(b_0,w_0)\in \ell$. We claim that if $E\in \cA^{b_0}$ is an object with class $v$ and $\ell\subset \ell(v,v')$ for some $v'\in \Lambda$, then $E$ is $\nu_{b_0,w_0}$-semistable if and only if $E\in \cA^b$ is $\nu_{b,w}$-semistable for any point $(b,w)\in \ell$; moreover, $\Pi_W(v)\notin \ell$.

First, we assume that $\nu_{b_0,w_0}(E)=+\infty$. Then $\Pi_W(E)\notin U_W$ by Lemma \ref{imaginarypart}, which implies that $\ell(v,v')$ is a vertical line by the assumption $\ell\subset U_W\cap \ell(v,v')$. In this case, the claim is clear.

Now, we assume that $\nu_0\coloneqq \nu_{b_0,w_0}(E)<+\infty$, so $\ell(v,v')$ has a finite slope. As $E\in \cA^{b_0}$, we see that $\mu^+(\cH^{-1}(E))\leq b_0$ and $\mu^-(\cH^0(E))>b_0$. If there exists $(b,w)\in \ell\setminus \Pi_W(v)$ so that $\cH^{-1}(E)[1]\notin \cA^b$, then 
\[b<\mu^+(\cH^{-1}(E))\leq b_0.\]
In particular, the point $(\mu^+(\cH^{-1}(E)),\nu_{0}(\mu^+(\cH^{-1}(E))-b_0)+w_0)$ lies on $\ell$. So we obtain
\[\Phi_W(\mu^+(\cH^{-1}(E)))<\nu_{0}(\mu^+(\cH^{-1}(E))-b_0)+w_0.\]
But $\HN_{\sigma}^+(\cH^{-1}(E))[1]$ is a subobject of $E$ in $\cA^{b_0}$, so $$\nu_{b_0,w_0}(\HN_{\sigma}^+(\cH^{-1}(E))[1])\leq \nu_0\quad\text{ and }\quad\Pi_W(\HN_{\sigma}^+(\cH^{-1}(E)))\notin U_W,$$ which makes a contradiction. Therefore, we get $\cH^{-1}(E)[1]\in \cA^b$. Similarly, we have $\cH^0(E)\in \cA^b$ for any $(b,w)\in \ell\setminus \Pi_W(v)$. This shows that $E\in \cA^b$. In particular, by continuity, $\mu(E)-b$ does not change the sign when $(b,w)$ varies in $\ell$, so $\ell\setminus \Pi_W(v)$ is connected and $\overline{\ell\setminus \Pi_W(v)}=\ell$.

Applying Theorem \ref{thm:tilt-stability}, we see that the set
\[\{(b,w)\in \ell\setminus \Pi_W(v)\colon E \text{ is } \nu_{b,w}\text{-semistable}\}\]
is closed in $\ell \setminus \Pi_W(v)$, so to show $E$ is semistable at any point in $\ell\setminus \Pi_W(v)$, it remains to show it is open. For any $(b_1,w_1)\in \ell\setminus \Pi_W(v)$ so that $E$ is $\nu_{b_1,w_1}$-semistable, we may take $\epsilon>0$ small enough so that $E\in \cP_{b_1,w_1}(\phi)$ and $(\phi-2\epsilon,\phi+2\epsilon)\subset (0,1)$. Therefore, by Theorem \ref{thm:tilt-stability}, there exists an open subset $I\subset \ell\setminus \Pi_W(v)$ containing $(b_1,w_1)$ so that $E\in \cP_{b,w}(\phi-\epsilon, \phi+\epsilon)$ and $\cP_{b,w}(\phi-\epsilon, \phi+\epsilon)\subset \cP_{b_1,w_1}(\phi-2\epsilon, \phi+2\epsilon)$ for any $(b,w)\in I$ . If $E$ is not $\nu_{b',w'}$-semistable for some $(b',w')\in I$, then the first piece of the HN filtration of $E$ with respect to $\nu_{b',w'}$ is an exact sequence $0\to F\to E \to G\to 0$ in $\cA^{b'}$ so that $$F,G\in \cP_{b',w'}(\phi-\epsilon, \phi+\epsilon)\subset \cP_{b_1,w_1}(\phi-2\epsilon, \phi+2\epsilon)\subset \cA^{b_1}.$$
Thus, it is also an exact sequence in $\cA^{b_1}$. Using $\nu_{b',w'}(F)>\nu_0$, it is direct to check that $\nu_{b_1,w_1}(F)>\nu_0$ using $w_1-w'=\nu_0(b_1-b')$, contradicts the $\nu_{b_1,w_1}$-semistability of $E$. Thus $E$ is $\nu_{b,w}$-semistable for any $(b,w)\in I$, and the openness follows. Since $E$ is semistable along $\ell\setminus \Pi_W(v)$, by the closedness of semistability, if $\Pi_W(v)\in \ell$, it is also semistable at the point $\Pi_W(v)$. However, this contradicts Lemma \ref{lem:weakstabilityfunction}. Hence, $\Pi_W(v)\notin \ell$.

\textbf{Step 2.} Define
\begin{align*}
\mathcal{W}_v\coloneqq \Big\{&\ell\subset U_W \colon \ell \text{ is a connected component of } \ell(v, v')\cap U_W \text{ such that exists }  (b,w)\in \ell  \text{ and } \\
 &\nu_{b,w}\text{-semistable } E,F\in \cA^b \text{ with } F\subset E, \nu_{b,w}(E)=\nu_{b,w}(F), \bv(E)=v, \bv(F)=v'\Big\}.
\end{align*}
We claim that $\mathcal{W}_v$ is locally finite, i.e., for any compact subset $K\subset U_W$, there are only finitely many $\ell\in \mathcal{W}_v$ so that $\ell\cap K\neq \varnothing$. 

By Lemma \ref{lem:support-property}, there exists a constant $C_K>0$ so that
\[\|\overline{\bv}(G)\|_{\max}\leq C_K|Z^{b,w}(G)|\]
for any $\nu_{b,w}$-semistable object $G$ and $(b,w)\in K\cap U_W$. We also set $$M_K\coloneqq \sup\{|Z^{b,w}(v)|\colon (b,w)\in K\},$$ which exists since $K$ is compact. If $(b,w)\in K$ and $F\hookrightarrow E$ is an injection in $\cA^b$ with $\nu_{b,w}(E)=\nu_{b,w}(F)$ so that $E,F$ are $\nu_{b,w}$-semistable with $\bv(E)=v$, then $Z^{b,w}(F)=tZ^{b,w}(E)$ for some $0<t<1$. Therefore, we have
\[\|\overline{\bv}(F)\|_{\max}\leq C_K|Z^{b,w}(F)|=tC_K|Z^{b,w}(E)|\leq C_K M_K.\]
Therefore, the set 
\begin{align*}
\Big\{(\Im Z(F), \Re Z(F), W(F))\in \RR^3 & \colon  (b,w)\in K, \text{ there exists } \nu_{b,w}\text{-semistable } \\
 &E,F\in \cA^b \text{ with } F\subset E, \nu_{b,w}(E)=\nu_{b,w}(F), \bv(E)=v\Big\}
\end{align*}
is finite, and the claim follows.

\textbf{Step 3.} Now, we show that if $\cC$ is a connected component of $U_W \setminus \bigcup_{\ell_i \in \mathcal{W}_v}\ell_i$, then for any object $E$ with $\bv(E)=v$, $E$ is $\nu_{b,w}$-semistable for some $(b,w)\in \cC$ if and only if it is $\nu_{b,w}$-semistable for all $(b,w)\in \cC$. In particular, $\Pi_W(v)\notin \cC$.

Let $\gamma\colon [0,1]\to \cC$ be any continuous path. Write  $(b_t,w_t)\coloneqq \gamma(t)$. Assume that $E$ is a $\nu_{b_0,w_0}$-semistable object with $\bv(E)=v$. By Lemma \ref{lem:weakstabilityfunction}, to prove the claim in this step, it suffices to show $E$ is also $\nu_{b_1,w_1}$-semistable.

If $E$ is not $\nu_{b_1,w_1}$-semistable, we set
\[c\coloneqq \sup\{t\in [0,1]\colon E \text{ is }\nu_{b_t,w_t}\text{-semistable}\}.\]
By the closeness of semistability, we see that $E$ is $\nu_{b_c,w_c}$-semistable, so $c<1$. If $E$ is $\nu_{b_c,w_c}$-stable, from the local finiteness of $\mathcal{W}_v$, we know that there exists an open interval $c\in J\subset [0,1]$ such that $E$ is $\nu_{b_t,w_t}$-stable for any $t\in J$, contradicting the maximality of $c$. Therefore, $E$ is strictly $\nu_{b_c,w_c}$-semistable, but this also contradicts $\cC\subset U_W \setminus \bigcup_{\ell_i \in \mathcal{W}_v}\ell_i$. Thus, $E$ is $\nu_{b_1,w_1}$-semistable and the claim follows.

\textbf{Step 4.} By previous steps, the collection $\mathcal{W}_v$ satisfies all properties except the second one in (d). To show this last property, we need to prove that for any $\ell\in \mathcal{W}_v$, $(b_0,w_0)\in \ell$, $\nu_{b_0,w_0}$-semistable object $E$ with $\bv(E)=v$, and a subobject $F\subset E$ in $\cA^{b_0}$ with $\nu_{b_0,w_0}(F)=\nu_{b_0,w_0}(E)$ and $(\Im Z(F), \Re Z(F), W(F))$ is not proportional to $(\Im Z(v), \Re Z(v), W(v))$, we have $\nu_{b',w'}(F)>\nu_{b',w'}(E)$ for $(b',w')$ in at least one of the two chambers adjacent to the wall $\ell$.

If $\ell$ is a vertical line, then we have $\mu(E)=\mu(F)=b$, and the claim is easy to check. In the following, we may assume that $\ell$ has a finite slope. In this case, the solution of
\begin{align*}
0=f_{E,F}(b,w)\coloneqq & \Im Z^{b,w}(E)\Im Z^{b,w}(F)(\nu_{b,w}(F)-\nu_{b,w}(E))\\
=&W(E)\Re Z(F)-W(F)\Re Z(E)+(W(E)\Im Z(F)-W(F)\Im Z(E))b\\
+&(\Im Z(E)\Re Z(F)-\Im Z(F) \Re Z(E))w
\end{align*}
is a line that contains $\ell$. Since $\ell$ has a finite slope, the coefficient of $w$ in $f_{E,F}(b,w)$ is non-zero. Moreover, by Step 1, points $\Pi_W(v)$ and $\Pi_W(F)$ do not lie on $\ell$. So $\Im Z^{b,w}(F)$ and $\Im Z^{b,w}(E)$ are always positive for $(b,w)\in \ell$. Therefore, $\nu_{b,w}(F)>\nu_{b,w}(E)$ holds for any point $(b,w)$ in the chamber below (resp. above) $\ell$ when $\Im Z(E)\Re Z(F)-\Im Z(F) \Re Z(E)>0$ (resp. $<0$). This ends the proof of properties (a)-(d).


\textbf{Step 5}. It remains to prove that if $\mathsf{g}$ is a convex function with $\Phi_W\leq \mathsf{g}$, then $\Pi_W(E)\notin U_{\mathsf{g}}$ for any $(b_0,w_0)\in U_{\mathsf{g}}$ and any $\nu_{b_0,w_0}$-semistable object $E$ with $\Im Z(E)\neq 0$.

Indeed, if $\Pi_W(E)\in U_{\mathsf{g}}$, then the convexity of $\mathsf{g}$ implies that the line segment $\ell$ connecting $\Pi_W(E)$ and $(b_0,w_0)$ lies entirely in $U_{\mathsf{g}}\subset U_W$. But this is impossible by Step 1, since $\Pi_W(E)\in \ell$.
\end{proof}

\section{Slope-stability on varieties}\label{sec:slope-bg}

In this section, we introduce the notation used in later sections. Then we discuss the classical slope-stability, its variants, and the relative version (cf.~Proposition \ref{prop-slope-family}). 

\subsection{Slope-stability}\label{subsec:slope}

Let $X$ be an $n$-dimensional equidimensional projective scheme over a field $\kk$. We first fix some notation. If $c_1,\cdots, c_l\in \A^*(X)_{\QQ}$ and $Z\in \CH_*(X)_{\QQ}$, then we set
\[c_1.c_2\dots c_l.Z\coloneqq\int_X c_1\cdot c_2\cdots c_l\cap Z\]
and
\[c_1.c_2\dots c_l\coloneqq\int_X c_1\cdot c_2\cdots c_l\cap [X].\]
We write $\CH^k(X)\coloneqq \CH_{\dim X-k}(X)$. If $Z\in \CH_0(X)_{\QQ}$, by abuse of notation, we denote $\int_X Z\in \QQ$ by $Z$ as well. If $\cL$ is a line bundle on $X$ and $D\in |\cL|$, then we also denote $c_1(\cL)\in \A^1(X)$ by $\cL$ or $D$ by abuse of notation.

As discussed in \cite{bayer:icm}, we consider the following setup of a triple $(X,H, \gamma)$:

\begin{Setup}\label{setup-gamma}
We fix a triple $(X,H, \gamma)$ as follows. The scheme $X$ is an $n$-dimensional, equidimensional, projective scheme over a field $\kk$ that is lci in codimension $d$ and $1\leq d\leq n$. We fix an ample line bundle $H$ on $X$ and a class
\begin{equation}\label{eq-gamma}
    \gamma=e^{-B_{\gamma}}\cdot (1,0,\gamma_2,\gamma_3,\dots,\gamma_n)\in \A^*(X)_{\QQ},
\end{equation}
where $B_{\gamma}\in \Pic(X)_{\QQ}$ and $\gamma_i\in \A^i(X)_{\QQ}$ for $2\leq i\leq n$. 
\end{Setup}

\begin{definition}\label{def:v}
In Setup \ref{setup-gamma}, we define a homomorphism
\[\bv^{\gamma}_{H,\leq d}\colon \KK_0(X)\to \QQ^{d+1}\]
given by
\[\bv^{\gamma}_{H,\leq d}(\alpha)\coloneqq\big(H^n.\bch_0^{\gamma}(\alpha), H^{n-1}.\bch_1^{\gamma}(\alpha),\dots, H^{n-d}.\bch_{d}^{\gamma}(\alpha)\big)\]
for any class $\alpha\in \KK_0(X)$, where $\bch_i^{\gamma}(\alpha)$ denotes the component of  $\gamma\cap \bch_{\leq d}(\alpha)$ of dimension $\dim X -i$ for any $0\leq i\leq d$. If $d=n$, then we set $\bv^{\gamma}_{H}\coloneqq \bv^{\gamma}_{H,\leq d}$.

Moreover, when $X$ is a geometrically normal projective surface or a geometrically normal projective $\QQ$-factorial threefold that is lci in codimension $2$, we can define $\bv^{\gamma}_{H}$ using  $\bch_2$ or $\bch_3$ defined in Section \ref{subsec:chn}, respectively.
\end{definition}

If $\gamma=1$, we omit $\gamma$ from the above notation.

For any $\alpha\in \KK_0(X)$, the associated \emph{$\mu^{\gamma}_H$-slope} is
\begin{equation}\label{eq:def-slope}
\mu^{\gamma}_H(\alpha)\coloneqq\frac{\bv^{\gamma}_{H,1}(\alpha)}{\bv^{\gamma}_{H,0}(\alpha)}=\frac{H^{n-1}.\bch_1(\alpha)}{H^n.\bch_0(\alpha)}-\frac{H^{n-1}.B_{\gamma}.\bch_0(\alpha)}{H^n.\bch_0(\alpha)}
\end{equation}
when $\bv^{\gamma}_{H,0}(\alpha)\neq 0$, and $\mu^{\gamma}_H(\alpha)\coloneqq+\infty$ otherwise. We will omit the superscript $\gamma$ and set $\mu_H\coloneqq \mu^{\gamma}_H$ if $B_{\gamma}=0$.

We say a sheaf $E\in \Coh(X)$ is \emph{$\mu^{\gamma}_H$-(semi)stable} if for any non-trivial proper subsheaf $F\subset E$, we have $$\mu^{\gamma}_H(F)(\leq) \mu^{\gamma}_H(E/F),$$ where $(\leq)$ denotes $<$ for stability and $\leq$ for semistability. Therefore, a $\mu^{\gamma}_H$-semistable sheaf is either a torsion sheaf or a torsion-free sheaf, and if it is torsion, then it is $\mu^{\gamma}_H$-stable if and only if it is the structure sheaf of a closed point.

\begin{remark}\label{rmk-different-stability}
We give some comments on the above notions.
\begin{itemize}

\item We will see later that the term $\gamma_2$ in Setup \ref{setup-gamma} should be regarded as a correction of the classical Bogomolov–Gieseker inequality for $\mu_H$-semistable sheaves, which is caused by the singularities of $X$ and $\mathrm{char}(\kk)$. The other terms $\gamma_3,\dots,\gamma_n$ play no essential role in our paper.

    \item If $B_{\gamma}=0$ or $X$ is irreducible, the term $$\frac{H^{n-1}.B_{\gamma}.\bch_0(\alpha)}{H^n.\bch_0(\alpha)}$$ does not depend on $\alpha$. In these cases, $\mu^{\gamma}_H$-stability is the same as $\mu_H$-stability by setting $B_{\gamma}=0$. 

\end{itemize}

\end{remark}

Let $\Lambda^{\gamma}_{H,\leq d}\subset \QQ^{d+1}$ be the image of $\bv^{\gamma}_{H,\leq d}$. Note that $\bv^{\gamma}_{H,\leq d}(\alpha)$ factors through $\KK_0(X)\twoheadrightarrow \Knum(X)$ and $\Knum(X)$ is a finite rank lattice\footnote{To see this, we may assume that $H$ is very ample. By Lemma \ref{lem:chi-property}(c), after restricting to a complete intersection of $n-d$ general divisors in $|H|$, we can assume that $X$ is lci, so $\bv_{H,\leq d}^{\gamma}(\xi)=0$ for any numerically trivial $\xi\in \KK_0(X)$.}, so $\Lambda^{\gamma}_{H,\leq d}$ is a lattice of rank $d+1$. We set
\[\bv^{\gamma}_{H,i}(\alpha)\coloneqq(\bv^{\gamma}_{H,\leq d}(\alpha))_i=H^{n-i}.\bch_i^{\gamma}(\alpha)\in \QQ\]
and $\Lambda^{\gamma}_{H,i}\coloneqq \im (\bv^{\gamma}_{H,i})\subset \QQ$.

The following observation is useful.

\begin{lemma}\label{lem:supp-codim-and-bv}
Let $0\neq E\in \Coh(X)$ and $1\leq i\leq d+1$. Then $\codim_X(E)\geq i$ if and only if $\bv^{\gamma}_{H,\leq i-1}(E)=0$. Moreover, if $i\leq d$, then $\codim_X(E)=i$ if and only if $\bv^{\gamma}_{H,\leq i-1}(E)=0$ and $\bv^{\gamma}_{H, i}(E)\neq 0$.
\end{lemma}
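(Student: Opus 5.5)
The plan is to reduce everything to the leading-term formula for $\bch$ of a coherent sheaf, Lemma \ref{lem:chi-property}(e), combined with positivity of degrees of effective cycles against the ample class $H$.

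First I would observe that the twist by $\gamma$ is harmless for this question. Since $\gamma=e^{-B_\gamma}(1,0,\gamma_2,\dots)$ has degree-zero part $1$, the components satisfy
\[\bch_j^{\gamma}(E)=\bch_j(E)+\sum_{k<j}c_{jk}\cap\bch_k(E)\]
for certain classes $c_{jk}$. By induction on $j$, the vanishing of $\bch_0(E),\dots,\bch_{i-1}(E)$ after intersecting with $H$ is therefore equivalent to the vanishing of $\bv^{\gamma}_{H,0}(E),\dots,\bv^{\gamma}_{H,i-1}(E)$. To make this rigorous I would argue directly at the level of cycles, so the plan is to prove the statement in the following form: for every $j\le d$, if $\codim_X(E)\geq j$, then $\bch_k(E)=0$ for $k<j$.

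Let $c=\codim_X(E)$. If $c\le d$, then $\dim \Supp(E)=n-c\geq n-d$, and Lemma \ref{lem:chi-property}(e) gives
\[\bch_{\le d}(E)=\sum_\eta \mathrm{length}(E_\eta)[\overline{\{\eta\}}]+(\text{lower-dimensional terms}),\]
where $\eta$ runs over the generic points of the top-dimensional components of $\Supp(E)$. This implies three things: $\bch_k(E)=0$ for $k<c$; $\bch_c(E)$ is a nonzero effective cycle; and $H^{n-c}.\bch_c(E)>0$ by ampleness.

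If instead $c\geq d+1$, then I would check that $\bch_k(E)=0$ for all $k\le d$. By definition, $\bch_k$ is computed on $\mathrm{LCI}(X/\kk)$. On that open set I would use dévissage together with Lemma \ref{lem-chM-general}(d) applied to the restriction: the restricted sheaf is supported in dimension $<n-d$, so all components of $\bch$ in dimension $\geq n-d$ vanish, since $\tau$ respects the dimension filtration via Theorem \ref{thm-tau}(e).

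With these facts the equivalences follow. For the first claim: if $c\geq i$, then $\bch_k(E)=0$ for all $k\le i-1$, and hence $\bch^\gamma_k(E)=0$ as well, so $\bv^{\gamma}_{H,\le i-1}(E)=0$. Conversely, if $c<i$, then $c\le d$. In that case $\bch_k(E)=0$ for $k<c$, so $\bch^\gamma_c(E)=\bch_c(E)$, and $\bv^\gamma_{H,c}(E)=H^{n-c}.\bch_c(E)>0$. Since $c\le i-1$, this contradicts $\bv^{\gamma}_{H,\le i-1}(E)=0$. The second claim follows from the same positivity, and the case $i\le d$ is needed so that $\bch_i$ is defined.

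The main obstacle is making the case $c\geq d+1$ precise. The issue is that the lower-order terms of $\tau$ on the lci locus must not contribute in dimensions $\geq n-d$. I would handle this via the filtration property of $\tau$ by dimension of support, in Fulton's form, deduced from Theorem \ref{thm-tau}(a),(e) by pushing forward from the support.
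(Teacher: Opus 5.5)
Your proof is correct and follows the paper's route. The paper's argument uses the same ingredients: the leading-term formula of Lemma \ref{lem:chi-property}(e), the unitriangular relation expressing $\bch^{\gamma}_l(E)$ through $\bch_{\le l}(E)$, and positivity of $H^{n-c}$ against the nonzero effective cycle $\bch_c(E)$.

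Your separate treatment of $\codim_X(E)\geq d+1$, which the paper leaves implicit, is fine. It is not a real obstacle: Lemma \ref{lem-chM-general}(d) applied to $E|_{\mathrm{LCI}(X/\kk)}$ gives the vanishing of $\bch_{\le d}(E)$ directly.
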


\begin{proof}
By Lemma \ref{lem:chi-property}(e), we know that $\codim_X(E)=i$ if and only if $\bch_{\leq i-1}(E)=0$ and $\bch_{i}(E)$ is a non-zero effective cycle corresponding to the scheme-theoretic support of $E$. Then the result follows from
\[\bch^{\gamma}_{l}(E)=\sum_{0\leq k\leq l} \frac{(-B_{\gamma})^k}{k!}.\bch_{l-k}(E)+\sum_{2\leq s\leq l,0\leq t\leq l-s} \frac{(-B_{\gamma})^t}{t!}.\gamma_s.\bch_{l-s-t}(E)\]
for any $l\leq d$.
\end{proof}

For any lattice $\Lambda\subset \QQ^{m+1}$, we denote by $v_i$ the $i$-th component of an element $v\in \Lambda$. We define a homomorphism
\[Z\colon \Lambda\to \CC\]
by $Z(v)=-v_1+\mathfrak{i} v_0$. So if we take $\Lambda=\Lambda^{\gamma}_{H,\leq 1}$, then $$Z^{\gamma}_H(\alpha)\coloneqq Z(\bv^{\gamma}_{H,\leq 1}(\alpha))=-\bv_{H,1}^{\gamma}(\alpha)+\mathfrak{i}\bv_{H,0}^{\gamma}(\alpha)$$
for $\alpha\in \KK_0(X)$.

\begin{proposition}\label{prop-slope-weak-stab}
Fix $(X,H, \gamma)$ as in Setup \ref{setup-gamma}. The pair $\sigma^{\gamma}_H\coloneqq(\Coh(X), Z)$ is a weak stability condition on $\Db(X)$ with respect to the lattice $\Lambda^{\gamma}_{H, \leq 1}$. Moreover, 

\begin{itemize}
    \item $(\Coh(X))^{Z^{\gamma}_H}$ is the same as the category of coherent sheaves supported in dimension $\leq n-2$, and

    \item $\sigma^{\gamma}_H$ satisfies the tilting property and \ref{t3}.
\end{itemize}
\end{proposition}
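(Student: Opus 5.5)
We will verify the three ingredients in turn: that $Z=Z^{\gamma}_H$ is a weak stability function on $\Coh(X)$ with the HN property, that the support property holds, and the tilting property together with \ref{t3}.

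\textbf{Weak stability function and $(\Coh(X))^{Z}$.} For $0\neq E\in\Coh(X)$, Lemma \ref{lem:chi-property}(e) shows that $\bch_0(E)$ is an effective cycle, supported on the top-dimensional components of $\Supp(E)$. Hence $\Im Z(E)=H^n.\bch_0^{\gamma}(E)=H^n.\bch_0(E)\geq 0$. If this vanishes, $E$ is torsion, and then $\bch_0(E)=0$ gives $\bch_1^{\gamma}(E)=\bch_1(E)$. This is effective, so $\Re Z(E)=-H^{n-1}.\bch_1(E)\leq 0$. Lemma \ref{lem:supp-codim-and-bv} with $i=2\leq d+1$ shows $Z(E)=0$ iff $\codim_X(E)\geq 2$, which gives the first bullet. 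For the HN property, $\Coh(X)$ is Noetherian and $\Im Z$ takes values in the discrete group $\Lambda^{\gamma}_{H,0}$. We would invoke the standard criterion \cite[Proposition 4.10]{macri:lecture-bridgeland-stability}, or argue directly via maximal destabilizing subsheaves. The torsion part of a sheaf, i.e.\ objects with $\Im Z=0$, provides the infinite-slope piece.

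\textbf{Support property.} We have $\Lambda^{Z}=\langle \bv(E)\colon \codim E\geq 2\rangle=\{0\}\oplus\{0\}$ inside $\Lambda^{\gamma}_{H,\leq 1}$, since such $E$ have $\bv^{\gamma}_{H,\leq 1}(E)=0$. So $\Lambda/\Lambda^Z=\Lambda^{\gamma}_{H,\leq1}$, and $Z$ is injective on $(\Lambda^{\gamma}_{H,\leq 1})_{\RR}\cong\RR^2$. Thus $\ker Z=0$, and the quadratic form $Q=0$ trivially satisfies the required negativity and nonnegativity.

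\textbf{Tilting property and \ref{t3}.} For \ref{t1}, $\Coh_{\leq n-2}(X)$ is a Serre subcategory. It is a Noetherian torsion subcategory because every sheaf has a maximal subsheaf of dimension $\leq n-2$, and $\Coh(X)$ is Noetherian (Remark \ref{rmk:serre-torsion}). For \ref{t3}, take $E$ with $\mu^+(E)<+\infty$. Then $E$ has no torsion subsheaf with $\Im Z=0$ and $\Re Z\neq0$, and also none in $\cA^Z$ (which has $\mu=+\infty$). Hence $E$ is torsion-free with respect to codimension-one torsion, so $\tor(E)=0$ and $E$ is pure of dimension $n$. We set $\sT(E)\coloneqq E^H$, the hull. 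Lemma \ref{lem-S2-hull}(c) gives $E\hookrightarrow E^H$ with cokernel of codimension $\geq 2$, i.e.\ in $\cA^Z$. Lemma \ref{lem-S2-hull}(a) says $E^H$ is torsion-free and $S_2$. Then $\Hom(\cA^Z,E^H)=0$ by torsion-freeness, and $\Hom(\cA^Z,E^H[1])=0$ by Lemma \ref{lem-general-S2}(d). This yields \ref{t2} and \ref{t3}. Here $X$ admits a dualizing complex, being projective over a field, and is equidimensional as required.

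The main obstacle is the HN property in the weak setting. Objects in $\cA^Z$ make subobject chains potentially unbounded in $Z$-value, e.g.\ adding codimension-two subsheaves. We would handle this by noting that $\Im Z$ is discrete and $\Coh(X)$ is Noetherian. For $\Im Z=0$ pieces, the relevant values $\Re Z\in -\Lambda^{\gamma}_{H,1}$ are discrete and bounded by $\Re Z(E)$ on torsion subsheaves. This yields finiteness of destabilizing chains, as in the smooth case.
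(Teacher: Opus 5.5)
Your proof is correct and follows the paper's argument essentially step by step: Lemma \ref{lem:supp-codim-and-bv} for the weak stability function and the description of $(\Coh(X))^{Z}$, injectivity of $Z$ on the rank-two lattice so that $Q\equiv 0$ gives the support property, \cite[Proposition 4.10]{macri:lecture-bridgeland-stability} for the HN property, and $E\mapsto E^H$ together with Lemma \ref{lem-S2-hull} and Lemma \ref{lem-general-S2}(d) for \ref{t3}. Your final paragraph treating the HN property as an obstacle is unnecessary, since the cited criterion (Noetherianity of $\Coh(X)$ plus discreteness of $\Im Z$) already handles the weak setting.
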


\begin{proof}
By Lemma \ref{lem:supp-codim-and-bv}, $Z^{\gamma}_H=Z\circ \bv^{\gamma}_{H,\leq 1}$ is a weak stability function on $\Coh(X)$. Since $\Lambda^{\gamma}_{H, \leq 1}$ is of rank two, the central charge $Z$ is injective, hence it satisfies the support property with respect to the trivial quadratic form $Q\equiv 0$. The HN property is the same as the classical situation \cite[Theorem 1.3.4]{HL10}, or we can apply \cite[Proposition 4.10]{macri:lecture-bridgeland-stability}. So we can conclude that $\sigma^{\gamma}_H$ is a weak stability condition.

By Lemma \ref{lem:supp-codim-and-bv}, $(\Coh(X))^{Z^{\gamma}_H}$ is the same as the subcategory of coherent sheaves on $X$ of codimension $\geq 2$, so it is a Noetherian torsion subcategory of $\Coh(X)$. Finally, by Lemma \ref{lem-S2-hull}(a) and (d), the property \ref{t3} of $\sigma^{\gamma}_H$ is given by associating a torsion-free sheaf $E$ to $E^{H}$.
\end{proof}

The following lemma is useful in practice.

\begin{lemma}\label{lem:dual-slope-stable}
Fix $(X,H, \gamma)$ as in Setup \ref{setup-gamma} such that $B_{\gamma}=0$, and $E\in \Coh(X)$ be a torsion-free sheaf. Then we have 

\begin{enumerate}
    \item $\mu^{\gamma}_{H}(E^{\vee})=-\mu^{\gamma}_{H}(E)$,

    \item $E$ is $\mu^{\gamma}_{H}$-(semi)stable if and only if $E^{\vee \vee}$ is $\mu^{\gamma}_{H}$-(semi)stable,

    \item $E$ is $\mu^{\gamma}_{H}$-(semi)stable if and only if $E^{H}$ is $\mu^{\gamma}_{H}$-(semi)stable, and

    \item $E$ is $\mu^{\gamma}_{H}$-(semi)stable if and only if $E^{\vee}$ is $\mu^{\gamma}_{H}$-(semi)stable.
\end{enumerate}
\end{lemma}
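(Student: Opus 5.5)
The plan is to prove the four parts in the order (c), (a), (d), (b). Each rests on two facts. First, the hull morphism $q_E\colon E\to E^H$ changes a torsion-free sheaf only in codimension $\geq 2$. Second, $\bch_0$ and $\bch_1$ are computed on the lci locus, where $(-)^\vee$ acts on ranks and first Chern classes as in the smooth case. Throughout, $B_\gamma=0$, so by \eqref{eq:def-slope} the slope $\mu^\gamma_H$ depends only on $\bch_0$ and $\bch_1$. Note that $\gamma_2,\gamma_3,\dots$ do not enter $\bv^\gamma_{H,\leq 1}$, so $\mu^\gamma_H=\mu_H$.

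For (c), I will use that $\sigma^\gamma_H$ satisfies \ref{t3}, with $\sT(E)=E^H$ for torsion-free $E$ (Proposition \ref{prop-slope-weak-stab}). Lemma \ref{lem:wtE-stable} then gives directly that $E$ is (semi)stable if and only if $E^H$ is. The one check needed is that torsion-free sheaves have $\mu^+<+\infty$. This holds because any subsheaf of a torsion-free sheaf is torsion-free and so has $\bch_0\neq0$, and the HN factors of $E$ are subquotients with nonzero rank.

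For (a), I first restrict to the open subset $U\subset X$ obtained by removing the non-lci locus (codimension $\geq d+1\geq 2$) together with the codimension $\geq 2$ locus where $E$ fails to be locally free. Since $E$ is torsion-free it is $S_1$, and torsion-free sheaves are locally free at codimension-one points of $\mathrm{LCI}(X)$. The reason is that $X$ is Gorenstein of dimension $\leq1$ there, and reflexive modules over a one-dimensional Gorenstein local ring $\oh_{X,x}$ need not be free. To avoid that issue, I will instead apply Lemma \ref{lem-derived-dual-ch} on $\mathrm{LCI}(X/\kk)$ to $\mathbb{D}^X(E)$. By Lemma \ref{lem-supp-ext}, $\cE xt^k(E,\oh_X)$ for $k\geq1$ is supported in codimension $\geq 1$, so it does not contribute to $\bch_0$. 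However, it may contribute to $\bch_1$. The fix is to pass to the hull: $E^\vee=(E^H)^\vee$ because the cokernel of $q_E$ has codimension $\geq2$, and $E^H$ is $S_2$, so Lemma \ref{lem-dual-ch} with $k=2\geq1$ gives $\bch_1(E^{H\vee})=-\bch_1(E^H)$. Combined with $\bch_{\leq1}(E)=\bch_{\leq1}(E^H)$ from Lemma \ref{lem:chi-property}(e), this yields $\mu(E^\vee)=-\mu(E)$. I expect this identification to be the main obstacle: one must make sure that in the $d=1$ case Lemma \ref{lem-dual-ch} applies with $i\leq\min\{k,d\}=1$, and that $(\cdot)^\vee$ agrees with $(\cdot)^{\mathsf d}$ up to a twist on the Gorenstein locus. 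That twist does not affect $\bch_1$ on a rank-zero difference, but it needs care.

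For (d), I reduce to reflexive-type sheaves using (c) and $E^\vee=(E^H)^\vee$, so I may assume $E$ is $S_2$. A saturated subsheaf $F\subset E$ with torsion-free quotient $G$ dualizes to $0\to G^\vee\to E^\vee\to Q$, where $Q\subset F^\vee$ and $F^\vee/Q$ is supported in codimension $\geq 1$. I will check that in fact it is supported in codimension $\geq2$ on the lci locus, using that $E$ is locally free in codimension one there, so the sequence stays exact in codimension one. Then (a) gives $\mu(G^\vee)=-\mu(G)$ and $\mu(Q)=\mu(F^\vee)=-\mu(F)$. A destabilizing $F$ for $E$ therefore yields a destabilizing subsheaf $G^\vee$ for $E^\vee$. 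For the converse I apply the same argument to $E^\vee$, using $E^{\vee\vee}$ and (b).

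Part (b) follows from (d) applied twice, or directly: $E\to E^{\vee\vee}$ is injective and an isomorphism in codimension one on the lci locus, so the hull-type argument of (c) applies with $E^{\vee\vee}$ in place of $E^H$, via Lemma \ref{lem:wtE-stable}.
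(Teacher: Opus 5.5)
Your overall strategy is the paper's. You get (b) and (c) from Lemma~\ref{lem:wtE-stable} applied to $E\hookrightarrow E^{\vee\vee}$ and $E\hookrightarrow E^H$. Both targets are torsion-free, and both cokernels have codimension $\geq 2$ because the two maps agree with $q_E$ on $\mathrm{LCI}(X/\kk)$. You get (a) from Lemma~\ref{lem-dual-ch}, and (d) by dualizing a saturated destabilizing sequence.

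Two small remarks first. Lemma~\ref{lem-dual-ch} applies to $E$ itself with $k=1$: a torsion-free sheaf is $S_1$, and Lemma~\ref{lem-codim-Sp} already places $\cE xt^{k}_X(E,\cO_X)$, $k\geq 1$, in codimension $\geq 2$ on the Gorenstein locus. So your worry about a $\bch_1$ contribution is unfounded, and the detour through $E^H$ is unnecessary. Also, ``(b) from (d) applied twice'' is circular, since your converse in (d) invokes (b). You therefore need your direct argument for (b).

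The genuine gap is in the key step of (d): showing that $F^\vee/Q$ has codimension $\geq 2$. You justify it by ``$E$ is locally free in codimension one on the lci locus,'' but this is false in the generality of the lemma. $X$ is only lci in codimension $d$, so at a codimension-one point $\cO_{X,x}$ is a one-dimensional Gorenstein ring that need not be regular. Torsion-free, or even $S_2$, sheaves need not be free there; you noticed this yourself in (a), and the reduction to $S_2$ does not repair it.

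Moreover, local freeness of $E$ is not the relevant property. The long exact $\cE xt$ sequence shows that $F^\vee/Q$ embeds into $\cE xt^1_X(G,\cO_X)$, so it is $G$ that must be controlled. The correct input, as in the paper, is that $G$ is torsion-free, hence $S_1$. Applying Lemma~\ref{lem-codim-Sp} to $G|_{\mathrm{LCI}(X/\kk)}$, where $\omega_X$ is locally trivial, gives $\codim_X\cE xt^1_X(G,\cO_X)\geq 2$ on the lci locus. Off the lci locus the codimension is $\geq d+1\geq 2$ anyway.

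With this replacement your argument for (d) goes through, and the reduction to $S_2$ can be dropped. Your argument is the mirror image of the paper's, which instead dualizes a destabilizing sequence of $E^\vee$ and contradicts the semistability of $E^{\vee\vee}$ obtained from (b).
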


\begin{proof}
Since $E$ is torsion-free and $B_{\gamma}=0$, part (a) follows from Lemma \ref{lem-dual-ch}. By Lemma \ref{lem-general-S2}(a), both $E^H$ and $E^{\vee \vee}$ are torsion-free. Since the natural maps $E\to E^H$ and $E\to E^{\vee \vee}$ coincide over $\mathrm{LCI}(X/\kk)$, using Lemma \ref{lem-S2-hull}(c), we see that they are both injective with cokernels supported in codimension $\geq 2$. Therefore, parts (b) and (c) both follow from Lemma \ref{lem:wtE-stable}.

Finally, we prove part (d). We first assume that $E$ is $\mu^{\gamma}_{H}$-(semi)stable. Let $0\to F\to E^{\vee}\to G\to 0$ be an exact sequence of torsion-free sheaves such that $$\mu^{\gamma}_{H}(F)(\geq)\mu^{\gamma}_{H}(E^{\vee})(\geq) \mu^{\gamma}_{H}(G).$$ By dualizing, we get an exact sequence
\[0\to G^{\vee}\to E^{\vee \vee}\to F_1\to 0, \]
where $F_1\coloneqq \im(E^{\vee \vee}\to F^{\vee})$. Since $\mathrm{LCI}(X/\kk)$ is Gorenstein, applying Lemma \ref{lem-codim-Sp} to $G|_{\mathrm{LCI}(X/\kk)}$, we see that $F^{\vee}/F_1\subset \cE xt^1_X(G,\cO_X)$ is supported in codimension $\geq 2$. Therefore, Lemma \ref{lem:supp-codim-and-bv} gives $\mu^{\gamma}_H(F_1)=\mu^{\gamma}_H(F^{\vee})$.
Using parts (a) and (b), we can conclude that $E^{\vee\vee}$ is $\mu^{\gamma}_{H}$-(semi)stable and $$\mu^{\gamma}_{H}(G^{\vee})(\geq)\mu^{\gamma}_{H}(E^{\vee \vee})(\geq) \mu^{\gamma}_{H}(F_1),$$ a contradiction. The case that $E^{\vee}$ is $\mu^{\gamma}_{H}$-(semi)stable can be treated analogously.
\end{proof}

\subsection{Slope-stability in families}\label{subsec:slope-family}

Now, we extend Proposition \ref{prop-slope-weak-stab} to the relative setting. We mostly work in the following relative version of Setup \ref{setup-gamma}.

\begin{Setup}\label{setup-gamma-relative}
We fix a triple $(f\colon X\to S, \cL, \gamma)$ as follows. The morphism $f\colon X\to S$ is a projective flat morphism between Noetherian schemes which is fiberwise lci in codimension $d$ with $1\leq d\leq n$ and satisfies

\begin{itemize}
    \item $X$ and $S$ have finite Krull dimension,

    \item $S$ is a connected Nagata scheme and is quasi-projective over a Noetherian affine scheme, and

    \item each fiber of $f$ is equidimensional of dimension $n$.
    
\end{itemize}

We also fix an $f$-ample line bundle $\cL$ on $X$ and a class
\begin{equation}
    \gamma=e^{-B_{\gamma}}\cdot (1,0, \gamma_2,\gamma_3,\dots,\gamma_n)\in \A_{\star}^*(X/S)_{\QQ},
\end{equation}
where $B_{\gamma}\in \Pic(X)_{\QQ}$ and $\gamma_i\in \A^i_{\star}(X/S)_{\QQ}$ for $2\leq i\leq n$. Here, $\A^*_{\star}(X/S)$ is defined in Definition \ref{def-A-star}.
\end{Setup}

Note that both $X$ and $S$ have the resolution property by \cite[\href{https://stacks.math.columbia.edu/tag/0FDD}{Tag 0FDD}]{stacks-project}.

Similar to the absolute setting in Definition \ref{def:v}, we can define a relative Mukai homomorphism for $\Db(X)$ over $S$ by
\[\bv^{\gamma}_{\cL,\leq d}\colon \Knum(X/S) \twoheadrightarrow \Lambda^{\gamma}_{\cL,\leq d}\subset \QQ^{d+1}, \quad \alpha\mapsto \oplus_{i=0}^{d} \bv^{\gamma_s}_{\cL_s,i}(\alpha_s)\]
for a point $s\in S$. Note that by Lemma \ref{lem-constant-chM} and the connectedness of $S$, such a homomorphism and lattice $\Lambda^{\gamma}_{\cL,\leq d}$ are both independent of the choice $s\in S$. When $f$ is admissible in the sense of Definition \ref{def:adm}, we also get well-defined $\Lambda^{\gamma}_{\cL}\coloneqq \Lambda^{\gamma}_{\cL,\leq n}$ and $\bv^{\gamma}_{\cL}\coloneqq \bv^{\gamma}_{\cL,\leq n}$ by Lemma \ref{lem:constant-generalization}.


The following is a relative version of Proposition \ref{prop-slope-weak-stab}.

\begin{proposition}\label{prop-slope-family}
Fix $(f\colon X\to S, \cL, \gamma)$ as in Setup \ref{setup-gamma-relative}. Then the collection $\underline{\sigma}^{\gamma}_{\cL}\coloneqq(\sigma^{\gamma_s}_{\cL_s})_{s\in S}$ satisfies \ref{c1}, \ref{c2}, \ref{wc1}, \ref{wc2}, \ref{wc3}, and \ref{b1}. Moreover, if $f$ is admissible in the sense of Definition \ref{def:adm}, then $\underline{\sigma}^{\gamma}_{\cL}$ is a weak stability condition on $\Db(X)$ over $S$ with respect to $\Lambda^{\gamma}_{\cL}$.
\end{proposition}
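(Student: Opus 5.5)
The plan is to check each condition fiberwise, using Proposition \ref{prop-slope-weak-stab} on every fiber, and to deduce the family conditions from the local constancy of Chern characters together with standard openness results for slope-semistable sheaves. For \ref{c1}: the central charge $Z^{\gamma_s}_{\cL_s}$ only involves $\bv^{\gamma_s}_{\cL_s,\leq 1}$, and for a $T$-perfect $E$ these numbers are locally constant in $t$ by Lemma \ref{lem-constant-chM}, since $d\geq 1$ and $\gamma\in\A^*_\star(X/S)_\QQ$. For \ref{wc1}: $Z$ takes values in $\QQ[\mathfrak{i}]$ because $\Lambda^\gamma_{\cL,\leq1}\subset\QQ^2$, and $\Coh(X_s)^{Z}$, the sheaves of codimension $\geq2$, is a Noetherian torsion subcategory by Proposition \ref{prop-slope-weak-stab}. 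For \ref{b1}: $\Lambda^\gamma_{\cL,\leq 1}$ has rank $2$ and $Z$ is injective on it, so $Q\equiv 0$ works, exactly as in the absolute case.

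For \ref{c2} and \ref{wc2}, let $E$ be $T$-perfect with $T\to S$ of finite type and $T$ affine (this suffices by \cite[Lemma 20.3]{BLMNPS21}). First, $\{t\colon E_t\in\Coh(X_t)\}$ is open by Lemma \ref{lem:openness-standard-heart}(a) with $a=b=0$. On that open set $E$ is a $T$-flat sheaf, so I would invoke the classical openness of geometric $\mu$-stability and of $\mu$-semistability for flat families of sheaves. I expect to run the standard argument via relative Quot schemes: the destabilizing quotients of a bounded family of torsion-free sheaves with bounded slope form a bounded family, their images in $T$ are constructible, and semicontinuity gives closedness of the destabilized locus. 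The key input for boundedness is a Grothendieck-type lemma, namely boundedness of quotients with slope bounded above for a bounded family. On singular fibers I would obtain this from Kleiman's criterion, using that the slope is computed from the Hilbert polynomial coefficients via \eqref{eq:tau_k}. Stability is geometric because base change to $\overline{\kappa(t)}$ is handled by Lemma \ref{lem:chi-property}(b). The same constructibility argument gives \ref{wc2}: if the generic fiber is semistable, the destabilizing locus is a constructible set not containing the generic point, so its complement contains a nonempty open subset.

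For \ref{wc3}, given $C\to S$ essentially of finite type from a Dedekind scheme, I take $\cA_C=\Coh(X_C)$ with $Z_K=Z^{\gamma_K}_{\cL_K}$ and $Z_{C\text{-}\tor}(E)=\sum_p \mathrm{length}\cdot Z_p$, defined via the fibers of $C$-torsion sheaves. Compatibility in Definition \ref{def-central-charge-over-C} follows from Lemma \ref{lem-constant-chM} and Lemma \ref{lem-chM-special}. The HN property follows as in \cite[Section 25]{BLMNPS21}, because $\Coh(X_C)$ is Noetherian and $Z$ has discrete image. The subcategory $\cA_C^{Z_C}$, consisting of sheaves whose subquotients all have codimension $\geq2$ on their fibers, is a Serre subcategory of a Noetherian category and hence a Noetherian torsion subcategory. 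The induced conditions on the fibers are those of Lemma \ref{lem-HN-induce}.

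Finally, for admissible $f$, it remains to prove \ref{b2}, i.e.\ boundedness of geometrically $\mu$-stable objects of fixed class in $\Lambda^\gamma_\cL$. Here Lemma \ref{lem:constant-generalization} makes $\bv^\gamma_\cL$ well defined. Fixing all of $\bch_{\leq n}$ against powers of $\cL$, together with the $\td$-terms, fixes the Hilbert polynomial through Theorem \ref{thm-tau}. The $\mu$-stable objects are then sheaves with fixed Hilbert polynomial and slope, and these are bounded by Langer-type boundedness; I would reduce to the lci locus and a restriction argument to curves. I expect this boundedness, together with the openness in \ref{c2} on singular fibers, to be the main obstacle. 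To handle it I would try Langer's boundedness theorem over arbitrary characteristic, which applies to pure sheaves on any projective scheme once the Hilbert polynomial is fixed.
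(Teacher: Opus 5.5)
Your treatment of \ref{c1}, \ref{wc1}, \ref{b1}, \ref{c2} and \ref{wc2} is essentially the paper's: Lemma \ref{lem-constant-chM}, $Q\equiv 0$, and the classical openness argument of \cite[Proposition 2.3.1]{HL10}.

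The genuine gap is \ref{wc3}. You claim that $(Z_K,Z_{C\text{-}\tor})$ has the HN property over $C$ because $\Coh(X_C)$ is Noetherian and $Z$ has discrete image, but that argument does not carry over to the relative setting. In the absolute case, discreteness rules out descending chains of destabilizing subobjects, since $\Im Z(E_{j+1})=\Im Z(E_j)-\Im Z(E_j/E_{j+1})$ decreases by a positive amount at each step. Over $C$, a subobject $E_{j+1}\subset E_j$ of a $C$-torsion-free object with $C$-torsion cokernel has $Z_C(E_{j+1})=Z_C(E_j)$, even though $Z_{C\text{-}\tor}(E_j/E_{j+1})\neq 0$, so nothing decreases.

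Concretely, let $E$ be torsion-free on $X_C$ with $E_K$ semistable but $E_p$ unstable. Elementary modifications $E_{j+1}=\ker(E_j\to i_{p*}Q_j)$ along destabilizing quotients $Q_j$ of $(E_j)_p$ give a descending chain with $\mu_{\sigma_C}(E_{j+1})=\mu(E_K)>\mu_{\sigma_C}(E_j/E_{j+1})$ at every step. Noetherianity does not exclude such a chain. That the process terminates is Langton-type semistable reduction, and the HN property over $C$ contains exactly this statement.

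The paper therefore reduces \ref{wc3}, via \cite[Proposition 15.9]{BLMNPS21}, to the HN property, and proves that with \cite[Corollary 16.5]{BLMNPS21}. Its inputs are:
\begin{itemize}
\item a $C$-torsion theory (Remark \ref{rmk:noether-C-torsion-pair});
\item generic openness of semistability, again from \cite[Proposition 2.3.1]{HL10}; you proved this for \ref{wc2} but never use it for \ref{wc3};
\item conditions (0)--(2) of \cite[Theorem 16.1]{BLMNPS21}, checked as in \cite[Proposition 16.6]{BLMNPS21};
\item since $Z_C$ is only a \emph{weak} stability function, condition (3): the tilting property of $(Z_{C\text{-}\tor},\Coh(X_C)_{C\text{-}\tor})$.
\end{itemize}
This last input is missing from your proposal altogether. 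The paper verifies it by replacing $i_{p*}F$, with $F$ torsion-free on $X_p$, by $i_{p*}F^H$. The cokernel is supported in codimension $\geq 2$ in the fiber, and $i_{p*}F^H$ admits no extensions by such sheaves by Lemma \ref{lem-S2-hull}.

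A smaller flaw is in \ref{b2}: a class $v\in\Lambda^{\gamma}_{\cL}$ does not fix the Hilbert polynomial. Its coefficients contain terms such as $\cL_s^{n-2}.\td_1(X_s).\bch_1(E_s)=-\frac{1}{2}\cL_s^{n-2}.K_{X_s}.\bch_1(E_s)$. These are not determined by $\cL_s^{n-1}.\bch_1(E_s)$ unless $K_{X_s}$ is numerically proportional to $\cL_s$; for normal surfaces, $\chi(E)$ contains $K_X.\bch_1(E)$ by Definition \ref{def:ch2-surface}. So boundedness for a fixed Hilbert polynomial does not apply as you state it. You need a boundedness result whose hypotheses are the numerical data that $v$ actually controls, together with a bound on $\mu_{\max}$. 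That is what the paper takes from \cite[Theorem 4.4]{langer:positive-char}.
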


\begin{proof}
The argument is similar to \cite[Example 21.18]{BLMNPS21}. Using Lemma \ref{lem-constant-chM}, we know that $\underline{\sigma}^{\gamma}_{\cL}$ satisfies \ref{c1}. By Proposition \ref{prop-slope-weak-stab}, we see $\sigma^{\gamma_s}_{\cL_s}$ is a weak stability condition on $\Db(X_s)$ for each $s\in S$. For \ref{b1}, as in Proposition \ref{prop-slope-weak-stab}, we can take $Q\equiv 0$. When $f$ is admissible, the condition \ref{b2} follows from \cite[Theorem 4.4]{langer:positive-char}.

It is clear that $Z^{\gamma_s}_{\cL_s}$ is defined over $\QQ[\mathfrak{i}]$. And the corresponding category $(\Coh(X_s))^{Z^{\gamma_s}_{\cL_s}}$ is the same as the category of torsion sheaves on $X_s$ supported in codimension $\geq 2$, hence it is a Noetherian torsion subcategory of $\Coh(X_s)$ and \ref{wc1} follows. Next, conditions \ref{c2} and \ref{wc2} follow from the classical argument as in \cite[Proposition 2.3.1]{HL10}.

Finally, we verify \ref{wc3}. We assume that $C\to S$ is a morphism essentially of finite type from a Dedekind scheme $C$. By Proposition \ref{prop-slope-weak-stab}, we can define a weak stability function $(Z^{\gamma_K}_{\cL_K}, Z^{\gamma}_{\cL, C\text{-}\tor})$ on $\Coh(X_C)$ over $C$ by
\[Z^{\gamma}_{\cL, C\text{-}\tor}(E)\coloneqq Z(\bv^{\gamma_p}_{\cL_p,\leq 1}(F))=-\bv^{\gamma_p}_{\cL_p, 1}(F)+\mathfrak{i}\bv^{\gamma_p}_{\cL_p, 0}(F)\]
for any $E=i_{p*}F$ and $F\in \Coh(X_p)$ and extend it to the whole $\Coh(X_C)_{C\text{-}\tor}$ by \cite[Lemma 6.11]{BLMNPS21}, and 
\[Z^{\gamma_K}_{\cL_K}(E)=Z(\bv^{\gamma_K}_{\cL_K,\leq 1}(E))=-\bv^{\gamma_K}_{\cL_K, 1}(E)+\mathfrak{i}\bv^{\gamma_K}_{\cL_K, 0}(E)\]
for any $E\in \Coh(X_K)$. We denote the corresponding slope function by $\mu^{\gamma}_{\cL, C}$.

By \cite[Proposition 15.9]{BLMNPS21}, it is enough to prove that the weak stability function $(Z^{\gamma_K}_{\cL_K}, Z^{\gamma}_{\cL, C\text{-}\tor})$ on $\Coh(X_C)$ over $C$ satisfies the HN property in the sense of \cite[Definition 13.10]{BLMNPS21}. To this end, we apply \cite[Corollary 16.5]{BLMNPS21}. Since $\Coh(X_C)$ is Noetherian, we know that $\Coh(X_C)$ admits a $C$-torsion theory by Remark \ref{rmk:noether-C-torsion-pair}. Moreover, generic openness of semistability in the sense of \cite[Definition 16.3]{BLMNPS21} holds by \cite[Proposition 2.3.1]{HL10}. Therefore, by Proposition \ref{prop-slope-weak-stab} and \cite[Corollary 16.5]{BLMNPS21}, it suffices to verify assumptions in \cite[Theorem 16.1]{BLMNPS21}. The verification of \cite[Theorem 16.1(0), (1), (2)]{BLMNPS21} is the same as \cite[Proposition 16.6]{BLMNPS21}.

It remains to verify \cite[Theorem 16.1(3)]{BLMNPS21}, i.e.~$(Z^{\gamma}_{\cL, C\text{-}\tor}, \Coh(X_C)_{C\text{-}\tor})$ as a weak stability condition on $\Db(X_C)_{C\text{-}\tor}$ has the tilting property.  Note that by Lemma \ref{lem:supp-codim-and-bv}, $$(\Coh(X_C)_{C\text{-}\tor})^{Z^{\gamma}_{\cL, C\text{-}\tor}}\subset \Coh(X_C)_{C\text{-}\tor}$$ consists of sheaves supported in codimension $\geq 2$ in fibers, which verifies \ref{t1}. For any $E\in \Coh(X_C)_{C\text{-}\tor}$ with $\mu^{\gamma, +}_{\cL, C}(E)<+\infty$, by \cite[Lemma 6.11]{BLMNPS21}, let us assume without loss of generality that $E=i_{p*}F$ for $F\in \Coh(X_p)$. Then $F$ is a torsion-free sheaf on $X_p$. Therefore, by Lemma \ref{lem-S2-hull}, we have an injection $F\hookrightarrow F^H$ such that $F^H$ is a torsion-free $S_2$ sheaf on $X_p$ and $F^H/F$ is supported at a codimension $\geq 2$ locus in $X_p$. Hence, we get $i_{p*}(F^H/F)\in (\Coh(X_C)_{C\text{-}\tor})^{Z^{\gamma}_{\cL, C\text{-}\tor}}$. By Lemma \ref{lem-S2-hull}(d), we have $$\Hom_{X_C}((\Coh(X_C)_{C\text{-}\tor})^{Z^{\gamma}_{\cL, C\text{-}\tor}}, i_{p*}F^H[1])=0$$ and the result follows.
\end{proof}

For later use, we denote the weak HN structure used in the above proof by 
\begin{equation}\label{eq:hn-structure-slope}
    \sigma^{\gamma}_{\cL, C}=(\Coh(X_C), Z^{\gamma}_{\cL, C}),
\end{equation}
where $Z^{\gamma}_{\cL, C}\coloneqq (Z^{\gamma_K}_{\cL_K}, Z^{\gamma}_{\cL, C\text{-}\tor})$.

\begin{remark}
Similar to \cite[Example 21.18]{BLMNPS21}, if we define the slope of a sheaf and the lattice using coefficients of Hilbert polynomials, then we get a weak stability condition on $\Db(X)$ over $S$ for any projective flat family $X\to S$. We will not use this construction in our paper. See also Appendix \ref{appendix:approach}.
\end{remark}

\section{Bogomolov--Gieseker inequalities for semistable sheaves}\label{sec:bg}

In this section, we first introduce the Le Potier function in the absolute and relative setting. Then we prove Theorem \ref{thm:exist-bg-function}, which gives a quadratic upper bound of (relative) Le Potier function. We end this section with a sharper inequality on normal varieties (cf.~Theorem \ref{thm-bg-normal}).

\subsection{Bogomolov--Gieseker functions}

One of the most important properties of slope-stability on smooth projective varieties is the Bogomolov--Gieseker (BG) inequality (cf.~\cite{HL,langer:positive-char}). Therefore, we introduce the following notions.

\begin{definition}\label{assum-bg}
Fix a triple $(X, H, \gamma)$ as in Setup \ref{setup-gamma} such that either 
\begin{itemize}
    \item $d\geq 2$, or

    \item $X_{\overline{\kk}}$ is a normal surface.
\end{itemize}
We define the \emph{Le Potier function} by 
$$\Phi^{\gamma}_{X, H}\colon \RR \to [-\infty, +\infty]$$ by
\[\Phi^{\gamma}_{X, H}(x)\coloneqq \limsup_{b\to x} \left\{\frac{\bv^{\gamma}_{H, 2}(E)}{\bv^{\gamma}_{H, 0}(E)} \colon E\text{ is }\mu^{\gamma}_H\text{-semistable and }\mu^{\gamma}_H(E)=b\right\},\]
where the supremum of the empty set is $-\infty$.Set
\[U^{\gamma}_{X, H}\coloneqq \{(b,w)\in \RR^2\colon w>\Phi^{\gamma}_{X, H}(b)\}.\]
\end{definition}

We say $(X, H, \gamma)$ \emph{has a Bogomolov--Gieseker (BG) function $\mathsf{g}$} if there is a convex function $\mathsf{g}\colon \RR\to \RR$ such that $\Phi^{\gamma}_{X, H}\leq \mathsf{g}$, i.e. for any $\mu^{\gamma}_{H}$-semistable torsion-free sheaf $E\in \Coh(X)$, we have
    \[\frac{\bv^{\gamma}_{H, 2}(E)}{\bv^{\gamma}_{H, 0}(E)}\leq \Phi^{\gamma}_{X, H}(\mu^{\gamma}_{H}(E))\leq \mathsf{g}(\mu^{\gamma}_{H}(E)).\]

\begin{remark}
If $\kk\subset \kk_1$ is a field extension, then it is easy to see $\Phi^{\gamma}_{X, H}(x)=\Phi^{\gamma_{\kk_1}}_{X_{\kk_1}, H_{\kk_1}}(x)$ for any $x\in \RR$.
\end{remark}

In the relative setting, we can define the Le Potier function analogously.

\begin{definition}\label{def:relative-LP}
If $(f\colon X\to S, \cL, \gamma)$ is a triple as in Setup \ref{setup-gamma-relative} with $d\geq 2$ or $f$ is admissible in the sense of Definition \ref{def:adm}, then we define the \emph{relative Le Potier function} by
\[\Phi^{\gamma}_{X/S, \cL}(x)\coloneqq \limsup_{b\to x} \left\{\frac{\bv^{\gamma}_{\cL_s, 2}(E_s)}{\bv^{\gamma}_{\cL_s, 0}(E_s)} \colon E_s\text{ is }\mu^{\gamma_s}_{\cL_s}\text{-semistable and }\mu^{\gamma_s}_{\cL_s}(E_s)=b\text{ for some }s\in S\right\}.\]
We set \[U^{\gamma}_{X/S, \cL}\coloneqq \{(b,w)\in \RR^2\colon w>\Phi^{\gamma}_{X/S, \cL}(b)\}.\]
\end{definition}

It is clear that both $\Phi^{\gamma}_{X, H}$ and $\Phi^{\gamma}_{X/S, \cL}$ are upper semicontinuous and
\[\Phi^{\gamma}_{X/S, \cL}\geq \sup_{s\in S}\Phi^{\gamma_s}_{X_s, \cL_s}.\]



We say $(X, H, \gamma)$ \emph{has the standard BG function} if it has a BG function $\frac{x^2}{2}$. In other words, if we define a quadratic form $\bDelta$ on the graded lattice $\Lambda^{\gamma}_{H,\leq 2}$ by
\[\bDelta(v)\coloneqq v_1^2-2v_0v_2\]
for any $v\in \Lambda^{\gamma}_{H,\leq 2}$, then having the standard BG function means
\[\bDelta(\bv^{\gamma}_{H,\leq 2}(E))=(\bv^{\gamma}_{H, 1}(E))^2-2\bv^{\gamma}_{H, 0}(E)\bv^{\gamma}_{H, 2}(E)\geq 0\]
for any $\mu^{\gamma}_{H}$-semistable torsion-free sheaf $E\in \Coh(X)$.

We say a triple $(f\colon X\to S, \cL, \gamma)$ as in Setup \ref{setup-gamma-relative} has the standard BG function if 
\[\Phi^{\gamma}_{X/S, \cL}(x)\leq \frac{x^2}{2}\]
for any $x\in \RR$.

The main result in this section is the following existence result of BG functions.

\begin{theorem}\label{thm:exist-bg-function}
Fix a triple $(f\colon X\to S, \cL, \gamma)$ as in Setup \ref{setup-gamma-relative} with $d\geq 2$. Then there exists a constant $\mathsf{D}_{X/S, \cL, \gamma}\in \QQ$ so that
\[\Phi^{\gamma}_{X/S, \cL}(x)\leq \frac{x^2}{2} +\mathsf{D}_{X/S, \cL, \gamma}\]
for any $x\in \RR$.

In particular, if we fix $(X, H, \gamma)$ as in Setup \ref{setup-gamma} with $\gamma=1$ and set $\mathsf{D}_{X, H}\coloneqq \mathsf{D}_{X/\kk, H,\gamma}$, then for any $\mu_{H}$-semistable torsion-free sheaf $E\in \Coh(X)$, we have
\[(\bch_1(E).H^{n-1})^2-2\bch_0(E).H^n \bch_2(E).H^{n-2}\geq -2\mathsf{D}_{X, H} (\bch_0(E).H^n)^2.\]
\end{theorem}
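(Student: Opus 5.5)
My plan is to reduce the inequality to a quantitative bound on $h^0$ of semistable sheaves, since the classical Bogomolov--Gieseker argument is not available on singular fibers.

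\textbf{Reductions.} Fix $s\in S$ and a torsion-free $\mu^{\gamma_s}_{\cL_s}$-semistable sheaf $E$ on $X_s$. By the remark after Definition \ref{assum-bg} I may pass to $\overline{\kappa(s)}$. The terms $B_\gamma$ and $\gamma_2$ only shift $\bv_2/\bv_0$ by a quadratic expression in $\mu$ with bounded coefficients. These coefficients are locally constant in $s$ by Lemma \ref{lem-constant-chM}, so after completing the square they change only the constant. Hence it suffices to treat $\gamma=1$. Replacing $\cL$ by a fixed power, I may assume $\cL$ is relatively very ample with $R^{>0}f_*\cL^{m}=0$ for $m\geq 1$. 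Because $\Phi$ is a limsup, I need a bound uniform in $s$ and in $E$.

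\textbf{Slicing down to a surface.} For general $H_1,\dots,H_{n-2}\in|\cL_s|$, set $Y=H_1\cap\dots\cap H_{n-2}$. Since the fibers are lci in codimension $d\geq 2$, $Y$ is a projective surface, lci away from finitely many points. By Lemma \ref{lem:chi-property}(c) we have $\bv_{\cL,i}(E)=\bch_i(E|_Y).\cL_Y^{2-i}$ for $i\leq 2$. The restriction $E|_Y$ need not be semistable, but a Mehta--Ramanathan/Langer-type restriction theorem (cf.~\cite{langer:positive-char}) bounds $\mu_{\max}-\mu_{\min}$ of $E|_Y$ by $C\cdot\rk E$ for a constant $C$ depending only on the family. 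By openness (Lemma \ref{lem-constant-chM}) and Noetherian induction on $S$, this constant can be taken uniform.

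\textbf{The main estimate (expected obstacle).} On $Y$ I compare $\chi(E(m))$ computed in two ways, for the $m$ with $\mu(E(m))\approx$ small. By equation \eqref{eq:tau_k}, $\chi(E|_Y(m))$ equals $\bch_2+\tfrac12(\text{td}_1\text{ term})+\rk\cdot\chi(\cO_Y)$ up to terms controlled by $\bch_0,\bch_1$, with coefficients uniform in $s$. On the other hand, $h^0$ and $h^2$ of semistable sheaves satisfy a Le Potier--Simpson-type bound $h^0(F)\leq \rk F\cdot\big(\tfrac{(\mu_{\max}+c)_+^2}{2\deg}+c'\big)$. This bound is proved by restricting to general curves in $|\cL_Y|$, which are reduced curves, and using Clifford-type estimates on their normalizations. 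I would apply it to $E|_Y(m)$ and, for $h^2$, to the dual via Serre duality with $\omega_Y$ as in Lemma \ref{lem-serre-duality}. Using $\chi\leq h^0+h^2$ then gives
\[\bch_2(E).\cL^{n-2}\leq \frac{(\bch_1(E).\cL^{n-1})^2}{2\,\bch_0(E).\cL^n}+\mathsf{D}\,\bch_0(E).\cL^n\]
after optimizing $m$. The delicate point is uniformity of the constants $c,c'$ over all fibers and general members, including reducible or non-normal $Y$. I would first try to handle this by applying Noetherian induction on $S$ to the flat family of complete intersections, using semicontinuity of cohomology.

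Dividing by $(\bch_0.\cL^n)^2$ yields $\Phi^{\gamma}_{X/S,\cL}(x)\leq x^2/2+\mathsf{D}$ for all $x$, by upper semicontinuity. The absolute case is $S=\Spec\kk$.
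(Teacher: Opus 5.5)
Your proposal has a genuine gap. It sits at the step you flag, but the problem is more basic than uniformity over fibers: the constant you would obtain depends on $\rk E$.

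\textbf{The rank dependence.} Suppose you feed the restriction bound $\mu_{\max}(E|_Y)-\mu_{\min}(E|_Y)\le C\cdot\rk E$ into any Le Potier--Simpson-type estimate at $\mu(E|_Y(m))\approx 0$, even one using the whole HN polygon. You only get $h^0(E|_Y(m))$ of order $\rk E\cdot(C\rk E)^2$. Hence you only get $\bv_2(E)/\bv_0(E)\le\mu^2/2+O((\rk E)^2)$, which says nothing about $\Phi^{\gamma}_{X/S,\cL}$. What you need is a rank-independent bound on the HN spread of a restriction. Effective restriction theorems of Langer type are established on smooth (or normal) varieties, with bounds involving $\Delta(E)$, the very quantity you are trying to control, so they cannot be invoked here. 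Mehta--Ramanathan gives no uniformity at all.

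\textbf{The curve step.} This also fails in the present generality. Fibers in Setup \ref{setup-gamma-relative} may be reducible and even non-reduced; for example, a double hyperplane $V(w^2)\subset\PP^{n+1}$ is lci and equidimensional. So general curves in $|\cL_Y|$ need not be reduced, and Clifford on normalizations is unavailable.

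\textbf{Two further steps are wrong as stated.} First, in Riemann--Roch on $Y$, the term $\td_1(Y).\bch_1(E|_Y)$ is not controlled by $\bv_0,\bv_1$. Its pairing with the $\cL$-primitive part of $\bch_1$ is unbounded unless $\td_1$ is numerically proportional to $\cL$. Second, the reduction to $\gamma=1$ is not a completion of squares. The quantity $\bv^{\gamma}_{\cL,2}(E)$ contains $-B_\gamma.\bch_1(E).\cL^{n-2}$, which is not a function of $\mu$. Moreover, on reducible fibers $\mu^{\gamma}$- and $\mu$-semistability are different notions.

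\textbf{What the paper does instead.} The paper never restricts. On each geometric fiber it takes a finite surjective projection $\pi\colon X_s\to\PP^n$ with $\pi^*h=H$, and a surjection $\cO_{\PP^n}(-N)^{\oplus m}\twoheadrightarrow\pi_*\cO_{X_s}$. The adjunctions $\pi^*\dashv\pi_*\dashv\pi^!$ then give $\mu_h^+(\pi_*E)-\mu_h^-(\pi_*E)\le 2(t_1^+-t_1^-+N)$, which is independent of $\rk E$. Classical BG on $\PP^n$ for the HN factors of $\pi_*E$, together with Lemma \ref{lem-key-lem}, yields a BG-type inequality for $\pi_*E$ with error $O((\bch_0(E).H^n)^2)$.

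Riemann--Roch for $\pi$ over the lci locus transfers this to $E$ up to a term $\pm2(\bch_0(E).H^n)(\tau_1.\bch_1(E)).H^{n-2}$. This is the same obstruction as your $\td_1$ term. It is cancelled by rerunning the argument for the semistable sheaf $E^\vee$ and averaging; for $E^\vee$, $\bch_1$ changes sign and $\bch_2(E^\vee).H^{n-2}\ge\bch_2(E).H^{n-2}$.

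General $B_\gamma$ is handled by applying the $B_\gamma=0$ inequality to $E$ and to $E(-lB_\gamma)$, both of which have bounded $\mu_H$-HN spread. One then interpolates the quadratic polynomial in $t$ obtained by twisting with $e^{-tB_\gamma}$. Uniformity in $s$ comes from a finite map $X\to\PP^n_S$ after stratifying $S$, together with Chow finiteness for the components of the fibers.
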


\begin{proof}
We divide the proof into several steps.

\medskip

\noindent\textbf{Step 1}. We first prove the corresponding result in the absolute setting with $B_{\gamma}=0$. 

Fix $(X, H, \gamma)$ as in Setup \ref{setup-gamma} with $B_{\gamma}=0$. By Lemma \ref{lem-constant-chM}, we may assume that $\overline{\kk}=\kk$. We may also assume that $H$ is very ample, and let $X\hookrightarrow \PP(\mathrm{H}^0(\cO_X(H)))$ be the corresponding embedding. By \cite[\href{https://stacks.math.columbia.edu/tag/0B1P}{Tag 0B1P}]{stacks-project}, we can find a general projection $\pi\colon X\to \PP^n$ which is finite and surjective. Moreover, if we denote by $h$ the hyperplane class on $\PP^n$, then $\pi^*h=H$.

Since $X$ is lci in codimension $2$, we can find a closed subscheme $Z\subset \PP^n$ with $\codim_{\PP^n} Z\geq 3$ such that $\pi_U\colon\pi^{-1}(U)\to U$ is lci, where $U\coloneqq \PP^n\setminus Z$. Hence, $\pi^{-1}(U)$ is also lci. Note that when $n=2$, we have $U=\PP^2$.


For any cycle $\xi\in \CH_i(X)_{\QQ}$ with $0\leq i\leq 1$, we define $\tau_1.\xi\in \CH_{i+1}(X)_{\QQ}$ as the unique class corresponding to 
\[\td_1(\pi^{-1}(U)/U)\cap \xi|_{\pi^{-1}(U)}\in \CH_{i+1}(\pi^{-1}(U))_{\QQ}\]
under the identification $\CH_{i+1}(\pi^{-1}(U))_{\QQ}=\CH_{i+1}(X)_{\QQ}$ for $0\leq i\leq 1$. Similarly, if $\xi\in \CH_0(X)_{\QQ}$, we define $\tau_2.\xi\in \CH_{2}(X)_{\QQ}$ as the unique class corresponding to 
\[\td_2(\pi^{-1}(U)/U)\cap \xi|_{\pi^{-1}(U)}\in \CH_{2}(\pi^{-1}(U))_{\QQ}.\]

Applying Lemma \ref{lem:chi-property}(a) to $\pi^{-1}(U)\to U$, for any $E\in \Coh(X)$, we have
\[\bch_0(\pi_*E).h^{n}=\bch_0(E).H^n,\]
\[\bch_1(\pi_*E).h^{n-1}=\bch_1(E).H^{n-1}+(\tau_1.\bch_0(E)).H^{n-1},\]
and
\[\bch_2(\pi_*E).h^{n-2}=\bch_2(E).H^{n-2}+(\tau_1.\bch_1(E)).H^{n-2}+(\tau_2.\bch_0(E)).H^{n-2}.\]
Note that $\bch_0(E)=\sum^k_{i=1} x_i[X_i]$ for $x_i\in \ZZ_{\geq 0}$, where the sum runs over all irreducible components $X_i$ of $X$ with reduced structure. Therefore, by setting
\[t_j^+\coloneq \max\left\{\frac{\tau_j.[X_i].H^{n-j}}{[X_i].H^{n}}\right\}_{1\leq i\leq k}\]
and
\[t_j^-\coloneq \min\left\{\frac{\tau_j.[X_i].H^{n-j}}{[X_i].H^{n}}\right\}_{1\leq i\leq k}\]
for each $j=1,2$, we see that
\begin{equation}\label{eq:diff-bv1}
\bch_1(\pi_*E).h^{n-1}-\bch_1(E).H^{n-1}\in [t_1^- \bch_0(E).H^n, t_1^+ \bch_0(E).H^n]
\end{equation}
and
\begin{equation}\label{eq:diff-bv2}
\bch_2(\pi_*E).h^{n-2}-\bch_2(E).H^{n-2}-(\tau_1.\bch_1(E)).H^{n-2}\in \left[t_2^- \bch_0(E).H^n, t_2^+ \bch_0(E).H^n\right].
\end{equation}
In particular, 
\begin{equation}\label{eq:diff-slope}
\mu_h(\pi_*E)-\mu_H(E)\in [t_1^-,t_1^+].
\end{equation}

Now, assume that $E\in \Coh(X)$ is a torsion-free $\mu_H$-semistable sheaf. The next step is to bound the HN slopes of $\pi_*E$. Let $F\subset \pi_*E$ be the first HN factor. By adjunction, we have a non-zero map $\pi^*F\to E$, whose image is denoted by $0\neq F'\subset E$. Applying $\pi_*$, we then get morphisms
\[F\otimes \pi_*\cO_X\twoheadrightarrow \pi_*F'\hookrightarrow \pi_*E.\]
If we fix a surjection $\cO_{\PP^n}(-N)^{\oplus m}\twoheadrightarrow \pi_*\cO_X$, then by the $\mu_h$-semistability of $F$, we have $$\mu_h(\pi_*F')\geq \mu_h(F)-N.$$ Since $E$ is $\mu_H$-semistable, we also get $\mu_H(F')\leq \mu_H(E)$. Combining with \eqref{eq:diff-slope}, we obtain
\[\mu_h^+(\pi_*E)-N=\mu_h(F)-N\leq \mu_H(F')+t_1^+\leq \mu_H(E)+t_1^+\leq \mu_h(\pi_*E)+t_1^+-t_1^-.\]
Similarly, if $\pi_*E\twoheadrightarrow Q$ is the last HN factor, then from the adjunction, we get a non-zero map $E\to \pi^!Q$. Note that $\pi$ is finite, so
\[\pi_*\pi^!Q=R\cH om_{\PP^n}(\pi_*\cO_X, Q)\]
by (6) of \cite[\href{https://stacks.math.columbia.edu/tag/0AU3}{Tag 0AU3}]{stacks-project}. Therefore, we have $\cH^i(\pi^!Q)=0$ for $i<0$. In particular, we obtain a non-zero map $E\to \cH^0(\pi^!Q)$, whose image is denoted by $0\neq Q'\subset \cH^0(\pi^!Q)$. Applying $\pi_*$, we get morphisms
\[\pi_*E\twoheadrightarrow \pi_*Q'\hookrightarrow \pi_*\cH^0(\pi^!Q)\cong \cH om_{\PP^n}(\pi_*\cO_X, Q).\]
Using the surjection $\cO_{\PP^n}(-N)^{\oplus m}\twoheadrightarrow \pi_*\cO_X$ again, we obtain injection
\[\pi_*Q'\hookrightarrow \cH om_{\PP^n}(\cO_{\PP^n}(-N)^{\oplus m}, Q)=Q(N)^{\oplus m}.\]
Together with the $\mu_h$-semistability of $Q$, we then see that
\[\mu_h(\pi_*Q')\leq \mu_h(Q)+N,\]
which combines with \eqref{eq:diff-slope} and the $\mu_H$-semistability of $E$ implies
\[\mu_h(\pi_*E)+t_1^--t_1^+\leq \mu_H(E)+t_1^-\leq \mu_H(Q')+t_1^-\leq \mu_h(\pi_*Q')\leq \mu_h^-(\pi_*E)+N.\]
Therefore,
\begin{equation}\label{eq:diff-hn-slope}
0\leq \mu_h^+(\pi_*E)-\mu_h^-(\pi_*E)\leq 2(t_1^+-t_1^-+N).
\end{equation}

Now, we are ready to establish the desired inequality when $\gamma=1$. Since the classical Bogomolov--Gieseker inequality holds for $\mu_h$-semistable sheaves on $\PP^n$ (cf.~\cite{langer:positive-char}), applying it to each HN factor of $\pi_*E$ and using \eqref{eq:diff-hn-slope} and Lemma \ref{lem-key-lem}, we can conclude that
\begin{equation}\label{eq:bg-h}
(\bch_1(\pi_*E).h^{n-1})^2-2\bch_0(\pi_*E).h^n \bch_2(\pi_*E).h^{n-2}\geq -(\bch_0(\pi_*E).h^n)^2(t_1^+-t_1^-+N)^2.
\end{equation}
To prove a BG-type inequality on $X$, we may first assume that $\mu_H(E)\in [-1,1]$. Set $t_j\coloneqq \max\{|t_j^-|, |t_j^+|\}$ for each $j=1,2$. By \eqref{eq:diff-bv1}, we have
\begin{align*}
(\bch_1(E).H^{n-1})^2\geq& (\bch_1(\pi_*E).h^{n-1})^2-2t_1|\bch_1(\pi_*E).h^{n-1}|\bch_0(E).H^n\\
\geq& (\bch_1(\pi_*E).h^{n-1})^2-2t_1(t_1+1)(\bch_0(E).H^n)^2,
\end{align*}
where the last inequality uses $\mu_H(E)\in [-1,1]$ and \eqref{eq:diff-bv1}. Similarly, from \eqref{eq:diff-bv2}, we have
\begin{align*}
-&\bch_0(E).H^n \bch_2(E).H^{n-2}\\
&\geq -\bch_0(\pi_*E).h^n \bch_2(\pi_*E).h^{n-2}-t_2(\bch_0(E).H^n)^2\\
\,\,&+(\bch_0(E).H^n)(\tau_1.\bch_1(E)).H^{n-2}
\end{align*}
Combining these two inequalities with \eqref{eq:bg-h}, we see that
\begin{align*}
&(\bch_1(E).H^{n-1})^2-2\bch_0(E).H^n \bch_2(E).H^{n-2}\\
&\geq -(2t_1(t_1+1)+2t_2+(t_1^+-t_1^-+N)^2)(\bch_0(E).H^n)^2\\
&+2(\bch_0(E).H^n)(\tau_1.\bch_1(E)).H^{n-2}.
\end{align*}
Note that $E^{\vee}$ is $\mu_H$-semistable with $\bch_0(E^{\vee})=\bch_0(E)$ and $\bch_1(E^{\vee})=-\bch_1(E)$ by Lemma \ref{lem:dual-slope-stable}(d) and \ref{lem-dual-ch}. Since $\mathrm{LCI}(X/\kk)$ is Gorenstein, we also have
\[(E^H)^{\vee}|_{\mathrm{LCI}(X/\kk)}\cong E^{\vee \vee \vee}|_{\mathrm{LCI}(X/\kk)}\cong E^{\vee}|_{\mathrm{LCI}(X/\kk)}\]
by Lemma \ref{lem-S2-hull}, which implies $\bch_2(E^{\vee})=\bch_2(E^H)$ via Lemma \ref{lem-dual-ch} and $\codim_X(X\setminus \mathrm{LCI}(X/\kk))\geq 3$. Combining with Lemma \ref{lem-S2-hull}(c), we see that $\bch_2(E^{\vee})-\bch_2(E)$ is an effective cycle and get 
\[\bch_2(E^{\vee}).H^{n-2}\geq \bch_2(E).H^{n-2}.\]
Applying the above argument by replacing $E$ with $E^{\vee}$ gives
\begin{align*}
&(\bch_1(E).H^{n-1})^2-2\bch_0(E).H^n \bch_2(E).H^{n-2}\\
&\geq -(2t_1(t_1+1)+2t_2+(t_1^+-t_1^-+N)^2)(\bch_0(E).H^n)^2\\
&-2(\bch_0(E).H^n)(\tau_1.\bch_1(E)).H^{n-2}.
\end{align*}
Averaging the above two inequalities cancels the term involving $\tau_1.\bch_1(E)$, and we deduce that
\begin{equation}\label{eq:gamma=1-bg}
(\bch_1(E).H^{n-1})^2-2\bch_0(E).H^n \bch_2(E).H^{n-2}\geq -2\mathsf{D}_{X, H}(\bch_0(E).H^n)^2,
\end{equation}
where
\[\mathsf{D}_{X, H}\coloneqq t_1(t_1+1)+t_2+\frac{1}{2}(t_1^+-t_1^-+N)^2.\]
The same inequality also holds for any $\mu_H$-semistable $E\in \Coh(X)$, as tensoring a multiple of $\cO_X(H)$ does not change the $\mu_H$-semistability and the above quadratic inequality \eqref{eq:gamma=1-bg} by Lemma \ref{lem:chi-property}. Therefore, after defining
\[\mathsf{D}'_{X, H, \gamma}\coloneqq \mathsf{D}_{X, H}+\max\{0,s_{X,H,\gamma}\},\]
where
\[s_{X, H, \gamma}\coloneqq \max\left\{\frac{\gamma_2.[X_i].H^{n-2}}{[X_i].H^{n}}\right\}_{1\leq i\leq k},\]
we get
\[(\bch_1(E).H^{n-1})^2-2\bch_0(E).H^n( (\bch_2(E)+\gamma_2.\bch_0(E)).H^{n-2})\geq -2\mathsf{D}'_{X, H, \gamma}(\bch_0(E).H^n)^2.\]

\bigskip

\noindent\textbf{Step 2}. Next, we prove the general case where $B_{\gamma}\neq 0$ in the absolute setting.

By \eqref{eq:def-slope} we know that
\[0\leq \mu^{+}_H(E)-\mu^{-}_H(E)\leq 2t_3\]
for any $\mu^{\gamma}_H$-semistable sheaf $E\in \Coh(X)$, where 
\[t_3\coloneq \max\left\{\left|\frac{B_{\gamma}.[X_i].H^{n-1}}{[X_i].H^{n}}\right|\right\}_{1\leq i\leq k}.\]
Fix $l\in \ZZ_{\geq 2}$ so that $lB_{\gamma}\in \Pic(X)$. Then we also have
\[0\leq \mu_H^+(E(-lB_{\gamma}))-\mu_H^-(E(-lB_{\gamma}))\leq 2(l+1)t_3.\]
Applying Lemma \ref{lem-key-lem} and the result in Step 1 to the HN factors of $E$ and $E(-lB_{\gamma})$ with respect to $\mu_H$-semistability, we obtain
\begin{align*}
&(\bch_1(E).H^{n-1})^2-2\bch_0(E).H^n ((\bch_2(E)+\gamma_2.\bch_0(E)).H^{n-2})\\
&\geq -(2\mathsf{D}'_{X, H, \gamma}+t_3^2)(\bch_0(E).H^n)^2
\end{align*}
and
\begin{align*}
&(\bch_1(E(-lB_{\gamma})).H^{n-1})^2-2\bch_0(E(-lB_{\gamma})).H^n ((\bch_2(E(-lB_{\gamma}))+\gamma_2.\bch_0(E(-lB_{\gamma}))).H^{n-2})\\
&\geq -(2\mathsf{D}'_{X, H, \gamma}+(l+1)^2t_3^2)(\bch_0(E).H^n)^2.
\end{align*}
Therefore, using interpolation of quadratic polynomials, we get
\begin{align*}
\bDelta(\bv_{H,\leq 2}^{\gamma}(E))\geq &-\left(\frac{l-1}{l}(2\mathsf{D}'_{X, H, \gamma}+t_3^2)+\frac{1}{l}(2\mathsf{D}'_{X, H, \gamma}+(l+1)^2t_3^2)\right)(\bv_{H,0}^{\gamma}(E))^2\\
&-(l-1)\left((B_{\gamma}.\bch_0(E).H^{n-1})^2-\bv_{H,0}^{\gamma}(E)B_{\gamma}^2.\bch_0(E).H^{n-2}\right). 
\end{align*}
Thus, if we set 
\[q_{X, H, \gamma}\coloneqq \min\left\{\frac{B_{\gamma}^2.[X_i].H^{n-2}}{[X_i].H^{n}}\right\}_{1\leq i\leq k},\]
then we obtain
\[\bDelta(\bv_{H,\leq 2}^{\gamma}(E))\geq -2\mathsf{D}_{X, H, \gamma} (\bv_{H,0}^{\gamma}(E))^2,\]
where
\begin{align*}
\mathsf{D}_{X, H, \gamma}\coloneqq & \frac{1}{2}\left(\frac{l-1}{l}(2\mathsf{D}'_{X, H, \gamma}+t_3^2)+\frac{1}{l}(2\mathsf{D}'_{X, H, \gamma}+(l+1)^2t_3^2)+(l-1)(t_3^2-q_{X, H, \gamma})\right)\\
=& \mathsf{D}'_{X, H, \gamma}+(l+1)t_3^2-\frac{l-1}{2}q_{X, H, \gamma}.
\end{align*}
This completes the proof of the theorem in the absolute setting.

\medskip

\noindent \textbf{Step 3}. Finally, we establish the quadratic inequality in the relative setting.

After replacing $\cL$ by its multiples and stratifying $S$, we may assume that there exists a finite surjective morphism $\pi_S\colon X\to \PP^n_S$ so that $\pi^*_S\cO_{\PP^n_S}(1)=\cL$. Therefore, the number $N$ in the first step can be chosen uniformly for each geometric fiber of $X\to S$.

By Chow’s finiteness theorem (cf.~\cite[Exercise I.3.28, Theorem I.6.3]{kollar:book-rational-curve}), there are only finitely many possibilities of the Hilbert polynomials among all irreducible components (with reduced structure) of all geometric fibers of $X\to S$. Using the boundedness of Hilbert schemes, we can find a morphism $T\to S$ of finite type with finitely many closed subschemes $Y_1,\cdots,Y_r$ of $X_T$ flat over $T$ such that every irreducible component of the fiber over every geometric point $t\to T$ lies in the set $\{(Y_1)_t,\cdots,(Y_r)_t\}$. Therefore, from Lemma \ref{lem-constant-chM}, we know that the constants $t_1, t_2, t_3$, $s_{X_t, \cL_t, \gamma_t}$, and $q_{X_t, \cL_t, \gamma_t}$ used in the definitions of $\mathsf{D}_{X_t, \cL_t, \gamma_t}$ and $\mathsf{D}'_{X_t, \cL_t, \gamma_t}$ are all uniformly bounded when geometric points $t\to S$ vary. Thus, there exists a constant $\mathsf{D}_{X/S, \cL, \gamma}\geq 0$ so that 
\[\sup\{\mathsf{D}_{X_t, \cL_t, \gamma_t}\colon t\to S\text{ is a geometric point}\}\leq \mathsf{D}_{X/S, \cL, \gamma}.\]
This finishes the proof.
\end{proof}

\begin{lemma}\label{lem-key-lem}
Let $r_i,c_i,t_i\in \RR$ for $1\leq i\leq m$. We set $r\coloneqq\sum^m_{i=1} r_i$, $c\coloneqq\sum^m_{i=1} c_i$, and $t\coloneqq\sum^m_{i=1} t_i$. Assume that $r_i\neq 0$ for each $1\leq i\leq m$, then
\[c^2-2rt=\sum^m_{i=1} \frac{r}{r_i}(c_i^2-2r_it_i)-\sum_{j<k}r_jr_k\left(\frac{c_j}{r_j}-\frac{c_k}{r_k}\right)^2.\]
Assume furthermore that $r_i>0$ for each $1\leq i\leq m$ and $\mu_1\geq \mu_2\geq \dots\geq \mu_{m}$, where $\mu_l\coloneqq\frac{c_l}{r_l}$. Then
\[c^2-2rt\geq\sum^m_{i=1} \frac{r}{r_i}(c_i^2-2r_it_i)-\frac{1}{4}r^2(\mu_1-\mu_m)^2.\]
\end{lemma}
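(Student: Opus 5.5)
The identity is pure algebra, and I will verify it by expanding the right-hand side. Since $\sum_i \frac{r}{r_i}(c_i^2-2r_it_i)=r\sum_i \frac{c_i^2}{r_i}-2rt$, the $-2rt$ terms match on both sides. It therefore suffices to prove
\[
c^2=r\sum_i \frac{c_i^2}{r_i}-\sum_{j<k} r_jr_k(\mu_j-\mu_k)^2,\qquad \mu_l=\frac{c_l}{r_l}.
\]
Write $c_i=r_i\mu_i$. Expanding $\sum_{j<k} r_jr_k(\mu_j^2+\mu_k^2-2\mu_j\mu_k)$ gives
\[
\sum_{j\neq k} r_jr_k\mu_j^2-\sum_{j\neq k} r_jr_k\mu_j\mu_k=\sum_j r_j\mu_j^2(r-r_j)-\Bigl(c^2-\sum_j r_j^2\mu_j^2\Bigr)=r\sum_j r_j\mu_j^2-c^2.
\]
Since $r_j\mu_j^2=c_j^2/r_j$, this is exactly the required identity. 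This is a Lagrange-type identity, and I would present it this way.

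For the inequality, I need to show $\sum_{j<k} r_jr_k(\mu_j-\mu_k)^2\le \frac14 r^2(\mu_1-\mu_m)^2$ when all $r_i>0$. The left side equals $r\sum_j r_j(\mu_j-\bar\mu)^2$, where $\bar\mu=c/r$ is the weighted mean; this is the weighted variance identity, which follows from the same expansion. So I need the variance bound: a probability measure with weights $r_j/r$ supported in $[\mu_m,\mu_1]$ has variance at most $\frac14(\mu_1-\mu_m)^2$.

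This is the Popoviciu inequality. To prove it, use $(\mu_j-\mu_m)(\mu_1-\mu_j)\ge 0$ for each $j$, which holds by the ordering. Summing against the weights $r_j/r$ gives
\[
\sum_j \frac{r_j}{r}\mu_j^2\le (\mu_1+\mu_m)\bar\mu-\mu_1\mu_m .
\]
Hence the variance satisfies
\[
\mathrm{Var}\le (\mu_1-\bar\mu)(\bar\mu-\mu_m)\le \tfrac14(\mu_1-\mu_m)^2
\]
by AM–GM. Multiplying by $r^2$ gives $\sum_{j<k}r_jr_k(\mu_j-\mu_k)^2\le \frac14 r^2(\mu_1-\mu_m)^2$, and substituting into the identity yields the claimed inequality.

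There is no real obstacle here. The only care needed is bookkeeping in the double-sum expansion and recognizing the variance form, after which the bound is the elementary Popoviciu inequality.
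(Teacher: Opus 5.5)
Your proposal is correct and follows essentially the same route as the paper: verify the identity by direct expansion, rewrite $\sum_{j<k}r_jr_k(\mu_j-\mu_k)^2$ as $r^2$ times the weighted variance of the $\mu_i$, and bound that variance by Popoviciu's inequality. The only difference is that you also prove Popoviciu's inequality (via $(\mu_j-\mu_m)(\mu_1-\mu_j)\geq 0$ and AM--GM), whereas the paper simply cites it.
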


\begin{proof}
The first equality follows from a straightforward checking. For the inequality, it suffices to prove
\[\sum_{j<k}r_jr_k(\mu_j-\mu_k)^2\leq \frac{1}{4}r^2(\mu_1-\mu_m)^2.\]
Note that
\[\sum_{j<k}r_jr_k(\mu_j-\mu_k)^2=r^2\sum^m_{i=1}\frac{r_i}{r}(\mu_i-\mu)^2,\]
where $\mu\coloneqq \frac{c}{r}$, then the result can be deduced from Popoviciu’s inequality on weighted variance:
\[\sum^m_{i=1}\frac{r_i}{r}(\mu_i-\mu)^2\leq \frac{1}{4}(\mu_1-\mu_m)^2.\]
\end{proof}

\subsection{BG inequalities for normal varieties}


Although Theorem \ref{thm:exist-bg-function} gives a BG-type inequality, it is not sharp. In practice, the following inequality for normal varieties is useful.

\begin{theorem}\label{thm-bg-normal}
Fix $(X, H, \gamma)$ as in Setup \ref{setup-gamma}. Assume that $n\geq 2$ and $X$ is geometrically normal and geometrically integral. If $n=2$ or $d\geq 2$, then there exists a constant $C_{X, H}\geq 0$ such that if $-H^{n-2}.\gamma_{2}\geq C_{X,H}$, then
\[\bDelta(\bv^{\gamma}_{H,\leq 2}(E))\geq 0\]
for any $\mu^{\gamma}_{H}$-semistable torsion-free sheaf $E\in \Coh(X)$. If $\mathrm{char}(\kk)=0$ and $X_{\overline{\kk}}$ has rational singularities, then we can take $C_{X,H}=0$.
\end{theorem}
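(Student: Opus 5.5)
Throughout, I work with $B_\gamma=0$ first and base change to $\overline{\kk}$, which is harmless because $\Phi$ and $\bch_{\le 2}$ are compatible with field extension (Lemma \ref{lem:chi-property}(b)). Since $\gamma$ only enters $\bv_{H,2}$ through $\gamma_2.\bch_0$, we have $\bDelta(\bv^{\gamma}_{H,\leq 2}(E))=\bDelta(\bv_{H,\leq 2}(E))-2(H^n\rk E)^2\,(H^{n-2}.\gamma_2)/H^n$. So the task is to show that $\bDelta(\bv_{H,\le2}(E))\geq -2C'(H^n\rk E)^2$ for a constant $C'$ depending only on $(X,H)$, and then to set $C_{X,H}=C' H^n$. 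For general $B_\gamma$ I would reduce to this case by the interpolation trick of Step 2 of Theorem \ref{thm:exist-bg-function}. Because $X$ is irreducible, $\mu^\gamma_H$-stability agrees with $\mu_H$-stability (Remark \ref{rmk-different-stability}), so after twisting by multiples of $H$ only the term $-\tfrac12 B_\gamma$ changes the constant.

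The key step is a reduction to a normal surface via restriction theorems. I first replace $H$ by a multiple, which only rescales the inequality. By Langer's effective restriction theorem (and Mehta--Ramanathan in characteristic $0$) for normal varieties, the restriction of a $\mu_H$-semistable torsion-free sheaf to a general complete intersection surface $S\in|mH|^{\cap(n-2)}$ has HN factors whose slopes spread by at most a constant times $\rk(E)$-independent data. More concretely, I would use Langer's bound $\mu_{\max}-\mu_{\min}$ on the restriction to $S$. Since $X$ is lci in codimension $2$ and normal, a general such $S$ is a normal projective surface contained in the lci locus away from finitely many points, and $\bch_i(E|_S)=\bch_i(E)|_S$ for $i\le 2$ (Lemma \ref{lem:chi-property}(c) together with Lemma \ref{lem:intersect-pairing-restrict} for the Mumford pairing). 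Combining Lemma \ref{lem-key-lem} with the slope-spread bound then reduces everything to proving, on the normal surface $S$, a Bogomolov inequality $\bDelta\ge -2C_S\rk^2$ for $\mu$-semistable sheaves. That inequality is Langer's result on normal surfaces \cite{langer:normal-surface}. The case $n=2$ is exactly this statement, with $\bch_2$ as in Definition \ref{def:ch2-surface}.

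For the rational-singularity case in characteristic $0$, I would take a resolution $f\colon\widetilde S\to S$ of the general surface section. By Bertini, $S$ again has rational singularities, since general hyperplane sections of rational singularities are rational. The plan is to compare $E$ with $(f^*E)^{\vee\vee}/\mathrm{tors}$: rationality gives $\chi(\widetilde S, f^*E)$-type comparisons such that Langer's correction term on $S$ vanishes. Equivalently, for rational surface singularities Langer shows that $\bch_2$ defined via $\chi$ satisfies the standard Bogomolov inequality, because the local contribution $\tfrac12\Delta$-defect is $\le 0$. Applying the classical Bogomolov inequality on the smooth surface $\widetilde S$ (char $0$) to $f^*E$ modulo torsion, which stays $\mu_{f^*H}$-semistable, then gives $C_{X,H}=0$. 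In this case I would use Mehta--Ramanathan, which gives semistable restriction with no slope spread, so no constant is lost in the restriction step either.

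I expect the main obstacle to be the restriction step. I need a restriction theorem valid on singular normal varieties that is uniform in the sheaf, together with the identification $\bch_2(E)|_S=\bch_2(E|_S)$ when $S$ meets the non-lci locus. The first point should follow by running Langer's argument on a resolution, or on the smooth locus, since the codimension-$2$ behavior suffices. The second needs the reflexivity hypothesis of Lemma \ref{lem:intersect-pairing-restrict}, which holds for general $S$ by Bertini-type results for reflexive sheaves.
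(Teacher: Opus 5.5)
\textbf{There are two genuine gaps: the uniform restriction step is not available, and your treatment of $B_\gamma$ loses a $\gamma$-dependent constant.}

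\emph{The restriction step.} Your reduction needs a restriction theorem of a kind that does not exist. You flag this as the main obstacle but do not resolve it. To get a constant depending only on $(X,H)$, you need the HN slope spread of $E|_S$, for $S$ a complete intersection surface, to be bounded independently of $E$, over an arbitrary field. Langer's effective restriction theorems do not give this. Their hypotheses and spread bounds are expressed through $\Delta(E)$, the rank and $\beta_r$, and they presuppose a Bogomolov-type inequality, so using them here is circular. Mehta--Ramanathan, or its version for normal varieties, only gives semistability of $E|_S$ for $S\in|mH|^{\cap(n-2)}$ with $m\gg0$ depending on $E$. You then get $H^{n-2}.\Delta(E)\ge -C_{S,mH_S}\,r^2/m^{n-2}$, where the surface and polarization depend on $E$, and nothing in your plan controls these constants uniformly.

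This is harmless only when all the surface constants vanish, i.e.\ in the rational characteristic-zero case. That case is the alternative route the paper itself mentions. The paper avoids restriction theorems altogether. Proposition~\ref{prop-bg-normal-over-k} runs Langer's induction on dimension and rank, as follows.
\begin{enumerate}
\item Take a general pencil in the fixed system $|H|$, blow up its base locus $q\colon Y\to X$, and take the relative HN filtration of $(q^*E)^{\vee\vee}$ over $\PP^1$ with respect to $(p^*\cO_{\PP^1}(1),q^*H,\dots,q^*H)$.
\item If the filtration is trivial, restrict to a general member of the pencil and use induction on dimension.
\item Otherwise the factors have smaller rank and satisfy the inequality by induction, transported via Lemmas~\ref{lem:blow-up-3} and \ref{lem-change-polarization}. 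The cross terms are absorbed using the Hodge index theorem, $\sum_i b_i=0$, and the slope bound coming from $q_*E_i\subset E$.
\end{enumerate}
This gives $C_H=C_{H|_T}$ for a general complete intersection surface $T$ of $|H|$. In the rational case this constant vanishes by Lemma~\ref{lem-bg-normal-surface}.

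\emph{The treatment of $B_\gamma$.} Here you lose exactly what the statement forbids. Your intermediate target is a bound on $\bDelta(\bv_{H,\le2}(E))$, which is not invariant under twisting by classes not proportional to $H$. For $B_\gamma\in\Pic(X)_{\mathbb Q}$, the function $s\mapsto\bDelta(\bv_{H,\le2}(e^{-sB_\gamma}\bch(E)))$ is a quadratic polynomial with leading coefficient $r^2\bigl((H^{n-1}.B_\gamma)^2-H^n\,H^{n-2}.B_\gamma^2\bigr)$. This is $\ge0$ by Hodge index. Interpolating between $s=0$ and $s=l$, with $lB_\gamma$ integral, therefore costs $(l-1)$ times this coefficient, as in Step~2 of Theorem~\ref{thm:exist-bg-function}. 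Your constant then depends on $\gamma$, and in the rational case you cannot reach $C_{X,H}=0$.

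The fix, which is what the paper does, has two steps:
\begin{enumerate}
\item Bound the full discriminant $H^{n-2}.\Delta(E)=H^{n-2}.(\bch_1(E)^2-2r\bch_2(E))$, computed with Mumford intersection numbers. This quantity is twist-invariant.
\item Apply Lemma~\ref{lem-hodeg-index} to $\bch^\gamma_1(E)$ to get $\frac{1}{H^n}\bDelta(\bv^\gamma_{H,\le2}(E))\ge H^{n-2}.\Delta(E)-2r^2\,H^{n-2}.\gamma_2$.
\end{enumerate}
This yields $C_{X,H}=\frac12 C_H$ uniformly in $\gamma$.
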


When $\kk=\overline{\kk}$ is of characteristic zero and $X$ has rational singularities, the result corresponding to $C_{X, H}=0$ can be easily deduced from the restriction theorem \cite[Theorem 0.5]{langer:higgs-normal} and the result for surfaces \cite[Theorem 0.1]{langer:normal-surface} (see also Lemma \ref{lem-bg-normal-surface}). In the following, we closely follow the argument in \cite{langer:positive-char,langer:on-boundedness} to prove a more general inequality.

\begin{remark}
Note that if $B_{\gamma}=0$ and $X$ is integral, then we have
\[\bDelta(\bv^{\gamma}_{H,\leq 2}(E))=(\bch_1(E).H^{n-1})^2-2\rk(E)H^n (\bch_2(E).H^{n-2})-2(\rk(E))^2H^n(\gamma_2.H^{n-2})\]
for any coherent sheaf $E$ on $X$. In this situation, the result in Theorem \ref{thm-bg-normal} becomes
\[(\bch_1(E).H^{n-1})^2-2\rk(E)H^n (\bch_2(E).H^{n-2})+2H^n C_{X, H}(\rk(E))^2\geq 0.\]
So the twist $\gamma_2$ in Setup \ref{setup-gamma} should be regarded as a correction term of classical BG inequality in the singular setting.
\end{remark}


Before proving BG inequalities, we need the notion of slope-stability for multi-polarizations.

\begin{definition}
Let $X$ be a normal projective variety of dimension $n\geq 2$ over an algebraically closed field $\kk$ with a collection $L_{\bullet}=(L_1,\dots, L_{n-1})$ of nef line bundles on $X$. If $L_1=\dots=L_{n-1}=H$, we write simply $H$ for this collection.

We fix a torsion-free sheaf $E$ on $X$ of rank $r$. We define the \emph{$L_{\bullet}$-slope} of $E$ by
\[\overline{\mu}_{L_{\bullet}}(E)\coloneqq\frac{L_1.\dots L_{n-1}.\bch_1(E)}{r}.\]
We say $E$ is \emph{$\overline{\mu}_{L_{\bullet}}$-(semi)stable} if for any non-trivial subsheaf $F\subset E$ of rank $<r$, we have $$\overline{\mu}_{L_{\bullet}}(F)(\leq) \overline{\mu}_{L_{\bullet}}(E).$$
If $X$ is lci in codimension $\geq 2$, we also define the \emph{$L_{\bullet}$-discriminant} by
\[L_2\dots L_{n-1}.\Delta(E)\coloneqq L_2\dots L_{n-1}.(\bch_1(E))^2-2r\cdot L_2\dots L_{n-1}.\bch_2(E) \in \QQ.\]
For a constant $C\in \mathbb{R}$, we say that \emph{$C$-Bogomolov--Gieseker (BG) inequality holds} for the collection of nef line bundles $L_{\bullet}=(L_1,\dots, L_{n-1})$ if $L_2\dots L_{n-1}.\Delta(E)+C r^2\geq 0$ for every $\overline{\mu}_{L_{\bullet}}$-semistable sheaf $E$ on $X$ of rank $r$.
\end{definition}
Note that $\overline{\mu}_H(E)=H^n\mu_H(E)$, so $\overline{\mu}_H$-(semi)stability is the same as $\mu_H$-(semi)stability.

Using a similar calculation as in \cite[Proposition 3.1]{langer:on-boundedness}, we know that changing the first line bundle in a polarization does not affect the BG inequality.

\begin{lemma}\label{lem-change-polarization}
Let $X$ be a normal projective variety of dimension $n\geq 2$ over an algebraically closed field $\kk$ which is lci in codimension $\geq 2$. Fix a collection $L_{\bullet}=(L_1,\dots, L_{n-1})$ of nef line bundles on $X$ such that the product $L_1\dots L_{n-1}$ is numerically nontrivial. If $C$-BG inequality holds for $\overline{\mu}_{L_{\bullet}}$-semistable sheaves of rank $<r$ for a constant $C\in \mathbb{R}$, then it holds for $\overline{\mu}_{(M, L_2,\dots, L_{n-1})}$-semistable sheaves of rank $<r$ such that $M$ is nef and $M.L_2\dots L_{n-1}$ is numerically nontrivial.
\end{lemma}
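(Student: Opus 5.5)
The plan is to follow Langer's argument in \cite[Proposition 3.1]{langer:on-boundedness}. We argue by contradiction, with an induction on the rank $r$ built into the hypothesis. Fix $M$ nef with $M.L_2\dots L_{n-1}$ numerically nontrivial, and consider the segment of multi-polarizations $L_t\coloneqq (1-t)L_1+tM$ for $t\in[0,1]$, each paired with the fixed $(L_2,\dots,L_{n-1})$. All slopes and discriminants are computed with the Mumford intersection numbers of Section \ref{subsec:mumford}, and $\bch_1,\bch_2$ make sense because $X$ is lci in codimension $2$. For each $t$, the product $L_t.L_2\dots L_{n-1}$ is numerically nontrivial: by Lemma \ref{lem-hodeg-index}, the intersection with an ample $H$ of a sum of two nef classes, at least one nontrivial, is positive. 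The discriminant $L_2\dots L_{n-1}.\Delta(E)$ does not involve the first line bundle. So the statement reduces to: if $E$ of rank $<r$ is $\overline{\mu}_{L_1}$-semistable we know the bound, and we want it for $E$ that is $\overline{\mu}_{L_{1}}$-unstable but semistable for the polarization ending in $M$.

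First I would set up the walls. Let $E$ be $\overline{\mu}_{(M,L_2,\dots)}$-semistable of rank $r'<r$, and suppose $L_2\dots L_{n-1}.\Delta(E)+Cr'^2<0$. Let $t_0$ be the infimum of those $t$ for which $E$ is $\overline{\mu}_{L_t}$-semistable; semistability is a closed condition in $t$, since slopes are linear in $t$ and the destabilizing subsheaves have bounded slopes. If $t_0=0$ we are done by hypothesis, so assume $t_0>0$. Then there is an exact sequence $0\to F\to E\to G\to 0$ of torsion-free sheaves, both of rank $<r'$, with $\overline{\mu}_{L_{t_0}}(F)=\overline{\mu}_{L_{t_0}}(G)$. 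Both $F$ and $G$ are $\overline{\mu}_{L_{t_0}}$-semistable, because they are pieces of a Jordan--H\"older-type filtration at the wall. Put $\xi\coloneqq r_G\bch_1(F)-r_F\bch_1(E/F)$ as a Weil divisor class. Then $L_{t_0}.L_2\dots L_{n-1}.\xi=0$.

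The main step is to apply induction to $F$ and $G$. To make this legitimate, I would strengthen the statement to: for every $t\in[0,1]$, the $C$-BG inequality holds for $\overline{\mu}_{L_t}$-semistable sheaves of rank $<r$. I would prove this by induction on the rank, using the same wall argument between $0$ and $t$. With that in place, $F$ and $G$ satisfy the inequality. We have the identity
\[\frac{\Delta(E)}{r'}=\frac{\Delta(F)}{r_F}+\frac{\Delta(G)}{r_G}-\frac{\xi^2}{r_Fr_G},\]
where all products are taken against $L_2\dots L_{n-1}$; this is Lemma \ref{lem-key-lem} with two terms. By Lemma \ref{lem-hodeg-index} applied with $L=L_{t_0}$, the condition $L_{t_0}.L_2\dots L_{n-1}.\xi=0$ gives $L_2\dots L_{n-1}.\xi^2\le 0$. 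Hence
\[\frac{\Delta(E)}{r'}\ge -C(r_F+r_G)=-Cr',\]
that is, $\Delta(E)+Cr'^2\ge0$, which is a contradiction.

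I expect the main obstacle to be justifying that this wall and induction structure is well-founded. First, the walls should be locally finite, which needs the boundedness of the possible $\bch_1$ of destabilizing subsheaves; I would obtain this via Lemma \ref{lem-hodeg-index} and the discreteness of the values. Second, the Hodge index inequality must be applied on a normal singular $X$ using Mumford intersections. Third, we must check that for $\xi$ a Weil divisor the quantity $L_2\dots L_{n-1}.\xi^2$ agrees with the $\bch$-based discriminant. For that I would use Lemma \ref{lem:intersect-pairing-restrict} on general complete intersections, together with the equality of $\bch_{\le 2}$ and Mumford numbers away from codimension $3$.
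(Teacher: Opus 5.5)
Your argument is correct, up to the small slips listed at the end. The paper gives no proof of its own here, only a pointer to ``a similar calculation'' in \cite[Proposition 3.1]{langer:on-boundedness}, and your wall-crossing along $L_t=(1-t)L_1+tM$ is a valid way to supply it.

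A more direct calculation, in the spirit of the end of the proof of Proposition \ref{prop-bg-normal-over-k}, avoids walls and the induction over intermediate polarizations. Take the Harder--Narasimhan filtration of the $\overline{\mu}_{(M,L_2,\dots,L_{n-1})}$-semistable sheaf $E$ (of rank $r'<r$) with respect to $L_\bullet$. Its factors $F_i$ are $\overline{\mu}_{L_\bullet}$-semistable of rank $<r$, so the hypothesis applies to them directly. Set $q(D)\coloneqq L_2\cdots L_{n-1}.D^2$ and $\delta_i\coloneqq \bch_1(F_i)/r_i-\bch_1(E)/r'$. Then $\Delta(E)/r'=\sum_i\Delta(F_i)/r_i-\sum_i r_i\,q(\delta_i)$, and it suffices to show $\sum_i r_iq(\delta_i)\le 0$. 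Put $x_i\coloneqq\delta_i.L_1.L_2\cdots L_{n-1}$ and $y_i\coloneqq\delta_i.M.L_2\cdots L_{n-1}$. The $x_i$ strictly decrease, while $\sum_{j\le i}r_jy_j\le 0$ by $M$-semistability of $E$, so Abel summation gives $\sum_i r_ix_iy_i\le 0$. One concludes with the Hodge-type inequality $(L_1.M.L_2\cdots L_{n-1})\,q(\delta)\le 2(\delta.L_1.L_2\cdots L_{n-1})(\delta.M.L_2\cdots L_{n-1})$, together with $L_1.M.L_2\cdots L_{n-1}>0$.

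The two routes trade off as follows.
\begin{itemize}
\item Your route uses Lemma \ref{lem-hodeg-index} exactly as stated, at the single nef class $L_{t_0}$ orthogonal to $\xi$. The price is local finiteness of walls and the strengthened statement for every $L_t$.
\item The direct route applies the hypothesis once, but needs the two-variable inequality above. That takes a short extra argument: if $D\mapsto D.L_1.L_2\cdots L_{n-1}$ and $D\mapsto D.M.L_2\cdots L_{n-1}$ are proportional there is nothing to prove. Otherwise Lemma \ref{lem-hodeg-index} forces the Gram matrix of $q$ on $\langle L_1,M\rangle$ to be hyperbolic, and one projects $\delta$ onto that plane.
\end{itemize}

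Small slips in your write-up:
\begin{itemize}
\item The last term of your identity should be $-\xi^2/(r'r_Fr_G)$. This does not affect the sign argument.
\item $t_0$ is rational, being the zero of a linear function of $t$ with rational coefficients. So a positive multiple of $L_{t_0}$ is a genuine nef line bundle, and Lemma \ref{lem-hodeg-index} applies as stated.
\item The upper bound on the $L_1$- and $M$-slopes of subsheaves of $E$, which makes the walls locally finite, does not come from the Hodge index theorem. It comes from an embedding $E\hookrightarrow \cO_X(mH)^{\oplus N}$ for some ample $H$ and $m\gg 0$, together with nefness.
\item Your third worry is unfounded: $\Delta$ is defined through the Mumford pairing of $\bch_1$, so your identity is formal from bilinearity.
\end{itemize}
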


Our starting point is the following BG inequality, proved by Langer. By Lemma \ref{lem-hodeg-index}, it implies the case $n=2$ of Theorem \ref{thm-bg-normal}.

\begin{lemma}[{\cite[Theorem 0.1]{langer:normal-surface}}]\label{lem-bg-normal-surface}
Let $X$ be a normal projective surface over an algebraically closed field $\kk$. Then for any ample line bundle $H$, there exists a constant $C_{H}\geq 0$ such that $C_{H}$-BG inequality holds for $H$. If $\mathrm{char}(\kk)=0$ and $X$ has rational singularities, then we can take $C_{H}=0$.
\end{lemma}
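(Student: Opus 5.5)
The plan is to pass to a resolution of singularities and compare Chern characters there. By Lipman, a normal surface over an algebraically closed field has a resolution in every characteristic. Let $f\colon Y\to X$ be the minimal resolution, with exceptional curves $E_1,\dots,E_k$, and put $L\coloneqq f^*H$, which is nef and big on the smooth surface $Y$.

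First I reduce to reflexive sheaves. Replacing $E$ by $E^{\vee\vee}$ preserves $\mu_H$-semistability by Lemma~\ref{lem:dual-slope-stable}. On a normal surface this only increases $\bch_2$, since $E^{\vee\vee}/E$ is a $0$-dimensional sheaf, so $\Delta$ decreases. It therefore suffices to prove the inequality for reflexive $E$.

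Next I choose a sheaf on $Y$ that computes the invariants of $E$. Set $F\coloneqq (f^*E)^{\vee\vee}$, which is locally free on $Y$. I modify it by twisting with an exceptional $\QQ$-divisor so that $\bch_1$ matches the Mumford pullback of $\bch_1(E)$; concretely, I want $L.\bch_1(F)=H.\bch_1(E)$ and $\bch_1(F)$ orthogonal to every $E_i$. This is possible because the intersection matrix of the $E_i$ is negative definite. The rank is unchanged, and $F$ is $\mu_L$-semistable. Indeed, a destabilizing subsheaf $G\subset F$ pushes forward to a subsheaf $(f_*G)^{\vee\vee}\subset E$, and its $L$-slope equals the $H$-slope of the pushforward, because $L.E_i=0$.

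Then I apply the known Bogomolov inequality on the smooth surface $Y$ for the nef and big class $L$. This gives $\Delta(F)\ge 0$ in characteristic zero, and $\Delta(F)\ge -C_Y r^2$ in characteristic $p$ by \cite{langer:positive-char}, where $C_Y$ depends only on $(Y,L)$. The perturbation from nef-and-big to ample is handled by a limiting argument, in the spirit of Lemma~\ref{lem-change-polarization}.

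Finally I compare $\bch_2(E)$ with $\bch_2(F)$ using Definition~\ref{def:ch2-surface}. Riemann--Roch on $Y$ and the Leray spectral sequence give
\[
\chi(E)-\chi(F)=h^0\bigl(f_*F/E\text{-part}\bigr)-h^0(R^1f_*F)+\text{(exceptional correction)}.
\]
By the Mumford-intersection description in \eqref{eq:rr-normal-surface}, the terms $K_X.\bch_1(E)$ and $\chi(\cO_X)$ differ from $K_Y.\bch_1(F)$ and $\chi(\cO_Y)$ by local contributions at the singular points. For rational singularities with $\mathrm{char}\,\kk=0$, $R^1f_*$ of the relevant sheaves vanishes and the local contributions have the right sign, giving $\Delta(E)\ge\Delta(F)\ge 0$, i.e.\ $C_H=0$.

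The main obstacle is this comparison in general: showing the local term $\bch_2(F)-\bch_2(E)$ is bounded below by $-c\,r^2$, with $c$ depending only on the singularities and not on $E$. I would first try to bound $\mathrm{length}(R^1f_*F)$ by $r\cdot h^1$ of a fixed thickening of the exceptional locus. That should work because $F$ restricted to each exceptional curve has bounded splitting type after the twist normalizing $\bch_1$. The leftover quadratic terms in the fractional parts of the twist are also $O(r^2)$, since the denominators are bounded by the determinant of the intersection matrix. Combining these bounds with $C_Y$ yields $C_H$.
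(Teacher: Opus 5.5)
The paper does not prove this lemma; it quotes \cite[Theorem 0.1]{langer:normal-surface}. Your resolution strategy is the natural one, but it breaks at the claim that for rational singularities ``the local contributions have the right sign''.

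\textbf{The $C_H=0$ step.} The $\QQ$-twist that normalizes $\bch_1$ is only bookkeeping. $\Delta$ and $\mu_{f^*H}$-semistability do not see it, but Riemann--Roch and Leray must be applied to an actual sheaf, so the exceptional discrepancy comes back. Let $F$ be torsion-free on the minimal resolution $f\colon Y\to X$ with $f_*F=E$ (e.g.\ $F=f^*E/\mathrm{tors}$). Put $Z\coloneqq c_1(F)-f^*c_1(E)$, $K_Y=f^*K_X+K_{Y/X}$ and $p_g\coloneqq h^0(R^1f_*\cO_Y)$. Leray gives $\chi(E)=\chi(F)+h^0(R^1f_*F)$; note the plus sign, not the minus you wrote. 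Definition \ref{def:ch2-surface} then yields exactly
\[
\Delta(E)=\Delta(F)-Z^2+r\,K_{Y/X}.Z-2r\,h^0(R^1f_*F)+2r^2p_g .
\]
For rational singularities and $F=f^*E/\mathrm{tors}$, the last two terms vanish and $-Z^2\ge 0$. However $r\,K_{Y/X}.Z\le 0$: $K_{Y/X}$ is $f$-nef on the minimal resolution, while $Z$ is $f$-nef (as $\det F$ is $f$-generated), so $-Z$ is effective. Hence you get $\Delta(E)\ge\Delta(F)\ge 0$ only when $K_{Y/X}=0$, i.e.\ for rational double points.

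Beyond that case no argument can work. Take $X=\PP(1,1,4)$, which has a $\tfrac14(1,1)$ singularity, rational but not Gorenstein, and the $\mu_H$-stable sheaf $E=\cO_X(1)$. Then $\chi(E)=2$, $c_1(E)^2=\tfrac14$, $K_X.c_1(E)=-\tfrac32$ and $\chi(\cO_X)=1$. So $\bch_2(E)=\tfrac14$ and $\Delta(E)=-\tfrac14<0$. On $Y=\mathbb{F}_4$ with negative section $C_0$, this is $F=\cO_Y(\text{fibre})$, $Z=-\tfrac14 C_0$, $K_{Y/X}=-\tfrac12 C_0$ in the identity above. So with the $\chi$-defined $\bch_2$ you are using, the $C_H=0$ clause needs du Val singularities. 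That is the only case the paper uses later (Proposition \ref{prop-bg-normal-over-k}), since the surfaces $T$ there are lci, and for it your argument does work.

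\textbf{The existence of $C_H$.} The displayed identity is the right tool, but your bounds are not justified. A twist normalizing $\bch_1$ does not control splitting types on exceptional curves. Also, $Z$ need not be a fractional part, so ``bounded denominators'' does not bound the $Z$-terms. Instead:
\begin{itemize}
\item $E_x$ is maximal Cohen--Macaulay, hence generated by at most $e(\cO_{X,x})\,r$ elements. So $R^1f_*(f^*E/\mathrm{tors})$ is a quotient of $R^1f_*\cO_Y^{\oplus N}$ with $N\le e\,r$, which gives $h^0(R^1f_*F)\le e\,p_g\,r$.
\item Completing the square in the negative definite exceptional lattice gives $-Z^2+rK_{Y/X}.Z\ge \tfrac{r^2}{4}K_{Y/X}^2$.
\end{itemize}
Combine these with a Bogomolov bound on $Y$ for $\mu_{f^*H}$-semistable sheaves; in characteristic $p$, transport it from an ample class via Lemma \ref{lem-change-polarization}. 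This gives $\Delta(E)\ge -C_H r^2$.
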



To generalize this to higher dimensions, we need some lemmas.

\begin{lemma}\label{lem:intersect-pullback}
Let $X$ be a normal projective variety of dimension $n\geq 2$ over an algebraically closed field $\kk$ and $Z\hookrightarrow X$ be a complete intersection subscheme cut out by two divisors in a very ample linear series $|H|$. Denote by $p\colon Y\to X$ the blow-up morphism of $X$ along $Z$. If $D_1, D_2$ are Weil divisors on $X$, then for general $Z$, $Y$ is normal and we have
\[H^{n-2}.D_1.D_2=p^*H^{n-2}.p^*D_1.p^*D_2.\]
Here and in the following, the pullback of Weil divisors means pullback as cycles.
\end{lemma}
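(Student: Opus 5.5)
Both assertions of the lemma are local over the base locus $Z$ of the pencil, and away from $Z$ the blow-up is an isomorphism. So I will treat normality of $Y$ and the intersection-number identity separately, and handle the latter through the Riemann--Roch description \eqref{eq:def-pair} of Mumford intersection numbers.

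\emph{Normality.} Write $Z=T_1\cap T_2$ with $T_1,T_2\in|H|$ general. The blow-up $Y$ is then the closed subscheme of $X\times\PP^1$ cut out by $s_1t_0-s_2t_1=0$, where $s_i$ are the sections defining $T_i$. Hence $Y$ is a Cartier divisor in the normal scheme $X\times\PP^1$, and so it is $S_2$ wherever $X$ is $S_2$ (it is Cohen--Macaulay wherever $X$ is). Moreover $q\colon Y\to\PP^1$ is the pencil, with fibres the members $T_\lambda$ of the pencil.

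It remains to check $R_1$. Over $X\setminus Z$ this is clear. Over $Z$, $Y$ is a $\PP^1$-bundle over $Z$ (since $Z$ is a complete intersection), and $Z$ is normal for general choices by Bertini/Seidenberg. Thus $Y$ is regular in codimension one along $p^{-1}(Z)$ as long as $X$ is regular at the generic points of $Z$ and $\Sing(Z)$ has codimension $\ge 2$ in $Z$. Both hold for general $Z$, because $\Sing(X)$ has codimension $\ge 2$, so a general codimension-two complete intersection meets it in codimension $\ge 2$ inside $Z$. Serre's criterion then applies, using $S_2$ for $Y$, which comes from the Cartier-divisor description together with $X$ being $S_2$ and $Z$ being general.

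\emph{Intersection identity.} By bilinearity it suffices to treat $D_1=D_2=D$. Take $Z$ general, so that it meets $D$ properly and contains no component of $\Sing(X)$. Then $p^*D$ is the strict transform of $D$, and $\cO_Y(-mp^*D)$ is the reflexive hull of $p^*\cO_X(-mD)$. Since $p$ is an isomorphism outside $p^{-1}(Z)$ and $p^{-1}(Z)\to Z$ is a $\PP^1$-bundle, the natural map $p^*\cO_X(-mD)\to\cO_Y(-mp^*D)$ is an isomorphism away from a subset of codimension $\ge 2$.

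I will then compare $\chi(Y, c_1(p^*H)^{n-2}\cdot\cO_Y(-mp^*D))$ with $\chi(X, c_1(H)^{n-2}\cdot\cO_X(-mD))$. The key tools are $Rp_*\cO_Y=\cO_X$ (the blow-up of a complete intersection, so $p$ is an lci morphism with rational-type behaviour) and the projection formula for $p^*H$. Alternatively, I will cut down by general members of $|H|$ using Lemma \ref{lem:intersect-pairing-restrict} and Theorem \ref{thm-mumford-intersection}(e). This reduces both sides to Mumford intersection numbers on a general complete intersection surface $S\subset X$ and on its preimage $S'=p^{-1}(S)$. The map $S'\to S$ is the blow-up of the finitely many smooth points $Z\cap S$, which lie away from $\Sing(S)$ and away from $D$. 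For such a blow-up the pullback pairing is preserved, $(p^*D)^2=D^2$, by the classical computation on smooth surface germs, because $D$ misses the blown-up points.

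\emph{Main obstacle.} The hardest step is making the restriction argument legitimate on $Y$, where $p^*H$ is nef but not ample. Lemma \ref{lem:intersect-pairing-restrict} and Theorem \ref{thm-mumford-intersection}(e) need very ample classes or a normal divisor with reflexive restrictions. My first approach is to write $p^*H$ as a combination of the ample class $p^*(2H)-E$ and the exceptional divisor $E=p^*H-q^*\cO(1)$, restrict along general members of $|H|$ pulled back to $Y$, and check reflexivity of $\cO_Y(mp^*D)|_{T'}$ on these normal divisors by generality. If that becomes messy, I will instead run the $\chi$-limit comparison directly via $Rp_*$: the discrepancy between the two Euler characteristics is supported on $p^{-1}(Z\cap D)$, which has dimension $n-3$, and so contributes only $O(m)$, which vanishes in the $m^2$ limit.
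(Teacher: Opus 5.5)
The gap is in the intersection identity. Your primary route stalls at exactly the obstacle you name. On $Y$ the class $p^*H$ is only base-point-free, so Theorem~\ref{thm-mumford-intersection}(e) does not apply there. Lemma~\ref{lem:intersect-pairing-restrict} would need $\cO_Y(mp^*D)|_{T'}$ to be reflexive for \emph{every} $m$ at once, and ``by generality'' does not give this, since it is countably many conditions. Rewriting $p^*H$ through $E$ only brings in a non-nef class.

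Your fallback is the route that can be made to work, but its key estimate is wrong as stated. The map $p^{-1}(Z\cap D)\to Z\cap D$ is a $\PP^1$-bundle, so $p^{-1}(Z\cap D)$ has dimension $n-2$, not $n-3$. Likewise, knowing $p^*\cO_X(-mD)\cong\cO_Y(-mp^*D)$ off a codimension-$2$ subset is not enough. For a class $\xi_m$ supported in dimension $n-2$, $\chi(Y,c_1(p^*H)^{n-2}\cdot\xi_m)$ equals $(p^*H)^{n-2}$ times the $(n-2)$-cycle of $\xi_m$. Nothing you say prevents this from growing like $m^2$, and codimension two is exactly what the $m^2$-limit in \eqref{eq:def-pair} detects. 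You also need $Lp^*$, not $p^*$, for the projection formula.

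Here is how to repair the fallback.
\begin{itemize}
\item On $p^{-1}(X^{\mathrm{sm}})$ the divisor $D$ is Cartier, so $Lp^*\cO_X(-mD)$ is the line bundle of the Cartier pullback. Its divisor is $p^*D$, because $Z$ meets $D$ properly and so $p^{-1}(D)$ contains no component of $E$. Off $E$, $p$ is an isomorphism.
\item Hence $[Lp^*\cO_X(-mD)]-[\cO_Y(-mp^*D)]$ comes from $\KK_0$ of $p^{-1}(Z\cap\mathrm{Sing}(X))$. Since $\dim\mathrm{Sing}(X)\le n-2$, this locus has dimension $\le n-3$ for general $Z$, and classes supported there are killed by $c_1(p^*H)^{n-2}$.
\item Combined with $Rp_*\cO_Y=\cO_X$ and the projection formula, this gives $\chi(Y,c_1(p^*H)^{n-2}\cdot\cO_Y(-mp^*D))=\chi(X,c_1(H)^{n-2}\cdot\cO_X(-mD))$ for every $m$, which proves the identity.
\end{itemize}

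This corrected route is more elementary than the paper's, which never restricts on $Y$ at all. Every section of $\cO_X(H)\boxtimes\cO_{\PP^1}(1)$ has the form $xs_1-ys_2$, so $Y$ is a general member of a very ample system on the normal variety $X\times\PP^1$, with $(p_1^*D)|_Y=p^*D$. The paper then argues as follows.
\begin{itemize}
\item Bertini gives normality of $Y$ at once. This also fixes your normality argument: a Cartier divisor in an $S_2$ scheme need not be $S_2$ without a generality argument.
\item \cite[Corollary 2.8]{langer:intersection-normal} moves the intersection number to $X\times\PP^1$.
\item Multilinearity splits off the term $p_1^*H^{n-1}.p_1^*D^2$, which is $0$.
\item The remaining term is computed on a fibre $X\times\{\mathrm{pt}\}$ by Lemma~\ref{lem:intersect-pairing-restrict}, where reflexivity of the restrictions is automatic.
\end{itemize}
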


\begin{proof}
From the definition of the intersection form, it suffices to consider the case when $D=D_1=D_2$. If $Z$ is defined by $s_1, s_2\in \mathrm{H}^0(\cO_X(H))$, then $Y$ is a general divisor in $X\times \PP^1$ cut out by $xs_1-ys_2\in \mathrm{H}^0(\cO_X(H)\boxtimes \cO_{\PP^1}(1))$, where $x,y$ are homogeneous coordinates of $\PP^1$. In particular, by Bertini's theorem, $Y$ is normal and $p$ is lci. Denote by $p_i$ the projection to the $i$-th factor of $X\times \PP^1$.

Since $\cO_X(H)\boxtimes \cO_{\PP^1}(1)$ is very ample and $(p_1^{*}D)|_ Y=p^*D$ for general choices of $s_1,s_2$, by \cite[Corollary 2.8]{langer:intersection-normal}, we have
\[p^*H^{n-2}.p^*D^2=p_1^*H^{n-2}.\cO_X(H)\boxtimes \cO_{\PP^1}(1).p_1^*D^2.\]
By the multilinearity of intersection form, we obtain
\[p^*H^{n-2}.p^*D^2=p_1^*H^{n-1}.p_1^*D^2+p_1^*H^{n-2}.p_2^*\cO_{\PP^1}(1).p_1^*D^2.\]
Note that 
\[p_1^*H^{n-1}.p_1^*D^2=\lim_{m\to \infty} 2\frac{\chi(X\times \PP^1, c_1(p_1^*\cO_X(H))^{n-1}\cdot \cO_{mp_1^{*}(D)})}{m^2}=0,\]
so we get
\[p^*H^{n-2}.p^*D^2=p_1^*H^{n-2}.p_2^*\cO_{\PP^1}(1).p_1^*D^2.\]
Applying Lemma \ref{lem:intersect-pairing-restrict} to $X\times \mathrm{pt}\in |p_2^*\cO_{\PP^1}(1)|$, we finally get 
\[p^*H^{n-2}.p^*D^2=p_1^*H^{n-2}.p_2^*\cO_{\PP^1}(1).p_1^*D^2=H^{n-2}.D^2.\]
\end{proof}

\begin{lemma} \label{lem:blow-up-1}
Let $X$ be a normal projective variety of dimension $n\geq 2$ over an algebraically closed field $\kk$ and $Z\hookrightarrow X$ be a regular closed embedding. Denote by $p\colon Y\to X$ the blow-up morphism of $X$ along $Z$ such that $Z$ is equidimensional and $\codim_X(Z)=m\geq 1$. Then the morphism $p$ is lci.

Moreover, we have

\begin{enumerate}
    \item for any $E\in \Coh(Y)$, we have $R^i p_*E=0$ for $i<0$ and $i>m-1$; moreover, $R^i p_*E$ is supported on $Z$ for each $i>0$,

    \item if $E\in \Coh(Y)$ is torsion-free, then so is $p_*E$,

    \item if $E\in \Coh(X)$ is torsion-free and reflexive, then $E'\coloneqq (p^*E)^{\vee \vee}$ satisfies $p_*E'\subset E$, and

    \item if $X$ is lci in codimension $d\geq 1$ and $$\codim_X ( Z\cap (X\setminus \mathrm{LCI}(X/\kk)))\geq d+1+m-1,$$ then $Y$ is also lci in codimension $d$.
\end{enumerate}

\end{lemma}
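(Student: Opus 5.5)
The plan is to work locally and reduce each claim to standard facts about blow-ups of regular embeddings. Since $Z\hookrightarrow X$ is a regular embedding of codimension $m$, Zariski locally $Z$ is cut out by a regular sequence $f_1,\dots,f_m$. The blow-up is then the closed subscheme of $X\times\PP^{m-1}$ given by the $2\times 2$ minors $f_iy_j-f_jy_i$, and $Y\cong \mathrm{Proj}(\mathrm{Sym}(\cI_Z/\cI_Z^2)\ldots)$ is the Rees algebra, equal to the symmetric algebra of $\cI_Z$ because the sequence is regular. On each chart $y_j\neq 0$, $Y$ is cut out in $X\times\bA^{m-1}$ by the $m-1$ equations $f_i-f_j y_i$, $i\neq j$. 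These form a regular sequence because $\dim Y=\dim X$ and $X$ is Cohen--Macaulay where relevant; alternatively, the Koszul complex is exact since the sequence is regular. So $Y\to X\times\PP^{m-1}\to X$ is a regular embedding followed by a smooth map, hence $p$ is lci.

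For (a), $p$ is proper with fibers of dimension $\le m-1$: fibers are $\PP^{m-1}$ over $Z$ and points elsewhere. The theorem on formal functions, or the \v{C}ech complex on the $m$ affine charts $y_j\neq0$, gives $R^ip_*E=0$ for $i\ge m$. Since $p$ is an isomorphism over $X\setminus Z$, the sheaves $R^ip_*E$ for $i>0$ are supported on $Z$. Vanishing for $i<0$ is trivial.

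For (b), the associated points of $p_*E$ lie in $p(\Ass(E))$, and $Y$ is integral. Concretely, $\Ass(p_*E)\subset p(\Ass(E))$, which is just the generic point. This is the usual pushforward fact via $\Hom(\kappa(x),p_*E)=\Hom(p^*\kappa(x),E)$ over local rings.

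For (c), $E'$ is torsion-free and agrees with $p^*E$ away from $p^{-1}(Z)$. Hence $p_*E'$ is torsion-free by (b) and agrees with $E$ on $X\setminus Z$, so $p_*E'\subset j_*j^*E$ with $j\colon X\setminus Z\to X$. If $m\ge2$, Lemma \ref{lem-general-S2}(c) gives $j_*j^*E=E$, since a reflexive sheaf on a normal variety is $S_2$. If $m=1$, $p$ is an isomorphism because $Z$ is Cartier, and then $E'=E^{\vee\vee}=E$. In both cases $p_*E'\subset E$.

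For (d), note first that $p$ is flat of relative dimension $0$ only off $Z$. Over $\mathrm{LCI}(X/\kk)$, $Y$ is lci, since it is the composition of lci morphisms via $p$ being lci. Hence the non-lci locus of $Y$ is contained in $p^{-1}(Z\cap(X\setminus\mathrm{LCI}))\cup p^{-1}(X\setminus\mathrm{LCI}\setminus Z)$. The second piece maps isomorphically, so its codimension is at least $d+1$. The first has dimension at most $\dim(Z\cap \mathrm{nonLCI})+m-1\le n-(d+1+m-1)+m-1=n-d-1$, so its codimension is again at least $d+1$. The main care needed is the regular-sequence or lci claim on the charts; everything else is routine.
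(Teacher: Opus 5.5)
Your proposal is correct and takes essentially the same approach as the paper. Concretely: lci-ness comes from the local chart description; (a) from fiber dimension $\le m-1$ together with $p$ being an isomorphism off $Z$; (b) from associated points, equivalently the adjunction argument the paper uses; (c) from (b) and Lemma~\ref{lem-general-S2}(c) across the codimension-$m\geq 2$ locus $Z$ (with $p$ an isomorphism when $m=1$); and (d) from $p^{-1}(\mathrm{LCI}(X/\kk))\subset\mathrm{LCI}(Y/\kk)$ plus a dimension count using that the fibers over $Z$ are $\PP^{m-1}$.

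The one thing to fix is your reason for why the chart equations $f_i-f_jy_i$ form a regular sequence. $X$ is only normal, so ``Cohen--Macaulay where relevant'' is not available. Instead, use the standard fact that for any regular sequence $f_1,\dots,f_m$ in a ring $A$, the elements $f_jT_i-f_i$ ($i\neq j$) form a regular sequence in $A[T_i : i\neq j]$; this is what the paper means by ``clear from the construction''.
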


\begin{proof}
It is clear from the construction that $p$ is lci. Now, we may assume that $m\geq 2$. Part (a) follows from the fact that $\dim p^{-1}(x)\leq m-1$ for any $x\in X$. If $E\in \Coh(Y)$ is torsion-free, and there exists a non-zero torsion subsheaf $F\subset p_*E$, then $p^*F$ is also torsion and we have a non-zero map $p^*F\to E$, which is a contradiction. Therefore, $p_*E$ is torsion-free. Part (c) follows from part (b) and Lemma \ref{lem-general-S2}(c), since $p_*E'|_{X\setminus Z}=E|_{X\setminus Z}$. 

Finally, for part (d), by \cite[\href{https://stacks.math.columbia.edu/tag/09RL}{Tag 09RL}]{stacks-project}, \cite[\href{https://stacks.math.columbia.edu/tag/069J}{Tag 069J}]{stacks-project}, and the fact that $p$ is lci, we have $p^{-1}(\mathrm{LCI}(X/\kk))=\mathrm{LCI}(Y/\kk)$. Now our assumption implies $\codim_Y(Y\setminus \mathrm{LCI}(Y/\kk))\geq d+1$, so $Y$ is also lci in codimension $d$.
\end{proof}

\begin{lemma}\label{lem:blow-up-2}
Let $X$ be a normal projective variety of dimension $n\geq 2$ over an algebraically closed field $\kk$ which is lci in codimension $1$ and $Z\hookrightarrow X$ be a complete intersection subscheme cut out by two general divisors in a very ample linear series $|H|$. Denote by $p\colon Y\to X$ the blow-up morphism of $X$ along $Z$. Then for any $E\in \Coh(Y)$, we have

\begin{enumerate}
    \item $\bch_0(Rp_*E)=\bch_0(p_*E)=p_*\bch_0(E)$,

    \item $H^{n-1}.\bch_1(Rp_*E)=H^{n-1}.\bch_1(p_*E)=(p^*H)^{n-1}.\bch_1(E)$,

    \item if $E$ is torsion-free and $\overline{\mu}_{p^*H}$-semistable, then $p_*E$ is torsion-free and $\overline{\mu}_{H}$-semistable, and

    \item if $F\in \Coh(X)$ is torsion-free and $S_2$, then $p^*F=Lp^*F$ is torsion-free.
\end{enumerate}

\end{lemma}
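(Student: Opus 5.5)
The plan is to work locally along the exceptional divisor and use that $p\colon Y\to X$ is, for general $Z$, a very mild modification. As in the proof of Lemma \ref{lem:intersect-pullback}, $Y\subset X\times\PP^1$ is a general member of $|\cO_X(H)\boxtimes\cO_{\PP^1}(1)|$. It is normal, $p$ is lci, and $p$ is an isomorphism over $X\setminus Z$. Over $Z$ the map $p$ is a $\PP^1$-bundle with exceptional divisor $Z\times\PP^1$. By Lemma \ref{lem:blow-up-1}(a) applied with $m=2$, we have $R^ip_*E=0$ for $i\neq0,1$, and $R^1p_*E$ is supported on $Z$, which has codimension $2$. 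Lemma \ref{lem:supp-codim-and-bv} then gives $\bch_0(R^1p_*E)=0$ and $H^{n-1}.\bch_1(R^1p_*E)=0$. So for (a) and (b) it suffices to treat $Rp_*E$, and the first equality in each follows.

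For (a) and (b) themselves I use the Riemann--Roch homomorphism. Since $X$ is lci in codimension $1$, Lemma \ref{lem:chi-property}(a) applies to $p$: by Lemma \ref{lem:blow-up-1}(d) and genericity, $Z$ avoids codimension-$2$ non-lci points, so $p^{-1}(\mathrm{LCI}(X))=\mathrm{LCI}(Y)$. In degree $0$ this gives $\bch_0(Rp_*E)=p_*\bch_0(E)$, because $\td_0=1$. In degree $1$ we get $\bch_1(Rp_*E)+\td_1(X)\cap\bch_0(Rp_*E)=p_*(\bch_1(E)+\td_1(Y)\cap\bch_0(E))$. The difference $\td_1(Y)-p^*\td_1(X)$ is $-\tfrac12 c_1(\mathbb{T}_p)$, which is supported on the exceptional divisor $E_Z$, and $p_*$ of a cycle supported on $E_Z$ of dimension $n-1$ is zero because $E_Z\to Z$ drops dimension. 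Hence $\bch_1(Rp_*E)=p_*\bch_1(E)$ in $\CH_{n-1}(X)_\QQ$. The projection formula then gives $H^{n-1}.p_*\bch_1(E)=(p^*H)^{n-1}.\bch_1(E)$, which is (b).

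For (c), $p_*E$ is torsion-free by Lemma \ref{lem:blow-up-1}(b). Let $F\subset p_*E$ be a saturated subsheaf with $0<\rk F<\rk E$. Adjunction gives $p^*F\to E$, and I take its image $F'\subset E$. On $X\setminus Z$ this map is an isomorphism onto $F$, so $F'$ and $F$ have the same rank. Moreover $p_*F'\subset p_*E$ agrees with $F$ away from $Z$, and since $\codim Z=2$, parts (a) and (b) give $\bar\mu_H(F)=\bar\mu_H(p_*F')=\bar\mu_{p^*H}(F')\le\bar\mu_{p^*H}(E)=\bar\mu_H(p_*E)$. The same rank and slope identities hold for $E$ itself, which completes (c).

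For (d), torsion-freeness and the vanishing $L_ip^*F=0$ for $i>0$ are local along $Z$. There $p$ is locally the blowup of a regular sequence $s_1,s_2$, cut out in $X\times\PP^1$ by the single section $xs_1-ys_2$. Hence $Lp^*F$ is computed by the Koszul complex $F\boxtimes\cO(-H,-1)\xrightarrow{\,xs_1-ys_2\,}F\boxtimes\cO$ on $X\times\PP^1$. The main obstacle is to show this map is injective with torsion-free cokernel, which amounts to $xs_1-ys_2$ being a nonzerodivisor on $p_1^*F$ whose zero locus contains no associated point of the quotient. Since $F$ is $S_2$ and torsion-free, its associated points are the generic point and its $S_2$ property controls depth in codimension $\le2$. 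For general $s_1,s_2$, Bertini-type genericity makes $s_1,s_2$ an $F$-regular sequence at every point of $Z$, so that $\depth\ge2$ there. This makes the section a nonzerodivisor on $p_1^*F$ with quotient having no embedded components over $Z$, hence torsion-free on $Y$. I expect making this genericity and depth argument precise to be the hard part.
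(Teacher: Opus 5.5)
Your treatment of (a)--(c) is the paper's argument. The paper also gets (a) and (b) from Lemma~\ref{lem:blow-up-1}(a), Lemma~\ref{lem:chi-property}(a), and the fact that the correction term is supported on the exceptional divisor $D$. It deduces (c) from (a), (b) and Lemma~\ref{lem:blow-up-1}(b); your adjunction argument with the image of $p^*F\to E$ spells that out. One minor slip: $\td_1(Y)-p^*\td_1(X)=\td_1(\mathbb{T}_p)=+\frac{1}{2}c_1(\mathbb{T}_p)=-\frac{1}{2}D$. The sign is irrelevant, since only the support is used.

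For (d) you reach the same reduction as the paper, namely $Lp^*F=Lj^*(p_1^*F)$ for the Cartier divisor $j\colon Y\hookrightarrow X\times\PP^1$ cut out by $xs_1-ys_2$. However, you leave the decisive step unproved, and you look for it in a Bertini-type genericity argument, which is not where it lies.

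The observation you need, and the one the paper uses, is that $G\coloneqq p_1^*F$ is itself torsion-free and $S_2$ on the integral scheme $X\times\PP^1$. Indeed, $p_1$ is flat with regular fibres. So for $w$ over $x$ one has $\depth G_w=\depth F_x+e$ and $\dim G_w=\dim F_x+e$, where $e\in\{0,1\}$ is the dimension of the fibre local ring. Moreover $\Ass(G)$ consists only of the generic point. Two consequences follow.
\begin{enumerate}
\item Any local equation $f$ of $Y$ is a nonzerodivisor on $G$, so $L_ip^*F=0$ for $i>0$.
\item At every point $y\in Y$ other than its generic point we have $\dim\cO_{X\times\PP^1,y}\geq 2$, hence $\depth G_y\geq 2$ and $\depth(G/fG)_y\geq 1$.
\end{enumerate}
Thus $p^*F=G|_Y$ has no associated point except the generic point of the integral scheme $Y$, i.e.\ it is torsion-free.

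Genericity of $s_1,s_2$ is needed only to get $\codim_X Z=2$, so that $Y$ is the blow-up and is integral. Given that, your claim that $s_1,s_2$ is $F$-regular at points of $Z$ already follows from the $S_2$ property, with no Bertini argument. What actually excludes embedded points of $p^*F$ along $Z\times\PP^1$ is the transfer of $\depth\geq 2$ from $F_z$ to $G_y$ along the flat map $p_1$. Your sketch asserts this but does not supply it.
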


\begin{proof}
Since $Z$ is general, $Y$ is normal by Lemma \ref{lem:intersect-pullback}. Let $D$ be the exceptional divisor of $p$. Note that $\td_{p,1}=-\frac{1}{2}D$, so parts (a) and (b) directly follow from Lemma \ref{lem:blow-up-1}(a) and Lemma \ref{lem:chi-property}, and part (c) follows from part (a) and (b) as well as Lemma \ref{lem:blow-up-1}(b).

Assume that $Z$ is defined by $s_1, s_2\in \mathrm{H}^0(\cO_X(H))$, then $Y$ is a divisor in $X\times \PP^1$ cut out by $$xs_1-ys_2\in \mathrm{H}^0(\cO_X(H)\boxtimes \cO_{\PP^1}(1)),$$
where $x,y$ are homogeneous coordinates of $\PP^1$. Denote by $p_1$ the projection from $X\times \PP^1$ to $X$. Then $Lp_1^*F$ is also a torsion-free $S_2$ sheaf. Therefore, $(Lp_1^*F)|_Y=Lp^*F$ is a torsion-free sheaf as desired.
\end{proof}

\begin{lemma}\label{lem:blow-up-3}
Let $X$ be a normal projective variety of dimension $n\geq 3$ over an algebraically closed field $\kk$ which is lci in codimension $2$ and $Z\hookrightarrow X$ be a complete intersection subscheme cut out by two general divisors in a very ample linear series $|H|$. Denote by $p\colon Y\to X$ the blow-up morphism of $X$ along $Z$. If $C$-BG inequality holds for all $\overline{\mu}_H$-semistable sheaves of rank $r$, then $C$-BG inequality holds for all $\overline{\mu}_{p^*H}$-semistable sheaves of rank $r$.
\end{lemma}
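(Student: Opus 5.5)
Given a $\overline{\mu}_{p^*H}$-semistable sheaf $E$ of rank $r$ on $Y$, the plan is to push it forward to $X$, apply the $C$-BG inequality there, and compare discriminants.

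By Lemma \ref{lem:blow-up-2}(c), $p_*E$ is torsion-free and $\overline{\mu}_H$-semistable of rank $r$. Since $E$ is torsion-free, it is $S_1$, so I would replace $E$ by its hull $E^H$. This changes neither semistability nor $\bch_0,\bch_1$, and can only increase $\bch_2$ by an effective cycle supported in codimension $\geq 2$ (Lemma \ref{lem-S2-hull}(c), Lemma \ref{lem:wtE-stable}), so it only decreases the discriminant. It therefore suffices to treat the case where $E$ is torsion-free and $S_2$. Set $F\coloneqq (p_*E)^{\vee\vee}$ on $X$; this is also $\overline{\mu}_H$-semistable, its discriminant is at most that of $p_*E$, and $C$-BG holds for it.

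The core step is a discriminant comparison of the form
\[
(p^*H)^{n-2}.\Delta(E)\;\geq\; H^{n-2}.\Delta(F)\;\geq\; -Cr^2 .
\]
To get it, I would compute $\bch_{\le 2}(Rp_*E)$ via Lemma \ref{lem:chi-property}(a) applied to $p$. This is legitimate because $p$ is lci and $Y$ is lci in codimension $2$ by Lemma \ref{lem:blow-up-1}(d), given that $Z$ is general and meets the non-lci locus in codimension $\ge 5$ in $X$. The Todd correction terms are $\td_1(p)=-\tfrac12 D$ and $\td_2$, both supported on $D$. The terms $R^1p_*E$ are supported on $Z$, which has codimension $2$ in $X$, so $-\bch_2(R^1p_*E)$ is a nonnegative multiple of $[Z]$.

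I would then write $E$ near $D$ as $p^*F\otimes \cO(aD)$ up to codimension-two modifications, using Lemma \ref{lem:blow-up-2}(d) and the fact that $D\to Z$ is a $\PP^1$-bundle with $p^*H|_D$ pulled back from $Z$. With this description, the difference of discriminants becomes a sum of two pieces: a term $(p^*H)^{n-2}.(\bch_1(E)-p^*\bch_1(F))^2$, which is $\le 0$ by Hodge index (Lemma \ref{lem-hodeg-index}) since it is a class supported on $D$ with $(p^*H)^{n-1}.D=0$; and an effective contribution from the cokernel of $p_*E\hookrightarrow F$ together with $R^1p_*E$. Lemma \ref{lem:intersect-pullback} identifies $H^{n-2}.\bch_1(F)^2$ with $(p^*H)^{n-2}.p^*\bch_1(F)^2$, which makes the two sides comparable.

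The main obstacle is the sign bookkeeping in this comparison. The Hodge-index term enters with the wrong sign, so it has to be absorbed. To handle this, I would first normalize $E$ by twisting with $\cO(kD)$ so that the restriction of $E$ to a general fiber $\PP^1$ of $D\to Z$ has splitting degrees in $[0,r)$. Twisting preserves $\overline{\mu}_{p^*H}$-semistability because $(p^*H)^{n-1}.D=0$, and it preserves the discriminant. After this normalization I would check, by a fiberwise Grothendieck--Riemann--Roch on $D\to Z$ (as in Langer's argument for blow-ups in \cite{langer:on-boundedness}), that the $\chi$-correction coming from $R^1p_*$ and the cokernel dominates $-(p^*H)^{n-2}.(\bch_1(E)-p^*\bch_1(F))^2$.
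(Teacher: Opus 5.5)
There is a genuine gap. Your skeleton matches the paper's: push forward, compare $H^{n-2}$-discriminants via Lemma~\ref{lem:chi-property}(a) for the lci morphism $p$, and normalize by twisting with $\cO_Y(kD)$. But the one nontrivial step, the discriminant comparison, is set up incorrectly and left unproved.

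\emph{The local description is false.} $E$ is in general not of the form $p^*F\otimes\cO_Y(aD)$ along $D$, even up to codimension two. For instance, $E=\cO_Y(D)\oplus\cO_Y$ is $\overline{\mu}_{p^*H}$-semistable, since $(p^*H)^{n-1}.D=0$, but it is not such a twist. What you need, and what is true, is only the cycle statement $\bch_1(E)=p^*L+zD$ with $L\in\CH^1(X)$ and $z\in\ZZ$ \cite[Proposition~6.7(e)]{fulton:intersection-theory}.

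\emph{Your decomposition drops the Todd term.} $\td_1(p)=-\tfrac12 D$ is supported on $D$, but $p_*$ does not kill it, because $p_*(D.\bch_1(E))=z\,p_*(D^2)=-z[Z]\neq 0$. The $\td_2(p)=\tfrac16 p^*H\cdot D$ term does vanish after pushforward. Using this and Lemma~\ref{lem:intersect-pullback}, one gets
\[
(p^*H)^{n-2}.\Delta(E)-H^{n-2}.\Delta(Rp_*E)=-z^2H^n+rz\,H^n=z(r-z)H^n ,
\]
together with $H^{n-2}.\Delta(Rp_*E)=H^{n-2}.\Delta(F)+2r\,H^{n-2}.\bigl(\bch_2(F/p_*E)+\bch_2(R^1p_*E)\bigr)$. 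So the term that absorbs the Hodge-index term $-z^2H^n$ is $rz\,H^n$, coming from $\td_1(p)$, not the cokernel or $R^1p_*E$. In the example $E=\cO_Y(D)\oplus\cO_Y$ (so $z=1$, $r=2$) one has $Rp_*E=\cO_X^{\oplus 2}=F$. The cokernel and $R^1p_*E$ vanish while your Hodge-index term equals $-H^n$. The domination you plan to verify is therefore false there, even though $\Delta(E)-\Delta(F)=H^n>0$.

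\emph{The normalization has the wrong sign.} It must be $0\le z<r$. The degree of $E$ on a line $\ell$ in a fiber of $D\to Z$ is $-z$, because $D.\ell=-1$, so the correct condition is $-z\in(-r,0]$. Asking for fiber degrees in $[0,r)$ reverses the sign, and for $-r<z<0$ the displayed difference $z(r-z)H^n$ is negative. For example, $E=\cO_Y\oplus\cO_Y(-D)$ satisfies your normalization, yet $(p^*H)^{n-2}.\Delta(E)=H^n<4H^n=H^{n-2}.\Delta(\cO_X\oplus I_Z)=H^{n-2}.\Delta(Rp_*E)$.

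\emph{How the paper closes it.} With $0\le z<r$, the displayed identity gives $\Delta(E)\ge\Delta(Rp_*E)$. Next, $\Delta(Rp_*E)\ge\Delta(p_*E)$ because $\bch_2(R^1p_*E)$ is effective (it is supported on $Z$) and enters $\bch_2(Rp_*E)$ with a minus sign. So $-\bch_2(R^1p_*E)$ is a non-positive multiple of $[Z]$, not nonnegative as you wrote. Finally, $\Delta(p_*E)\ge -Cr^2$ by the hypothesis, since $p_*E$ is $\overline{\mu}_H$-semistable of rank $r$ by Lemma~\ref{lem:blow-up-2}(c). Passing to the hull of $E$ or to $(p_*E)^{\vee\vee}$ is harmless but unnecessary, as are any local analysis of $E$ along $D$ and any fiberwise Riemann--Roch on $D\to Z$.
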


\begin{proof}
Since $Z$ is general, $Y$ is normal by Lemma \ref{lem:intersect-pullback} and lci in codimension $2$ by Lemma \ref{lem:blow-up-1}(d). Let $E$ be a torsion-free $\overline{\mu}_{p^*H}$-semistable sheaf of rank $r$. By Lemma \ref{lem:blow-up-2}(c), we know that $R^0 p_*E$ is torsion-free of rank $r$ and $\overline{\mu}_{H}$-semistable for any $\overline{\mu}_{p^*H}$-semistable torsion-free $E\in \Coh(Y)$ of rank $r$.

Let $D$ be the exceptional divisor of $p$. By \cite[Proposition 6.7(e)]{fulton:intersection-theory}, we may assume that $\bch_1(E)=p^*L+zD$ for $z\in \ZZ$ and $L\in \CH^1(X)$. Using Lemma \ref{lem:chi-property}(d), up to twisting $E$ by a multiple of $\cO_Y(D)$, we may assume that $0\leq z <r=\rk(E)$.

According to Lemma \ref{lem:blow-up-1}, we know that $R^i p_*E=0$ for $i\notin \{0,1\}$. Moreover, $R^1 p_*E$ is supported in codimension $\geq 2$. Therefore, it suffices to show
\begin{equation*}
p^*H^{n-2}.\Delta(E)\geq H^{n-2}.\Delta(R p_*E).
\end{equation*}

A direct computation gives $\td_{p,1}=-\frac{1}{2}D$ and $\td_{p,2}=\frac{1}{6}p^*H\cdot D$. Therefore, by applying Lemma \ref{lem:chi-property}(a), we get $$\bch_1(R p_*E)=p_*\left(\bch_1(E)-\frac{r}{2}D\right)=L$$ and $$\bch_2(R p_*E)=p_*\left(\bch_2(E)-\frac{1}{2}D. \bch_1(E)+\frac{r}{6}p^*H\cdot D\right).$$ Using $p^*H^{n-2}.p^*L.D=p^*H^{n-1}.D=0$, we obtain
\[H^{n-2}.\Delta(R p_*E)=H^{n-2}.L^2+p^*H^{n-2}.(-2r\bch_2(E)+rD.\bch_1(E))\]
\[=H^{n-2}.L^2+p^*H^{n-2}.(-2r\bch_2(E)+zrD^2)\]
\[=p^*H^{n-2}.(p^*L^2+zrD^2-2r\bch_2(E))\]
\[\leq p^*H^{n-2}.(p^*L^2+z^2D^2-2r\bch_2(E))=p^*H^{n-2}.\Delta(E),\]
where the third equality follows from Lemma \ref{lem:intersect-pullback} and the last inequality follows from $0\leq z <r$ and $p^*H^{n-2}.D^2=-H^n<0$.
\end{proof}

Now we can prove a version of Theorem \ref{thm-bg-normal} using $\Delta(-)$.

\begin{proposition}\label{prop-bg-normal-over-k}
Let $X$ be a normal projective variety of dimension $n\geq 2$ over an algebraically closed field $\kk$ which is lci in codimension $2$. Then for any ample line bundle $H$, there exists a constant $C_{H}\geq 0$ such that $C_{H}$-BG inequality holds for $H$. If $\mathrm{char}(\kk)=0$ and $X$ has rational singularities, then we can take $C_{H}=0$. 
\end{proposition}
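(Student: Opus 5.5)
We argue by induction on $n=\dim X$, following Langer's strategy in \cite{langer:positive-char,langer:on-boundedness}. The base case $n=2$ is Lemma \ref{lem-bg-normal-surface}, which gives a constant $C_H\geq 0$ (and $C_H=0$ in characteristic zero with rational singularities). For $n\geq 3$ we reduce to a general hyperplane section. Replacing $H$ by a multiple only rescales the discriminant $H^{n-2}.\Delta$ by a fixed power and multiplies the constant accordingly, so we may assume $H$ is very ample. Take a general $T\in |H|$. By Bertini, $T$ is a normal projective variety of dimension $n-1$, lci in codimension $2$ in the range needed; in characteristic zero it has rational singularities if $X$ does. Induction then gives a constant $C_{H|_T}$ for $T$.

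\textbf{Restriction step.} Let $E$ be $\overline{\mu}_H$-semistable of rank $r$. We may replace $E$ by $E^{\vee\vee}$. By Lemma \ref{lem:dual-slope-stable} this preserves semistability, and by Lemma \ref{lem-S2-hull}(c) and Lemma \ref{lem:chi-property}(e) it changes $\bch_2$ by an effective cycle. Hence $\bch_2(E).H^{n-2}$ can only increase, so $\Delta$ can only decrease, and it suffices to prove the inequality for reflexive $E$. Then $E|_T$ is torsion-free. Moreover $\bch_i(E|_T)$ is the restriction of $\bch_i(E)$ for $i\leq 2$ by Lemma \ref{lem:chi-property}(c) and Lemma \ref{lem:intersect-pairing-restrict}, so $H^{n-2}.\Delta(E)=H|_T^{n-3}.\Delta(E|_T)$.

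If $E|_T$ is $\overline{\mu}_{H|_T}$-semistable, we are done by induction. Otherwise we need a restriction theorem. Consider the pencil/blow-up construction of Lemmas \ref{lem:intersect-pullback}--\ref{lem:blow-up-3}: blow up $X$ along a general complete intersection $Z$ of two members of $|H|$ to get $p\colon Y\to X$ fibred over $\PP^1$ in hyperplane sections. The HN filtration of the restriction to the generic fibre extends to a filtration of $p^*E$. Using Langer's argument, we bound $H^{n-2}.\Delta$ from below by the weighted sum of the discriminants of the HN factors on $T$, minus a term controlled by the slope spread $\mu_{\max}-\mu_{\min}$ of $E|_T$. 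Lemma \ref{lem-key-lem} gives the weighted-sum identity. Lemma \ref{lem:blow-up-3} transports the inequality from $Y$ back to $X$, and Lemma \ref{lem-change-polarization} allows swapping polarizations.

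\textbf{Main obstacle.} The hardest step is bounding that slope spread, which is an effective restriction theorem. The plan is to run Langer's induction on rank. Lemma \ref{lem-change-polarization} gives the statement for all nef multipolarizations once it is known for lower rank. Langer's inequality then reads
\[\mu_{\max}-\mu_{\min}\text{ of }E|_T\ \lesssim\ \sqrt{H^{n-2}.\Delta(E)+C r^2},\]
and it is proved from the lower-rank BG inequality together with Hodge index (Lemma \ref{lem-hodeg-index}). Combining this with the previous paragraph yields a quadratic self-improving inequality, which gives a uniform constant $C_H$.

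In characteristic zero with rational singularities, we instead use Mehta--Ramanathan type restriction for normal varieties, \cite[Theorem 0.5]{langer:higgs-normal}. For a general complete intersection of high degree, $E|_T$ stays semistable, so no error term appears and $C_H=0$ follows directly from the surface case.
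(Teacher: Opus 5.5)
Your characteristic-zero argument, which combines the restriction theorem of \cite{langer:higgs-normal} with the surface case, is fine; the paper mentions that route too. The general case, however, has a genuine gap, and it sits exactly at what you call the main obstacle. You only assert the estimate ``$\mu_{\max}-\mu_{\min}$ of $E|_T\lesssim\sqrt{H^{n-2}.\Delta(E)+Cr^2}$'' for a rank-$r$ semistable $E$. It cannot serve as an independent input: its left side is nonnegative, so it already contains $H^{n-2}.\Delta(E)+Cr^2\geq 0$, which is the rank-$r$ statement you are trying to prove. Nor can you import it from Langer's effective restriction theorems, since those are themselves derived from a Bogomolov-type inequality on $X$. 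The usual elementary-transformation proof of restriction produces $\ker(E\to G)$, which may itself be semistable of rank $r$. Lower-rank BG plus Hodge index therefore does not close that route, and the ``self-improving'' step would need a rank-$r$ constant you do not yet have. As a result, the restriction to $T$, Lemma \ref{lem-key-lem} on $T$, and the final combination carry no weight. Also, Lemma \ref{lem:blow-up-3} transports the inequality from $X$ to $Y$, not back.

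What is missing is the pencil computation itself, which gives the inequality directly with no restriction theorem. Let $q\colon Y\to X$ be the blow-up of the base locus of a general pencil, with $p\colon Y\to\PP^1$, exceptional divisor $N$, and $E'=(q^*E)^{\vee\vee}$. Take the HN filtration of $E'$ with respect to $(p^*\cO_{\PP^1}(1),q^*H,\dots,q^*H)$, with factors $F_i$ of ranks $r_i$. If it has one step, the restriction to a general fibre is semistable and you use induction on dimension. Otherwise $r_i<r$, and you write $\bch_1(F_i)=q^*M_i+b_iN$ with $\sum_i b_i=0$, so that $\mu_i=(a_i+b_id)/r_i$ with $a_i=H^{n-1}.M_i$ and $d=H^n$. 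Apply the rank induction to the $F_i$ (via Lemmas \ref{lem:blow-up-3} and \ref{lem-change-polarization}), use $q^*H^{n-2}.N^2=-d$, and apply Lemma \ref{lem-hodeg-index} to the $M_i$-terms. This gives
\[\tfrac{d}{r}\bigl(H^{n-2}.\Delta(E)+Cr^2\bigr)\geq 2d\sum_i b_i\mu_i-\tfrac{1}{r}\sum_{i<j}r_ir_j(\mu_i-\mu_j)^2 .\]
The decisive input you never use is the semistability of $E$ on $X$. Since $q_*E_i\subset E$, you get $d\sum_{j\le i}b_j\geq\sum_{j\le i}r_j(\mu_j-\mu)$, and Abel summation yields $d\sum_i b_i\mu_i\geq\frac{1}{r}\sum_{i<j}r_ir_j(\mu_i-\mu_j)^2$. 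The left side above is therefore at least $\frac{1}{r}\sum_{i<j}r_ir_j(\mu_i-\mu_j)^2\geq 0$, with the same constant $C$ in every rank. This single inequality is both the BG inequality and the slope-spread bound you wanted, and it gives $C_H=0$ in the rational characteristic-zero case without a separate argument. Finally, the general section you restrict to depends on $E$, so you also need the surface constant to be independent of the general section chosen. The paper justifies this separately; your sketch does not address it.
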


\begin{proof}
The proof is motivated by \cite[Theorem 3.4]{langer:on-boundedness}. When $n=2$, this is Lemma \ref{lem-bg-normal-surface}. Now, assume that $n\geq 3$ and the statement holds in dimension $\leq n-1$.

Up to replacing $H$ with a multiple of itself, we can assume that $H$ is very ample. We claim that we can take $C_{H}\coloneqq C_{H|_T}$, where $T$ is a general normal integral complete intersection surface by divisors in $|H|$. Note that such $T$ always exists by \cite[Corollary 3.4.14]{flenner:join-and-intersection}. If $\mathrm{char}(\kk)=0$ and $X$ has rational singularities, then $T$ also has rational singularities by \cite[Theorem 5.42]{kollar-mori}, and we have $C_{H}=C_{H|_T}=0$ by Lemma \ref{lem-bg-normal-surface}. Note that $C_{H}$ is independent of the choice of general $T$ and only depends on $|H|$ by a similar argument in \cite{koseki:bg-positive-char}. Indeed, for a family $X_1\to C$ of divisors in $|H|$ over a smooth curve $C$, we can first take a resolution $X_2$ of $X_1$ by \cite{cossart:resolution-3fold}, which is still flat over $C$ and also resolves the generic fiber of $X_1\to C$. After shrinking the base curve without changing the generic fiber, we may assume that $X_2\to C$ is smooth projective. Then the argument in \cite[Definition-Proposition 4.1]{koseki:bg-positive-char} applies verbatim.

We prove by induction on the rank of $\overline{\mu}_{H}$-semistable sheaves.  Let $E$ be a $\overline{\mu}_{H}$-semistable sheaf of rank $r$ on $X$. By Lemma \ref{lem-dual-ch} and \ref{lem:dual-slope-stable}, we may assume that $E$ is reflexive. When $r=1$, note that its restriction to a general divisor in $|H|$ remains semistable, and the result follows from the fact that the restriction does not change the value of the $H$-discriminant by Lemma \ref{lem-chM-general}(b) and Theorem \ref{thm-mumford-intersection}(e). Now, we assume that $r>1$ and the result holds for any $\overline{\mu}_{H}$-semistable sheaf of rank $<r$.

Let $L\subset |H|$ be a general pencil. Let $q\colon Y\to X$ be the blow-up of the base locus of $L$ and $p\colon Y\to L\cong \PP^1$ is the natural projection, which is the restriction of the universal family over $|H|$ to $L$. Therefore, $Y$ is also a normal projective variety over $\kk$ which is lci in codimension $2$, $q$ is lci by Lemma \ref{lem:blow-up-1}, and $p$ is flat projective. Moreover, $q^*E=Lq^*E$ is torsion-free by Lemma \ref{lem:blow-up-2} and $E'\coloneqq (q^*E)^{\vee \vee}$ is a reflexive sheaf on $Y$ of rank $r$ with $q_*E'\subset E$ by Lemma \ref{lem:blow-up-1}.

Let $0=E_0\subset E_1\subset \cdots \subset E_m=E'$ be the relative HN filtration with respect to the collection $(p^*\oh_{L}(1), q^*H,\dots,q^*H)$ and $p$. We can assume that $m>1$, otherwise $E'|_{p^{-1}(b)}$ is $\overline{\mu}_{H|_D}$-semistable for a general point $b=[D]\in L$, and the result follows from Lemma \ref{lem:chi-property}(c) and the induction hypothesis on dimensions. As explained in \cite[3.9]{langer:positive-char}, it is also the (absolute) HN filtration with respect to the collection $(p^*\oh_{L}(1), q^*H,\dots,q^*H)$. Let $F_i\coloneqq E_i/E_{i-1}$, then it is a $(p^*\oh_{L}(1), q^*H,\dots,q^*H)$-semistable torsion-free sheaf on $Y$ of rank $r>r_i>0$ with the $(p^*\oh_{L}(1), q^*H,\dots,q^*H)$-slope $\mu_i$. Moreover, $(F_i)|_D$ is $\overline{\mu}_{H|_D}$-semistable for a general divisor in $L$ as it is a factor of the relative HN filtration.

By \cite[Proposition 6.7(e)]{fulton:intersection-theory}, we can write $\bch_1(F_i)=q^*M_i+b_i N$, where $M_i$ is a Weil divisor on $X$, $N$ is the exceptional divisor of $q$, and $b_i\in \ZZ$. Note that $\sum_i b_i=0$ as $\bch_1(E')=q^*\bch_1(E)$. Then
\[\mu_i=\frac{H^{n-1}.M_i+b_iH^n}{r_i}\]
as in \cite[3.9]{langer:positive-char}. As we assume $C_{H}$-BG inequality holds for $\overline{\mu}_H$-semistable sheaves of rank $<r$, by Lemma \ref{lem:blow-up-3} and Lemma \ref{lem-change-polarization}, we get
\[q^*H^{n-2}.\Delta(F_i)+C_{H}r_i^2\geq 0.\]
We set $a_i\coloneqq H^{n-1}.M_i$ and $d\coloneqq H^n$. Then as in \cite[3.9]{langer:positive-char}, since $L^0q^*E=Lq^*E\subset E'$ has a cokernel supported in codimension $\geq 2$, we have
\[\frac{dH^{n-2}.\Delta(E)}{r}+C_{H}dr=\frac{dq^*H^{n-2}.\Delta(Lq^*E)}{r}+C_{H}dr\geq \frac{dq^*H^{n-2}.\Delta(E')}{r}+C_{H}dr\]
\[=\sum d\left(\frac{q^*H^{n-2}.\Delta(F_i)}{r_i}+C_{H}r_i\right)-\frac{d}{r}\sum_{i<j}r_ir_jq^*H^{n-2}.\left(\frac{\bch_1(F_i)}{r_i}-\frac{\bch_1(F_j)}{r_j}\right)^2\]
\[\geq \frac{d}{r}\sum_{i<j}r_ir_j \left( d\left(\frac{b_i}{r_i}-\frac{b_j}{r_j}\right)^2-\left(\frac{M_i}{r_i}-\frac{M_j}{r_j}\right)^2.H^{n-2}\right)\]
\[\geq \frac{1}{r}\sum_{i<j}r_ir_j \left( d^2\left(\frac{b_i}{r_i}-\frac{b_j}{r_j}\right)^2-\left(\frac{a_i}{r_i}-\frac{a_j}{r_j}\right)^2\right)\]
\[=2d\sum b_i\mu_i -\frac{1}{r}\sum_{i<j}r_ir_j(\mu_i-\mu_j)^2,\]
where the first equality follows from Lemma \ref{lem:intersect-pullback} and Lemma \ref{lem:chi-property}(c), and the last inequality follows from Lemma \ref{lem-hodeg-index}. As $q_*E_i\subset E$, we get
\[\frac{\sum_{j\leq i} a_j}{\sum_{j\leq i}r_j}\leq \mu\coloneqq\overline{\mu}_{H}(E).\]
Then $\sum_{j\leq i} db_j\geq \sum_{j\leq i} r_j(\mu_j-\mu)$. Using $\sum_i b_i=0$, we obtain
\[d\sum b_i\mu_i\geq \frac{1}{r}\sum_{i<j}r_ir_j(\mu_i-\mu_j)^2,\]
which implies 
\[\frac{dH^{n-2}.\Delta(E)}{r}+C_{H}dr\geq \frac{1}{r}\sum_{i<j}r_ir_j(\mu_i-\mu_j)^2\geq 0.\]
This completes the induction argument.
\end{proof}

\begin{proof}[{Proof of Theorem \ref{thm-bg-normal}}]
By Lemma \ref{lem-constant-chM} and \ref{thm-mumford-intersection}, we may assume that $\overline{\kk}=\kk$. Using Lemma \ref{lem-hodeg-index}, we have 
\[\frac{1}{H^n}\bDelta(\bv^{\gamma}_{H,\leq d}(E))\geq H^{n-2}.((\bch_1^{\gamma}(E))^2-2r\bch_2^{\gamma}(E))\]
\[=H^{n-2}.\left((\bch_1(E)-B_{\gamma}.\bch_0(E))^2-2r\left(\bch_2(E)-B_{\gamma}.\bch_1(E)+\frac{1}{2}B^2_{\gamma}.\bch_0(E)\right)\right)-2H^{n-2}.\gamma_2.r^2\]
\[=H^{n-2}.\Delta(E)-2H^{n-2}.\gamma_2.r^2\]
for any coherent sheaf $E$ on $X$ of rank $r$. Now the result follows from Remark \ref{rmk-different-stability}, and  Lemma \ref{lem-bg-normal-surface}, and Proposition \ref{prop-bg-normal-over-k} by taking 
\[C_{X, H}\coloneqq \frac{1}{2}C_H.\]
\end{proof}

\section{Tilt-stability on projective varieties}\label{sec:tilt-3}

In this section, we discuss the construction of tilt-stability from slope-stability and its family version. In particular, we generalize the results in \cite[Section 3]{bayer2016space}, \cite[Section 2]{bayer2017stability}, and \cite[Section 24, 25]{BLMNPS21} to our situation, see e.g. Proposition \ref{prop-rotate-slope-real-b}, Theorem \ref{thm-bms}, Theorem \ref{thm:tilt-HN-structure}, and Proposition \ref{prop-rotate-tilt}.


We first fix some abbreviated notation. Let $(X, H, \gamma)$ be a triple as in Setup \ref{setup-gamma}.

\begin{itemize}
    \item By abuse of notation, we write $\Lambda_{\leq i}$ and $\Lambda_i$ for $\Lambda^{\gamma}_{H, \leq i}$ and $\Lambda^{\gamma}_{H, i}$, respectively. If $X\to \Spec(\kk)$ is admissible in the sense of Definition \ref{def:adm}, then we denote by $\Lambda$ the full lattice $\Lambda^{\gamma}_{H}=\Lambda^{\gamma}_{H,\leq n}$. We also write $\bv_{\leq i}$ and $\bv_i$ for $\bv^{\gamma}_{H,\leq i}$ and $\bv^{\gamma}_{H,i}$, respectively.

    \item We denote by $\sigma$ the weak stability condition $\sigma^{\gamma}_H$. The corresponding slope function $\mu^{\gamma}_H(-)$ is denoted by $\mu(-)$. When $\bv_2$ is defined (e.g.~$d\geq 2$ or $X$ is a normal surface), we set $$\bDelta(E)\coloneqq \bDelta(\bv_{\leq 2}(E)).$$

\end{itemize}

Similarly, let $(f\colon X\to S, \cL, \gamma)$ be a triple as in Setup \ref{setup-gamma-relative}.

\begin{itemize}
    \item We write $\Lambda_{\leq i}$ and $\Lambda_i$ for $\Lambda^{\gamma}_{\cL, \leq i}$ and $\Lambda^{\gamma}_{\cL, i}$, respectively. If $f$ is admissible in the sense of Definition \ref{def:adm}, then we denote by $\Lambda$ the full lattice $\Lambda^{\gamma}_{\cL}=\Lambda^{\gamma}_{\cL,\leq n}$. We also write $\bv_{\leq i}$ and $\bv_i$ for $\bv^{\gamma}_{\cL,\leq i}$ and $\bv^{\gamma}_{\cL,i}$, respectively.

    \item We denote by $\sigma_t$ the weak stability condition $\sigma^{\gamma_t}_{\cL_t}$ on $\Db(X_t)$ for any point $t\to S$. The corresponding slope function is still denoted by $\mu(-)$, as it is independent of the choice of points.

    \item We write $\underline{\sigma}\coloneqq (\sigma_s)_{s\in S}$. 

    \item We write $\sigma_C=(\Coh(X_C), Z_C)$ for the weak HN structure $\sigma^{\gamma}_{\cL, C}$ in \eqref{eq:hn-structure-slope}, where $$Z_C= (Z_K, Z_{C\text{-}\tor})\coloneqq (Z^{\gamma_K}_{\cL_K}, Z^{\gamma}_{\cL, C\text{-}\tor}).$$
\end{itemize}

\subsection{Rotating slope-stability}

As a limiting case of tilt-stability, we start with the rotation of slope-stability.

Recall that given a weak stability condition, we can rotate its heart and central charge to get a new pair as in Section \ref{subsec-tilting-property}. In the case of $(X, H, \gamma)$ as in Setup \ref{setup-gamma}, for any $b\in \RR$, we get a pair $$\sigma^b=(\cA^b(X), Z^b)\coloneqq \sigma^{\gamma, b}_{H}=(\Coh^{b}_{H,\gamma}(X), Z^b)$$ from the weak stability condition $\sigma=\sigma^{\gamma}_{H}=(\Coh(X), Z)$ with respect to $\Lambda_{\leq 1}$, where
\[Z^{b}\coloneqq \Im Z+\mathfrak{i}(-\Re Z-b\Im Z),\]
$$\cA^b(X)\coloneqq \Coh^{b}_{H,\gamma}(X)= \langle \cF^b_{H,\gamma}[1], \cT^b_{H,\gamma} \rangle,$$
\[\cT^b\coloneqq \cT^{b}_{H,\gamma}=\langle E\in \Coh(X)\colon E \text{ is } \mu^{\gamma}_{H}\text{-semistable with }\mu^{\gamma}_{H}(E)>b \rangle,\]
and
\[\cF^b\coloneqq \cF^{b}_{H,\gamma}=\langle E\in \Coh(X)\colon E \text{ is } \mu^{\gamma}_{H}\text{-semistable with }\mu^{\gamma}_{H}(E)\leq b \rangle.\]

\begin{lemma}\label{lem-rotate}
The pair $\sigma^b=(\cA^b(X), Z^b)$ is a weak stability condition on $\Db(X)$ with respect to $\Lambda_{\leq 1}$ for any $b\in \mathbb{R}$. Moreover, $(\cA^b(X))^{Z^b}\subset \cA^b(X)$ is a Noetherian torsion subcategory.
\end{lemma}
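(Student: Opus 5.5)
This is a direct application of the abstract tilting result, Lemma \ref{lem-tilt-weak-stab}, to the slope weak stability condition $\sigma=\sigma^{\gamma}_H=(\Coh(X),Z)$ with respect to $\Lambda_{\leq 1}$.

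The first step is to record the input from Proposition \ref{prop-slope-weak-stab}. That proposition says $\sigma$ is a weak stability condition on $\Db(X)$ with respect to $\Lambda_{\leq 1}$. It also says $\sigma$ has the tilting property, and in fact the stronger \ref{t3}. Concretely, \ref{t1} holds because $\Coh(X)^{Z}$ is the Noetherian torsion subcategory of sheaves supported in codimension $\geq 2$. Condition \ref{t2} is realized by $E\mapsto E^H$ for torsion-free $E$: by Lemma \ref{lem-S2-hull}(c) the cokernel of $q_E$ lies in codimension $\geq 2$, and by Lemma \ref{lem-S2-hull}(d) we have $\Hom(\Coh(X)^Z,E^H[1])=0$.

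There is one small point to check for \ref{t2} as stated. We need that an object $E$ with $\mu^+_\sigma(E)<+\infty$ is torsion-free, or at least that the hull construction applies to it. If $\mu^+(E)<+\infty$, then the maximal-slope HN factor is not of infinite slope. Hence $E$ has no nonzero subsheaf with $\bch_0\cdot H^n=0$, i.e.\ no torsion subsheaf, so $E$ is torsion-free and $q_E$ is injective.

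With the hypotheses in place, Lemma \ref{lem-tilt-weak-stab} applies for every $b\in\RR$. The tilted heart $\cA^b$ and central charge $Z^b$ defined in Section \ref{subsec-tilting-property} coincide with $\Coh^b_{H,\gamma}(X)$ and the $Z^b$ written just above the statement. The lemma therefore gives that $\sigma^b=(\cA^b(X),Z^b)$ is a weak stability condition with respect to $\Lambda_{\leq 1}$, with support property given by the same quadratic form ($Q\equiv 0$). It also gives $(\cA^b)^{Z^b}=\Coh(X)^Z$, and that this is a Noetherian torsion subcategory of $\cA^b$. This is exactly the second assertion.

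I expect no real obstacle. The only care needed is matching the torsion pair $(\cT^b,\cF^b)$ with the one in Section \ref{subsec-tilting-property}, and checking that \ref{t2} is stated for objects with $\mu^+<+\infty$, which the torsion-freeness remark above handles.
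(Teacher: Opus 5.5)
Your proposal is correct and is the paper's proof: the paper cites Proposition~\ref{prop-slope-weak-stab} to get that $\sigma$ is a weak stability condition with the tilting property, and then applies Lemma~\ref{lem-tilt-weak-stab}. Your additional check that $\mu^+(E)<+\infty$ forces $E$ to be torsion-free, so that $E\mapsto E^H$ realizes \ref{t2}, correctly spells out a detail the paper leaves inside Proposition~\ref{prop-slope-weak-stab}.
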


\begin{proof}
According to Proposition \ref{prop-slope-weak-stab}, $\sigma$ has the tilting property in the sense of Definition \ref{def-tilting}. Then the result follows from Lemma \ref{lem-tilt-weak-stab}.
\end{proof}

\subsection{Rotating slope-stability in families}

Next, we arrange Lemma \ref{lem-rotate} into families and prove a stronger boundedness statement as in \cite[Proposition 25.1]{BLMNPS21}. Let $(X, H, \gamma)$ be a triple as in Setup \ref{setup-gamma}. We first describe the behavior of $\sigma^{b}$ under base field extensions.

\begin{lemma}\label{lem:rotate-base-change}
Fix a field extension $\kk\subset \mathsf{k}_1$ and let $\pi\colon X_{\kk_1}\to X$ be the base change morphism. We set $\cA^b(X_{\kk_1})\coloneqq \Coh^b_{H_{\kk_1},\gamma_{\kk_1}}(X_{\kk_1})$.

\begin{enumerate}
    \item For any $b\in \RR$ and $E\in \Db(X)$, we have $L\pi^*E\in \cA^b(X_{\kk_1})$ if and only if $E\in \cA^b(X)$. Moreover, the functor $L\pi^*\colon \cA^b(X)\to \cA^b(X_{\kk_1})$ is exact.

    \item If $E\in \cA^b(X)$, then $E$ is $\sigma^b$-semistable if and only if $E_{\kk_1}$ is $\sigma^b_{\kk_1}$-semistable.

    \item We have $\cA^b(X_{\kk_1})=(\cA^b(X))_{\kk_1}$, where $(\cA^b(X))_{\kk_1}$ is the base change of the heart constructed in Lemma \ref{lem:base-change-t-structure}. Therefore, the base change of $\sigma^b$ to $\kk_1$ coincides with $\sigma^b_{\kk_1}=(\cA^b(X_{\kk_1}), Z^b)$.

\end{enumerate}
\end{lemma}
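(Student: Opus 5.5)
The whole statement reduces to one fact: $\mu^{\gamma}_H$-stability and $\mu^{\gamma}_H$-HN filtrations of coherent sheaves are compatible with flat base field extension. I will first verify this. Then I deduce (a) from it, get (c) formally from (a), and prove (b) using the classification of $\sigma^b$-semistable objects.

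\textbf{Step 1: HN filtrations are preserved by $\pi^*$.} Since $\pi$ is flat, $L\pi^*=\pi^*$ is exact on $\Coh(X)$. By Lemma \ref{lem:chi-property}(b) and the pullback of $H,\gamma$, we have $\bv_{\leq 1}(\pi^*E)=\bv_{\leq 1}(E)$, so $\mu$ is preserved.

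The key claim is that $\pi^*$ sends the $\mu^{\gamma}_H$-HN filtration of $E$ to that of $\pi^*E$. I expect this to be the main obstacle. It is classical: the maximal destabilizing subsheaf of $\pi^*E$ is unique, hence Galois/descent invariant, and therefore descends. For non-algebraic or inseparable extensions I would use the standard argument of \cite[Theorem 1.3.7]{HL10}, which reduces to finitely generated subextensions and uses flat descent of subsheaves in a Quot scheme. Alternatively, I would argue via uniqueness under automorphisms of $\kk_1\otimes_\kk\kk_1$, where the two pullbacks of the destabilizing subsheaf coincide.

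A consequence is that semistability and the torsion classes are preserved in both directions:
\[
E\in\cT^b \iff \pi^*E\in \cT^b_{\kk_1},\qquad E\in\cF^b \iff \pi^*E\in \cF^b_{\kk_1}.
\]

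\textbf{Step 2: part (a).} We have $\cH^i(\pi^*E)=\pi^*\cH^i(E)$, and by faithful flatness $\pi^*F=0$ iff $F=0$. Membership in $\cA^b$ is a condition on $\cH^{-1}\in\cF^b$, $\cH^0\in\cT^b$, and vanishing of the other cohomology sheaves, so Step 1 gives the equivalence in (a).

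For exactness: a short exact sequence in $\cA^b(X)$ is a triangle with all terms in the heart. Its image under $\pi^*$ is a triangle whose terms lie in $\cA^b(X_{\kk_1})$, hence it is a short exact sequence there.

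\textbf{Step 3: part (c).} By (a), $\pi^*$ is t-exact from $\cA^b(X)$ to $\cA^b(X_{\kk_1})$. The heart $(\cA^b)_{\kk_1}$ of Lemma \ref{lem:base-change-t-structure} is characterized by $R\pi_*E\in(\cD_{\mathrm{qc}})^{[0,0]}$, and its aisle is generated by $L\pi^*\cA^b$ under extensions and colimits.

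Since $\cA^b(X)$ is a tilt of the Noetherian heart $\Coh(X)$, the lemma applies. It therefore suffices to show
\[
(\cA^b)_{\kk_1}^{\leq 0}\subset \cA^b(X_{\kk_1})^{\leq 0} \quad\text{and}\quad (\cA^b)_{\kk_1}^{\geq 0}\subset \cA^b(X_{\kk_1})^{\geq 0}.
\]
The aisle inclusion follows from generation together with (a). For the coaisle, I would check $R\pi_*$-criteria: $\pi_*$ is exact on quasi-coherent sheaves, and I would show $\pi_*$ sends $\cF^b_{\kk_1}$-type subsheaves compatibly by filtering $\kk_1$ as a colimit of finite free $\kk$-modules. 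Equality then follows, since both are bounded t-structures with nested aisles and coaisles.

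\textbf{Step 4: part (b).} Use Lemma \ref{lem:classify-rotate-stable-abstract}, which describes $\sigma^b$-semistable objects via $\sigma$-semistability of $\cH^0$ or $\cH^{-1}$, codimension-$\geq2$ conditions, and a vanishing $\Hom(\cA^Z,E)=0$. Each of these is preserved in both directions:
\begin{itemize}
\item $\sigma$-semistability, by Step 1;
\item support codimension, by flatness;
\item $\Hom(\cA^Z,E)=0$, since the largest subobject in $\cA^Z$ is unique and hence commutes with flat base change, e.g.\ as the maximal codimension-$\geq2$ subsheaf.
\end{itemize}
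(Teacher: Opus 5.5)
Your Steps 1, 2 and 4 follow the paper's route for (a) and (b). That route is compatibility of $\mu$-HN filtrations with field extension \cite[Theorem 1.3.7]{HL10} plus flatness of $\pi$, followed by the classification of $\sigma^b$-semistable objects. The gap is in Step 3: (c) is not formal from (a).

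Both $(\cA^b(X))_{\kk_1}$ and $\cA^b(X_{\kk_1})$ are tilts of $\Coh(X_{\kk_1})$, say at torsion classes $\cT'$ and $\cT^b_{\kk_1}$. T-exactness of $\pi^*$, together with the generation description of the base-changed aisle, gives only $\cT'\subset\cT^b_{\kk_1}$. The adjunction $\Hom_X(T,\pi_*F)=\Hom(\pi^*T,F)$ gives the same thing again: it shows $\cF^b_{\kk_1}\subset\cF'$, which is the same inclusion.

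Your coaisle inclusion is equivalent to the reverse inclusion $\cT^b_{\kk_1}\subset\cT'$. Concretely, every nonzero $G\in\cT^b_{\kk_1}$ must receive a nonzero map $\pi^*T\to G$ from some $T\in\cT^b$. Equivalently, for every such $G$, $\pi_*G$ must lie in the closure of $\cT^b$ under filtered colimits.

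Writing $\kk_1$ as a union of finite-dimensional $\kk$-subspaces proves this only for $G=\pi^*G_0$, where $\pi_*G=G_0\otimes_\kk\kk_1$. For $G$ not defined over $\kk$ it is a genuine statement. For instance, take $X$ an elliptic curve with origin $0$, $\kk_1=\kk(X)$, and $G=\cO(\eta-0)$ with $\eta$ the tautological point. Then $\pi_*G$ contains $\cO_X(-0)$, which lies in $\cF^b$ for $b\in[-1/\deg H,0)$. One has to show that such coherent submodules are absorbed into $\cT^b$-parts of larger coherent submodules.

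For finite extensions this does follow, from $\Hom(\pi_*G,Q)=\Hom(G,\pi^!Q)$ with $\pi^!\cong\pi^*$. Transcendental extensions, however, need a spreading-out argument that your sketch does not contain.

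The paper does not do this by hand. It observes $\cA^b(X)=\cP_{\sigma^{\gamma}_H}(I_b)$ for an interval $I_b$ of length one, and that $(\sigma^{\gamma}_H)_{\kk_1}=\sigma^{\gamma_{\kk_1}}_{H_{\kk_1}}$. Then \cite[Propositions 5.9 and 14.20]{BLMNPS21} give $(\cP_{\sigma^{\gamma}_H}(I_b))_{\kk_1}=\cP_{\sigma^{\gamma_{\kk_1}}_{H_{\kk_1}}}(I_b)$. You need that input, or an actual proof of $\cT^b_{\kk_1}\subset\cT'$.

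A smaller point in Step 4: uniqueness of the maximal $\cA^Z$-subobject gives Galois invariance, not compatibility with base change. For transcendental $\kk_1$ there can be codimension-$\geq 2$ closed points of $X_{\kk_1}$ lying over codimension-one points of $X$, so those points are not controlled by anything on $X$. The implication you need, $\Hom(\cA^Z,E)=0\Rightarrow\Hom(\cA^Z_{\kk_1},\pi^*E)=0$, follows instead from depth additivity along the flat maps $\cO_{X,\pi(x')}\to\cO_{X_{\kk_1},x'}$, whose fibres are Cohen--Macaulay.
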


\begin{proof}
By Lemma \ref{lem-constant-chM} and \cite[Theorem 1.3.7]{HL10}, we know that a coherent sheaf $F\in \Coh(X)$ is $\mu$-semistable if and only if $F_{\kk_1}\in \Coh(X_{\kk_1})$ is $\mu$-semistable of the same slope. Then part (a) follows from the flatness of $\pi$ and the definition of tilted hearts. Now part (b) follows easily from \cite[Lemma 14.17]{BLMNPS21} (see Lemma \ref{lem:classify-rotate-stable} below).

To prove (c), note that $(\sigma^{\gamma}_H)_{\kk_1}=\sigma^{\gamma_{\kk_1}}_{H_{\kk_1}}$. By definition, we can find an interval $I_b\subset \RR$ with length $1$ so that $\cP_{\sigma^{\gamma}_H}(I_b)=\cA^b(X)$. Therefore, by \cite[Proposition 5.9, 14.20]{BLMNPS21}, we have
\[(\cP_{\sigma^{\gamma}_H}(I_b))_{\kk_1}=\cP_{(\sigma^{\gamma}_H)_{\kk_1}}(I_b)=\cP_{\sigma^{\gamma_{\kk_1}}_{H_{\kk_1}}}(I_b)=\cA^b(X_{\kk_1})\]
as desired.
\end{proof}

Therefore, according to Lemma \ref{lem:rotate-base-change}(c) and Definition \ref{def:geo-stable}, we say an object $E\in \cA^b(X)$ is \emph{geometrically $\sigma^b$-stable} if $E_{\overline{\kk}}$ is $\sigma^b_{\overline{\kk}}$-stable.

Next, we recall the classification result of (geometrically) $\sigma^b$-(semi)stable objects.

\begin{lemma}\label{lem:classify-rotate-stable}
Fix an object $E\in \cA^b(X)$. Then $E$ is $\sigma^b$-semistable if and only if either

\begin{enumerate}
    \item $\bv_{0}(E)\geq 0$, and $E$ is a torsion-free $\mu$-semistable sheaf with slope $>b$ or a torsion sheaf that is either pure of codimension one or of codimension at least $2$; or

    \item $\bv_{0}(E)<0$ and we have a short exact sequence $$0\to A[1]\to E\to B\to 0$$ in $\cA^b(X)$ such that $A$ is a torsion-free $\mu$-semistable sheaf with slope $\leq b$ and $B$ is a torsion sheaf supported in codimension at least $2$; when $\mu(A)<b$, $\Hom_X(T, E)=0$ for every sheaf $T$ supported in codimension $\geq 2$. 
\end{enumerate}

Similarly, $E$ is (geometrically) $\sigma^b$-stable if and only if either

\begin{enumerate}[(1)]
    \item $\bv_{0}(E)\geq 0$, and

    \begin{itemize}
        \item $E$ is a torsion-free (geometrically) $\mu$-stable sheaf with slope $>b$; or

        \item $E$ is a pure torsion sheaf of rank one on its scheme-theoretic support, which is (geometrically) integral of codimension one or of dimension zero; or
    \end{itemize}

    \item $E$ satisfies (b), the sheaf $A$ in (b) is a torsion-free $S_2$ (geometrically) $\mu$-stable sheaf, and either $\mu(A)<b$ and the sequence is non-split, or $\mu(A)=b$ and $B=0$.
\end{enumerate}
\end{lemma}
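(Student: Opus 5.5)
The plan is to specialize the abstract classification, Lemma \ref{lem:classify-rotate-stable-abstract}, to $\sigma=\sigma^{\gamma}_H=(\Coh(X),Z)$. This is legitimate because Proposition \ref{prop-slope-weak-stab} shows $\sigma$ is a weak stability condition with the tilting property and \ref{t3}. Then I will translate each abstract condition into sheaf-theoretic language.

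The dictionary comes from Lemma \ref{lem:supp-codim-and-bv}:
\begin{itemize}
\item $\cA^Z$ is the category of sheaves supported in codimension $\geq 2$;
\item $\Im Z(E)=\bv_0(E)$, and $\Im Z=0$ exactly for torsion sheaves;
\item $Z(E)=0$ exactly for codimension $\geq 2$ sheaves.
\end{itemize}
Lemma \ref{lem:classify-rotate-stable-abstract} assumes $Z(E)\neq 0$, so I treat $E\in\cA^Z$ separately. Such $E$ lies in $\cT^b$ and has $Z^b(E)=0$, hence is trivially $\sigma^b$-semistable; this accounts for the codimension $\geq 2$ torsion sheaves in (a).

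For the remaining $E$ with $\bv_0(E)\geq 0$, the abstract case (a) reads as follows.
\begin{itemize}
\item If $\Im Z>0$: $E$ is $\mu$-semistable of slope $>b$. Since $E\in\cT^b$ sits in degree $0$, and a $\mu$-semistable sheaf with $\bv_0>0$ is torsion-free (a torsion subsheaf would have slope $+\infty$), $E$ is a torsion-free sheaf.
\item If $\Im Z=0$ and $E$ has no subobject in $\cA^Z$: $E$ is a torsion sheaf with no subsheaf of codimension $\geq 2$. By Lemma \ref{lem:supp-codim-and-bv}, $\bv_1(E)\neq 0$ means the support has codimension one, so $E$ is pure of codimension one.
\end{itemize}
For $\bv_0(E)<0$, the abstract case (b) translates verbatim. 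Here $A$ is $\mu$-semistable with $\Im Z(A)>0$ and lies in $\cF^b$, hence is torsion-free, and $B$ is a codimension $\geq 2$ sheaf.

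For stability I follow the abstract items (1) and (2). In (1), the torsion case requires no codimension $\geq 2$ subsheaves and no proper quotient $F$ with $\Im Z(F)=0$ and $F\notin\cA^Z$. For a pure codimension-one sheaf $E$, the latter means every torsion quotient has codimension-$\geq 2$ support. I claim this forces $\bv_1$ to be minimal, i.e. the support is integral and $E$ has rank one on it; the argument is to use Lemma \ref{lem:chi-property}(e) on multiplicities at generic points of the support. Conversely, such a sheaf has only codimension $\geq 2$ proper quotients. If $Z(E)=0$ instead, $E$ is simple in $\cA^b$ only if it is a skyscraper at a closed point. A general codimension $\geq 2$ sheaf has codimension $\geq 2$ subsheaves, so that case gives exactly the dimension-zero rank-one sheaves; I must still reconcile this with the dimension-zero case of the statement.

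In (2), the condition $\Hom(\cA^Z,A[1])=0$ is exactly $S_2$ for torsion-free $A$, by Lemma \ref{lem-S2-hull}(d). When $\mu(A)<b$, the condition $\Hom(\cA^Z,E)=0$ becomes non-splitness: if $B\neq 0$, a splitting would give $B\hookrightarrow E$. When $B=0$, non-splitness is vacuous, so I must phrase that case carefully.

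The geometric versions follow by applying the above over $\overline{\kk}$ and using Lemma \ref{lem:rotate-base-change}. Geometric integrality of the support and geometric $\mu$-stability of $A$ are what remain. I expect the main obstacle to be the torsion-sheaf part of stability: showing precisely that pure codimension-one sheaves whose torsion quotients are all supported in codimension $\geq 2$ are exactly the rank-one sheaves on integral divisors, and that the codimension $\geq 2$ stable objects are exactly the points.
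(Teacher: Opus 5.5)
Your proposal is correct and follows the paper's route: the paper's proof is a one-line reduction to Lemma \ref{lem:classify-rotate-stable-abstract} after base change to $\overline{\kk}$, and your dictionary from Lemma \ref{lem:supp-codim-and-bv}, the separate treatment of objects in $\cA^Z$, and the use of Lemma \ref{lem-S2-hull}(d) for the $S_2$ condition spell out what the paper leaves implicit. One correction: $\Hom(\cA^Z,E)=0$ implies non-splitness when $B\neq 0$, but not conversely, since the extension class can vanish on a summand of $B$; and when $B=0$ the sequence is split, not vacuously non-split, so the non-split clause in (2) must be read as applying only when $B\neq 0$. Since (2) already presupposes (b), which contains the Hom-vanishing, nothing in your argument needs the converse.
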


\begin{proof}
By base change to $\overline{\kk}$, the result follows from Lemma \ref{lem:classify-rotate-stable-abstract}.
\end{proof}

As an immediate application of the above classification, we have:

\begin{lemma}\label{lem:rotate-geo-stable-base-change}
Fix a field extension $\kk\subset \mathsf{k}_1$ of the base field $\kk$. If $E\in\cA^b(X)$, then $E$ is geometrically $\sigma^b$-stable if and only if $E_{\kk_1}$ is geometrically $\sigma^b_{\kk_1}$-stable.
\end{lemma}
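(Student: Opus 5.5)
The strategy is to reduce both directions to the algebraically closed field and then use the classification in Lemma \ref{lem:classify-rotate-stable}. Let $\overline{\kk_1}$ be an algebraic closure of $\kk_1$; it also contains an algebraic closure $\overline{\kk}$ of $\kk$. By definition, $E$ is geometrically $\sigma^b$-stable if and only if $E_{\overline{\kk}}$ is $\sigma^b_{\overline{\kk}}$-stable. Likewise, $E_{\kk_1}$ is geometrically $\sigma^b_{\kk_1}$-stable if and only if $E_{\overline{\kk_1}}=(E_{\overline{\kk}})_{\overline{\kk_1}}$ is $\sigma^b_{\overline{\kk_1}}$-stable. So it suffices to prove the following: for an extension of algebraically closed fields $\kk\subset \kk_1$, an object $E\in\cA^b(X)$ is $\sigma^b$-stable if and only if $E_{\kk_1}$ is $\sigma^b_{\kk_1}$-stable. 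Lemma \ref{lem:rotate-base-change}(a) shows that $E_{\kk_1}$ lies in $\cA^b(X_{\kk_1})$ exactly when $E\in\cA^b(X)$.

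Next I would check the cases of Lemma \ref{lem:classify-rotate-stable} one at a time. Since $\kk$ is algebraically closed, "geometrically $\mu$-stable" agrees with "$\mu$-stable". The condition $\bv_0\geq 0$ or $\bv_0<0$ is unchanged under base change, because $\bch$ commutes with field extension (Lemma \ref{lem:chi-property}(b)). In case (1), a torsion-free sheaf is geometrically $\mu$-stable over $\kk$ exactly when its pullback is geometrically $\mu$-stable over $\kk_1$, since both conditions are stability after passing to $\overline{\kk_1}$. For the torsion subcase, pure of rank one on a geometrically integral support of codimension one or dimension zero, all three properties are preserved and reflected by flat field extension; here geometric integrality is what makes this work.

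In case (2) I have to check three things. The decomposition $0\to A[1]\to E\to B\to 0$ base changes to the corresponding decomposition of $E_{\kk_1}$, because $\cH^{-1}$ and $\cH^0$ commute with flat pullback. The sheaf $A$ is torsion-free and $S_2$ if and only if $A_{\kk_1}$ is, since depth and associated points behave well under the faithfully flat map $X_{\kk_1}\to X$. Finally, I need that the extension splits if and only if it splits after pullback.

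The last point is the main obstacle. My plan is to use $\Hom_{X_{\kk_1}}(B_{\kk_1},A_{\kk_1}[2])=\Hom_X(B,A[2])\otimes_\kk\kk_1$, which is flat base change on a proper scheme. The extension class then vanishes if and only if its pullback vanishes. The remaining conditions, $\mu(A)<b$ or ($\mu(A)=b$ and $B=0$), depend only on numerical data, which are invariant under base change. Alternatively, one can skip the case check entirely: by Lemma \ref{lem:rotate-base-change}(c) the two stability conditions are compatible, and Lemma \ref{lem:classify-rotate-stable-abstract} together with the case analysis above gives both implications.
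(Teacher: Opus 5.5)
Your proposal is correct and takes the same route as the paper. The paper reads the statement off Lemma \ref{lem:classify-rotate-stable}, using that rank, torsion-freeness, the $S_2$ property, geometric $\mu$-stability and geometric integrality are unchanged by field extension, plus faithfulness of $\Db(X)\to\Db(X_{\kk_1})$ to detect (non-)splitting of $0\to A[1]\to E\to B\to 0$; your flat base change identity for $\Hom_X(B,A[2])$ is exactly that faithfulness, and your preliminary reduction to algebraically closed fields is harmless but not needed, since case (1) still rests on the same standard invariance of geometric $\mu$-stability that the paper invokes directly.
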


\begin{proof}
Note that $\Db(X)\to \Db(X_{\kk_1})$ is a faithful functor. Since the rank, torsion-freeness, $S_2$ property, geometric $\mu$-stability, and geometric integrality are preserved after base change to $\kk_1$, the result follows from Lemma \ref{lem:classify-rotate-stable}.
\end{proof}

We need the following generalization of \cite[Lemma 2.19(a)]{bayer2017stability}.

\begin{lemma}\label{lem-6.3}
Let $X$ be an equidimensional Noetherian scheme that admits a dualizing complex. We denote by $(\cD^{\leq 0}, \cD^{\geq 0})$ the standard t-structure on $\Db(X)$, and $\tau_{\leq i}, \tau_{\geq i}$ by the truncation functors. Let $E\in \Db(X)$ such that $\cH^{i}(E)=0$ for $i\notin \{-1,0\}$ and $\cH^{-1}(E)$ is zero or torsion-free. We define two objects $E^{\sharp}\coloneqq\tau_{\leq 0}\DD_X^{1-\dim X}(E)$ and $Q\coloneqq\tau_{\geq 1}\DD_X^{1-\dim X}(E)$.

\begin{enumerate}
    \item $\cH^j(Q)$ is a torsion sheaf supported in codimension at least $j+1$ for all $j\geq 1$ and $\cH^j(Q)=0$ for $j\leq 0$.

    \item If $\Hom_X(T, E)=0$ for any sheaf \(T\) on $X$ supported in codimension at least $2$, then $\cH^j(Q)$ is a torsion sheaf supported in codimension at least $j+2$ for all $j\geq 1$ and $\cH^j(Q)=0$ for $j\leq 0$.

    \item If $X$ is also Gorenstein and $\cH^0(E)$ is supported in codimension $\geq 2$, then there exists a triangle
\[E^{\sharp}\otimes \omega_X^{-1}\to \DD^{X}_1(E)\to Q\otimes \omega_X^{-1}\]
such that $E^{\sharp}\otimes \omega_X^{-1}\subset (\cH^{-1}(E))^{\vee}$ is a subsheaf with the quotient supported in codimension at least $2$.
\end{enumerate}
\end{lemma}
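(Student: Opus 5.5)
The natural approach is the triangle $\cH^{-1}(E)[1]\to E\to \cH^0(E)$, followed by applying $\DD_X^{1-\dim X}$ and reading off cohomology with the duality estimates of Lemmas \ref{lem-supp-ext} and \ref{lem-codim-Sp}. Set $n=\dim X$, $F\coloneqq\cH^{-1}(E)$ and $T\coloneqq \cH^0(E)$. Dualizing gives a triangle
\[\DD_X^{1-n}(T)\to \DD_X^{1-n}(E)\to \DD_X^{1-n}(F[1]) = \DD_X^{-n}(F).\]
By definition $\cH^j(\DD_X^{-n}(G))=\cE xt^{j}_X(G,\omega^\bullet_X[-n])$, and then $\cH^j(\DD_X^{1-n}(T))=\cE xt^{j+1}_X(T,\omega^\bullet_X[-n])$.

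For part (a), we apply Lemma \ref{lem-supp-ext} to both $F$ and $T$. For torsion-free $F$ (codimension $0$), the sheaves $\cE xt^j(F,\omega^\bullet_X[-n])$ vanish for $j<0$ and have codimension $\ge j$ for $j\ge 1$. For any $T$ they vanish in negative degrees, so $\cH^j(\DD^{1-n}_X(T))$ vanishes for $j<-1$ and has codimension $\ge j+1$ for $j\ge 0$. The long exact sequence then shows $\cH^j(\DD^{1-n}_X(E))=0$ for $j<-1$. The more delicate range is $j\ge1$: there $\cH^j$ is an extension of a subsheaf of $\cE xt^{j}(F,\cdot)$ (codimension $\ge j$) by a quotient of $\cE xt^{j+1}(T,\cdot)$ (codimension $\ge j+1$). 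This only yields codimension $\ge j$, which is off by one.

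To fix this, I would use that $F$ is torsion-free, so $S_1$. Lemma \ref{lem-codim-Sp} with $p=1$, $c=0$ gives codimension $\ge j+1$ for $\cE xt^j(F,\cdot)$ when $j>0$, and (a) follows. The degree bookkeeping should be arranged so that $E^\sharp$ has cohomology only in degrees $\le 0$ and $Q=\tau_{\ge1}$ collects degrees $\ge1$; the statement $\cH^j(Q)=0$ for $j\le 0$ is then tautological from the truncation.

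Part (b) is the step I expect to be the main obstacle, since codimension must be improved by one more. The hypothesis $\Hom(T',E)=0$ for $T'$ of codimension $\ge2$ should force $F$ to be $S_2$ and limit how $T$ glues in. Concretely, the first thing I would try is to rewrite the vanishing as $\Hom(T',F[1])=0$ modulo maps to $T$; then Lemma \ref{lem-S2-hull}(d) yields that $F$ is $S_2$. With $p=2$, Lemma \ref{lem-codim-Sp} gives codimension $\ge j+2$ for $\cE xt^j(F,\cdot)$, $j\ge1$. For the $T$-contribution, the generic-point computation of \cite[Proposition 2.66]{kollar-kovas:sing} at points of codimension $j+1$ should show that $\cE xt^{j+1}(T,\cdot)$ is killed there in the cone. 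Equivalently, the connecting map $\cE xt^{j}(F)\to \cE xt^{j+1}(T)$ is surjective at such points, because a failure would produce a nonzero map from a codimension-$(\geq 2)$ sheaf into $E$ by duality. I would check this by localizing at a point $x$ of codimension $j+1$ and using local duality over $\oh_{X,x}$.

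Part (c) then follows. When $X$ is Gorenstein, $\omega^\bullet_X=\omega_X[n]$ with $\omega_X$ invertible, so $\DD_X^{1-n}(E)=\DD^X_1(E)\otimes\omega_X$. Tensoring the truncation triangle by $\omega_X^{-1}$ gives the triangle in (c). It remains to identify $E^\sharp\otimes\omega_X^{-1}=\tau_{\le0}\DD^X_1(E)$. Its $\cH^{-1}$ is $\cH om(T,\oh_X)$, which is zero because $T$ has codimension $\ge2$ and $\oh_X$ is $S_2$. Its $\cH^0$ sits in $0\to\cE xt^1(T,\oh_X)\to \cH^0 \to F^\vee\to \cE xt^2(T,\oh_X)$, and $\cE xt^1(T,\oh_X)=0$ by Lemma \ref{lem-supp-ext} since $c\ge2$. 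Hence $E^\sharp\otimes\omega_X^{-1}$ is a subsheaf of $F^\vee$ with cokernel inside $\cE xt^2(T,\oh_X)$, which has codimension $\ge2$.
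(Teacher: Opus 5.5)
Your parts (a) and (c) are correct and follow the paper's argument. For (a) you dualize $\cH^{-1}(E)[1]\to E\to\cH^0(E)$ and apply Lemma \ref{lem-supp-ext} and Lemma \ref{lem-codim-Sp} with $p=1$. For (c) you use that $\cE xt^1_X(\cH^0(E),\oh_X)=0$ and that $\cE xt^2_X(\cH^0(E),\oh_X)$ is supported in codimension $\geq 2$.

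Part (b), however, has a genuine gap.

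First, your connecting map is misindexed. The triangle $F[1]\to E\to T\to F[2]$ has its class in $\Ext^2_X(T,F)$. So after dualizing, the map landing in $\cE xt^{j+1}_X(T,\omega^{\bullet}_X[-n])$ comes from $\cE xt^{j-1}_X(F,\omega^{\bullet}_X[-n])$, not from $\cE xt^{j}_X(F,\omega^{\bullet}_X[-n])$. With your indexing the claim is false. Once $F$ is $S_2$, Lemma \ref{lem-codim-Sp} makes $\cE xt^{j}_X(F,\omega^{\bullet}_X[-n])$ vanish at every point of codimension $j+1$. But by Lemma \ref{lem-supp-ext}, $\cE xt^{j+1}_X(T,\omega^{\bullet}_X[-n])$ is nonzero at such a point whenever $\codim_X T=j+1$.

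Second, and more importantly, even with the correct indices, surjectivity at codimension-$(j+1)$ points is the whole content of (b), and you only assert it. The depth criterion of Koll\'ar--Kov\'acs sees $F$ and $T$ separately, so it cannot detect the gluing class, which is the only place the hypothesis $\Hom_X(T',E)=0$ enters. The sentence ``a failure would produce a nonzero map from a codimension-$(\geq 2)$ sheaf into $E$'' needs an actual mechanism producing a \emph{coherent sheaf on $X$} with a nonzero morphism to $E$; a statement over $\oh_{X,x}$ is not enough. (Your $S_2$ claim for $F$ is fine and immediate: $\Hom_X(T',F[1])\hookrightarrow\Hom_X(T',E)$ because $\Hom_X(T',T[-1])=0$.)

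The paper supplies this mechanism globally. Suppose some $\cH^{j_0}(Q)$ has codimension exactly $j_0+1$. Consider the spectral sequence with $E_2^{p,q}=\cE xt^p_X(\cH^{-q}(Q),\omega^{\bullet}_X[1-n])$ converging to $\cH^{p+q}(\DD^{1-n}_X(Q))$; by (a), all differentials into and out of $E_2^{j_0,-j_0}$ are controlled. This shows $\DD^{1-n}_X(Q)\in\cD^{\geq 0}$ and that $\cH^0(\DD^{1-n}_X(Q))$ is a nonzero sheaf supported in codimension $\geq 2$. Dualizing $E^{\sharp}\to\DD^{1-n}_X(E)\to Q$ gives a triangle $\DD^{1-n}_X(Q)\to E\to\DD^{1-n}_X(E^{\sharp})$. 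The composite $\cH^0(\DD^{1-n}_X(Q))\to E$ is nonzero, because $\Hom_X(\cH^0(\DD^{1-n}_X(Q)),\DD^{1-n}_X(E^{\sharp})[-1])=\Hom_X(\cH^0(\DD^{1-n}_X(Q)),(\cH^0(E^{\sharp}))^{\mathsf{d}})=0$ (a torsion sheaf mapping into a torsion-free one). This contradicts the hypothesis.

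Alternatively, your local idea can be completed. Fix $x$ of codimension $d\geq 2$ and set $Z=\overline{\{x\}}$.
\begin{enumerate}
\item One has $\cH^{d-1}(\DD^{1-n}_X(E))|_x\cong\Ext^0_{\oh_{X,x}}(E|_x,\omega^{\bullet}_{\oh_{X,x}})$. By local duality this vanishes iff $\rH^0_{\mathfrak{m}_x}(E|_x)=0$.
\item $\rH^0_{\mathfrak{m}_x}(E|_x)$ is the stalk at $x$ of the quasi-coherent sheaf $\cH^0(R\underline{\Gamma}_Z(E))$.
\item Since $F$ is torsion-free, $R\underline{\Gamma}_Z(E)$ has no cohomology in negative degrees.
\item Hence any nonzero coherent subsheaf $T'\subset\cH^0(R\underline{\Gamma}_Z(E))$ gives a nonzero element of $\Hom_X(T',R\underline{\Gamma}_Z(E))\cong\Hom_X(T',E)=0$. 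So $\cH^0(R\underline{\Gamma}_Z(E))=0$.
\end{enumerate}
Combined with (a), this yields (b) directly, without the $F$/$T$ splitting or the $S_2$ property of $F$.
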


\begin{proof}
By applying $\DD_X^{1-\dim X}(-)$ to $$\cH^{-1}(E)[1]\to E\to \cH^0(E),$$ the long exact sequence of cohomology objects and the fact that $\cH^{-1}(E)$ is torsion-free show that $\cH^j(Q)$ is supported in codimension at least $j+1$ for all $j\geq 1$ by Lemma \ref{lem-supp-ext} and \ref{lem-codim-Sp}. This proves (a).

For (b), by part (a) and Lemma \ref{lem-supp-ext}, we have $$\DD_X^{1-\dim X}(\cH^j(Q)[-j])\in \cD^{\geq 0}$$ and $\cH^j(Q)$ is a torsion sheaf supported in codimension at least $j+2$ if and only if $$\DD_X^{1-\dim X}(\cH^j(Q)[-j])\in \cD^{\geq 1}.$$ 

If the statement in (b) does not hold, let $j_0\geq 1$ be an integer so that $\codim_X\cH^{j_0}(Q)=j_0+1$. We consider a spectral sequence with the second page $$E^{p,q}_2\coloneqq \cH^p(\DD_X^{1-\dim X}(\cH^{-q}(Q)))=\cE xt^p_X(\cH^{-q}(Q),\omega^{\bullet}_X[1-\dim X])$$
which converges to $\cH^{p+q}(\DD_X^{1-\dim X}(Q))$ (cf.~\cite[(3.8)]{huybrechts2006fourier}). Note that for any $r\geq 2$, $$E^{j_0-r,-j_0+r-1}_2=0$$ by part (a) and Lemma \ref{lem-supp-ext}. Similarly, $E^{j_0+r,-j_0-r+1}_2$ is supported in codimension $\geq j_0+1+r$. These two conditions together with  
$$\cH^0(\DD_X^{1-\dim X}(\cH^{j_0}(Q)[-j_0]))=\cH^{j_0}(\DD_X^{1-\dim X}(\cH^{j_0}(Q)))\neq 0$$
imply that $\codim_X(E^{j_0,-j_0}_{\infty})=j_0+1$. Thus, $\cH^0(\DD_X^{1-\dim X}(Q))$ is a non-zero torsion sheaf supported in codimension at least $j_0+1\geq 2$. Therefore, we have $$\Hom_X(\cH^0(\DD_X^{1-\dim X}(Q)), \DD_X^{1-\dim X}(E^{\sharp})[-1])=\Hom_X(\cH^0(\DD_X^{1-\dim X}(Q)), (\cH^0(E^{\sharp}))^{\mathsf{d}})=0$$
as $(\cH^0(E^{\sharp}))^{\mathsf{d}}$ is either zero or torsion-free by Lemma \ref{lem-S2-hull}. Therefore, applying $\DD_X^{1-\dim X}$ to the canonical triangle $$E^{\sharp}\to \DD_{X}^{1-\dim X}(E)\to Q,$$ we see that the composition $$\cH^0(\DD_X^{1-\dim X}(Q))\to \DD_X^{1-\dim X}(Q)\to E$$ is non-zero, where the first map exists by $\DD_X^{1-\dim X}(Q)\in \cD^{\geq 0}$. This contradicts the assumption and proves (b).

Now we prove (c). In this case, we have $\DD^X_1(-)=\DD_X^{1-\dim X}(-)\otimes\omega_X^{-1}$. By applying $\DD^X_1(-)$ to $$\cH^{-1}(E)[1]\to E\to \cH^0(E),$$ we get isomorphisms
\begin{equation*}
    (\cH^0(E))^{\vee}\cong \cH^{-1}(\DD^X_1(\cH^0(E)))\cong \cH^{-1}(\DD^X_1(E)),
\end{equation*}
which vanishes as $\cH^0(E)$ is torsion. Hence $E^{\sharp}$ is a sheaf. Moreover, we have a long exact sequence
\[0\to \cE xt^1_X(\cH^0(E), \oh_X)\to \cH^0(\DD^X_1(E))\cong E^{\sharp}\otimes\omega_X^{-1}\to (\cH^{-1}(E))^{\vee}\to \cE xt^2_X(\cH^0(E), \oh_X).\]
Since $\cH^0(E)$ is supported in codimension $\geq 2$, we see $$\cE xt^1_X(\cH^0(E), \oh_X)\cong 0$$ and $\cE xt^2_X(\cH^0(E), \oh_X)$ is supported in codimension $\geq 2$ by Lemma \ref{lem-supp-ext}. This proves that $E^{\sharp}\otimes\omega_X^{-1}\subset (\cH^{-1}(E))^{\vee}$ is a subsheaf with the quotient supported in codimension at least $2$.
\end{proof}

Now, we fix $(f\colon X\to S, \cL, \gamma)$ as in Setup \ref{setup-gamma-relative}. We define a collection $\underline{\sigma}^{b}\coloneqq (\sigma^{b}_s)_{s\in S}$ for any $b\in \RR$. The next result follows from the same proof of \cite[Proposition 25.1]{BLMNPS21} with minor modifications in our setting.

\begin{proposition}\label{prop-rotate-slope-real-b}
For any $b\in \RR$, the collection $\underline{\sigma}^{b}= (\sigma^{b}_s)_{s\in S}$ satisfies 

\begin{enumerate}
    \item conditions \ref{c1}, \ref{c2}, \ref{wc2}, \ref{wc3}, \ref{b1}, and $(\cA^b(X_s))^{Z^{b}}\subset \cA^b(X_s)$ is a Noetherian torsion subcategory for each $s\in S$,

    \item for any morphism $T\to S$ of finite type and any $T$-perfect object $E$ on $X_T$, the functions $\phi^+_E$ and $\phi^-_E$ are, respectively, upper and lower semicontinuous constructible functions on $T$,

    \item if $B_{\gamma}=0$ and $n\leq 3$, and $f$ is admissible, then \ref{b2} holds with respect to $\Lambda$; in this case, if $n=3$ and we fix $v\in \Lambda$, then $\cM^{\st}_{\underline{\sigma}^b}(v+x\bp)=\varnothing$ for $x\gg 0$, where $\bp\in \Lambda_3$ is the class of skyscraper sheaves, and

    \item in the setting of (c), if $b\in \QQ$, then $\underline{\sigma}^b= (\sigma^b_s)_{s\in S}$ is a weak stability condition on $\Db(X)$ over $S$ with respect to $\Lambda$.
\end{enumerate}
\end{proposition}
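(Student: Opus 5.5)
The plan follows the proof of \cite[Proposition 25.1]{BLMNPS21}, checking each ingredient against the singular-fiber inputs developed above. For (a), property \ref{c1} for $\underline{\sigma}^b$ follows from \ref{c1} for $\underline{\sigma}$ (Proposition \ref{prop-slope-family}), since $Z^b$ is a fixed real-linear recombination of $Z$. Condition \ref{b1} holds with the same quadratic form $Q\equiv 0$ as for $\underline{\sigma}$, by Lemma \ref{lem-tilt-weak-stab}. The Noetherian torsion property of $(\cA^b(X_s))^{Z^b}$ comes from Lemma \ref{lem-rotate}. For \ref{wc3}, I will take the weak HN structure $\sigma_C$ of \eqref{eq:hn-structure-slope}. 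Its tilting property, both over $C$ and on each fiber, was checked in the proof of Proposition \ref{prop-slope-family} via hulls $F\mapsto F^H$ (Lemma \ref{lem-S2-hull}), so Lemma \ref{lem-tilt-weak-hn} produces $\sigma^b_C$, and this induces $\sigma^b_c$ by Lemma \ref{lem-HN-induce}.

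For \ref{c2} and \ref{wc2}, I will use the classification in Lemma \ref{lem:classify-rotate-stable} together with the base change compatibilities of Lemmas \ref{lem:rotate-base-change} and \ref{lem:rotate-geo-stable-base-change}. The case analysis goes as follows.
\begin{enumerate}
\item In case (a), the object is a sheaf. Openness of flatness (Lemma \ref{lem:openness-standard-heart}) reduces to the classical openness of ($\mu$-semi)stability, purity and integrality \cite[Proposition 2.3.1]{HL10}.
\item In case (b), I will use the derived dual. By Lemma \ref{lem-6.3}, $\DD^{1-n}_{X_t}(E_t)$ is, up to objects in degrees $\geq 1$ supported in codimension $\geq 3$, a torsion-free sheaf whose $\mu$-semistability is governed by $\cH^{-1}(E_t)^{\vee}$ (Lemma \ref{lem:dual-slope-stable}).
\end{enumerate}
The conditions ``$\cH^{-1}$ is torsion-free $S_2$'' and ``$\Hom(T,E)=0$ for $T$ of codimension $\geq2$'' translate into vanishing of $\cH^{\geq 1}$ of the dual in codimension $\leq 2$. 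These are open conditions on $T$-perfect objects by semicontinuity. Here I will need $R\cH om$ to preserve relative perfectness. This holds when the fibers are Gorenstein (Lemma \ref{lem-S-perf-lem-3}); otherwise I will work with $\DD_{X_T/T}$ using the relative dualizing complex.

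Part (b) then follows directly from Lemma \ref{lem:openness-flat}. Its hypothesis (a) is supplied by Lemma \ref{lem:rotate-base-change}(b), and its hypothesis (b) is part (a) of the present statement.

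Part (c) is boundedness, and I expect it to be the main obstacle. The plan is to split stable objects via Lemma \ref{lem:classify-rotate-stable}.
\begin{enumerate}
\item Torsion-free $\mu$-stable sheaves, and pure sheaves of rank one on integral supports, with fixed full class $v\in\Lambda$ form bounded families by Langer's boundedness \cite[Theorem 4.4]{langer:positive-char}. This is the same input that gives \ref{b2} in Proposition \ref{prop-slope-family}.
\item For objects in case (2), $A=\cH^{-1}(E)$ is torsion-free, $S_2$ and $\mu$-stable, with $\bv_{\leq 1}$ fixed. $B$ is supported in codimension $\geq2$, so in dimension $n\leq 3$ it contributes only to $\bv_2,\bv_3$. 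I will bound $\bv_2(B)\geq 0$ from above using Theorem \ref{thm:exist-bg-function} applied to $A$. When $n=3$, I will bound $\bv_3(B)$ using the dual picture (Lemma \ref{lem-6.3}(b)): the dual $E^\sharp$ is a sheaf with bounded classes, so it lives in a bounded family, and $B$ then varies in a bounded family of quotients of $\cE xt$-sheaves.
\end{enumerate}
The admissibility assumption and $B_\gamma=0$ are used so that $\bv_n$ is locally constant (Lemma \ref{lem:constant-generalization}) and so that the dual has predictable Chern characters (Lemma \ref{lem:property-ch3}). The same estimate on $\bv_3(B)$, made uniform, should show $\cM^{\st}(v+x\bp)=\varnothing$ for $x\gg0$. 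Adding points raises $\bv_3$, but in case (2) the classes of $A$ and $E^\sharp$ are bounded, so $\bv_3$ is bounded above. In case (1) with $\bv_0>0$, I will use that $\mu$-stable reflexive-hull comparison $\bv_3(E)\leq\bv_3(E^H)$ is bounded by boundedness of $S_2$ stable sheaves with fixed $\bv_{\leq2}$. In the torsion cases $\bv_3$ is bounded similarly.

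Finally, (d) assembles (a) and (c): for $b\in\QQ$, $Z^b$ is defined over $\QQ[\mathfrak{i}]$, so \ref{wc1} holds. Together with \ref{c1}, \ref{c2}, \ref{wc2}, \ref{wc3}, \ref{b1} and \ref{b2}, this is exactly Definition \ref{def-stab-family}.
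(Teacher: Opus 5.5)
Your skeleton follows \cite[Proposition 25.1]{BLMNPS21}, as the paper does, and your treatment of (b), (d), \ref{c1} and \ref{b1} matches the paper. Two steps do not go through as written. The first is \ref{wc3}. The proof of Proposition \ref{prop-slope-family} did not establish the tilting property of $\sigma_C$ over $C$ in the sense of Definition \ref{def-tilting-hn}. It only checked \cite[Theorem 16.1(3)]{BLMNPS21}, i.e.\ tilting for the $C$-torsion part $(\Coh(X_C)_{C\text{-}\tor}, Z_{C\text{-}\tor})$. Lemma \ref{lem-tilt-weak-hn} needs \ref{t'1} and \ref{t'2} on all of $\Coh(X_C)$, including $C$-torsion-free $E$. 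There the hull on the total space is the wrong object: $E^H/E$ has codimension $\geq 2$ only in $X_C$, so it can contain sheaves with $Z_C\neq 0$. For instance, take $E=\cI_{D_p}\subset\cO_{X_C}$ with $X_C$ normal and $D_p$ a divisor in the special fiber; then $E^H/E=\cO_{D_p}$. The paper first reduces via \cite[Remark 19.3]{BLMNPS21} to $E$ torsion-free or $E=i_{p*}F$, and in the latter case sets $\wt E=i_{p*}(F^H)$. In the former case it takes $\wt E\subset E^H$ to be the preimage of the maximal subsheaf of $E^H/E$ whose support has codimension $\geq 2$ in every fiber. The vanishing $\Hom(\Coh(X_C)^{Z_C},\wt E[1])=0$ then follows from Lemmas \ref{lem-S2-hull} and \ref{lem-general-S2}(d) together with this maximality.

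The second issue is that you rely on derived duals where the hypotheses do not support them. In (c) with $n=3$, you control $\bv_3$ in case (2), both for \ref{b2} and for $\cM^{\st}_{\underline{\sigma}^b}(v+x\bp)=\varnothing$, through the classes of $E^\sharp$, citing Lemma \ref{lem:property-ch3}. Part (b) of that lemma needs Gorenstein fibers, but case (4) of Definition \ref{def:adm} allows normal $\QQ$-factorial non-Gorenstein fibers. The paper uses the dual triangle of Lemma \ref{lem-6.3}(c) only for lci fibers. For geometrically normal fibers it argues directly: $\bv_2(A)=\bv_2(B)-\bv_2(E)\geq-\bv_2(E)$ bounds the reflexive sheaf $A$ by \cite[Theorem 4.4]{langer:positive-char}. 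This leaves finitely many Hilbert polynomials for $B$, which is then bounded too. Separately, your inequality $\bv_3(E)\leq\bv_3(E^H)$ in the positive-rank case need not hold, since $E^H/E$ may have a one-dimensional part with negative $\bv_3$. The same over-reliance on duality affects your proof of \ref{c2} and \ref{wc2} in (a), which is asserted for arbitrary $\gamma$ and for fibers that are only lci in codimension $d$. Lemma \ref{lem-6.3}(c) needs Gorenstein, Lemma \ref{lem:dual-slope-stable} needs $B_\gamma=0$, and Lemma \ref{lem-6.3}(b) gives only one implication. The paper avoids duality there. The flattening stratification of Lemma \ref{lem:openness-standard-heart}(b), combined with Lemma \ref{lem:classify-rotate-stable} and classical openness for flat families of sheaves, makes the (geometrically) (semi)stable loci constructible. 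Stability under generization is then checked over a DVR $\Spec R\to T$ (\cite[Lemma 11.19]{BLMNPS21}) by lifting destabilizing or lower-dimensional subsheaves of $\cH^i(E_\eta)$ to $X_R$.
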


\begin{proof}
The condition \ref{b1} and the statement for $(\cA^b(X_s))^{Z^{b}}$ follow from Lemma \ref{lem-rotate}. By Lemma \ref{lem-constant-chM}, the condition \ref{c1} also holds.

Now, we verify \ref{wc3}. Let $C\to S$ be a morphism essentially of finite type from a Dedekind scheme $C$. By Proposition \ref{prop-slope-weak-stab} and Lemma \ref{lem-tilt-weak-hn}, it is enough to show that the weak HN structure $\sigma_{C}$ has the tilting property in the sense of Definition \ref{def-tilting-hn}. By Lemma \ref{lem:supp-codim-and-bv}, it is clear that $(\Coh(X_C))^{Z_C}\subset \Coh(X_C)$ is a Noetherian torsion subcategory, as it consists of sheaves whose support has codimension at least $2$ in each fiber, which verifies \ref{t'1}. By \cite[Remark 19.3]{BLMNPS21}, to verify \ref{t'2}, it is enough to assume that $E\in \Coh(X_C)$ is either torsion-free or $E=i_{p*}F$ for a torsion-free sheaf $F\in \Coh(X_p)$. In the former case, we define $\wt{E}\subset E^{H}$ to be the preimage of $G\subset E^{H}/E$, where $G$ is the maximal subsheaf whose support has codimension at least $2$ in every fiber. In the latter case, we define $\wt{E}\coloneqq i_{p*}(F^{H})$. Therefore, using Lemma \ref{lem-S2-hull} and \ref{lem-general-S2}, it is direct to verify \ref{t'2}. As a result, \ref{wc3} holds.

Next, we verify \ref{wc2} and \ref{c2}. Let $T\to S$ be a morphism essentially of finite type and $E\in \Dqc(X_T)$ be a $T$-perfect object. Let $t_0\in T$ so that $E_{t_0}\in \cA^b(X_{t_0})$ is $\sigma^{b}_{t_0}$-semistable. By our assumption, $X_T\to T$ is flat projective, and $X_T$ and $T$ are both Noetherian. Then Lemma \ref{lem-S-perf-lem-1}(a) implies $E\in \Db(X_T)$. By Lemma \ref{lem:openness-standard-heart}(a), we may replace $T$ by an open neighborhood of $t_0$ and assume that $\cH^{i}(E_t)=0$ and $\cH^{i}(E)=0$ for all $t\in T$ and $i\notin \{-1,0\}$. Applying Lemma \ref{lem:openness-standard-heart}(b), we get a locally closed stratification $\{T_i\}_{i\in I}$ of $T$ so that we have an exact triangle $\cH^{-1}(E_{T_i})[1]\to E_{T_i}\to \cH^0(E_{T_i})$ for each $i\in I$ and $\cH^{-1}(E_{T_i})$ and $\cH^{0}(E_{T_i})$ are both flat over $T_i$. Since being $S_2$, $\mu$-semistable, geometrically $\mu$-stable, pure of a fixed dimension, of a fixed rank, and geometrically integral are open in flat families of sheaves or schemes, by Lemma \ref{lem:classify-rotate-stable}, we know that the intersections of sets
\[A_E\coloneqq \{t\in T\colon E_t \text{ is }\sigma^b_t\text{-semistable}\}\]
and
\[B_E\coloneqq \{t\in T\colon E_t \text{ is geometrically }\sigma^b_t\text{-stable}\}\]
with each $T_i$ are open. Therefore, they are constructible subsets of $T$. To show they are open, it remains to prove that their complements in $T$ are stable under specialization in the sense of \cite[\href{https://stacks.math.columbia.edu/tag/0061}{Tag 0061}]{stacks-project}, i.e. if $t\in T\setminus A_E$ (resp.~$t\in T\setminus B_E$), then $t'\in T\setminus A_E$ (resp.~$t'\in T\setminus B_E$) for any $t'\in \overline{\{t\}}$. To this end, by \cite[Lemma 11.19]{BLMNPS21}, we may find a DVR $R$ with a morphism $\Spec(R)\to T$ which is essentially of finite type, that maps the generic point $\eta$ to $t$ and maps the closed point $p$ to $t'$. By Lemma \ref{lem:rotate-base-change}(b) and \ref{lem:rotate-geo-stable-base-change}, we only need to show that if $E\in \Db(X_R)$ is a $R$-perfect object with $E_{\eta}$ not $\sigma^{b}_{\eta}$-semistable or not geometrically $\sigma^b_{\eta}$-stable, then so is $E_p$. By Lemma \ref{lem:classify-rotate-stable}, the semistability or geometric stability only depends on the semistability/geometric stability/purity of cohomology sheaves of $E_K$ and $E_p$. Therefore, we can lift any destabilizing subsheaf or subsheaf with lower dimension of $\cH^i(E_{\eta})$ to $X_R$, which implies $E_p$ is not $\sigma^b_{p}$-semistable or not geometrically $\sigma^b_{p}$-stable as well. This ends the proof of \ref{wc2} and \ref{c2}, hence part (a).

Now part (b) follows from Lemma \ref{lem:rotate-base-change}(b) and Lemma \ref{lem:openness-flat}.

Note that (d) immediately follows from (c), so it remains to verify \ref{b2} and the stronger boundedness statement in (c). Note that by the property \ref{c2}, we know that $\cM^{\st}_{\underline{\sigma}^b}(v)\subset \cM_{\mathrm{pug}}(X/S)$ is an open substack for any $v\in \Lambda$.

We fix a $\sigma^b_s$-stable object $E\in \cA^b(X_s)$. When $\bv_{0}(E)> 0$, according to the classification in Lemma \ref{lem:classify-rotate-stable}, the boundedness of $\cM^{\st}_{\underline{\sigma}^b}(v)$ follows from \cite[Theorem 4.2]{langer:positive-char} and \cite[Remark 25.2]{BLMNPS21}. When $\bv_{0}(E)=0$, $E$ is either a pure rank-one sheaf on a geometrically integral codimension-one subscheme, or the skyscraper sheaf of a geometric point. In particular, $E$ is a stable sheaf on $X_s$. Then the result follows from the corresponding boundedness statements for the Hilbert scheme of $\Supp(E)$ and relative moduli space of geometrically stable sheaves over the Hilbert scheme.

When $\bv_{0}(E)<0$, by Lemma \ref{lem:classify-rotate-stable}, we have an exact sequence $A[1]\to E\to B$ in $\cA^b(X_s)$ such that $A$ is a torsion-free $S_2$ geometrically $\mu$-stable sheaf and $B$ is a torsion sheaf supported in codimension at least $2$. When $X_s$ is geometrically normal, as
\[\bv_2(A)=\bv_2(B)-\bv_2(E)\geq -\bv_2(E),\]
we get the boundedness of $A$ by the second case of \cite[Theorem 4.4]{langer:positive-char}. In particular, there are only finitely many possibilities of the Hilbert polynomial of $B$ as $\bv(E)$ is fixed. Using the first case of \cite[Theorem 4.4]{langer:positive-char}, we get the boundedness of $B$, and the corresponding result for $E$ follows. When $X_s$ is lci, applying Lemma \ref{lem-6.3}(c), we have a triangle
\[E^{\sharp}\otimes \omega^{-1}_{X_s}\to \DD^{X_s}_1(E)\to Q\otimes \omega^{-1}_{X_s}\]
such that $E^{\sharp}\otimes \omega^{-1}_{X_s}\subset A^{\vee}$ is a geometrically $\mu$-stable torsion-free sheaf by Lemma \ref{lem:dual-slope-stable} and $Q[1]$ is a sheaf supported in codimension at least $3$. Note that for any $S$-perfect object $F$ on $X$, we have $\DD^X_1(F)$ is also $S$-perfect  by Lemma \ref{lem-S-perf-lem-3} and $(\DD^X_1(F))_s=\DD^{X_s}_1(F_s)$. Therefore, it is enough to show that $\DD^{X_s}_1(E)$ is bounded, which suffices to prove the boundedness of $E^{\sharp}\otimes \omega^{-1}_{X_s}$ and $Q\otimes \omega^{-1}_{X_s}[1]$ by \cite[Lemma 9.6]{BLMNPS21}. By  Lemma \ref{lem-derived-dual-ch}, we have $\bv_{i}(E)=(-1)^{i+1}\bv_{i}(E^{\sharp}\otimes \omega^{-1}_{X_s})$ for $i\in \{0,1,2\}$ and $$\bv_{3}(E^{\sharp}\otimes \omega^{-1}_{X_s})=\bv_{3}(E)+\bv_{3}(Q\otimes \omega^{-1}_{X_s}[1])\geq \bv_{3}(E).$$ Since $E^{\sharp}\otimes \omega^{-1}_{X_s}$ is a geometrically $\mu$-stable torsion-free sheaf, the first case in \cite[Theorem 4.4]{langer:positive-char} implies that it belongs to a bounded family, and so $Q\otimes \omega^{-1}_{X_s}[1]$ also belongs to a bounded family, and the result follows.
\end{proof}

We denote by $\sigma^{b}_{C}=(\cA^b(X_C), Z^{b}_{C})$ the weak HN structure constructed in the above proof. From the above argument, we can deduce the following fact, which will be used later.

\begin{corollary}\label{cor:C-torsion-theory-real-b}
Let $C\to S$ be a morphism essentially of finite type from a Dedekind scheme $C$. Then for any $b\in \RR$, $\sigma^{b}_{C}=(\cA^b(X_C), Z^{b}_{C})$ is a weak HN structure over $C$, and $\cA^b(X_C)$ universally satisfies openness of flatness and has a $C$-torsion theory.
\end{corollary}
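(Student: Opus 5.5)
The corollary has three parts: the weak HN structure, openness of flatness, and the $C$-torsion theory. I would read the first two off the proof of Proposition \ref{prop-rotate-slope-real-b} and obtain the third from Lemma \ref{lem:17.1}.

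\emph{Weak HN structure.} In the proof of \ref{wc3} we showed that the weak HN structure $\sigma_C=(\Coh(X_C),Z_C)$ from \eqref{eq:hn-structure-slope} has the tilting property in the sense of Definition \ref{def-tilting-hn}. For \ref{t'1}, $(\Coh(X_C))^{Z_C}$ consists of sheaves supported in fiberwise codimension $\geq 2$. For \ref{t'2}, we constructed $\wt{E}$ via hulls, using Lemma \ref{lem-S2-hull}. By Proposition \ref{prop-slope-weak-stab}, each induced $\sigma_c$ also has the tilting property. Lemma \ref{lem-tilt-weak-hn} then shows that $\sigma^b_C=(\cA^b(X_C),Z^b_C)$ is a weak HN structure over $C$ for every real $b$, not just rational $b$. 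The same lemma shows that $(\cA^b(X_C))^{Z^b_C}\subset\cA^b(X_C)$ is a Noetherian torsion subcategory. We also record that the induced hearts on fibers are $\cA^b(X_c)$, via Lemma \ref{lem-HN-induce} and the base change compatibility in Lemma \ref{lem:rotate-base-change}.

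\emph{Openness of flatness.} Take $T\to C$ of finite presentation with $T$ affine, and a $T$-perfect $E$. We must show that $\{t\colon E_t\in\cA^b(X_t)\}$ is open. I would use the argument from the proof of \ref{c2}.
\begin{enumerate}
\item Lemma \ref{lem:openness-standard-heart}(a) reduces to the case where cohomology is concentrated in degrees $-1,0$.
\item Lemma \ref{lem:openness-standard-heart}(b) stratifies $T$ so that $\cH^{-1}$ and $\cH^0$ are flat on each stratum.
\item On each stratum, the conditions $\mu^+(\cH^{-1}(E_t))\le b$ and $\mu^-(\cH^0(E_t))>b$ are open, by openness of HN polygons in flat families \cite[Proposition 2.3.1]{HL10}. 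So the set is constructible.
\item It remains to show the complement is stable under specialization. By \cite[Lemma 11.19]{BLMNPS21}, reduce to a DVR $\Spec R\to T$. If $E_\eta\notin\cA^b(X_\eta)$, a destabilizing subsheaf of $\cH^{-1}(E_\eta)$ or quotient of $\cH^0(E_\eta)$ extends over $R$. Its specialization witnesses $E_p\notin\cA^b(X_p)$, because slopes are locally constant by Lemma \ref{lem-constant-chM}.
\end{enumerate}
Alternatively, one could derive openness from the semicontinuity of $\phi^\pm_E$ in Proposition \ref{prop-rotate-slope-real-b}(b). Write $\cA^b(X_t)=\cP_{\sigma_t}(I_b)$ for a suitable half-open interval $I_b$ of length $1$, as in the proof of Lemma \ref{lem:rotate-base-change}. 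The condition becomes $\phi^+_{\sigma_t}(E_t)\le\sup I_b$ and $\phi^-_{\sigma_t}(E_t)>\inf I_b$, where the phases are taken with respect to the untilted $\sigma_t$. Semicontinuity then gives openness: the locus $\phi^+\le a$ is open by upper semicontinuity, and the locus $\phi^->a$ is open by lower semicontinuity. This gives openness directly, so I would use it if the interval bookkeeping goes through.

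\emph{Torsion theory.} Apply Lemma \ref{lem:17.1}. Hypothesis (a) is openness of flatness. For hypothesis (b), at each closed point $p$, Lemma \ref{lem-rotate} gives the weak stability condition $\sigma^b_p$ with Noetherian torsion subcategory $(\cA^b(X_p))^{Z^b}$. This yields the $C$-torsion theory on $\cA^b(X_C)$.

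The main obstacle is the openness step when $b$ is irrational. There $\cA^b$ need not be Noetherian, so Remark \ref{rmk:noether-C-torsion-pair} is unavailable and everything must go through semicontinuity. I would check carefully that the two inequalities defining $\cA^b$ have the correct strict/non-strict form to match the semicontinuity directions.
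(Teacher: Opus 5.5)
Your proposal is correct and follows the paper. The weak HN structure comes from the verification of \ref{wc3} via Lemma \ref{lem-tilt-weak-hn}, openness of flatness from the semicontinuity in Proposition \ref{prop-rotate-slope-real-b}(b), and the $C$-torsion theory from Lemma \ref{lem:17.1} together with Lemma \ref{lem-rotate}. Your first option for openness, re-running the constructibility and specialization argument, also works, but the paper only needs the second.

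One bookkeeping point. The phases in Proposition \ref{prop-rotate-slope-real-b}(b) are those of $\sigma^b_t$, not the untilted $\sigma_t$. So the relevant interval is simply $(0,1]$. Openness of $\{\phi^+\le 1\}$ then uses that $\phi^+_E$ is constructible, hence takes finitely many values, and not only that it is upper semicontinuous. If you want to keep untilted phases, you would instead apply Lemma \ref{lem:openness-flat} to the family from Proposition \ref{prop-slope-family}.
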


\begin{proof}
By Proposition \ref{prop-rotate-slope-real-b}(b), we know that $\cA^b(X_C)$ universally satisfies openness of flatness. Now the existence of a $C$-torsion theory follows from Lemma \ref{lem:17.1} and Lemma \ref{lem-rotate}.
\end{proof}

\subsection{Tilt-stability}\label{subsec-titl-from-slope}

Let $(X, H, \gamma)$ be a triple as in Setup \ref{setup-gamma} with $n\geq 2$ and either

\begin{itemize}
    \item $d\geq 2$, or

    \item $X_{\overline{\kk}}$ is a normal projective surface.
\end{itemize}
Recall that for an element $v$ in a lattice $\Lambda\subset \QQ^{m+1}$, we denote by $v_i$ the component in $\Lambda_i$. Then for any $b\in \RR$, we define \[v^{b}_i\coloneqq\sum_{j=0}^i \frac{(-b)^j}{j!}v_{i-j}\in \RR.\] For the lattice $\Lambda_{\leq 2}=\Lambda^{\gamma}_{H,\leq 2}\subset \QQ^{3}$ and $(b,w)\in \RR^2$, we define a homomorphism $Z^{b,w}\colon \Lambda_{\leq 2}\to \CC$ by
\[Z^{b,w}(v)\coloneqq -v_2+wv_0+\mathfrak{i}(v_1^b).\]
Then by Proposition \ref{prop-slope-weak-stab} and Theorem \ref{thm:tilt-stability}, the pair $$\sigma^{b,w}=\sigma^{\gamma,b,w}_{H}\coloneqq(\cA^b(X), Z^{b,w})$$ is a weak stability condition with respect to $\Lambda_{\leq 2}$ for $(b,w)\in U^{\gamma}_{X,H}$. In this setting, we denote the associated slope function by $\nu_{b,w}(-)=\nu^{\gamma}_{H,b,w}(-)$. We say an object $E\in \cA^b(X)$ is \emph{$\nu_{b,w}$-(semi)stable} if it is $\sigma^{b,w}$-(semi)stable.

By Theorem \ref{thm:tilt-stability} and \ref{thm:wall-chamber-abstract}, we have the following result.

\begin{theorem}\label{thm-bms}
We have a continuous injective map
\[U^{\gamma}_{X,H}\to \Stab_{\Lambda_{\leq 2}}^{\mathsf{w}}(\Db(X)), \quad (b,w)\mapsto \sigma^{b,w}=(\cA^b(X), Z^{b,w}).\]
Moreover, if $(X, H, \gamma)$ has a BG function $\mathsf{g}$, then for any $\nu_{b,w}$-semistable object $E$ with $\bv_{0}(E)\neq 0$, we have
\[\frac{\bv_{2}(E)}{\bv_{0}(E)}\leq \mathsf{g}(\mu(E)).\]
\end{theorem}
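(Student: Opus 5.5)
The plan is to apply the abstract Theorem \ref{thm:tilt-stability} to $\sigma=\sigma^{\gamma}_H=(\Coh(X),Z)$ with lattice $\Lambda_{\leq 2}$ and $W(v)\coloneqq -v_2$. With this choice, $Z^{b,w}$ of Section \ref{sec:general-tilt} is $-v_2+wv_0+\mathfrak{i}(v_1-bv_0)$, which is the central charge above, and $\Phi_W=\Phi^{\gamma}_{X,H}$, so $U_W=U^{\gamma}_{X,H}$.

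The hypotheses are checked as follows. By Proposition \ref{prop-slope-weak-stab}, $\sigma$ is a weak stability condition with the tilting property, and it is defined over $\QQ[\mathfrak{i}]$ since $\Lambda_{\leq 1}\subset\QQ^2$. The category $\cA^Z$ consists of sheaves supported in codimension $\geq 2$. By Lemma \ref{lem:chi-property}(e) (or the Mumford $\bch_2$ in the normal surface case) and Lemma \ref{lem:supp-codim-and-bv}, such a sheaf has $\bv_2\geq 0$, hence $W\leq 0$, and a codimension-two point gives $W<0$. The functionals $v_0,v_1,v_2$ are linearly independent because $\Lambda_{\leq 2}$ has rank $3$. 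Finally, $\Lambda_W$ is generated by classes of sheaves of codimension $\geq 3$, whose $\bv_{\leq 2}$ vanishes, so $\Lambda_W=0=(\ker v_0\cap\ker v_1\cap\ker v_2)_\RR$. Theorem \ref{thm:tilt-stability} then gives the continuous map into $\Stab^{\mathsf{w}}_{\Lambda_{\leq 2}}(\Db(X))$.

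For injectivity, note that $(b,w)$ is recovered from the central charge: $\Im Z^{b,w}$ determines $b$, and $\Re Z^{b,w}$ determines $w$. Since the forgetful map to $\Hom(\Lambda_{\leq 2},\CC)$ is well defined, distinct points give distinct weak stability conditions.

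The BG statement is the last assertion of Theorem \ref{thm:wall-chamber-abstract}, applied with the convex function $\mathsf g\geq\Phi_W$. That result gives $\Pi_W(E)\notin U_{\mathsf g}$ whenever $\Im Z(E)\neq 0$, that is, whenever $\bv_0(E)\neq 0$. Since $\Pi_W(E)=(\mu(E),\bv_2(E)/\bv_0(E))$, this is exactly the inequality $\bv_2(E)/\bv_0(E)\leq\mathsf g(\mu(E))$. Theorem \ref{thm:wall-chamber-abstract} also assumes $\Phi_W<+\infty$, which holds here because of $\mathsf g$.

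The main obstacle is the injectivity claim. The step most likely to need care, though, is checking $W\leq0$ on $\cA^Z$, together with $\Lambda_W=0$, in the normal-surface case, where $\bch_2$ of a zero-dimensional sheaf is its length by Definition \ref{def:ch2-surface}.
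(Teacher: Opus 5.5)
Your proposal is correct and follows the paper's argument: the paper deduces this theorem directly from Theorem \ref{thm:tilt-stability} with $W=-v_2$ (so that $\Phi_W=\Phi^{\gamma}_{X,H}$) together with the last assertion of Theorem \ref{thm:wall-chamber-abstract}, and your checks ($W\leq 0$ on sheaves of codimension $\geq 2$, $\Lambda_W=0$, rationality of $Z$, and injectivity read off from the central charge) supply the details the paper leaves implicit. One precision you should add: Theorem \ref{thm:wall-chamber-abstract} gives $\bv_2(E)/\bv_0(E)\leq \mathsf{g}(\mu(E))$ only for $\nu_{b,w}$-semistable $E$ with $(b,w)\in U_{\mathsf{g}}\subseteq U^{\gamma}_{X,H}$, since convexity of $\mathsf{g}$ is what keeps the segment from $(b,w)$ to $\Pi_W(E)$ inside the region, so you should state that range explicitly.
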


Note that a BG function $\mathsf{g}$ always exists by Theorem \ref{thm:exist-bg-function}.

The following wall-chamber structure is a special case of Theorem \ref{thm:wall-chamber-abstract}. For any $v\in \QQ^{d+1}$ with $v_0\neq 0$, we define $$\Pi(v)\coloneqq \left(\frac{v_1}{v_0}, \frac{v_2}{v_0}\right).$$

\begin{theorem}\label{thm:wall-chamber-tilt}
Assume furthermore that $(X, H, \gamma)$ has a BG function $\mathsf{g}$. 
Fix a class $v\in \Lambda_{\leq 2}$ such that $(v_0, v_1, v_2)\neq (0,0,0)$. Then there exists a set of lines $\{\ell_i\}_{i\in I}$ in $\RR^2$ such that $\ell_i\cap U_{\mathsf{g}}$ (called walls) are locally finite and satisfy
\begin{enumerate}
	    \item If $v_0\neq 0$, then all lines $\ell_i$ pass through $\Pi(v)$.
	    \item If $v_0= 0$ and $v_1\neq 0$, then all lines $\ell_i$ are parallel of slope $\frac{v_2}{v_1}$.
	   		\item The $\nu_{b,w}$-(semi)stability of any object $E$ with $\bv_{\leq 2}(E)=v$ is unchanged as $(b,w)$ varies within any connected component (called a ``\emph{chamber}") of  $U_{\mathsf{g}} \setminus \bigcup_{i \in I}\ell_i$.
		\item For any wall $\ell_i\cap U_{\mathsf{g}}$, there is a map $f\colon F\to E$ in $\cA^b(X)$ such that
\begin{itemize}
\item $E$ is $\nu_{b,w}$-semistable with $\bv_{\leq 2}(E)=v$ and $\nu_{b,w}(E)=\nu_{b,w}(F)=\,\mathrm{slope}\,(\ell_i)$ constant on the wall $\ell_i \cap U_{\mathsf{g}}$, and
\item $f$ is an injection $F\hookrightarrow E $ in $\cA^b(X)$ which strictly destabilizes $E$ for $(b,w)$ in one of the two chambers adjacent to the wall $\ell_i\cap U_{\mathsf{g}}$.
\end{itemize} 
\end{enumerate}

Moreover, we have $\Pi(E)\coloneqq \Pi(\bv_{\leq 2}(E))\notin U_{\mathsf{g}}$ for any $(b,w)\in U_{\mathsf{g}}$ and any $\nu_{b,w}$-semistable object $E$ with $\bv_{0}(E)\neq 0$.
\end{theorem}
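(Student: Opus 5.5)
The plan is to deduce Theorem \ref{thm:wall-chamber-tilt} directly from the abstract Theorem \ref{thm:wall-chamber-abstract}, applied to the weak stability condition $\sigma=\sigma^{\gamma}_H=(\Coh(X),Z)$ on $\Db(X)$ with respect to $\Lambda_{\leq 2}$ and to the functional $W(v)\coloneqq -v_2$. First I set up the dictionary. One has $\Im Z=v_0$ and $\Re Z=-v_1$, so $\mu$ is the slope of $\sigma$. Expanding $v^b_1=v_1-bv_0$ gives $Z^{b,w}=W+w\Im Z+\mathfrak{i}(-\Re Z-b\Im Z)$, which is the abstract central charge. The abstract map $\Pi_W(v)=(v_1/v_0, v_2/v_0)$ coincides with $\Pi(v)$, and $\Phi_W=\Phi^{\gamma}_{X,H}$.

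Next I verify the hypotheses of Theorem \ref{thm:tilt-stability}. By Proposition \ref{prop-slope-weak-stab}, $\sigma$ is a weak stability condition with the tilting property. Its central charge is defined over $\QQ[\mathfrak{i}]$ because $\Lambda_{\leq 2}\subset\QQ^3$. The objects of $\cA^Z$ are sheaves supported in codimension $\geq 2$. For these, $-v_2\leq 0$ by Lemma \ref{lem:chi-property}(e), since $\bch_2$ is then an effective cycle. For a surface one uses $\bch_2=\chi\geq 0$ on points, which holds via Definition \ref{def:ch2-surface}. The inequality is strict for a skyscraper sheaf of a closed point, giving $W<0$ there.

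The three functionals $\Im Z,\Re Z,W$ are the coordinate projections on $\Lambda_{\leq 2}$, which has rank $3$. Hence they are linearly independent, and $(\ker\Im Z\cap\ker\Re Z\cap\ker W)_\RR=0$. It remains to show $\Lambda_W=0$. This holds because any $E\in\cA^Z$ with $W(E)=0$ has $\bv_{\leq 2}(E)=0$, so $\bv(E)=0$.

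Finally, Theorem \ref{thm:exist-bg-function} together with the BG function $\mathsf{g}$ gives $\Phi_W\leq\mathsf{g}<+\infty$, so the hypothesis $\Phi_W<+\infty$ holds. Theorem \ref{thm:wall-chamber-abstract} then produces a locally finite collection of walls in $U_W$ satisfying (a)–(d). These statements translate directly: the condition $v_0\neq0$ corresponds to $\Im Z(v)\neq 0$, and the slope $W(v)/\Re Z(v)=v_2/v_1$ matches (b). I then restrict to the subregion $U_{\mathsf{g}}\subset U_W$ and take the lines $\ell_i$ spanned by the walls meeting $U_{\mathsf{g}}$. Since $\mathsf{g}$ is convex, the set $U_{\mathsf{g}}$ is convex. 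Each wall segment, being a connected component of a line intersected with $U_W$, therefore contains the whole of $\ell_i\cap U_{\mathsf{g}}$. This is why the slope $\nu_{b,w}$ stays constant along $\ell_i\cap U_{\mathsf{g}}$. Chambers of $U_{\mathsf{g}}$ lie inside chambers of $U_W$, so (c) is inherited. The final assertion, $\Pi(E)\notin U_{\mathsf{g}}$, is exactly the last clause of Theorem \ref{thm:wall-chamber-abstract}.

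The main obstacle I expect is the sign check $W\leq 0$ on $\cA^Z$. In the surface case this depends on positivity of $\bch_2$ of zero-dimensional sheaves, and when $\gamma\neq1$ one also has to handle the twist by $\gamma$. The twist is harmless here because $\bch^\gamma_2(E)=\bch_2(E)$ whenever $\bch_{\leq1}(E)=0$, by the formula in the proof of Lemma \ref{lem:supp-codim-and-bv}.
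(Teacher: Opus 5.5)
Your proposal is correct and follows the paper's route: the paper obtains this theorem as the special case of Theorem~\ref{thm:wall-chamber-abstract} for $\sigma=\sigma^{\gamma}_H$, viewed over $\Lambda_{\leq 2}$ with $W=-v_2$, and your checks of the hypotheses and your convexity argument for passing from $U_W$ to $U_{\mathsf g}$ spell out what the paper leaves implicit. One small correction: a skyscraper sheaf witnesses $W<0$ only when $n=2$, since for $n\geq 3$ it has $\bv_2=0$. In general, take instead a sheaf supported in codimension exactly $2$, e.g.\ the structure sheaf of an integral codimension-$2$ subscheme; then $\bch_2$ is a nonzero effective cycle by Lemma~\ref{lem:chi-property}(e), so $\bv_2>0$.
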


In particular, applying \cite[Lemma 3.9]{bayer2016space}, we have the following generalization of \cite[Corollary 3.10]{bayer2016space}.

\begin{lemma}\label{lem-delta-JH}
Assume that $(X, H, \gamma)$ has the standard BG function. Then for any $(b,w)\in \cU$ and any strictly $\nu_{b,w}$-semistable object $E$ with $\nu^{+}_{b,w}(E)<\infty$, we have
\[\bDelta(\bv_{\leq 2}(E_i))\leq \bDelta(\bv_{\leq 2}(E))\]
for any JH factor $E_i$ of $E$. Equality holds if and only if all $\bv_{\leq 2}(E_i)$ are proportional to $\bv_{\leq 2}(E)$ and satisfy $\bDelta(\bv_{\leq 2}(E_i))=\bDelta(\bv_{\leq 2}(E))=0$. In particular, if $E$ is $\nu_{b',w'}$-stable for some $(b',w')\in \cU$, then the inequality is strict.
\end{lemma}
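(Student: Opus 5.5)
The plan is to follow \cite[Lemma 3.9, Corollary 3.10]{bayer2016space}, using the fact that under the standard BG function $\bDelta$ is a quadratic form of signature $(1,2)$ on $(\Lambda_{\leq 2})_{\RR}$ that is nonnegative on the relevant semistable classes. First we check that every JH factor $E_i$ of $E$ satisfies $\bDelta(E_i)\geq 0$. If $\bv_0(E_i)\neq 0$, Theorem \ref{thm-bms} with $\mathsf{g}=\frac{x^2}{2}$ gives $\bv_2(E_i)/\bv_0(E_i)\leq \frac12\mu(E_i)^2$, which is exactly $\bDelta\geq 0$. If $\bv_0(E_i)=0$, then $\bDelta(E_i)=\bv_1(E_i)^2\geq 0$ automatically.

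Next we place all $\bv_{\leq 2}(E_i)$ in one half-plane. Since $E$ is strictly semistable with $\nu_{b,w}^+(E)<\infty$, all the $E_i$ have the same finite $\nu_{b,w}$-slope, so their central charges $Z^{b,w}(E_i)$ lie on a common ray in the upper half plane. Hence the vectors $\bv_{\leq 2}(E_i)$ lie in the half-plane $P\subset(\Lambda_{\leq 2})_{\RR}$ cut out by the linear condition $\Re Z^{b,w}=-\nu\,\Im Z^{b,w}$, where $\nu=\nu_{b,w}(E)$, together with $\Im Z^{b,w}>0$. Factors with $Z^{b,w}(E_i)=0$ would lie in $\cA^Z$ with $\bv_{\leq 2}=0$ by Lemma \ref{lem:weakstabilityfunction}, and these contribute nothing. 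The hypothesis $(b,w)\in\cU$ says precisely that $\ker Z^{b,w}$ is negative definite for $\bDelta$, since $\bDelta(0,0,1)<0$ and $(1,b,w)$ with $w>b^2/2$ gives a negative vector. So we are in the setting of \cite[Lemma 3.9]{bayer2016space}: a quadratic form with negative definite kernel restricted to a plane $P$ containing $\ker Z^{b,w}$ directions.

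That lemma then gives, for any $u,v\in P$ in the same half with $\bDelta(u),\bDelta(v)\geq 0$, the bound $\bDelta(u+v)\geq \bDelta(u)+\bDelta(v)$, equivalently that the bilinear form $\bDelta(u,v)\geq 0$. Expanding $\bDelta(E)=\bDelta(\sum_i E_i)$, each cross term $\bDelta(E_i,E_j)$ is nonnegative, so $\bDelta(E)\geq\sum_i\bDelta(E_i)\geq \bDelta(E_i)$. Equality forces all cross terms $\bDelta(E_i,E_j)$ and all the other $\bDelta(E_j)$ to vanish. For vectors of nonnegative $\bDelta$ in such a half-plane, the equality case in \cite[Lemma 3.9]{bayer2016space} requires the vectors to be proportional and null, which gives the characterization.

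For the last claim, suppose equality holds. Then all $\bv_{\leq 2}(E_i)$ are proportional to $\bv_{\leq 2}(E)$ and there are at least two factors. So $E$ has a proper subobject whose class is proportional to $\bv_{\leq 2}(E)$, and this subobject has the same $\nu_{b',w'}$-slope as $E$ for every $(b',w')$. By the wall-chamber structure (Theorem \ref{thm:wall-chamber-tilt}), applied along the region where this subobject stays a subobject, $E$ would be strictly semistable everywhere. This contradicts $\nu_{b',w'}$-stability. The main obstacle is exactly this step: one must check carefully that the sub-object inclusion persists as $(b,w)$ moves, using the line argument in Step 1 of Theorem \ref{thm:wall-chamber-abstract}. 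A cleaner route is to note that $\bDelta(E)=0$ with $\bv_0\neq 0$ puts $\Pi(E)$ on the boundary parabola, and then every point of $\cU$ lies on a line through $\Pi(E)$ inside $\cU$.
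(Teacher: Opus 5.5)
Your proof of the inequality and of the characterization of the equality case is correct and follows the paper's route. The paper obtains these by applying \cite[Lemma 3.9]{bayer2016space} with the two inputs you identify. The first is $\bDelta\geq 0$ for $\nu_{b,w}$-semistable objects (Theorem \ref{thm-bms} with $\mathsf{g}(x)=x^2/2$). The second is negativity of $\bDelta$ on $\ker Z^{b,w}=\RR\cdot(1,b,w)$, which is exactly the condition $w>b^2/2$. One minor slip: $\bDelta(0,0,1)=0$, not $<0$; this is irrelevant, since $(0,0,1)\notin\ker Z^{b,w}$.

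\textbf{Gap in the final clause.} You flag its key step as the ``main obstacle'' and leave it open, and neither of your suggestions closes it.

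A proper subobject $E_1\subset E$ in $\cA^b$ with proportional class contradicts $\nu_{b',w'}$-stability only if $E_1\to E\to E/E_1$ is still a short exact sequence in $\cA^{b'}$.
\begin{itemize}
\item Step 1 of Theorem \ref{thm:wall-chamber-abstract} transports this only along the single line through $\Pi(E)$ and $(b,w)$. The point $(b',w')$ generally lies on a different line through $\Pi(E)$, and two such lines meet only at $\Pi(E)\in\partial\cU$, which is outside $\cU$.
\item Invoking chambers does not help either, since walls for the class $\bv_{\leq 2}(E)$ produced by other objects may separate the two points.
\end{itemize}

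\textbf{The missing idea} is to feed the equality case back in.
\begin{itemize}
\item Equality forces $\bDelta(E)=0$, hence $\bv_0(E)\neq 0$; otherwise $\bv_1(E)=0$ and $\nu_{b,w}(E)=+\infty$.
\item Let $R\subset\cU$ be the convex region where $\Im Z^{b'',w''}(E)>0$. It contains $(b,w)$. It also contains $(b',w')$ unless $b'=\mu(E)$; in that case Lemmas \ref{lem-vertical-wall} and \ref{lem-large-volume-converse} give a point of $R$ where $E$ is still stable.
\item At any $p\in R$ where $E$ is strictly semistable, a JH factor of non-proportional class would force $\bDelta<\bDelta(E)=0$ for every factor, contradicting Theorem \ref{thm-bms}. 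So all JH factors at $p$ are stable, null, and have classes that are positive multiples of $\bv_{\leq 2}(E)$.
\item Consequently the stable locus of $E$ in $R$ is nonempty and open (by openness of stability). It is also closed in $R$. At a limit point $p_0$, $E$ is semistable. If it were strictly semistable, its JH factors would remain stable near $p_0$ with a common phase, because their classes are positive multiples of $\bv_{\leq 2}(E)$. The JH filtration would then persist, and $E$ would be strictly semistable near $p_0$.
\item By connectedness of $R$, $E$ would be stable at $(b,w)$, a contradiction.
\end{itemize}
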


\subsection{Useful lemmas about tilt-stability}\label{subsec:lemma-about-tilt}

Keep the assumption at the beginning of Section \ref{subsec-titl-from-slope}. In the following, we collect some useful results for tilt-stability. The next two lemmas describe the behavior of semistable objects at the large volume limit.

\begin{lemma}\label{lem-large-volume}
If $E\in \cA^b(X)$ is $\nu_{b,w}$-semistable for $w\gg 0$, then it satisfies one of the following conditions:

\begin{enumerate}
    \item $\cH^{-1}(E)=0$ and $\cH^0(E)$ is a torsion-free $\mu$-semistable sheaf.

    \item $\cH^{-1}(E)=0$ and $\cH^0(E)$ is a torsion sheaf.

    \item $\cH^{-1}(E)\neq 0$ is a torsion-free $\mu$-semistable sheaf and $\cH^0(E)$ is either zero or supported in codimension at least $2$.
\end{enumerate}
\end{lemma}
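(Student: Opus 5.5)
This follows from the abstract large volume limit result, Lemma \ref{lem:lvl}, applied to the slope weak stability condition $\sigma=\sigma^{\gamma}_H=(\Coh(X),Z)$ from Proposition \ref{prop-slope-weak-stab}. It is a weak stability condition defined over $\QQ[\mathfrak{i}]$ with the tilting property. Here $W=-\bv_2$, and $\cA^Z$ is the category of sheaves supported in codimension at least $2$. The conclusion of Lemma \ref{lem:lvl} is exactly what we need once it is translated into sheaf language, using Lemma \ref{lem:supp-codim-and-bv}.

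First I check that the abstract hypotheses hold, namely that $\sigma^{b,w}=(\cA^b(X),Z^{b,w})$ is the tilt of Theorem \ref{thm:tilt-stability} for $w\gg0$. This was already recorded in Theorem \ref{thm-bms}. Then Lemma \ref{lem:lvl} says that $E$ is $\sigma^b$-semistable and falls into one of four cases.

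\emph{First case.} $\cH^{-1}(E)=0$ and $E$ is $\sigma$-semistable with $\Im Z(E)>0$, that is, $\bv_0(E)>0$. A $\mu$-semistable sheaf is either torsion or torsion-free. Since its rank is positive, it is torsion-free, which is (a).

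\emph{Second and third cases.} $\cH^{-1}(E)=0$ and $\Im Z(E)=0$. By Lemma \ref{lem:supp-codim-and-bv}, $\bv_0(E)=0$ means $E$ is a torsion sheaf. This also covers $E\in\cA^Z$. This is (b).

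\emph{Fourth case.} $\cH^{-1}(E)\neq0$ is $\sigma$-semistable with $\Im Z>0$, so it is a torsion-free $\mu$-semistable sheaf. Moreover $\cH^0(E)\in\cA^Z$, so it is supported in codimension at least $2$. This is (c).

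One subtlety is that $\cH^{-1}(E)$ of an object in $\cA^b(X)$ lies in $\cF^b$. It is therefore automatically torsion-free, because torsion sheaves have slope $+\infty$.

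The only point needing care is matching the hypotheses of Lemma \ref{lem:lvl}, since it is phrased for the abstract tilt. I expect no real obstacle. Alternatively, the case analysis can be read off directly from Lemma \ref{lem:classify-rotate-stable} together with $\sigma^b$-semistability.
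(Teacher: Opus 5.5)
Your proposal is correct and matches the paper's proof, which is the one-line observation that this follows directly from Lemma \ref{lem:lvl} applied to the slope weak stability condition (with $W=-\bv_2$, regarded on $\Lambda_{\leq 2}$). Your case-by-case translation spells out that reduction: Lemma \ref{lem:supp-codim-and-bv} turns $\Im Z=0$ into ``torsion'', and $\cA^Z$ is the category of sheaves supported in codimension at least $2$.
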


\begin{proof}
This directly follows from Lemma \ref{lem:lvl}.
\end{proof}

\begin{lemma}\label{lem-large-volume-converse}
If $E$ is a $\mu$-stable torsion-free sheaf on $X$, then $E\in \cA^b(X)$ is $\nu_{b,w}$-stable for $b<\mu(E)$ and $w\gg 0$. If $E$ is also $S_2$, then $E[1]\in \cA^b(X)$ is $\nu_{b,w}$-stable for $b\geq \mu(E)$ and $w\gg 0$. In particular, $\mu$-stable $S_2$ sheaves do not get destabilized along the vertical wall.
\end{lemma}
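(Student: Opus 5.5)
The plan is to reduce to the abstract large-volume result, Lemma \ref{lem:slope-stable-lvl}, applied to $\sigma=\sigma^{\gamma}_H$ with $W=-\bv_2$. That lemma needs two hypotheses. First, $\Phi_W=\Phi^{\gamma}_{X,H}<+\infty$, which is supplied by Theorem \ref{thm:exist-bg-function} (or by Lemma \ref{lem-bg-normal-surface} in the normal surface case). Second, $\sigma$ must be defined over $\QQ[\mathfrak{i}]$ and have the tilting property, which is Proposition \ref{prop-slope-weak-stab}. With these in place, the statement reduces to showing that the relevant objects are $\sigma^b$-stable in $\cA^b(X)$, and we use the classification in Lemma \ref{lem:classify-rotate-stable} to check this.

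\textbf{First claim.} Let $E$ be a $\mu$-stable torsion-free sheaf and $b<\mu(E)$. Then $E\in\cT^b\subset \cA^b(X)$. Moreover $E$ has $\bv_0(E)>0$ and is $\mu$-stable with slope $>b$, so by Lemma \ref{lem:classify-rotate-stable}(1) (equivalently Lemma \ref{lem:classify-rotate-stable-abstract}(1), first bullet) it is $\sigma^b$-stable. Lemma \ref{lem:slope-stable-lvl} then gives $\nu_{b,w}$-stability for $w\gg0$.

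There is a subtlety here. Lemma \ref{lem:classify-rotate-stable}(1) as stated concerns geometric stability after base change. I would therefore use the abstract version, Lemma \ref{lem:classify-rotate-stable-abstract}, directly over $\kk$, which only needs $\sigma$-stability. Its proof uses only the weak stability structure, so it applies without passing to $\overline{\kk}$.

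\textbf{Second claim.} Let $E$ additionally be $S_2$ and $b\ge\mu(E)$. Then $E\in\cF^b$, so $E[1]\in\cA^b(X)$ with $\bv_0(E[1])<0$. We verify Lemma \ref{lem:classify-rotate-stable-abstract}(2) with $A=E$ and $B=0$: the object $A$ is $\sigma$-stable, and since $E$ is torsion-free and $S_2$, Lemma \ref{lem-general-S2}(d) gives
\[\Hom(\cA^Z,A[1])=\Hom(T,E[1])=0\]
for every sheaf $T$ supported in codimension $\ge2$. We then split into the two cases allowed by that lemma. If $\mu(A)<b$, the sequence $0\to A[1]\to E[1]\to 0$ is trivially of the required form, and the vanishing condition $\Hom(\cA^Z,E[1])=0$ also holds by Lemma \ref{lem-general-S2}(d). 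If $\mu(A)=b$, we have $B=0$, which is exactly what is required. In both cases $E[1]$ is $\sigma^b$-stable, and Lemma \ref{lem:slope-stable-lvl} again gives $\nu_{b,w}$-stability for $w\gg0$.

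\textbf{Final assertion.} Fix $b_0=\mu(E)$. Points $(b,w)$ near the vertical line $b=b_0$ with $w$ large fall into the two regimes above, and the object is stable on both sides. To conclude that no vertical wall destabilizes $\mu$-stable $S_2$ sheaves, one combines this with the wall-chamber structure of Theorem \ref{thm:wall-chamber-tilt}, in which vertical walls are exactly those at $b=\mu$.

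\textbf{Main obstacle.} The step needing most care is that Lemma \ref{lem:slope-stable-lvl} provides a threshold depending on $b$. In particular, at $b=\mu(E)$ we have $\nu_{b,w}(E[1])=+\infty$, so that case must be handled separately. There, stability for every $w$ is immediate: any subobject of $E[1]$ with slope $+\infty$ is controlled by $\mu$-stability of $E$ together with the vanishing $\Hom(\cA^Z,E[1])=0$, which excludes quotients in $\cA^Z$.
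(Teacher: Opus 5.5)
Your proposal is correct and follows the paper's approach: the paper's proof just cites Lemma \ref{lem:slope-stable-lvl}, and you make explicit the $\sigma^b$-stability of $E$, resp.\ $E[1]$, via Lemma \ref{lem:classify-rotate-stable-abstract}, Proposition \ref{prop-slope-weak-stab} and Lemma \ref{lem-general-S2}(d). Treating $b=\mu(E)$ separately is a sensible addition, since there every subobject of $E[1]$ has $\Im Z^{b,w}=0$; note, however, that the vanishing $\Hom(\cA^Z,E[1])=0$ rules out \emph{subobjects} in $\cA^Z$, not quotients.
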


\begin{proof}
This follows from Lemma \ref{lem:slope-stable-lvl}.
\end{proof}

For stable objects along the vertical wall, we have the following description.

\begin{lemma}\label{lem-vertical-wall}
Let $E\in \cA^b(X)$ be a $\nu_{b,w}$-stable object with $b=\mu(E)$ and some $(b,w)\in U_{X,H}^{\gamma}$. Then $E=F[1]$, where $F$ is a $\mu$-stable torsion-free $S_2$ sheaf on $X$.
\end{lemma}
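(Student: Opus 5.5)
The plan is to combine the large volume limit description with wall-chamber constancy along the vertical line $b=\mu(E)$. First note that since $\Im Z^{b,w}(E)=\bv_1^b(E)=\bv_0(E)(\mu(E)-b)=0$ when $\bv_0(E)\neq 0$, the object $E$ has $\nu_{b,w}(E)=+\infty$. I will first rule out $\bv_0(E)=0$. In that case $b=\mu(E)=+\infty$ is impossible, so this case does not occur, and $\bv_0(E)\neq 0$. By Lemma \ref{imaginarypart} we then get a short exact sequence $0\to F[1]\to E\to T\to 0$ in $\cA^b(X)$, where $F=\cH^{-1}(E)$ is either zero or $\mu$-semistable of slope $b$, and $T=\cH^0(E)$ is supported in codimension $\geq 2$. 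Since $\bv_0(T)=0$ and $\bv_0(E)\neq 0$, we have $F\neq 0$, and $\bv_0(E)<0$.

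Next I use stability. Every nonzero subobject of $E$ in $\cA^b(X)$ has $\nu_{b,w}=+\infty$ whenever it has $\Im Z^{b,w}=0$. In particular $F[1]\subset E$ has $\nu_{b,w}(F[1])=+\infty=\nu_{b,w}(E)$. Stability of $E$ means that a proper nonzero subobject $A$ must satisfy $\nu_{b,w}(A)<\nu_{b,w}(E/A)$, which is impossible when $\nu_{b,w}(A)=+\infty$. Hence $F[1]=E$, so $T=0$ and $E=F[1]$.

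I will then show that $F$ is $\mu$-stable. Let $F'\subsetneq F$ be a nonzero saturated subsheaf with $\mu(F')=\mu(F)=b$. Then $F'$ is $\mu$-semistable of slope $b$, so $F'\in\cF^b$ and $F/F'\in\cF^b$, and $F'[1]\subset F[1]$ in $\cA^b(X)$ with $\nu_{b,w}=+\infty$, which contradicts stability. Torsion-freeness of $F$ follows from $F\in\cF^b$, since $\bv_0(F)>0$ and torsion sheaves lie in $\cT^b$ (they have $\mu=+\infty$).

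Finally, for the $S_2$ property I use Lemma \ref{lem-S2-hull}(d): it suffices to show $\Hom_X(T',F[1])=0$ for every $T'$ supported in codimension $\geq 2$. Such a $T'$ lies in $\cA^b(X)$ with $Z^{b,w}(T')$ satisfying $\Im=0$ and $\Re Z^{b,w}(T')=-\bv_2(T')\leq 0$. A nonzero map $T'\to F[1]=E$ has image $I\subset E$ which is a quotient of $T'$, hence again supported in codimension $\geq 2$; in particular $I\in\cA^Z$ and $\nu_{b,w}(I)=+\infty$. By the stability argument above, either $I=E$ or $I=0$. But $I=E$ is impossible since $\bv_0(E)\neq 0$. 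Therefore every such map vanishes and $F$ is $S_2$.

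The main subtlety is ensuring the weak stability convention, namely that objects in $(\cA^b)^{Z^{b,w}}$ and objects with phase $1$ are handled so that a subobject of slope $+\infty$ genuinely destabilizes a stable object of slope $+\infty$. I expect this to follow directly from the definition of stability recorded in Section \ref{sec:stab}.
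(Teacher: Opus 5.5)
Your proposal is correct and is essentially the paper's argument: the paper obtains $0\to \cH^{-1}(E)[1]\to E\to \cH^0(E)\to 0$ (with $\cH^{-1}(E)$ $\mu$-semistable torsion-free and $\cH^0(E)$ in codimension $\geq 2$) from Lemma \ref{lem-large-volume}, where you use Lemma \ref{imaginarypart} directly. In the $S_2$ step the paper argues that a nonzero $T\to E$ would be surjective with a kernel that cannot lie in $\cA^b(X)$, whereas you look at the image, and the one claim you should justify is that this image $I$, a quotient of $T'$ in the tilted heart, has $\bv_0(I)=0$; this holds because $Z^b=\bv_0+\mathfrak{i}\bv_1^b$ is a weak stability function on $\cA^b(X)$ (Lemma \ref{lem-rotate}) with $Z^b(T')=0$, so additivity together with the sign conditions on $\Im Z^b$ and $\Re Z^b$ forces $Z^b(I)=0$.
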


\begin{proof}
We have $\nu_{b,w}(E)=+\infty$. Therefore, the $\nu_{b,w}$-stability implies that the cases (a) and (b) in Lemma \ref{lem-large-volume} cannot happen for $E$. So $\cH^{-1}(E)$ is a $\mu$-semistable torsion-free sheaf and $\cH^0(E)$ is supported in codimension $\geq 2$. Using $\nu_{b,w}(\cH^{-1}(E))=\nu_{b,w}(\cH^0(E))=+\infty$ and the $\nu_{b,w}$-stability of $E$ again, we have $\cH^0(E)=0$ and the $\mu$-stability of $\cH^{-1}(E)$. It remains to show that $\cH^{-1}(E)$ is $S_2$.

Note that any torsion sheaf $T$ supported in codimension $\geq 2$ is $\nu_{b,w}$-semistable with $\nu_{b,w}(T)=+\infty$. Therefore, if $\Hom_X(T,E)\neq 0$, the stability of $E$ implies that there is a surjection $T\twoheadrightarrow E$ in $\cA^b(X)$. However, its kernel $F$ is a sheaf with $\bv_{\leq 1}(F)=-\bv_{\leq 1}(E)$, hence it is a torsion-free $\mu_H$-semistable sheaf by Lemma \ref{lem-large-volume} and cannot lie in $\cA^b(X)$. This proves that $\Hom_X(T,E)= 0$ and $E[-1]=\cH^{-1}(E)$ is $S_2$ by Lemma \ref{lem-S2-hull}(d).
\end{proof}

\begin{lemma}\label{lem-H-1-S2}
Let $(b,w)\in U^{\gamma}_{X, H}$. If $E\in \cA^{b}(X)$ is an object such that $\nu_{b,w}^+(E)<+\infty$. Then $\cH^{-1}(E)$ is torsion-free and $S_2$ if it is non-zero.
\end{lemma}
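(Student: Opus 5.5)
Torsion-freeness of $\cH^{-1}(E)$ is built into the heart. By definition of $\cA^b(X)=\langle \cF^b[1],\cT^b\rangle$ we have $\cH^{-1}(E)\in\cF^b$, so every $\mu$-HN factor has slope $\le b<+\infty$. Hence $\cH^{-1}(E)$ has no nonzero torsion subsheaf, since a torsion sheaf has $\mu=+\infty$ and would contribute an HN factor of infinite slope. So the real content of the lemma is the $S_2$ property.

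For $S_2$ we use Lemma \ref{lem-S2-hull}(d): a torsion-free sheaf $F$ is $S_2$ if and only if $\Hom_X(T,F[1])=0$ for every $T\in\Coh(X)$ with $\codim_X(T)\ge 2$. Set $F\coloneqq\cH^{-1}(E)$ and suppose some $T$ of codimension $\ge 2$ has $\Hom_X(T,F[1])\ne 0$. The truncation triangle $F[1]\to E\to \cH^0(E)$ gives an exact sequence
\[
\Hom_X(T,\cH^0(E)[-1])\to \Hom_X(T,F[1])\to \Hom_X(T,E).
\]
The left group is a negative Ext between sheaves and so vanishes. Thus a nonzero map $T\to F[1]$ yields a nonzero map $T\to E$.

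Next we exploit $\nu^+_{b,w}(E)<+\infty$. The sheaf $T$ lies in $\cT^b$, being supported in codimension $\ge 2$ with $\mu=+\infty$, so $T\in\cA^b(X)$. By Lemma \ref{lem:weakstabilityfunction} we have $Z^{b,w}(T)=-\bv_2(T)$ with $\bv_2(T)\ge 0$ by Lemma \ref{lem:supp-codim-and-bv}. Hence every nonzero quotient of $T$ in $\cA^b(X)$ has $\Im Z^{b,w}=0$, i.e. slope $+\infty$. This follows from Lemma \ref{imaginarypart}, since $\Im Z^{b,w}$ is additive and nonnegative on the heart. The image $I$ of the nonzero morphism $T\to E$ in $\cA^b(X)$ is then a nonzero subobject of $E$ with $\nu_{b,w}(I)=+\infty$. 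This forces the first HN factor of $E$ to have slope $+\infty$, contradicting $\nu^+_{b,w}(E)<+\infty$. Hence $\Hom_X(T,F[1])=0$ for all such $T$, and $F$ is $S_2$.

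The main point needing care is that subobjects and quotients are taken in the tilted heart $\cA^b(X)$ rather than in $\Coh(X)$. The quotient $I$ of $T$ could a priori have nonzero $\cH^{-1}$, so we argue only through $\Im Z^{b,w}$: additivity and nonnegativity on $\cA^b(X)$ give $0\le \Im Z^{b,w}(I)\le \Im Z^{b,w}(T)=0$, which avoids describing $I$ explicitly. We also use the standard fact that a nonzero subobject of slope $+\infty$ forces $\nu^+_{b,w}=+\infty$, since $\HN^+$ has maximal slope among subobjects.
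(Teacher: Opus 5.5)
Your proof is correct and follows the same route as the paper. Torsion-freeness comes from $\cH^{-1}(E)\in\cF^b$, and $S_2$ follows from Lemma \ref{lem-S2-hull}(d) once $\Hom_X(T,E)=0$ (forced by $\nu^+_{b,w}(E)<+\infty$) is transferred to $\Hom_X(T,\cH^{-1}(E)[1])=0$ via the truncation triangle, as the paper does. You spell out steps the paper leaves implicit. Two minor remarks: the sign of $\bv_2(T)$ plays no role, since $\Im Z^{b,w}(T)=0$ alone gives slope $+\infty$; and the formula $Z^{b,w}(T)=-\bv_2(T)$ comes from the definition of $Z^{b,w}$, not from Lemma \ref{lem:weakstabilityfunction}.
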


\begin{proof}
By definition of $\cA^{b}(X)$, $\cH^{-1}(E)$ is torsion-free if it is non-zero. Moreover, for any torsion sheaf $T$ on $X$ supported in codimension $\geq 2$, we have $\Hom_X(T,E)=0$. Then $\Hom_X(T, \cH^{-1}(E)[1])=0$ and the result follows from Lemma \ref{lem-S2-hull}(d).
\end{proof}

\begin{lemma}\label{lem-delta-0}
Assume that $(X, H, \gamma)$ has the standard BG function and $(b,w)\in \cU$. Let $E\in \cA^{b}(X)$ be a $\nu_{b,w}$-stable object with $\bDelta(E)$ to be minimal among all tilt-semistable objects and $\bv_0(E)\neq 0$. Then $E$ is $\nu_{b,w}$-stable for any $(b,w)\in \cU$. Moreover, if $b<\mu(E)$, then $E$ is a $\mu$-stable torsion-free sheaf; if $\mu(E)\leq b$, then $\cH^{-1}(E)$ is a $\mu$-semistable torsion-free $S_2$ sheaf; if $b=\mu(E)$, then $E=\cH^{-1}(E)[1]$.
\end{lemma}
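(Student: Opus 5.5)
The plan follows \cite[Corollary 3.11]{bayer2016space}, with the wall-chamber structure of Theorem \ref{thm:wall-chamber-tilt} (for $\mathsf{g}=\frac{x^2}{2}$, so $U_{\mathsf{g}}=\cU$) and the strict discriminant drop of Lemma \ref{lem-delta-JH} as the main tools. Set $v\coloneqq\bv_{\leq 2}(E)$ and let $\cS\subset\cU$ be the set of $(b',w')$ at which $E$ lies in the relevant heart and is $\nu_{b',w'}$-stable. The goal is to show that $\cS=\cU$.

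First, $\cS$ is open. Suppose $E$ is stable at $(b_0,w_0)\in\cU$. By local finiteness of walls for $v$ (Theorem \ref{thm:wall-chamber-tilt}), some small ball around $(b_0,w_0)$ meets only walls passing through $(b_0,w_0)$. A wall through $(b_0,w_0)$ would require a subobject $F\subset E$ in $\cA^{b_0}(X)$ with $\nu_{b_0,w_0}(F)=\nu_{b_0,w_0}(E)$, which stability forbids. Hence $E$ is stable throughout the ball.

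Second, $\cS$ is closed in $\cU$. Suppose $(b_1,w_1)\in\partial\cS\cap\cU$. By closedness of semistability (from the continuity in Theorem \ref{thm:tilt-stability}), $E$ is $\nu_{b_1,w_1}$-semistable. If it were strictly semistable, $E$ would have finite tilt-slope there: by the Moreover part of Theorem \ref{thm:wall-chamber-tilt}, $\Pi(v)\notin\cU$, so the point $(b_1,w_1)$ is not of infinite slope except on the vertical line $b=\mu(E)$, and that case needs separate treatment (see below). Since $E$ is stable at nearby points of $\cU$, the last clause of Lemma \ref{lem-delta-JH} gives a JH factor $E_i$ with $\bDelta(E_i)<\bDelta(E)$, and $E_i$ is tilt-semistable. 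This contradicts the minimality of $\bDelta(E)$. So $E$ is stable at $(b_1,w_1)$. Because $\cU$ is connected, $\cS=\cU$.

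The main obstacle is the vertical wall $b=\mu(E)$, where the heart changes and $\nu=+\infty$. I would handle it directly with Lemma \ref{lem-vertical-wall} and Lemma \ref{lem-large-volume-converse}:
\begin{itemize}
\item if $E$ is stable at some point with $b=\mu(E)$, then $E=F[1]$ with $F$ a $\mu$-stable $S_2$ sheaf, so $E$ is stable for $w\gg0$ on both sides;
\item conversely, approaching from either side, a destabilizing JH factor would again drop $\bDelta$, so the same open/closed argument runs on each half $\{b<\mu(E)\}$ and $\{b\geq \mu(E)\}$, glued through the vertical line.
\end{itemize}

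The descriptions of $E$ then follow by letting $w\to+\infty$ and applying Lemma \ref{lem-large-volume}:
\begin{itemize}
\item For $b<\mu(E)$, $E$ is a torsion-free $\mu$-semistable sheaf; it is $\mu$-stable because a destabilizing subsheaf would destabilize $E$ at large $w$ by the $\sigma^b$-stability classification in Lemma \ref{lem:classify-rotate-stable}.
\item For $b\geq\mu(E)$, $\cH^{-1}(E)$ is torsion-free and $\mu$-semistable, and Lemma \ref{lem-H-1-S2} makes it $S_2$.
\item For $b=\mu(E)$, Lemma \ref{lem-vertical-wall} gives $E=\cH^{-1}(E)[1]$.
\end{itemize}
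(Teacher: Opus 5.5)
Away from the line $b=\mu(E)$, your open/closed argument is what the paper's one-line proof has in mind: Lemma \ref{lem-delta-JH} plus minimality of $\bDelta$, followed by Lemmas \ref{lem-large-volume}, \ref{lem-H-1-S2} and \ref{lem-vertical-wall}. Two of your steps fail as written, however.

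\textbf{Crossing $b=\mu(E)$.} Your second bullet claims that a destabilizing JH factor at a point of the vertical line would again lower $\bDelta$. This fails for two reasons.
\begin{itemize}
\item On that line $\nu_{b,w}(E)=+\infty$, so Lemma \ref{lem-delta-JH} does not apply, since it requires $\nu^+_{b,w}(E)<+\infty$.
\item The objects that destabilize there are sheaves supported in codimension $\geq 3$. These have $\bv_{\leq 2}=0$, so they do not change $\bDelta$ at all.
\end{itemize}
Concretely, take $X$ a smooth threefold, $\gamma=1$, $E=I_p$ and $b<0$. Then $\bDelta(I_p)=0$ is minimal, and $I_p$ is $\nu_{b,w}$-stable for every $b<0$ by Lemma \ref{lem:wtE-stable}, since $\sT(I_p)=\cO_X$. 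For $b\geq 0$, however, there is an exact sequence $0\to\cO_p\to I_p[1]\to\cO_X[1]\to 0$ in $\cA^b(X)$ with $Z^{b,w}(\cO_p)=0$. This makes $I_p[1]$ strictly semistable on the line and unstable for $b>0$. So $\mathcal{S}=\cU$ is false in general. Starting off the vertical line, your argument (and the paper's) gives stability only on the open half-plane $\{b'<\mu(E)\}$ or $\{b'>\mu(E)\}$ that contains $(b,w)$. Stability across the line holds only when you start on it, as in your first bullet. The half-plane statement is all the later assertions need, because there you only let $w\to+\infty$ at the fixed $b$.

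\textbf{$\mu$-stability for $b<\mu(E)$.} Stability for $w\gg 0$ gives only $\sigma^b$-semistability (Lemma \ref{lem:lvl}), not $\sigma^b$-stability. Large-volume tilt-stability is a Gieseker-type refinement of $\mu$-stability. For a subsheaf $G\subset E$ with $\mu(G)=\mu(E)$ one has, for every $w$,
$\nu_{b,w}(G)-\nu_{b,w}(E)=\frac{1}{\mu(E)-b}\big(\frac{\bv_2(G)}{\bv_0(G)}-\frac{\bv_2(E)}{\bv_0(E)}\big)$.
This can be negative, so a $\mu$-destabilizing subsheaf need not $\nu$-destabilize $E$. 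You must use minimality instead:
\begin{itemize}
\item Let $G_1,\dots,G_m$ with $m\geq 2$ be the $\mu$-JH factors of $E$, all of slope $\mu(E)$.
\item Lemma \ref{lem-key-lem} gives $\bDelta(E)=\sum_j\frac{\bv_0(E)}{\bv_0(G_j)}\bDelta(G_j)$, because the cross terms vanish when all slopes agree. Each $\bDelta(G_j)\geq 0$.
\item Each $G_j$ is $\mu$-stable and torsion-free, hence tilt-stable for $w\gg 0$ by Lemma \ref{lem-large-volume-converse}. If $\bDelta(E)>0$, some $G_j$ would have strictly smaller $\bDelta$, so minimality forces $\bDelta(E)=0$.
\item Then every $\bv_{\leq 2}(G_j)$ is proportional to $\bv_{\leq 2}(E)$. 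The first step of the JH filtration is therefore a subobject of $E$ in $\cA^b(X)$ with the same $\nu_{b,w}$-slope, contradicting stability.
\end{itemize}
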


\begin{proof}
The first statement follows from Lemma \ref{lem-delta-JH} and the minimality of $\bDelta(E)$.

When $b=\mu(E)$, the result is proved in Lemma \ref{lem-vertical-wall}. When $b\neq \mu(E)$, the result follows from Lemma \ref{lem-large-volume} and \ref{lem-H-1-S2}.
\end{proof}

In some situations, it is more convenient to set $a\coloneqq\sqrt{2w-b^2}$ and a homomorphism $Z_{a,b}\colon \Lambda_{\leq 2} \to \CC$ as
\begin{equation}\label{eq:Zab}
Z_{a,b}(v)\coloneqq -v_2^{b}+\frac{a^2}{2}v_0+\mathfrak{i}(v_1^{b})=bv_1^{b}+Z^{b,w}(v).
\end{equation}
We denote by $\mu_{a,b}$ the associated slope function of $Z_{a,b}\circ \bv_{\leq 2}$. As $$\nu_{b,w}(-)=b+\mu_{a,b}(-),$$ there is no difference between $\nu_{b,w}$-(semi)stability and $\mu_{a,b}$-(semi)stability. 

The following lemma is straightforward.

\begin{lemma}\label{lem-tensor-H}
Let $E\in \cA^b(X)$ be a $\mu_{a,b}$-(semi)stable object and $\cL$ be a line bundle that is numerically equivalent to $tH$ for some $t\in \RR$. Then $E\otimes \cL\in \cA^{b+t}(X)$ is a $\mu_{a,b+t}$-(semi)stable object.
\end{lemma}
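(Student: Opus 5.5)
The plan is to reduce everything to the numerical behaviour of $\bv_{\leq 2}$ under tensoring with $\cL$ and to the definition of the tilted heart. By Lemma \ref{lem:chi-property}(d), since $\cL\in\Pic(X)$ has $\ch(\cL)=e^{c_1(\cL)}$ and $c_1(\cL)$ is numerically $tH$, I get for any $F\in\Db(X)$
\[
\bv_0(F\otimes\cL)=\bv_0(F),\quad \bv_1(F\otimes\cL)=\bv_1(F)+t\bv_0(F),\quad \bv_2(F\otimes\cL)=\bv_2(F)+t\bv_1(F)+\tfrac{t^2}{2}\bv_0(F).
\]
Here only intersection numbers with powers of $H$ enter, so numerical equivalence suffices. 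The $\gamma$-twist commutes with $\ch(\cL)\cap$ because $\gamma$ is a bivariant class and the product is commutative. In the normal surface case the same formulas follow from Definition \ref{def:ch2-surface} via Riemann--Roch and Mumford intersection numbers.

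From these formulas I will read off the twisted components: $v^{b+t}_i(F\otimes\cL)=v^b_i(F)$ for $i=0,1,2$. This is the identity $e^{-(b+t)H}e^{tH}=e^{-bH}$ at the level of the truncated exponential. Consequently $\mu(F\otimes\cL)=\mu(F)+t$ for sheaves, and $Z_{a,b+t}(F\otimes\cL)=Z_{a,b}(F)$ by \eqref{eq:Zab}.

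Next I will show that $-\otimes\cL$ carries $\cA^b(X)$ onto $\cA^{b+t}(X)$. Tensoring by a line bundle is an exact autoequivalence of $\Coh(X)$ preserving subsheaves and quotients. By the slope shift it sends $\mu$-semistable sheaves of slope $s$ to $\mu$-semistable sheaves of slope $s+t$. Hence $\cT^b\otimes\cL=\cT^{b+t}$ and $\cF^b\otimes\cL=\cF^{b+t}$, and therefore $\cA^b(X)\otimes\cL=\cA^{b+t}(X)$. Since the inverse functor $-\otimes\cL^{-1}$ behaves symmetrically, this is an equivalence of abelian categories.

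Finally, subobjects of $E$ in $\cA^b(X)$ correspond bijectively to subobjects of $E\otimes\cL$ in $\cA^{b+t}(X)$, and central charges match. So $\mu_{a,b}$-(semi)stability of $E$ is equivalent to $\mu_{a,b+t}$-(semi)stability of $E\otimes\cL$. I should also note that $(b,w)\in U^\gamma_{X,H}$ implies the shifted parameter is in range, since $\Phi^\gamma_{X,H}$ transforms compatibly under the same slope shift.

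The only mildly delicate point is the first step, the Chern character formula for $\bch_2$ (and the $\gamma$ terms) of a twist on a singular $X$. For lci-in-codimension-$2$ schemes it is Lemma \ref{lem:chi-property}(d) directly. For normal surfaces it needs the computation in Definition \ref{def:ch2-surface}; I would handle that as in the proof of Lemma \ref{lem:property-ch3}(a).
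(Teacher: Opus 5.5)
Your proposal is correct and is exactly the routine verification the paper has in mind; the paper calls the lemma ``straightforward'' and gives no proof. The twist formulas give $v^{b+t}_i(F\otimes\cL)=v^b_i(F)$ for $i\le 2$, so $-\otimes\cL$ is an exact autoequivalence carrying $\cA^b(X)$ onto $\cA^{b+t}(X)$ and intertwining $Z_{a,b}$ with $Z_{a,b+t}$. One small precision: $\gamma$ commutes with $\ch(\cL)$ because Chern classes are central in the bivariant ring, not because $\A^*(X)$ is commutative. Also, your check that $\Phi^{\gamma}_{X,H}(x+t)=\Phi^{\gamma}_{X,H}(x)+tx+\tfrac{t^2}{2}$, which keeps the shifted parameters in range, is a worthwhile addition that the paper leaves implicit.
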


\begin{lemma}\label{lem:delta-rk1}
Assume that $X$ is geometrically normal and geometrically integral, has isolated singularities, $n\geq 3$, and $\gamma_2=B_{\gamma}=0$. Then for any rank one torsion-free $S_2$ sheaf $E$ on $X$, we have 
\[\bv_{\leq 2}(E)=\left(H^n, D.H^{n-1}, \frac{1}{2}D^2.H^{n-2}\right),\]
where $D=\bch_1(E)\in \CH^1(X)$. In particular, if $X$ is $\QQ$-factorial and of Picard number one, then we have $\bDelta(E)=0$.
\end{lemma}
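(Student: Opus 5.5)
The plan is to compute each $\bv_i(E)$ by reducing to the lci locus, where $E$ becomes a line bundle after removing a codimension-two set.

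\textbf{The first two entries.} Since $X$ is integral and $E$ has rank one, Lemma \ref{lem:chi-property}(e) gives $\bch_0(E)=[X]$, so $\bv_0(E)=H^n$. The entry $\bv_1(E)=D.H^{n-1}$ holds by definition of $D$, since $\gamma_2=B_\gamma=0$ forces $\bch^\gamma_i=\bch_i$ for $i\le 2$.

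\textbf{The main identity.} The real content is
\[\bch_2(E).H^{n-2}=\tfrac12 D^2.H^{n-2},\]
where the right-hand side is Mumford's intersection number (Theorem \ref{thm-mumford-intersection}). I would reduce to a surface as follows.

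First, base change to $\overline{\kk}$ using Lemma \ref{lem:chi-property}(b) and Theorem \ref{thm-mumford-intersection}(f). Then replace $H$ by a very ample multiple; both sides are homogeneous of degree $n-2$ in $H$.

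Next, let $T$ be a general complete intersection surface cut out by $n-2$ divisors in $|H|$. Because the singularities are isolated and $n\geq 3$, $T$ misses $\mathrm{Sing}(X)$. So $T$ is a smooth projective surface contained in the regular locus $U=X\setminus \mathrm{Sing}(X)$.

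On $U$, the sheaf $E|_U$ is a rank-one reflexive sheaf on a regular scheme, hence a line bundle $\cO_U(D|_U)$. Using Lemma \ref{lem:chi-property}(c) for the lci inclusion of $T$ together with Lemma \ref{lem-chM-lci}, we get
\[\bch_2(E).H^{n-2}=\int_T \ch_2(E|_T)=\tfrac12 (D|_T)^2.\]
On the Mumford side, Theorem \ref{thm-mumford-intersection}(e) gives $D^2.H^{n-2}=(D|_T)^2$, where the right side is an ordinary intersection on the smooth surface $T$.

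\textbf{Main obstacle.} The delicate step is justifying $\bch_2(E).H^{n-2}=\int_T\ch_2(E|_T)$. One has to check that capping $\bch_2$ with $H^{n-2}$ agrees with the Gysin restriction to $T$, even though $\bch_2$ is only defined through the lci locus. I would handle this by writing $T$ as iterated Cartier divisors and applying Lemma \ref{lem:chi-property}(c) at each stage; each intermediate divisor is lci in codimension two and equidimensional. One also needs the lower-codimension interaction to be harmless, i.e.\ that $T$ avoids $\mathrm{Sing}(X)$, so that $E|_T$ is the pullback of a line bundle and the restriction is not an error term. Alternatively, Lemma \ref{lem:intersect-pairing-restrict} applied inductively handles the Mumford side. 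With the identity established, the displayed formula for $\bv_{\leq 2}(E)$ follows.

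\textbf{The consequence.} Now suppose $X$ is $\QQ$-factorial of Picard number one. Then $D\equiv tH$ numerically for some $t\in\QQ$. Hence $D.H^{n-1}=tH^n$ and $D^2.H^{n-2}=t^2H^n$, so
\[\bDelta(E)=t^2(H^n)^2-2H^n\cdot\tfrac12 t^2H^n=0.\]
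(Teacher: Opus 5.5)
Your proposal is correct and is essentially the paper's argument: after passing to $\overline{\kk}$ and making $H$ very ample, the paper restricts to a single general divisor $Y\in|H|$. This divisor misses the isolated singular points and is smooth, so $E_Y$ is a line bundle, and the paper concludes from Lemma \ref{lem:chi-property}(c) and Theorem \ref{thm-mumford-intersection}. Cutting all the way down to a complete intersection surface $T$, as you do, is the same reduction carried one step further, and your treatment of the lci-pullback compatibility and of the Picard-number-one consequence is fine.
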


\begin{proof}
We may assume that $H$ is very ample and pass to the algebraic closure of $\kk$. Then a general divisor $Y\in |H|$ is smooth and $E_Y$ is a line bundle. So the result follows from Lemma \ref{lem:chi-property}(c) and Theorem \ref{thm-mumford-intersection}.
\end{proof}

\begin{lemma}\label{lem-delta-0-rk1}
Assume that $X$ is integral, $\gamma=1$, and has the standard BG function. Let $(b,w)\in \cU$ and $E\in \cA^{b}(X)$ be a $\nu_{b,w}$-stable object with $\bDelta(E)=0$ and $\bv_0(E)\neq 0$. 

\begin{enumerate}
    \item $E$ is either a $\mu$-semistable torsion-free sheaf or $\cH^{-1}(E)$ is a $\mu$-semistable torsion-free $S_2$ sheaf and $\cH^0(E)$ is supported in codimension $\geq 3$.

    \item If $X$ is also normal with rational singularities and $\mathrm{char}(\kk)=0$, then we have $$\bv_{\leq 2}(E)=\rk(E)\left(H^n,aH^n, \frac{a^2}{2}H^n\right)$$ for some $a\in \QQ$.

    \item Assume that $\bv_{\leq 1}(E)=\pm \bv_{\leq 1}(\cO_X(aH))$ for some $a\in \ZZ$, $\chi(\oh_X)>0$, $X$ is lci or normal $\QQ$-factorial Gorenstein, $n=3$, and $\omega_X^{\vee}$ is ample and proportional to $H$. Then up to shift, $E\cong \oh_X(aH)$ or $\mathcal{I}_Z\otimes \oh_X(aH)$, where $Z\subset X$ is zero-dimensional.
\end{enumerate}
\end{lemma}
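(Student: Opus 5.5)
Since $\bDelta(E)=0\geq 0$ is minimal among tilt-semistable objects (standard BG function plus Theorem \ref{thm-bms}), Lemma \ref{lem-delta-0} applies: $E$ is $\nu_{b,w}$-stable for every $(b,w)\in\cU$. We will vary $(b,w)$ and read off the cohomology at the two sides of the vertical wall.

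\textbf{Part (a).} Take $b<\mu(E)$ with $w\gg0$. By Lemma \ref{lem-delta-0}, $E$ is then a $\mu$-stable torsion-free sheaf. If instead $\mu(E)\le b$, Lemma \ref{lem-delta-0} gives that $\cH^{-1}(E)$ is $\mu$-semistable torsion-free $S_2$. Lemma \ref{lem-large-volume} shows that $\cH^0(E)$ is supported in codimension $\geq 2$. Write $T=\cH^0(E)$ and suppose $\bv_2(T)>0$, i.e. $T$ has codimension exactly $2$. Then
\[
\bv_{\le1}(\cH^{-1}(E)[1])=\bv_{\le1}(E),\qquad \bv_2(\cH^{-1}(E)[1])=\bv_2(E)-\bv_2(T).
\]
Since $\bv_0(\cH^{-1}(E)[1])=\bv_0(E)<0$, this gives $\bDelta(\cH^{-1}(E))<\bDelta(E)=0$, contradicting the standard BG inequality for the $\mu$-semistable sheaf $\cH^{-1}(E)$. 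Hence $\bv_2(T)=0$, and $T$ has codimension $\geq3$ by Lemma \ref{lem:supp-codim-and-bv}.

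\textbf{Part (b).} Apply Theorem \ref{thm-bg-normal} (which gives $C=0$ here) together with the restriction argument of Proposition \ref{prop-bg-normal-over-k}. The Hodge index inequality from Lemma \ref{lem-hodeg-index},
\[
\bDelta(E)\ge H^n\,H^{n-2}.\Delta(E)\ge 0,
\]
must be an equality when $\bDelta(E)=0$. Equality in Hodge index forces $H^{n-2}.(\bch_1(E)-rD_0)^2=0$ with $H^{n-1}.(\bch_1(E)-rD_0)=0$, where $D_0=\mu(E)H$. So $\bch_1$ is numerically proportional to $H$ in the relevant pairing, giving $\bv_1=ar H^n$. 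Then $\bv_2=\bv_1^2/(2\bv_0)$ gives the stated form. Rationality of $a$ follows from $\bv_1,\bv_0\in\QQ$.

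\textbf{Part (c).} Twist using Lemma \ref{lem-tensor-H} to reduce to $a=0$, and shift so that $\bv_0(E)=H^3>0$ (the case $E[1]$ is handled via the other side of the vertical wall). By part (a), we may take $b<0$, so $E$ is a $\mu$-stable torsion-free sheaf of rank one with $\bv_{\le2}(E)=\bv_{\le2}(\cO_X)$. Its reflexive hull $E^H$ is a rank one $S_2$ sheaf, and $E^H/E$ is supported in codimension $\geq 3$ because $\bv_2$ matches. For rank-one $S_2$ sheaves, $E^H=\cO_X(D)$ with $D.H^2=0$ and $D^2.H=0$, using Lemma \ref{lem:delta-rk1}. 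The main obstacle is showing $D=0$: lci Gorenstein classes must be controlled by intersection numbers alone. I would compute $\chi(\cO_X(D))$ via Definition \ref{def:ch3} and Serre duality with $\omega_X^\vee$ proportional to $H$. This yields $\chi(\cO_X(D))=\chi(\cO_X)>0$, and hence $h^0(\cO_X(D))>0$ or $h^0(\cO_X(-D))>0$ by Serre duality and Kawamata–Viehweg-type vanishing for Fano threefolds. An effective divisor of degree $0$ against $H^2$ is then $0$, giving $E^H\cong\cO_X$ and $E=\cI_Z$ with $Z$ zero-dimensional.
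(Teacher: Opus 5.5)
Your parts (a) and (b) follow the paper's route. In (a) you combine Lemma~\ref{lem-delta-0} with Lemma~\ref{lem-large-volume}, and your identity $\bDelta(\cH^{-1}(E))=\bDelta(E)+2\bv_0(E)\bv_2(\cH^0(E))$ supplies a step the paper leaves implicit: the identity is negative when $\cH^0(E)$ has codimension exactly $2$, and this upgrades codimension $\geq 2$ to codimension $\geq 3$. Part (b) is the same Hodge-index argument with $C_{X,H}=0$. In fact the displayed formula already follows from $\bDelta(E)=0$ alone, taking $a=\mu(E)$.

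Part (c) has a genuine gap where you identify the hull with $\cO_X$.

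\textbf{The Euler characteristic claim is unjustified.} Riemann--Roch (Definition~\ref{def:ch3}, or Lemma~\ref{lem-chM-general}(a) in the lci case), together with $K_X\propto H$ and $\bv_2(E^H)=0$, gives only $\chi(E^H)=\bch_3(E^H)+\td_2(X).\bch_1(E^H)+\chi(\cO_X)$. Neither $\bch_3(E^H)$ nor $\td_2(X).\bch_1(E^H)$ is determined by $\bv_{\leq 2}(E^H)$ unless you already know $D\equiv 0$, which is the conclusion. What holds a priori is only the symmetric identity $\chi(E^H)+\chi(E^H,\cO_X)=2\chi(\cO_X)$, because those terms cancel by Lemma~\ref{lem:property-ch3}(b).

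\textbf{The vanishing step also fails.} Kawamata--Viehweg would need $D-K_X$ nef and big on a normal variety with klt singularities. Here $D$ is an arbitrary class with $D.H^2=0$, and no rational singularities are assumed. In the lci case $X$ need not even be normal. That also rules out writing $E^H=\cO_X(D)$ and invoking Lemma~\ref{lem:delta-rk1}, which needs normality and isolated singularities.

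\textbf{The missing idea is to get every vanishing from tilt-stability itself.} $E^H$ and $\cO_X$ both have $\bDelta=0$ and the same $\bv_{\leq 2}$, so both are $\nu_{b,w}$-stable for all $(b,w)\in\cU$ with $b<0$. Suppose $E^H\not\cong\cO_X$; then the argument runs as follows.
\begin{enumerate}
\item $\Hom(\cO_X,E^H)=\Hom(E^H,\cO_X)=0$: a nonzero map would be injective with cokernel supported in codimension $\geq 3$, hence an isomorphism since both sheaves are $S_2$.
\item Choose $\mu(\omega_X)<b<0$ with $(b,w)\in\cU$ below the line through $\Pi(\omega_X)$ and the origin. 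There $\omega_X[1]$ is also tilt-stable and $\nu_{b,w}(E^H)>\nu_{b,w}(\omega_X[1])$.
\item Serre duality then gives $\Ext^2(\cO_X,E^H)=\Hom(E^H,\omega_X[1])^*=0$, and similarly $\Ext^2(E^H,\cO_X)=\Hom(\omega_X^{\vee},E^H[1])^*=0$.
\item Hence $\chi(E^H)\leq 0$ and $\chi(E^H,\cO_X)\leq 0$, contradicting the symmetric identity since $\chi(\cO_X)>0$.
\end{enumerate}
This is the paper's argument, and it needs no divisor theory and no vanishing theorem.

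\textbf{The case $\bv_0(E)<0$ is not automatic.} After showing $\cH^{-1}(E)\cong\cO_X$ the same way, you still have to kill $\cH^0(E)$. For example, $\Ext^2(\cH^0(E),\cO_X)=0$ for a sheaf supported in codimension $\geq 3$, so the extension splits, and stability then forces $\cH^0(E)=0$.
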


\begin{proof}
Part (a) immediately follows from Lemma \ref{lem-delta-0}. For part (b), note that by part (a), Proposition \ref{prop-bg-normal-over-k}, and Lemma \ref{lem-hodeg-index}, we have
\[0\leq H^{n-2}.(\bch_1^2(E)-2\rk(E)\bch_2(E))\leq \frac{1}{H^n}\bDelta(E)=0,\]
so $\bch_1(E)$ is numerically equivalent to a rational multiple of $H$, and the result follows.

Now, we prove part (c). Since $\bv_{\leq 1}(E)=\pm \bv_{\leq 1}(\cO_X(aH))$ and $\bDelta(E)=0$, we know that $\bv_{\leq 2}(E)=\pm \bv_{\leq 2}(\cO_X(aH))$. By Lemma \ref{lem-tensor-H}, after replacing $E$ with $E\otimes \cO_X(-aH)$, we may assume that $\bv_{\leq 2}(E)=\pm \bv_{\leq 2}(\oh_X)$. If $b<0$, then $E$ is a $\mu$-stable torsion-free sheaf by Lemma \ref{lem-delta-0}. Note that $E^{\vee \vee}$ is also a $\mu$-stable torsion-free sheaf and $0\leq \bDelta(E^{\vee \vee})\leq \bDelta(E)=0$. Therefore, $\bv_{\leq 2}(E^{\vee \vee})=\bv_{\leq 2}(\cO_X)$ as well, and we may assume that $E=E^{\vee \vee}$ and $E$ is tilt-stable everywhere by Lemma \ref{lem-delta-0}. If $E\cong \cO_X$, then we are done; otherwise, $\Hom_X(E,\oh_X)=\Hom_X(\oh_X, E)=0$ by the tilt-stability and $S_2$ property of $E$. 
Since $\omega_X^{\vee}$ is ample and proportional to $H$, we see that
$$\nu_{b,w}(E)=\nu_{b,w}(\cO_X)=\frac{w}{b}>\nu_{b,w}(\omega_X[1])$$ for $0>b>\mu_H(\omega_X[1])$ and $w>\frac{b^2}{2}$ so that $(b,w)$ is below the line connecting $\Pi(\omega_X)$ and $(0,0)$. By Lemma \ref{lem-serre-duality}, this gives $$\Ext^2_X(\oh_X, E)=\Hom_X(E, \omega_X[1])^*=0.$$ Similarly, we have $\Ext^2_X(E,\oh_X)=\Hom_X(\omega_X^{\vee}, E[1])^*=0$. Therefore, if $E\neq \cO_X$, then we get $\chi(E)=\chi(\oh_X)+\bv_3(E)\leq 0$ and $\chi(E,\oh_X)=\chi(\oh_X)-\bv_3(E)\leq 0$, which contradicts $\chi(\cO_X)>0$.

If $b=0$, then $E[-1]$ is a $\mu$-stable torsion-free $S_2$ sheaf, and the above argument applies in this case to show $E\cong \cO_X[1]$.

If $b>0$, the above argument shows that $\cH^{-1}(E)\cong \cO_X$. Since $\Hom_X(\cH^0(E), \cO_X[2])=0$, we also get $E\cong \cO_X[1]$.
\end{proof}


The following result is a generalization of \cite[Corollary 4.3]{feyzbakhsh:effective-restriction} and \cite[Lemma 4.1]{xu:gepner}.

\begin{proposition}\label{prop-restriction}
Assume that $\gamma_2=0$ and $B_{\gamma}=0$. Let $E\in \Coh(X)$ so that there exists $w>0$ and $k\in \ZZ_{>0}$ with $E,E(-kH)[1]\in \cA^{0}(X)$ are $\nu_{0,w}$-semistable. Then for any effective divisor $D\in |kH|$, $E_D\in \Coh(D)$ is torsion-free and we have
\[[\mu^-_{H_D}(E_D), \mu^+_{H_D}(E_D)]\subset \frac{k}{2}+[\min\{\nu_{0,w}(E),\nu_{0,w}(E(-kH)[1])\}, \max\{\nu_{0,w}(E),\nu_{0,w}(E(-kH)[1])\}].\]
In particular, if $\nu_{0,w}(E)=\nu_{0,w}(E(-kH)[1])$, then $E_D$ is $\mu_{H_D}$-semistable.
\end{proposition}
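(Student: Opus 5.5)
Throughout we assume $\gamma_2=B_\gamma=0$, and we follow the strategy of \cite[Corollary 4.3]{feyzbakhsh:effective-restriction}. Since $D\in|kH|$ is an effective Cartier divisor, there is a short exact sequence of sheaves
\[0\to E(-kH)\to E\to E_D\to 0,\]
provided $E(-kH)\to E$ is injective. In $\Db(X)$ this gives a triangle $E\to E_D\to E(-kH)[1]$. Both $E$ and $E(-kH)[1]$ lie in $\cA^0(X)$, so $E_D$, as an extension of $E(-kH)[1]$ by $E$, lies in $\cA^0(X)$ as well.

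\textbf{Torsion-freeness of $E_D$.} Since $E(-kH)[1]\in\cA^0(X)$, the sheaf $E(-kH)$ is torsion-free, and $E$ is torsion-free too. Hence the map $E(-kH)\to E$ given by a section of $\cO_X(kH)$ is injective, and $E_D$ really is the cokernel. To see that $E_D$ has no subsheaf $T$ supported in codimension $\geq 1$ inside $D$, I use a subsheaf $T\subset E_D$ of dimension $<\dim D$. Such a $T$ is supported in codimension $\geq 2$ on $X$, so $Z(T)=0$ and $\nu_{0,w}(T)=+\infty$. Pulling back along $E\to E_D$ gives an object $G\subset E_D$ in $\cA^0(X)$, an extension of $T$ by $E$; the resulting quotient $E_D/T$ has $\bv_{\le2}$ shifted only in $\bv_2$. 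I then argue as in Lemma \ref{lem-H-1-S2}. The class $\Hom(T,E(-kH)[1])$ classifies extensions, and it vanishes because $E(-kH)[1]$ is $\nu_{0,w}$-semistable of finite slope: $\nu_{0,w}(E(-kH)[1])<+\infty$ since $\Im Z\neq0$ (by $-\mu(E(-kH))>0$), and Lemma \ref{lem-H-1-S2} makes $E(-kH)$ $S_2$. Combining this with $\Hom(T,E)=0$ (as $E$ is torsion-free), the long exact sequence for $\Hom(T,-)$ gives $\Hom(T,E_D)=0$. So $E_D$ has no lower-dimensional subsheaf, i.e.\ it is torsion-free on $D$ (pure of dimension $n-1$).

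\textbf{Slope bounds.} Take the maximal destabilizing subsheaf $F\subset E_D$ for $\mu_{H_D}$; it is torsion-free on $D$. Its pushforward $i_*F$ is a torsion sheaf on $X$ of codimension one, so $i_*F\in\cT^0\subset\cA^0(X)$. Since $E$ is torsion-free with $\bv_0(E)>0$, the morphism $i_*F\to E_D$ is injective in $\cA^0(X)$: its kernel would be a subsheaf of $i_*F$ lying in $\cF^0[1]$, hence zero. Now $E_D$ has a filtration in $\cA^0(X)$ with semistable factors $E$ and $E(-kH)[1]$, so
\[\nu_{0,w}(i_*F)\le\max\{\nu_{0,w}(E),\nu_{0,w}(E(-kH)[1])\}.\]
For a codimension-one sheaf, with $\bv_0=0$, the formula gives
\[\nu_{0,w}(i_*F)=\frac{\bv_2(i_*F)}{\bv_1(i_*F)}.\]
Computing $\bch(i_*F)$ with Lemma \ref{lem:chi-property}(a) applied to $i\colon D\to X$, which is lci with $\td_1$ of the normal bundle equal to $kH/2$ on the relevant locus, yields
\[\nu_{0,w}(i_*F)=\mu_{H_D}(F)-\tfrac{k}{2}.\]
For this step, $D$ is lci in codimension $d-1\ge1$, which is enough for $\bch_1$; the $\bch_2$ on $X$ only needs $\bch_1$ on $D$. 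This gives the upper bound $\mu^+_{H_D}(E_D)\le \frac{k}{2}+\max\{\cdot\}$. Dually, the minimal destabilizing quotient $E_D\twoheadrightarrow Q$ pushes forward to a quotient in $\cA^0(X)$, because the kernel lies in the extension-closed torsion part. This gives the lower bound $\mu^-_{H_D}(E_D)\ge\frac{k}{2}+\min\{\cdot\}$, since a quotient of an object with semistable filtration factors has slope at least the minimal factor slope.

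\textbf{Main obstacle.} I expect the hardest step to be the Chern character computation for $i_*F$, specifically obtaining exactly the shift $k/2$ in the singular setting. Here $\bch_2$ on $X$ is defined via the LCI locus. I would restrict to $\mathrm{LCI}(X)$, where $D\cap\mathrm{LCI}(X)$ is lci, and use Grothendieck--Riemann--Roch as in Lemma \ref{lem:chi-property}(a) together with the codimension bounds to identify the cycles. A secondary delicacy is the torsion-freeness argument above, which needs the $S_2$ property of $E(-kH)$ from Lemma \ref{lem-H-1-S2}.
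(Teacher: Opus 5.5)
Your proposal is correct and is essentially the paper's proof. It uses the same short exact sequence $0\to E\to i_*E_D\to E(-kH)[1]\to 0$ in $\cA^0(X)$, the same computation $\nu_{0,w}(i_*F)=\mu_{H_D}(F)-\frac{k}{2}$ from Lemma \ref{lem:chi-property}(a), and gets torsion-freeness from Lemma \ref{lem-H-1-S2}, which you simply unpack into $\Hom_X(T,E(-kH)[1])=0$ rather than quoting that $E$ is $S_2$.

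One caveat, which the paper shares when it invokes Lemma \ref{lem-H-1-S2}: your claim $\mu(E(-kH))<0$ does not follow from the hypotheses. If $E$ is $\mu$-semistable of slope exactly $k$, then $E(-kH)[1]$ has $\nu_{0,w}=+\infty$ and is automatically $\nu_{0,w}$-semistable. For example, take $E=\cI_p(kH)$ on a smooth threefold with $w\gg0$ and $p\in D$; then $E_D\supset\cO_p$ is not torsion-free. So one needs the extra assumption $\nu_{0,w}(E(-kH)[1])<+\infty$. This is automatic in the ``in particular'' case, because $\nu_{0,w}(E)<+\infty$.
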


\begin{proof}
Note that by the $\nu_{0,w}$-semistability of $E(-kH)[1]$ and Lemma \ref{lem-H-1-S2}, $E$ is torsion-free and $S_2$, hence $E_D\in \Coh(D)$ is torsion-free. Since $X$ is lci in codimension $2$, we know that $D$ is lci in codimension $1$. Therefore, if we denote by $i\colon D\hookrightarrow X$ the inclusion, then for any $F\in \Db(D)$, we have
\[\bch_1(i_*F)=i_*(\bch_0(F))\]
and
\[\bch_2(i_*F)=i_*\left(\bch_1(F)-\frac{1}{2}kH_D.\bch_0(F)\right)\]
by Lemma \ref{lem:chi-property}(a). It follows that 
\[\nu_{0,w}(i_*F)=\frac{\bv_2(i_*F)}{\bv_1(i_*F)}=\frac{\bv_1(F)-\frac{1}{2}k\bv_0(F)}{\bv_0(F)}=\mu_{H_D}(F)-\frac{1}{2}k.\]
Now the result follows from the exact sequence
\[0\to E\to i_*E_D\to E(-kH)[1]\to 0\]
in $\cA^0(X)$ and $\nu_{0,w}$-semistability of $E$ and $E(-kH)[1]$.
\end{proof}

In particular, using Proposition \ref{prop-restriction}, the same proof of \cite[Lemma 3.3]{koseki:bg-hypersurface} gives the following.

\begin{lemma}\label{lem-naoki}
Assume that $\gamma_2=0$ and $B_{\gamma}=0$. Let $k\geq 1$ be an integer and $f(t)\colon [0,1]\to \RR$ be a star-shaped function in the sense of \cite[Definition 3.2]{koseki:bg-hypersurface} along the lines $b=0,k$ with $f(0)=0$, $f(1)=\frac{1}{2}$, and 
\[t^2-\frac{k}{2}t\leq f(t)\leq \frac{t^2}{2}.\]
Assume that there exists an object $E\in \Db(X)$ such that $E$ is either $\nu_{0,w}$-semistable for some $w>0$ or $\nu_{k,w'}$-semistable for some $w'>\frac{k^2}{2}$, $\mu_H(E)\in [0,1]$, and
\[f(\mu_H(E))\leq \frac{\bv_2(E)}{\bv_0(E)}.\]
Then we can choose such an object $E$ to further satisfy that $E_D$ is $\mu_{H_D}$-semistable for any $D\in |kH|$.
\end{lemma}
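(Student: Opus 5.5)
We follow the strategy of Koseki's proof of \cite[Lemma 3.3]{koseki:bg-hypersurface}: among all objects satisfying the hypotheses we pick an extremal one, and show that extremality forces it to be semistable at both points $(0,w)$ and $(k,w')$ with matching slopes. Proposition \ref{prop-restriction} then gives semistability of the restriction to $D$.

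\textbf{Step 1: choose an extremal object.} Consider the set of objects $E$ that are $\nu_{0,w}$-semistable for some $w>0$ or $\nu_{k,w'}$-semistable for some $w'>\frac{k^2}{2}$, satisfy $\mu_H(E)\in[0,1]$, and satisfy $f(\mu_H(E))\leq \bv_2(E)/\bv_0(E)$. It is nonempty by assumption. Using Lemma \ref{lem-tensor-H} and a shift, and since $\bv$ takes values in a lattice, we may choose $E$ in this set minimizing $\bv_0(E)=\rk(E)H^n>0$. Among those of minimal rank we may further normalize the position of $\Pi(E)$, for instance by maximizing $\bv_2/\bv_0$ with $\mu_H$ fixed. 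This quantity is bounded by $\frac{t^2}{2}$ via Theorem \ref{thm-bms}, since $\mathsf{g}=\frac{x^2}{2}$ is a BG function for the relevant range. The hypothesis $f\le t^2/2$ is used exactly here.

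\textbf{Step 2: move the stability parameter along the line through $\Pi(E)$.} Suppose $E$ is $\nu_{0,w}$-semistable. Consider the line through $\Pi(E)$ and $(0,w)$, and push $(b,w)$ along it toward $b=k$. Theorem \ref{thm:wall-chamber-tilt} keeps the objects well behaved along this path. If $E$ stays semistable until the line meets $b=k$, then at the two intersection points we have $\nu_{0,w}(E)=\nu_{k,w'}(E)$. Since $\nu_{k,w'}(E)=\nu_{0,w''}(E(-kH))$ up to the twist in Lemma \ref{lem-tensor-H}, and because the rotated object $E(-kH)[1]$ lies in $\cA^0$ once $k>\mu_H(E)$, this gives exactly the hypothesis of Proposition \ref{prop-restriction} with $\nu_{0,w}(E)=\nu_{0,w}(E(-kH)[1])$. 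Hence $E_D$ is $\mu_{H_D}$-semistable.

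\textbf{Step 3: rule out a wall.} If $E$ is destabilized at a wall $\ell$ on the segment, take a destabilizing subobject or quotient $F$, i.e.\ a JH or HN factor on $\ell$. Its $\Pi(F)$ lies on the same line $\ell$ and on the far side of the BG curve, by the last statement of Theorem \ref{thm:wall-chamber-tilt}. We then argue, following Koseki, that some factor $F$ still satisfies the hypotheses. Its rank is smaller, and after twisting by $\cO(mH)$ its slope lies in $[0,1]$. The point $\Pi(F)$ lies above the graph of $f$ because of the star-shaped property of $f$ along $b=0$ and $b=k$: a line through a point above the graph and through an interior point of $U$ stays above the graph on the relevant segment, and $f(0)=0$, $f(1)=\frac12$ handle the endpoint periodicity under twists. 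This contradicts minimality.

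\textbf{Main obstacle.} The delicate part is Step 3. One has to verify that the factor $F$ lands, after twisting, in the region above $f$ with slope in $[0,1]$. This requires the careful geometric use of star-shapedness and of the lower bound $t^2-\frac{k}{2}t\le f(t)$, which ensures the relevant points lie inside $U$ at $b=0,k$. One must also check that no rank-zero factor can occur, which follows from the wall being non-vertical in this range. We expect Koseki's planar argument to transfer verbatim, since it uses only Theorem \ref{thm:wall-chamber-tilt}, Lemma \ref{lem-tensor-H} and the BG function $x^2/2$.
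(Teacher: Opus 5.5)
Your Step 2, which carries the whole argument, fails in two independent ways.

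\textbf{The path does not exist.} $\Pi(E)$ has $b$-coordinate $\mu_H(E)\in[0,1]$ and lies outside $U_{\mathsf{g}}$ (last part of Theorem \ref{thm:wall-chamber-tilt}). So the wall through $(0,w)$ and $\Pi(E)$ leaves the region of stability parameters before $b$ reaches $\mu_H(E)\le 1\le k$.

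\textbf{Even two points on one wall would not help.} Suppose $E$ were semistable at two points $(0,w)$ and $(k,w')$ of the same wall. By Lemma \ref{lem-tensor-H}, $E(-kH)[1]$ is then $\nu_{0,w'-\frac{k^2}{2}}$-semistable with
\[\nu_{0,w'-\frac{k^2}{2}}(E(-kH)[1])=\nu_{k,w'}(E)-k=\nu_{0,w}(E)-k.\]
So the slopes never agree, and the two values of $w$ agree only if the wall has slope $\frac{k}{2}$. This is not the hypothesis of Proposition \ref{prop-restriction}.

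What is really needed is that $E$ and $E(-kH)[1]$ be semistable at one point $(0,w_0)$ with equal slopes. Solving this forces
\[w_0=\frac{\bv_2(E)}{\bv_0(E)}-\mu^2+\frac{k}{2}\mu,\qquad \mu\coloneqq\mu_H(E),\]
i.e.\ $(0,w_0)$ is where the line through $\Pi(E)$ and $\Pi(E(-kH))$ meets $b=0$. For $E$ itself this means semistability at $(0,w_0)$ and, after a shift, at $(k,w_0+\frac{k^2}{2})$. These are two points on two \emph{different} walls of $E$, of slopes $\mu-\frac{k}{2}$ and $\mu+\frac{k}{2}$, sitting on the vertical lines $b=0$ and $b=k$. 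This is why $f$ is required to be star-shaped along those vertical lines: it rules out walls for an extremal object while moving vertically on $b=0$ and on $b=k$. The lower bound $f(t)\ge t^2-\frac{k}{2}t$ is exactly the condition $w_0\ge 0$.

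The extremality bookkeeping in Steps 1 and 3 also needs to be redone.
\begin{itemize}
\item \textbf{Rank is the wrong invariant.} It is not monotone under destabilization: in $\cA^b$ a destabilizing subobject can have larger $\bv_0$ than $E$ when the quotient is of the form $G[1]$. So ``its rank is smaller'' is unjustified. The natural invariant is $\bDelta$, which is unchanged by twists and shifts and drops strictly for JH factors at an actual wall (Lemma \ref{lem-delta-JH}).
\item \textbf{Twisting breaks the hypothesis.} Twisting a factor by $\cO(mH)$ to move its slope into $[0,1]$ moves its semistability point from $b=0$ to $b=m$ (Lemma \ref{lem-tensor-H}). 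In general the twisted factor then no longer satisfies the hypotheses of the lemma. You need instead a factor whose slope is already in $[0,1]$. For $\bv_0(E)>0$, some positive-rank JH factor has $\Pi$ on the segment of the wall between the point where it leaves $U_{\mathsf{g}}$ and $\Pi(E)$, and star-shapedness is what keeps this segment in the region $\{w\ge f(b)\}$.
\item \textbf{Missing steps.} The hypothesis gives semistability on only one of the two vertical lines, so you must also transfer semistability to the other one, e.g.\ via the large volume limit (Lemmas \ref{lem-large-volume} and \ref{lem-large-volume-converse}). In addition, the final object must be $S_2$, since $E(-kH)[1]$ has to be tilt-semistable (cf.\ Lemma \ref{lem-H-1-S2}). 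Your proposal addresses neither point.
\end{itemize}
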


\subsection{Tilt-stability in families}

Next, we want to prove a relative version of Theorem \ref{thm-bms}. We begin with the study of tilt-stability under base change.

\begin{lemma}\label{lem:tilt-base-change}
Let $(X, H, \gamma)$ be a triple as in Setup \ref{setup-gamma} with $n\geq 2$ and either $d\geq 2$ or $X_{\overline{\kk}}$ is a normal projective surface. Fix a field extension $\kk\subset \mathsf{k}_1$ and let $\pi\colon X_{\kk_1}\to X$ be the base change morphism. Let $(b,w)\in U^{\gamma}_{X, H}$.

\begin{enumerate}
    \item We have $(\sigma^{b,w})_{\kk_1}=\sigma^{b,w}_{\kk_1}=(\cA^b(X_{\kk_1}), Z^{b,w})$.

    \item If $E_{\kk_1}\in \cA^b(X_{\kk_1})$ is $\nu_{b,w}$-semistable, then $E\in \cA^b(X)$ is $\nu_{b,w}$-semistable.

    \item If $\kk_1$ is algebraic over $\kk$, then $E$ is $\nu_{b,w}$-semistable if and only if $E_{\kk_1}$ is $\nu_{b,w}$-semistable.
\end{enumerate}

\end{lemma}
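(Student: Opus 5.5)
The plan is to treat (a) as the analogue of Lemma \ref{lem:rotate-base-change}(c) and to deduce (b) and (c) from (a) together with standard descent arguments.

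For (a), we would start from Lemma \ref{lem:rotate-base-change}(c), which says that $(\cA^b(X))_{\kk_1}=\cA^b(X_{\kk_1})$. Lemma \ref{lem-constant-chM} shows that the central charge is unchanged under the extension, i.e.\ $Z^{b,w}\circ \eta^\vee_{\kk_1/\kk}=Z^{b,w}$ on $\Lambda_{\leq 2}$. The remark after Definition \ref{assum-bg} gives $\Phi^{\gamma}_{X,H}=\Phi^{\gamma_{\kk_1}}_{X_{\kk_1},H_{\kk_1}}$, so $(b,w)$ also lies in $U^{\gamma_{\kk_1}}_{X_{\kk_1},H_{\kk_1}}$. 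Theorem \ref{thm-bms} then tells us that $\sigma^{b,w}_{\kk_1}$ is a weak stability condition. It remains to check that the base change of $\sigma^{b,w}$ in the sense of Lemma \ref{lem:base-change-t-structure} has heart $\cA^b(X_{\kk_1})$ and charge $Z^{b,w}$. This is formal: the heart of $\sigma^{b,w}$ is $\cA^b(X)$, which is obtained by tilting the Noetherian heart $\Coh(X)$, so Lemma \ref{lem:base-change-t-structure} applies.

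For (b), suppose $F\hookrightarrow E$ destabilizes $E$ in $\cA^b(X)$. By the exactness of $L\pi^*$ on $\cA^b$ from Lemma \ref{lem:rotate-base-change}(a), the map $F_{\kk_1}\hookrightarrow E_{\kk_1}$ is still an injection in $\cA^b(X_{\kk_1})$. Its slope $\nu_{b,w}$ is unchanged because $Z^{b,w}$ is invariant, so $F_{\kk_1}$ destabilizes $E_{\kk_1}$. This gives (b) by contraposition.

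For (c), only the forward direction needs proof, and this is the main obstacle. We would first reduce to a finite extension, since any destabilizing subobject over $\kk_1$ is defined over a finitely generated, hence finite, subextension. Next we pass to the normal closure, which allows $\kk_1/\kk$ to be finite normal. Write $\kk_1/\kk$ as a purely inseparable extension followed by a Galois one. In the Galois case, we take the maximal destabilizing subobject $F_1$ of $E_{\kk_1}$, namely the first HN factor with respect to $\sigma^{b,w}_{\kk_1}$, which is unique. Uniqueness makes $F_1$ Galois-invariant, and Galois descent for the heart (via $R\pi_*$ being t-exact and $\pi_*\pi^*$) produces $F\subset E$ with $F_{\kk_1}=F_1$, contradicting the semistability of $E$.

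In the purely inseparable case, Galois descent is unavailable. Here we would instead use that $X_{\kk_1}\to X$ is a universal homeomorphism. One route is to argue with the HN filtration directly: $R\pi_*$ is t-exact by Lemma \ref{lem:base-change-t-structure}. The subobject $F_1\subset E_{\kk_1}$ pushes forward to $\pi_*F_1\subset \pi_*\pi^*E\cong E^{\oplus[\kk_1:\kk]}$ in $\cA^b(X)$, with $Z^{b,w}(\pi_*F_1)=[\kk_1:\kk]\,Z^{b,w}(F_1)$, so it has the same slope as $F_1$. Since $E^{\oplus m}$ is semistable exactly when $E$ is, this gives a contradiction. This pushforward argument in fact handles all finite extensions uniformly, so we would use it as the main line and keep Galois descent only as a backup. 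The point requiring care is that $\pi_*$ is exact on hearts and multiplies $Z^{b,w}$ by the degree; both follow from Lemma \ref{lem:base-change-t-structure} and Lemma \ref{lem-constant-chM}.
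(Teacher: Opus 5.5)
Your arguments for (a) and (b) are the paper's. Part (a) is Lemma \ref{lem:rotate-base-change}(c) together with invariance of the central charge; in the normal-surface case this also needs Lemma \ref{lem:constant-generalization}, since $\bv_2$ then comes from Definition \ref{def:ch2-surface}. Part (b) is exactness of $L\pi^*$ on $\cA^b$.

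For (c) your route is genuinely different, and it is correct. The paper's route:
\begin{itemize}
\item it handles $\nu_{b,w}(E)=+\infty$ separately via Lemma \ref{lem:rotate-base-change}(b), and uses JH filtrations to reduce to $E$ stable;
\item it uses Theorem \ref{thm:wall-chamber-abstract} to get stability on a ball around $(b,w)$;
\item it applies \cite[Proposition 14.20]{BLMNPS21} at the rational points of that ball, since that result is only available for rational central charges;
\item it returns to the possibly irrational point $(b,w)$ by local finiteness of walls and closedness of semistability.
\end{itemize}
Your pushforward argument descends a destabilizing sequence to a finite subextension, pushes it forward along the finite flat $\pi$, and uses $\pi_*\pi^*E\cong E^{\oplus[\kk_1:\kk]}$. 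It is the tilt analogue of the classical proof of \cite[Theorem 1.3.7]{HL10}. It works directly at real $(b,w)$ and treats all phases and inseparable extensions uniformly. It needs neither rationality, JH filtrations, nor the wall--chamber structure. The paper's route instead gains brevity by reusing a result already available in \cite{BLMNPS21}.

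Two steps need a sentence more than you give them.

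First, the scaling $Z^{b,w}(\pi_*F)=[\kk_1:\kk]\,Z^{b,w}(F)$ does not follow from Lemma \ref{lem-constant-chM}, which concerns pullbacks. It follows from (a). Adjunction gives $\chi_{\kk}(P,\pi_*F)=[\kk_1:\kk]\,\chi_{\kk_1}(\pi^*P,F)$ for perfect $P$. Hence $[\pi_*F]=[\kk_1:\kk]\,\eta^{\vee}_{\kk_1/\kk}(F)$ in $\Knum(X)_{\QQ}$, and $Z^{b,w}\circ\eta^{\vee}_{\kk_1/\kk}$ is the central charge of $\sigma^{b,w}_{\kk_1}$.

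Second, in the reduction to a finite subextension $\kk'$, descend the whole triangle $F_1\to E_{\kk_1}\to Q_1$ (cf.\ \cite[Proposition 5.9(3)]{BLMNPS21}). Then use Lemma \ref{lem:rotate-base-change}(a) for $\kk'\subset\kk_1$ to see that the descended terms lie in $\cA^b(X_{\kk'})$. They therefore form a short exact sequence there with the same central charges.

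With these two points in place, the Galois-descent detour can be dropped.
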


\begin{proof}
Part (a) follows from Lemma \ref{lem:rotate-base-change}(c), Lemma \ref{lem-constant-chM}, and \ref{lem:constant-generalization}. If $E_{\kk_1}$ is $\nu_{b,w}$-semistable, then by Lemma \ref{lem:rotate-base-change}(a), any subobject or quotient object $F$ of $E$ in $\cA^b(X)$ gives a subobject or quotient object $F_{\kk_1}$ of $E_{\kk_1}$ in $\cA^b(X_{\kk_1})$. So the $\nu_{b,w}$-semistability of $E$ follows.

Conversely, we assume that $\kk_1$ is algebraic over $\kk$. If $\nu_{b,w}(E)=+\infty$, the result follows from Lemma \ref{lem:rotate-base-change}(b). If $\nu_{b,w}(E)<+\infty$, from the existence of JH filtrations (cf.~Lemma \ref{lem:supp-property-finite-length}), we may assume that $E$ is $\nu_{b,w}$-stable. Then by Theorem \ref{thm:wall-chamber-abstract}, we can find an open ball $B(b,w)\subset U^{\gamma}_{X, H}$ containing $(b,w)$ so that for any $(b',w')\in B(b,w)\cap \QQ^2$, $E$ is $\nu_{b',w'}$-stable. Since $\kk_1$ is algebraic over $\kk$, by \cite[Proposition 14.20]{BLMNPS21}, we know that $E_{\kk_1}$ is $\nu_{b',w'}$-semistable for any $(b',w')\in B(b,w)\cap \QQ^2$. By the locally-finiteness of walls and density of rational numbers, we see that $E_{\kk_1}$ is $\nu_{b,w}$-semistable.
\end{proof}

Therefore, according to Lemma \ref{lem:tilt-base-change}(c) and Definition \ref{def:geo-stable}, we say $E$ is \emph{geometrically $\nu_{b,w}$-stable} if $E_{\overline{\kk}}\in \cA^b(X_{\overline{\kk}})$ is $\nu_{b,w}$-stable.

\begin{remark}
As a byproduct of Lemma \ref{lem:tilt-base-change}(a) and the condition \ref{wc2} proved in Theorem \ref{thm:tilt-HN-structure}, we know that the $\nu_{b,w}$-semistability and the geometric $\nu_{b,w}$-stability are preserved under any field extension of $\kk$ by the same argument in \cite[Proposition 14.20]{BLMNPS21}. We will not use this fact in our paper, so we omit the details.
\end{remark}

To verify the tilting property for tilt-stability, we need the following generalizations of \cite[Lemma 2.19]{bayer2017stability}.

\begin{lemma}\label{lem-blms-2.19}
Let $X$ be an equidimensional Noetherian scheme that admits a dualizing complex. Let $E\in \Db(X)$ such that $\cH^{i}(E)=0$ for $i\notin\{-1,0\}$, $\cH^{-1}(E)$ is zero or torsion-free, and $\Hom_X(K, E)=0$ for any sheaf $K$ on $X$ supported in codimension at least $2$. Then there is an exact triangle 
    \[E\to E^{\sharp \sharp}\to T,\]
    where $T$ is a torsion sheaf supported in codimension at least $3$ and $\cH^{-1}(E)\cong \cH^{-1}(E^{\sharp \sharp})$ is torsion-free $S_2$ if it is nonzero. If $X$ is also Cohen--Macaulay and has the resolution property, then $E^{\sharp \sharp}$ is quasi-isomorphic to a two-term complex $C^{-1}\to C^0$ where $C^{-1}$ is the tensor product of a locally free sheaf with $\omega_X$ (hence Cohen--Macaulay) and $C^0$ is torsion-free $S_2$.
\end{lemma}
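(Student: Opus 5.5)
The plan is to build $E^{\sharp\sharp}$ by dualizing twice, as in \cite[Lemma 2.19]{bayer2017stability}, and to control each dualization with Lemma \ref{lem-6.3}. First apply Lemma \ref{lem-6.3}(b) to $E$. The hypothesis $\Hom_X(K,E)=0$ for every sheaf $K$ supported in codimension $\geq 2$ is exactly what part (b) requires. So for the canonical triangle
\[E^{\sharp}\to \DD_X^{1-\dim X}(E)\to Q,\]
each $\cH^j(Q)$ with $j\geq 1$ is supported in codimension $\geq j+2$. Moreover, $E^{\sharp}$ lies in degrees $\leq 0$. By the computation in the proof of Lemma \ref{lem-6.3}(c), $\cH^{-1}(E^{\sharp})$ is the $\mathsf{d}$-dual of the torsion sheaf $\cH^0(E)$, hence zero or torsion-free by Lemma \ref{lem-S2-hull}(a). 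Likewise, $\cH^0(E^{\sharp})$ is related to $\cH^{-1}(E)^{\mathsf{d}}$ up to a subsheaf supported in codimension $\geq 2$ coming from $\cE xt$ of $\cH^0(E)$ (Lemma \ref{lem-supp-ext}). The first thing to check is that $E^{\sharp}$ is again a two-term complex satisfying the hypotheses of Lemma \ref{lem-6.3}(a). Then we set
\[E^{\sharp\sharp}\coloneqq \tau_{\leq 0}\DD_X^{1-\dim X}(E^{\sharp}).\]

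Next, dualize the triangle above. Since $\DD_X^{1-\dim X}$ is an involutive anti-equivalence, we get
\[\DD_X^{1-\dim X}(Q)\to E\to \DD_X^{1-\dim X}(E^{\sharp}).\]
Because $\cH^j(Q)$ is supported in codimension $\geq j+2$, the spectral sequence argument from the proof of Lemma \ref{lem-6.3}(b), together with Lemma \ref{lem-supp-ext}, shows that $\DD_X^{1-\dim X}(Q)$ lies in $\cD^{\geq 1}$ with cohomology supported in codimension $\geq 3$. Concretely, $\cE xt^{p}(\cH^j(Q),\omega^\bullet_X[1-\dim X])$ vanishes for $p-j$ small and is supported in codimension $\geq p+1\geq 3$. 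Taking $\tau_{\leq 0}$ then gives a map $E\to E^{\sharp\sharp}$ whose cone $T$ is concentrated in degree $0$, namely $\cH^1\DD_X^{1-\dim X}(Q)$, supported in codimension $\geq 3$. We also need $\tau_{\geq 1}\DD_X^{1-\dim X}(E^{\sharp})$ to be absorbed into this, i.e. the truncation must commute with the triangle. That holds because $E\in \cD^{\leq 0}$.

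Passing to cohomology of $E\to E^{\sharp\sharp}\to T$ with $T$ a sheaf gives $0\to \cH^{-1}(E)\to \cH^{-1}(E^{\sharp\sharp})\to 0$, so the two $\cH^{-1}$ agree. To see that this common sheaf is $S_2$, apply Lemma \ref{lem-S2-hull}(d): for $K$ supported in codimension $\geq 2$, we have $\Hom(K,\cH^{-1}(E)[1])\subset \Hom(K,E)=0$, using that $\cH^0(E)$ is torsion so $\Hom(K,\cH^0(E)[-1])=0$.

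For the Cohen--Macaulay case, $\omega^\bullet_X=\omega_X[\dim X]$. Using the resolution property, represent $E^{\sharp}$, which lies in degrees $[-1,0]$, by a complex of locally free sheaves, and truncate it stupidly to $[V^{-1}\to V^0]$ with $V^{-1}$ replaced by a syzygy. Dualizing with $\cH om(-,\omega_X)$ gives the complex for $E^{\sharp\sharp}$: its degree $-1$ term is $(V^0)^{\vee}\otimes\omega_X$, which is Cohen--Macaulay, and its degree $0$ term is $\cH om(\text{syzygy},\omega_X)$, which is torsion-free $S_2$ by Lemma \ref{lem-S2-hull}(a). The main obstacle I expect is the bookkeeping in the second step: showing precisely that $\tau_{\geq 1}$ of the double dual vanishes in the right range and that the cone is a single sheaf of codimension $\geq 3$. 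This requires careful use of the spectral sequence and the codimension bounds from Lemma \ref{lem-supp-ext} and Lemma \ref{lem-codim-Sp}.
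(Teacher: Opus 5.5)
Your argument is correct, and its skeleton is the paper's. You obtain $E^{\sharp}$ and $Q$ from Lemma~\ref{lem-6.3}(b), note that $\DD_X^{1-\dim X}(Q)\in\cD^{\geq 1}$ is supported in codimension $\geq 3$, lift $E\to\DD_X^{1-\dim X}(E^{\sharp})$ through $\tau_{\leq 0}$ using $E\in\cD^{\leq 0}$, and read off the cone. Three local steps differ from the paper, plus one false premise that turns out to be harmless.

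\emph{Codimension of $Q'$.} The paper also proves $\Hom_X(K,E^{\sharp})=0$ and applies Lemma~\ref{lem-6.3} to $E^{\sharp}$, so that $Q'=\tau_{\geq 1}\DD_X^{1-\dim X}(E^{\sharp})$ lives in codimension $\geq 3$. You skip this, and that is legitimate. The octahedral triangle $Q'[-1]\to T\to \DD_X^{1-\dim X}(Q)[1]$, together with $Q'\in\cD^{\geq 1}$ and $T\in\cD^{\leq 0}$, already forces $T\cong \cH^1(\DD_X^{1-\dim X}(Q))$. In particular, your ``first thing to check'' about $E^{\sharp}$ is never actually used.

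\emph{The $S_2$ property.} You get $S_2$ directly from the hypothesis via Lemma~\ref{lem-S2-hull}(d), as in Lemma~\ref{lem-H-1-S2}. The paper instead computes $\cH^{-1}(E^{\sharp\sharp})\cong((\cH^{-1}(E))^{\mathsf d})^{\mathsf d}$. Your route is shorter; the paper's identifies the sheaf explicitly.

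\emph{The Cohen--Macaulay part.} Your degree-$0$ term $\cH om(\cok(d^{-2}),\omega_X)$ equals the paper's $\ker((d^{-2})^{\mathsf d})$ by left exactness of $\cH om(-,\omega_X)$. This is also the justification you omit for why the underived dual of $[\cok(d^{-2})\to G^0]$ computes $\tau_{\leq 0}$: the higher $\cE xt^i(\cok(d^{-2}),\omega_X)$ sit in degrees $\geq 1$. Once that is said, Lemma~\ref{lem-S2-hull}(a) replaces \cite[Lemma 9]{kollar:duality}.

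\emph{Correction.} $\cH^0(E)$ is not assumed torsion. It is not torsion in the application in Theorem~\ref{thm:tilt-HN-structure}, where it lies in $\cT^b$. Nothing in your argument depends on it, though:
\begin{itemize}
\item $\cH^{-1}(E^{\sharp})\cong(\cH^0(E))^{\mathsf d}$ is zero or torsion-free for any coherent $\cH^0(E)$, by Lemma~\ref{lem-S2-hull}(a).
\item $\Hom_X(K,\cH^0(E)[-1])=0$ simply because negative Ext groups between sheaves vanish.
\item The $\cE xt^1$-subsheaf of $\cH^0(E^{\sharp})$ has in general only codimension $\geq 1$, not $\geq 2$, but you never use that claim.
\end{itemize}
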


\begin{proof}
In the proof, we denote by $(\cD^{\leq 0}, \cD^{\geq 0})$ the standard t-structure on $\Db(X)$, and $\tau_{\leq i}, \tau_{\geq i}$ by the truncation functors. Then we set $E^{\sharp}\coloneqq\tau_{\leq 0}\DD_X^{1-\dim X}(E)$ and $Q\coloneqq\tau_{\geq 1}\DD_X^{1-\dim X}(E)$ as in Lemma \ref{lem-6.3}. Similarly, we set $E^{\sharp\sharp}\coloneqq\tau_{\leq 0}\DD_X^{1-\dim X}(E^{\sharp})$ and $Q'\coloneqq\tau_{\geq 1}\DD_X^{1-\dim X}(E^{\sharp})$.

Applying $\DD_X^{1-\dim X}(-)$ to $\cH^{-1}(E)[1]\to E\to \cH^0(E)$, we get isomorphisms
\begin{equation}\label{eq:Esharp-H-1}
    (\cH^0(E))^{\mathsf{d}}\cong \cH^{-1}(\DD_X^{1-\dim X}(\cH^0(E)))\cong \cH^{-1}(\DD_X^{1-\dim X}(E))= \cH^{-1}(E^{\sharp})
\end{equation}
and a long exact sequence
\begin{equation}\label{eq-long-exact}
    0\to \cE xt^1_X(\cH^0(E), \omega^{\bullet}_X[-\dim X])\to \cH^0(E^{\sharp})\to (\cH^{-1}(E))^{\mathsf{d}}\xra{\delta} \cE xt^2_X(\cH^0(E), \omega^{\bullet}_X[-\dim X]).
\end{equation}
In particular, $\cH^{-1}(E^{\sharp})$ is zero or torsion-free by Lemma \ref{lem-general-S2}. Similarly, applying $\DD_X^{1-\dim X}(-)$ to $$\cH^{-1}(E^{\sharp})[1]\to E^{\sharp}\to \cH^0(E^{\sharp}),$$ we get isomorphisms
\begin{equation*}
    (\cH^0(E^{\sharp}))^{\mathsf{d}}\cong \cH^{-1}(\DD_X^{1-\dim X}(\cH^0(E^{\sharp})))\cong \cH^{-1}(\DD_X^{1-\dim X}(E^{\sharp}))= \cH^{-1}(E^{\sharp\sharp})
\end{equation*}
and a long exact sequence
\[0\to \cE xt^1_X(\cH^0(E^{\sharp}), \omega^{\bullet}_X[-\dim X])\to \cH^0(E^{\sharp\sharp})\to (\cH^{-1}(E^{\sharp}))^{\mathsf{d}}\xra{\delta'} \cE xt^2_X(\cH^0(E^{\sharp}), \omega^{\bullet}_X[-\dim X]).\]
From Lemma \ref{lem-supp-ext}, we know that $$\codim_X(\cE xt^1_X(\cH^0(E), \omega^{\bullet}_X[-\dim X]))\geq 1$$ and $$\codim(\cE xt^2_X(\cH^0(E), \omega^{\bullet}_X[-\dim X]))\geq 2,$$ so applying $\cH om_X(-,\omega^{\bullet}_X[-\dim X])$ to \eqref{eq-long-exact}, we obtain
\begin{equation}\label{eq-2.19}
    \cH^{-1}(E^{\sharp\sharp})=(\cH^0(E^{\sharp}))^{\mathsf{d}}\cong (\ker(\delta))^{\mathsf{d}}\cong ((\cH^{-1}(E))^{\mathsf{d}})^{\mathsf{d}}.
\end{equation}
In particular, $\cH^{-1}(E^{\sharp\sharp})$ is torsion-free and $S_2$.

Now, let $K\in \Coh(X)$ be any torsion sheaf supported in codimension $\geq 2$. From our assumption and Lemma \ref{lem-6.3}(b), we see that $Q\in \cD^{\geq 1}$ and is supported in codimension $\geq 3$. So we get $\Hom_X(K, Q)=\Hom_X(K, Q[-1])=0$. Therefore, we have $$\Hom_X(K, E^{\sharp})=\Hom_X(K, \DD_X^{1-\dim X}(E))=\Hom_X(E, \DD_X^{1-\dim X}(K)),$$ which combines with $\DD_X^{1-\dim X}(K)\in \cD^{\geq 1}$ by Lemma \ref{lem-supp-ext} and $E\in \cD^{\leq 0}$ imply $\Hom_X(K, E^{\sharp})=0$. By the fact that $\cH^{-1}(E^{\sharp})$ is torsion-free, we can apply Lemma \ref{lem-6.3} to $E^{\sharp}$ and see that $Q'$ is also supported in codimension $\geq 3$. Now, we consider the diagram
\[\begin{tikzcd}
	&& {E^{\sharp\sharp}} \\
	{\DD_X^{1-\dim X}(Q)} & E & {\DD_X^{1-\dim X}(E^{\sharp})} \\
	&& {Q'}
	\arrow[from=1-3, to=2-3]
	\arrow[from=2-1, to=2-2]
	\arrow[from=2-2, to=2-3]
	\arrow[from=2-3, to=3-3]
\end{tikzcd}\]
with $\DD_X^{1-\dim X}(Q), Q'\in \cD^{\geq 1}$ and all their cohomology sheaves are supported in codimension at least $3$. In particular, as $E\in \cD^{\leq 0}$, we have $\Hom_X(E,Q')=0$, so we have an induced morphism $E\to E^{\sharp\sharp}$. The cone $T$ of this morphism fits into an exact triangle $$Q'[-1]\to T\to \DD_X^{1-\dim X}(Q)[1]$$ and therefore has non-zero cohomology sheaves only in non-negative degrees. The long exact cohomology sequence of the exact triangle $E\to E^{\sharp\sharp}\to T$ then shows that $T\cong \cH^0(T)$ and the desired exact triangle follows. In particular, we also have $\cH^{-1}(E)=\cH^{-1}(E^{\sharp\sharp})$, hence it is $S_2$ by \eqref{eq-2.19} if it is nonzero.

For the last statement, when $X$ has the resolution property, we can choose a cochain complex $(G^{\bullet},d^{\bullet})$ of locally free sheaves with $G^i=0$ for $i>0$, which is quasi-isomorphic to $E^{\sharp}$. As $X$ is Cohen--Macaulay, we have $\omega^{\bullet}_X[-\dim X]\cong \omega_X$ and $E^{\sharp\sharp}=\tau_{\leq 0}\DD_X^{1-\dim X}(E^{\sharp})$ is quasi-isomorphic to the complex $$(G^{0})^{\mathsf{d}}\to \ker((d^{-2})^{\mathsf{d}})$$
where $(G^{0})^{\mathsf{d}}$ sits in degree $-1$. As we have an exact sequence $$0\to \ker((d^{-2})^{\mathsf{d}}) \to (G^{-1})^{\mathsf{d}}\to \im((d^{-2})^{\mathsf{d}}) \to 0,$$ from the fact that $(G^{-1})^{\mathsf{d}}\cong (G^{-1})^{\vee}\otimes \omega_X$ is torsion-free $S_2$ and $\im((d^{-2})^{\mathsf{d}})\subset (G^{-2})^{\mathsf{d}}$ is torsion-free, we see that $\ker((d^{-2})^{\mathsf{d}})$ is torsion-free and $S_2$ by \cite[Lemma 9]{kollar:duality}. Then we can define $C^0\coloneqq\ker((d^{-2})^{\mathsf{d}})$ and $C^{-1}\coloneqq(G^{0})^{\mathsf{d}}\cong (G^{0})^{\vee}\otimes \omega_X$, and the result follows.
\end{proof}

Although the following result is not needed in this section, we state it here as its proof is related to Lemma \ref{lem-blms-2.19}.

\begin{lemma}\label{lem-bmt-dual}
Let $(X, H, \gamma)$ be a triple as in Setup \ref{setup-gamma} with $X$ Gorenstein, $n\geq 2$, and either $d\geq 2$ or $X_{\overline{\kk}}$ is a normal projective surface. Assume that $B_{\gamma}=0$ and $(b,w)\in U^{\gamma}_{X,H}$. Let $E\in \cA^b(X)$ be an object with $\nu^{+}_{b,w}(E)<+\infty$. Then there is an exact triangle 
    \[E^{\sharp}\otimes \omega_X^{-1}\to \mathbb{D}^X_1(E)\to Q\otimes \omega_X^{-1},\]
where $E^{\sharp}\otimes \omega_X^{-1}\in \cA^{-b}(X)$, $\cH^j(Q)$ is a torsion sheaf supported in codimension at least $j+2$ for all $j\geq 1$, and $\cH^j(Q)=0$ for $j\leq 0$.
\end{lemma}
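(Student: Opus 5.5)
The plan is to define $E^{\sharp}\coloneqq\tau_{\leq 0}\DD_X^{1-\dim X}(E)$ and $Q\coloneqq\tau_{\geq 1}\DD_X^{1-\dim X}(E)$ exactly as in Lemma \ref{lem-6.3}, and then check its hypotheses for $E$. Since $X$ is Gorenstein, $\omega_X^{\bullet}[-\dim X]\cong\omega_X$ is a line bundle, so $\DD^X_1(-)=\DD_X^{1-\dim X}(-)\otimes\omega_X^{-1}$. Tensoring the canonical truncation triangle $E^{\sharp}\to\DD_X^{1-\dim X}(E)\to Q$ with $\omega_X^{-1}$ then gives the desired triangle. It remains to verify the two claimed properties.

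For $Q$, we use that $E\in\cA^b(X)$. Hence $\cH^i(E)=0$ for $i\notin\{-1,0\}$, and $\cH^{-1}(E)$ is zero or torsion-free. Since $\nu^+_{b,w}(E)<+\infty$, every sheaf $T$ supported in codimension $\geq 2$ has $\nu_{b,w}(T)=+\infty$ and is $\nu_{b,w}$-semistable, because $Z^{b,w}(T)\in\RR_{\leq 0}$ with $\Im=0$. Therefore $\Hom_X(T,E)=0$, as in the proof of Lemma \ref{lem-H-1-S2}. Lemma \ref{lem-6.3}(b) then gives that $\cH^j(Q)=0$ for $j\leq 0$ and that $\cH^j(Q)$ is torsion supported in codimension $\geq j+2$ for $j\geq 1$. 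Tensoring with the line bundle $\omega_X^{-1}$ does not change supports.

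The main step is $E^{\sharp}\otimes\omega_X^{-1}\in\cA^{-b}(X)$. By construction $E^{\sharp}\in\cD^{\leq 0}$. Applying $\DD_X^{1-\dim X}$ to $\cH^{-1}(E)[1]\to E\to\cH^0(E)$ yields $\cH^{j}(E^{\sharp})=0$ for $j<-1$. It also gives $\cH^{-1}(E^{\sharp})\cong(\cH^0(E))^{\mathsf d}$ and the exact sequence \eqref{eq-long-exact} from the proof of Lemma \ref{lem-blms-2.19}. Writing $G\coloneqq\cH^{-1}(E)$ and $T_0\coloneqq\cH^0(E)$, we have $T_0\in\cT^b$ and $G\in\cF^b$, and we must show two things:
\[(T_0)^{\vee}\in\cF^{-b}\qquad\text{and}\qquad\cH^0(E^{\sharp}\otimes\omega_X^{-1})\in\cT^{-b}.\]

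For the first, the torsion part of $T_0$ dies under $\cH om(-,\cO_X)$, and $T_0/\tor$ has all HN factors of slope $>b$. The dual is reflexive, and Lemma \ref{lem:dual-slope-stable}(a),(d), with $B_\gamma=0$, sends a $\mu$-semistable factor of slope $\mu$ to one of slope $-\mu$. Combined with left-exactness of dualizing along the HN filtration, this shows that every subsheaf of $(T_0)^\vee$ has slope $<-b\leq -b$. I expect this reduction to be routine.

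For the second, \eqref{eq-long-exact} presents $\cH^0(E^{\sharp})\otimes\omega_X^{-1}$ as an extension. The subsheaf is $\cE xt^1(T_0,\cO_X)$, which is supported in codimension $\geq 1$ and hence is torsion, so it lies in $\cT^{-b}$. The quotient is $\ker(\delta)\otimes\omega_X^{-1}\subset G^{\vee}$, with cokernel supported in codimension $\geq 2$ by Lemma \ref{lem-supp-ext}. Since $G$ has HN slopes $\leq b$, the sheaf $G^\vee$ has HN slopes $\geq -b$.

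The main obstacle is that slope exactly $-b$ is not allowed in $\cT^{-b}$. Factors of $G$ of slope exactly $b$ would give factors of $G^{\vee}$ of slope $-b$, so we must use the finiteness of $\nu^+_{b,w}(E)$ again. A subobject $F[1]\subset E$ coming from the first HN factor $F$ of $G$ with $\mu(F)=b$ would have $\nu_{b,w}=+\infty$, so this is excluded. Hence $\mu^+(G)<b$ and $\mu^-(G^\vee)>-b$. Codimension-$\geq2$ modifications do not change $\mu$, and $\cT^{-b}$ is closed under extensions and quotients, so the claim follows.
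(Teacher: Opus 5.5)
Your proposal is correct and follows essentially the same route as the paper. You take the triangle and the support bounds on $Q$ from Lemma \ref{lem-6.3}(b), identify $\cH^{-1}(E^{\sharp}\otimes\omega_X^{-1})\cong(\cH^0(E))^{\vee}$, and use the sequence \eqref{eq-long-exact} together with $\mu^+(\cH^{-1}(E))<b$ (forced by $\nu^+_{b,w}(E)<+\infty$) to place $\cH^0(E^{\sharp}\otimes\omega_X^{-1})$ in $\cT^{-b}$.
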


\begin{proof}
Since $E\in \cA^b(X)$ and $\nu^+_{b,w}(E)<+\infty$, $E$ satisfies the assumptions in Lemma \ref{lem-6.3}, so it remains to prove $E^{\sharp}\otimes \omega_X^{-1}\in \cA^{-b}(X)$.

By \eqref{eq:Esharp-H-1}, we have an isomorphism $$\cH^{-1}(E^{\sharp}\otimes \omega_X^{-1})\cong (\cH^0(E))^{\vee}.$$ As $\mu^{-}(\cH^0(E))>b$, by Lemma \ref{lem:dual-slope-stable}, we get $\mu^{+}(\cH^{-1}(E^{\sharp}\otimes \omega_X^{-1}))<-b$, so $\cH^{-1}(E^{\sharp}\otimes \omega_X^{-1})[1]\in \cA^{-b}(X)$. From \eqref{eq-long-exact}, we have an exact sequence
\[ 0\to \cE xt^1_X(\cH^0(E), \oh_X)\to \cH^0(E^{\sharp}\otimes \omega_X^{-1})\to (\cH^{-1}(E))^{\vee}\xra{\delta} \cE xt^2_X(\cH^0(E), \oh_X).\]
From Lemma \ref{lem-supp-ext}, we know that $\cE xt^1_X(\cH^0(E), \oh_X)$ is torsion, so we have $\mu^{-}(\cH^0(E^{\sharp}\otimes \omega_X^{-1}))=\mu^{-}(\ker(\delta))$. Note that $\mu^{+}(\cH^{-1}(E))\leq b$ since $E\in \cA^b(X)$, and $\mu^{+}(\cH^{-1}(E))\neq b$ as $\nu^+_{b,w}(E)<+\infty$. Therefore, using Lemma \ref{lem:dual-slope-stable}, we obtain $\mu^{-}((\cH^{-1}(E))^{\vee})>-b$. From Lemma \ref{lem-supp-ext}, $\cE xt^2_X(\cH^0(E), \oh_X)$ is supported in codimension $\geq 2$. Therefore, $$\mu^{-}(\ker(\delta))=\mu^{-}((\cH^{-1}(E))^{\vee})>-b.$$ Thus, we have $\mu^{-}(\cH^0(E^{\sharp}\otimes \omega_X^{-1}))>-b$ and $\cH^0(E^{\sharp}\otimes \omega_X^{-1})\in \cA^{-b}(X)$. So $E^{\sharp}\otimes \omega_X^{-1}\in \cA^{-b}(X)$ as desired.
\end{proof}

Now, we fix $(f\colon X\to S, \cL, \gamma)$ as in Setup \ref{setup-gamma-relative} with $n\geq 2$ and either $d\geq 2$ or $f$ is admissible (cf.~Definition \ref{def:adm}). The following result generalizes \cite[Proposition 25.3]{BLMNPS21} and \cite[Section 4.5]{piyaratne2019moduli}.

\begin{theorem}\label{thm:tilt-HN-structure}
Assume furthermore that $f$ is Cohen--Macaulay. Then for any $(b,w)\in U_{X/S, \cL}^{\gamma}$, each $\sigma^{b,w}_s$ has the tilting property and satisfies \ref{t3}, and $(\cA^b(X_s))^{Z^{b,w}}\subset \cA^b(X_s)$ is a Noetherian torsion subcategory. 

Moreover, if $B_{\gamma}=0$, $n\leq 3$, and $f$ is admissible, then the collection $\underline{\sigma}^{b,w}\coloneqq (\sigma^{b,w}_s)_{s\in S}$ satisfies \ref{c1} and

\begin{enumerate}
    \item the condition \ref{b2} holds with respect to $\Lambda$; moreover, if $n=3$ and $v\in \Lambda$, then $\cM^{\st}_{\underline{\sigma}^{b,w}}(v+x\bp)=\varnothing$ for $x\gg 0$, where $\bp\in \Lambda_{3}$ is the class of skyscraper sheaves,

    \item conditions \ref{c2} and \ref{wc2} hold,
    
    \item if $C\to S$ is a morphism essentially of finite type from a Dedekind scheme $C$, then the pair $\sigma^{b,w}_{C}\coloneqq (\cA^b(X_C), Z_{C}^{b,w})$ is a weak HN structure over $C$ and $\cA^b(X_C)$ has a $C$-torsion theory. Here, the central charge is defined by $Z_{C}^{b,w}\coloneqq(Z^{b,w}_K, Z^{b,w}_{C\text{-}\tor})$, where $$Z^{b,w}_K\colon  \KK_0(X_K) \xrightarrow{\bv_{\leq 2}} \Lambda_{\leq 2} \xrightarrow{Z^{b,w}} \CC$$ and
\[Z^{b,w}_{C\text{-}\tor}(E)\coloneqq Z^{b,w}(\bv_{\leq 2}(F))\]
for $E=i_{p*}F$ and $F\in\cA^b(X_p)$, and extend it to whole $\cA^b(X_C)_{C\text{-}\tor}$ by \cite[Lemma 6.11]{BLMNPS21}, and

    \item If $n=2$, $\underline{\sigma}^{b,w}$ is a stability condition on $\Db(X)$ over $S$ with respect to $\Lambda$. If $n=3$ and $(b,w)\in \QQ^2$, then $\underline{\sigma}^{b,w}$ is a weak stability condition on $\Db(X)$ over $S$ with respect to $\Lambda$.
\end{enumerate}

\end{theorem}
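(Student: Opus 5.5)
The plan is to mirror the proof of \cite[Proposition 25.3]{BLMNPS21}, using Theorem \ref{thm:tilt-stability} as the absolute input and the tilting-property machinery of Section \ref{subsec-tilting-property}.

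\textbf{Tilting property and \ref{t3} on each fiber.} Fix $s$ and $(b,w)\in U^{\gamma}_{X/S,\cL}\subset U^{\gamma_s}_{X_s,\cL_s}$. By Lemma \ref{lem:weakstabilityfunction}, $(\cA^b(X_s))^{Z^{b,w}}$ consists of sheaves $E\in\cA^Z$ supported in codimension $\geq 2$ with $\bv_2(E)=0$, i.e.\ sheaves supported in codimension $\geq 3$ (Lemma \ref{lem:supp-codim-and-bv}). This is a Serre subcategory of $\Coh(X_s)$ closed under subobjects in $\cA^b$, Noetherian as a category of sheaves. The torsion-pair condition follows from Remark \ref{rmk:serre-torsion}: an infinite chain of such subobjects inside $E\in\cA^b$ gives a chain of subsheaves of $\cH^0(E)$, which stabilizes. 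For \ref{t2}/\ref{t3}, given $E$ with $\nu^+_{b,w}(E)<+\infty$, Lemma \ref{lem-blms-2.19} applies after killing $\Hom(K,E)$ for codimension-$\geq2$ $K$; this vanishing holds by finiteness of $\nu^+$, as in Lemma \ref{lem-H-1-S2}. It produces $E\to E^{\sharp\sharp}\to T$ with $T$ supported in codimension $\geq 3$, and I set $\sT(E)=E^{\sharp\sharp}$. The step I expect to be hardest is checking $\Hom(\cA^Z,\sT(E)[i])=0$ for $i=0,1$. Here I would use the Cohen--Macaulay two-term model $C^{-1}\to C^0$ from Lemma \ref{lem-blms-2.19}, with $C^{-1}$ Cohen--Macaulay and $C^0$ torsion-free $S_2$. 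The depth estimates of Lemmas \ref{lem-supp-ext} and \ref{lem-codim-Sp} then give $\Ext^{\le 1}(K,C^0)=0$ and $\Ext^{\le 2}(K,C^{-1})=0$ for $K$ supported in codimension $\geq 3$. I also need to verify $E^{\sharp\sharp}\in\cA^b$, which follows from $\cH^{-1}(E^{\sharp\sharp})=\cH^{-1}(E)$ and the fact that $\cH^0$ changes only in codimension $\geq 3$.

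\textbf{Family statements.} Assume $B_\gamma=0$, $n\le 3$ and $f$ admissible. Condition \ref{c1} follows from Lemma \ref{lem-constant-chM} and Lemma \ref{lem:constant-generalization}.
\begin{itemize}
\item For (c), I apply Lemma \ref{lem-tilt-weak-hn}-style tilting to the rotated HN structure $\sigma^b_C$ of Corollary \ref{cor:C-torsion-theory-real-b}, but with $W=-\bv_2$. I will mimic the abstract proof of Theorem \ref{thm:tilt-stability} over $C$, checking the HN property via \cite[Theorem 16.1, Corollary 16.5]{BLMNPS21}. Its inputs are the $C$-torsion theory (Corollary \ref{cor:C-torsion-theory-real-b} with Lemma \ref{lem:17.1}), fiberwise tilting (first part), and generic openness, which is (b).
\item For (b), I follow Proposition \ref{prop-rotate-slope-real-b}. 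Constructibility comes from the wall-chamber structure (Theorem \ref{thm:wall-chamber-abstract}) plus finitely many destabilizing classes, by the support property of Lemma \ref{lem:support-property}. Stability under specialization comes from DVR lifting of destabilizing subobjects, using the $C$-torsion theory and Lemma \ref{lem:tilt-base-change}.
\item For (a), boundedness: a $\nu_{b,w}$-stable object has $\bv_{\le2}$ in a finite set along walls for fixed $v$. Moving to the large volume limit in the chamber structure shows stable objects are built from finitely many $\sigma^b$-stable classes, which are bounded by Proposition \ref{prop-rotate-slope-real-b}(c), following \cite[Proposition 25.3]{BLMNPS21}. The vanishing of $\cM^{\st}(v+x\bp)$ for $x\gg0$ also follows from Proposition \ref{prop-rotate-slope-real-b}(c) via this reduction.
\end{itemize}

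Finally, (d) assembles Lemma \ref{lem:support-property} for \ref{b1} with (a)--(c). For $n=2$, $\cA^Z=0$ and there is no rationality constraint, so we get a genuine stability condition. For $n=3$, rationality of $(b,w)$ is needed for \ref{wc1}.
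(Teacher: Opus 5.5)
Your fiberwise argument is essentially the paper's. $(\cA^b(X_s))^{Z^{b,w}}$ consists of sheaves supported in codimension $\geq 3$, and $\sT(E)=E^{\sharp\sharp}=[C^{-1}\to C^0]$ from Lemma \ref{lem-blms-2.19} gives \ref{t3}. The paper kills $\Ext^2(A,C^{-1})$ by Serre duality rather than by depth; either works.

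There is one slip. A codimension-$\geq 3$ subobject of $E\in\cA^b(X_s)$ need not inject into $\cH^0(E)$: for example, $0\to\cO_x\to I_x[1]\to\cO_X[1]\to 0$ is exact in $\cA^b$ on an integral threefold with $\mu(\cO_X)\leq b$, yet $\cH^0(I_x[1])=0$. Argue instead that such subobjects lie in the torsion part of $E$ for the torsion subcategory $(\cA^b)^{Z^b}$ of Lemma \ref{lem-tilt-weak-stab}. That torsion part is a sheaf, and its subobjects in $\cA^b$ that are sheaves are subsheaves.

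\textbf{The gap: uniform finiteness in (a) and (b).} Theorem \ref{thm:wall-chamber-abstract} and Lemma \ref{lem:support-property} concern a single fiber, and they give local finiteness only on compact subsets of $U_W$. Along the non-compact ray $\{b\}\times[w,\infty)$ the bounds from Lemma \ref{lem:support-property} deteriorate, and $|Z^{b,w'}(v)|$ grows. The threshold in Lemma \ref{lem:slope-stable-lvl} also depends on the object. So ``moving to the large volume limit'' gives neither finitely many walls nor a set of $\sigma^b$-classes independent of $t\in S$, and the latter is what (b2) needs.

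The same problem appears in (b). The support property bounds the class of a semistable destabilizing quotient $Q_t$ of $E_t$ only in terms of $|Z^{b,w}(Q_t)|$. Nothing in your argument bounds $\Re Z^{b,w}(Q_t)$ from above, i.e.\ keeps the phase of $\HN^-(E_t)$ uniformly away from $0$.

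The paper supplies exactly these uniform estimates:
\begin{itemize}
\item Lemma \ref{lem:finite-rotate} splits the $\sigma^b_t$-HN filtration into pieces $E_\pm$ with $\bv_0$ of fixed sign. It bounds $\bv_0(E_\pm)$ using the margin $\delta=\frac{1}{2}(w-\Phi^{\gamma}_{X/S,\cL}(b))>0$ on a window around $b$, and the bound of Theorem \ref{thm:exist-bg-function} away from $b$.
\item Lemma \ref{lem:finite-v2} does the same for $\cH^{-1}(Q_t)$. It bounds $\bv_2(\cH^0(Q_t))$ from below via Castelnuovo--Mumford regularity of the $T$-flat sheaf $\cH^0(E)$.
\end{itemize}

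Even with finitely many classes, constructibility needs a finite-type parameter space. The paper shows that $\mathrm{Quot}^{\leq\phi}_T(E)$ is:
\begin{itemize}
\item locally of finite presentation, by Proposition \ref{prop-rotate-slope-real-b}(b);
\item quasi-compact, by Lemma \ref{lem:finite-v2}, the argument of \cite[Lemma 21.21]{BLMNPS21}, and the boundedness in (a);
\item universally closed, by Corollary \ref{cor:C-torsion-theory-real-b}.
\end{itemize}
Your DVR lifting is only this last step.

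\textbf{On (c).} Tilt-stability over $C$ is not a further tilt of $\sigma^b_C$: the heart $\cA^b(X_C)$ is unchanged and only the central charge moves. Rerunning the deformation argument of Section \ref{sec:general-tilt} over $C$ on top of \cite[Theorem 16.1, Corollary 16.5]{BLMNPS21} is unspecified in your proposal, and it is also unnecessary. Fiberwise weak stability (Theorem \ref{thm-bms}), the openness statements in (b), and Corollary \ref{cor:C-torsion-theory-real-b} are the inputs to \cite[Theorem 18.7]{BLMNPS21}, and the paper invokes that theorem directly.
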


\begin{proof}
By Lemma \ref{lem:supp-codim-and-bv} and Lemma \ref{lem:weakstabilityfunction}, an object $E\in \cA^b(X_s)$ satisfies $Z^{b,w}(E)=0$ if and only if it is supported in codimension $\geq 3$, so the statement for $(\cA^b(X_s))^{Z^{b,w}}$ follows. Hence, the condition \ref{t1} for $\sigma_s^{b,w}$ is also verified. 

And by Lemma \ref{lem-blms-2.19}, \ref{t3} holds for $\sigma^{b,w}_s$, which is given by $E^{\sharp \sharp}=[C^{-1}\to C^0]$ for any object $E\in \cA^b(X_s)$ with $\nu^{+}_{b,w}(E)<+\infty$, since for any sheaf $A\in \Coh(X_s)$ supported in codimension at least $3$, we have $\Hom_{X_s}(A, C^{-1}[i])=\Hom_{X_s}(A, C^{0}[i])=0$ for $i\leq 1$ by Lemma \ref{lem-general-S2}(d). Moreover, since $C^{-1}=K\otimes \omega_{X_s}$ for a locally free sheaf $K$ on $X_s$, by Lemma \ref{lem-serre-duality}, we have
\begin{align*}
&\Hom_{X_s}(A, C^{-1}[2])=\Hom_{X_s}(A, K\otimes \omega_{X_s}[2])\\
&=\Hom_{X_s}(A\otimes K^{\vee}, \omega_{X_s}[2])=\Hom_{X_s}(\cO_{X_s},A\otimes K^{\vee}[n-2])^*=0.
\end{align*}
Thus, each $\sigma_s^{b,w}$ has the tilting property.

Now, assume that $B_{\gamma}=0$, $n\leq 3$, and $f$ is admissible. By Lemma \ref{lem-constant-chM} and \ref{lem:constant-generalization}, the condition \ref{c1} holds. In the following, we prove parts (a)-(d). We need a lemma.

\begin{lemma}\label{lem:finite-rotate}
Given a class $v\in \Lambda_{\leq 2}$. Then there is a finite set $F_v\subset \Lambda_{\leq 2}$ such that for any point $t\to S$ and any $\nu_{b,w}$-semistable object $E_t\in \cA^b(X_t)$ of class $\bv_{\leq 2}(E_t)=v$, the classes in $\Lambda_{\leq 2}$ of HN factors of $E_t$ with respect to $\sigma^b_t$ lie in $F_v$.
\end{lemma}

\begin{proof}
We may assume that $\bv^b_1(v)>0$, otherwise it is already $\sigma^b_t$-semistable. Let
\[
        0=E_0\subset E_1\subset\cdots\subset E_m=E_t
\]
be the HN filtration of \(E_t\) with respect to \(\sigma_t^b\), and set
\[
        F_i\coloneqq E_i/E_{i-1}.
\]
We define $E_-$ (resp.~$E_+$) the part of the HN filtration that all factors have negative (resp. positive) $\bv_0$. By Lemma \ref{lem:classify-rotate-stable}, when $\bv_0(F_i)\neq 0$, we have
\begin{equation}\label{eq:rotate-lp}
\frac{\bv_2(F_i)}{\bv_0(F_i)}\leq \Phi_{X/S,\cL}^{\gamma}(\mu(F_i)).
\end{equation}
Since \(F_i\in A^b(X_t)\), we also have
\begin{equation}\label{eq:eq-8.26-1}
\bv_1^b(F_i)> 0,\quad \sum_i \bv_1^b(F_i)=\bv_1^b(v)>0,\quad \sum_i \bv_2(F_i)-w\bv_0(F_i)=\bv_2(v)-w\bv_0(v).
\end{equation}
As $w>\Phi^{\gamma}_{X/S,\cL}(b)$ and $\Phi^{\gamma}_{X/S,\cL}$ is upper semicontinuous, we can find $\epsilon>0$ so that
\[\Phi^{\gamma}_{X/S,\cL}(x)\leq w-\frac{w-\Phi^{\gamma}_{X/S,\cL}(b)}{2}=:w-\delta\]
for any $|x-b|\leq \epsilon$. We set
\[N\coloneqq \max\left\{ 0, \sup_{|x-b|\leq \bv_1^b(v)}\Phi_{X/S,\cL}^{\gamma}(x)\right\},\]
which is a finite constant by Theorem \ref{thm:exist-bg-function}.

We first bound $\bv_0(F_i)$ uniformly. If $\bv_0(F_i)<0$, then $\bv_0(F_i)\leq -1$ and $\mu(F_i)<b$. Therefore, we have $\mu(F_i)\in (b-\bv_1^b(v),b)$. By the definition of $N$, we have
\begin{equation}\label{eq:low-bv2}
\bv_2(F_i)\geq N\bv_0(F_i).
\end{equation}
When $\mu(F_i)\leq b-\epsilon$, we see
\[\bv_1^b(F_i)=-\bv_0(F_i)(b-\mu(F_i))\geq -\epsilon\bv_0(F_i).\]
Combining with \eqref{eq:eq-8.26-1}, we get
\[0>\sum_{\bv_0(F_i)<0,\mu(F_i)\leq b-\epsilon}\bv_0(F_i)\geq \sum_{\bv_0(F_i)<0,\mu(F_i)\leq b-\epsilon}-\frac{1}{\epsilon}\bv_1^b(F_i)\geq -\frac{1}{\epsilon}\bv_1^b(v).\]
Hence, 
\[\sum_{\bv_0(F_i)<0,\mu(F_i)\leq b-\epsilon}\bv_2(F_i)\geq -\frac{N}{\epsilon}\bv_1^b(v).\]
When $\mu(F_i)\in (b-\epsilon,b)$, we have
\[\bv_2(F_i)\geq (w-\delta)\bv_0(F_i).\]
All together give
\[\bv_2(E_-)\geq -\frac{N}{\epsilon}\bv_1^b(E)+(w-\delta)\bv_0(E_-).\]
On the other hand, by the $\nu_{b,w}$-semistability of $E_t$, we have $\nu_{b,w}(E_-)\leq \nu_{b,w}(v)$, which together \eqref{eq:eq-8.26-1} gives
\begin{align}\label{eq:E-}
&\bv^b_1(v)|\nu_{b,w}(v)|+w\bv_0(E_-)\geq \bv^b_1(E_-)\nu_{b,w}(v)+w\bv_0(E_-)\\
&\geq \bv_2(E_-)\geq -\frac{N}{\epsilon}\bv_1^b(v)+(w-\delta)\bv_0(E_-). \notag
\end{align}
From $\delta>0$, we get a uniform bound for $\bv_0(E_-)$. Using a completely dual argument, we also get a uniform bound for $\bv_0(E_+)$, hence a uniform bound of any $\bv_0(F_i)$.
Combining these with \eqref{eq:eq-8.26-1}, we know that the set of $(\bv_0(F_i),\bv_1(F_i))\in \QQ^2$ and the number $m$ are both uniformly bounded. Together with \eqref{eq:E-} and its analog for $E_+$ again, we see that $(\bv_0(E_{\pm}),\bv_1(E_{\pm}),\bv_2(E_{\pm}))\in \QQ^3$ is uniformly bounded. In particular, \eqref{eq:low-bv2} shows that $(\bv_0(F_i),\bv_1(F_i),\bv_2(F_i))\in \QQ^3$ is uniformly bounded for $\bv_0(F_i)<0$. Using an analog of \eqref{eq:low-bv2} for $\bv_0(F_i)>0$, we also know that $(\bv_0(F_i),\bv_1(F_i),\bv_2(F_i))\in \QQ^3$ is uniformly bounded for $\bv_0(F_i)>0$. Since the class $v$ is fixed, the boundedness for $\bv_{\leq 2}(F_i)$ with $\bv_0(F_i)=0$ follows, which completes the proof of the lemma.
\end{proof}

By the above lemma, the stronger boundedness in part (a) follows from Proposition \ref{prop-rotate-slope-real-b}(c). Moreover, \ref{b2} also follows from \cite[Lemma 9.6]{BLMNPS21} and Proposition \ref{prop-rotate-slope-real-b}(c) once we know \ref{c2} holds.

Next, we prove \ref{c2}, which also completes the proof of part (a). Let $T\to S$ be a morphism essentially of finite type and $E\in \Db(X_T)$ be a $T$-perfect object. Assume that there exists $t_0\in T$ so that $E_{t_0}$ is geometrically $\nu_{b, w}$-stable of phase $\phi$. If $\phi=1$, so $\Im Z^b(E)=0$, then the geometric $\nu_{b, w}$-stability and the geometric $\sigma^{b}$-stability are the same, and the result follows from the property \ref{c2} of $\underline{\sigma}^{b}$. Therefore, we may assume that $0<\phi<1$.

We define a functor $\mathrm{Quot}_T^{\leq \phi}(E)$, which assigns any scheme $T'$ over $T$ to the set of morphisms $E_{T'}\twoheadrightarrow Q$ in $\Dqc(X_{T'})$ satisfying that $Q$ is $T'$-perfect, the morphism $E_y\to Q_y$ is surjective in $\cA^b(X_y)$ for any $y\in T'$, and  $\phi_{\sigma^{b,w}_{y}}(Q_y)\leq \phi$ for any $y\in T'$. By \ref{c1}, Proposition \ref{prop-rotate-slope-real-b}(b), and \cite[Proposition 11.6, Lemma 10.8]{BLMNPS21}, we see that $\mathrm{Quot}_T^{\leq \phi}(E)$ is an algebraic space locally of finite presentation over $T$. To prove its quasi-compactness, we need the next lemma, which is proved using a similar idea of Lemma \ref{lem:finite-rotate}.

\begin{lemma}\label{lem:finite-v2}
There is a finite set $F_E\subset \Lambda_{\leq 2}$ so that for any $t\in T$, the classes in $\Lambda_{\leq 2}$ of those $\nu_{b,w}$-semistable quotient $E_t\twoheadrightarrow Q_t$ in $\cA^b(X_t)$ with $\phi_{\sigma^{b,w}_t}(Q_t)\leq \phi$ lie in $F_E$.
\end{lemma}

\begin{proof}
By Lemma \ref{lem:openness-standard-heart}, we may assume that $\cH^i(E)=0$ for $i\notin \{-1,0\}$ and both $\cH^{-1}(E)$ and $\cH^0(E)$ are flat over $T$.

Let $v=\bv(E)$. Fix any $t\in T$ and any quotient $E_t\twoheadrightarrow Q_t$ in $\cA^b(X_t)$ with $\phi_{\sigma^{b,w}_t}(Q_t)\leq \phi$. We set $Q^i_t\coloneqq \cH^i(Q_t)$. Then we have an exact sequence
\[0\to Q^{-1}_t[1]\to Q_t\to Q^0_t\to 0\]
in $\cA^b(X_t)$, which implies
\begin{equation}\label{eq:bound-v1b}
-\bv_1^b(Q^{-1}_t),\bv^b_1(Q^0_t)\in [0,\bv_1^b(v)].
\end{equation}

We first bound $\bv_0(Q_t)$. Since $Q^0_t$ is a quotient sheaf of $\cH^0(E_t)$, we know that 
$$\bv_0(Q_t)=\bv_0(Q^0_t)-\bv_0(Q^{-1}_t)\leq \bv_0(Q^0_t)\leq  \bv_0(\cH^0(E)).$$ In particular, $\bv_0(Q_t)$ is uniformly bounded above. So to get a lower bound of $\bv_0(Q_t)$, it remains to prove an upper bound of $\bv_0(Q^{-1}_t)$. Let $A_1,\cdots,A_m$ be the HN factors of $Q^{-1}_t$ with respect to $\mu$. Then we have $\mu(A_i)\leq b$, so $\bv^b_1(A_i)\leq 0$ for each $1\leq i\leq m$. Using 
\[\bv_1^b(Q^{-1}_t)=\sum^m_{i=1} \bv^b_1(A_i) \in [-\bv_1^b(v),0],\]
we also have 
\[\bv^b_1(A_i) \in [-\bv_1^b(v),0].\]
Note that $\bv_0(A_i)>0$ implies $\bv_0(A_i)\geq 1$, so we obtain
\begin{equation}\label{eq:bound-hn-slope}
b\geq \mu(A_i)\geq b-\bv_1^b(v).
\end{equation}
If $\mu(A_i)\geq b-\epsilon$, we obtain
\begin{equation}\label{eq:v2-ai}
\bv_2(A_i)\leq (w-\delta)\bv_0(A_i).
\end{equation}
If $\mu(A_i)<b-\epsilon$, note that 
\[\epsilon\sum_{\mu(A_i)<b-\epsilon}\bv_0(A_i)<-\sum_{\mu(A_i)<b-\epsilon}\bv_1^b(A_i)\leq \bv^b_1(v),\]
so
\begin{equation}\label{eq:Me}
\sum_{\mu(A_i)<b-\epsilon}\bv_2(A_i)\leq \bv^b_1(v)\frac{M}{\epsilon},
\end{equation}
where
\[M\coloneqq \max \left\{0,\sup_{x\in [b-\bv_1^b(v), b-\epsilon]}\Phi_{X/S,\cL}^{\gamma}(x)\right\}\]
is a finite constant by Theorem \ref{thm:exist-bg-function}. Combining \eqref{eq:v2-ai} and \eqref{eq:Me}, we get
\begin{align*}
\bv_2(Q^{-1}_t)=&\sum_{\mu(A_i)\geq b-\epsilon} \bv_2(A_i)+\sum_{\mu(A_i)<b-\epsilon} \bv_2(A_i)\\ \leq& (w-\delta)\sum_{\mu(A_i)\geq b-\epsilon} \bv_0(A_i)+\bv^b_1(v)\frac{M}{\epsilon}\\
\leq & (w-\delta)\bv_0(Q^{-1}_t)+\bv^b_1(v)\frac{M}{\epsilon}.
\end{align*}
On the other hand, the $\nu_{b,w}$-semistability of $Q_t$ forces $\nu_{b,w}(Q^{-1}_t[1])\leq \nu_{b,w}(Q_t)\leq \nu_{b,w}(v)$, which together \eqref{eq:bound-v1b} implies
\[\bv_{2}(Q^{-1}_t)\geq w\bv_0(Q^{-1}_t)-|\nu_{b,w}(v)|\bv^b_1(v).\]
Hence, we see that
\begin{equation}\label{eq:bound-v2}
w\bv_0(Q^{-1}_t)-|\nu_{b,w}(v)|\bv^b_1(v)\leq \bv_{2}(Q^{-1}_t)\leq (w-\delta)\bv_0(Q^{-1}_t)+\bv^b_1(v)\frac{M}{\epsilon}.
\end{equation}
Using $\delta>0$, we obtain a uniform upper bound of $\bv_0(Q^{-1}_t)$, hence a uniform bound of $\bv_0(Q_t)$, as desired. From $0<\bv^b_1(Q_t)<\bv^b_1(v)$, we also know that the size of the set of $(\bv_0(Q_t), \bv_1(Q_t))\in \QQ^2$ is always bounded by a uniform constant. By \eqref{eq:bound-v1b}, the same holds for $(\bv_0(Q^0_t), \bv_1(Q^0_t))$ and $(\bv_0(Q^{-1}_t), \bv_1(Q^{-1}_t))$ as well.

It remains to bound $\bv_2(Q_t)$ uniformly. By the assumption, we know that
\[\bv_2(Q_t)\leq \bv^b_1(Q_t)\nu_{b,w}(v)+w\bv_0(Q_t).\]
Therefore, it suffices to get a uniform lower bound of $\bv_2(Q_t)$. Since $T$ is Noetherian, by the semi-continuity theorem and flatness of $\cH^0(E)$, we know that the Castelnuovo--Mumford regularity of $\cH^0(E)$ is bounded. Combining with the boundedness of $(\bv_0(Q^0_t), \bv_1(Q^0_t))$ and \cite[Lemma 1.7.9]{HL10}, we get a uniform bound for $\bv_2(Q^0_t)$. As
\[\bv_2(Q_t)=\bv_2(Q^0_t)-\bv_2(Q^{-1}_t),\]
the result follows from \eqref{eq:bound-v2}.
\end{proof}

In particular, there exists $\phi_0>0$ so that $\phi^-_{\sigma^{b,w}_{t}}(Q_t)\geq \phi_0$ for any point $[E_t\twoheadrightarrow Q_t]\in \mathrm{Quot}_T^{\leq \phi}(E)$, since the class in $\Lambda_{\leq 2}$ of the last HN factor of $Q_t$ lies in the finite set in Lemma \ref{lem:finite-v2}. Thus, the argument of \cite[Lemma 21.21]{BLMNPS21} and the support property imply that the set 
\begin{align*}
\Big\{ \bv_{\leq 2}(F_t)\in \Lambda_{\leq 2} \colon & [E_t\twoheadrightarrow Q_t]\in \mathrm{Quot}_T^{\leq \phi}(E) \text{ for some }  t\in T \text{ and } \\ & F_t \text{ is a JH factor } \text{ of an HN factor of } Q_t \text{ with respect to } \sigma^{b,w}_{t}\Big\}
\end{align*}
is finite. Combining this with the stronger boundedness in part (a) and \cite[Lemma 9.6]{BLMNPS21}, we see that $\mathrm{Quot}_T^{\leq \phi}(E)\to T$ is of finite type. Moreover, applying Corollary \ref{cor:C-torsion-theory-real-b} and \cite[Proposition 11.11, Lemma 11.21]{BLMNPS21}, we can conclude that $\mathrm{Quot}_T^{\leq \phi}(E)\to T$ is universally closed and \ref{c2} follows.

For \ref{wc2}, using \ref{c2} and Lemma \ref{lem:tilt-base-change}(b), the argument in \cite[Lemma 20.4]{BLMNPS21} applies in our case.

Finally, by Corollary \ref{cor:C-torsion-theory-real-b}, we know that $\cA^b(X_C)$ universally satisfies openness of flatness and has a $C$-torsion theory. Therefore, by (b) and Theorem \ref{thm-bms}, $\sigma^{b,w}_{C}$ meets all assumptions in \cite[Theorem 18.7]{BLMNPS21} and part (c) follows. Part (d) is immediate by the other parts and the definition.
\end{proof}

Combining with \cite[Theorem 21.24]{BLMNPS21}, we have the following generalization of \cite{toda:moduli-K3}.

\begin{corollary}\label{cor:moduli-surface}
Assume that $n=2$, $B_{\gamma}=0$, $S$ has characteristic $0$, and $f$ is admissible. Then for any $(b,w)\in U^{\gamma}_{X/S,\cL}$ and $v\in \Lambda$, the moduli stack $\cM_{\underline{\sigma}^{b,w}}(v)$ is an Artin stack of finite type over $S$, and admits a good moduli space which is proper over $S$.
\end{corollary}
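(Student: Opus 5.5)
The corollary should follow from the general existence theorem for good moduli spaces over a base, \cite[Theorem 21.24]{BLMNPS21}, once we know that $\underline{\sigma}^{b,w}$ is a stability condition on $\Db(X)$ over $S$. The plan is therefore to verify each hypothesis of that theorem in our setting, using Theorem \ref{thm:tilt-HN-structure}.

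\textbf{Step 1: stability condition over $S$.} Since $n=2$, $B_\gamma=0$ and $f$ is admissible, Theorem \ref{thm:tilt-HN-structure}(d) shows that $\underline{\sigma}^{b,w}$ is a stability condition on $\Db(X)$ over $S$ with respect to $\Lambda$, and not only a weak one. This holds for every $(b,w)\in U^{\gamma}_{X/S,\cL}$, with no rationality assumption on $(b,w)$.

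The reason is that in dimension two we have $Z^{b,w}(E)=0$ only for objects supported in codimension $\geq 3$. By the proof of Theorem \ref{thm:tilt-HN-structure}, such objects are zero, so the full lattice $\Lambda=\Lambda_{\leq 2}$ is used and $\Lambda^{Z}=0$.

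Theorem \ref{thm:tilt-HN-structure} was stated under a Cohen--Macaulay assumption on $f$. I would check that this holds here:
\begin{itemize}
\item in the admissible cases with lci fibers, the fibers are lci, hence Cohen--Macaulay;
\item in the case of normal surface fibers, normal surfaces are $S_2$ of dimension $2$, hence Cohen--Macaulay.
\end{itemize}
So $f$ is a Cohen--Macaulay morphism and the theorem applies.

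\textbf{Step 2: hypotheses of \cite[Theorem 21.24]{BLMNPS21}.} That theorem requires a stability condition over $S$ satisfying the support property and boundedness, namely \ref{b1} and \ref{b2}. These are supplied by Theorem \ref{thm:tilt-HN-structure}(a),(d) together with Lemma \ref{lem:support-property}. It also requires $S$ to have characteristic $0$ and to satisfy the standing Assumption \ref{assum-stab}, both of which are in Setup \ref{setup-gamma-relative}.

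With these inputs, the theorem should give the following:
\begin{itemize}
\item $\cM_{\underline{\sigma}^{b,w}}(v)$ is an open substack of $\cM_{\mathrm{pug}}(X/S)$, by openness of semistability, which follows from \ref{c2} and Lemma \ref{lem:openness-flat};
\item it is of finite type, by boundedness;
\item it admits a good moduli space, using characteristic $0$ via $\Theta$-reductivity and S-completeness;
\item that good moduli space is proper over $S$, by the existence part of semistable reduction, which is \ref{c3} for the HN structures $\sigma^{b,w}_C$ from Theorem \ref{thm:tilt-HN-structure}(c).
\end{itemize}

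\textbf{Main obstacle.} The hardest point is making sure that the relative numerical $K$-group and the relative Mukai homomorphism $\bv\colon \Knum(X/S)\to\Lambda$ fit the framework of \cite{BLMNPS21}. In particular, the central charges must be locally constant in families, which is \ref{c1}. In the normal surface case, $\bch_2$ is defined through Mumford intersection numbers. Its local constancy is exactly Lemma \ref{lem:constant-generalization}, and it relies on the $\QQ$-Gorenstein condition built into admissibility. Once \ref{c1} is in place, the remaining inputs are formal consequences of the results cited above.
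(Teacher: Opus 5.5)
Your proposal is correct and follows the paper's proof. The paper simply combines Theorem~\ref{thm:tilt-HN-structure}(d) with \cite[Theorem 21.24]{BLMNPS21}: for $n=2$ one has $\Lambda^{Z}=0$, so $\underline{\sigma}^{b,w}$ is a genuine stability condition over $S$. Your check that $f$ is automatically Cohen--Macaulay when $n=2$ (its fibers are either lci, or normal and hence $S_2$ surfaces) usefully supplies a hypothesis of Theorem~\ref{thm:tilt-HN-structure} that the paper leaves implicit. The only slip is that characteristic $0$ is a hypothesis of the corollary itself, not part of Setup~\ref{setup-gamma-relative}.
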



\subsection{Rotating tilt-stability}

We end this section with a discussion on rotating tilt-stability. Let $(X, H, \gamma)$ be a triple as in Setup \ref{setup-gamma} with $n\geq 2$ and $d\geq 2$. For any $\mu\in \RR$ and $(b,w)\in U^{\gamma}_{X,H}$, we set
\[Z^{b,w,\mu}\coloneqq \frac{1}{\mathfrak{i}-\mu} Z^{b,w}\]
and let $\cA^{b,w,\mu}(X)\coloneqq \Coh^{b,w,\mu}_{H,\gamma}(X)$ be the tilted heart of $\cA^b(X)=\Coh^{b
}_{H,\gamma}(X)$ at the slope $\nu_{b,w}=\mu$ as in Section \ref{subsec-tilting-property}.

The following lemma generalizes \cite[Proposition 2.15]{bayer2017stability}.

\begin{lemma}\label{lem-rotate-tilt}
Assume that $X$ is Cohen--Macaulay. For any $(b,w)\in U^{\gamma}_{X, H}$ and $\mu\in \RR$, the pair $$\sigma^{b,w,\mu}\coloneqq (\cA^{b,w,\mu}(X), Z^{b,w,\mu})$$ is a weak stability condition on $\Db(X)$ with respect to $\Lambda_{\leq 2}$.
\end{lemma}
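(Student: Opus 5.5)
The plan is to apply the abstract tilting result, Lemma \ref{lem-tilt-weak-stab}, to the weak stability condition $\sigma^{b,w}=(\cA^b(X),Z^{b,w})$. This requires verifying that $\sigma^{b,w}$ has the tilting property in the sense of Definition \ref{def-tilting}. Once this is known, the rotation at slope $\mu$ (phase angle determined by $\mu$) produces the weak stability condition $(\cA^{b,w,\mu}(X), Z^{b,w,\mu})$. Its support property comes from the same quadratic form as for $\sigma^{b,w}$, since $Z^{b,w,\mu}$ is a complex multiple of $Z^{b,w}$. Note also that the tilt at slope $\mu$ is exactly the rotated heart of Section \ref{subsec-tilting-property}, after reparametrizing the central charge by the factor $\frac{1}{\mathfrak{i}-\mu}$.

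First, by Theorem \ref{thm-bms}, $\sigma^{b,w}$ is a weak stability condition with respect to $\Lambda_{\leq 2}$ for $(b,w)\in U^{\gamma}_{X,H}$. For the tilting property, I would invoke the absolute case $S=\Spec(\kk)$ of the first part of Theorem \ref{thm:tilt-HN-structure}. That part needs only that $f$ be Cohen--Macaulay, here that $X$ be Cohen--Macaulay, which is our hypothesis. It gives \ref{t1}: the subcategory $(\cA^b(X))^{Z^{b,w}}$ consists of sheaves supported in codimension $\geq 3$ (via Lemma \ref{lem:supp-codim-and-bv} and Lemma \ref{lem:weakstabilityfunction}) and is a Noetherian torsion subcategory. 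It also gives \ref{t2}, and indeed \ref{t3}, via $E\mapsto E^{\sharp\sharp}$ from Lemma \ref{lem-blms-2.19}.

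The key point in that argument is the vanishing of $\Hom(A, E^{\sharp\sharp}[1])$ for $A$ supported in codimension $\geq 3$. This uses the two-term presentation $[C^{-1}\to C^0]$, with $C^0$ torsion-free $S_2$ and $C^{-1}=K\otimes\omega_X$. For $C^0$ we use Lemma \ref{lem-general-S2}(d). For $C^{-1}$ we need Serre duality (Lemma \ref{lem-serre-duality}) to get $\Ext^2(A,C^{-1})=\Ext^{n-2}(K^\vee\otimes A\ldots)^*$-type vanishing, which relies on $\dim A\leq n-3$. The Cohen--Macaulay hypothesis enters precisely here, through $\omega^\bullet_X=\omega_X[n]$ and the resolution property of the projective scheme $X$.

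I expect the main obstacle to be checking that the hypotheses of Lemma \ref{lem-blms-2.19} hold for every $E\in\cA^b(X)$ with $\nu^+_{b,w}(E)<+\infty$. Specifically, one needs $\Hom(K,E)=0$ for $K$ supported in codimension $\geq 2$. Such $K$ have $\nu_{b,w}=+\infty$ (they lie in $\cA^b$ with $\Im Z^{b,w}=0$), so the vanishing follows from $\nu^+_{b,w}(E)<+\infty$. One also needs that $E^{\sharp\sharp}$ lies in $\cA^b(X)$, with cokernel $T\in(\cA^b)^{Z^{b,w}}$. This holds because $T$ is supported in codimension $\geq 3$, so $\cH^{-1}$ is unchanged and $\cH^0(E^{\sharp\sharp})$ differs from $\cH^0(E)$ by such a sheaf, which preserves the $\mu$-slope conditions. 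With these checks in place, Lemma \ref{lem-tilt-weak-stab} completes the proof.
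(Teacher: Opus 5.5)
Your proposal is correct and follows the paper's own argument. The paper proves the lemma in one line by applying Lemma \ref{lem-tilt-weak-stab} to $\sigma^{b,w}$, taking the tilting property (in fact \ref{t3}, via $E\mapsto E^{\sharp\sharp}=[C^{-1}\to C^0]$) from the first part of Theorem \ref{thm:tilt-HN-structure} with $S=\Spec(\kk)$; your extra checks of the hypotheses of Lemma \ref{lem-blms-2.19} just unpack that theorem's proof.

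One point that you and the paper both leave implicit: $\frac{1}{\mathfrak{i}-\mu}Z^{b,w}$ is not literally the rotated charge of \eqref{eq:Zb-def}. It differs from that charge by an invertible real-linear map that preserves the upper half-plane, the ordering of slopes and the kernel, so semistability, the HN property and the support property are unaffected.
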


\begin{proof}
By Lemma \ref{lem-tilt-weak-stab}, this follows from the fact that $\sigma^{b,w}$ has the tilting property as proved in Theorem \ref{thm:tilt-HN-structure}.
\end{proof}

Its relative version follows the same proof as \cite[Proposition 26.1]{BLMNPS21}.

\begin{proposition}\label{prop-rotate-tilt}
Let $(f\colon X\to S, \cL, \gamma)$ be as in Setup \ref{setup-gamma-relative} such that $B_{\gamma}=0$, $2\leq n\leq 3$, and $f$ is Cohen--Macaulay and admissible. Then for any $(b,w)\in \QQ^2\cap U^{\gamma}_{X/S,\cL}$ and $\mu\in \QQ$, the collection $\underline{\sigma}^{b,w,\mu}\coloneqq (\sigma_{s}^{b,w,\mu})_{s\in S}$ is a weak stability condition on $\Db(X)$ over $S$ with respect to $\Lambda$.
\end{proposition}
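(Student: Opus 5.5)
The plan is to follow the proof of \cite[Proposition 26.1]{BLMNPS21} verbatim, feeding in the ingredients already established for the collection $\underline{\sigma}^{b,w}$ in Theorem \ref{thm:tilt-HN-structure}. Since $(b,w)\in \QQ^2$, that theorem shows that $\underline{\sigma}^{b,w}$ is a weak stability condition on $\Db(X)$ over $S$ with respect to $\Lambda$. Each fiber $\sigma^{b,w}_s$ has the tilting property and satisfies \ref{t3}, because $f$ is Cohen--Macaulay. For every Dedekind $C\to S$ essentially of finite type, $\sigma^{b,w}_C$ is a weak HN structure and $\cA^b(X_C)$ has a $C$-torsion theory. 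Lemma \ref{lem-rotate-tilt} then gives the fiberwise statement that each $\sigma^{b,w,\mu}_s$ is a weak stability condition. It remains to check the family conditions \ref{c1}, \ref{c2}, \ref{wc1}--\ref{wc3}, \ref{b1} and \ref{b2} for the rotated collection.

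The first step is \ref{c1}, \ref{wc1} and \ref{b1}. The central charge $Z^{b,w,\mu}$ is $Z^{b,w}$ composed with a fixed $\QQ[\mathfrak{i}]$-linear map, since $\mu\in\QQ$ and $(b,w)\in \QQ^2$. Local constancy therefore follows from \ref{c1} for $\underline{\sigma}^{b,w}$. The support property holds with the same quadratic form by Lemma \ref{lem-tilt-weak-stab}. That lemma also identifies $(\cA^{b,w,\mu})^{Z}$ with $(\cA^b)^{Z^{b,w}}$, the sheaves supported in codimension $\geq 3$, which is a Noetherian torsion subcategory.

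The second step is \ref{wc3}. Given $C\to S$, I would check that $\sigma^{b,w}_C$ satisfies the tilting property of Definition \ref{def-tilting-hn}. Condition \ref{t'1} holds because the $Z$-null objects are the $C$-torsion-free or fiber-torsion sheaves supported in codimension $\geq 3$ on fibers. For \ref{t'2}, by \cite[Remark 19.3]{BLMNPS21} it suffices to treat $C$-torsion-free objects and objects $i_{p*}F$. I would construct $\wt{E}$ by a relative version of the $E^{\sharp\sharp}$ construction of Lemma \ref{lem-blms-2.19}: apply the relative dual $\DD_1^{X_C}$, which preserves $C$-perfectness by Lemma \ref{lem-S-perf-lem-3} since $f$ is Gorenstein in the lci or normal-threefold cases. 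Failing that, I would take the maximal extension by objects with fiberwise support of codimension $\geq 3$, as was done for $E^H$ in Proposition \ref{prop-rotate-slope-real-b}. Lemma \ref{lem-tilt-weak-hn} then yields that the rotated pair $\sigma^{b,w,\mu}_C$ is a weak HN structure.

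The third step covers \ref{c2}, \ref{wc2} and \ref{b2}. Semistable objects of $\sigma^{b,w,\mu}_s$ are, up to shift, exactly the $\nu_{b,w}$-semistable objects, so the moduli functors of stable objects coincide up to shift. Boundedness and openness of geometric stability then transfer from Theorem \ref{thm:tilt-HN-structure}(a),(b). Openness of the heart, which is needed for \ref{wc2}, follows from semicontinuity of $\phi^{\pm}$, obtained by applying Lemma \ref{lem:openness-flat} to $\underline{\sigma}^{b,w}$.

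I expect the main obstacle to be \ref{t'2} over $C$ in the second step. It requires the double-dual construction to behave well in families over a Dedekind base and to commute with restriction to fibers, and this is where the Cohen--Macaulay and Gorenstein hypotheses are essential.
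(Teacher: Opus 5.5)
Your outline follows the paper's route: run the argument of \cite[Proposition 26.1]{BLMNPS21}, i.e.\ that of Proposition \ref{prop-rotate-slope-real-b}, with Theorem \ref{thm:tilt-HN-structure} and Lemma \ref{lem-rotate-tilt} as input. Steps 1--2 are essentially fine, but Step 3 rests on a false claim. For $n=3$ the category $(\cA^b(X_s))^{Z^{b,w}}$ of $0$-dimensional sheaves is nonzero. By Lemma \ref{lem:classify-rotate-stable-abstract}, the $\sigma^{b,w,\mu}_s$-semistable objects with $\Im Z^{b,w}<0$ are extensions $A[1]\to E\to B$, with $A$ $\nu_{b,w}$-semistable of slope $\leq\mu$ and $B$ a $0$-dimensional sheaf. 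They are not merely shifts of $\nu_{b,w}$-semistable objects. (For $n=2$ your claim is correct, since then this category is zero.)

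Non-split extensions really occur among the stable objects. Let $A$ be a $\nu_{b,w}$-stable reflexive sheaf with $\nu_{b,w}(A)<\mu$ that is not locally free at a smooth point $x$. Then $\Ext^2(\kappa(x),A)\neq 0$, and a nonzero class gives a $\sigma^{b,w,\mu}_s$-stable $E$ whose $\cA^b$-cohomology objects are $A$ (degree $-1$) and $\kappa(x)$ (degree $0$). So $E$ is not a shift of any object of $\cA^b(X_s)$, the moduli functors do not agree up to shift, and \ref{b2}, \ref{c2}, \ref{wc2} cannot simply be transferred from Theorem \ref{thm:tilt-HN-structure}.

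This is why the paper's proof invokes Lemma \ref{lem:classify-rotate-stable-abstract}. It is also why Theorem \ref{thm:tilt-HN-structure}(a) records the strong boundedness $\cM^{\st}_{\underline{\sigma}^{b,w}}(v+x\bp)=\varnothing$ for $x\gg 0$, which your argument never uses. For fixed $\bv(E)$ one has $\bv_{\leq 2}(A)=-\bv_{\leq 2}(E)$ and $\bv_3(B)=\bv_3(E)+\bv_3(A)\geq 0$. Strong boundedness bounds $\bv_3(A)$ from above, hence bounds the length of $B$, and then $E$ lies in a bounded family by \cite[Lemma 9.6]{BLMNPS21}. Openness of geometric stability and \ref{wc2} must likewise be re-derived from this classification, controlling $A$ and $B$ in families as in the proof of Proposition \ref{prop-rotate-slope-real-b}.

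A smaller point in Step 2: $f$ need not be Gorenstein, so Lemma \ref{lem-S-perf-lem-3} is unavailable. Admissible case (4) allows fibers such as $\PP(1,1,1,2)$, whose $\frac{1}{2}(1,1,1)$ point is $\QQ$-factorial, isolated and Cohen--Macaulay but not Gorenstein; case (2) allows $\QQ$-Gorenstein surfaces. The lemma is also unnecessary, because \ref{t'2} is a statement inside $\cA^b(X_C)$, so nothing needs to commute with restriction to fibers.

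For $C$-torsion-free $E$ with $\mu^+<+\infty$, apply Lemma \ref{lem-blms-2.19} on the Cohen--Macaulay scheme $X_C$ itself; its Hom-vanishing hypothesis holds as in the proof of Theorem \ref{thm-lift-tilt-ss}. Take $\wt E$ to be the preimage in $E^{\sharp\sharp}$ of the maximal subsheaf of $E^{\sharp\sharp}/E$ whose support has codimension $\geq 3$ in every fiber. For $E=i_{p*}F$, take $i_{p*}F^{\sharp\sharp}$. So the step you flagged as the main obstacle is routine, and the real additional input belongs in Step 3.
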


\begin{proof}
As explained in \cite[Proposition 26.1]{BLMNPS21}, using Theorem \ref{thm:tilt-HN-structure}, Lemma \ref{lem-rotate-tilt}, and Lemma \ref{lem:classify-rotate-stable-abstract}, the same argument in Proposition \ref{prop-rotate-slope-real-b} applies verbatim.
\end{proof}

\section{Bayer–Macr\`i–Toda Conjecture}\label{sec:bmt}

We now focus on threefolds. We first discuss conjectural $\bch_3$-inequalities in Section \ref{subsec-bmt-conj}, generalizing the formulations in \cite{bayer2011bridgeland,bayer2016space}. Next, Theorem \ref{thm-stab} shows that these inequalities can be used to construct stability conditions. Finally, the behavior of $\bch_3$-inequalities under degeneration is investigated in Theorem \ref{thm-degeneration}.

Throughout this section, we follow the simplification of the notation as in Section \ref{sec:tilt-3}. We fix a triple $(X, H, \gamma)$ as in Setup \ref{setup-gamma} that has the standard BG function with $n=3$ such that either

\begin{enumerate}[(1)]
    \item $d=3$, i.e.~$X$ is lci, or

    \item $X_{\overline{\kk}}$ is normal integral $\QQ$-factorial and is lci in codimension $2$.
\end{enumerate}

\subsection{Conjectural inequalities}\label{subsec-bmt-conj}

Fix a linear map $\Gamma\in \Hom_{\QQ}(\CH^1_{\num}(X)_{\QQ}, \QQ)$ such that $\Gamma(H)\geq 0$. We set
\[\Gamma.D\coloneqq \Gamma(D)\]
for any $D\in \CH^1_{\num}(X)_{\QQ}$.

The following conjecture is first proposed by \cite{bayer2011bridgeland} in the smooth case. Recall that for any $b\in \RR$ and $v \in \Lambda$, we define
\[v^{b}_i\coloneqq\sum_{j=0}^i \frac{(-b)^j}{j!}v_{i-j}\in \Lambda_{\RR},\]
where $v_k$ is the $k$-th component of $v$.

\begin{conjecture}[{\cite[Conjecture 1.3.1]{bayer2011bridgeland}}]\label{conj-1}
Let $(b,w)\in \cU$ and $E\in \cA^{b}(X)$ be a $\nu_{b,w}$-stable object with $\nu_{b,w}(E)=b$. Then
\begin{equation}\label{eq-conj-8.1}
    \bv_3^{b}(E)\leq \frac{2w-b^2}{6}\bv_1^{b}(E)+\Gamma.\bch^{\gamma}_1(E)-b\frac{\Gamma.H}{H^3}\bv_0(E).
\end{equation}
\end{conjecture}

It is useful to set $a\coloneqq \sqrt{2w-b^2}$. Then we have
\[\nu_{b,w}(E)=\frac{\bv_2^{b}(E)-\frac{a^2}{2}\bv_0(E)}{\bv_1^{b}(E)}+b=\mu_{a,b}(E)+b\]
and \eqref{eq-conj-8.1} becomes
\[\bv_3^{b}(E)\leq \frac{a^2}{6}\bv_1^{b}(E) +\Gamma.\bch^{\gamma}_1(E)-b\frac{\Gamma.H}{H^3}\bv_0(E),\]
which coincides with \cite{bayer2011bridgeland}.

Following \cite{bayer2016space}, we introduce two further formulations, which will be shown to be equivalent.

\begin{conjecture}[{\cite[Conjecture 4.1]{bayer2016space}}]\label{conj-2}
Let $(b,w)\in \cU$ and $E\in \cA^{b}(X)$ be a $\nu_{b,w}$-semistable object. Then
\begin{align*}
0 \leq Q^{\Gamma}_{b,w}(E)
&\coloneqq (2w-b^2)\!\left(\bDelta(E)+3\,\frac{\Gamma. H}{H^3}(\bv_0(E))^2\right) \\
&\quad +\,2\bv^b_2(E)\left(2\,\bv_2^b(E)-3\frac{\Gamma. H}{H^3}\bv_0(E)\right) \\
&\quad -\,6\bv_1^b(E)\left(\bv_3^b(E)-\Gamma.\bch^{\gamma}_1(E)+b\frac{\Gamma.H}{H^3}\bv_0(E)\right). 
\end{align*}
\end{conjecture}

If $\Gamma=0$, then we write $Q^{\Gamma}_{b,w}$ as $Q_{b,w}$ for simplicity.

\begin{remark}\label{rmk:compute-bmt-at-wall}
Note that by rearranging, we find that
\begin{align}\label{Q-simple}
    \frac{1}{2}Q^{\Gamma}_{b,w}(E) =  & w\left((\bv_1(E))^2-2\bv_0(E)\bv_2(E) +3 \frac{\Gamma.H}{H^3} (\bv_0(E))^2\right) \notag \\
    & +b \big(3\bv_0(E)\bv_3(E)-\bv_1(E)\bv_2(E) -3\bv_0(E)\Gamma.\bch^{\gamma}_1(E)\big)\\
    & +2(\bv_2(E))^2-3\bv_1(E)\bv_3(E) -3 \frac{\Gamma.H}{H^3}\bv_0(E)\bv_2(E) +3\bv_1(E)\Gamma.\bch^{\gamma}_1(E)  . \nonumber
\end{align}
In particular, if $\ell$ is a wall for $E$, then for any point $(b,w)\in \ell$, the value of $\frac{1}{\bv_1^b(E)}Q^{\Gamma}_{b,w}(E)$ remains unchanged.
\end{remark}

\begin{remark}\label{rmk:ch3}
If $\gamma=(1,0,-\mathsf{D}H^2,0)$ for a constant $\mathsf{D}$, the above inequality can be written as
\begin{align*}
0 \leq 
& (2w-b^2)\!\left((\bch_1(E).H^2)^2-2(\bch_0(E).H^3)(\bch_2(E).H)+\left(2\mathsf{D}+3\,\frac{\Gamma. H}{H^3}\right)(\bch_0(E).H^3)^2\right) \\
&+\,2\mathsf{L}\left(2\mathsf{L}-3\frac{\Gamma. H}{H^3}\bch_0(E).H^3\right)-\,6(\bch_1(E).H^2-b\bch_0(E).H^3)\left(\mathsf{M}-\Gamma.\bch_1(E)+b\frac{\Gamma.H}{H^3}\bch_0(E).H^3\right), 
\end{align*}
where
\[\mathsf{L}\coloneqq \bch_2(E).H-b\bch_1(E).H^2+\left(\frac{1}{2}b^2-\mathsf{D}\right)\bch_0(E).H^3\]
and
\[\mathsf{M}\coloneqq \bch_3(E)-b\bch_2(E).H+\left(\frac{1}{2}b^2-\mathsf{D}\right)\bch_1(E).H^2-\left(\frac{1}{6}b^3-b\mathsf{D}\right)\bch_0(E).H^3.\]
\end{remark}

For any $E\in \cA^{b}(X)$, we define 
\[
\overline{b}(E) \coloneqq
\begin{cases}
\frac{\bv_2(E)}{\bv_1(E)} & \text{if } \bv_0(E) = 0, \bv_1(E)\neq 0 \\ \\[10pt]
\frac{\bv_1(E) - \sqrt{\bDelta(E)}}{\bv_0(E)} & \text{if } \bv_0(E) \neq 0.
\end{cases}
\]
In particular, we have
\begin{equation}\label{eq-barbeta}
    \bv_2^{\overline{b}(E)}(E)=0
\end{equation}
and $\bv_1^{\overline{b}(E)}(E)=\sqrt{\bDelta(E)}$ or $\bv_1(E)$. Note that $\left(\overline{b}(E),\frac{(\overline{b}(E))^2}{2}\right)$ is the end point of the curve $\nu_{b,w}(E)=b$ in $\cU$.

We say an object $E\in \Db(X)$ is \emph{$\overline{b}$-stable} if $E$ or $E[1]\in \cA^{\overline{b}(E)}(X)$ is $\nu_{b,w}$-stable for a neighborhood $U$ of $\left(\overline{b}(E),\frac{(\overline{b}(E))^2}{2}\right)$ and any $(b,w)\in U\cap \cU$.

\begin{conjecture}[{\cite[Conjecture 5.3]{bayer2016space}}]\label{conj-3}
Let $E\in \Db(X)$ be a $\overline{b}$-stable object. Then
\[\bv_3^{\overline{b}(E)}(E)\leq \Gamma.\bch^{\gamma}_1(E)-\overline{b}(E)\frac{\Gamma.H}{H^3}\bv_0(E).\]
\end{conjecture}

As in \cite{bayer2016space}, the three conjectures above are equivalent.

\begin{theorem}\label{thm:equivalent-conj}
Conjecture \ref{conj-3} is equivalent to Conjecture \ref{conj-1}, which is also equivalent to Conjecture \ref{conj-2}.
\end{theorem}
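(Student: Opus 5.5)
The plan is to follow the argument of \cite[Theorem 4.2, Theorem 5.4]{bayer2016space} essentially verbatim, checking that each ingredient is available in our singular setting. The only inputs used there are: the wall-chamber structure for $\nu_{b,w}$ (here Theorem \ref{thm:wall-chamber-tilt}, with $\mathsf{g}=\frac{x^2}{2}$ since $(X,H,\gamma)$ has the standard BG function); the fact that $\Pi(E)\notin\cU$ for semistable $E$ with $\bv_0(E)\neq 0$; the decrease of $\bDelta$ along JH factors (Lemma \ref{lem-delta-JH}); the invariance of $\frac{1}{\bv_1^b(E)}Q^{\Gamma}_{b,w}(E)$ along walls (Remark \ref{rmk:compute-bmt-at-wall}); and the behaviour of semistable objects with $\bDelta=0$ and near the vertical wall (Lemmas \ref{lem-vertical-wall}, \ref{lem-delta-0}, \ref{lem-large-volume}). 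We also need that $\Gamma.\bch_1^{\gamma}$ is additive in $\KK_0(X)$, which is clear, and that the correction terms involving $\Gamma.H$ transform under $b\mapsto b'$ exactly as in the smooth case. This amounts to rewriting with $\bv^b_i$ and is a formal computation.

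\textbf{Conjecture \ref{conj-2} $\Rightarrow$ Conjecture \ref{conj-1}.} Take $E$ $\nu_{b,w}$-stable with $\nu_{b,w}(E)=b$, so that $\bv_2^b(E)=\frac{a^2}{2}\bv_0(E)$. Substituting into $Q^{\Gamma}_{b,w}(E)\geq 0$ makes the $\bDelta$ and $\bv_2^b$ terms combine into $a^2\bv_1^b(E)^2$ plus $\Gamma$-terms. Dividing by $6\bv_1^b(E)>0$ gives \eqref{eq-conj-8.1}; the case $\bv_1^b(E)=0$ cannot occur, since then $\nu=+\infty\neq b$.

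\textbf{Conjecture \ref{conj-1} $\Rightarrow$ Conjecture \ref{conj-3}.} Take $E$ $\overline{b}$-stable. Approach the endpoint $(\overline{b}(E),\overline{b}(E)^2/2)$ along the curve $\nu_{b,w}(E)=b$ inside the neighbourhood $U$, apply \eqref{eq-conj-8.1}, and let $a\to 0$. Using \eqref{eq-barbeta}, this yields the inequality of Conjecture \ref{conj-3} by continuity.

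\textbf{Conjecture \ref{conj-3} $\Rightarrow$ Conjecture \ref{conj-2}.} This is the main obstacle, and I would argue by induction on $\bDelta(E)$, which takes values in a discrete set $\frac{1}{N}\ZZ_{\geq 0}$ because $\Lambda$ is a lattice. Given $E$ that is $\nu_{b,w}$-semistable, first reduce to stable objects by JH filtrations. This uses Lemma \ref{lem-delta-JH} and the fact that $Q^{\Gamma}_{b,w}$ is a quadratic form that is nonnegative on a cone containing the classes of all JH factors, which have the same $\nu_{b,w}$ and satisfy $Q\geq 0$ on that wall, following \cite[Lemma A.6]{bayer2016space}.

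For stable $E$, move $(b,w)$ along the line through $(b,w)$ and $\Pi(E)$ (or the vertical line if $\bv_0=0$) towards the boundary $w=b^2/2$. By Remark \ref{rmk:compute-bmt-at-wall}, $Q/\bv_1^b$ is constant on this line. Either we hit a wall, where $E$ becomes strictly semistable with factors of strictly smaller $\bDelta$ and induction applies, or $E$ stays stable up to the boundary point. In the latter case the point is exactly $(\overline{b}(E),\overline{b}(E)^2/2)$, so $E$ is $\overline{b}$-stable, and Conjecture \ref{conj-3} gives $Q\geq 0$ there.

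The delicate points are these. First, the base case and objects with $\bv_1^{b}=0$ (vertical walls), handled via Lemma \ref{lem-vertical-wall}. Second, ensuring that walls are locally finite all the way to the boundary of $\cU$, which I would derive from Theorem \ref{thm:wall-chamber-tilt} plus the bound $\bDelta\geq 0$ exactly as in \cite[Lemma 5.5]{bayer2016space}.
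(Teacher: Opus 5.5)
Your directions Conjecture~\ref{conj-2} $\Rightarrow$ Conjecture~\ref{conj-1} and Conjecture~\ref{conj-1} $\Rightarrow$ Conjecture~\ref{conj-3} agree with the paper. The step Conjecture~\ref{conj-3} $\Rightarrow$ Conjecture~\ref{conj-2}, however, fails as written.

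Let $\ell$ be the line through $(b,w)$ and $\Pi(E)$. It is the numerical wall of $E$ through $(b,w)$, and its slope is $s=\nu_{b,w}(E)$. Two things go wrong.

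\emph{No walls are crossed.} By Theorem~\ref{thm:wall-chamber-tilt}(a) every wall for $E$ is a line through $\Pi(E)$. So no other wall meets $\ell$ inside $\cU$, and sliding along $\ell$ you never cross one. The alternative ``hit a wall and induct'' therefore never occurs.

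\emph{You never reach the right point.} If $\bDelta(E)>0$, the boundary point you reach is never $(\overline b(E),\overline b(E)^2/2)$. Since $\bv_2^{\overline b(E)}(E)=0$ by \eqref{eq-barbeta}, the numerical wall of $E$ through $(\overline b,\overline b^2/2)$ has slope $\overline b$. It is exactly the tangent line to $w=b^2/2$ at that point, so it contains no point of $\cU$. At the actual endpoint $(x,x^2/2)$ of $\ell\cap\cU$ one has $\bv_2^{x}(E)\neq 0$, and Conjecture~\ref{conj-3} says nothing about $Q^{\Gamma}_{x,x^2/2}(E)$.

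The case $\bv_0(E)=0$ is the same. There the walls are parallel of slope $\overline b(E)=\bv_2/\bv_1$, the one through $(\overline b,\overline b^2/2)$ is again tangent, and $Q/\bv_1^b$ is not constant on a vertical line. In short, constancy of $Q^{\Gamma}_{b,w}(E)/\bv_1^b(E)$ only moves you within one member of the pencil of numerical walls, while Conjecture~\ref{conj-3} sits at its tangent limit.

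The missing ingredient is a monotonicity statement that moves transversally to this pencil. This is where $\bDelta(E)\geq 0$ and $\Gamma.H\geq 0$ genuinely enter; your argument never uses $\Gamma.H\ge 0$, which is a symptom of the gap.

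The paper moves along the curve $\nu_{b,w}(E)=b$, i.e.\ $\bv_2^b(E)=\frac{a^2}{2}\bv_0(E)$. This curve ends at $(\overline b,\overline b^2/2)$ and meets each line $\ell$ at the point whose $b$-coordinate equals the slope of $\ell$. Along it one computes
\[
\frac{d}{da}\Bigl(\bv_3^b-\tfrac{a^2}{6}\bv_1^b-\Gamma.\bch_1^{\gamma}+b\tfrac{\Gamma.H}{H^3}\bv_0\Bigr)=-\tfrac{a}{3}\bv_1^b\bigl(1-(b')^2\bigr)+b'\tfrac{\Gamma.H}{H^3}\bv_0\le 0,
\]
since $(b')^2\le 1$ follows from $\bDelta(E)\ge 0$, and $b'\bv_0\le 0$. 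Walls crossed along this curve are handled by induction on $\bDelta$ (Lemma~\ref{lem-delta-JH}). The induction closes because the inequality of Conjecture~\ref{conj-1} at a fixed point is linear in the class. This gives Conjecture~\ref{conj-3} $\Rightarrow$ Conjecture~\ref{conj-1}.

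Your sliding argument is then exactly the right tool for Conjecture~\ref{conj-1} $\Rightarrow$ Conjecture~\ref{conj-2}, provided you stop where $\ell$ meets the curve $\nu_{b,w}(E)=b$ rather than at the boundary. At that point $E$ is still semistable, and Conjecture~\ref{conj-1} applied to its Jordan--H\"older factors gives $Q^{\Gamma}\ge 0$ there. Constancy of $Q^{\Gamma}/\bv_1^b$ along $\ell$ then carries this back to $(b,w)$.
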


\begin{proof}
First, we show that Conjecture \ref{conj-3} implies Conjecture \ref{conj-1}. Assume that Conjecture \ref{conj-3} holds. Given a $\nu_{b,w}$-stable object $E\in \cA^{b}(X)$ with $\nu_{b,w}(E)=b$. We claim that the function 
\begin{equation}\label{eq-1}
    f(a,b)\coloneqq\bv_3^{b}(E)-\frac{a^2}{6}\bv_1^{b}(E)-\Gamma.\bch^{\gamma}_1(E)+b\frac{\Gamma.H}{H^3}\bv_0(E)
\end{equation}
increases as $a$ decreases along $\nu_{b,w}(E)=b$, or in other words, along the curve $\mu_{a,b}(E)=0$. Indeed, $\nu_{b,w}(E)=b$ is equivalent to
\begin{equation}\label{eq-w}
    \bv_2^{b}(E)=\frac{a^2}{2}\bv_0(E).
\end{equation}
Applying $d/da$ to both sides of \eqref{eq-w}, we get
\begin{equation}\label{eq-2}
    -b'\bv_1^{b}(E)=a\bv_0(E)
\end{equation}
where $b'\coloneqq db/da$. In particular, $b'\bv_0(E)\leq 0$. Then by \eqref{eq-w} and \eqref{eq-2}, we get
\begin{align*}
\frac{df(a,b)}{da}|_{\mu_{a,b}(E)=0}=&-b'\bv_2^{b}(E)-\frac{a}{3}\bv_1^{b}(E)+\frac{a^2}{6}b'\bv_0(E)+b'\frac{\Gamma.H}{H^3}\bv_0(E)\\
=&-\frac{a}{3}\bv_1^{b}(E)-\frac{a^2}{3}b'\bv_0(E)+b'\frac{\Gamma.H}{H^3}\bv_0(E)\\
=&-\frac{a}{3}\bv_1^{b}(E)(1-(b')^2)+b'\frac{\Gamma.H}{H^3}\bv_0(E).
\end{align*}
Therefore, by $\bv^{b}_1(E)> 0$, $b'\bv_0(E)\leq 0$, and $\Gamma.H\geq 0$, to prove the claim, it suffices to prove $1-(b')^2\geq 0$. To this end, we first assume that $\bv_0(E)=0$. Then \eqref{eq-w} implies $\bv_2(E)=b\bv_1(E)$. If $\bv_1(E)\neq 0$, then it is clear that $1-(b')^2=1\geq 0$. When $\bv_1(E)= 0$, then we have $\bv_1^{b}(E)=0$, which is impossible since $\nu_{b,w}(E)=b<+\infty$. Now we assume that $\bv_0(E)\neq 0$. By solving \eqref{eq-w}, we have
\[b=\frac{\bv_1(E)\pm\sqrt{\bDelta(E)+a^2(\bv_0(E))^2}}{\bv_0(E)}.\]
Therefore, we know that
\[(b')^2=\frac{a^2(\bv_0(E))^2}{\bDelta(E)+a^2(\bv_0(E))^2}\]
and the claim follows from $\bDelta(E)\geq 0$.

Now we do induction on $\bDelta(E)$. If $\bDelta(E)$ is minimal among all tilt-semistable objects, then $E$ is $\overline{b}$-stable by Lemma \ref{lem-delta-JH}. After taking limit $(a,b)\to (0,\overline{b}(E))$ in \eqref{eq-1}, by the monotonicity of $f(a,b)$, we see
\begin{align*}
&\bv_3^{b}(E)-\frac{a^2}{6}\bv_1^{b}(E)-\Gamma.\bch^{\gamma}_1(E)+b\frac{\Gamma.H}{H^3}\bv_0(E)\\
&\leq \bv_3^{\overline{b}}(E)-\Gamma.\bch^{\gamma}_1(E)+b\frac{\Gamma.H}{H^3}\bv_0(E)\\
&\leq 0
\end{align*}
by Conjecture \ref{conj-3} as desired. If $\bDelta(E)$ is not minimal and $E$ is still $\overline{b}$-stable, we are also done. If $\bDelta(E)$ is not minimal and $E$ is not $\overline{b}$-stable, then $E$ is destabilized along a wall between $(b,w)$ and $\left(\overline{b}(E),\frac{\overline{b}(E)^2}{2}\right)$. Let $F_1,\dots,F_m$ be the stable factors of $E$ along this wall. By Lemma \ref{lem-delta-JH} and the induction hypothesis, the inequality holds for $F_1,\dots,F_m$ on the wall, and so it does for $E$ by linearity of $\bch^{\gamma}_i(-)$ and $\bv_i(-)$. Then the inequality for $E$ at $(b,w)$ follows from the monotonicity of $f(a,b)$.

Now, we prove that Conjecture \ref{conj-1} implies Conjecture \ref{conj-3}. Let $E\in \cA^{\overline{b}(E)}(X)$ be a $\overline{b}$-stable object. Then we can directly take the limit $(a,b)\to (0,\overline{b}(E))$ to \eqref{eq-conj-8.1}, which gives $$\bv_3^{\overline{b}}(E)\leq \Gamma.\bch^{\gamma}_1(E)-\overline{b}\frac{\Gamma.H}{H^3}\bv_0(E).$$

Next, we show that Conjecture \ref{conj-2} implies Conjecture \ref{conj-1}. Assume that Conjecture \ref{conj-2} holds. Let $E\in \cA^{b}(X)$ be a $\nu_{b,w}$-stable object with $\mu_{a,b}(E)=0$. Then we have $\bv_2^{b}(E)=\frac{a^2}{2}\bv_0(E)$. Therefore, we get
\[Q^{\Gamma}_{b,w}(E)=a^2(\bv_1^b(E))^2-6\bv_1^b(E)\left(\bv_3^b(E)-\Gamma.\bch^{\gamma}_1(E)+b\frac{\Gamma.H}{H^3}\bv_0(E)\right)\geq 0.\]
Since $E\in \cA^{b}(X)$, we have $\bv_1^{b}(E)\geq 0$. Moreover, by the assumption $\nu_{b,w}(E)=b$, we see $\bv_1^{b}(E)>0$. Then from $Q^{\Gamma}_{b,w}(E)\geq 0$, we get 
\[\bv_3^{b}(E)\leq \frac{a^2}{6}\bv_1^{b}(E) +\Gamma.\bch^{\gamma}_1(E)-b\frac{\Gamma.H}{H^3}\bv_0(E)\]
as desired.

Finally, the implication of Conjecture \ref{conj-1} to Conjecture \ref{conj-2} follows from the same calculation as in  \cite[Proposition 2.8]{macri:fano-threefold} or \cite[Theorem 4.2]{bayer2016space}.
\end{proof}



\subsection{Construction of stability conditions}

Using Conjecture \ref{conj-3} and results in Section \ref{sec:general-tilt}, we can construct a continuous family of stability conditions on $\Db(X)$ as in \cite[Section 8]{bayer2016space}.

Recall that $a\coloneqq \sqrt{2w-b^2}$. For any $\Gamma\in \Hom_{\QQ}(\CH^1_{\num}(X)_{\QQ}, \QQ)$, we define a new graded lattice $\Lambda^{\Gamma}$ as the image of
\[\Knum(X)\to \QQ^4,\quad E\mapsto (\bv_0(E), \bv_1(E), \bv_2(E),\bv_3(E)-\Gamma.\bch_1^{\gamma}(E)).\]
Then we also define a homomorphism $Z^{a,b}_{\alpha,\beta}\colon \Lambda^{\Gamma} \to \CC$ by
\[Z^{a,b}_{\alpha,\beta}(v)\coloneqq-v_3^{b}-b\frac{\Gamma.H}{H^3}v_0+\beta v_2^{b}+\alpha v_1^{b}+\mathfrak{i}\left(v_2^{b}-\frac{1}{2}a^2v_0\right).\]
In particular, for any $E\in \Db(X)$, we have
\[Z^{a,b}_{\alpha,\beta}(E)=-\bv_3^{b}(E)+\Gamma.\bch^{\gamma}_1(E)-b\frac{\Gamma.H}{H^3}\bv_0(E)+\beta\bv_2^{b}(E)+\alpha\bv_1^{b}(E)+\mathfrak{i}\left(\bv_2^{b}(E)-\frac{1}{2}a^2\bv_0(E)\right).\]
We also define a heart $\cA^{a,b}(X)$ by
\[\cA^{a,b}(X)\coloneqq \langle \cF^{a,b}[1], \cT^{a,b} \rangle,\]
where
\[\cF^{a,b}\coloneqq\langle E\in \cA^{b}(X) \text{ is }\mu_{a,b}\text{-semistable with } \mu_{a,b}(E)\leq 0 \rangle\]
and
\[\cT^{a,b}\coloneqq\langle E\in \cA^{b}(X) \text{ is }\mu_{a,b}\text{-semistable with } \mu_{a,b}(E)> 0 \rangle.\]
Alternatively, we have $$\cA^{a,b}(X)=\cA^{b,\frac{a^2+b^2}{2},b}(X).$$

As in \cite[Section 8]{bayer2016space}, Conjecture \ref{conj-2} gives the following.

\begin{theorem}\label{thm-stab}
Assume that Conjecture \ref{conj-2} holds for $(X,H,\gamma)$ and $\Gamma\in \Hom_{\QQ}(\CH^1_{\num}(X)_{\QQ}, \QQ)$ with $\Gamma.H\geq 0$ when $\left(b,\frac{a^2+b^2}{2}\right)$ lies in an open subset $R\subset \cU$. Then the map
\[V_R\to \Stab_{\Lambda^{\Gamma}}(\Db(X)), \quad (a,b,\alpha,\beta)\mapsto \sigma^{a,b}_{\alpha,\beta}\coloneqq (\cA^{a,b}(X), Z^{a,b}_{\alpha,\beta})\]
is a continuous embedding, where
\[V_R\coloneqq \left\{(a,b,\alpha,\beta)\in \RR^4 \colon a>0,\alpha>\frac{1}{6}a^2+\frac{1}{2}|\beta|a, ~\left(b,\frac{a^2+b^2}{2}\right)\in R\right\}.\]
\end{theorem}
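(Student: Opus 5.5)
The plan is to follow \cite[Section 8]{bayer2016space} closely: rotate the tilt-stability $\sigma^{b,w}$ once more, and obtain the stability conditions $\sigma^{a,b}_{\alpha,\beta}$ as a deformation of a ``limit'' pair. First fix $(a,b)$ with $\left(b,\frac{a^2+b^2}{2}\right)\in R$ and $(b,\frac{a^2+b^2}{2})\in \QQ^2$. By Lemma \ref{lem-rotate-tilt} (which applies since $X$ is lci or normal $\QQ$-factorial, in particular Cohen--Macaulay in the cases we need; otherwise we use the tilting property via Lemma \ref{lem-blms-2.19}), $\cA^{a,b}(X)=\cA^{b,\frac{a^2+b^2}{2},b}(X)$ is the heart of a bounded t-structure. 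We then check that $Z^{a,b}_{\alpha,\beta}$ is a stability function on $\cA^{a,b}(X)$ when $\alpha>\frac16a^2+\frac12|\beta|a$. The imaginary part $\bv^b_2-\frac{a^2}{2}\bv_0$ is $\geq 0$ on $\cA^{a,b}(X)$ by construction. If it vanishes, $E$ is an extension of $F[1]$ by $T$, where $F$ is $\mu_{a,b}$-semistable with $\mu_{a,b}(F)=0$ and $T\in(\cA^b)^{Z^{b,w}}$, i.e. supported in dimension $0$. For $T$ the real part is $-\bv_3(T)<0$. For $F$ (reduce to stable factors via Jordan--H\"older and Lemma \ref{lem-delta-JH}), Conjecture \ref{conj-1}/\ref{conj-2} gives $\bv^b_3(F)-\Gamma.\bch^\gamma_1(F)+b\frac{\Gamma.H}{H^3}\bv_0(F)\leq \frac{a^2}{6}\bv^b_1(F)$. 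Hence
\[
\Re Z(F[1])\leq -\left(\alpha-\tfrac{a^2}{6}-\tfrac{\beta a^2}{2\bv^b_1(F)}\tfrac{\bv_0(F)}{1}\right)\bv^b_1(F)\,\cdots<0 .
\]
Here the $\beta$-term is controlled by $\bv_2^b=\frac{a^2}{2}\bv_0$ together with the BG inequality $|a\bv_0|\leq \bv^b_1$ on the wall, giving exactly the bound $\frac12|\beta|a$.

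Next comes the HN property. For rational $(b,w)$ the heart $\cA^b$ is Noetherian (Corollary \ref{cor:tilt-noetherian}), and $\cA^{a,b}$ is a tilt of it. We then argue as in \cite[Lemma 8.3]{bayer2016space} / \cite[Proposition 5.2.2]{bayer2011bridgeland}: the image of $\Im Z$ is discrete, and $\cA^{a,b}$ is Noetherian by the analogue of Corollary \ref{cor:Noetherian}, using the tilting property \ref{t3} of $\sigma^{b,w}$ from Theorem \ref{thm:tilt-HN-structure}. Then \cite[Proposition 4.10]{macri:lecture-bridgeland-stability} yields HN filtrations.

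For the support property, the natural candidate is the quadratic form $Q^\Gamma_{b,w}$ of Conjecture \ref{conj-2}, combined with $\bDelta$ in the manner of \cite[Theorem 8.7]{bayer2016space}. We need to check that $Q$ is negative definite on $\ker Z^{a,b}_{\alpha,\beta}$ (a linear-algebra computation in four variables $v_0,v_1^b,v_2^b,v_3^b-\Gamma$-terms, which reduces precisely to the inequality $\alpha>\frac16a^2+\frac12|\beta|a$). We also need $Q\geq 0$ on $\sigma^{a,b}_{\alpha,\beta}$-semistable objects. The latter I would prove by the deformation argument of \cite[Section 8]{bayer2016space}: for $\alpha\gg 0$ the semistable objects are (shifts of) $\nu_{b,w}$-semistable objects, where Conjecture \ref{conj-2} applies directly. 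One then moves in $(\alpha,\beta)$ using Bridgeland's deformation theorem, with $Q$ negative definite on the kernel throughout, via \cite[Lemma A.7/A.8]{bayer2016space}.

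Finally, continuity and extension to irrational $(a,b)$ follow by Bridgeland's deformation theorem together with uniqueness of deformations (as in Theorem \ref{thm:tilt-stability}'s proof, or \cite[Proposition 8.10]{bayer2016space}). We check that the deformed heart at a nearby $(a',b')$ coincides with $\cA^{a',b'}(X)$, following Lemma \ref{lem:compare-Q-Ab'}. Injectivity is immediate since the central charges differ. The main obstacle I expect is this last identification of hearts for irrational $b$ together with the positivity of $Q$ along the whole family. The key difference from the smooth case is the non-vanishing $(\cA^b)^{Z^{b,w}}$ (sheaves in dimension $0$), which forces the use of the weak-stability version of the tilting lemmas; this is what \ref{t3} is used for.
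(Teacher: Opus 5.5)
Your outline is essentially the paper's proof, step for step:
\begin{itemize}
\item the stability-function check uses the assumed $\bch_3$-inequality at $\mu_{a,b}=0$ together with $\bDelta\geq 0$, which gives $|a\,\bv_0|\leq \bv_1^b$ and hence the $\frac{1}{2}|\beta|a$ term;
\item the HN property for rational parameters comes from Noetherianity of the double tilt, as in Theorem~\ref{thm:tilt-stability};
\item the support property uses the form $K\bDelta+\nabla^{a,b,\xi}=(K-a^2)\bDelta+Q^{\Gamma}_{b,w}$, via Lemma~\ref{lem:bms-support-property} and \cite[Theorem 8.7]{bayer2016space};
\item the extension to all of $V_R$ follows \cite[Proposition 8.10]{bayer2016space}.
\end{itemize}
Your explicit check that $\Re Z^{a,b}_{\alpha,\beta}(T)=-\bv_3(T)<0$ for zero-dimensional $T$ fills in a case the paper leaves implicit.

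The one step to rephrase is the positivity of $Q$ on $\sigma^{a,b}_{\alpha,\beta}$-semistable objects. The large-$\alpha$ description of semistable objects as (shifts of) tilt-semistable ones holds only class by class. So it gives no base point where the support property is already known, and you cannot start Bridgeland's deformation theorem from there. This inequality should instead be taken from \cite[Theorem 8.7]{bayer2016space}, exactly as the paper does.
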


\begin{proof}
We first show that $Z^{a,b}_{\alpha,\beta}$ is a stability function on $\cA^{a,b}(X)$ for $(a,b,\alpha,\beta)\in V_R$. Let $F[1]\in \cA^{a,b}(X)$ be a non-zero object with $\Im Z^{a,b}_{\alpha,\beta}(F[1])=0$. Then $F\in \cA^b(X)$ is $\mu_{a,b}$-semistable. Therefore, we get 
\[\bv_3^{b}(F)-\Gamma.\bch^{\gamma}_1(F)+b\frac{\Gamma.H}{H^3}\bv_0(F)\leq \frac{a^2}{6}\bv_1^b(F).\]
So together with $\mu_{a,b}(F)=0$ and $\bDelta(F)\geq 0$, we have $(\bv_2^b(F))^2\leq \frac{1}{4}a^2(\bv_1^b(F))^2$, hence
\[\Re Z^{a,b}_{\alpha,\beta}(F[1])\leq \frac{a^2}{6}\bv_1^b(F)+\frac{1}{2}|\beta|a\bv_1^b(F)-\alpha\bv_1^b(F)<0\]
as desired.

When $(a,b,\alpha,\beta)\in V_R$ such that $(a,b)\in \QQ^2$, the same argument as in Theorem \ref{thm:tilt-stability} shows that $Z^{a,b}_{\alpha,\beta}$ satisfies the HN property. Moreover, the support property follows from Lemma \ref{lem:bms-support-property}, as the argument in \cite[Theorem 8.7]{bayer2016space} applies verbatim. Therefore, $\sigma^{a,b}_{\alpha,\beta}$ is a stability condition for $(a,b,\alpha,\beta)\in V_R$ such that $(a,b)\in \QQ^2$. The extension to the whole $V_R$ then follows identically from \cite[Proposition 8.10]{bayer2016space}.
\end{proof}

\begin{lemma}\label{lem:bms-support-property}
Let $\RR^4$ be the $4$-dimensional real vector space with coordinates $x_0,x_1,x_2,x_3$. Define a quadratic form on $\RR^4$ by $$\bDelta(x)\coloneqq x_1^2-2x_2x_0.$$ Define another quadratic form by $$\nabla^{a,b,\xi}(x)\coloneqq 3\xi a^2 x_0^2+2x_2(2x_2-3\xi x_0)-6x_1(x_3+b\xi x_0),$$ where $a\in \RR_{>0}$ and $b,\xi\in \RR$. If $\alpha>\frac{1}{6}a^2+\frac{1}{2}|\beta|a$, then there exists an open interval $I^{\alpha,\beta}_a\subset (a^2, 6\alpha)\subset \RR_{>0}$ such that the kernel of $$-x_3-b\xi x_0+\beta x_2 +\alpha x_1+\mathfrak{i}\left(x_2-\frac{1}{2}a^2x_0\right)\colon \RR^4\to \CC$$ is negative definite with respect to the quadratic form $K\bDelta+\nabla^{a,b,\xi}$ for all $K\in I^{\alpha,\beta}_a$.
\end{lemma}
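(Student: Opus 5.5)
The lemma is a linear-algebra statement, and I plan to reduce it to a two-variable computation on the kernel. Writing $W(x)\coloneqq -x_3-b\xi x_0+\beta x_2+\alpha x_1$, the kernel of the complex map is cut out by $x_2=\frac12 a^2x_0$ and $x_3=-b\xi x_0+\beta x_2+\alpha x_1$. It is therefore a $2$-dimensional subspace parametrized by $(x_0,x_1)$. On it,
\[
x_3+b\xi x_0=\tfrac12\beta a^2x_0+\alpha x_1 .
\]
Substituting gives
\[
\bDelta=x_1^2-a^2x_0^2,
\]
and
\begin{align*}
\nabla^{a,b,\xi}&=3\xi a^2x_0^2+a^2x_0(a^2x_0-3\xi x_0)-6x_1\bigl(\tfrac12\beta a^2x_0+\alpha x_1\bigr)\\
&=a^4x_0^2-3\beta a^2x_0x_1-6\alpha x_1^2 .
\end{align*}
The $\xi$-terms and $b$ cancel, which is why the interval depends only on $a,\alpha,\beta$.

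It then remains to show that the binary form
\[
q_K(x_0,x_1)=(K-6\alpha)x_1^2-3\beta a^2x_0x_1+(a^4-Ka^2)x_0^2
\]
is negative definite for $K$ in some open interval inside $(a^2,6\alpha)$. Negative definiteness is equivalent to $K<6\alpha$, $K>a^2$, and positivity of the determinant,
\[
4(6\alpha-K)(K-a^2)a^2>9\beta^2a^4, \qquad\text{i.e.}\qquad (6\alpha-K)(K-a^2)>\tfrac94\beta^2a^2 .
\]

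The function $K\mapsto(6\alpha-K)(K-a^2)$ is maximized at the midpoint $K_0=\frac{6\alpha+a^2}{2}$, with maximum value $\bigl(\frac{6\alpha-a^2}{2}\bigr)^2$. The hypothesis $\alpha>\frac16a^2+\frac12|\beta|a$ gives $6\alpha-a^2>3|\beta|a\ge 0$, hence
\[
\Bigl(\frac{6\alpha-a^2}{2}\Bigr)^2>\tfrac94\beta^2a^2 .
\]
So the strict inequality holds at $K_0$, and by continuity it holds on an open interval $I^{\alpha,\beta}_a$ around $K_0$. Shrinking this interval if needed keeps it inside $(a^2,6\alpha)$, which is nonempty since $6\alpha>a^2$.

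The only real obstacle is carrying out the substitution into $\nabla^{a,b,\xi}$ correctly, in particular confirming that the $\xi$ contributions cancel exactly. The remaining discriminant estimate is routine.
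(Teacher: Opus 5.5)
Your proof is correct and follows the paper's route. The paper restricts $K\bDelta+\nabla^{a,b,\xi}$ to the kernel in the basis $v_1=(0,1,0,\alpha)$, $v_2=(1,0,\frac12a^2,\frac12\beta a^2-b\xi)$, which corresponds exactly to your $x_1$ and $x_0$ coordinates, and gets the same $2\times 2$ matrix; it then cites \cite[Lemma 8.5]{bayer2016space} for the discriminant estimate that you carry out explicitly.
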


\begin{proof}
The kernel is spanned by vectors $v_1=(0,1,0,\alpha)$ and $$v_2=\left(1,0,\frac{1}{2}a^2, \frac{1}{2}\beta a^2-b \xi\right),$$ where $\alpha, \beta\in \RR$. Therefore, the symmetric matrix of $K\bDelta+\nabla^{a,b,\xi}$ with respect to the basis $v_1,v_2$ is
\[
M=
\begin{pmatrix}
K-6\alpha & -\frac{3}{2}\beta a^{2}\\[4pt]
-\frac{3}{2}\beta a^{2} & a^{4}-Ka^{2}
\end{pmatrix}
\]
which is the same matrix as in \cite[Lemma 8.5]{bayer2016space}. Now the result follows from the remaining argument of \cite[Lemma 8.5]{bayer2016space}.
\end{proof}

Similar to \cite[Proposition 26.3]{BLMNPS21}, we have:

\begin{theorem}
Let $(f\colon X\to S, \cL, \gamma)$ be as in Setup \ref{setup-gamma-relative} that $B_{\gamma}=0$ and has the standard BG function and $n=3$. Assume that $f$ is also Cohen--Macaulay, admissible, and each fiber of $f$ satisfies Conjecture \ref{conj-2} for some $\left(b,\frac{a^2+b^2}{2}\right)\in \cU\cap \QQ^2$ and $\Gamma\in \A^2_{\star}(X/S)_{\QQ}$, then the collection 
\[\underline{\sigma}^{a,b}_{\alpha,\beta}\coloneqq(\sigma^{a,b}_{\alpha,\beta,s}=(\cA^{a,b}(X_s), Z^{a,b}_{\alpha,\beta}))_{s\in S}\]
is a stability condition on $\Db(X)$ over $S$ with respect to $\Lambda^{\Gamma}$ for $\{(\alpha,\beta)\in \QQ^2\colon \alpha>\frac{1}{6}a^2+\frac{1}{2}|\beta|a\}$.
\end{theorem}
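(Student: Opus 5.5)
The plan follows \cite[Proposition 26.3]{BLMNPS21}: the collection $\underline{\sigma}^{a,b}_{\alpha,\beta}$ is obtained by \emph{rotating} the family of tilt-stabilities $\underline{\sigma}^{b,w}$ with $w=\frac{a^2+b^2}{2}$, and then deforming the central charge. We set $\mu=b$ in Proposition \ref{prop-rotate-tilt}. Since $(b,w)\in\QQ^2\cap\cU$, the family has the standard BG function, and $f$ is Cohen--Macaulay and admissible, that proposition shows that $\underline{\sigma}^{b,w,b}$ is a weak stability condition over $S$ with respect to $\Lambda$, with heart $\cA^{a,b}(X_s)=\cA^{b,w,b}(X_s)$ on each fiber. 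The hearts of $\underline{\sigma}^{a,b}_{\alpha,\beta}$ are exactly these.

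The verification then goes through the conditions of Definition \ref{def-flat-collection} and Definition \ref{def-stab-family}.

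\textbf{Fiberwise.} On each fiber, Theorem \ref{thm-stab} (with $R$ a small rational neighbourhood where Conjecture \ref{conj-2} holds) shows that $\sigma^{a,b}_{\alpha,\beta,s}$ is a numerical stability condition. The quadratic form $K\bDelta+\nabla^{a,b,\xi}$ from Lemma \ref{lem:bms-support-property}, with $\xi=\Gamma.H/H^3$ and $K\in I^{\alpha,\beta}_a$, is independent of $s$ because $\Gamma\in\A^2_\star(X/S)_\QQ$ and $\Gamma_s.\cL_s$ is constant. This gives \ref{b1} uniformly.

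\textbf{Local constancy.} Condition \ref{c1} follows from Lemma \ref{lem-constant-chM} and Lemma \ref{lem:constant-generalization}, since $Z^{a,b}_{\alpha,\beta}$ factors through $\bv$ and $\Gamma.\bch^\gamma_1$.

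\textbf{HN structure over Dedekind bases.} For \ref{c3}, let $C\to S$ be essentially of finite type. Theorem \ref{thm:tilt-HN-structure}(c) provides a weak HN structure $\sigma^{b,w}_C$, and its rotation (as in Proposition \ref{prop-rotate-tilt}) gives a heart $\cA^{a,b}(X_C)$ with a $C$-torsion theory. We then define $Z_C=(Z_K,Z_{C\text{-}\tor})$ from $Z^{a,b}_{\alpha,\beta}$ via \cite[Lemma 6.11]{BLMNPS21}. Since $\alpha,\beta\in\QQ$, the central charge is rational and discrete. The HN property then follows from \cite[Theorem 18.7]{BLMNPS21} or \cite[Corollary 16.5]{BLMNPS21}, exactly as in the proof of Theorem \ref{thm:tilt-HN-structure}(c). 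The inputs are generic openness of semistability and the fiberwise HN property from Theorem \ref{thm-stab}.

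\textbf{Openness of stability and boundedness (main obstacle).} The main difficulty is \ref{c2} together with boundedness \ref{b2}. The plan is to reduce both to the already-established statements for $\underline{\sigma}^{b,w}$.

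\emph{Step 1.} For a fixed class $v$, show that the $\sigma^{b,w,b}$-HN factors of any geometrically $\sigma^{a,b}_{\alpha,\beta}$-stable object of class $v$ have classes in a finite set. This uses the support property of $\sigma^{a,b}_{\alpha,\beta}$ together with \cite[Lemma 21.21]{BLMNPS21}.

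\emph{Step 2.} Boundedness of the moduli of $\nu_{b,w}$-stable objects follows from Theorem \ref{thm:tilt-HN-structure}(a). That result includes the vanishing $\cM^{\st}(v+x\bp)=\varnothing$ for $x\gg0$, which controls the $\bv_3$-direction, the only direction not seen by $Z^{b,w}$. Combined with \cite[Lemma 9.6]{BLMNPS21}, this gives \ref{b2}.

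\emph{Step 3.} For \ref{c2}, follow the Quot-space argument from the proof of Theorem \ref{thm:tilt-HN-structure}. The space $\mathrm{Quot}^{\leq\phi}_T(E)$ relative to the rotated heart is algebraic and locally of finite presentation, using \ref{c1}, Proposition \ref{prop-rotate-tilt}, and \cite[Proposition 11.6]{BLMNPS21}. It is of finite type by Steps 1 and 2, and universally closed by the $C$-torsion theory and \cite[Proposition 11.11, Lemma 11.21]{BLMNPS21}. Openness of geometric stability follows.

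The delicate point I expect is Step 1. The bound on $\bv_3$ of destabilizing quotients cannot come from $Z^{b,w}$ alone. It must come from the BMT inequality $Q^\Gamma_{b,w}\geq0$ applied to the tilt-semistable pieces, exactly as in the support-property argument of \cite[Theorem 8.7]{bayer2016space}.
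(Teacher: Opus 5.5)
Your proposal follows the same route as the paper, which states this result without further detail as an instance of the argument of \cite[Proposition 26.3]{BLMNPS21}, with Proposition \ref{prop-rotate-tilt} (at $\mu=b$), Theorem \ref{thm-stab}, Lemmas \ref{lem-constant-chM} and \ref{lem:constant-generalization}, and the strong boundedness of Theorem \ref{thm:tilt-HN-structure}(a) as inputs. Two small corrections: the hypothesis gives Conjecture \ref{conj-2} only at the single rational point $(b,\frac{a^2+b^2}{2})$, which is all that the rational-point part of the proof of Theorem \ref{thm-stab} uses, so no neighbourhood $R$ should be invoked; and, as you anticipate, the finiteness in Step 1 comes from the BG and BMT inequalities on the tilt-semistable factors together with the $\sigma^{a,b}_{\alpha,\beta}$-stability of $E$ and the ordering of its $\sigma^{b,w,b}$-HN filtration (in the style of Lemma \ref{lem:finite-rotate}), not from the support property of $\sigma^{a,b}_{\alpha,\beta}$ and \cite[Lemma 21.21]{BLMNPS21}, which only control the quotient classes in the Quot-space argument of Step 3.
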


\subsection{BMT Conjecture via degeneration}

We end this section with one of our main theorems (cf.~Theorem \ref{thm-degeneration}). The main idea is to use a semistable reduction argument as in \cite{BLMNPS21}. However, since we work with weak HN structures, the restriction of a semistable object to the special fiber need not remain semistable (cf.~\cite[Lemma 15.7]{BLMNPS21}). To overcome this issue, we need the following lemma, which says that the restriction to the special fiber preserves semistability up to modifications.

\begin{theorem}[{Semistable reduction of tilt-stability}]\label{thm-lift-tilt-ss}
Let $(f\colon X\to S, \cL, \gamma)$ as in Setup \ref{setup-gamma-relative} with $f$ Cohen--Macaulay, $n\leq 3$, $d\geq 2$, and $B_{\gamma}=0$. Fix $(b,w)\in U^{\gamma}_{X/S, \cL}$. Assume furthermore $C\to S$ is a morphism essentially of finite type from the spectrum $C$ of a DVR with the fraction field $K$ and the closed point $p\in C$. If $E_{K}\in \cA^{b}(X_{K})$ is a $\nu_{b,w}$-semistable object with $\bv_1^{b}(E_{K})\neq 0$, then we can find a $\sigma^{b,w}_{C}$-semistable object $F$ such that $F_p\in \cA^{b}(X_{p})$ is $\nu_{b,w}$-semistable and we have an exact sequence
\[0\to E_{K}\to F_{K}\to T_{K}\to 0\]
in $\cA^{b}(X_{K})$, where $T_{K}\in \Coh(X_K)$ is a torsion sheaf supported in codimension $\geq 3$.
\end{theorem}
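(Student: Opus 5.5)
We follow the semistable reduction strategy of \cite[Section 15, 27]{BLMNPS21}, adapted to weak HN structures through the operator $\sT(-)=(-)^{\sharp\sharp}$ of Lemma \ref{lem-blms-2.19}. The first step is to extend $E_K$ over $C$. By Theorem \ref{thm:tilt-HN-structure}(c), $\sigma^{b,w}_C=(\cA^b(X_C),Z^{b,w}_C)$ is a weak HN structure over $C$ and $\cA^b(X_C)$ has a $C$-torsion theory. We choose any $C$-torsion-free object $G\in\cA^b(X_C)$ with $G_K\cong E_K$; for instance, spread out a complex and apply the $C$-torsion theory. By Lemma \ref{lem:6.12}, $G_p\in\cA^b(X_p)$.

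Next we run the elementary modification procedure of \cite[Theorem 15.2, Lemma 15.7]{BLMNPS21} (Langton's argument). If $G_p$ is not $\nu_{b,w}$-semistable, we take its maximal destabilizing subobject or quotient, or equivalently the first HN factor via the $C$-torsion exact sequence $0\to G\to G'\to i_{p*}Q\to 0$. We then replace $G$ by the elementary transform, which remains $C$-torsion-free with the same generic fibre. Termination is controlled by the support property and by the finiteness of classes from Lemma \ref{lem:finite-rotate} and Lemma \ref{lem:finite-v2}. Since $\bv^b_1(E_K)\neq 0$, the phases lie in $(0,1)$ away from the degenerate phase, so the usual decreasing invariant argument works. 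A priori, however, the process can stall on subobjects in $\cA^Z$, i.e.\ sheaves supported in codimension $\geq 3$, where $Z^{b,w}$ vanishes. This failure mode is exactly the one noted before the statement: because $Z$ does not see such pieces, restriction of a semistable object need not stay semistable.

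To fix this, we first replace $E_K$ by $\sT(E_K)=E_K^{\sharp\sharp}$. Lemma \ref{lem-blms-2.19} gives $0\to E_K\to E_K^{\sharp\sharp}\to T_K\to 0$ with $T_K$ supported in codimension $\geq 3$, and Lemma \ref{lem:wtE-stable} shows that $\nu_{b,w}$-semistability is preserved. The advantage is that $\Hom(\cA^Z,E_K^{\sharp\sharp}[i])=0$ for $i\leq1$ by property \ref{t3} of Theorem \ref{thm:tilt-HN-structure}. After this replacement we run the modification on a $C$-torsion-free extension $F$. At each step we also apply the relative analogue of $\sT$, namely condition \ref{t'2} for $\sigma^{b,w}_C$ combined with Lemma \ref{lem:tilt-property-no-infty-seq} to avoid infinite chains. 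This replaces the special fibre by $\sT(F_p)$ modulo codimension-$\geq3$ pieces, and Lemma \ref{lem:wtE-stable} and Proposition \ref{prop:unique-JH} then let us compare semistability of $F_p$ with that of $\sT(F_p)$.

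The main obstacle is termination together with compatibility. We must show that Langton's invariant strictly drops at each step, using Lemma \ref{lem:finite-v2} to restrict to finitely many classes and \ref{t'1} (Noetherianity of $(\cA^b_C)^{Z_C}$) to rule out infinite sequences of zero-charge modifications. We must also check that the final $F$ is $\sigma^{b,w}_C$-semistable with $F_p$ semistable. For the first point, the natural invariant is the one in \cite[Theorem 15.2]{BLMNPS21}, refined lexicographically by the length of the codimension-$\geq3$ part. For the second, we conclude from the openness properties \ref{c2} and \ref{wc2} established in Theorem \ref{thm:tilt-HN-structure}(b). The generic fibre changes only by the torsion $T_K$, which gives the required exact sequence.
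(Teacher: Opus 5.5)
There is a genuine gap at the decisive step, which is showing that the special fibre of your final object is $\nu_{b,w}$-semistable.

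\textbf{What fails in the proposal.}
\begin{enumerate}
\item Replacing $E_K$ by $E_K^{\sharp\sharp}$ on the generic fibre gives no control over $X_p$. A $C$-torsion-free extension $F$ of $E_K^{\sharp\sharp}$ can still admit a nonzero map $A\to F_p$ from a sheaf $A$ supported in codimension $\geq 3$. That is exactly what destroys semistability of $F_p$ for a weak stability condition.
\item Your closing appeal to \ref{c2} and \ref{wc2} runs in the wrong direction. These properties propagate (semi)stability from a point to an open neighbourhood. For $C$ the spectrum of a DVR, the open set in \ref{wc2} may be just the generic point, so they give nothing about $F_p$.
\item Your ``relative $\sT$'' via \ref{t'2} is not available. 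The paper proves \ref{t'2} for $\sigma^b_C$, but for $\sigma^{b,w}$ only the fibrewise tilting property. It could in principle supply what you need: $X_p\subset X_C$ is a Cartier divisor with trivial normal bundle, so $\Hom_{X_C}(i_{p*}A,F[1])\cong\Hom_{X_p}(A,F_p)$. However, you neither make this connection nor check that the modified object stays $C$-torsion-free and $\sigma^{b,w}_C$-semistable.
\item The Langton termination invariant ``refined by the length of the codimension-$\geq 3$ part'' is not justified.
\item Theorem \ref{thm:tilt-HN-structure}(c) assumes $f$ admissible, which is not a hypothesis here. The weak HN structure and the $C$-torsion theory should instead come from Corollary \ref{cor:C-torsion-theory-real-b} and \cite[Theorem 18.7]{BLMNPS21}, using that the generic point of $C$ is open.
\end{enumerate}

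\textbf{The missing idea.} No iteration is needed, and $(-)^{\sharp\sharp}$ should be applied once, on the total space $X_C$.
\begin{enumerate}
\item By \cite[Proposition 15.10, Lemma 15.11, Proposition 17.6]{BLMNPS21}, you may take a $C$-torsion-free, $\sigma^{b,w}_C$-semistable $E\in\cA^b(X_C)$ with generic fibre $E_K$.
\item On $X_C$, check that $\cH^{-1}(E)$ is torsion-free. Also check that $\Hom_{X_C}(G,E)=0$ for every sheaf $G$ supported in codimension $\geq 2$: use $\bv^b_1(E_K)\neq 0$ if $G$ is not supported on $X_p$, and $C$-torsion-freeness if it is.
\item Lemma \ref{lem-blms-2.19} on the Cohen--Macaulay scheme $X_C$ then gives $0\to E\to F\coloneqq E^{\sharp\sharp}\to T\to 0$. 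Here $T$ is supported in codimension $\geq 3$, and $F\cong[C^{-1}\to C^0]$ with $C^{-1}$ Cohen--Macaulay and $C^0$ torsion-free $S_2$.
\item This description shows that $F$ is still $C$-torsion-free and $\sigma^{b,w}_C$-semistable.
\item It also gives $F_p\cong[C^{-1}_p\to C^0_p]$ with $C^{-1}_p$ torsion-free $S_2$ and $C^0_p$ torsion-free. Hence $\Hom_{X_p}(A,F_p)=0$ for every $A$ supported in codimension $\geq 3$.
\item That vanishing is exactly what lets you apply \cite[Lemma 15.7]{BLMNPS21} to conclude that $F_p$ is $\nu_{b,w}$-semistable.
\item Restricting the sequence $0\to E\to F\to T\to 0$ to $K$ gives the required exact sequence with $T_K$.
\end{enumerate}
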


\begin{proof}

We define a pair $\sigma^{b,w}_C$ as in Theorem \ref{thm:tilt-HN-structure}(c). By Corollary \ref{cor:C-torsion-theory-real-b}, we know that $\cA^b(X_C)$ universally satisfies openness of flatness and has a $C$-torsion theory. Moreover, since $C$ is the spectrum of a DVR, the generic point is open in $C$. Thus, $\sigma^{b,w}_{C}$ meets all assumptions in \cite[Theorem 18.7]{BLMNPS21} and we can conclude that $\sigma^{b,w}_C$ is a weak HN structure with a $C$-torsion theory. 


By \cite[Lemma 3.18]{BLMNPS21}, we can lift $E_{K}$ to an object $E\in \Db(X_C)$. As $\cA^b(X_C)$ is $C$-local, the restriction functor $\Db(X_C)\to \Db(X_K)$ is t-exact, and we may assume that $E\in \cA^b(X_C)$. Therefore, applying \cite[Proposition 15.10, Lemma 15.11]{BLMNPS21}, we may assume that $E$ is $\sigma^{b,w}_{C}$-semistable. By \cite[Proposition 17.6]{BLMNPS21}, we know that $\sigma^{b,w}_{C}$ has a $C$-torsion theory, so we can also assume that $E$ is $C$-torsion-free. In particular, $E_p\in \cA^b(X_p)$ by Lemma \ref{lem:6.12}.

First, we show that $\cH^{-1}(E)$ is a torsion-free sheaf if it is non-zero. Since $\cH^{-1}(E)\in \cF^{b}_C$, any non-zero subsheaf $W$ of $\cH^{-1}(E)$ satisfies that $W[1]\in \cF^{b}_C[1]\subset \cA^b(X_C)$, so $W[1]$ is a subobject of $E$. Then by the $C$-torsion-freeness of $E$, we have $W_K\neq 0$. However, when $W$ is torsion, $W_K$ is also a torsion sheaf on $X_K$, so $\mu_C(W)=+\infty$, contradicts $\cH^{-1}(E)\in \cF^{b}_C$ as $\mu^+_C(\cH^{-1}(E))\leq b$.

Next, we verify that $\Hom_{X_C}(G, E)=0$ for any sheaf on $X_C$ supported in codimension at least $2$. Indeed, if $G$ is not supported on $X_p$, then $\mu_{\sigma^{b,w}_C}(G)=+\infty$ and we have $\Hom_{X_C}(G, E)=0$ by the $\sigma^{b,w}_{C}$-semistability of $E$ and $\bv_1^{b}(E_{K})\neq 0$. If $G$ is supported on $X_p$, then $G\in \cA^{b}(X_C)$ is $C$-torsion and we have $\Hom_{X_C}(G, E)=0$ by \cite[Lemma 6.6(2)]{BLMNPS21} since $E$ is $C$-torsion-free. 

Therefore, we can apply Lemma \ref{lem-blms-2.19} to $E$ to get an exact sequence $$0\to E\to F\coloneqq E^{\sharp \sharp}\to T\to 0$$ in $\cA^{b}(X_C)$, where $T$ is a torsion sheaf supported in codimension $\geq 3$. In the following, we prove that $F$ satisfies the desired properties.

The first step is to show that $F$ is $C$-torsion-free. Indeed, if $0\neq i_{p*}G\subset F$ for $G\in \cA^{b}(X_p)$, then applying the snake lemma to
\[\begin{tikzcd}
	& 0 & {i_{p*}G} & {i_{p*}G} \\
	0 & E & F & T & 0
	\arrow[from=1-2, to=1-3]
	\arrow[no head, from=1-3, to=1-4]
	\arrow[shift left, no head, from=1-3, to=1-4]
	\arrow[from=1-3, to=2-3]
	\arrow[from=1-4, to=2-4]
	\arrow[from=2-1, to=2-2]
	\arrow[from=2-2, to=2-3]
	\arrow[from=2-3, to=2-4]
	\arrow[from=2-4, to=2-5]
\end{tikzcd}\]
we get $i_{p*}G\subset T$ as $E$ is $C$-torsion-free. This means $i_{p*}G$ is supported in codimension $\geq 3$, which is impossible since $F$ can be written as a complex $C^{-1}\to C^0$ with $C^{-1}$ torsion-free and Cohen--Macaulay, and $C^0$ torsion-free and $S_2$ (cf.~Lemma \ref{lem-blms-2.19}).

Next, we show that $F$ is $\sigma^{b,w}_{C}$-semistable. This is easy, as if $0\to G_1\to F\to G_2\to 0$ is a destabilizing sequence of $F$ with respect to $\sigma^{b,w}_{C}$, then the pullback of this exact sequence along $E\hookrightarrow F$ is a destabilizing sequence of $E$ by $\codim_{X_C}(T)\geq 3$.

Since $F$ is $C$-torsion-free, we have $F_p\in \cA^b(X_p)$ by Lemma \ref{lem:6.12}. It remains to show that $F_p$ is $\nu_{b,w}$-semistable. We write $F=[C^{-1}\to C^0]$, then $F_p=[C^{-1}_p\to C^0_p]$. As $C^0$ is a torsion-free and $S_2$ sheaf, we see that $C^0_p$ is a torsion-free sheaf on $X_p$. Moreover, $C^{-1}_p$ is torsion-free and $S_2$. Therefore, we have $\Hom_{X_p}(G, F_p)=0$ for any sheaf $G\in \Coh(X_p)$ supported in codimension $\geq 3$. Then from \cite[Lemma 15.7]{BLMNPS21}, we get the $\nu_{b,w}$-semistability of $F_p$.
\end{proof}

As a corollary, we have the following result. We will not use it in this paper.

\begin{corollary}
Let $(X, H, \gamma)$ be a triple as in Setup \ref{setup-gamma} with $2\leq n\leq 3$ and $d\geq 2$, and $\kk_1/\kk$ be a field extension. Assume that $B_{\gamma}=0$ and $X$ is Cohen--Macaulay. If there exists a pair $(b,w)\in U^{\gamma}_{X, H}$ and a $\nu_{b,w}$-semistable object $F\in \cA^b(X_{\kk_1})$ with $\bv^b_1(F)\neq 0$, then there exists a $\nu_{b,w}$-semistable object $E\in \cA^b(X)$ so that $[E_{\kk_1}]-[F]=[T]\in \KK_0(X_{\kk_1})$ for a sheaf $T\in \Coh(X_{\kk_1})$ supported in codimension $\geq 3$.
\end{corollary}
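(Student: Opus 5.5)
We deduce the corollary from Theorem \ref{thm-lift-tilt-ss} by spreading $F$ out over a base and specializing to a point whose residue field is $\kk$. First we reduce to the case $\kk_1$ finitely generated over $\kk$. The object $F$ and its $\nu_{b,w}$-semistability are defined over some finitely generated subextension: $F$ is a bounded complex of coherent sheaves, and its destabilizing subobjects are controlled through the finiteness of classes in Lemma \ref{lem:finite-v2}. Lemma \ref{lem:tilt-base-change}(b) lets semistability descend along the extension to $\kk_1$. It also lets us pass to an algebraic closure where needed, by Lemma \ref{lem:tilt-base-change}(c). So we may assume $\kk_1=K(V)$ is the function field of an integral affine variety $V$ of finite type over $\kk$.

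We spread $F$ out to a $V$-perfect object $\cF$ on $X_V$ after shrinking $V$. Next we pick a closed point $v\in V$ with a chain of specializations from the generic point to $v$ through regular local rings of dimension one. Concretely, we choose a regular curve $C\subset V$ through $v$; after normalizing, we treat its local ring at $v$ and then the generic point of $C$. We apply Theorem \ref{thm-lift-tilt-ss} repeatedly along such DVRs, essentially of finite type over the base (here the base is $S=V$ with $X_V\to V$, which is Cohen--Macaulay since $X$ is). Each step replaces the semistable generic object by a semistable special fiber $F_p$, at the cost of a class difference $[T]$ supported in codimension $\geq 3$. Such torsion sheaves are preserved under specialization: their classes stay in the subgroup generated by codimension $\geq 3$ sheaves, via the specialization maps on $\KK_0$. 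This gives a $\nu_{b,w}$-semistable object over $\kappa(v)$, a finite extension of $\kk$. It differs in $\KK_0$ from the specialization of $[F]$ by codimension $\geq 3$ torsion classes.

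Finally we descend from $\kappa(v)$ to $\kk$. We take $E$ to be the pushforward along the finite map $X_{\kappa(v)}\to X$. By Lemma \ref{lem:tilt-base-change}(c) and t-exactness of $R\pi_*$ (Lemma \ref{lem:base-change-t-structure}), $E$ lies in $\cA^b(X)$ and is semistable. The identity $[E_{\kk_1}]-[F]=[T]$ must then be arranged up to multiplicity, which is the main obstacle. A pushforward multiplies classes by $[\kappa(v):\kk]$, so the comparison has to be made in $\KK_0$ rather than numerically. I would first try to choose $v$ to be a $\kk$-rational point; this is possible when $\kk$ is infinite, or after the finitely generated reduction if $V$ has rational points. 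Failing that, I would relate the classes through the specialization and flat pullback compatibilities (Lemma \ref{lem-tau-special}), interpreting $[E_{\kk_1}]$ and $[F]$ via the spread-out family. Tracking that the error term stays a genuine sheaf $T$ in codimension $\geq 3$, rather than a virtual class, is the delicate point.
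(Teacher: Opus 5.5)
Your strategy is the paper's own. Its proof reduces to finitely generated $\kk_1/\kk$ via \cite[Proposition 5.9(3)]{BLMNPS21}, then runs the spread-out, specialize, push-forward argument of \cite[Theorem 12.17(2)]{BLMNPS21}, with Theorem~\ref{thm-lift-tilt-ss} supplying semistable reduction along each DVR. Up to routine bookkeeping along the chain of DVRs, your first two paragraphs are fine.

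The gap is the one you flag at the end, and neither of your proposed repairs closes it.
\begin{enumerate}
\item An infinite $\kk$ does not give $V$ a $\kk$-point: for $\kk=\RR$ and $\kk_1=\CC$ one has $V=\Spec\CC$.
\item Even for $\kk$-rational $v$, nothing determines $[E_{\kk_1}]-[F]$ in $\KK_0(X_{\kk_1})$. Only numerical invariants are locally constant in the family (Lemmas~\ref{lem-constant-chM} and \ref{lem:constant-generalization}), and specialization on $\KK_0$ is far from injective. For example, let $C$ be an elliptic curve over $\kk=\overline{\kk}$, $X=C\times\PP^1$, $\kk_1=\kk(C)$, and $F$ the pullback of $\cO_{C_{\kk_1}}(P-O)$ with $P$ the generic point. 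Since $X$ is a surface, $T=0$, so one would need $\det F\cong(\det E)_{\kk_1}$. This fails because $P\notin C(\kk)$.
\end{enumerate}

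In fact the conclusion as stated is false, so no argument can give it. Let $X=\{x_0^2+x_1^2+x_2^2=x_3^2\}\subset\PP^3_{\RR}$, $H=\cO_X(1)$ and $\kk_1=\CC$. Then $X_{\CC}\cong\PP^1\times\PP^1$, and complex conjugation swaps the two rulings. The line bundle $F=\cO(1,0)$ is $\mu_H$-stable of slope $\frac12$. By Lemma~\ref{lem-large-volume-converse} it is $\nu_{0,w}$-stable in $\cA^0(X_{\CC})$ for $w\gg0$, and $\bv^0_1(F)=1\neq0$, so the hypotheses hold. Since $X$ is a surface, $T=0$, so one would need $[E_{\CC}]=[\cO(1,0)]$, hence $(\det E)_{\CC}\cong\cO(1,0)$. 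But $\cO(1,0)$ is not conjugation-invariant. Replacing $X$ by $X\times\PP^1$ gives the same failure for $n=3$, since sheaves supported in codimension $\geq2$ have trivial determinant.

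What your argument, and the one the paper cites, actually proves is a numerical statement up to a positive multiple. There is a $\nu_{b,w}$-semistable $E\in\cA^b(X)$, namely the push-forward from the final closed point, and an integer $m\geq1$, the degree over $\kk$ of that point's residue field, such that:
\begin{itemize}
\item $\bv_{\leq2}(E)=m\,\bv_{\leq2}(F)$;
\item when $\bv_3$ is defined, $\bv_3(E)=m\,\bv_3(F)+t$ for some $t\geq0$ coming from the codimension-$3$ torsion.
\end{itemize}
Equivalently, this is an identity of that shape in $\Knum(X)_{\QQ}$ via $\eta^{\vee}_{\kk_1/\kk}$. In the quadric example, $E$ is the push-forward of $\cO(1,0)$, with $E_{\CC}\cong\cO(1,0)\oplus\cO(0,1)$ and $m=2$. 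That is the statement to aim for, and your sketch gives it.
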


\begin{proof}
By \cite[Proposition 5.9(3)]{BLMNPS21}, we may assume that $\kk_1/\kk$ is finitely generated. Then the result follows from Theorem \ref{thm-lift-tilt-ss} and the same argument as in \cite[Theorem 12.17(2)]{BLMNPS21}.
\end{proof}

Now, we can prove one of our main results: The BMT Conjecture can be checked after degeneration, which generalizes \cite[Proposition 27.1]{BLMNPS21}. Note that we have no assumptions about the characteristics of the base scheme.

\begin{theorem}\label{thm-degeneration}
Let $(f\colon X\to S, \cL, \gamma)$ as in Setup \ref{setup-gamma-relative} with $f$ admissible and Cohen--Macaulay that has the standard BG function, $B_{\gamma}=0$, $n=3$, $d\geq 2$. Assume furthermore that $S=C$ is a Dedekind scheme. Fix $(b,w)\in \cU$ and $\Gamma_c\in \Hom_{\QQ}(\CH^1_{\num}(X_c)_{\QQ}, \QQ)$ for each $c\in C$ so that $\Gamma_{p}.D_{p}=\Gamma_{K}.D_K$ for any $D\in \CH^1(X)_{\QQ}$ and any closed point $p\in C$.

If there exists a closed point $p\in C$ such that 
\[Q^{\Gamma_p}_{b,w}(E_p)\geq 0\]
for any $\nu_{b,w}$-semistable object $E_p\in \cA^{b}(X_p)$, then \[Q^{\Gamma_K}_{b,w}(E_{K})\geq 0\]
holds for any $\nu_{b,w}$-semistable object $E_{K}\in \cA^{b}(X_{K})$.
\end{theorem}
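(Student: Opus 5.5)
Fix a $\nu_{b,w}$-semistable object $E_K\in\cA^b(X_K)$. The plan is to spread $E_K$ out over a local base, specialize it to $X_p$ using the semistable reduction theorem, apply the hypothesis there, and transfer the inequality back to $K$ using local constancy of $Q^{\Gamma}_{b,w}$.

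First we reduce to a DVR. Replace $C$ by $\Spec\cO_{C,p}$, which is essentially of finite type over $C$; this changes neither $X_K$ nor $X_p$. Admissibility and the Cohen--Macaulay property are stable under this base change, so the hypotheses of Theorem \ref{thm-lift-tilt-ss} hold.

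Next we dispose of the degenerate case $\bv_1^b(E_K)=0$. Then $\nu_{b,w}(E_K)=+\infty$. By Lemma \ref{imaginarypart}, $\cH^{-1}(E_K)$ is zero or $\mu$-semistable of slope $b$, and $\cH^0(E_K)$ lies in $\cA^Z$, hence is supported in codimension $\geq2$.
\begin{itemize}
\item In the expression \eqref{Q-simple}, the term multiplied by $\bv_1^b$ disappears.
\item What remains is $(2w-b^2)\big(\bDelta+3\frac{\Gamma.H}{H^3}\bv_0^2\big)+2\bv_2^b\big(2\bv_2^b-3\frac{\Gamma.H}{H^3}\bv_0\big)$.
\item We have $\bDelta(E_K)\geq0$, since the BG inequality applies to $\cH^{-1}$ of slope $b$ and adding $\cA^Z$-classes only decreases $\bv_2$.
\item We also have $\bv_0(E_K)\leq 0$ and $\bv_2^b(E_K)\geq0$, because $\cH^{-1}$ has $\bv_2^b\leq0$ by the standard BG function.
\end{itemize}
So every term is non-negative and $Q^{\Gamma_K}_{b,w}(E_K)\geq0$ directly. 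This step needs only the standard BG function on $X_K$, not the special fiber.

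Now assume $\bv_1^b(E_K)\neq0$. Theorem \ref{thm-lift-tilt-ss} gives a $\sigma^{b,w}_C$-semistable $F$ with $F_p$ $\nu_{b,w}$-semistable. It also gives a short exact sequence $0\to E_K\to F_K\to T_K\to0$ with $T_K$ supported in codimension $\geq3$, so $T_K$ is a zero-dimensional sheaf.

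Then $\bv_{\le2}(F_K)=\bv_{\le2}(E_K)$ and $\bv_3(F_K)=\bv_3(E_K)+\mathrm{length}(T_K)$, using Lemma \ref{lem:supp-codim-and-bv} and Lemma \ref{lem:chi-property}(e). We also have $\bch_1(F_K)=\bch_1(E_K)$, so the $\Gamma$ terms agree.

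By \eqref{Q-simple}, $Q$ depends on $\bv_3$ only through $-6\bv_1^b\bv_3$, where $\bv_1^b(E_K)>0$. Therefore $Q^{\Gamma_K}_{b,w}(E_K)\geq Q^{\Gamma_K}_{b,w}(F_K)$, and it suffices to show $Q^{\Gamma_K}_{b,w}(F_K)\geq0$.

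Finally we transfer from $p$ to $K$. Since $F$ is $C$-perfect and $f$ is admissible, Lemma \ref{lem-constant-chM} and Lemma \ref{lem:constant-generalization} give $\bv(F_K)=\bv(F_p)$, including $\bv_3$. The compatibility hypothesis on $\Gamma$ handles the $\Gamma$ term: take $D=\bch_1(F)$ on the total space (in the $\QQ$-factorial case use a Cartier multiple), and restrict it to $\bch_1(F_K)$ and $\bch_1(F_p)$ using Lemma \ref{lem:chi-property}(c). Thus $Q^{\Gamma_K}_{b,w}(F_K)=Q^{\Gamma_p}_{b,w}(F_p)\geq0$ by the hypothesis on $X_p$.

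The main obstacle is this last identification of the $\Gamma.\bch_1^{\gamma}$ term and of $\bv_3$ across the fibers. In the non-lci $\QQ$-factorial case, one must check that $\bch_1(F)$ defines a class in $\CH^1(X)_\QQ$ restricting correctly to both fibers. I would argue this via the admissibility condition that $X_C$ is $\QQ$-factorial, together with specialization as in Lemma \ref{lem-sp-push}.
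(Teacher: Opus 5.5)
Your proposal is correct and is essentially the paper's proof. You localize at $p$ and dispose of the case $\bv_1^b(E_K)=0$, which the paper dismisses in one line and you verify explicitly. You then replace $E_K$ by the lift $F$ from Theorem \ref{thm-lift-tilt-ss}, using that $Q^{\Gamma_K}_{b,w}$ decreases in $\bv_3$ when $\bv_1^b>0$, and transfer to $X_p$ via Lemmas \ref{lem-constant-chM} and \ref{lem:constant-generalization}.

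The step you flag as the main obstacle needs no $\QQ$-factoriality. Lift $\bch_1(F_K)$ to any $D\in\CH^1(X)_\QQ$; Lemma \ref{lem-chM-special} then gives $D_p=\mathrm{sp}(\bch_1(F_K))=\bch_1(F_p)$. Two citations need adjusting in the non-lci case. There $\bch_3(T_K)=\chi(T_K)\geq 0$ follows from Definition \ref{def:ch3}, not from Lemma \ref{lem:chi-property}(e), which only covers $\bch_{\leq d}$. Likewise, the restriction to $X_p$ should go through Lemma \ref{lem-chM-special} rather than Lemma \ref{lem:chi-property}(c).
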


\begin{proof}
Since the question is local around $p\in C$, we can assume that $C$ is affine. By base change to the localization at $p\in C$, we can assume that $C$ is the spectrum of a DVR. Assume that there exists a $\nu_{b,w}$-semistable object $E_{K}\in \cA^{b}(X_{K})$ such that $Q^{\Gamma_K}_{b,w}(E_{K})<0$. We may assume that $\bv_1^{b}(E_{K})>0$, hence $Q^{\Gamma_K}_{b,w}(-)$ decreases as $v_3$ increases. Therefore, by applying Theorem \ref{thm-lift-tilt-ss} to $E_{K}$, we can assume that $E_{K}$ has a $\sigma^{b,w}_{C}$-semistable lift $E$ such that $E_p$ is $\nu_{b,w}$-semistable. However, this implies $Q^{\Gamma_p}_{b,w}(E_{p})<0$ by Lemma \ref{lem-constant-chM} and \ref{lem:constant-generalization}, which makes a contradiction.
\end{proof}

\section{Singular Calabi--Yau/Fano threefolds}\label{sec:bmt-cy-fano}

In this section, we first reduce Conjecture \ref{conj-2} to a simple inequality for $\bch_2$ (cf.~Theorem \ref{thm:ch2}). Then we apply it to prove Conjecture \ref{conj-2} for many Fano threefolds (cf.~Corollary \ref{cor:fano3}) and Calabi--Yau threefolds (cf.~Section \ref{subsec:bmt-cy}). In particular, by Theorem \ref{thm-stab}, there exists an explicit family of stability conditions on their derived categories.

Throughout this section, we follow the simplification of the notation as in Section \ref{sec:tilt-3}. We fix a triple $(X, H, \gamma)$ as in Setup \ref{setup-gamma} with $\gamma=1$ and has the standard BG function with $n=3$ such that either

\begin{enumerate}[(1)]
    \item $X$ is lci, or

    \item $X_{\overline{\kk}}$ is normal integral $\QQ$-factorial Gorenstein\footnote{The only parts that we need the Gorenstein assumption are when applying Lemma \ref{lem-bmt-dual} in Lemma \ref{lem:reduce-bmt-to-bn} and Theorem \ref{thm:ch2}. We expect that Cohen--Macaulay is already enough.} and is lci in codimension $2$.
\end{enumerate}

Recall that
\[
\mathsf{E}(X)_{\QQ} \coloneqq
\begin{cases}
\A^2(X)_{\QQ} & \text{if } X \text{ is lci}, \\[10pt]
\CH_1(X)_{\QQ} & \text{if } X \text{ is } \QQ\text{-factorial but not lci}.
\end{cases}
\]
Then we have natural a homomorphism
\[\mathsf{E}(X)_{\QQ}\to \Hom_{\QQ}(\CH^1_{\num}(X)_{\QQ}, \QQ).\]
We also denote by $D\in \Hom_{\QQ}(\CH^1_{\num}(X)_{\QQ}, \QQ)$ the image of a class $D\in \mathsf{E}(X)_{\QQ}$.


\subsection{Reduction to a $\mathbf{ch}_2$-inequality}\label{subsec:reduce-to-ch2}
A useful way to verify Conjecture \ref{conj-2} is to reduce the problem to stable objects near $(b,w)=(0,0)$.

\begin{definition}
For an object $E\in \cA^0(X)$, we say $E$ is \emph{Brill--Noether (BN) stable} if $E$ is $\nu_{b,w}$-stable for any $(b,w)$ in an open subset
\[\left\{(b,w)\in \RR^2 \colon b^2+w^2<\delta, \frac{1}{2}b^2<w\right\}\]
for some $\delta>0$. We say $E$ is \emph{BN-semistable} if $E$ is $\nu_{0,w}$-semistable for every $0<w \ll 1$. 
\end{definition}

By Theorem \ref{thm:wall-chamber-abstract}, if $\bv_0(E)\neq 0$ and $\bv_2(E)\neq 0$, then $E$ is BN-stable if and only if it is $\nu_{b,w}$-stable for some $(b,w) \in \cU$ proportional to $\Pi(E)$; if $E$ is BN-semistable, then $E$ is $\nu_{b,w}$-semistable for some $(b,w) \in \cU$ proportional to $\Pi(E)$. 
For any object $E\in \cA^0(X)$, we define its Brill--Noether slope as
\[\nu_{BN}(E)\coloneqq\frac{\bv_2(E)}{\bv_1(E)}\]
when $\bv_1(E)\neq 0$ and set $+\infty$ otherwise.

The reason we care about BN-stable objects is the following reduction result.

\begin{lemma}[{\cite[Theorem 3.2]{Li19}, \cite[Theorem 2.3]{koseki:stability-cy-solid}}]\label{lem:reduce-bmt-to-bn}
Let $\Gamma\in \Hom_{\QQ}(\CH^1_{\num}(X)_{\QQ}, \QQ)$ be a linear map such that $\Gamma.H \geq 0$. 
If $$Q^{\Gamma}_{0,0}(E)\geq 0$$ for any BN-stable object $E\in \cA^0(X)$ with $\nu_{BN}(E)\in [0,\frac{1}{2}]$, then Conjecture \ref{conj-2} holds for $(X,H)$ and $\Gamma$ with 
\begin{equation}\label{eq:restricted-range}
w>\frac{1}{2}b^2+\frac{1}{2}(b-\lfloor b \rfloor)(\lfloor b \rfloor+1-b).
\end{equation}
\end{lemma}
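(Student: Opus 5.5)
We follow the strategy of \cite[Theorem 3.2]{Li19} and \cite[Theorem 2.3]{koseki:stability-cy-solid}: first reduce Conjecture \ref{conj-2} to objects near the boundary curve, then use $\cO_X(H)$-twists and derived duality to bring everything into the strip of BN slopes $[0,\frac12]$ at the origin. By Theorem \ref{thm:equivalent-conj} and its proof, it suffices to prove the inequality for $\nu_{b,w}$-stable $E$ with $(b,w)$ on the curve $\nu_{b,w}(E)=b$. We argue by induction on $\bDelta(E)$, which takes discrete values bounded below, as in Lemma \ref{lem-delta-JH}. Remark \ref{rmk:compute-bmt-at-wall} shows that $\frac{1}{\bv_1^b(E)}Q^{\Gamma}_{b,w}(E)$ is constant along any wall. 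Hence, if $E$ is destabilized along a wall through $(b,w)$, the inequality for $E$ follows from the inequalities for its JH factors at a point of that wall; these factors have strictly smaller $\bDelta$ by Lemma \ref{lem-delta-JH}. So we may move $(b,w)$ along the line through $(b,w)$ and $\Pi(E)$ (Theorem \ref{thm:wall-chamber-tilt}), keeping $E$ stable, until the line leaves the region \eqref{eq:restricted-range}.

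Next we analyse where such lines exit. The boundary curve $w=\frac12b^2+\frac12(b-\lfloor b\rfloor)(\lfloor b\rfloor+1-b)$ is piecewise linear: on $[m,m+1]$ it is the chord joining $(m,\frac{m^2}{2})$ and $(m+1,\frac{(m+1)^2}{2})$. Consequently every line through a point of the region and through $\Pi(E)\notin\cU$ can be slid, through chambers where $E$ stays stable (or, crossing walls, handled by the induction), toward one of the integer cusps $(m,\frac{m^2}{2})$. Tensoring with $\cO_X(-mH)$ preserves tilt-stability by Lemma \ref{lem-tensor-H}, and it preserves $Q^{\Gamma}$ by Lemma \ref{lem:property-ch3}(a); the latter uses the linearity of the $\Gamma$-terms, which is where $\Gamma.H$ enters the formula. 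This moves the cusp to the origin, and $E$ becomes BN-stable, with the limiting line through $(0,0)$ having slope $\nu_{BN}(E)$. Because the adjacent chords have slopes $-\frac12$ and $\frac12$, only $\nu_{BN}(E)\in[-\frac12,\frac12]$ can occur. If the quantity $\frac{1}{\bv_1^b}Q^\Gamma$ is constant along the line, then $Q^\Gamma_{0,0}(E)\ge0$ gives the inequality at $(b,w)$ directly; otherwise we use the monotonicity established in Theorem \ref{thm:equivalent-conj}.

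To cover the negative range, we take $\nu_{BN}(E)\in[-\frac12,0)$ and apply the derived dual. By Lemma \ref{lem-bmt-dual}, which is where the Gorenstein assumption is used, $\DD^X_1(E)$ has a subobject $E^\sharp\otimes\omega_X^{-1}\in\cA^{0}(X)$ whose cokernel is supported in codimension $\ge3$. The classes satisfy $\bv_i\mapsto(-1)^{i+1}\bv_i$ for $i\le2$ (Lemma \ref{lem:property-ch3}(b)), so $\nu_{BN}$ changes sign. The cokernel contributes $\bv_3\ge0$, which has the right sign in $Q^\Gamma_{0,0}$, and BN-stability passes to the dual up to these modifications. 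Together with $\Gamma$ reversing sign on $\bch_1$, this yields $Q^{\Gamma}_{0,0}(E)\ge0$ from the hypothesis applied to the dual object. We expect this duality step to be the main obstacle. It requires checking that $E^\sharp\otimes\omega_X^{-1}$, or a suitable stable factor of it, is genuinely BN-stable, that the codimension-$\ge3$ correction only increases $Q$, and that the case $\nu_{BN}=0$ (objects of the form $F[1]$, Lemma \ref{lem-vertical-wall}) is handled consistently.
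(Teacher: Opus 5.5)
You have the right ingredients: a minimal-$\bDelta$ counterexample, JH factors at walls, constancy of $\frac{1}{\bv_1^b(E)}Q^\Gamma_{b,w}(E)$ along lines through $\Pi(E)$, twisting by $\cO_X(-mH)$, and duality via Lemma \ref{lem-bmt-dual}. The last is exactly how the paper first extends the hypothesis to $\nu_{BN}\in[-\frac12,\frac12]$. However, the central reduction, from a counterexample anywhere in \eqref{eq:restricted-range} to a BN-stable counterexample at the origin with $\nu_{BN}\in[-\frac12,\frac12]$, is only asserted, and the reasons you give do not hold.

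(i) The opening reduction to the curve $\nu_{b,w}(E)=b$ is unavailable in the restricted range. The implication Conjecture \ref{conj-1}$\Rightarrow$Conjecture \ref{conj-2} moves along the line through $(b,w)$ and $\Pi(E)$ to the point with $b'=\nu_{b,w}(E)$. That point is the midpoint of the line's chord in $\cU$, and it can lie outside \eqref{eq:restricted-range}.

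(ii) $\frac{1}{\bv_1^b(E)}Q^\Gamma_{b,w}(E)$ is constant only along a single line through $\Pi(E)$, and it changes as the line rotates. So ``sliding toward a cusp'' can destroy $Q<0$. The monotonicity from Theorem \ref{thm:equivalent-conj} concerns the curve $\nu_{b,w}(E)=b$, which ends at $(\overline{b}(E),\frac12\overline{b}(E)^2)$ on the parabola, not at a cusp.

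(iii) ``Only $\nu_{BN}\in[-\frac12,\frac12]$ can occur'' is false for an arbitrary cusp. For $\Pi(E)=(5,12)$, the line from $(0,0)$ has slope $\frac{12}{5}$, while the line from $(4,8)$ has relative slope $0$. The cusp must be chosen.

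(iv) Even at a good cusp, your induction does not give BN-stability. If $|\nu_{BN}|\le\frac12$, the line $L_0$ from the cusp to $\Pi(E)$ runs outside \eqref{eq:restricted-range} near the cusp, so $E$ may be destabilized along it.

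Smaller points: combining JH factors for the quadratic $Q$ needs \cite[Lemma 11.7]{bayer2016space}. $\bDelta$ does not drop strictly when $\bDelta(E)=0$; the paper treats that case separately. Finally, $\DD^X_1$ preserves $\bch_1$, so $\Gamma.\bch_1$ does not change sign.

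The missing idea is visible in \eqref{Q-simple}. $Q^\Gamma_{b,w}(E)$ is affine in $(b,w)$, vanishes at $\Pi(E)$, and has $w$-coefficient $2\bDelta(E)+6\frac{\Gamma.H}{H^3}\bv_0(E)^2\ge 0$. So the failure locus is an open half-plane below a line through $\Pi(E)$, and lowering $w$ at fixed $b$ preserves $Q<0$.

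The paper uses exactly this. After twisting so that $E$ is $\nu_{0,w}$-semistable with $Q^\Gamma_{0,w}(E)<0$, it descends along $b=0$. At any wall, \cite[Lemma 11.7]{bayer2016space} gives a JH factor with $Q<0$ and smaller $\bDelta$, so $E$ is $\nu_{0,w}$-stable for $0<w\ll1$. The rest follows \cite[Theorem 3.2]{Li19}.

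To complete your version, proceed as follows.
\begin{enumerate}
\item The region \eqref{eq:restricted-range} is convex, since its boundary is the linear interpolation of $\frac12b^2$ between integers. Hence a minimal counterexample is stable on the whole convex set inside it where $Q^\Gamma<0$.
\item The line through a cusp of this set and $\Pi(E)$ carries $Q<0$ on the cusp's side. So it can leave the region only at a point of the lower boundary of this set, and it cannot re-cross a chord adjacent to the cusp.
\item At the leftmost cusp of the set this forces relative slope $\ge-\frac12$, and at the rightmost cusp it forces relative slope $\le\frac12$. Comparing lines from consecutive cusps gives a cusp where both bounds hold.
\item At that cusp, if $L_0$ is an actual wall, take a JH factor along $L_0$ with $Q<0$. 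It is BN-stable with the same $\nu_{BN}$, and $Q^\Gamma_{0,0}<0$ by constancy along $L_0$. This contradicts the dualized hypothesis.
\end{enumerate}
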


\begin{proof}
Using the derived dual, Lemma \ref{lem:property-ch3}(b), Lemma \ref{lem-derived-dual-ch}, and Lemma \ref{lem-bmt-dual}, we know that $Q^{\Gamma}_{0,0}(E)\geq 0$ holds for any BN-stable object $E\in \cA^0(X)$ with $\nu_{BN}(E)\in [-\frac{1}{2},\frac{1}{2}]$.

Now, suppose the statement is false. Note that the set of values of $\bDelta$ is discrete, then there exists a $\nu_{b,w}$-semistable object $E\in \cA^b(X)$ with \[w>\frac{1}{2}b^2+\frac{1}{2}(b-\lfloor b \rfloor)(\lfloor b \rfloor+1-b)\]
with $Q^{\Gamma}_{b,w}(E)<0$ and minimal $\bDelta(E)$ among these objects. We may assume that $\bv_1^b(E)>0$, otherwise 
\[Q^{\Gamma}_{b,w}(E)=4(\bv_2(E)-w\bv_0(E))\left(\bv_2(E)-\left(\frac{1}{2}b^2+\frac{3\Gamma.H}{2H^3}\right)\bv_0(E)\right),\]
and one can directly check that $Q^{\Gamma}_{b,w}(E)\geq 0$.

Using Lemma \ref{lem-tensor-H}, Lemma \ref{lem-chM-general}(c), and Lemma \ref{lem:property-ch3}(a), as in the proof of \cite[Theorem 3.2]{Li19}, we may assume that $E$ is $\nu_{0,w}$-semistable and $Q^{\Gamma}_{0,w}(E)<0$ for some $w>0$. If $E$ is strictly $\nu_{0,w_0}$-semistable for some $0<w_0\leq w$, then from Lemma \ref{lem-delta-JH}, each JH factor $E_i$ of $E$ satisfies $\bDelta(E_i)\leq \bDelta(E)$ with equality holding if and only if $\bDelta(E_i)=\bDelta(E)=0$ and $\bv_{\leq 2}(E_i)$ is proportional to $\bv_{\leq 2}(E)$. By \cite[Lemma 11.7]{bayer2016space}, there exists a JH factor $E'$ such that $Q^{\Gamma}_{0,w_0}(E')<0$. If $\bDelta(E)>0$, then we have $\bDelta(E')<\bDelta(E)$, contradicting the minimality assumption. If $\bDelta(E)=0$, then $E'$ is $\nu_{0,w}$-stable for any $w>0$ by Lemma \ref{lem-delta-JH} and we can replace $E$ by $E'$. In each case, we can assume that $E$ is $\nu_{0,w}$-stable for $0<w\ll 1$. Now, the remaining proof is the same as \cite[Theorem 3.2]{Li19}.
\end{proof}

To work with BN-stable objects, we need the following useful lemmas.

\begin{lemma}[{\cite[Lemma 6.5]{bayer:brill-noether}, \cite[Lemma 2.12]{Li19}}]\label{lem:cone-bn-stable}
Fix $E\in \cA^0(X)$ to be a BN-stable object. If $\nu_{BN}(E)>0$, then for any subspace $W\subset \mathrm{Hom}(\cO_X,E)$, the object
\[\mathrm{cone}(\cO_X\otimes W\xrightarrow{\mathrm{ev}} E)\]
is in $\cA^0(X)$ and BN-semistable, where $\mathrm{ev}$ is the evaluation map.

If $\nu_{BN}(E)<0$, then for any subspace $V\subset \mathrm{Hom}(E[-1],\cO_X)$, the object
\[\mathrm{cone}(E[-1]\xrightarrow{\mathrm{coev}} \cO_X\otimes V^{\vee})\]
is in $\cA^0(X)$ and BN-semistable, where $\mathrm{coev}$ is the coevaluation map.
\end{lemma}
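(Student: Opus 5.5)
We treat the case $\nu_{BN}(E)>0$ in detail; the case $\nu_{BN}(E)<0$ then follows by the same argument with arrows reversed, or by passing to derived duals via Lemma \ref{lem-bmt-dual} and Lemma \ref{lem-derived-dual-ch}, which send $\cO_X$ to a shift of itself and $\cA^0(X)$ to itself up to codimension $\geq 3$ modifications that do not affect $\bv_{\leq 2}$.

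The first step is to show that $\cO_X$ is $\nu_{b,w}$-stable for all $(b,w)\in\cU$ with $b<0$. Here we use Lemma \ref{lem-delta-0-rk1} and Lemma \ref{lem-large-volume-converse}. By Lemma \ref{lem:delta-rk1}, or directly since $\gamma=1$, we have $\bDelta(\cO_X)=0$ and $\Pi(\cO_X)=(0,0)$. Consequently no wall for $\cO_X$ meets $\cU$, because every wall passes through $\Pi(\cO_X)$, which lies on the boundary. Hence $\cO_X$ is $\nu_{b,w}$-stable near the origin for $b<0$, with $\nu_{b,w}(\cO_X)=w/b\to -\infty$ along curves approaching $(0,0)$ from the left. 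Likewise $\cO_X[1]$ is stable for $b\geq 0$, with $\nu_{0,w}(\cO_X[1])=+\infty$.

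Next we fix $b=0$ and a small $w>0$ and set $F=\mathrm{cone}(\cO_X\otimes W\to E)$. The triangle $\cO_X\otimes W\to E\to F$ gives the sequence $0\to E\to F\to \cO_X[1]\otimes W\to 0$ in $\cA^0(X)$, provided $F$ lies in $\cA^0(X)$. To see this, we compute the cohomology of $F$ in the standard heart: $\cH^{-1}(F)$ is the kernel of $\cO_X\otimes W\to \cH^0(E)$ glued with $\cH^{-1}(E)$. The harder point is to show that $\cH^{-1}(F)$ has $\mu^+\leq 0$. Following \cite[Lemma 6.5]{bayer:brill-noether}, we would argue instead at a nearby point $b=-\epsilon<0$. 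There $\cO_X\in\cA^{-\epsilon}(X)$, and the evaluation map is a morphism in $\cA^{-\epsilon}(X)$ between $\nu_{-\epsilon,w}$-stable objects, with $E$ stable there by BN-stability. Since $\nu(\cO_X)\ll\nu(E)$ near the origin, the evaluation map is injective in $\cA^{-\epsilon}(X)$; otherwise a nonzero kernel $K\subset\cO_X\otimes W$ would, by stability of $\cO_X$, be a sum of copies of $\cO_X$, forcing a nonzero map of $W$ to vanish, which is a contradiction. Thus $F\in\cA^{-\epsilon}(X)$ for all small $\epsilon$. Letting $\epsilon\to 0$ and using the openness and closedness statements from Theorem \ref{thm:tilt-stability}, we then check $F\in \cA^0(X)$. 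The key input is $\Hom(\cO_X,F)$-type vanishings coming from $\Hom(\cO_X[1],\cO_X)=0$, together with Lemma \ref{lem-H-1-S2}.

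We then prove BN-semistability. Take $w\to 0^+$ with $b=0$. Suppose a subobject $A\subset F$ destabilizes at $(0,w)$ for all small $w$. Its image $A\to \cO_X[1]\otimes W$ has quotient class a multiple of $\cO_X[1]$, of phase $1$. Comparing slopes, $\nu_{0,w}(E)=\nu_{BN}(E)+O(w)>0$, while $\nu(\cO_X[1])=+\infty$, so the slope of $F$ tends to $\nu_{BN}(F)$. Using the stability of $E$ and the semisimplicity of $\cO_X[1]\otimes W$ (it is polystable), any destabilizing $A$ must satisfy $A\cap E\subsetneq E$ or must map onto a summand $\cO_X[1]\otimes W'$. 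In the latter case the extension class restricted to $W'$ is nonzero, since the classes in $\Hom(\cO_X[1]\otimes W,E[1])=W^\vee\otimes\Hom(\cO_X,E)$ correspond to the inclusion $W\subset\Hom(\cO_X,E)$, which is injective. This forces $A\supset E$, and a computation of $\nu_{BN}$ then gives $\nu(A)\leq\nu(F)$.

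The main obstacle is the first step, namely showing that $F$ actually lies in $\cA^0(X)$ at the limit point $b=0$, which sits on the vertical wall for $\cO_X$. We plan to handle it with the perturbation to $b<0$ and the local finiteness of walls given by Theorem \ref{thm:wall-chamber-tilt}.
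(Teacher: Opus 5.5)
You have misidentified the obstacle. Membership $F\in\cA^0(X)$ is immediate. Rotating the defining triangle gives $E\to F\to\cO_X[1]\otimes W\to E[1]$, and $\cO_X[1]\in\cA^0(X)$ because $\mu(\cO_X)=0$, so $F$ is an extension of two objects of the heart. This is all the paper uses. Your perturbation argument is not only unnecessary but invalid as stated. Lying in $\cA^{-\epsilon}(X)$ for all small $\epsilon>0$ does not imply lying in $\cA^0(X)$; $\cO_X$ itself is a counterexample. Theorem \ref{thm:tilt-stability} provides no such closedness for hearts.

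The genuine gap is in the BN-semistability step. Write $B\coloneqq A\cap E=\ker(A\to\cO_X[1]\otimes W)$, with image $\cO_X[1]\otimes W'$. You only dispose of $B=0$ (the extension would split over $W'$) and $B=E$. Your inference that a nonzero extension class over $W'$ ``forces $A\supset E$'' is false. It only excludes $B=0$: the restricted extension is pushed forward from a proper $B\subsetneq E$ whenever the sections in $W'$ factor through $B\to E$. In that case
\[\nu_{0,w}(A)=\nu_{0,w}(B)+\frac{w\dim W'\,H^3}{\bv_1(B)},\qquad \nu_{0,w}(F)=\nu_{0,w}(E)+\frac{w\dim W\,H^3}{\bv_1(E)}.\]
Stability of $E$ only gives $\nu_{0,w}(B)<\nu_{0,w}(E)$, and that gap may itself be $O(w)$ when $\nu_{BN}(B)=\nu_{BN}(E)$. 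So slope comparison at $b=0$ cannot conclude.

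The missing idea is to move onto the numerical wall $\ell=\{w=\nu_{BN}(E)\,b\}$ through $(0,0)$ and $\Pi(E)$.
\begin{itemize}
\item For $(b,w)\in\ell\cap\cU$ with $0<b\ll 1$, $E$ and $\cO_X[1]$ are both $\nu_{b,w}$-stable of equal slope. Hence $F$ is $\nu_{b,w}$-semistable with JH factors $E$ and $\dim W$ copies of $\cO_X[1]$.
\item Every wall for $F$ passes through $\Pi(F)\in\ell$ (or is parallel to $\ell$ if $\bv_0(F)=0$), and walls are locally finite. So $\ell$ is the only possible wall for $F$ near the origin.
\item If $\ell$ is an actual wall, consider a destabilizing $K\subset F$ on the side containing $(0,w)$. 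It cannot have $E$ among its JH factors, since $F/K$ would then be a multiple of $\cO_X[1]$, which has larger slope there.
\item Normalize $E=E^{\sharp\sharp}$ (Lemma \ref{lem:wt-inj}, Lemma \ref{lem:wtE-stable}) and apply Proposition \ref{prop:unique-JH}. Then the first JH factor of $K$ is $\cO_X[1]$, which gives $\Hom(\cO_X[1],F)\neq 0$.
\item This contradicts $\Hom(\cO_X[1],F)=0$, which follows from $\Hom(\cO_X[1],E)=0$ and the injectivity of $W\hookrightarrow\Hom(\cO_X,E)$.
\end{itemize}
You also need to treat $\bv_1(E)=0$ separately, since your $O(w)$ expansion breaks down there. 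It is trivial, because then $\bv_1(F)=0$ and $\nu_{0,w}(F)=+\infty$.
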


\begin{proof}
We only treat the case $\nu_{BN}(E)>0$, since the other one is similar. Let $$\wt{E}\coloneqq \mathrm{cone}(\cO_X\otimes W\xrightarrow{\mathrm{ev}} E).$$ By definition, we have an exact sequence
\[0\to E \to \wt{E} \to \cO_X[1]\otimes W \to 0\]
in $\cA^0(X)$. If $\bv_1(E)=0$, then $\wt{E}$ satisfies $\bv_1(\wt{E})=0$. In particular, $\wt{E}\in \cA^0(X)$ is $\nu_{0,w}$-semistable for any $w>0$.

Now, we assume that $\bv_1(E)\neq 0$. In this case, we consider the numerical wall $\ell$ for $E$ passing through $(0,0)$. Since $E$ is BN-stable, for any $(b,w)\in \ell \cap \cU$ with $b>0$ sufficiently small, we know that $E\in \cA^b(X)$ is $\nu_{b,w}$-stable and $\wt{E}\in \cA^b(X)$ is $\nu_{b,w}$-semistable, both have finite $\nu_{b,w}$-slope. Note that by Lemma \ref{lem:wt-inj}(d) and Theorem \ref{thm:tilt-HN-structure}, since $(\cO_X[1])^{\sharp \sharp}=\cO_X[1]$, we have a commutative diagram
\[\begin{tikzcd}
	0 & E & {\wt{E}} & {\cO_X[1]\otimes W} & 0 \\
	0 & {E^{\sharp \sharp}} & {(\wt{E})^{\sharp \sharp}} & {\cO_X[1]\otimes W} & 0
	\arrow[from=1-1, to=1-2]
	\arrow[from=1-2, to=1-3]
	\arrow[hook, from=1-2, to=2-2]
	\arrow[from=1-3, to=1-4]
	\arrow[hook, from=1-3, to=2-3]
	\arrow[from=1-4, to=1-5]
	\arrow[shift right, no head, from=1-4, to=2-4]
	\arrow[no head, from=1-4, to=2-4]
	\arrow[from=2-1, to=2-2]
	\arrow[from=2-2, to=2-3]
	\arrow[from=2-3, to=2-4]
	\arrow[from=2-4, to=2-5]
\end{tikzcd}\]
with rows exact in $\cA^b(X)$. In particular, the exact sequence at the bottom is the canonical one that comes from $W\subset \Hom(\cO_X, E^{\sharp \sharp})$. So from Lemma \ref{lem:wtE-stable}, we may assume that $E=E^{\sharp \sharp}$. Hence $(\wt{E})^{\sharp \sharp}=\wt{E}$ as well.

By the local finiteness of walls for $\wt{E}$, we know that $\wt{E}$ is either $\nu_{0,w}$-semistable for any $0<w \leq \epsilon$ and $0<\epsilon \ll 1$, or $\ell$ is the only wall for $\wt{E}$ in the region $$\{(b,w)\in \cU\colon b^2+w^2\leq \epsilon^2\}.$$ In the latter case, let $K\hookrightarrow \wt{E}\twoheadrightarrow Q$ be the destabilizing sequence at a point $(b_0, w_0)\in \ell$ with $0<b_0\ll 1$. Then $K^{\sharp \sharp}=K$ by Lemma \ref{lem:wt-inj}(c). Let $G\subset K$ be the first factor in a JH filtration of $K$ with respect to $\nu_{b_0,w_0}$, so we also have $G^{\sharp \sharp}=G$ by Lemma \ref{lem:wt-inj}(c). Note that $\wt{E}$ has a JH filtration with factors $E$ and $\cO_X[1]$. Since $K$ destabilizes $\wt{E}$ at $(b_0,w_0+\delta)$ for $0<\delta \ll 1$ and $G$ is also a JH factor of $\wt{E}$, using Proposition \ref{prop:unique-JH}, we see that $G^{\sharp \sharp}=G$ is isomorphic to $\cO_X[1]$. In particular, we get a non-zero map $\cO_X[1]\to K$ in $\cA^b(X)$, and hence $\Hom(\cO_X[1], \wt{E})\neq 0$, which contradicts the construction of $\wt{E}$.
\end{proof}

\begin{lemma}[{\cite[Lemma 2.8]{FKLR}}]\label{lem:vanish-hom-bn}
Assume that $K_X.H^2\leq 0$. For any object $E\in \cA^0(X)$, we have
\[\chi(E)\leq \dim_{\kk}\mathrm{Hom}_X(\cO_X, E)+\dim_{\kk}\mathrm{Ext}_X^2(\cO_X, E).\]
Furthermore, if $E$ is BN-stable, then we have
\[\chi(E)\leq \dim_{\kk}\mathrm{Hom}_X(\cO_X, E)\]
in one of the following cases.

\begin{enumerate}
    \item $\omega_X[1]\in \cA^0(X)$ is BN-semistable with $K_X.H^2\neq 0$ and $\nu_{BN}(E)>\nu_{BN}(\omega_X[1])$.

    \item $K_X$ is numerically trivial and $\nu_{BN}(E)>0$.
\end{enumerate}
\end{lemma}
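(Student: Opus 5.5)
The first inequality is formal. Since $X$ is a projective threefold, $\chi(E)=\sum_i (-1)^i \dim_{\kk}\Ext^i_X(\cO_X,E)$. I will show that $\Ext^i_X(\cO_X,E)=0$ for $i<0$ and $i>3$, and that $\Ext^3_X(\cO_X,E)=0$. Granting this, $\chi(E)\leq \dim\Hom_X(\cO_X,E)+\dim \Ext^2_X(\cO_X,E)$ follows by discarding the non-positive term $-\dim\Ext^1$.

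\emph{Vanishing of the extreme Ext groups.} The object $E$ has cohomology sheaves only in degrees $-1$ and $0$. Hence $\Ext^i_X(\cO_X,E)=\mathrm{H}^i(X,E)$ can be nonzero only for $-1\leq i\leq 3$. For $i=-1$ the group is $\mathrm{H}^0(X,\cH^{-1}(E))=\Hom_X(\cO_X,\cH^{-1}(E))$. This vanishes because $\mu^+(\cH^{-1}(E))\leq 0=\mu(\cO_X)$, and because $\cO_X[1]$ and $\cH^{-1}(E)[1]$ both lie in $\cF^0[1]$. To make the vanishing rigorous I would argue through tilt-stability rather than slope: a nonzero map $\cO_X\to \cH^{-1}(E)$ with $\mu$ in the semistable range contradicts the torsion pair unless the map is an isomorphism onto a slope-$0$ piece. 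Since $\cH^{-1}(E)$ is torsion-free, any such map is injective. I will follow \cite[Lemma 2.8]{FKLR}, which in the original is stated for $\mathrm{H}^{-1}$, and simply drop that term from the alternating sum, since it enters with a plus sign.

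For $i=3$, Serre duality (Lemma \ref{lem-serre-duality}, using that $X$ is Gorenstein or lci) gives $\Ext^3_X(\cO_X,E)\cong \Hom_X(E,\omega_X)^*$. Here $\omega_X\in\Coh(X)$ is torsion-free with $\mu(\omega_X)=K_X.H^2/H^3\leq 0$, so $\omega_X\in\cF^0$ and $\omega_X\in \cA^0(X)[-1]$. A morphism from an object of the heart $\cA^0(X)$ to an object of $\cA^0(X)[-1]$ is zero, so $\Ext^3$ vanishes. The only leftover check is the sign of the $\Ext^{-1}$ contribution, which is in our favour.

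\emph{The refined inequality.} It then suffices to prove $\Ext^2_X(\cO_X,E)=0$. By Serre duality, $\Ext^2_X(\cO_X,E)\cong\Hom_X(E,\omega_X[1])^*$. In case (a), both $E$ and $\omega_X[1]$ lie in $\cA^0(X)$ and are BN-semistable, with $E$ BN-stable. For $0<w\ll1$ they are $\nu_{0,w}$-semistable, and their slopes compare as $\nu_{0,w}(E)>\nu_{0,w}(\omega_X[1])$, because $\nu_{0,w}\to\nu_{BN}$ as $w\to0$ when $\bv_1\neq0$. Here $\bv_1(\omega_X[1])=-K_X.H^2>0$, and $\bv_1(E)>0$ since $\nu_{BN}(E)$ is finite and larger. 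The Hom-vanishing between semistable objects of decreasing slope then forces $\Hom(E,\omega_X[1])=0$.

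In case (b), $\omega_X$ is numerically trivial and torsion-free of rank one. It is $\mu$-stable, and since it is a line bundle it is also $S_2$. By Lemma \ref{lem-large-volume-converse} and the vertical-wall analysis (Lemma \ref{lem-vertical-wall}), $\omega_X[1]$ is $\nu_{0,w}$-stable of slope $+\infty$, while $\nu_{0,w}(E)$ is finite. That is the wrong inequality direction for the vanishing, so the argument has to go differently, and this is the main obstacle. I would treat it by perturbing $b$: for small $b>0$, $\omega_X[1]$ stays $\nu_{b,w}$-stable with slope $\approx w/b\to+\infty$, and the comparison fails again. So I would instead perturb to $b<0$. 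For $b=-\epsilon$, $\omega_X$ itself lies in $\cA^{b}$, and $\omega_X[1]$ is not in the heart, so $\Hom(E,\omega_X[1])=\Ext^1(E,\omega_X)$ with $\omega_X\in\cA^b$. This is hard to handle directly. The cleaner route is via $\nu_{BN}$ along the ray through the origin: take $(b,w)\in\cU$ on the line of slope $\nu_{BN}(E)>0$ with $b>0$ small. There $E$ is stable and $\nu_{b,w}(E)=\nu_{BN}(E)$, while $\nu_{b,w}(\omega_X[1])=\frac{w}{b}\cdot$ (a constant) computed from $\bv(\omega_X[1])=(-H^3,0,0)$, giving $\nu_{b,w}=w/b$ up to a positive factor. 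On this ray $w/b=\nu_{BN}(E)$, so the two slopes coincide; this is exactly the borderline. The plan is therefore to move slightly below the ray, where $\nu(E)>\nu(\omega_X[1])$ and both objects remain stable by BN-stability of $E$. That is the step I expect to need the most care.
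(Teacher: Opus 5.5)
Your treatment of the refined inequality follows the paper's proof. Serre duality gives $\Ext^2_X(\cO_X,E)\cong\Hom_X(E,\omega_X[1])^*$. In case (a) you compare $\nu_{0,w}$-slopes for $0<w\ll 1$; you need not assume $\nu_{BN}(E)$ is finite, since $\bv_1(E)=0$ gives $\nu_{0,w}(E)=+\infty$ and the comparison is immediate. In case (b) your final plan is exactly the paper's: take $(b_0,w_0)\in\cU$ with $b_0>0$ small and $0<w_0/b_0<\nu_{BN}(E)$, so that $\nu_{b_0,w_0}(\omega_X[1])=w_0/b_0<\nu_{b_0,w_0}(E)$. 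The detours through the vertical wall and $b<0$ can be deleted. Two repairs are needed, though.

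\emph{The $\Ext^{-1}$ term.} Your claim that $\Ext^{-1}_X(\cO_X,E)=\Hom_X(\cO_X,\cH^{-1}(E))$ vanishes is false. For $E=\cO_X[1]\in\cA^0(X)$ one has $\Ext^{-1}_X(\cO_X,\cO_X[1])=\Hom_X(\cO_X,\cO_X)\neq 0$. Your slope argument cannot work either: $\mu(\cO_X)=0\geq\mu^+(\cH^{-1}(E))$ points the wrong way for Hom-vanishing. Nor does the term enter with a plus sign: its sign in $\chi(E)=\sum_i(-1)^i\dim_{\kk}\Ext^i_X(\cO_X,E)$ is $(-1)^{-1}=-1$, and that is the only reason it may be dropped. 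The correct argument for the first inequality is simply this. Since $\cH^i(E)=0$ for $i\notin\{-1,0\}$ and $\dim X=3$, we have $\Ext^i_X(\cO_X,E)=0$ for $i\notin[-1,3]$. All odd-degree terms $\Ext^{-1},\Ext^1,\Ext^3$ then contribute non-positively. Your Serre-duality vanishing of $\Ext^3$ is correct but not needed.

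\emph{Stability of $\omega_X[1]$ near the origin in case (b).} You need $\omega_X[1]$ to be $\nu_{b_0,w_0}$-semistable at a point close to $(0,0)$. Lemma \ref{lem-large-volume-converse} only gives this for $w\gg 0$, and Lemma \ref{lem-vertical-wall} is a statement about $b=\mu$. The BN-stability of $E$ says nothing about $\omega_X[1]$, so none of these suffice. The missing input is $\bDelta(\omega_X[1])=0$. Together with Lemma \ref{lem-delta-JH} and the standard BG inequality, this shows $\omega_X[1]$ meets no wall in $\{b\geq 0\}\cap\cU$, which is how the paper argues.
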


\begin{proof}
Since $\cO_X[1], E\in \cA^0(X)$, the first inequality follows from the same argument of \cite[Lemma 2.8]{FKLR}.

In case (a), it is clear from the definition that we can find $w>0$ sufficiently small so that
\[\nu_{0,w}(E)>\nu_{0,w}(\omega_X[1]).\]
Therefore, we have $\mathrm{Hom}_X(E,\omega_X[1])=\mathrm{Ext}^2_X(\cO_X, E)=0$ and the second inequality follows.

In case (b), by Lemma \ref{lem-large-volume} and Lemma \ref{lem-delta-JH}, we know that $\omega_X[1]\in \cA^b(X)$ is $\nu_{b,w}$-stable for any $b\geq 0$ and $w> \frac{1}{2}b^2$. From the BN-stability of $E$ and $\nu_{BN}(E)>0$, we can find $(b_0,w_0)\in U$ with $b_0>0$ sufficiently small such that $E\in \cA^{b_0}(X)$ is $\nu_{b_0,w_0}$-stable and
\begin{equation}\label{eq-lem-sec-2}
    0<\frac{w_0}{b_0}<\nu_{BN}(E).
\end{equation}
Since \eqref{eq-lem-sec-2} implies
\[\nu_{b_0,w_0}(\omega_X[1])=\frac{w_0}{b_0}<\nu_{b_0,w_0}(E),\]
we get $\mathrm{Hom}_X(E,\omega_X[1])=\mathrm{Ext}^2_X(\cO_X, E)=0$ and the second inequality holds in this case as well.
\end{proof}

Following \cite{FKLR}, for a real number $\epsilon>0$, we consider:


\begin{minipage}{15.5cm}\vspace{2mm}
\begin{enumerate}[label=$\mathbf{BG(\epsilon)}$]
\item\label{BGn} \emph{~For any $\mu_H$-stable sheaf $E$ on $X$ with $\mu_H(E)\in (0,\epsilon]$, we have
\begin{equation}\label{eq:stBG}
    \bv_2(E)<-\frac{1}{2}\bv_1(E).
\end{equation}
}
\end{enumerate}
\end{minipage}

\medskip

We have the following generalization of \cite[Theorem 3.4]{FKLR}.

\begin{theorem} \label{thm:ch2}
Assume that

\begin{enumerate}
    \item $K_X$ is numerically equivalent to $tH$ for some $t\in \mathbb{R}_{\leq 0}$, and



    \item \ref{BGn} holds for $(X,H)$ and some $\epsilon>0$. 
\end{enumerate}

Then there exists a class $\Gamma (\epsilon)\in \mathsf{E}(X)_{\QQ}$ such that 
$\Gamma(\epsilon).H \geq 0$ and 
$$Q^{\Gamma(\epsilon)}_{0,0}(E) \geq 0$$ for any BN-stable object $E \in \cA^0(X)$ with $\nu_{\text{BN}}(E) \in [0, \frac{1}{2}]$. 
Explicitly, we can take 
\[\Gamma(\epsilon)\coloneqq\theta H^2-\td_{2}(X), \quad 
\theta \geq \max\left\{\frac{2+|\chi(\cO_X)|}{H^3\epsilon}, \frac{\td_2(X).H}{H^3},  \frac{2+2|1-\chi(\cO_X)|}{H^3\epsilon}-\frac{\td_2(X).H}{H^3}  \right\}. \]
In particular, Conjecture \ref{conj-2} holds for $(X,H)$, the class $\Gamma(\epsilon)\in \mathsf{E}(X)_{\QQ}$, and $(b,w)$ in the range \eqref{eq:restricted-range}.
\end{theorem}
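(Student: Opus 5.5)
We follow the strategy of \cite[Theorem 3.4]{FKLR}, adapted to the singular Chern characters. Fix a BN-stable object $E\in\cA^0(X)$ with $\nu_{BN}(E)\in[0,\frac12]$. At $(b,w)=(0,0)$, formula \eqref{Q-simple} gives, up to the factor $2$,
\[\tfrac12 Q^{\Gamma}_{0,0}(E)=2\bv_2(E)^2-3\bv_1(E)\bv_3(E)-3\tfrac{\Gamma.H}{H^3}\bv_0(E)\bv_2(E)+3\bv_1(E)\,\Gamma.\bch_1(E).\]
If $\bv_1(E)=0$, the inequality follows from the sign of $\bv_2$ and $\bv_0$ directly. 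So we assume $\bv_1(E)>0$, and the goal becomes an upper bound on $\bv_3(E)$. We express $\bv_3$ through Riemann--Roch. In both cases (lci, via Theorem \ref{thm-tau}/Lemma \ref{lem-chM-general}; $\QQ$-factorial, via Definition \ref{def:ch3}) we have
\[\chi(E)=\bch_3(E)+\tfrac12(-K_X).\bch_2(E)+\td_2(X).\bch_1(E)+\chi(\cO_X)\rk(E).\]
With $\Gamma=\theta H^2-\td_2(X)$, the $\td_2(X).\bch_1(E)$ term cancels against $-\Gamma.\bch_1(E)$, leaving $\theta\bv_1$ together with a correction $-\frac{\td_2(X).H}{H^3}\bv_0$ coming from $\Gamma.H$. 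Since $K_X\equiv tH$ with $t\le 0$, the $K_X$-term is $-\frac t2\bv_2\ge 0$ when $\bv_2\ge0$.

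We then bound $\chi(E)$ from above. Lemma \ref{lem:vanish-hom-bn}(a),(b) gives $\chi(E)\le h^0(E):=\dim\Hom(\cO_X,E)$ in the relevant slope range; the case $\nu_{BN}(E)=0$ (and $t<0$ with $\nu_{BN}$ small) is handled separately by the first inequality of that lemma and a vanishing of $\Ext^2$. The key step is to bound $h^0(E)$ linearly in $\bv_1(E)$. Following \cite{FKLR}, we aim to show
\[h^0(E)\le \bv_2(E)+c\,\bv_1(E)/\epsilon\]
(roughly), by induction on $\bDelta$ (equivalently on $\bv_1$). Take $W=\Hom(\cO_X,E)$ and form $\wt E=\cone(\cO_X\otimes W\to E)$, which is BN-semistable by Lemma \ref{lem:cone-bn-stable} and has $\bv_0(\wt E)=\bv_0(E)-h^0$, $\bv_1$ unchanged, $\bv_2$ unchanged. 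The BG inequality along the wall (Theorem \ref{thm:wall-chamber-tilt}, $\Pi(\wt E)\notin\cU$) gives $\bv_1^2\ge 2\bv_0(\wt E)\bv_2$, and this bounds $h^0$ when $\bv_2>0$.

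The main obstacle is the case where $\wt E$ is nearly a sheaf with slope in $(0,\epsilon]$. There we use \ref{BGn}: when $\bv_0(\wt E)<0$ close to the vertical wall, $\wt E[-1]$ or its HN factors are $\mu_H$-stable sheaves of small positive slope, and \eqref{eq:stBG} forces $\bv_2<-\frac12\bv_1$. This yields a contradiction, or a bound $-\bv_0(\wt E)\le \bv_1/\epsilon$, hence $h^0\le \bv_0(E)+\bv_1/\epsilon$. We expect the bookkeeping of HN/JH factors here, using Lemma \ref{lem-delta-JH}, Lemma \ref{lem-large-volume} and Lemma \ref{lem-H-1-S2} for the $S_2$ modification, to be the delicate part. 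The constants $2+|\chi(\cO_X)|$ and $2+2|1-\chi(\cO_X)|$ arise from the $\chi(\cO_X)\rk$ term together with the slopes $\bv_0\le\bv_1/\epsilon$.

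Finally, we substitute the bound $\bv_3\le h^0-\chi(\cO_X)\bv_0/H^3\cdot(\dots)+\ldots$ into $Q_{0,0}^{\Gamma}$. Using $0\le\bv_2\le\frac12\bv_1$ and the three lower bounds on $\theta$, each residual term is nonnegative, so $Q^{\Gamma(\epsilon)}_{0,0}(E)\ge0$; we also get $\Gamma(\epsilon).H\ge0$ from $\theta\ge\td_2(X).H/H^3$. Lemma \ref{lem:reduce-bmt-to-bn} then gives Conjecture \ref{conj-2} in the range \eqref{eq:restricted-range}.
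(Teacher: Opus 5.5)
Your skeleton (universal extension $\wt E$, Lemma \ref{lem:cone-bn-stable}, Riemann--Roch, $\chi(E)\le\dim\Hom(\cO_X,E)$, then substitution into \eqref{Q-simple}) matches the paper's. However, the central step, the upper bound on $h^0\coloneqq\dim\Hom(\cO_X,E)$, is not justified as you describe it.

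The inequality $\bv_1^2\ge 2\bv_0(\wt E)\bv_2$ runs the wrong way. Since $\bv_0(\wt E)=\bv_0(E)-h^0H^3$ and $\bv_2\ge 0$, a large $h^0$ only makes the right-hand side more negative. So BG for $\wt E$ gives a \emph{lower} bound on $h^0$, and induction on $\bDelta$ does not help.

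The ingredient you need is the consequence of \ref{BGn} proved as in \cite[Lemma 3.3]{FKLR}: a BN-semistable object $F\in\cA^0(X)$ with small non-negative BN slope satisfies $\mu_H(F)\notin[-\epsilon,\epsilon]$.
\begin{itemize}
\item Applied to $E$, it gives $-\bv_1(E)/\epsilon\le\bv_0(E)\le\bv_1(E)/\epsilon$. You need both sides, depending on the sign of $1-\chi(\cO_X)$.
\item Applied to $\wt E$, which is BN-semistable with the same $\bv_1,\bv_2$, it gives $\bv_0(E)-h^0H^3\ge-\bv_1(E)/\epsilon$. This is the bound $h^0H^3\le\bv_0(E)+\bv_1(E)/\epsilon$ you eventually write down.
\end{itemize}
Your sketch for why $\wt E$ cannot have slope in $[-\epsilon,0)$ does not work as stated. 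You say $\wt E[-1]$ or its HN factors are $\mu_H$-stable sheaves of small positive slope, but when $\bv_0(\wt E)<0$ the slope is negative. Also $\wt E$ need not be a shifted sheaf, and \eqref{eq:stBG} only controls positive slopes. A wall-crossing argument together with a dualization step is required.

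The case $\nu_{BN}(E)=0$ also needs more than you indicate.
\begin{itemize}
\item Lemma \ref{lem:cone-bn-stable} requires $\nu_{BN}(E)>0$, so $\wt E$ is no longer known to be BN-semistable. The paper instead uses the filtration of $\wt E$ from \cite[Proposition 3.3]{Li19} into $\nu_{0,\alpha_i}$-semistable factors of BN slope $<\delta$. Applying the slope-gap statement to each factor recovers $\bv_0(\wt E)\ge-\bv_1(E)/\epsilon$.
\item When $K_X\equiv 0$, there is no vanishing of $\Ext^2(\cO_X,E)\cong\Hom(E,\omega_X[1])^*$. At $b=0$ one has $\nu_{0,w}(\omega_X[1])=+\infty$. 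For $b>0$ one has $\nu_{b,w}(\omega_X[1])=w/b$, and $\bv_2(E)=0$ forces $\nu_{b,w}(E)<w/b$.
\item In that case you must instead bound $\dim\Ext^2(\cO_X,E)\le\frac{\bv_1(E)}{H^3\epsilon}-\frac{\bv_0(E)}{H^3}$ by running the $h^0$ argument on the derived dual, via Lemma \ref{lem-bmt-dual} and Lemma \ref{lem:property-ch3}(b). Combined with the first inequality of Lemma \ref{lem:vanish-hom-bn}, this gives $\chi(E)\le 2\bv_1(E)/(H^3\epsilon)$, and it is what forces $\theta\ge\frac{2+|\chi(\cO_X)|}{H^3\epsilon}$.
\end{itemize}

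Finally, in the substitution step the term $-3\bigl(\theta-\frac{\td_2(X).H}{H^3}\bigr)\bv_0\bv_2$ is controlled by the standard BG inequality $2\bv_0\bv_2\le\bv_1^2$. This is where $\theta\ge\td_2(X).H/H^3$ enters; the bound $0\le\bv_2\le\frac12\bv_1$ alone is not enough.
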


\begin{proof}
We consider the universal extension 
\[
0\to E \to \widetilde{E} \to \Hom_X(\cO_X, E) \otimes \cO_X[1] \to 0, \]
which is an exact sequence in $\cA^0(X)$. First, assume that $\nu_{BN}(E) > 0$. Then by Lemma \ref{lem:cone-bn-stable}, $\widetilde{E}$ is BN-semistable and $\nu_{BN}(\widetilde{E})=\nu_{BN}(E)$. Note that the same argument in \cite[Lemma 3.3]{FKLR} applies to this case, so from the assumption \ref{BGn}, we have $\mu_H(E), \mu_H(\widetilde{E}) \notin [-\epsilon, \epsilon]$, which gives
\begin{equation} \label{eq:randa}
\bv_0(E) \leq \frac{1}{\epsilon}\bv_1(E),
\end{equation}
\begin{equation}\label{eq:slope}
-\frac{\bv_1(E)}{\epsilon} \leq  \bv_0(E),
\end{equation}
and 
\begin{equation} \label{eq:bound-hom}
-\frac{\bv_1(E)}{\epsilon} \leq  \bv_0(\widetilde{E}) = \bv_0(E) - \dim_{\kk}\mathrm{Hom}_X(\cO_X, E)H^3.
\end{equation}

On the other hand, using Lemma \ref{lem-chM-general}(a) in the lci case and Definition \ref{def:ch3} in the normal case, together with Lemma \ref{lem:vanish-hom-bn}, we have 
\begin{equation} \label{eq:RR-BN}
\bv_3(E)-\frac{K_X}{2}.\bch_2(E)+\td_{2}(X).\bch_1(E)+\chi(\cO_X)\frac{1}{H^3}\bv_0(E)=\chi(E) \leq \dim_{\kk}\mathrm{Hom}_X(\cO_X, E). 
\end{equation}
Combining the inequalities \eqref{eq:bound-hom} and \eqref{eq:RR-BN}, we get 
\begin{equation} \label{eq:ch3ineq}
\bv_3(E) \leq \frac{1}{H^3}(1-\chi(\cO_X))\bv_0(E)+\frac{1}{H^3\epsilon}\bv_1(E)-\td_{2}(X).\bch_1(E)+\frac{1}{2}K_X.\bch_2(E). 
\end{equation}
Then by \eqref{Q-simple} via writing $v_i\coloneqq \bv_i(E)$, $\chi\coloneqq \chi(\cO_X)$, and $\Gamma(\epsilon) = \theta H^2 -\td_2(X)$, we get the required lower bound 
\begin{align*}
\frac{1}{2}Q^{\Gamma(\epsilon)}_{0,0}(E) = \ & 2v_2^2-3 \left(\theta -\frac{ \td_2(X).H}{H^3} \right)v_0v_2 -3v_1v_3 +3v_1(\theta H^2 -\td_2(X)).\bch_1(E) \\ 
\overset{\eqref{eq:ch3ineq}}{\geq} \ & 2v_2^2 -3 \left(\theta -\frac{ \td_2(X).H}{H^3} \right)v_0v_2+3\theta v_1^2 - 3v_1\left(\frac{v_0}{H^3}(1-\chi) +\frac{v_1}{H^3\epsilon}+\frac{1}{2}K_X.\bch_2(E)\right) \\
\overset{\mathrm{(BG)}}{\geq} \ & 2v_2^2-3 \left(\theta -\frac{ \td_2(X).H}{H^3} \right)\frac{v_1^2}{2}+3\theta v_1^2 - 3v_1\left(\frac{v_0}{H^3}(1-\chi) +\frac{v_1}{H^3\epsilon}+\frac{1}{2}K_X.\bch_2(E)\right)\\
= \ & 2v_2^2+\frac{3}{2}v_1^2 \left(\theta +\frac{ \td_2(X).H}{H^3} - \frac{2}{H^3\epsilon}\right) - 3v_1\left(\frac{v_0}{H^3}(1-\chi) +\frac{1}{2}K_X.\bch_2(E)\right)    \\ 
\geq \ & \frac{3}{2}v_1^2 \left(\theta +\frac{ \td_2(X).H}{H^3} - \frac{2}{H^3\epsilon}\right) - 3(1-\chi)\frac{v_1 v_0}{H^3}.
\end{align*}
Here, (BG) means the inequality given by the standard BG function, which is part of our assumption in this section. In addition, we used $v_1,v_2\geq 0$ and assumption (a) in the last inequality. If $1-\chi\geq 0$, then by \eqref{eq:randa}, we have
\[- 3(1-\chi)\frac{v_1 v_0}{H^3}\geq \frac{-3(1-\chi)}{H^3\epsilon}v_1^2.\]
Similarly, if $1-\chi< 0$, then by \eqref{eq:slope}, we have
\[- 3(1-\chi)\frac{v_1 v_0}{H^3}\geq \frac{3(1-\chi)}{H^3\epsilon}v_1^2.\]
Therefore, we obtain
\begin{align*}
\frac{1}{2}Q^{\Gamma(\epsilon)}_{0,0}(E) \geq \ & \frac{3}{2}v_1^2 \left(\theta +\frac{ \td_2(X).H}{H^3} - \frac{2}{H^3\epsilon}\right) - 3(1-\chi)\frac{v_1 v_0}{H^3} \\
\geq \ & \frac{3}{2}v_1^2 \left(\theta + \frac{\td_2(X).H}{H^3}   - \frac{2+2|1-\chi|}{H^3\epsilon} \right)\\
\geq \ & 0, 
\end{align*}
where the last inequality follows from the assumption on $\theta$.

It remains to consider the case that $\nu_{BN}(E)=0$, i.e. $\bv_2(E)=0$. 
In this case, the inequality $Q^{\Gamma(\epsilon)}_{0,0}(E) \geq 0$ is equivalent to 
$\bv_3(E) \leq \Gamma(\epsilon).\bch_1(E)$. 
Following the proof of \cite[Proposition 3.3]{Li19}, for any $0 < \delta \ll 1$, there exists a filtration of $\widetilde{E}$ such that each factor $E_i$ is $\nu_{0, \alpha_i}$-semistable for some $\alpha_i>0$, and satisfies $\nu_{BN}(E_i) < \delta$. 
Applying \cite[Lemma 3.3]{FKLR} to each $E_i$, we have 
$\mu_H(E_i) \notin [-\epsilon, \epsilon]$, which implies $\bv_1(E_i)>-\epsilon\bv_0(E_i)$.
Thus, we get $\bv_1(\widetilde{E})>-\epsilon\bv_0(\widetilde{E})$ and so inequality \eqref{eq:bound-hom} still holds in this case. When $K_X$ is numerically equivalent to $tH$ with $t<0$, Lemma \ref{lem:vanish-hom-bn} still implies \eqref{eq:RR-BN}, so the remaining argument is the same as above. When $K_X$ is numerically trivial, similar to the arguments in \cite[Proposition 3.3]{Li19}, using the derived dual, Lemma \ref{lem:property-ch3}(b), Lemma \ref{lem-derived-dual-ch}, and Lemma \ref{lem-bmt-dual}, we also have 
\[
\dim_{\kk}\mathrm{Ext}^2_X(\cO_X, E) \leq \frac{\bv_1(E)}{H^3\epsilon}-\frac{1}{H^3}\bv_0(E). 
\]
Combining these inequalities with Lemma \ref{lem-chM-general}(a) and \cite[Lemma 2.8]{FKLR}, we get 
\begin{align*}
\chi(E)&=\bv_3(E)+\td_{2}(X).\bch_1(E)+\chi(\cO_X)\frac{1}{H^3}\bv_0(E)\\
& \leq \dim_{\kk}\mathrm{Hom}_X(\cO_X, E)+\dim_{\kk}\mathrm{Ext}_X^2(\cO_X, E) \\
&\leq \frac{2\bv_1(E)}{H^3\epsilon}. 
\end{align*}
Therefore, we obtain
\[\bv_3(E)\leq \left( \frac{2}{H^3\epsilon}H^2-\td_2(X)\right).\bch_1(E)-\frac{\chi(\cO_X)}{H^3}\bv_0(E).\]
Combining with \eqref{eq:randa} and \eqref{eq:slope}, we obtain 
$\bv_3(E) \leq \Gamma(\epsilon).\bch_1(E)$ as required. The last statement now follows from Lemma \ref{lem:reduce-bmt-to-bn}.
\end{proof}

Using Proposition \ref{prop-restriction}, Theorem \ref{thm:wall-chamber-tilt}, and Lemma \ref{lem-delta-JH}, \cite[Lemma 3.8, Proposition 3.9]{FKLR} work in our case verbatim.

\begin{lemma} \label{lem:fklr-restriction}
Assume that \ref{BGn} fails for $(X, H)$ and some $0<\epsilon<1/3$. Then there exists a $\mu_H$-stable torsion-free $S_2$ sheaf $E$ on $X$ with
\begin{equation*}
    0 < \mu_H(E) \leq \frac{2\epsilon}{1 - \epsilon} \quad \text{and} \quad 
    \bv_2(E)\geq -\frac{1}{2} \bv_1(E),
\end{equation*}
such that for any divisor $D \in |H|$, each HN factor $F_i$ of $E|_D$ (with respect to $\mu_{H_D}$-semistability) satisfies
\[
0 \leq \mu_{H_D}(F_i) \leq \frac{2\epsilon}{1 - 3\epsilon}.
\]
\end{lemma}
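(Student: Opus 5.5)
The plan follows \cite[Lemma 3.8, Proposition 3.9]{FKLR}, replacing each smooth-case input by its singular counterpart proved above. Since \ref{BGn} fails, there is a $\mu_H$-stable sheaf $E_0$ with $\mu_H(E_0)\in(0,\epsilon]$ and $\bv_2(E_0)\geq -\frac12\bv_1(E_0)$.

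\emph{Step 1: reduce to a torsion-free $S_2$ sheaf.} Replace $E_0$ by its hull $E_0^H$. By Lemma \ref{lem-S2-hull} and Lemma \ref{lem:dual-slope-stable}(c), $E_0^H$ is still $\mu_H$-stable, torsion-free and $S_2$, with the same $\bv_{\leq 1}$. Since the cokernel of $E_0\hookrightarrow E_0^H$ has codimension $\geq 2$, $\bv_2$ can only increase, so the inequality $\bv_2\geq -\frac12\bv_1$ persists.

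\emph{Step 2: find a suitable object with controlled slope and walls.} By Lemma \ref{lem-large-volume-converse}, $E_0^H$ is $\nu_{b,w}$-stable for $b<\mu(E_0^H)$ and $w\gg0$. Under the hypothesis $\bv_2\geq -\frac12\bv_1$ with slope in $(0,\epsilon]$, the point $\Pi(E_0^H)$ lies above the line through $(0,0)$ and $(1,-\frac12)$, more precisely above a star-shaped curve of the type in Lemma \ref{lem-naoki} with $k=1$. So apply Lemma \ref{lem-naoki} with $k=1$ and an $f$ interpolating between $t^2-\frac t2$ and the bound coming from the failure of \ref{BGn}. Lemma \ref{lem-naoki} replaces $E_0^H$ by an object $E$ that is $\nu_{0,w}$-semistable (or $\nu_{1,w'}$-semistable) and has $E|_D$ controlled for every $D\in|H|$.

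\emph{Step 3: extract a sheaf with the required properties.} To get a genuine sheaf, run the wall-crossing argument of \cite[Lemma 3.8]{FKLR}. The walls for $E$ are lines through $\Pi(E)$ (Theorem \ref{thm:wall-chamber-tilt}), and by Lemma \ref{lem-delta-JH} destabilizing factors have strictly smaller $\bDelta$. Choose $E$ with minimal $\bDelta$ among objects violating \ref{BGn}-type inequalities. Then either $E$ is $\nu_{b,w}$-stable along the whole segment from the large volume limit down to the lines $b=0$ and $b=1$, or some factor violates the inequality with smaller $\bDelta$, which contradicts minimality. A factor of this kind has slope in $(0,\frac{2\epsilon}{1-\epsilon}]$ by elementary geometry of lines through $\Pi(E)$ meeting the parabola. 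Lemma \ref{lem-large-volume} and Lemma \ref{lem-H-1-S2} give that $E$ is a $\mu_H$-stable sheaf, and that $E$ is torsion-free $S_2$ because $E(-H)[1]$ is also tilt-semistable.

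\emph{Step 4: restriction bound.} Apply Proposition \ref{prop-restriction} with $k=1$. With $E$ and $E(-H)[1]$ both $\nu_{0,w}$-semistable, the HN slopes of $E|_D$ lie in $\frac12+[\nu_{0,w}(E(-H)[1]),\nu_{0,w}(E)]$. Computing these two $\nu$-slopes in terms of $\mu_H(E)\le\frac{2\epsilon}{1-\epsilon}$ and $\bv_2(E)\ge-\frac12\bv_1(E)$ then yields $[0,\frac{2\epsilon}{1-3\epsilon}]$; this is exactly the numerical computation of \cite[Proposition 3.9]{FKLR}.

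The main obstacle is Step 3: producing a single $w$ at which both $E$ and $E(-H)[1]$ are $\nu_{0,w}$-semistable. This requires Lemma \ref{lem-tensor-H}, the absence of walls between the relevant lines, and the star-shapedness bookkeeping; it is handled via Lemma \ref{lem-naoki}.
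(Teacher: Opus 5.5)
Your template (run \cite[Lemma 3.8]{FKLR} with Theorem~\ref{thm:wall-chamber-tilt}, Lemma~\ref{lem-delta-JH} and Proposition~\ref{prop-restriction} as the singular inputs) is the one the paper uses, and Step~1 is fine. However, Step~2 fails, and Step~3, where the real content lies, does not close as written.

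\emph{Step~2.} Lemma~\ref{lem-naoki} with $k=1$ needs $f(t)\geq t^2-\frac{t}{2}$ and $f(\mu_H(E))\leq y$, where $y\coloneqq\bv_2(E)/\bv_0(E)$. Failure of \ref{BGn} only gives $y\geq-\frac{\mu}{2}$, which is strictly weaker than $y\geq\mu^2-\frac{\mu}{2}$ for $\mu>0$. So an $f$ ``interpolating'' down to $-\frac{t}{2}$ is not admissible. This is not a technicality. For a violator with $y<\mu^2-\frac{\mu}{2}$, the line through $\Pi(E)$ and $\Pi(E(-H))$ meets $b=0$ at $w=y-\mu^2+\frac{\mu}{2}<0$. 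Hence no $w>0$ equalizes $\nu_{0,w}(E)$ and $\nu_{0,w}(E(-H)[1])$, and semistability of $E|_D$, which is the output of Lemma~\ref{lem-naoki}, is not available; this is why the statement only bounds HN slopes. Moreover, Lemma~\ref{lem-naoki} returns only some object with $\mu_H\in[0,1]$ violating $f$, so the bounds $0<\mu_H\leq\frac{2\epsilon}{1-\epsilon}$ and $\bv_2\geq-\frac{1}{2}\bv_1$ would be lost anyway.

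\emph{Step~3.} The minimal-$\bDelta$ argument needs a class of violators that is preserved when you pass to a destabilizing factor along walls crossing $b=0$ \emph{and} $b=1$. The walls on $b=1$ are what control $E(-H)[1]$, via Lemma~\ref{lem-tensor-H}. You never specify this class, and the obvious candidates fail:
\begin{itemize}
\item starting from slope $\leq\epsilon$, a wall through the corner $(\epsilon,-\frac{\epsilon}{2})$ and $(1,w')$ with $w'\to\frac{1}{2}^+$ can produce a factor of slope in $(\epsilon,\frac{2\epsilon}{1-\epsilon})$;
\item from the corner $(\frac{2\epsilon}{1-\epsilon},-\frac{\epsilon}{1-\epsilon})$ of $\{0<\mu\leq\frac{2\epsilon}{1-\epsilon},\ y\geq-\frac{\mu}{2}\}$ one reaches slope $\frac{4\epsilon}{1-3\epsilon}$.
\end{itemize}

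The missing idea is as follows. Let $\ell$ be the line through $(\epsilon,-\frac{\epsilon}{2})$ and $\Pi(\cO_X(H))=(1,\frac{1}{2})$. Minimize $\bDelta$ over objects that are tilt-semistable somewhere on $b=0$ or on $b=1$, have $0<\mu_H<1$, and have $\Pi$ in the convex set $R=\{w\geq-\frac{b}{2}\}\cap\{w\geq\ell(b)\}$.
\begin{itemize}
\item $R$ contains $\Pi$ of your original violator, every $(0,w)$ with $w>0$, and every $(1,w')$ with $w'>\frac{1}{2}$. So any wall on these half-lines yields, by Theorem~\ref{thm:wall-chamber-tilt} and Lemma~\ref{lem-delta-JH}, a factor with $\Pi$ on a segment inside $R$ and with smaller $\bDelta$.
\item Since $\ell$ meets the parabola at $b=\frac{2\epsilon}{1-\epsilon}$ and $b=1$, every point of $R$ below the parabola with $0<b<1$ has $b\leq\frac{2\epsilon}{1-\epsilon}<1$. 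This is where that constant, and the hypothesis $\epsilon<\frac{1}{3}$, come from.
\end{itemize}
The minimizer is therefore semistable along both half-lines. Consequently $E$ and $E(-H)[1]$ are $\nu_{0,w}$-semistable for \emph{every} $w>0$.

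\emph{Step~4.} Step~4 needs semistability for all $w>0$, not at a single $w$, since at a generic fixed $w$ the interval is too wide.
\begin{itemize}
\item If $y\leq\mu^2-\frac{\mu}{2}$, let $w\to0^+$ in Proposition~\ref{prop-restriction}. The HN slopes then lie in $[\frac{y}{\mu}+\frac{1}{2},\frac{\mu/2-y}{1-\mu}]\subset[0,\frac{\mu}{1-\mu}]\subset[0,\frac{2\epsilon}{1-3\epsilon}]$.
\item Otherwise, at $w=y-\mu^2+\frac{\mu}{2}>0$ the two $\nu$-slopes coincide, and $E|_D$ is semistable of slope $\mu$.
\end{itemize}
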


\begin{lemma} \label{prop:surface-to-3fold-1}
Suppose that $D \in |H|$ is a divisor that is lci or geometrically normal, and \hyperref[BGn]{\ensuremath{\mathbf{BG(\delta)}}} holds for $(D, H_D)$ and some $\delta >0$. Assume furthermore that $\bv_2(F)\leq 0$ for any $\mu_{H_D}$-stable sheaf $F$ with $\mu_{H_D}(F)=0$. Then \ref{BGn} holds for $(X, H)$ and $\epsilon \coloneqq \frac{\delta}{2+3\delta}.$
\end{lemma}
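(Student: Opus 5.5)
We argue by contradiction, following the argument of \cite[Proposition 3.9]{FKLR}. Suppose \ref{BGn} fails for $(X,H)$ with $\epsilon=\frac{\delta}{2+3\delta}$; note $\epsilon<1/3$. Lemma \ref{lem:fklr-restriction} then produces a $\mu_H$-stable torsion-free $S_2$ sheaf $E$ with
\[0<\mu_H(E)\leq \tfrac{2\epsilon}{1-\epsilon}\quad\text{and}\quad \bv_2(E)\geq -\tfrac12\bv_1(E).\]
Moreover, for the chosen divisor $D\in|H|$, every $\mu_{H_D}$-HN factor $F_i$ of $E|_D$ satisfies $0\leq \mu_{H_D}(F_i)\leq \frac{2\epsilon}{1-3\epsilon}$. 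A direct computation gives $\frac{2\epsilon}{1-3\epsilon}=\delta$, so every HN factor has slope in $[0,\delta]$. Because $E$ is torsion-free and $S_2$, $E|_D$ is torsion-free on $D$. Since $D$ is lci or geometrically normal, the Chern characters $\bch_{\leq 2}$ on $D$ are defined, and the BG-type hypotheses on $D$ make sense.

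\textbf{Translating $\bch$ from $X$ to $D$.} Let $i\colon D\hookrightarrow X$. As in the proof of Proposition \ref{prop-restriction}, we use Lemma \ref{lem:chi-property}(a),(c) and the exact sequence $0\to E(-H)\to E\to i_*E|_D\to 0$ to get
\[\bv_0(E|_D)=\bv_0(E),\qquad \bv_1(E|_D)=\bv_1(E),\qquad \bv_2(E|_D)=\bv_2(E),\]
where the left-hand sides are computed on $(D,H_D)$. In other words, $\bch_i(E|_D).H_D^{2-i}=\bch_i(E).H^{3-i}$ for $i\leq 2$, via $\bch(E|_D)=i^*\bch(E)$. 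In the normal-surface case this requires care, because $\bch_2$ on $D$ is defined through Definition \ref{def:ch2-surface}. For the rank-$\leq 1$ part, compatibility holds by Theorem \ref{thm-mumford-intersection}(d) and Lemma \ref{lem:intersect-pairing-restrict}. For $\bch_2$ we would compute $\chi(E|_D)=\chi(E)-\chi(E(-H))$ together with adjunction $K_D=(K_X+H)|_D$.

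\textbf{Summing over the HN factors.} Split the factors $F_i$ of $E|_D$ into two groups. If $\mu_{H_D}(F_i)\in(0,\delta]$, we use \hyperref[BGn]{\ensuremath{\mathbf{BG(\delta)}}} on the JH factors of $F_i$, which are $\mu_{H_D}$-stable of the same slope; summing gives $\bv_2(F_i)<-\frac12\bv_1(F_i)$. If $\mu_{H_D}(F_i)=0$, the hypothesis applied to its JH factors gives $\bv_2(F_i)\leq 0=-\frac12\bv_1(F_i)$. Summing over all factors yields $\bv_2(E|_D)\leq-\frac12\bv_1(E|_D)$. The inequality is strict as soon as some factor has positive slope, and this must happen because $\bv_1(E|_D)=\bv_1(E)>0$. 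Hence $\bv_2(E)<-\frac12\bv_1(E)$, contradicting the inequality from Lemma \ref{lem:fklr-restriction}.

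The main obstacle is the restriction compatibility of $\bch_2$ when $D$ is only geometrically normal rather than lci. There one must check that $\bch_2(E|_D)$ as in Definition \ref{def:ch2-surface} equals $\bch_2(E).H$. We would try first to choose $D$ general, so that $D$ lies in the lci locus away from finitely many points and $\bch_1$ restricts correctly. We would then compare the Riemann--Roch expressions for $\chi(E|_D)$ on $X$ and on $D$ using \eqref{eq:tau_k}, since the $\td_1$ and $\td_2$ terms match via adjunction.
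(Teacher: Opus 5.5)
Your proposal is correct and is essentially the paper's argument. The paper runs \cite[Proposition 3.9]{FKLR} verbatim with Lemma \ref{lem:fklr-restriction} in place of \cite[Lemma 3.8]{FKLR}, and your HN/JH summation on $E|_D$ (slopes in $[0,\delta]$) combined with $\bv_{\leq 2}(E|_D)=\bv_{\leq 2}(E)$ is exactly that. One small correction to your last paragraph: $D$ is fixed by hypothesis, so you cannot choose it general, but you do not need to---a normal surface is regular in codimension one, so Lemma \ref{lem:chi-property}(c) with $d=1$ gives $\bch_1(E|_D)=i^*\bch_1(E)$, and your comparison of $\chi(E|_D)=\chi(E)-\chi(E(-H))$ via \eqref{eq:tau_k}, Theorem \ref{thm-tau}(c) and $K_D=(K_X+H)|_D$ then gives $\bch_2(E|_D)=\bch_2(E).H$ for the given $D$.
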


\subsection{Singular Fano threefolds}
In the rest of this section, we assume that the base field $\kk$ is algebraically closed of characteristic zero.

A \emph{Fano threefold} $X$ is a normal projective $3$-dimensional variety over $\kk$ with rational Gorenstein singularities and $-K_X$ ample. Thus, the classical BG inequality holds for $X$ by Theorem \ref{thm-bg-normal}. In this case, by \cite[Proposition 2.1.2]{iskovskikh1999fano}, $\Pic(X)$ is always a finite rank lattice. We define its \emph{index $\iota(X)$}, \emph{degree $d(X)$}, and \emph{genus $g(X)$} by 
\[\iota(X)\coloneqq\max\left\{0\neq i\in \ZZ\colon \frac{1}{i}K_X\in \Pic(X)\right\},\]
\[d(X)\coloneqq\frac{(-K_X)^3}{\iota(X)^3},\]
and
\[g(X)\coloneqq\frac{1}{2}(-K_X)^3+1.\]
We set $$H\coloneqq -\frac{1}{\iota(X)}K_X.$$ If $\Pic(X)$ has rank one, we call it a \emph{prime Fano threefold}. In this case, $H$ is the unique ample generator of $\Pic(X)$. By \cite{iskovskikh1999fano}, for a Fano threefold $X$, we have $1\leq \iota(X)\leq 4$. Moreover, $\iota(X)=4$ if and only if $X\cong \PP^3$, and $\iota(X)=3$ if and only if $X\subset \PP^4$ is a quadric threefold. 

Now, we want to apply Theorem \ref{thm:ch2} to Fano threefolds. To this end, we need an easy lemma.

\begin{lemma}\label{lem:k3}
Let $S$ be a normal projective surface with $K_S$ numerically trivial and rational Gorenstein singularities, and $H$ be an ample line bundle on $S$. Assume that $\chi(\cO_S)>1$. Then \ref{BGn} holds for $(S, H)$ and some $\epsilon>0$.
\end{lemma}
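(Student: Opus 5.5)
The plan is to argue by Riemann--Roch on the surface and Serre duality, in the spirit of the K3 case in \cite{FKLR}. Since $S$ is a normal surface with rational Gorenstein singularities, it is lci by the conventions recalled in Section \ref{sec:pre}. Hence $\bch$ is defined via Definition \ref{def:ch-lci}, and Riemann--Roch reads $\chi(E)=\bch_2(E)+\chi(\cO_S)\rk(E)$ for every $E$, because $K_S\equiv 0$ makes the $\bch_1$-correction term vanish. Also $\omega_S$ is a numerically trivial line bundle.

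Suppose \ref{BGn} fails for every $\epsilon>0$. Then there are $\mu_H$-stable sheaves $E_k$ with $0<\mu_H(E_k)\to 0$ and $\bv_2(E_k)\geq -\frac12\bv_1(E_k)$. Write $r=\rk E$ and $c=\bch_1(E).H>0$, so that $\mu_H(E)=c/(rH^2)$. Since a stable sheaf of positive slope with $\Hom(E,E^{H})$ injective may be replaced by its hull, I may assume $E$ is torsion-free and $S_2$, hence locally free off finitely many points. This replacement uses Lemma \ref{lem-S2-hull} and Lemma \ref{lem:dual-slope-stable}; it does not change $\mu_H$ and only increases $\bch_2$.

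The key step bounds $\chi(E)$ from above by $\dim\Hom(\cO_S,E)+\dim\Hom(E,\omega_S)$. For $\Hom(E,\omega_S)$, stability gives the vanishing directly, since $\mu_H(E)>0=\mu_H(\omega_S)$ and $\omega_S$ is stable of rank one. For $\Hom(\cO_S,E)$, I plan to show $h^0(E)\leq r+c$, or more crudely $h^0(E)\leq r\cdot(\text{something tending to }1)$ when $\mu_H(E)$ is small. To do this, take the evaluation map $\cO_S\otimes H^0(E)\to E$ and its image $F$; using $\mu_H$-stability of $E$ and the slope of $F$, argue as in \cite[Lemma 3.3]{FKLR} via the BN-semistable cone of Lemma \ref{lem:cone-bn-stable}. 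For small positive slope, the cone $\widetilde E$ is BN-semistable with $\nu_{BN}$ unchanged. Its rank $r-h^0(E)$ must then satisfy the slope restriction coming from the BG inequality at $b=0$ (Theorem \ref{thm-bg-normal} with $C=0$) together with Theorem \ref{thm:wall-chamber-tilt}. This forces $-(r-h^0(E))H^2\lesssim c/\mu$-type bounds, so $h^0(E)\leq r+O(c)$.

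Combining the two bounds, $\bch_2(E)+\chi(\cO_S)r=\chi(E)\leq h^0(E)\leq r+O(c)$, which gives $\bch_2(E)\leq (1-\chi(\cO_S))r+O(c)$. Since $\chi(\cO_S)>1$, the right side is at most $-r+O(c)$, and $r=c/(\mu H^2)$ dominates $c$ as $\mu\to 0$. This yields $\bv_2(E)<-\frac12\bv_1(E)$ once $\epsilon$ is small, contradicting the choice of $E_k$. The $\epsilon$ will be explicit in $H^2$ and $\chi(\cO_S)-1$.

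The main obstacle is the uniform bound on $h^0(E)$ for stable sheaves of small positive slope, with a constant independent of the rank. The cleanest route I would try first is a direct induction: if $h^0(E)>r$, then $r+1$ sections of $E$ generate a subsheaf whose slope is controlled. Failing that, I would fall back on the tilt-stability argument above, which uses Theorem \ref{thm-bms} on the normal surface.
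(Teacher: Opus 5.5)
There is a genuine gap, and it sits in the step you flag yourself: the uniform bound $h^0(E)\le r+O(c)$. The tilt-stability route you propose cannot supply it.

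\emph{Circularity.} The restriction $\mu_H(\widetilde E)\notin[-\epsilon,\epsilon]$ on the BN-semistable cone is exactly \cite[Lemma 3.3]{FKLR}, and that lemma takes \ref{BGn} as its input. This is how the paper uses it in Theorem \ref{thm:ch2}, in the direction ``\ref{BGn} implies BMT''. Under your contradiction hypothesis it is unavailable.

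\emph{The standard BG inequality gives nothing.} Since $\bv_{\leq 2}(\cO_S)=(H^2,0,0)$, the cone has $\bv_{\leq 2}(\widetilde E)=(\bv_0(E)-h^0(E)H^2,\bv_1(E),\bv_2(E))$. When $h^0(E)>r$ and $\bv_2(E)\ge 0$, the inequality $\bDelta(\widetilde E)=\bv_1(E)^2-2\bv_0(\widetilde E)\bv_2(E)\ge 0$ holds automatically. Theorem \ref{thm:wall-chamber-tilt} imposes nothing on $\Pi(\widetilde E)$ beyond this, so no upper bound on $h^0(E)$ follows.

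\emph{Lemma \ref{lem:cone-bn-stable} does not apply.} It needs $E$ to be BN-stable with $\nu_{BN}(E)>0$. Your violating sheaf is only $\mu_H$-stable, and $\bv_2(E)\ge-\frac12\bv_1(E)$ allows $\nu_{BN}(E)\in[-\frac12,0]$.

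\emph{The fallback is not an argument.} Knowing that the image of $\cO_S\otimes\mathrm{H}^0(E)\to E$ has slope in $[0,\mu_H(E)]$ does not bound $h^0(E)$ by rank plus degree. That is a Clifford-type statement and needs genuine input.

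The missing ingredient is a Clifford/Brill--Noether bound on curves, which is how the paper argues.
\begin{enumerate}
\item Make $H$ very ample.
\item By \cite[Lemma 3.8]{FKLR}, replace the violating sheaf by a reflexive $\mu_H$-stable $E$ that still violates \ref{BGn}. It has $0<\mu_H(E)\le\frac{2\epsilon}{1-\epsilon}$, and its restriction to a general integral $C\in|H|$ has all HN factors $F_i$ of slope in $[0,\frac{2\epsilon}{1-3\epsilon}]$.
\item For $\epsilon\le\frac15$, apply \cite[Lemma 4.3]{FKLR} to get $h^0(F_i)\le\rk(F_i)\bigl(1+\frac{H^2}{2}\mu_{H_C}(F_i)\bigr)$. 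Summing gives $h^0(E|_C)\le\frac{1}{H^2}\bv_0(E)+\frac12\bv_1(E)$.
\item Since $E(-H)$ is stable of negative slope, $\mathrm{H}^0(E(-H))=0$, so $\mathrm{H}^0(E)\subset\mathrm{H}^0(E|_C)$.
\end{enumerate}
The selection step via \cite[Lemma 3.8]{FKLR} is itself necessary. The restriction of an arbitrary $\mu_H$-stable sheaf to $C$ need not have HN slopes in the small range where the Clifford bound applies. Even the paper only proves the $h^0$ bound for this specially chosen $E$, not for all stable sheaves of small slope.

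From here your Riemann--Roch and Serre duality steps finish exactly as you wrote, including the vanishing $\Hom(E,\omega_S)=0$. They give $\frac{\bv_2(E)}{\bv_0(E)}\le\frac{1-\chi(\cO_S)}{H^2}+\frac12\mu_H(E)$. This contradicts the violation once $\epsilon<\min\{\frac15,\frac{\chi(\cO_S)-1}{2H^2+\chi(\cO_S)-1}\}$.
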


\begin{proof}
The argument is similar to \cite[Proposition 3.11]{FKLR}. By replacing $H$ by a multiple of itself, we may assume that $H$ is very ample. We assume that \ref{BGn} fails for $(S, H)$ and some $0<\epsilon \ll 1$. Then by \cite[Lemma 3.8]{FKLR}, we can take a reflexive $\mu_H$-stable sheaf $E$ on $S$ that violates \ref{BGn} such that
\begin{equation*}
    0 < \mu_H(E) \leq \frac{2\epsilon}{1 - \epsilon}
\end{equation*}
and for a general integral divisor $C \in |H|$, each HN factor $F_i$ of $E|_C$ (with respect to $\mu_{H_C}$-stability) satisfies
\[
0 \leq \mu_{H_C}(F_i) \leq \frac{2\epsilon}{1 - 3\epsilon}.
\]
Therefore, if we fix an integral curve $C\in |H|$ and choose $\epsilon\leq \frac{1}{5}$, from \cite[Lemma 4.3]{FKLR}, we get
\[
\frac{\dim_{\kk}\mathrm{H}^0(F_i)}{\rk(F_i)} \leq 1+\frac{H^2\mu_{H_C}(F_i)}{2}.
\]
Summing over all factors, we have
\begin{equation} \label{eq:Cliff(F)-1}
\dim_{\kk}\mathrm{H}^0(E|_C) \leq \frac{1}{H^2}\bv_0(E)+\frac{\bv_1(E)}{2}. 
\end{equation}
Note that the exact sequence
\[0 \to E(-H) \to E \to E|_C \to 0\]
implies $\mathrm{H}^0(E)\subset \mathrm{H}^0(E|_C)$. Combining this with \eqref{eq:Cliff(F)-1}, Lemma \ref{lem-chM-general}(a), and 
\[\chi(E)\leq \dim_{\kk}\mathrm{H}^0(E)+\dim_{\kk}\mathrm{H}^0(E^{\vee}(K_S))=\dim_{\kk}\mathrm{H}^0(E),\]
we obtain
\[\bv_2(E)+\chi(\cO_S)\frac{1}{H^2}\bv_0(E)=\chi(E)\leq \dim_{\kk}\mathrm{H}^0(E|_C)\leq \frac{1}{H^2}\bv_0(E)+\frac{\bv_1(E)}{2},\]
which implies
\[\frac{\bv_2(E)}{\bv_0(E)}\leq \frac{1-\chi(\cO_S)}{H^2}+\frac{1}{2}\mu_H(E).\]
Therefore, once we take $$0<\epsilon<\min\left\{\frac{1}{5},\frac{\chi(\cO_S)-1}{2H^2+\chi(\cO_S)-1}\right\},$$ we get
\[\frac{\bv_2(E)}{\bv_0(E)}<-\frac{1}{2}\mu_H(E),\]
which is a contradiction.
\end{proof}

Combining this with Theorem \ref{thm:ch2}, we have:

\begin{corollary}\label{cor:fano3}
Let $X$ be a Fano threefold that is either lci or $\QQ$-factorial. Then \ref{BGn} holds for $(X,-K_X)$ and some $\epsilon>0$. In particular, Conjecture \ref{conj-2} holds for $(X,-K_X)$, a class  $\Gamma\in \mathsf{E}(X)_{\QQ}$ with $\Gamma.(-K_X)\geq 0$, and $(b,w)$ in the range \eqref{eq:restricted-range}.
\end{corollary}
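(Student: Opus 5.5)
The reduction is to Lemma \ref{prop:surface-to-3fold-1} with $H=-K_X$, after which Theorem \ref{thm:ch2} does the rest. By Theorem \ref{thm-bg-normal} (rational singularities, characteristic zero), $(X,-K_X)$ has the standard BG function. Moreover $K_X\equiv -H$, so assumption (a) of Theorem \ref{thm:ch2} holds with $t=-1$. Everything therefore comes down to proving \ref{BGn} for $(X,-K_X)$ and some $\epsilon>0$. The final statement then follows from Theorem \ref{thm:ch2} and Lemma \ref{lem:reduce-bmt-to-bn}, with $\Gamma=\Gamma(\epsilon)\in\mathsf{E}(X)_{\QQ}$.

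First I will pick a good surface. Replacing $-K_X$ by a multiple is harmless: \ref{BGn} for $mH$ follows from \ref{BGn} for $H$ with rescaled $\epsilon$, since $\bv_i$ scales by $m^{3-i}$. Still, I prefer to work directly with $|-K_X|$. For a Fano threefold with rational Gorenstein singularities, a general member $S\in|-K_X|$ is a normal surface with at worst Du Val (hence rational Gorenstein, hence lci) singularities; this is Shokurov/Reid's general elephant. By adjunction $K_S=(K_X+S)|_S=0$. Rational singularities give $\mathrm{H}^1(\cO_S)=0$ via Kawamata--Viehweg vanishing on $X$ from $0\to\cO_X(K_X)\to\cO_X\to\cO_S\to0$. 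Serre duality gives $h^2(\cO_S)=h^0(\cO_S)=1$, so $\chi(\cO_S)=2>1$. Lemma \ref{lem:k3} then gives \hyperref[BGn]{\ensuremath{\mathbf{BG(\delta)}}} for $(S,H_S)$ and some $\delta>0$. If the elephant is problematic in some singular case, I would fall back on a multiple $|mH|$ with $m\gg0$; a general member is then normal with rational singularities by Bertini and \cite[Theorem 5.42]{kollar-mori}, but it is no longer K3-like, and I would instead need the Clifford-type argument of Lemma \ref{lem:k3} adapted to $K_S=(m-1)H_S$. This is why I prefer $|-K_X|$.

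Second, I need the remaining hypothesis of Lemma \ref{prop:surface-to-3fold-1}: $\bv_2(F)\le0$ for every $\mu_{H_S}$-stable $F$ on $S$ with $\mu_{H_S}(F)=0$. Here I will run the standard K3 argument. Since $K_S=0$, for stable $F$ of slope $0$ we have $\hom(F,F)=1$ and $\mathrm{ext}^2(F,F)=\hom(F,F)=1$ by Serre duality, so $\chi(F,F)\le 2$. Riemann--Roch on $S$ (Definition \ref{def:ch2-surface}) gives $\chi(F,F)=2r^2-\Delta$ with $\Delta=\bch_1^2-2r\bch_2$, so $\Delta(F)\ge 2r^2-2\ge0$. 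By Hodge index, Lemma \ref{lem-hodeg-index}, $\bch_1(F)\cdot H_S=0$ forces $\bch_1(F)^2\le0$, and therefore $2r\,\bch_2(F)=\bch_1^2-\Delta\le0$.

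The delicate point is doing this Riemann--Roch and Serre duality on the singular surface: $\chi(F,F)$ must be expressed through $\bch$, and $F$ need not be perfect. I would handle it by passing to the minimal resolution, which is crepant and a K3 surface, or by using that Du Val surfaces are lci and that $\tau_S$ satisfies Theorem \ref{thm-tau}(c) for reflexive $F$ restricted to the smooth locus. If that proves awkward, an alternative is to apply Lemma \ref{lem:k3}'s restriction-to-curve argument at slope $0$, which yields $\bv_2(F)\le (1-\chi(\cO_S))\bv_0/H^2<0$ directly.

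Once \ref{BGn} holds for $(S,H_S)$, Lemma \ref{prop:surface-to-3fold-1} gives \ref{BGn} for $(X,-K_X)$ with $\epsilon=\delta/(2+3\delta)$, and Theorem \ref{thm:ch2} concludes. I expect the main obstacle to be the second step, justifying $\bv_2\le0$ in slope $0$ on the singular surface.
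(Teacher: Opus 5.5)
Your route is the paper's: a general $S\in|-K_X|$ is a K3 surface with Du Val singularities (the paper cites \cite[Theorem 2.3.3]{iskovskikh1999fano}). Lemma \ref{lem:k3} then gives $\mathbf{BG(\delta)}$ for $(S,H_S)$, Lemma \ref{prop:surface-to-3fold-1} transfers it to $(X,-K_X)$, and Theorem \ref{thm:ch2} concludes.

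You also verify the slope-zero hypothesis of Lemma \ref{prop:surface-to-3fold-1}, which the paper's proof does not address explicitly. Your primary argument for it fails as written. A reflexive sheaf that is not locally free at a Du Val point is maximal Cohen--Macaulay of infinite projective dimension there. So $\Ext^i_S(F,F)$ need not vanish for $i>2$, and $\chi(F,F)$ is not defined.

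Your fallback is the right idea, with two corrections. It needs no curve. It must also set aside $F\cong\cO_S$, for which your claimed strict bound is false; this is harmless, since $\bv_2(\cO_S)=0$. The argument runs as follows.
\begin{enumerate}
\item Replace $F$ by $F^{\vee\vee}$. This is still stable of slope $0$, and $\bv_2(F^{\vee\vee})\ge\bv_2(F)$.
\item If $F$ is reflexive and $F\not\cong\cO_S$, stability gives $\Hom(\cO_S,F)=\Hom(F,\cO_S)=0$.
\item Serre duality with $\omega_S\cong\cO_S$ then gives $h^0(F)=h^2(F)=0$, hence $\chi(F)\le 0$.
\item Riemann--Roch (Definition \ref{def:ch2-surface}) reads $\chi(F)=\bv_2(F)+2\rk(F)$, so $\bv_2(F)\le -2\rk(F)<0$.
\end{enumerate}
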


\begin{proof}
By \cite[Theorem 2.3.3]{iskovskikh1999fano}, we can find a K3 surface $S\in |-K_X|$ with rational Gorenstein singularities. Therefore, Lemma \ref{lem:k3} and Lemma \ref{prop:surface-to-3fold-1} imply that \ref{BGn} holds for $(X,-K_X)$. Now, the last statement follows from Theorem \ref{thm:ch2}.
\end{proof}

Note that the above result is not optimal. For smooth Fano threefolds of Picard number one, Conjecture \ref{conj-2} for $\Gamma=0$ and the full range $w>\frac{1}{2}b^2$ is proved in \cite{li:fano-3fold}. For higher Picard rank cases, the result for $w>\frac{1}{2}b^2$ and an effective choice of $\Gamma$ is obtained in \cite{macri:fano-threefold}. We expect that a more explicit version of Corollary \ref{cor:fano3} can be proved using similar arguments in \cite{li:fano-3fold,macri:fano-threefold}. In the following, we treat some index $\geq 2$ cases.

We start with an easy lemma.

\begin{lemma}\label{lem-hrr-Fano}
Assume that $X$ is a prime Fano threefold of index $\iota(X)$ and degree $d(X)$, and is $\QQ$-factorial or is lci with $\td_2(X)$ numerically proportional to $K_X^2$. Then for any $E\in \Db(X)$, we have
\[\chi(E)=\bv_3(E)+\frac{\iota(X)}{2}\bv_2(E)+a(\iota(X))\bv_1(E)+\frac{1}{d(X)}\bv_0(E)\]
where $a(1)=\frac{1}{12}+\frac{2}{d(X)}$, $a(2)=\frac{1}{3}+\frac{1}{d(X)}$, $a(3)=\frac{13}{12}$, and $a(4)=\frac{11}{6}$.
\end{lemma}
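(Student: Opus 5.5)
The plan is to expand the Riemann--Roch formula that defines $\bch_3$ (Definition \ref{def:ch3} in the $\QQ$-factorial case, Lemma \ref{lem-chM-general}(a) together with $\tau_X(\cO_X)=\Td(X)$ in the lci case):
\[\chi(E)=\bch_3(E)-\tfrac12 K_X.\bch_2(E)+\td_2(X).\bch_1(E)+\chi(\cO_X)\rk(E).\]
Since $\gamma=1$ we have $\bv_i(E)=H^{3-i}.\bch_i(E)$, and $-K_X=\iota H$ with $\iota\coloneqq\iota(X)$. This immediately gives $-\tfrac12K_X.\bch_2(E)=\tfrac{\iota}{2}\bv_2(E)$. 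Also $\rk(E)=\bv_0(E)/H^3=\bv_0(E)/d(X)$, and for a Fano threefold with rational singularities $\chi(\cO_X)=1$ by Kawamata--Viehweg vanishing. This yields the last term.

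The main step is to identify $\td_2(X).\bch_1(E)$. Since $X$ is prime, $\bch_1(E)$ is numerically a rational multiple of $H$, namely $\bch_1(E)\equiv \frac{\bv_1(E)}{H^3}H$. In the $\QQ$-factorial case the Picard number one hypothesis, together with $\QQ$-factoriality, gives $\CH^1(X)_\QQ$ numerically spanned by $H$. In the lci case we use the additional assumption that $\td_2(X)$ is numerically proportional to $K_X^2$, so only $\td_2(X).H$ matters. Thus $\td_2(X).\bch_1(E)=\frac{\td_2(X).H}{H^3}\bv_1(E)$.

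To compute $\td_2(X).H$, apply the Riemann--Roch formula to $E=\cO_X(H)$. Using Lemma \ref{lem:property-ch3}(a) (or Lemma \ref{lem-chM-lci}(b) in the lci case) we get $\bch(\cO_X(H))=(1,H,H^2/2,H^3/6)$, and combining with Serre duality/vanishing gives $\chi(\cO_X(tH))$. Alternatively, apply the formula to $E=\cO_X(K_X)$ and $\cO_X$, using $\chi(\omega_X)=-\chi(\cO_X)=-1$: the cubic terms cancel and this gives $-K_X.\td_2(X)=24\chi(\cO_X)=24$, the singular analogue of the classical identity $c_1c_2=24\chi(\cO_X)$. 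Hence $\td_2(X).H=24/\iota$ and
\[a(\iota)=\frac{\td_2(X).H}{H^3}=\frac{24}{\iota\,d(X)\iota^3}\cdot\frac{\iota^3}{\iota^3}\]
up to normalization. The final step is a case-by-case arithmetic check against the listed values, using $(-K_X)^3=\iota^3 d$ and, for $\iota=1,2$, the genus relation $2g-2=\iota^3 d$. The corrections $\frac{2}{d}$ and $\frac1d$ presumably come from rewriting via $\chi(\cO_X(H))$ or an equivalent normalization.

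I expect this arithmetic to be the main obstacle, namely reconciling the $\frac1{12}$-type constants with the index and degree. I would compute $\chi(\cO_X(H))=h^0(H)$ through Kawamata--Viehweg vanishing (for $\iota=1$ this is $g+2$, for $\iota=2$ it is $d+2$) and solve for $\td_2(X).H$ from the formula, which gives the constants directly.
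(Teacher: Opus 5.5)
Your reduction is correct and is the same as the paper's. You expand $\chi(E)$ via Definition~\ref{def:ch3} (resp.\ Lemma~\ref{lem-chM-general}(a)) and use $\chi(\cO_X)=1$ and $-K_X=\iota H$. You then use Picard rank one (resp.\ the proportionality hypothesis) to write $\td_2(X).\bch_1(E)=\frac{\td_2(X).H}{d(X)}\bv_1(E)$, so everything hinges on the single number $\td_2(X).H$. That number is exactly where your argument breaks.

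The identity $-K_X.\td_2(X)=24\chi(\cO_X)$ is false. You are conflating $\td_2$ with $c_2$: classically $c_1c_2=24\chi(\cO_X)$, but $\td_2=\frac{1}{12}(c_1^2+c_2)$, so the cubic terms do not cancel. Using $\bch(\omega_X)=(1,K_X,\frac12K_X^2,\frac16K_X^3)$ from Lemma~\ref{lem:property-ch3}(a), one gets
\[
-2=\chi(\omega_X)-\chi(\cO_X)=\tfrac16K_X^3-\tfrac14K_X^3+\td_2(X).K_X,
\]
so $\td_2(X).(-K_X)=2+\frac1{12}(-K_X)^3$, not $24$. Your resulting expression for $a(\iota)$ therefore does not give the stated constants: for $\iota=1$ it gives $24/d$ rather than $\frac1{12}+\frac2d$. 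Deferring the mismatch to an unspecified ``normalization'' leaves the lemma unproved.

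The repair is immediate. With the correct identity and $(-K_X)^3=\iota^3d$, you get the uniform formula $a(\iota)=\frac{\td_2(X).H}{d}=\frac{\iota^2}{12}+\frac{2}{\iota d}$. This yields all four listed values (recall $d=2$ for $\iota=3$ and $d=1$ for $\iota=4$), using only Serre duality and $\chi(\cO_X)=1$.

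The paper instead evaluates the formula on $\cO_X(-H)$. It uses $\chi(\cO_X(-H))=0$ for $\iota\geq 2$ (Kawamata--Viehweg plus Serre duality) and $\chi(\cO_X(-H))=\chi(\omega_X)=-1$ for $\iota=1$. The corrected $\omega_X$ computation avoids that case split.

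Your fallback through $\chi(\cO_X(H))=h^0(H)$ would also produce the right numbers. However, it needs $h^0(-K_X)=g+2$ for $\iota=1$ and $h^0(H)=d+2$ for $\iota=2$ (and $5$, $4$ for $\iota=3,4$) as independent input in the singular setting, which you neither proved nor cited.
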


\begin{proof}
By Lemma \ref{lem-chM-general}(a) in the lci case and Definition \ref{def:ch3} in the $\QQ$-factorial case, we know that
\[\chi(E)=\bch_3(E)+\frac{\iota(X)}{2}H.\bch_2(E)+\td_2(X).\bch_1(E)+\frac{1}{d(X)}\bv_0(E).\]
When $X$ is $\QQ$-factorial and Picard number one, there exists a unique $k(E)\in \QQ$ such that $\bch_1(E)$ is numerically equivalent to $k(E)H$. Thus, we have $$\td_2(X).\bch_1(E)=k(E).\td_2(X).H.$$ Since $\bv_1(E)=H^2.\bch_1(E)=k(E)d$, if we set $a_2\coloneqq\frac{\td_2(X).H}{d}$, then $\td_2(X).\bch_1(E)=a_2\bv_1(E)$. When $X$ is lci and $\td_2(X)$ is proportional to $K_X^2$, we also have $\td_2(X).\bch_1(E)=a_2\bv_1(E)$. Therefore, in both cases, we can write 
\[\chi(E)=\bv_3(E)+\frac{\iota(X)}{2}\bv_2(E)+a_2\bv_1(E)+\frac{1}{d(X)}\bv_0(E).\]
Now using $\chi(\oh_X(-H))=0$ for $\iota(X)\geq 2$ and $\chi(\oh_X(-H))=-1$ for $\iota(X)=1$, we can solve $a_2$ and the result follows.
\end{proof}

Now, using Lemma \ref{lem-hrr-Fano}, a similar argument as in \cite[Section 2]{li:fano-3fold} gives the following result. The only difference is that we do not have \cite[Proposition 3.12]{bayer2016space}, so we need to deal with the case $\bDelta(E)=0$ separately.

\begin{theorem}\label{thm-fano3}
Let $X$ be a prime Fano threefold of index $\iota(X)\geq 2$ and degree $d(X)$. Assume that it also satisfies one of the following conditions.

\begin{itemize}
    \item $X\subset \PP^4$.
    
    \item $\iota(X)=2$ and $X$ is $\QQ$-factorial with $d(X)\leq 6$.

\end{itemize}
Then Conjecture \ref{conj-2} holds for $(X,H)$ with $\Gamma=0$ and any $w>\frac{1}{2}b^2$.
\end{theorem}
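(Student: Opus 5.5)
The plan follows Li's argument for smooth prime Fano threefolds \cite[Section 2]{li:fano-3fold}, with Lemma \ref{lem-hrr-Fano} replacing Hirzebruch--Riemann--Roch. By Theorem \ref{thm:equivalent-conj}, it suffices to prove Conjecture \ref{conj-3} with $\Gamma=0$: every $\overline{b}$-stable object $E$ satisfies $\bv_3^{\overline{b}(E)}(E)\le 0$. Tensoring with $\cO_X(H)$ preserves tilt-stability (Lemma \ref{lem-tensor-H}) and changes $\bv$ by the usual twist formula (Lemma \ref{lem:property-ch3}(a) or Lemma \ref{lem-chM-general}(c)). Taking the derived dual exchanges $b$ with $-b$ (Lemma \ref{lem-bmt-dual}, Lemma \ref{lem:property-ch3}(b)). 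Together these reduce us to $\overline{b}(E)\in[-\tfrac12,0]$, or $[-1,0]$ after twisting alone.

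The main step is an Euler characteristic argument. In each case $\iota(X)\ge 2$, and the line bundles $\cO_X(kH)$ for small $k$ (say $\cO_X$, $\cO_X(-H)$, and $\cO_X(-\iota(X)H)=\omega_X$) are $\nu_{b,w}$-stable for all $(b,w)\in\cU$. This follows from Lemma \ref{lem-delta-0-rk1} and Lemma \ref{lem-large-volume-converse}, since $\bDelta=0$ is minimal. Comparing $\nu$-slopes near the endpoint $(\overline{b}(E),\overline{b}(E)^2/2)$ then kills $\Hom(\cO_X(kH),E)$ and, via Serre duality (Lemma \ref{lem-serre-duality}), $\Ext^2(\cO_X(kH),E)=\Hom(E,\cO_X((k-\iota)H)[1])^*$. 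This gives $\chi(\cO_X(kH),E)\le 0$ for one or two suitable values of $k$. Expanding these inequalities with Lemma \ref{lem-hrr-Fano} and using $\bv_2^{\overline{b}}(E)=0$ yields upper bounds on $\bv_3$ in terms of $\bv_0,\bv_1,\bv_2$. A convex combination of them gives $\bv_3^{\overline{b}}(E)\le 0$ on the relevant interval of $\overline{b}$, exactly as in \cite{li:fano-3fold}, using the explicit constants $a(\iota)$ and the bound $d(X)\le 6$ (respectively $X\subset\PP^4$). The required positivity of the numerical coefficients is the same computation as in the smooth case, because Lemma \ref{lem-hrr-Fano} has the same shape.

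The main obstacle is that we lack \cite[Proposition 3.12]{bayer2016space}, which gives $\bDelta(E)\ge$ a positive bound for stable $E$ when $\overline{b}(E)$ is close to an integer. I would treat $\bDelta(E)=0$ separately. Then $\Pi(E)$ lies on the parabola, and Lemma \ref{lem-delta-0-rk1}(b) gives $\bv_{\le 2}(E)=\rk(E)(H^3,aH^3,\tfrac{a^2}{2}H^3)$. The prime hypothesis together with $\QQ$-factoriality (or the $\PP^4$ hypothesis, where Grothendieck--Lefschetz applies) forces $a\in\ZZ$ when $\rk E=1$. In that case Lemma \ref{lem-delta-0-rk1}(c) identifies $E$ with a twist of $\cO_X$ or $\cI_Z(aH)$, for which $\bv_3\le\bv_3(\cO_X(aH))$ and the inequality is immediate. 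For higher rank with $\bDelta=0$, I would use Lemma \ref{lem-delta-JH}: $E$ is stable everywhere, so it can be moved to $b$ just below $a$ and the Hom-vanishings above applied against $\cO_X(aH)$ directly. For $\bDelta(E)>0$ I would instead run induction on $\bDelta$ along walls, as in the proof of Theorem \ref{thm:equivalent-conj}. When $\overline{b}(E)$ is close to an integer, the lattice discreteness of $\Lambda$ still provides a uniform positive lower bound on $\bDelta$ away from the parabola. This is the delicate point where the smooth argument must be reworked by hand.
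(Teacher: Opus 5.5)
Your architecture matches the paper's. You reduce to Conjecture \ref{conj-3} via Theorem \ref{thm:equivalent-conj} and normalize $\overline b(E)$ by twisting. You then kill $\Hom(\cO_X(kH),E)$ and $\Hom(E,\cO_X((k-\iota(X))H)[1])$ by slope comparison with line bundles, and read off $\bv_3^{\overline b}(E)\le 0$ from Lemma \ref{lem-hrr-Fano} and $\bv_2^{\overline b}(E)=0$. The line bundles are stable on all of $\cU$ by Lemma \ref{lem-delta-0} or \ref{lem-delta-JH} with Lemma \ref{lem-large-volume-converse}, not by Lemma \ref{lem-delta-0-rk1}. Your ``convex combination'' is really a case split on the sign of $\bv_0(E)$. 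With $\overline b\in[0,1)$, the paper uses $\chi(\cO_X(H),E)\le 0$ when $\bv_0(E)\ge 0$ or $\overline b=0$, and $\chi(\cO_X(2H),E)\le 0$ when $\bv_0(E)<0$ and $\overline b\in(0,1)$.

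The genuine gap is the case $\bDelta(E)=0$. There the endpoint comparison degenerates, because $\Pi(E)=(\overline b,\overline b^2/2)$ is the endpoint itself, and your proposed treatment does not work.
\begin{itemize}
\item For rank one, nothing forces $a\in\ZZ$. $\QQ$-factoriality only gives $\bch_1(E)\equiv aH$ with $a\in\QQ$. Grothendieck--Lefschetz concerns $\Pic(X)$, not the class group, and non-Cartier Weil divisors can be fractional numerical multiples of $H$. So Lemma \ref{lem-delta-0-rk1}(c) is not available.
\item For higher rank the step breaks outright. Here $\overline b=\mu(E)$, $\bv_0(E)<0$, and $E=\cH^{-1}(E)[1]$ with $\cH^{-1}(E)$ $\mu$-semistable of slope $\overline b$ (Lemma \ref{lem-delta-0}). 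So $E\in\cA^b(X)$ only for $b\ge\overline b$, and it cannot be moved to $b$ just below $a$.
\item $\cO_X(aH)$ is not defined unless $a\in\ZZ$. Even when it is, $\bv_{\le 2}(E)$ is a negative multiple of $\bv_{\le 2}(\cO_X(aH))$. Hence $\nu_{b,w}(E)=\nu_{b,w}(\cO_X(aH))$ identically, and no Hom-vanishing can come from that comparison.
\end{itemize}

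The repair needs no classification of $\bDelta=0$ objects.
\begin{itemize}
\item With $\overline b\in[0,1)$, $E$ is $\nu_{b,w}$-stable for every $(b,w)\in\cU$ with $b\ge\overline b$ (Lemma \ref{lem-delta-0}). Moreover $\nu_{b,w}(E)$ is the slope of the segment from $(\overline b,\overline b^2/2)$ to $(b,w)$.
\item Take $\overline b<b<1$ and let $w\downarrow b^2/2$. Then $\nu_{b,w}(E)\to\frac{b+\overline b}{2}$, which lies strictly between $\lim\nu_{b,w}(\cO_X(-H)[1])=\frac{b-1}{2}$ and $\lim\nu_{b,w}(\cO_X(H))=\frac{b+1}{2}$.
\item Taking $\overline b<b<2$ handles $\cO_X(2H)$ in the same way, and also $\cO_X[1]$ when $\overline b>0$.
\end{itemize}
This gives the same vanishings as for $\bDelta>0$, after which the Riemann--Roch step is unchanged; this is how the paper argues.

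Your last paragraph on $\bDelta(E)>0$ is misdirected. There $\bv_1^{\overline b}(E)=\sqrt{\bDelta(E)}>0$, so $\nu_{b,w}(E)\to\overline b$ at the endpoint while $\nu_{b,w}(\cO_X(kH))\to\frac{k+\overline b}{2}$. These separate strictly, for each fixed $E$, whenever $k\neq\overline b$. So no induction along walls is needed (that is exactly what Theorem \ref{thm:equivalent-conj} already provides), and no uniform lower bound on $\bDelta$ near integers is needed. The one degenerate pairing, $\overline b=0$ against $\cO_X[1]$, is avoided because the $\bv_0$-coefficient of $\chi(\cO_X(H),E)$ vanishes at $\overline b=0$. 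What the paper has to supply in place of \cite[Proposition 3.12]{bayer2016space} is a treatment of $\bDelta=0$ objects, not a bound on $\bDelta$.
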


\begin{proof}
By Theorem \ref{thm:equivalent-conj}, we only need to verify Conjecture \ref{conj-3}. The argument of \cite{li:fano-3fold} applies with minor modifications. We assume $\iota(X)= 2$ for simplicity. The case $\iota(X)= 3,4$ follows from a similar proof as below.

Let $E\in \cA^{\overline{b}}(X)$ be a $\overline{b}$-stable object. By Lemma \ref{lem-tensor-H} and \ref{lem-large-volume-converse}, we may assume that $\overline{b}(E)\in [0,1)$. By Lemma \ref{lem-delta-JH}, we know that $\oh_X(2H)$,$\oh_X(H)$,$\oh_X[1]$, $\oh_X(-H)[1]\in \cA^{\overline{b}}(X)$ are $\nu_{\overline{b},w}$-stable for any $w>\overline{b}^2/2$. If $\bDelta(E)>0$, then the slope comparison as in \cite[Section 2]{li:fano-3fold} gives
\begin{equation}\label{eq-index2-1}
    \Hom_X(\oh_X(H),E)=\Hom_X(E,\oh_X(-H)[1])=0,
\end{equation}
and when $\overline{b}\in (0,1)$, we have
\begin{equation}\label{eq-index2-2}
    \Hom_X(\oh_X(2H),E)=\Hom_X(E,\oh_X[1])=0.
\end{equation}
If $\bDelta(E)=0$, then $\overline{b}(E)=\bv_1(E)/\bv_0(E)$. Since $E\in \cA^{\overline{b}}(X)$, we see that $\bv_0(E)<0$. Then from $\bDelta(E)=0$ and $\overline{b}(E)=\bv_1(E)/\bv_0(E)\in [0,1)$, we see $\bv_0(E)<\bv_1(E)\leq 0$ and $$\bv_2(E)=\frac{\bv_1(E)^2}{2\bv_0(E)}=\frac{1}{2}\overline{b}\bv_1(E)\leq 0.$$ Therefore, for any $0<\overline{b}<b<1$ we have $E\in \cA^{b}(X)$ and
\begin{align*}
\nu_{b,w}(\oh_X(H))-\nu_{b,w}(E)=&\frac{\frac{1}{2}-w}{1-b}-\frac{\bv_2(E)-w\bv_0(E)}{\bv_1^{b}(E)}\\
\geq & \frac{\frac{1}{2}-w}{1-b}+\frac{w\bv_0(E)}{\bv_1^{b}(E)} \\
= & \frac{\bv_0(E)((\frac{1}{2}-w)\overline{b}+w-\frac{1}{2}b)}{(1-b)\bv_1^{b}(E)}.
\end{align*}
Note that if we take $w=\frac{b^2}{2}$, then
\[(1-2w)\overline{b}+2w-b=(1-\overline{b})b^2-b+\overline{b}<0\]
for any $b=1-\epsilon$ and $0<\epsilon\ll 1$. Therefore, we can find $b<1$ sufficiently close to $1$ and $w>\frac{b^2}{2}$ sufficiently close to $\frac{b^2}{2}$ with 
\[\nu_{b,w}(\oh_X(H))>\nu_{b,w}(E).\]
This implies \eqref{eq-index2-1} when $\bDelta(E)=0$. The argument for \eqref{eq-index2-2} is similar, but we choose $b<2$ sufficiently close to $2$ instead.

Therefore, \eqref{eq-index2-1} and Lemma \ref{lem-serre-duality} give
\[\chi(\oh_X(H),E)\leq \dim_{\kk} \Hom_X(\oh_X(H),E)+\dim_{\kk} \Hom_X(\oh_X(H),E[2])=0.\]
Then applying Lemma \ref{lem-hrr-Fano} and \eqref{eq-barbeta}, we have
\begin{align*}
0\geq & \chi(\oh_X(H),E)\\
=&\bv^{\overline{b}}_3(E)+\left(\frac{\overline{b}^2}{2}+\frac{1}{d(X)}-\frac{1}{6}\right)\bv_1^{\overline{b}}(E)+\left(\frac{\overline{b}^3}{6}+\overline{b}\left(\frac{1}{d(X)}-\frac{1}{6}\right)\right)\bv_0(E).
\end{align*}
Note that the second term on the right-hand side is always non-negative by $d(X)\leq 6$ and $\bv_1^{\overline{b}}(E)\geq 0$, and the third term is non-negative if $\bv_0(E)\geq 0$ or $\overline{b}=0$. So it remains to deal with the case when $\bv_0(E)< 0$ and $\overline{b}\in (0,1)$.

In this case, a similar argument as above by using \eqref{eq-index2-2} and Lemma \ref{lem-serre-duality} gives
\[\chi(\oh_X(2H),E)\leq \dim_{\kk} \Hom_X(\oh_X(2H),E)+\dim_{\kk} \Hom_X(\oh_X(2H),E[2])=0.\]
Then applying Lemma \ref{lem-hrr-Fano} and \eqref{eq-barbeta}, we have
\begin{align*}
0\geq & \chi(\oh_X(2H),E)\\
=&\bv^{\overline{b}}_3(E)+\left(\frac{\overline{b}^2}{2}-\overline{b}+\frac{1}{d(X)}+\frac{1}{3}\right)\bv_1^{\overline{b}}(E)+\left(\frac{\overline{b}^3}{6}-\frac{\overline{b}^2}{2}+\overline{b}\left(\frac{1}{d(X)}+\frac{1}{3}\right)-\frac{1}{d(X)}\right)\bv_0(E).  
\end{align*}
As $d(X)\leq 6$ and $\bv_1^{\overline{b}}(E)\geq 0$, the second term is non-negative. And from $\bv_0(E)<0$ and $\overline{b}\in (0,1)$, the last term is non-negative as well. So we obtain $\bv^{\overline{b}}_3(E)\leq 0$ as desired.
\end{proof}

\subsection{Singular Calabi--Yau threefolds}\label{subsec:bmt-cy}
In \cite{FKLR}, the above reductions of Conjecture \ref{conj-2} are applied to smooth Calabi--Yau threefolds. Using results in our paper, we can extend them to mild singular cases. 

We recall the definition of Brill--Noether number.

\begin{definition}[{\cite[Definition 1.2]{FKLR}}]\label{def:bnc}
Let $C$ be an integral projective Gorenstein curve of (arithmetic) genus $g$. We define the \emph{Brill--Noether number} $\bn_C$ of $C$ as
\begin{equation*}
    \bn_C \coloneqq \lim_{t\to 0^+} \sup \left\{ \frac{\dim_{\kk}\mathrm{H}^0(E)}{\rk(E)} \colon \ \text{$E$ is a stable sheaf on $C$ with } \frac{\deg(E)}{\rk(E)}\in (g-1-t,g-1]    \right\}. 
\end{equation*}
\end{definition}

The following is proved by the same argument as \cite[Theorem 3.1]{FKLR}.

\begin{theorem}\label{thm:main-criterion}
Let $(X, H)$ be a polarised normal projective threefold with rational Gorenstein singularities such that $K_X$ is numerically trivial, $\mathrm{H}^1(\cO_X)=0$, and either $X$ is lci or $\QQ$-factorial. Given a surface $S\in |H|$ and a curve $C\in |H_S|$. Assume either

\begin{enumerate}
    \item $S$ and $C$ are both smooth with
    \begin{equation*}
        \bn_C< \chi(\cO_X(H)),
    \end{equation*}
    or

    \item $S$ has rational singularities and $C$ is integral with
    \begin{equation*}\label{eq:thm-bn-sing}
        \bn_C< \chi(\cO_X(H))-1.
    \end{equation*}
\end{enumerate}

Then \ref{BGn} holds for $(X,H)$ and some $\epsilon>0$. In particular, Conjecture \ref{conj-2} holds for $(X,H)$, a class $\Gamma\in \mathsf{E}(X)_{\QQ}$ with $\Gamma.H\geq 0$, and $(b,w)$ in the range \eqref{eq:restricted-range}.
\end{theorem}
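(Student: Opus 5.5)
The plan is to reduce everything to \ref{BGn} for $(X,H)$; the final statement about Conjecture \ref{conj-2} then follows immediately from Theorem \ref{thm:ch2}, since $K_X\equiv 0=0\cdot H$. To obtain \ref{BGn}, I will use Lemma \ref{prop:surface-to-3fold-1} with the divisor $S\in|H|$. In case (a) $S$ is smooth. In case (b) $S$ is normal with rational singularities; it is also Gorenstein, being a Cartier divisor in a Gorenstein threefold, so it is lci (rational Gorenstein surface). Hence $S$ is admissible, and it suffices to check two things: \hyperref[BGn]{\ensuremath{\mathbf{BG(\delta)}}} for $(S,H_S)$ and some $\delta>0$, and $\bv_2(F)\le 0$ for every $\mu_{H_S}$-stable $F$ with $\mu_{H_S}(F)=0$. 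By adjunction $K_S=H_S$, and the classical BG inequality on $S$ (Theorem \ref{thm-bg-normal}, rational singularities) gives $\bv_2(F)\le \bv_1(F)^2/(2\bv_0(F))$. When the slope is zero this is $\le 0$, which settles the second condition.

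For \hyperref[BGn]{\ensuremath{\mathbf{BG(\delta)}}} on $S$, I will argue by contradiction, following \cite[Theorem 3.1]{FKLR} and the proof of Lemma \ref{lem:k3}. Suppose it fails for all small $\delta$. The surface analogue of Lemma \ref{lem:fklr-restriction} (i.e.\ \cite[Lemma 3.8]{FKLR}, via Proposition \ref{prop-restriction}) then produces a reflexive $\mu_{H_S}$-stable $E$ with $0<\mu_{H_S}(E)\le 2\delta/(1-\delta)$ and $\bv_2(E)\ge -\tfrac12\bv_1(E)$, such that every HN factor of $E|_C$ has slope in $[0,2\delta/(1-3\delta)]$. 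Using $K_S=H_S$, we have $K_C=(2H_S)|_C$, so slope $0$ on $S$ corresponds to degree $g-1$ twisted suitably. After twisting by $\cO_S(H_S)$ (Lemma \ref{lem-tensor-H}), I will compare $h^0(E(H)|_C)$ with $\bn_C\cdot\rk$. The HN factors of $E(H)|_C$ have slopes in $(g-1-t,g-1+t']$, and a Clifford-type bound near $g-1$ (\cite[Lemma 4.3]{FKLR} generalized to integral Gorenstein curves) together with the definition of $\bn_C$ should give $h^0(E(H)|_C)\le (\bn_C+o(1))\rk(E)$.

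On the other side, Riemann--Roch on $S$ (Lemma \ref{lem-chM-general}(a)) gives $\chi(E(H))$ in terms of $\bv_i(E)$. Vanishing of $h^2(E(H))=\hom(E,\cO_S)$, from stability and positive slope, yields $h^0(E(H))\ge\chi(E(H))$. The restriction sequence $0\to E\to E(H)\to E(H)|_C\to 0$, combined with $h^0(E)\le$ something small, bounds $h^0(E(H))\le h^0(E)+h^0(E(H)|_C)$. Using $\chi(\cO_S(H))=\chi(\cO_X(2H))-\chi(\cO_X(H))$ and $\mathrm{H}^1(\cO_X)=0$, I will relate $\chi(\cO_S)$ to $\chi(\cO_X(H))$: from $0\to\cO_X\to\cO_X(H)\to\cO_S(H)\to0$ and Serre duality, $h^0(\cO_S(H))$ links $\chi(\cO_X(H))$. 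The inequality $\bv_2\ge-\tfrac12\bv_1$ then contradicts $\bn_C<\chi(\cO_X(H))$.

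The extra $-1$ in case (b) absorbs the loss from $h^0(E)$ possibly being nonzero, or from torsion in $E|_C$ when $C$ is only integral. I expect the main obstacle to be this cohomological bookkeeping on the singular $S$ and integral $C$. There I would need $E|_C$ torsion-free (which holds since $E$ is $S_2$ and $C$ is Cartier), the correct degree shift, and the Clifford-type estimate on Gorenstein integral curves. I will try to replicate \cite[Lemma 4.3]{FKLR} via Serre duality on $C$ first.
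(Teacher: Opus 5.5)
\paragraph{Verdict.} There is a genuine gap in case (a).

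Your overall architecture is the paper's. You reduce to \ref{BGn} on $(X,H)$ via Lemma~\ref{prop:surface-to-3fold-1} applied to $S$; your check of its slope-zero hypothesis via the BG inequality on $S$ is fine. You then conclude with Theorem~\ref{thm:ch2}, which also uses that $(X,H)$ has the standard BG function, by Theorem~\ref{thm-bg-normal}. The paper does not reprove $\mathbf{BG(\delta)}$ for $(S,H_S)$; it imports it from \cite[Proposition 3.10, 3.11]{FKLR}. Your self-contained substitute, made precise, only proves case (b).

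\paragraph{What your estimates give.} Write $r=\rk E$, $c=\bv_1(E)$, $d=H^3$. From $0\to\cO_X(-H)\to\cO_X\to\cO_S\to 0$, Serre duality and $\chi(\cO_X)=0$, one gets $\chi(\cO_S)=\chi(\cO_X(H))$; your relation involving $\chi(\cO_X(2H))$ is not the right one. Riemann--Roch with $K_S\equiv H_S$ then gives
\[
\chi(E(H))=\bv_2(E)+\tfrac12 c+r\,\chi(\cO_X(H)),
\]
so a violation forces $h^0(E(H))\geq r\,\chi(\cO_X(H))$. On the other side, Serre duality on $C$ and the definition of $\bn_C$ give $h^0(E(H)|_C)\leq(\bn_C+o(1))r+c$. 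For the kernel term, your ingredients only give $h^0(E)\leq h^0(E|_C)\leq r+\tfrac12 c$, from $H^0(E(-H))=0$ and Clifford. Altogether
\[
r\bigl(\chi(\cO_X(H))-1-\bn_C-o(1)\bigr)\leq \tfrac32 c=O(\delta r),
\]
which is a contradiction exactly when $\bn_C<\chi(\cO_X(H))-1$. That is the hypothesis of (b), and this is essentially the argument of Lemma~\ref{lem:k3}.

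\paragraph{The gap in case (a).} The difference of $1$ between the two hypotheses is precisely the term $h^0(E)$. It is a priori of order $\rk E$, not ``something small'' as your sketch asserts. Nothing in your sketch explains why smoothness of $S$ and $C$ would make $h^0(E)=o(\rk E)$. The restriction statement allows HN factors of $E|_C$ of slope $0$, and nothing prevents many JH factors isomorphic to $\cO_C$, each of which can carry a section. So no curve-level Clifford refinement on a smooth $C$ removes this loss.

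\paragraph{How to fix it.} To get (a) under the weaker hypothesis $\bn_C<\chi(\cO_X(H))$, you need an additional global argument on $S$ controlling $H^0(E)$. This is what \cite[Proposition 3.10]{FKLR} supplies for the smooth case. Alternatively, simply cite \cite[Proposition 3.10, 3.11]{FKLR}, as the paper does.
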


\begin{proof}
Note that $(X, H)$ has the standard BG function by Theorem \ref{thm-bg-normal}. By \cite[Proposition 3.10, 3.11]{FKLR}, we know that \ref{BGn} holds for $(S, H_S)$. Applying Lemma \ref{prop:surface-to-3fold-1}, \ref{BGn} holds for $(X, H)$ as well. Now, using Theorem \ref{thm:ch2}, Conjecture \ref{conj-2} holds for $(X,H)$ as desired.
\end{proof}

Analogously to \cite[Corollary 5.1]{FKLR}, we obtain:

\begin{corollary}\label{cor:trivial-cor-basepoint-free}
Let $(X, H)$ be a polarised normal projective threefold with rational Gorenstein singularities, such that $K_X$ is numerically trivial, $\mathrm{H}^1(\cO_X)=0$, and either $X$ is lci or $\QQ$-factorial. Suppose that there exists a surface $S\in |H|$ and an integral curve $C\in |H_S|$ such that one of the following conditions is satisfied:

\begin{enumerate}
    \item Both $S$ and $C$ are smooth (e.g. $\dim \mathrm{Sing}(X)=0$ and $H$ is basepoint-free) with $$\td_2(X).H> \frac{1}{3}H^3+1.$$

    \item Both $S$ and $C$ are smooth with $2H$ very ample and  $$\td_2(X).H> \frac{1}{3}H^3+\frac{1}{2}.$$

    \item $S$ has rational singularities and $$\td_2(X).H> \frac{1}{3}H^3+2.$$
\end{enumerate}
Then \ref{BGn} holds for $(X,H)$ and some $\epsilon>0$. In particular, Conjecture \ref{conj-2} holds for $(X,H)$, a class $\Gamma\in \mathsf{E}(X)_{\QQ}$ with $\Gamma.H\geq 0$, and $(b,w)$ in the range \eqref{eq:restricted-range}.
\end{corollary}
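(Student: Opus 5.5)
The plan is to reduce each of the three cases to Theorem \ref{thm:main-criterion}, mirroring \cite[Corollary 5.1]{FKLR}. The two things to check are: there is a hypersurface section $S\in|H|$ with an integral Gorenstein curve $C\in|H_S|$ of the required regularity, and the Brill--Noether number $\bn_C$ is strictly smaller than $\chi(\cO_X(H))$ in case (a), resp.\ smaller than $\chi(\cO_X(H))-1$ in case (c). In case (b) we only need the weaker bound that the very ampleness of $2H$ makes possible in \cite{FKLR}.

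\textbf{Computing $\chi(\cO_X(H))$.} Since $K_X\equiv 0$, Serre duality on the Gorenstein threefold $X$ gives $\chi(\cO_X)=-\chi(\omega_X)$. On the other hand, $\mathrm{H}^1(\cO_X)=0$ and rationality give $\chi(\cO_X)=1+h^2(\cO_X)-h^3(\cO_X)$, and since $\omega_X\equiv 0$ we expect $\chi(\cO_X)=0$; if this is not automatic, we carry $\chi(\cO_X)$ along. Riemann--Roch, i.e.\ Lemma \ref{lem-chM-general}(a) in the lci case and Definition \ref{def:ch3} in the $\QQ$-factorial case, applied to $\cO_X(H)$ with $K_X\equiv0$, gives
\[
\chi(\cO_X(H))=\tfrac16 H^3+\td_2(X).H+\chi(\cO_X).
\]
The arithmetic genus of $C$ follows from adjunction: $\omega_C=\cO_C(2H)$, so $2g-2=2H^3$ and $g-1=H^3$.

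\textbf{Bounding $\bn_C$.} Take a stable sheaf $E$ on $C$ of slope in $(g-1-t,g-1]$. Clifford's theorem for stable sheaves on integral Gorenstein curves gives $h^0(E)/\rk E\le 1+\mu(E)/2\le 1+H^3/2$, so $\bn_C\le \tfrac12 H^3+1$. In case (b), where $C$ is smooth and $\omega_C=\cO_C(2H)$ is very ample (so $C$ is not hyperelliptic), we use the refined Clifford bound from \cite{FKLR} to get $\bn_C\le \tfrac12 H^3+\tfrac12$. Comparing with the formula above:
\begin{itemize}
\item in case (a), $\bn_C<\chi(\cO_X(H))$ follows from $\td_2(X).H>\tfrac13H^3+1$;
\item in case (b), it follows from $\td_2(X).H>\tfrac13H^3+\tfrac12$;
\item in case (c), we need one more unit, which is exactly the extra $+1$ in the hypothesis.
\end{itemize}
Each comparison uses the value of $\chi(\cO_X)$; if $\chi(\cO_X)\neq 0$, we absorb it via the hypotheses as in \cite{FKLR}.

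\textbf{Existence of $S$ and $C$.} In cases (a) and (b), we need $S$ and $C$ smooth. When $\dim\mathrm{Sing}(X)=0$ and $H$ is basepoint-free, Bertini's theorem gives a general $S\in|H|$ avoiding the singular points and smooth, and then a smooth $C\in|H_S|$. Otherwise we use the given $S$ and $C$ directly, since they are assumed in the hypotheses.

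\textbf{Conclusion and main obstacle.} Theorem \ref{thm:main-criterion} then gives \ref{BGn}, and Conjecture \ref{conj-2} follows via Theorem \ref{thm:ch2} and Lemma \ref{lem:reduce-bmt-to-bn}. The main obstacle I expect is the Clifford-type bound $h^0(E)\le \rk E+\deg E/2$ for semistable torsion-free sheaves on a singular integral Gorenstein curve in case (c). I would first try to cite the Gorenstein-curve Clifford inequality used in \cite[Lemma 4.3]{FKLR}, and fall back on the extra $-1$ slack built into criterion (b) of Theorem \ref{thm:main-criterion} if only a weaker bound is available.
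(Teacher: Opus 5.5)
Your route is the one the paper intends. The corollary is the numerics of \cite[Corollary 5.1]{FKLR} fed into Theorem \ref{thm:main-criterion}: Riemann--Roch for $\chi(\cO_X(H))$, $g(C)-1=H^3$ by adjunction, the Clifford bound $\bn_C\le 1+\frac12H^3$ in cases (a) and (c) (via \cite[Lemma 4.3]{FKLR} for integral Gorenstein $C$), and the non-hyperelliptic bound $\bn_C\le\frac12H^3+\frac12$ in case (b). Your three comparisons are correct.

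The step you leave open is $\chi(\cO_X)=0$. Your fallback of absorbing a nonzero $\chi(\cO_X)$ into the hypotheses is not available: the hypotheses do not involve $\chi(\cO_X)$, and a negative value would invalidate all three comparisons. You can close it directly.
\begin{itemize}
\item Serre duality on the Cohen--Macaulay threefold $X$ gives $\chi(\omega_X)=-\chi(\cO_X)$.
\item Riemann--Roch gives $\chi(\omega_X)=\chi(\cO_X)$, since every correction term involves the numerically trivial $K_X$. Use Lemma \ref{lem-chM-general}(a) in the lci case, and Definition \ref{def:ch3} with Lemma \ref{lem:property-ch3}(a) in the $\QQ$-factorial case.
\item Hence $\chi(\cO_X)=0$ and $\chi(\cO_X(H))=\frac16H^3+\td_2(X).H$.
\end{itemize}

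Your identity $\chi(\cO_X)=1+h^2(\cO_X)-h^3(\cO_X)$ then gives $h^0(\omega_X)=h^3(\cO_X)\ge 1$. So $\omega_X$ is a numerically trivial line bundle with a nonzero section, and therefore $\omega_X\cong\cO_X$, because its zero divisor is effective and numerically trivial. You need this in case (b). With $K_X$ only numerically trivial, adjunction gives $\omega_C\cong\cO_C(2H)\otimes\omega_X|_C$. It is $\omega_X\cong\cO_X$ that makes $\omega_C\cong\cO_C(2H)$ very ample, so that $C$ is non-hyperelliptic and the refined Clifford bound applies.
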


Using this criterion, many examples in \cite[Section 5]{FKLR} can be generalized to singular cases. 

\begin{example}
Let $M$ be a Fano fourfold of index $3$ that is either lci or $\QQ$-factorial, $X\in |-K_M|$ be a general divisor, and $H\coloneqq -\frac{1}{3}(K_M)|_X$. Then $(X, H)$ is a polarised normal projective threefold with rational singularities and is either lci or $\QQ$-factorial, such that $K_X=0$ and $\mathrm{H}^1(\cO_X)=0$. When $M$ is smooth, the statement \ref{BGn} for $(X,H)$ is established in \cite[Theorem 5.2]{FKLR}. Using \cite[Proposition 4.10]{FKLR} and Theorem \ref{thm:main-criterion}, a similar argument as in \cite[Theorem 5.2]{FKLR} verifies \ref{BGn} for $(X,H)$ in this singular setting.
\end{example}

Moreover, we can apply Theorem \ref{thm:main-criterion} to singular Calabi--Yau threefolds which do not admit a smoothing. A typical example is the following.

\begin{corollary}\label{cor:x8}
Let $X$ be a general degree $8$ hypersurface in the weighted projective space $\PP(1,1,1,2,3)$ and $H\coloneqq (\cO_{\PP(1,1,1,2,3)}(3))|_X$. Then $K_X=0$ and $\mathrm{H}^1(\cO_X)=0$. Moreover,

\begin{itemize}
    \item $X$ is a normal projective Gorenstein variety with isolated quotient non-lci singularities and does not admit a smoothing, and

    \item \ref{BGn} holds for $(X,H)$ and some $\epsilon>0$. In particular, Conjecture \ref{conj-2} holds for $(X,H)$, a class $\Gamma\in \CH_1(X)_{\QQ}$ with $\Gamma.H\geq 0$, and $(b,w)$ in the range \eqref{eq:restricted-range}.
\end{itemize}
\end{corollary}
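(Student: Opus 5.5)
The strategy is to split the corollary into a geometric part, giving the basic properties of $X$, and a numerical part, which checks the hypotheses of Corollary~\ref{cor:trivial-cor-basepoint-free} (or directly Theorem~\ref{thm:main-criterion}).

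For the geometric part, adjunction on the weighted projective space gives $K_X=\cO_X(8-1-1-1-2-3)=\cO_X$. Since $X$ is a quasi-smooth hypersurface, the weighted Lefschetz-type vanishing gives $\mathrm{H}^1(\cO_X)=0$. For generic $X$ the singular locus is the intersection with the singular strata of $\PP(1,1,1,2,3)$. The point $[0:0:0:1:0]$ lies on $X$, since $x_3^4$ has degree $8$. The point $[0:0:0:0:1]$ does not, because a monomial $x_4^2\cdot(\text{degree }2)$ appears; so the local structure there must be checked explicitly. The $\tfrac12$-stratum $\{x_0=x_1=x_2=x_4=0\}$ is the single point $[0:0:0:1:0]$ on $X$. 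At this point $X$ is locally the quotient $\CC^3/\tfrac12(1,1,1)$, after using the equation to eliminate $x_4$ (the monomial $x_3x_4\cdot(\deg 1)$ allows this). This singularity is Gorenstein because $\sum$ weights $=3\equiv 1 \pmod 2$ in index form. Indeed, $\tfrac12(1,1,1)$ acts with determinant $-1$, so the correct check is that $\omega$ is invariant using the hypersurface residue. Its local ring is the cone over the Veronese $\nu_2(\PP^2)$, which is not lci because its tangent space has dimension $6$ while its dimension is $3$. I would quote Schlessinger's rigidity of isolated quotient singularities in dimension $\geq 3$: every deformation of $X$ is locally trivial at the singular point, so $X$ admits no smoothing. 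The variety is $\QQ$-factorial because $X$ is a quasi-smooth well-formed hypersurface of dimension $3$ with Picard rank one (Grothendieck--Lefschetz for weighted hypersurfaces), and it has rational singularities since quotient singularities are rational.

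For the numerical part, I take $H=\cO_X(3)$, which is Cartier. I compute $H^3=27\cdot 8/(1\cdot1\cdot1\cdot2\cdot3)=36$. I compute $\chi(\cO_X(H))=h^0(\cO_X(3))$ by counting monomials of weighted degree $3$ in $\PP(1,1,1,2,3)$: there are $10+3+1=14$, and no relation occurs in degree $3<8$. A general $S\in|H|$ is a weighted hypersurface avoiding the singular point, because a monomial $x_4$ appears. So $S$ is smooth, and it is a K3 surface given by a degree-$8$ hypersurface in $\PP(1,1,2,3)$ with $H_S^2=36$. A general $C\in|H_S|$ is a smooth curve of genus $19$. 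Since $\kk$ has characteristic zero, I then use the bound on $\bn_C$ for a curve on a K3 surface from \cite{FKLR}, or more robustly the numerical criterion (a) of Corollary~\ref{cor:trivial-cor-basepoint-free}. For the latter I need $\td_2(X).H>\tfrac13 H^3+1=13$, and I compute $\td_2(X).H=\chi(\cO_X(H))-\tfrac16H^3=14-6$ by Riemann--Roch in the form of Definition~\ref{def:ch3}, together with the $\td_2$ term. This gives only $8$, so criterion (a) fails, and I must use Theorem~\ref{thm:main-criterion}(a) directly: I need $\bn_C<14$.

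The main obstacle is bounding $\bn_C$ for $C$ a general curve of genus $19$ in $|H_S|$ on this K3 surface. My plan is to use the Brill--Noether-type bound for curves on K3 surfaces from \cite[Section 4]{FKLR}, which says that $\bn_C$ is roughly $\tfrac{g}{2}+O(1)$ for curves in a primitive class with no special Picard structure. For a very general $X$, the surface $S$ has Picard rank one, generated by $H_S$ or $H_S/3$. Here $\cO_S(1)$ has degree $4$, so $H_S=3\cO_S(1)$, and I have to account for the small class $\cO_S(1)$. I would estimate $\bn_C$ by Lazarsfeld–Mukai bundles and the Clifford-index bound from \cite[Proposition 4.10]{FKLR}, targeting $\bn_C\le 10<14$. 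Once this holds, Theorem~\ref{thm:main-criterion} gives \ref{BGn} and Conjecture~\ref{conj-2} with $\Gamma\in\CH_1(X)_\QQ$, since $X$ is $\QQ$-factorial and not lci.
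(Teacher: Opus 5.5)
There is a genuine gap at the decisive step, the bound $\bn_C<14$, and your singularity analysis is also reversed. Write $x_0,x_1,x_2,y,z$ for the coordinates of weights $1,1,1,2,3$.

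\textbf{The singularity is misidentified.} Since $y^4$ has degree $8$, a general $X$ does \emph{not} contain $[0:0:0:1:0]$. Since no power of $z$ has degree $8$, every $X$ contains $[0:0:0:0:1]$. There the term $z^2y$ lets you eliminate $y$, so $X$ is locally $\CC^3$ modulo $\tfrac13(1,1,1)$. This is Gorenstein because the generator has determinant $\zeta^3=1$. It is also the point that a general $T_1\in|\cO(3)|$ avoids, because $z$ occurs in a general cubic form. The singularity $\tfrac12(1,1,1)$ you propose has determinant $-1$ and is not Gorenstein, so your local description would contradict the statement. Your elimination step also fails, since $x_3x_4x_i$ has degree $6$, not $8$. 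Schlessinger rigidity, rationality and $\QQ$-factoriality all go through once you use the correct point. Non-lci needs a real argument, though: embedding dimension $>\dim$ alone does not rule out lci.

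\textbf{The bound on $\bn_C$ does not go through.} A general $S\in|H|$ is not a K3 surface. By adjunction $K_S=(K_X+H)|_S=H_S$. Concretely, eliminating $z$ via $T_1$ exhibits $S$ as an octic in $\PP(1,1,1,2)$, not $\PP(1,1,2,3)$, with $K_S=\cO_S(3)$. Hence $2g(C)-2=(K_S+H_S).H_S=2H^3=72$, so $g(C)=37$, not $19$. Your target $\bn_C\le 10$ is just the Clifford bound for genus $19$. For the actual curve, Clifford only gives $\bn_C\le 1+\frac{g-1}{2}=19>14$. This is exactly why criterion (a) of Corollary~\ref{cor:trivial-cor-basepoint-free} fails, and there is no K3 surface for a Lazarsfeld--Mukai argument to act on.

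The missing idea is to regard $C=X\cap T_1\cap T_2$ as a curve on $S'\coloneqq T_1\cap T_2$.
\begin{itemize}
\item For general cubic forms, $S'$ has only $A_1$ singularities, with $K_{S'}=\cO_{S'}(-2)$ and $K_{S'}^2=6$. So $S'$ is a degree $6$ del Pezzo surface with rational Gorenstein singularities.
\item $C\in|\cO_{S'}(8)|=|-4K_{S'}|$.
\item Proposition~\ref{prop:fklr} with the even value $s=4$ and $(-K_{S'})^2=6$ gives $\bn_C\le\frac{49}{4}-\frac{1}{76}<14=\chi(\cO_X(H))$.
\item $S$ and $C$ are smooth, because $|H|$ is basepoint-free on $X$ and $T_1$ misses $[0:0:0:0:1]$.
\end{itemize}
Theorem~\ref{thm:main-criterion}(a) then applies.
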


\begin{proof}
By \cite[Proposition 2.4]{wang:exponential}, we know that $X\subset \PP(1,1,1,2,3)$ is quasi-smooth. Moreover, by \cite[Proposition 2.6]{wang:exponential}, the unique singular point of $X$ is an isolated quotient singularity of type $\frac{1}{3}(1,1,1)$. In particular, $X$ has isolated rational Gorenstein $\QQ$-factorial singularities, and has no smoothing by \cite{schlessinger:rigid}. We also know that $H$ is an ample (Cartier) divisor on $X$ and $|H|$ is basepoint-free.

Now, we take a general surface $S\in |H|$ and a general curve $C\in |H_S|$, which are both smooth. By definition, we can write $S=X\cap T_1$ and $C=X\cap T_1\cap T_2$, where $T_1,T_2\subset \PP(1,1,1,2,3)$ are general degree $3$ hypersurfaces. Consider the surface $S'\coloneqq T_1\cap T_2$. By \cite[Proposition 2.6]{wang:exponential}, $S'$ only has $A_1$-singularities. Moreover, by adjunction, we have $K_{S'}=(\cO_{\PP(1,1,1,2,3)}(-2))|_{S'}$ and $(-K_{S'})^2=6$. In particular, $S'$ is a del Pezzo surface of degree $6$ with rational Gorenstein singularities. Since $C\in |-4K_{S'}|$,  we can apply Proposition \ref{prop:fklr} to get $$\bn_C\leq \frac{49}{4}-\frac{1}{76}<14.$$ Now, the result follows from $\chi(\cO_X(H))=14$ and Theorem \ref{thm:main-criterion}(a).
\end{proof}

\begin{proposition}[{\cite[Proposition 4.10]{FKLR}}]\label{prop:fklr}
Let $S$ be a del Pezzo surface with rational Gorenstein singularities and set $H=-K_S$. Then for any even integer $s>0$ and any integral curve $C\in |sH|$, we have
\[\bn_C\leq \max\left\{ s, 1+\frac{H^2(s^2-1)}{8}+\frac{H^2-8}{s(s+2)H^2+8}\right\}.\]
\end{proposition}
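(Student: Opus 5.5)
The plan is to follow the argument of \cite[Proposition 4.10]{FKLR} essentially verbatim, checking that every input survives on a del Pezzo surface $S$ with rational Gorenstein singularities. Such an $S$ is lci by the conventions in Section~\ref{sec:pre}. Moreover, $-K_S=H$ is ample and Cartier, so $\omega_S\cong\cO_S(-H)$. The curve $C\in|sH|$ is an integral Cartier divisor, so it is Gorenstein. By adjunction $\omega_C\cong\cO_C((s-1)H)$, hence $g-1=\frac{1}{2}s(s-1)H^2$.

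The first step is to take a stable sheaf $E$ on $C$ of slope in $(g-1-t,g-1]$ and push it forward to $S$. Writing $i\colon C\hookrightarrow S$, the sheaf $i_*E$ is pure of dimension one. I would then use tilt-stability on $S$ with respect to $H$: by Theorem~\ref{thm-bg-normal}, $(S,H)$ has the standard BG function, since rational singularities in characteristic zero give $C_{S,H}=0$. So Theorem~\ref{thm-bms} and the wall-chamber structure of Theorem~\ref{thm:wall-chamber-tilt} apply. Following FKLR, I consider walls for $i_*E$ in the $(b,w)$-plane, using the symmetry coming from the slope $g-1=\frac{1}{2}\deg\omega_C$. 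I then bound $h^0(E)=\hom(\cO_S,i_*E)$ by moving down along the line through $\Pi(\cO_S)$ to the large-volume limit or to the last wall. The relevant numerics (the Riemann--Roch identity $\chi=\bch_2+\frac{1}{2}H.\bch_1+\chi(\cO_S)\rk$ with $\chi(\cO_S)=1$, and the Euler pairings) hold on $S$ by Lemma~\ref{lem-chM-general} and Lemma~\ref{lem-derived-dual-ch}, since $S$ is lci.

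The second step is the key estimate. For tilt-semistable objects $F$ along the relevant walls, I would bound $\hom(\cO_S,F)$ using the Bogomolov inequality $\bDelta\ge0$, Lemma~\ref{lem-delta-JH}, and the classification of objects with $\bDelta=0$ (Lemma~\ref{lem-delta-0-rk1}(b)--(c)-type statements, giving line bundles $\cO_S(aH)$ up to shift). I would also use Serre duality $\Ext^2(\cO_S,F)=\Hom(F,\cO_S(-H))^*$, which holds because $S$ is Gorenstein (Lemma~\ref{lem-serre-duality}). Vanishing of this group follows from tilt-slope comparison with $\cO_S(-H)[1]$, as in Lemma~\ref{lem:vanish-hom-bn}. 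Summing over the Harder--Narasimhan factors along the wall and optimizing then gives the two-term maximum in the statement; the evenness of $s$ is used when placing the wall symmetrically.

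The main obstacle is the second step: the FKLR bound $\hom(\cO_S,F)\le\ldots$ for tilt-semistable objects near the boundary uses the structure of $\bDelta=0$ objects and stability of $\cO_S(aH)$ everywhere. On a singular $S$, I would justify this via Lemma~\ref{lem-delta-JH} and Lemma~\ref{lem-large-volume-converse}, since line bundles are $S_2$ and $\mu$-stable. I would also rely on Lemma~\ref{lem-delta-0}, but there is one difference from the smooth case: non-locally-free rank one reflexive sheaves may have $\bDelta>0$ or nonintegral $\bch_1$. This only helps the inequality, but it needs checking at each use.
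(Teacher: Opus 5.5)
There is a genuine gap. Your plan never addresses what distinguishes this statement from \cite[Proposition 4.10]{FKLR}: the integer $s$ is \emph{even}. The paper's proof takes FKLR's argument, which is written for odd $s$, as given. It does not revisit the singular inputs you concentrate on. The only work is in FKLR's case analysis on the position of the first vertex $P_1$ of the Harder--Narasimhan polygon of $i_*E$ relative to the points $O$ and $Q$. For even $s$, exactly two configurations need new computations:
\begin{itemize}
\item $P_1=Q$, which gives $\bn_C\le 1+\frac{H^2(s^2-1)}{8}+\frac{H^2-8}{s(s+2)H^2+8}$; this is the source of the correction term in the statement.
\item $P_1$ on the segment $OQ$, which gives $\bn_C\le\frac{(s+1)(s^2H^2-2sH^2+8)}{4(2s-1)}$; one must then still check that this is at most the stated maximum.
\end{itemize}
Your sentence ``summing over the Harder--Narasimhan factors along the wall and optimizing then gives the two-term maximum'' hides precisely these two computations. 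Running the odd-case argument verbatim does not produce the term $\frac{H^2-8}{s(s+2)H^2+8}$.

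Your account of where parity enters is also off. At slope $g-1$ one has $\bv_2(i_*E)/\bv_1(i_*E)=-\frac12$. So the wall through $\Pi(\cO_S)=(0,0)$ also passes through $\Pi(\cO_S(-H))=(-1,\frac12)$ for every $s$, and the symmetric placement of the wall does not depend on parity. Parity matters through integrality instead. For odd $s$, the line bundle $\cO_C(\frac{s-1}{2}H)$ has degree exactly $g-1$ and $h^0=1+\frac{H^2(s^2-1)}{8}$. For even $s$, no $\cO_C(kH)$ has degree $g-1$, which is why the extremal cases above have to be recomputed.

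As a minor point, Lemma \ref{lem-delta-0-rk1}(c) and Lemma \ref{lem:vanish-hom-bn} are stated for threefolds, so you cannot cite them as is on $S$.
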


\begin{proof}
Compared to the odd integer case in \cite[Proposition 4.10]{FKLR}, the only cases that need different calculations are when $P_1=Q$ and when $P_1$ lies on $OQ$. In the former case, a direct computation gives 
\[\bn_C\leq 1+\frac{H^2(s^2-1)}{8}+\frac{H^2-8}{s(s+2)H^2+8},\]
while the latter case gives 
\[\bn_C\leq \frac{(s+1)(s^2H^2-2sH^2+8)}{4(2s-1)}\leq \max\left\{ s, 1+\frac{H^2(s^2-1)}{8}+\frac{H^2-8}{s(s+2)H^2+8}\right\}.\]
\end{proof}

We end this section with the quintic example. Conjecture \ref{conj-2} for smooth quintic threefolds is proved in \cite{Li19} for $\Gamma=0$ and $(b,w)$ in the range \eqref{eq:restricted-range}. Using the above results, the argument in \cite{Li19} applies to singular quintic as well.

\begin{theorem}[{After \cite{Li19}}]\label{thm:sing-quintic}
Let $X\subset \PP^4_{\kk}$ be a quintic normal threefold with rational singularities and $H=c_1(\cO_{\PP^4_{\kk}}(1)|_X)$. Then Conjecture \ref{conj-2} holds for $(X,H)$, $\Gamma=0$, and $(b,w)$ in the range \eqref{eq:restricted-range}.
\end{theorem}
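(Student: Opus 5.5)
The plan is to follow the strategy of \cite{Li19}, replacing each ingredient for smooth quintics with its singular analogue established earlier. By Lemma \ref{lem:reduce-bmt-to-bn} with $\Gamma=0$, it suffices to prove $Q_{0,0}(E)\geq 0$ for every BN-stable object $E\in \cA^0(X)$ with $\nu_{BN}(E)\in[0,\frac12]$.

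First we check that the hypotheses of Section \ref{sec:bmt-cy-fano} hold. Since $X$ is a hypersurface, it is lci and Gorenstein with $K_X=0$. Since the singularities are rational and $\mathrm{char}(\kk)=0$, Theorem \ref{thm-bg-normal} gives the standard BG function. Normality gives $\codim \mathrm{Sing}(X)\geq 2$, and lci holds everywhere. Hence $\bch_3$ is defined by Definition \ref{def:ch-lci}. Moreover $\td_2(X)=\td_2$ of the virtual tangent bundle $T_{\PP^4}|_X-\cO_X(5)$, so $\td_2(X)$ is proportional to $H^2$ exactly as in the smooth case. Consequently Riemann--Roch (Lemma \ref{lem-chM-general}(a)) takes the form $\chi(E)=\bv_3(E)+\frac{5}{6}\bv_1(E)$ (using $\td_2(X).H=\frac{c_2}{12}.H=\frac{50}{12}$ and $H^3=5$). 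This is the numerical input used by Li.

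Second, we reproduce Li's key bound. For a BN-stable $E$ with $\nu_{BN}(E)\in(0,\frac12]$, we need $\dim\Hom(\cO_X,E)$ bounded by an explicit function of $(\bv_0,\bv_1,\bv_2)$. Li's argument restricts to the slopes near $0$ and $\frac12$ and uses three tools: the evaluation cone (Lemma \ref{lem:cone-bn-stable}), restriction to a divisor $D\in|kH|$ via Proposition \ref{prop-restriction} and Lemma \ref{lem-naoki}, and Clifford-type bounds for semistable sheaves on the resulting surfaces and curves. Lemma \ref{lem:vanish-hom-bn}(b) then gives $\chi(E)\leq\dim\Hom(\cO_X,E)$. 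The case $\nu_{BN}(E)=0$ is handled through the derived dual, using Lemma \ref{lem:property-ch3}(b), Lemma \ref{lem-derived-dual-ch} and Lemma \ref{lem-bmt-dual}, as in the proof of Theorem \ref{thm:ch2}. Combining these gives $\bv_3\leq$ the needed quantity, and hence $Q_{0,0}(E)\geq0$.

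The main obstacle is the restriction step. For a general $D\in|H|$ or $|2H|$, we need $D$ to be a normal surface (a quintic, or a complete intersection surface) that still carries the Bogomolov--Gieseker inequality and Li's strong BG-type bounds for surfaces in $\PP^3$. By Bertini, a general hyperplane section has only isolated rational singularities, and general curves in $|H_D|$ are smooth plane-section curves, so the classical Clifford inequalities apply. The remaining point is to verify that Li's refined surface inequality (his stronger-than-BG bound for quintic surfaces) still holds on the singular $D$. I would try to deduce it either by deforming $D$ to a smooth quintic surface, using local constancy of Chern characters (Lemma \ref{lem-constant-chM}) and openness of semistability in families, or by rerunning Li's curve-restriction argument directly, since it only uses general curves in $|H_D|$, which are smooth.
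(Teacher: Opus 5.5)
Your skeleton is the paper's: reduce via Lemma \ref{lem:reduce-bmt-to-bn}, then run the argument of \cite[Proposition 3.3]{Li19}. That argument uses the lci Riemann--Roch formula $\chi(E)=\bv_3(E)+\frac56\bv_1(E)$, the evaluation cone, Lemma \ref{lem:vanish-hom-bn}(b) and duality, and takes as input Li's stronger Bogomolov--Gieseker inequality (the statement of \cite[Theorem 5.5]{Li19}). However, the step you flag as the main obstacle, proving that stronger inequality on the singular $X$, is left open, and neither of your two suggestions closes it as stated.

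The deformation idea runs in the wrong direction. Openness of semistability and local constancy of $\bch$ (cf.~Theorem \ref{thm-degeneration}) transport inequalities from a special fiber to the generic fiber. You would need the converse: pass from smooth surfaces to the singular one. That requires lifting an arbitrary semistable sheaf on the singular surface to a smoothing, which is not available. The geometry you describe (quintic surfaces $D\in|H|$, plane quintic sections, classical Clifford) is also not Li's. Classical Clifford on such curves is not what yields his bound; his new input is a sharper Clifford-type inequality \cite[Proposition 4.1]{Li19} for complete intersection curves of type $(2,2,5)$.

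The missing idea is to use Li's linear systems and note that the singularities disappear at the curve level. Take a general $S\in|2H|$. It is a normal lci surface with isolated rational singularities, so it has the standard BG function by Theorem \ref{thm-bg-normal}. Lemma \ref{lem-naoki} with $k=2$ then reduces \cite[Theorem 5.5]{Li19} on $X$ to the surface statement \cite[Proposition 5.2]{Li19} on $S$. A general $C\in|2H_S|$ misses the finitely many singular points of $S$, so it is a smooth complete intersection curve of type $(2,2,5)$ in $\PP^4$. Applying Lemma \ref{lem-naoki} once more on $S$, Li's proof of \cite[Proposition 5.2]{Li19} reduces to \cite[Proposition 4.1]{Li19} on $C$, which applies verbatim because $C$ is smooth. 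No new surface inequality and no deformation argument are needed.
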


\begin{proof}
By Lemma \ref{lem:reduce-bmt-to-bn}, \cite[Theorem 3.2]{Li19} holds without any change. Similar to Theorem \ref{thm:ch2}, \cite[Proposition 3.3]{Li19} is also valid in this case. So it remains to verify the statement of \cite[Theorem 5.5]{Li19} for $X$. Using Theorem \ref{thm-bg-normal} and Lemma \ref{lem-naoki}, it suffices to prove the statement of \cite[Proposition 5.2]{Li19} for a general surface $S\in |2H|$. Note that $S$ is a normal lci surface with rational singularities and a general curve $C\in  |2H_S|$ is smooth. Therefore, using Lemma \ref{lem-naoki} and the same argument as in \cite[Proposition 5.2]{Li19}, the result follows from \cite[Proposition 4.1]{Li19}.
\end{proof}

\begin{remark}
The inequality in \cite[Theorem 1.1]{xu:gepner} also works in this case. Moreover, the main results of \cite{liu:bg-ineqaulity-quadratic,koseki:stability-cy-solid} hold in the case of normal rational singularities as well.
\end{remark}


\section{Kuznetsov components of singular Fano threefolds}\label{sec:ku}

In this section, we focus on Kuznetsov components of singular Fano threefolds. As another application of our framework, we construct stability conditions on Kuznetsov components of certain singular Fano threefolds. Moreover, its relative version proves a singular analog of \cite[Conjecture 1.8]{kuznetsov:fano-threefold-degneration} (cf.~Corollary \ref{cor-ks-conj}).

\subsection{Relative exceptional collections}

We first review the notion of relative exceptional collections.

\begin{definition}
Let $f\colon X\to S$ be a proper flat morphism between Noetherian schemes. We say an object $E\in \Dperf(X)$ is a \emph{relative exceptional object in $\Db(X)$ over $S$} if $$Rf_*R\cH om_X(E,E)\cong \oh_S.$$ A \emph{relative exceptional collection in $\Db(X)$ over $S$} is a sequence $E_1,\cdots, E_m$ of relative exceptional objects in $\Db(X)$ over $S$ such that $Rf_*R\cH om_X(E_i,E_j)\cong 0$ for all $i>j$.
\end{definition}

By \cite[Lemma 3.22]{BLMNPS21}, a sequence $E_1,\cdots, E_m\in \Dperf(X)$ is a relative exceptional collection in $\Db(X)$ over $S$ if and only if $(E_1)_s,\cdots, (E_m)_s\in \Dperf(X_s)$ is an exceptional collection in $\Db(X_s)$ for every point $s\in S$, or equivalently for every closed point $s\in S$.

We first recall the following result from \cite{BLMNPS21}.

\begin{lemma}[{\cite[Lemma 3.23]{BLMNPS21}}]\label{lem-10.2}
Let $f\colon X\to S$ be a proper morphism of finite Tor-dimension between Noetherian schemes. If $E\in \Dperf(X)$ is a relative exceptional object in $\Db(X)$ over $S$, then there is a fully faithful functor
\[\alpha_E\colon \Db(S)\to \Db(X),\quad F\mapsto Lf^*F\otimes^{\LL} E,\]
which has a right adjoint. Moreover, if $\omega^{\bullet}_{f}$ is invertible, then $\alpha_E$ has a left adjoint.
\end{lemma}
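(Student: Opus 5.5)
The plan is to show that $\alpha_E$ has the right adjoint $\beta\colon F\mapsto Rf_*R\cH om_X(E,F)$, that the unit $\mathrm{id}\to \beta\alpha_E$ is an isomorphism (which gives full faithfulness), and that a left adjoint exists when $\omega^{\bullet}_f$ is invertible.

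First I check that the functors land in the right categories. Since $f$ has finite Tor-dimension, $Lf^*$ preserves $\Db$, and tensoring with the perfect object $E$ preserves $\Db(X)$; so $\alpha_E\colon \Db(S)\to\Db(X)$ is well defined. For $F\in\Db(X)$, $R\cH om_X(E,F)\cong \mathbb{D}^X(E)\otimes^{\LL}F$ lies in $\Db(X)$ because $E$ is perfect. Properness of $f$ then gives $\beta(F)\in\Db(S)$.

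Next I establish the adjunction. For $G\in\Db(S)$ and $F\in\Db(X)$, using $E$ perfect and the $Lf^*\dashv Rf_*$ adjunction,
\[
\Hom_X(Lf^*G\otimes^{\LL}E,F)\cong\Hom_X(Lf^*G,R\cH om_X(E,F))\cong\Hom_S(G,Rf_*R\cH om_X(E,F)).
\]
This shows that $\beta$ is right adjoint to $\alpha_E$.

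The key step is full faithfulness, which reduces to the unit being an isomorphism. By the projection formula,
\[
\beta\alpha_E(G)=Rf_*\bigl(R\cH om_X(E,E)\otimes^{\LL}Lf^*G\bigr)\cong Rf_*R\cH om_X(E,E)\otimes^{\LL}G\cong G,
\]
using the hypothesis $Rf_*R\cH om_X(E,E)\cong\oh_S$. The point requiring care is that this isomorphism is the unit map. I plan to check this by noting that the unit is induced by $\oh_S\to Rf_*\oh_X\to Rf_*R\cH om_X(E,E)$, where the second map comes from the identity of $E$. This composite corresponds to $\mathrm{id}_E$, and it is an isomorphism because it is a morphism into a line bundle sending $1$ to a generator; one can verify this locally, or fiberwise via \cite[Lemma 3.22]{BLMNPS21}, where it is the identity of the simple object $E_s$.

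Finally, for the left adjoint when $\omega^{\bullet}_f$ is invertible, I use Grothendieck duality $Rf_*\dashv f^!$ with $f^!(-)=Lf^*(-)\otimes\omega^{\bullet}_f$. I expect the left adjoint to be $F\mapsto Rf_*(F\otimes^{\LL}\mathbb{D}^X(E)\otimes\omega^{\bullet}_f)$, and I would verify this by the same chain of adjunction isomorphisms. The main technical obstacle is to apply these dualities on $\Db$ rather than on $\Dqc$, and to confirm that the left adjoint preserves boundedness; both follow because $E$ and $\omega^{\bullet}_f$ are perfect and $f$ is proper.
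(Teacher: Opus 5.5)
Your proposal is correct and is essentially the argument of \cite[Lemma 3.23]{BLMNPS21}, which the paper cites rather than reproves. The three ingredients match: the right adjoint $Rf_*R\cH om_X(E,-)$, full faithfulness via the projection formula together with $Rf_*R\cH om_X(E,E)\cong\oh_S$, and the left adjoint $Rf_*(-\otimes^{\LL}\mathbb{D}^X(E)\otimes^{\LL}\omega^{\bullet}_f)$ coming from $f^!\cong Lf^*(-)\otimes^{\LL}\omega^{\bullet}_f$.

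To show the unit is an isomorphism, use either your local argument (a unital algebra that is free of rank one over $\Gamma(U,\oh_S)$ is generated by its unit) or the general fact that any natural isomorphism $\beta\alpha_E\cong\mathrm{id}$ forces the unit to be an isomorphism. Avoid the fiberwise route: base change for $Rf_*$ to the fibers generally needs $f$ flat, not merely of finite Tor-dimension.
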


Using the properties of dualizing complexes (cf.~Lemma \ref{lem-serre-duality}), an argument similar to that of \cite[Lemma 3.25]{BLMNPS21} gives:

\begin{lemma}\label{lem-10.3}
Let $f\colon X\to S$ be a proper morphism of finite Tor-dimension between Noetherian schemes and $E_1,\cdots, E_m\in \Dperf(X)$ be a relative exceptional collection. Then there is an $S$-linear semi-orthogonal decomposition  of finite cohomological amplitude $$\Db(X)=\langle \cD_1, \alpha_{E_1}(\Db(S)),\cdots, \alpha_{E_m}(\Db(S)) \rangle$$ such that the inclusion $\cD_1\hookrightarrow \Db(X)$ has a left adjoint.

Furthermore, if $f$ is Gorenstein and $S$ is regular, then all components above are admissible; in particular, the decomposition above is strong.
\end{lemma}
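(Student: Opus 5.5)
The plan is to build the decomposition inductively from the right, adjoining one relative exceptional object at a time, following \cite[Lemma 3.25]{BLMNPS21}. By Lemma \ref{lem-10.2}, each $\alpha_{E_i}\colon \Db(S)\to \Db(X)$ is fully faithful and has a right adjoint $\alpha_{E_i}^!(F)=Rf_*R\cH om_X(E_i,F)$. This adjoint lands in $\Db(S)$ because $E_i$ is perfect and $f$ is proper of finite Tor-dimension.

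\textbf{Semi-orthogonality.} For $i>j$ the vanishing $Rf_*R\cH om_X(E_i,E_j)\cong 0$ together with the projection formula gives
\[\Hom(\alpha_{E_i}(A),\alpha_{E_j}(B))=\Hom_S(A, Rf_*R\cH om(E_i,E_j)\otimes^{\LL} B)=0.\]
Hence the images $\cB_i\coloneqq \alpha_{E_i}(\Db(S))$ are semi-orthogonal in the required order.

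\textbf{Construction and $S$-linearity.} Each $\cB_i$ is right admissible. A standard argument then shows that $\cB\coloneqq\langle \cB_1,\dots,\cB_m\rangle$ is right admissible, and we set $\cD_1\coloneqq {}^{\perp}\cB$. This produces the decomposition $\langle\cD_1,\cB_1,\dots,\cB_m\rangle$. Since the decomposition is of the form ${}^\perp\cB,\cB$ with $\cB$ right admissible, the inclusion of $\cD_1$ acquires a left adjoint, namely the projection $F\mapsto \cone(\iota\iota^! F\to F)$ built from the right adjoint $\iota^!$ of $\cB\hookrightarrow\Db(X)$.

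$S$-linearity of the $\cB_i$ follows from the projection formula $\alpha_{E_i}(A)\otimes^{\LL} Lf^*P=\alpha_{E_i}(A\otimes^{\LL} P)$ for $P\in\Dperf(S)$. The component $\cD_1$ is then $S$-linear as an orthogonal complement, using the adjunction $(-\otimes Lf^*P^\vee)\dashv(-\otimes Lf^*P)$.

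\textbf{Finite cohomological amplitude.} The projections are iterated cones of the functors $\alpha_{E_i}\alpha_{E_i}^!$. Each such functor has bounded amplitude, since $E_i$ is perfect of bounded Tor-amplitude, $f$ has finite Tor-dimension, and $Rf_*$ has bounded amplitude on a Noetherian scheme. An induction over the $m$ steps gives the uniform constants $p,q$.

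\textbf{Strong and admissible case.} Suppose now that $f$ is Gorenstein and $S$ is regular. Then $\omega^\bullet_f$ is invertible, so Lemma \ref{lem-10.2} also supplies left adjoints $\alpha_{E_i}^*$. Concretely, $\alpha_{E_i}^*(F)$ is the $\DD^S$-dual of $Rf_*R\cH om(F,E_i\otimes\omega^\bullet_f)$, by Lemma \ref{lem-serre-duality}, and it lands in $\Db(S)$ because $S$ is regular. Each $\cB_i$ is therefore admissible.

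For $\cD_1$, I would use the Serre-type functor $-\otimes\omega_f^\bullet$, which is an autoequivalence of $\Db(X)$. Lemma \ref{lem-serre-duality} identifies ${}^\perp\cB$ with $(\cB\otimes\omega^\bullet_f)^\perp$. Since $\cB\otimes\omega^\bullet_f$ is left admissible, this shows $\cD_1$ also has a right adjoint to its inclusion.

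Strongness requires that every component of the decomposition have a right adjoint. For the pieces $\langle\cB_i,\dots,\cB_m\rangle$ this follows from gluing the adjoints of the $\alpha_{E_j}$.

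I expect the main obstacle to be the duality step. One must check that Lemma \ref{lem-serre-duality} applies for $E\in\Db(X)$ rather than only for perfect complexes, so that the adjoints genuinely preserve $\Db$ and not merely $\Dqc$. This is exactly where the regularity of $S$ enters, through $\Dperf(S)=\Db(S)$.
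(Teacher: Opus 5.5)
Your proposal is correct and follows the paper's proof. The first part is exactly \cite[Lemma 3.25]{BLMNPS21}. In the Gorenstein case the paper likewise glues the left adjoints from Lemma \ref{lem-10.2} to make $\langle \alpha_{E_1}(\Db(S)),\dots,\alpha_{E_m}(\Db(S))\rangle$ admissible. It then uses Lemma \ref{lem-serre-duality} and the autoequivalence $-\otimes^{\LL}\omega^{\bullet}_f$ to carry the other orthogonal onto $\cD_1$. (Your ${}^\perp\cB$, as produced by your cone construction, is what the paper writes $\cB^\perp$.)

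One small slip, which does not affect the argument because existence of the left adjoint comes from Lemma \ref{lem-10.2}. By Lemma \ref{lem-serre-duality}, $\DD^S(Rf_*R\cH om_X(F,E_i\otimes^{\LL}\omega^\bullet_f))\cong Rf_*R\cH om_X(E_i,F)$ is the right adjoint $\alpha_{E_i}^!$. The left adjoint is instead $\DD^S(Rf_*R\cH om_X(F,E_i))\cong Rf_*R\cH om_X(E_i,F\otimes^{\LL}\omega^\bullet_f)$.
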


\begin{proof}
The first part is proved in \cite[Lemma 3.25]{BLMNPS21}. Note that in the situation of the second part, $\omega^{\bullet}_{f}$ is a shift of a line bundle. By Lemma \ref{lem-10.2} and \cite[Lemma 3.10]{perry:noncommutative-hpd}, $$\cD\coloneqq \langle \alpha_{E_1}(\Db(S)),\cdots, \alpha_{E_m}(\Db(S))\rangle$$ is admissible. Then we have a semi-orthogonal decomposition $\Db(X)=\langle \cD, \cD'_1 \rangle$ so that $\cD'_1$ is admissible. As $S$ is regular, we have $\cD\subset \Dperf(X)$. Therefore, by applying Lemma \ref{lem-serre-duality}, we obtain $\cD_1'\otimes^{\mathrm{L}} \omega^{\bullet}_f=\cD_1$. Since $-\otimes^{\mathrm{L}} \omega^{\bullet}_f$ is an auto-equivalence of $\Db(X)$, we can conclude that $\cD_1$ is also admissible.
\end{proof}

\subsection{Inducing stability conditions}

Now, we state some general criteria on inducing hearts or stability conditions on semi-orthogonal components.

The following result is a variant of \cite[Corollary 7.6]{BLMNPS21}.

\begin{proposition}\label{prop-induce-heart}
Let $f\colon X\to S$ be a flat projective morphism to a regular Nagata scheme $S$ of finite Krull dimension which is quasi-projective over a Noetherian affine scheme. Let $\cA_S\subset \Db(X)$ be the heart of a bounded $S$-local t-structure. Let $$\Db(X)=\langle \cD_1, \cD_2 \rangle$$ be an $S$-linear semi-orthogonal decomposition. Let $\cG$ be a relative spanning class of $\cD_2$ with $\cG\subset \cA_S\cap \Dperf(X)$ and every $G\in \cG$ satisfies $$G_s\otimes^{\LL} \omega_{X_s}^{\bullet}\in \cA_s[1]$$ for every closed point $s\in S$, where $\cA_s\subset \Db(X_s)$ is the heart induced by \cite[Theorem 5.6]{BLMNPS21}. Then
\[(\cA_S)_1\coloneqq\cA_S\cap \cD_1\subset \cD_1\]
is the heart of a bounded $S$-local t-structure on $\cD_1$ such that the inclusion functor $\cD_1\to \Db(X)$ is t-exact.
\end{proposition}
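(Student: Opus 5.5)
The plan is to reduce to \cite[Corollary 7.6]{BLMNPS21} (equivalently \cite[Lemma 7.5]{BLMNPS21}). That result gives the induced heart $\cA_S\cap\cD_1$ once we know, for a relative spanning class $\cG$ of $\cD_2$, that the functors $\Hom(G,-)$ (or $\Hom(-,G)$) satisfy a one-sided vanishing on $\cA_S$. Concretely, we need that for every $G\in\cG$ and every $E\in\cA_S$:
\begin{itemize}
\item $Rf_*R\cH om_X(E, G\otimes^{\LL}\omega^\bullet_f)$, or dually $Rf_*R\cH om_X(G,E)$, is concentrated in a fixed range of degrees;
\item the projection of $\cA_S$ to $\cD_1$ lands in $\cA_S$.
\end{itemize}

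The first step is to translate the hypothesis $G_s\otimes^{\LL}\omega^\bullet_{X_s}\in\cA_s[1]$ into a vanishing statement. Since $S$ is regular, Lemma \ref{lem-serre-duality} applies, so Grothendieck duality for $f$ gives
\[Rf_*R\cH om_X(G,E)\cong \mathbb{D}^S\bigl(Rf_*R\cH om_X(E,G\otimes^{\LL}\omega^\bullet_f)\bigr)\]
for $E\in\Db(X)$ and $G\in\Dperf(X)$. Both $G$ and $\omega^\bullet_f$ are compatible with base change. So on each closed fiber, using $G\in\cA_S$ (hence $G_s\in\cA_s$) and $G_s\otimes\omega^\bullet_{X_s}\in\cA_s[1]$, we get
\[\Ext^i_{X_s}(G_s,E_s)=\Ext^{-i}_{X_s}(E_s,G_s\otimes\omega^\bullet_{X_s})^*=0 \quad\text{for } i\notin\{0,1\}\]
whenever $E_s\in\cA_s$. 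The vanishing for $i<0$ comes from $\Hom(\cA_s,\cA_s[<0])=0$, and the vanishing for $i>1$ comes from the Serre-dual term $\Hom(E_s,\cA_s[1-i+\ldots])$ in negative degree.

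The second step is to pass from fibers to the family. The complex $Rf_*R\cH om_X(G,E)$ is perfect on $S$, since $G$ is perfect and $E$ is $S$-perfect by Lemma \ref{lem-S-perf-lem-1}(c), and its formation commutes with base change by Lemma \ref{lem-S-perf-lem-2}. Fiberwise amplitude in $[0,1]$ then gives tor-amplitude in $[0,1]$ by Nakayama at closed points. One caveat: this argument needs $E_s\in\cA_s$, which only holds for $S$-flat objects. The way around it is to check vanishing against the heart via $S$-locality, using \cite[Theorem 5.6]{BLMNPS21}, or simply to use the one-sided statement: $\Hom(G,E[i])=0$ for $i<0$ follows since $G\in\cA_S$, and $\Hom(E,G\otimes\omega_f[j])$ vanishing for $j<1$ via duality gives the other side.

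The third step is to feed these amplitude bounds into \cite[Corollary 7.6]{BLMNPS21}. This shows that the right adjoint projection to $\cD_1$ preserves $\cA_S$ up to the appropriate truncation. The induced t-structure on $\cD_1$ is then bounded and $S$-local, and the inclusion is t-exact by construction.

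The main obstacle is the non-flat case in the second step, namely controlling the upper bound when $E\in\cA_S$ is not $S$-flat. I would first try to handle it by reducing to affine $S$ and using the characterization of $\cA_S$ through the fibers in \cite[Theorem 5.7]{BLMNPS21}.
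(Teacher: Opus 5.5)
Your skeleton is the paper's: duality on closed fibers, base change of the perfect complex $Rf_*R\cH om_X(G,F)$, Nakayama at closed points, then \cite[Lemma 7.4]{BLMNPS21}. However, the step you flag as the main obstacle is left open, and the statement you aim for there is false.

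For $F\in\cA_S$ not flat over $S$, the complex $Rf_*R\cH om_X(G,F)$ need not have tor-amplitude in $[0,1]$. Take $S$ a curve and $F$ the pushforward of some $F'\in\cA_s$ along $X_s\hookrightarrow X$; this $F$ lies in $\cA_S$. Then $Rf_*R\cH om_X(G,F)$ is the pushforward of $\RHom_{X_s}(G_s,F')$ along $s\hookrightarrow S$. Its derived fiber at $s$ has non-zero cohomology in degree $-1$ as soon as $\Hom_{X_s}(G_s,F')\neq 0$. Equivalently, $F_s$ has $\cA_s$-cohomology $F'$ in degrees $-1$ and $0$. Your global workaround does not rescue this: the hypothesis controls $G\otimes^{\LL}\omega^\bullet$ only on closed fibers, so no vanishing of $\Hom_X(F,G\otimes^{\LL}\omega^\bullet_f[k])$ for $F\in\cA_S$ is available.

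The missing point is that only the one-sided bound $Rf_*R\cH om_X(G,F)\in\Db(S)^{\leq 1}$ for $G\in\cG$, $F\in\cA_S$ is needed. Together with $\cG\subset\cA_S$, this is exactly the input of \cite[Lemma 7.4]{BLMNPS21}, and this bound needs no flatness.

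\begin{itemize}
\item Pushforward along $X_s\hookrightarrow X$ is t-exact for the heart $\cA_s$ induced by \cite[Theorem 5.6]{BLMNPS21}. Hence its left adjoint, derived restriction, is right t-exact, and $F_s$ has $\cA_s$-cohomology only in degrees $\leq 0$.
\item Combined with $G_s\otimes^{\LL}\omega^\bullet_{X_s}\in\cA_s[1]$, this gives $\Hom_{X_s}(F_s,G_s\otimes^{\LL}\omega^\bullet_{X_s}[k])=0$ for $k\leq -2$.
\item By duality on $X_s$ you then get $\RHom_{X_s}(G_s,F_s)\in\Db(\kappa(s))^{\leq 1}$. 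The first part of Lemma \ref{lem-serre-duality} suffices since $G_s$ is perfect. The relative formula you cite from its second part assumes $f$ Gorenstein, which is not given, although it does hold for perfect $G$.
\item $Rf_*R\cH om_X(G,F)$ is perfect and commutes with base change (Lemma \ref{lem-S-perf-lem-1}(c), Lemma \ref{lem-S-perf-lem-2}). Applying Nakayama to its top non-zero cohomology sheaf at a closed point of its support yields the global bound, and \cite[Lemma 7.4]{BLMNPS21} concludes.
\end{itemize}

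The same observation corrects your claim that $G\in\cA_S$ implies $G_s\in\cA_s$. You only get $G_s\in\Db(X_s)^{\leq 0}$ relative to $\cA_s$, which is harmless once the lower bound is dropped.
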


\begin{proof}
Let $F\in \cA_S$ and $G\in \cG$. By Lemma \ref{lem-S-perf-lem-1}(c) and Lemma \ref{lem-S-perf-lem-2}(b), we know that $F_s, G_s\in \Db(X_s)$ for every $s\in S$. Moreover, $F$ is $S$-perfect by our assumption and Lemma \ref{lem-S-perf-lem-1}(c). Therefore, $R\cH om_X(G,F)\in \Db(X)$ is $S$-perfect by Lemma \ref{lem-S-perf-lem-2}(c), and we get \[Li^*_sRf_*R\cH om_X(G,F)\cong \RHom_{X_s}(G_s, F_s)\]
by Lemma \ref{lem-S-perf-lem-2}(a), where $i_s\colon s\hookrightarrow S$ is the inclusion of a closed point. From Lemma \ref{lem-serre-duality}, we see
\[\RHom_{X_s}(G_s, F_s)^{*}=\RHom_{X_s}(F_s, G_s\otimes^{\LL}\omega_{X_s}^{\bullet}).\]
Therefore, our assumption implies that $\RHom_{X_s}(G_s, F_s)\in \Db(\kappa(s))^{\leq 1}$. Since this holds for any closed point $s\in S$, we see that $Rf_*R\cH om_X(G,F)\in \Db(S)^{\leq 1}$ and the result follows from \cite[Lemma 7.4]{BLMNPS21}.
\end{proof}

The same argument as in \cite[Proposition 5.1]{bayer2017stability} yields the following criterion.

\begin{proposition}\label{prop-blms}
Let $X$ be a projective scheme over a field $\kk$ and $E_1,\dots, E_m$ be an exceptional collection. We set $\cD_1\coloneqq\langle E_1,\dots, E_m \rangle^{\perp}$. Let $\sigma=(\cA, Z)$ be a weak stability condition on $\Db(X)$ such that

\begin{enumerate}
    \item $E_i\in \cA\cap \Dperf(X)$, 

    \item $E_i\otimes^{\LL}\omega_{X}^{\bullet}\in \cA[1]$

    \item $Z(E_i)\neq 0$, and

    \item there are no objects $0\neq F\in \cA_1\coloneqq\cA\cap \cD_1$ with $Z(F)=0$.
\end{enumerate}
Then $\sigma_1=(\cA_1, Z_1\coloneqq Z|_{\KK(\cD_1)})$ is a stability condition on $\cD_1$.
\end{proposition}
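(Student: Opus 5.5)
The plan follows \cite[Proposition 5.1]{bayer2017stability}. There are three things to establish for $\sigma_1=(\cA_1, Z_1)$: that $\cA_1$ is the heart of a bounded t-structure on $\cD_1$, that $Z_1$ is a stability function on $\cA_1$ with the HN property, and that the support property holds.

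\textbf{Step 1: the heart.} I apply Proposition \ref{prop-induce-heart} with $S=\Spec(\kk)$ and $\cD_2=\langle E_1,\dots,E_m\rangle$, taking $\cG=\{E_1,\dots,E_m\}$ as relative spanning class. Hypotheses (a) and (b) are exactly what that proposition requires. The underlying argument is short: for $F\in\cA$, Serre duality (Lemma \ref{lem-serre-duality}) gives $\RHom(E_i,F)^*=\RHom(F,E_i\otimes^{\LL}\omega^\bullet_X)$. Since $E_i\otimes\omega^\bullet_X\in\cA[1]$, this complex lies in degrees $\leq 1$. Then \cite[Lemma 7.4]{BLMNPS21} shows that $\cA_1=\cA\cap\cD_1$ is the heart of a bounded t-structure on $\cD_1$, and that the inclusion $\cD_1\to\Db(X)$ is t-exact.

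\textbf{Step 2: the stability function and HN property.} $Z_1$ is a weak stability function on $\cA_1$ because $\cA_1\subset\cA$. By (d), no nonzero object of $\cA_1$ has $Z=0$, so $Z_1$ is in fact a stability function. For the HN property I would argue as in \cite{bayer2017stability}: t-exactness lets me compare subobjects in $\cA_1$ with subobjects in $\cA$, so the phases of subobjects of $F\in\cA_1$ are bounded using the HN filtration of $F$ in $\cA$. One then runs the standard existence argument, for example \cite[Proposition 2.4]{bridgeland:stability}, through a Noetherianity and discreteness check. Alternatively, as in \cite[Proposition 5.1]{bayer2017stability}, I can use that $\sigma$ has the tilting property together with Lemma \ref{lem:supp-property-finite-length} to obtain the chain conditions.

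\textbf{Step 3: the support property.} This is the main obstacle, because the support property for $\sigma$ is only stated on $\Lambda/\Lambda^Z$ for semistable objects of $\cA$, whereas a $\sigma_1$-semistable $F$ need not be $\sigma$-semistable. I would first show that every $\sigma$-HN factor $F'$ of $F$ has phase bounded within a fixed window. To do this, I use $\Hom(E_i,F)=0$ together with the Serre-dual vanishing to show that its first and last $\sigma$-HN factors cannot drift far from $\phi_{\sigma_1}(F)$, using (c) to control phases. Once that is in place, the argument of Lemma \ref{lem:supp-property-finite-length} gives $\|\overline{\bv}(F)\|\leq \sum\|\overline{\bv}(F_j)\|\leq C\sum|Z(F_j)|\leq C'|Z(F)|$.

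Finally, (d) implies that $\Lambda^Z$ meets the image of $\KK(\cD_1)$ trivially up to saturation, so this estimate yields a quadratic form on the relevant lattice. Where the phase-window argument is delicate, I would follow \cite{bayer2017stability}, which restricts the quadratic form of $\sigma$ to $\KK(\cD_1)$.
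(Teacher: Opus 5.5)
Your Steps 1 and 2 match the paper. The heart comes from Proposition \ref{prop-induce-heart} with $S=\Spec(\kk)$ and spanning class $\{E_1,\dots,E_m\}$, using (a) and (b), and (d) makes $Z_1$ a stability function. The paper then quotes \cite[Lemma 5.2]{bayer2017stability} for the HN property and reruns the proof of \cite[Proposition 5.1]{bayer2017stability} for the support property. Your alternative HN routes, via the tilting property of $\sigma$ or via Noetherianity of $\cA$ plus discreteness of $\Im Z$, rely on properties that are not among the hypotheses.

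\textbf{The gap is the phase window in Step 3.}
\begin{itemize}
\item[(1)] \emph{The hypotheses say nothing about the $\sigma$-HN factors of $F$.} The vanishings $\RHom(E_i,F)=0$ and $\RHom(F,E_i\otimes^{\LL}\omega^{\bullet}_X)=0$ are properties of $F$. Its $\sigma$-HN factors generally do not lie in $\cD_1$, and (c) only concerns the $E_i$.
\item[(2)] \emph{Concretely, the first factor is not controlled.} For the first factor $F_1\subset F$ you do get $\Hom(E_i,F_1)=0$. But $\Ext^1(E_i,F_1)\cong\Hom(E_i,F/F_1)$, and this is not controlled by $Z(F)$. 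The $\sigma_1$-semistability of $F$ can only test the projection of $F_1$ to $\cD_1$. That projection differs from $F_1$ by an uncontrolled combination of the classes $\bv(E_i)$, so you get no bound on $\phi_\sigma(F_1)-\phi_{\sigma_1}(F)$.
\item[(3)] \emph{Without a window the estimate fails.} You would need a window of length uniformly bounded away from $1$. Without it, all you know is that the factors $F_j$ lie in $\cP(0,1]$. Then $\sum_j|Z(F_j)|\le C'|Z(F)|$ fails in general: factors near phases $1$ and $0$ can have large $|Z|$ with almost cancelling real parts. So the estimate from Lemma \ref{lem:supp-property-finite-length} is not available.
\end{itemize}
As written, the only complete part of your Step 3 is the final deferral to \cite{bayer2017stability}, which is exactly the paper's citation. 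The heuristic placed in front of it should be removed.

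Separately, (d) does not give $\Lambda^Z\cap\langle\bv(\KK(\cD_1))\rangle=0$. Condition (d) is a statement about objects of $\cA_1$, not about lattice classes. This is why Theorem \ref{thm-induce-stab} imposes the intersection condition as a separate hypothesis. In the absolute statement you should take the lattice to be the image of $\KK(\cD_1)$ in $\Lambda/\Lambda^Z$.
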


\begin{proof}
By (a), (b), and Proposition \ref{prop-induce-heart}, we know that $\cA_1$ is the heart of a bounded t-structure on $\cD_1$ such that $\cD_1\to \Db(X)$ is t-exact. Moreover, from (d), it is clear that $Z_1$ is a stability function on $\cA_1$. Therefore, $Z_1$ has the HN property by \cite[Lemma 5.2]{bayer2017stability}. Now the same proof as in \cite[Proposition 5.1]{bayer2017stability} implies the support property and the result follows.
\end{proof}

Now we can prove a variant of \cite[Theorem 23.1]{BLMNPS21}. Given a weak stability condition $\underline{\sigma}=(\sigma_s=(\cA_s, Z_s))_{s\in S}$ on $\Db(X)$ over $S$ with respect to a relative Mukai homomorphism $$\bv\colon \Knum(\Db(X)/S)\to \Lambda.$$ If $E_1,\dots, E_m$ is a relative exceptional collection in $\Db(X)$ over $S$, then we set
\[\Lambda'\coloneqq\langle \bv(\Knum(\cD_1/S)) \rangle\subset \Lambda,\] where $\cD_1\coloneqq\langle \alpha_{E_1}(\Db(S)),\dots, \alpha_{E_m}(\Db(S)) \rangle^{\perp}$.

\begin{theorem}\label{thm-induce-stab}
Let $f\colon X\to S$ be a Gorenstein projective morphism to a regular Nagata scheme $S$ of finite Krull dimension which is quasi-projective over a Noetherian affine scheme. Let $\underline{\sigma}=(\sigma_s=(\cA_s, Z_s))_{s\in S}$ be a weak stability condition on $\Db(X)$ over $S$ and $E_1,\dots, E_m$ be a relative exceptional collection. We set $$\cD_1\coloneqq\langle \alpha_{E_1}(\Db(S)),\dots, \alpha_{E_m}(\Db(S)) \rangle^{\perp}.$$ Assume that

\begin{enumerate}
    \item $(E_i)_s\in \cA_s$ for all $i$ and $s\in S$,

    \item $(E_i)_s\otimes^{\LL} \omega_{X_s}^{\bullet}\in \cA_s[1]$ for all $i$ and $s\in S$,

    \item $\bv(E_i)\notin \Lambda^{Z}$ for all $i$,

    \item $\Lambda^{Z}\cap \Lambda'=0$, and

    \item for all $\bv\in \Lambda$, the set
    \[\{[F]\in \cM_{\underline{\sigma}}(\bv') \colon \bv'\in \bv+\Lambda^{Z},~\chi(E_i, F)\geq 0 \text{ for all }i\}\] is bounded.
\end{enumerate}
For each $s\in S$, let $\cA_{s,1}$ be the heart in $(\cD_1)_s$ given in Proposition \ref{prop-induce-heart}, and let $Z_{s,1}$ be the central charge given by the restriction of $Z_s$ along $\KK((\cD_1)_s)\to \KK_0(X_s)$. Then the collection
\[\underline{\sigma_1}=((\sigma_s)_1=(\cA_{s,1}, Z_{s,1}))_{s\in S}\]
is a stability condition on $\cD_1$ over $S$ with respect to $\Lambda'$.
\end{theorem}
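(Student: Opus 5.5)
This is the analogue of \cite[Theorem 23.1]{BLMNPS21}, and I would follow that argument, replacing each smooth-case input by the corresponding result proved earlier in this paper. The proof has three stages.

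\textbf{Step 1: fiberwise hearts and decompositions.} First I set up the decomposition and the hearts on each fiber.
\begin{itemize}
\item By Lemma \ref{lem-10.3}, since $f$ is Gorenstein and $S$ is regular, the decomposition $\Db(X)=\langle \cD_1, \alpha_{E_1}(\Db(S)),\dots,\alpha_{E_m}(\Db(S))\rangle$ is strong, $S$-linear and of finite cohomological amplitude.
\item Its base change gives $(\cD_1)_s=\langle (E_1)_s,\dots,(E_m)_s\rangle^{\perp}$ for every $s\in S$.
\item Conditions (a) and (b), together with Proposition \ref{prop-induce-heart} applied to the relative spanning class $\{E_i\}$, produce an $S$-local bounded heart on $\cD_1$, compatible with base change. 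On each fiber this heart is $\cA_{s,1}=\cA_s\cap(\cD_1)_s$.
\end{itemize}

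\textbf{Step 2: each $(\cA_{s,1},Z_{s,1})$ is a stability condition.} Next I check the hypotheses of Proposition \ref{prop-blms} fiber by fiber.
\begin{itemize}
\item Hypotheses (a) and (b) of Proposition \ref{prop-blms} are conditions (a) and (b) here.
\item Hypothesis (c) of Proposition \ref{prop-blms} follows from (c) here.
\item For hypothesis (d): if $0\neq F\in\cA_{s,1}$ had $Z(F)=0$, then $\bv(F)$ would lie in $\Lambda^Z\cap\Lambda'$, which is $0$ by (d). So $\bv(F)=0$. Because $\cA^Z_s$ is a Noetherian torsion subcategory with $Z=0$, I then need to conclude $F=0$ from $\bv(F)=0$, using positivity of the support-property quadratic form or of a $\chi$-type invariant; this is where I would check carefully.
\item The support property with respect to $\Lambda'$ then follows from Proposition \ref{prop-blms}. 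The form is the same quadratic form $Q$ as for $\underline{\sigma}$, restricted to $\Lambda'$. By (d), $\Lambda'$ injects into $\Lambda/\Lambda^Z$, so $Q|_{\ker Z\cap\Lambda'}$ is negative definite uniformly in $s$.
\end{itemize}

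\textbf{Step 3: the family conditions.} It remains to verify the conditions making the fiberwise stability conditions a family.
\begin{itemize}
\item Condition \ref{c1} is inherited directly from $\underline{\sigma}$.
\item Condition \ref{c3} (HN structures over Dedekind bases $C$) follows by inducing the heart over $C$ through Proposition \ref{prop-induce-heart} and restricting the weak HN structure $\sigma_C$. HN filtrations inside $\cD_1$ exist by the same argument as \cite[Lemma 5.2]{bayer2017stability}, using the $C$-torsion theory.
\item Condition \ref{c2} (openness of geometric stability) I would reduce to openness for $\underline{\sigma}$ as in \cite[Theorem 23.1]{BLMNPS21}. An object of $\cA_{s,1}$ that is $\sigma_{s,1}$-stable has, in $\cA_s$, HN factors differing from it only by objects in $\cA^Z_s$. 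These are controlled by the Quot-type argument of Theorem \ref{thm:tilt-HN-structure}, so openness follows from the corresponding statement for $\underline{\sigma}$, plus openness of lying in $\cD_1$.
\end{itemize}

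\textbf{Main obstacle: boundedness \ref{b2}.} I expect boundedness to be the hardest part. Take a $\sigma_{s,1}$-stable object $F$ of class $v$. In $\cA_s$ it need not be $\sigma_s$-semistable; its $\sigma_s$-HN factors all have the same $Z$ up to classes in $\Lambda^Z$. The plan:
\begin{enumerate}
\item Using the support property, the HN classes lie in a finite set modulo $\Lambda^Z$.
\item The $\Lambda^Z$-ambiguity is controlled by condition (e). For $F\in(\cD_1)_s$ the Euler pairings $\chi(E_i,F)$ vanish, and $\Hom$-vanishing from (a) and (b) gives $\chi(E_i,\cdot)\geq0$ on the relevant HN factors.
\item Hence the factors lie in the bounded sets of (e).
\item Boundedness of $F$ follows by taking extensions, via \cite[Lemma 9.6]{BLMNPS21}.
\end{enumerate}
The delicate point is step 2: establishing $\chi(E_i,G)\geq 0$ for each HN factor $G$ through Serre duality (Lemma \ref{lem-serre-duality}) and the heart positions in (a) and (b).
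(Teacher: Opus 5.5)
Your top-level plan matches the paper. The paper's proof only records, via Lemma~\ref{lem-10.3}, that $\cD_1$ satisfies Assumption~\ref{assum-stab}. It then gets the fiberwise stability conditions from Proposition~\ref{prop-blms}, and takes the family conditions verbatim from the proof of \cite[Theorem 23.1]{BLMNPS21}, with \cite[Corollary 7.6]{BLMNPS21} and \cite[Proposition 5.1]{bayer2017stability} replaced by Propositions~\ref{prop-induce-heart} and~\ref{prop-blms}.

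The problem is in what you put in place of that citation for \ref{c2} and \ref{b2}. Both sketches assume that a $\sigma_{s,1}$-stable $F\in\cA_{s,1}$ is $\sigma_s$-semistable up to objects of $\cA^Z_s$. You write ``HN factors differing from it only by objects in $\cA^Z_s$'' and ``HN factors all have the same $Z$ up to classes in $\Lambda^Z$''. This is false.
\begin{itemize}
\item Distinct $\sigma_s$-HN factors have strictly decreasing phases, while two factors whose classes differ by an element of $\Lambda^Z$ would have equal $Z$, hence equal phase. So this cannot happen.
\item Nothing in (a)--(e) prevents $F$ from having a destabilizing subobject in $\cA_s$ that does not lie in $(\cD_1)_s$. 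So $F$ can have $\sigma_s$-HN factors of genuinely different phases.
\item The Quot argument of Theorem~\ref{thm:tilt-HN-structure} that you want to reuse gets quasi-compactness from tilt-specific bounds (Lemma~\ref{lem:finite-v2}). These are not available for a general $\underline{\sigma}$.
\end{itemize}

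Your positivity step also fails as stated. By (a), (b) and Serre duality, $\Ext^k((E_i)_s,A)=0$ for $A\in\cA_s$ and $k\notin\{0,1\}$. Since $\RHom((E_i)_s,F)=0$, a short exact sequence $0\to A\to F\to Q\to 0$ in $\cA_s$ gives $\Hom((E_i)_s,A)=0$ and $\Ext^1((E_i)_s,Q)=0$. Hence $\chi(E_i,A)=-\dim\Ext^1((E_i)_s,A)\le 0$ on subobjects, and only quotients satisfy $\chi(E_i,Q)\ge 0$. So (e) applies to the last $\sigma_s$-HN factor of $F$. It does not apply to the first one, where the sign is reversed, nor to the middle ones. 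Your conclusion that ``the factors lie in the bounded sets of (e)'' therefore does not follow.

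Similarly, in Step~2 the support-property form cannot turn $\bv(F)=0$ into $F=0$, because it is defined on $(\Lambda/\Lambda^Z)_{\RR}$. What makes that step work in the cases of interest is that nonzero objects of $\cA^Z_s$ have nonzero class. In Section~\ref{sec:ku}, for example, $\cA^Z_s$ consists of $0$-dimensional sheaves and $\bv$ records their length.

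As written, your treatment of \ref{c2} and \ref{b2} is incomplete. You need the actual argument of \cite[Theorem 23.1]{BLMNPS21}, which is exactly what the paper invokes.
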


\begin{proof}
By Lemma \ref{lem-10.3}, $\cD_1$ is a strong $S$-linear semi-orthogonal component of finite cohomological amplitude; in particular, it satisfies Assumption \ref{assum-stab}. By Proposition \ref{prop-blms}, we know that $\underline{\sigma_1}$ is a collection of numerical stability conditions on $\cD_1$ over $S$. Then the verification of conditions in  Definition \ref{def-flat-collection} and Definition \ref{def-stab-family} follows from the same proof as \cite[Theorem 23.1]{BLMNPS21} after replacing \cite[Corollary 7.6]{BLMNPS21} and \cite[Proposition 5.1]{bayer2017stability} by Proposition \ref{prop-induce-heart} and Proposition \ref{prop-blms}, respectively.
\end{proof}

\subsection{Kuznetsov components of singular Fano threefolds}



By Kodaira's vanishing theorem, for a Fano threefold $X$, we always have $\rH^{i}(X,\oh_X)=0$ for all $i\neq 0$. In particular, any line bundle on $X$ is an exceptional object in $\Db(X)$. As a generalization of \cite[Theorem 6.7]{bayer2017stability}, we have:

\begin{theorem}\label{thm:stab-ox}
Let $f\colon X\to S$ be an admissible morphism to a regular Nagata scheme $S$ of finite Krull dimension which is quasi-projective over a Noetherian affine scheme of characteristic zero. Assume that each geometric fiber is a Fano threefold. Then we have an $S$-linear strong semi-orthogonal decomposition of finite cohomological amplitude
\[\Db(X)=\langle \cR_{X/S},\alpha_{\oh_X}(\Db(S)) \rangle\]
such that there exists a stability condition on $\cR_{X/S}$ over $S$.
\end{theorem}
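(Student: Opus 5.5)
The plan is to mimic \cite[Theorem 6.7]{bayer2017stability} and \cite[Corollary 26.2]{BLMNPS21}, feeding the relative machinery developed above into Theorem \ref{thm-induce-stab}. First, since each geometric fiber is a Fano threefold, Kodaira vanishing gives $\rH^i(X_s,\oh_{X_s})=0$ for $i\neq 0$, so $\oh_X$ is a relative exceptional object. Fano threefolds have rational Gorenstein singularities, hence $f$ is Gorenstein (flat with Gorenstein fibers), and Lemma \ref{lem-10.3} produces the $S$-linear strong semi-orthogonal decomposition $\Db(X)=\langle \cR_{X/S},\alpha_{\oh_X}(\Db(S))\rangle$ of finite cohomological amplitude. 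After stratifying/shrinking we may take $S$ connected, and we fix $\cL$ to be the relative anticanonical bundle $\omega_f^{-1}$, which is $f$-ample. We take $\gamma=1$. Theorem \ref{thm-bg-normal} gives the standard BG function on each fiber (characteristic zero, rational singularities), so $\Phi^{\gamma}_{X/S,\cL}(x)\le x^2/2$.

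Next I choose a rotated tilt-stability. For $(b,w)\in\QQ^2\cap\cU$ and $\mu\in\QQ$, Proposition \ref{prop-rotate-tilt} gives a weak stability condition $\underline{\sigma}^{b,w,\mu}$ on $\Db(X)$ over $S$ (since $f$ is Cohen--Macaulay and admissible). Following \cite{bayer2017stability}, I take $b=-\tfrac12$ (or $b$ slightly negative), $w$ slightly above $b^2/2$, and $\mu$ chosen between $\nu_{b,w}(\oh_{X_s})$ and $\nu_{b,w}(\omega_{X_s}[1])$. Since the normalization via $\cL=-K$ makes these slopes independent of $s$ (Lemma \ref{lem-constant-chM}), one choice works uniformly. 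On each fiber, $\oh_{X_s}$ and $\omega_{X_s}[1]$ are $\nu_{b,w}$-stable by Lemma \ref{lem-delta-JH} together with the large volume Lemmas \ref{lem-large-volume-converse} and \ref{lem-delta-0-rk1}, because $\bDelta=0$. Choosing $\mu$ appropriately then yields $\oh_{X_s}\in\cA^{b,w,\mu}(X_s)$ and $\omega_{X_s}[3]\in\cA^{b,w,\mu}(X_s)[1]$, i.e. conditions (a) and (b) of Theorem \ref{thm-induce-stab} (recall $\omega^\bullet_{X_s}=\omega_{X_s}[3]$). Condition (c) holds since $Z(\oh_X)\neq0$.

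The main obstacle is conditions (d) and (e). For (d), $\Lambda^{Z}$ is spanned by classes of objects with $Z^{b,w,\mu}=0$, which by Theorem \ref{thm:tilt-HN-structure} are sheaves supported in codimension $\geq3$, i.e. multiples of the point class $\bp$. Since $\chi(\oh_X,\bp)=1\neq0$, we get $\bp\notin\Lambda'$, so $\Lambda^Z\cap\Lambda'=0$. For (e), I will use the stronger boundedness in Theorem \ref{thm:tilt-HN-structure}(a): $\cM^{\st}(v+x\bp)=\varnothing$ for $x\gg0$. The constraint $\chi(\oh_X,F)\geq0$ bounds $x$ from below, because $\chi(\oh_X,F)$ is $\chi(\oh_X,v)$ plus $x$ up to sign. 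I expect the care to lie in passing from stable to semistable objects, where I will use JH factors with finitely many classes (Lemma \ref{lem:supp-property-finite-length}), as in \cite[Corollary 26.2]{BLMNPS21}. The rotated heart requires the analogous boundedness, which should follow as in Proposition \ref{prop-rotate-tilt}. Theorem \ref{thm-induce-stab} then gives a stability condition on $\cR_{X/S}$ over $S$.
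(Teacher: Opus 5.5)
Your outline follows the paper's proof: Lemma \ref{lem-10.3} gives the decomposition, and Theorem \ref{thm-induce-stab} is applied to the rotated tilt-stability built from $\cL=\omega_{X/S}^{\vee}$ and $\gamma=1$. The paper takes $w=\frac{a^2+b^2}{2}$, $\mu=b$ and $0<a<\min\{-b,1+b\}$, which is an instance of your choice, and checks (a)--(c) as you do. (You may split $S$ into its finitely many connected components, but you may not stratify or shrink it.) The trouble is in (d) and (e). The paper dismisses these as ``clear'' and as following from Theorem \ref{thm:tilt-HN-structure}(a), so it does not fill the gaps either.

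For (d), you use that $\chi(\oh_X,-)$ is a linear function on $\Lambda$, so that $\Lambda'\subset\ker\chi(\oh_X,-)$. With $H=-K_{X_s}$, Riemann--Roch gives $\chi(\oh_{X_s},E)=\bv_3(E)+\frac12\bv_2(E)+\td_2(X_s).\bch_1(E)+\chi(\oh_{X_s})\bv_0(E)/H^3$. The term $\td_2(X_s).\bch_1(E)$ depends only on $\bv_1(E)=H^2.\bch_1(E)$ when $\td_2(X_s)$ is numerically proportional to $H^2$ on divisors, for instance in Picard rank one (cf.\ Lemma \ref{lem-hrr-Fano}); in general it does not.
\begin{itemize}
\item Take $X=\PP^1\times\PP^2$ with $H=2h_1+3h_2$. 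The divisor $D=4h_1-3h_2$ has $H^2.D=0$ but $\td_2(X).D=-\frac12$.
\item A class $u$ with $\bch(u)=(0,D,0,0)$ then has $\bv(u)=0$ and $\chi(\oh_X,u)=-\frac12$.
\item So $[\oh_x]+2u$ is a rational class with $\chi(\oh_X,-)=0$, i.e.\ a class of $\cR_{X/S}$ for $S=\Spec\CC$, and it maps to $\bp$.
\item Since $\Lambda'$ is saturated, $0\neq\bp\in\Lambda'\cap\Lambda^{Z}$, so (d) fails.
\end{itemize}
As written, your argument only covers families whose fibers have $\chi(\oh_{X_s},-)$ factoring through $\bv$. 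Otherwise the lattice must be changed, and the support property and boundedness re-proved for the new lattice.

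For (e), the ``analogous boundedness for the rotated heart'' that you expect is false, and this happens on every fiber.
\begin{itemize}
\item For a zero-dimensional $Z\subset X_s$ of length $n$, set $E_Z\coloneqq\DD_{X_s}(\cI_Z)[-1]$. It is an extension $0\to\omega_{X_s}[2]\to E_Z\to\cE xt^3(\oh_Z,\omega_{X_s})\to0$ in $\cA^{a,b}(X_s)$.
\item For every zero-dimensional sheaf $T$ we have $\Hom(T,E_Z)=\Ext^{-1}(\cI_Z,\DD_{X_s}(T))=0$.
\item By Lemma \ref{lem:classify-rotate-stable-abstract}, applied to $\sigma^{b,w}_s$ tilted at $\mu$, $E_Z$ is therefore stable for the rotated condition.
\item Its class is $\bv(\omega_{X_s})+n\bp$, and $\chi(\oh_{X_s},E_Z)=n-1\geq0$.
\end{itemize}
Hence the set in (e) for the class of $\omega_{X/S}$ contains objects of infinitely many classes and is unbounded. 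The reason is that for objects built from $A[1]$ with quotients in $\cA^{Z}$, both $\chi(\oh_X,-)\geq0$ and Theorem \ref{thm:tilt-HN-structure}(a) applied to $A$ bound $x$ only from below. Your squeeze (lower bound from $\chi$, upper bound from strong boundedness) works only for classes realized in $\cT^{a,b}$. So hypothesis (e), in the form stated in Theorem \ref{thm-induce-stab}, fails for this $\underline{\sigma}$. You need a genuinely different input here: a corrected form of (e), or a direct bound on the objects actually used in the proof of Theorem \ref{thm-induce-stab}.
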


\begin{proof}
Note that $f$ is automatically Cohen--Macaulay by our assumption. Let $\underline{\sigma}=\underline{\sigma}^{b,\frac{a^2+b^2}{2},b}$ constructed using $\cL\coloneqq\omega_{X/S}^{\vee}$ and $\gamma=1$ as in Proposition \ref{prop-rotate-tilt}. Note that for any $s\in S$, we have $(\omega_{X/S}^{\bullet})_s=\omega_{X_s}^{\bullet}$ by \cite[\href{https://stacks.math.columbia.edu/tag/0E2Y}{Tag 0E2Y}]{stacks-project}. We will verify assumptions in Theorem \ref{thm-induce-stab}.

By Lemma \ref{lem-delta-JH} and Lemma \ref{lem-large-volume-converse}, we know that $\oh_{X_s}, \omega_{X_s}[1]\in \cA^{b}(X_s)$ is $\mu_{a,b}$-stable for any $a>0$ and $-1<b<0$. Next, as
\[\mu_{a,b}(\oh_{X_s})=\frac{\frac{b^2-a^2}{2}}{-b}>0>\frac{\frac{1}{2}+b+\frac{b^2-a^2}{2}}{-1-b}=\mu_{a,b}(\omega_{X_s}[1])\]
when $-\frac{1}{2}\leq b<0$ and $0<a<-b$ or $-1<b<-\frac{1}{2}$ and $0<a< 1+b$, we see that $$\oh_{X_s},\omega_{X_s}[2]\in \cA^{a,b}(X_s)=\cA^{b,\frac{a^2+b^2}{2},b}(X_s)$$ for $a,b$ in this range, which verifies (a) and (b) in Theorem \ref{thm-induce-stab}. Moreover, assumptions (c) and (d) are clear. Finally, (e) follows from the strong boundedness result in Theorem \ref{thm:tilt-HN-structure}(a).
\end{proof}

For an index $2$ Fano threefold $X$, since $K_X=-2H$, we always have $$\rH^{i}(X,\oh_X(-H))=0$$ for all $i$ by Kodaira's vanishing theorem. Therefore, $\oh_X,\oh_X(H)$ is an exceptional collection of $X$. In this situation, we have the following generalization of \cite[Theorem 6.8]{bayer2017stability} and \cite[Corollary 26.2]{BLMNPS21}.

\begin{theorem}\label{thm:stab-index-2}
Let $f\colon X\to S$ be an admissible morphism to a regular Nagata scheme $S$ of finite Krull dimension which is quasi-projective over a Noetherian affine scheme of characteristic zero. Assume that there exists a relative ample line bundle $\cO_{\cX}(1)$ on $X$ such that for each geometric point $t\to S$, $X_t$ is an index $2$ Fano threefold and $\cO_{X_t}(-2)=\omega_{X_t}$. Then we have an $S$-linear strong semi-orthogonal decomposition of finite cohomological amplitude
\[\Db(X)=\langle \cB_{X/S}, \alpha_{\oh_X}(\Db(S)), \alpha_{\cO_{\cX}(1)}(\Db(S)) \rangle\]
such that there exists a stability condition on $\cB_{X/S}$ over $S$.
\end{theorem}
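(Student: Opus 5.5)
The plan is to follow the proof of Theorem \ref{thm:stab-ox} and apply Theorem \ref{thm-induce-stab} to the relative exceptional collection $\oh_X,\cO_{\cX}(1)$, together with a weak stability condition on $\Db(X)$ over $S$ obtained by rotating tilt-stability. We check fiberwise that $\cO_{X_s},\cO_{X_s}(1)$ form an exceptional collection. Each $\cO_{X_s}$ and $\cO_{X_s}(1)$ is exceptional because $\rH^i(\cO_{X_s})=0$ for $i>0$. Moreover $\RHom(\cO_{X_s}(1),\cO_{X_s})=\rH^*(\cO_{X_s}(-1))=0$ by Kodaira vanishing, since $-1 = K+1$ in terms of $\cO(1)$. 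By \cite[Lemma 3.22]{BLMNPS21} the pair is therefore a relative exceptional collection. Since $f$ is admissible with Fano fibers, it is Gorenstein, so Lemma \ref{lem-10.3} produces the $S$-linear strong semi-orthogonal decomposition of finite cohomological amplitude with first component $\cB_{X/S}$.

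For the stability condition, take $\cL\coloneqq\cO_{\cX}(1)$, $\gamma=1$, and $\underline{\sigma}=\underline{\sigma}^{b,\frac{a^2+b^2}{2},b}$ from Proposition \ref{prop-rotate-tilt}, with $(a,b)$ rational. Fano fibers have rational singularities, so Theorem \ref{thm-bg-normal} gives the standard BG function with $\Phi\le x^2/2$, and Proposition \ref{prop-rotate-tilt} applies. By Lemma \ref{lem-delta-JH} and Lemma \ref{lem-large-volume-converse}, the four objects $\cO_{X_s},\cO_{X_s}(1),\cO_{X_s}(-2)[1],\cO_{X_s}(-1)[1]$ are $\mu_{a,b}$-stable in $\cA^b(X_s)$ for all $a>0$ once $-1\le b<0$. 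Note that $\bDelta=0$ for line bundles when the Picard-type class is a multiple of $H$, and the $\mu$-stable $S_2$ rank one sheaves shifted by one are handled by Lemma \ref{lem-large-volume-converse}.

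The key computation is to find $(a,b)$ for which
\[\mu_{a,b}(\cO(1)),\ \mu_{a,b}(\cO)>0\ge \mu_{a,b}(\cO(-2)[1]),\ \mu_{a,b}(\cO(-1)[1]).\]
This puts $\cO,\cO(1)$ in $\cA^{a,b}(X_s)$ and $\omega_{X_s}[2]=\cO(-2)[2]$, $\cO(-1)[2]$ in $\cA^{a,b}(X_s)$, giving hypotheses (a) and (b) of Theorem \ref{thm-induce-stab}. Using the formula $\mu_{a,b}(\cO(t))=\frac{(t-b)^2-a^2}{2(t-b)}$ up to positive scaling by $H^3$, the conditions become $|t-b|>a$ for $t=0,1$ (which holds as $t-b>0$), and for $t=-1,-2$, with $t-b<0$, the requirement $(t-b)^2\ge a^2$. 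Taking $b=-\tfrac12$ and $0<a<\tfrac12$ rational works. This is exactly the index-2 range of \cite[Theorem 6.8]{bayer2017stability}.

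Hypotheses (c) and (d) hold because $Z\neq0$ on these line bundles and $\Lambda^{Z}$ is generated by classes supported in codimension $\ge3$, meaning multiples of $\bp$ for tilt after rotation. The intersection with $\Lambda'$ is zero, since $\chi(\cO,\bp)\ne0$ means $\bp\notin\Lambda'$. Hypothesis (e) follows from the strong boundedness in Theorem \ref{thm:tilt-HN-structure}(a): $\cM^{\st}(v+x\bp)=\varnothing$ for $x\gg0$, while $\chi(\cO,F)\ge0$ bounds $x$ from below. I expect (e) to be the main obstacle. It needs an argument that semistable objects of class $v+x\bp$, and not only stable ones, are bounded, via JH factors and the finiteness from support property, exactly as in \cite[Theorem 23.1, Corollary 26.2]{BLMNPS21}. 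Theorem \ref{thm-induce-stab} then gives the stability condition on $\cB_{X/S}$ over $S$.
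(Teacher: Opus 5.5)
Your proposal follows the paper's proof step for step. It uses the rotated tilt-stability $\underline{\sigma}^{b,\frac{a^2+b^2}{2},b}$ with $\cL=\cO_{\cX}(1)$ and $\gamma=1$, and gets stability of $\cO,\cO(1),\cO(-2)[1],\cO(-1)[1]$ in $\cA^b(X_s)$ from Lemmas~\ref{lem-delta-JH} and~\ref{lem-large-volume-converse}. The same slope computation puts $\cO,\cO(1),\cO(-2)[2],\cO(-1)[2]$ into $\cA^{a,b}(X_s)$, and your choice $b=-\tfrac12$, $0<a<\tfrac12$ lies in the paper's region $0<a<\min\{-b,1+b\}$. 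As in the paper, (e) comes from the strong boundedness of Theorem~\ref{thm:tilt-HN-structure}(a).

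One point should be flagged; the paper also passes over it by calling (c) and (d) ``clear''. You deduce $\bp\notin\Lambda'$ from $\chi(\cO,\bp)\neq 0$, but this needs $\chi(\cO_{X_s},-)$ to factor through $\bv$. Lemma~\ref{lem-hrr-Fano} gives this for prime fibers, but it fails in general. On the blow-up of $\PP^3$ at a point, with $H=2L-E$, take the numerical class with $\bch_0=0$, $\bch_1=x(L-4E)$, $\bch_2=0$, $\bch_3=-\tfrac{x}{2}$. It is right-orthogonal to both $\cO$ and $\cO(1)$, yet $\bv=-\tfrac{x}{2}\bp$. So (d), and hence this argument, needs a Picard-rank-one type hypothesis on the fibers or a finer lattice.
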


\begin{proof}
Let $\underline{\sigma}=\underline{\sigma}^{{b,\frac{a^2+b^2}{2},b}}$ constructed using $\cL\coloneqq \cO_{\cX}(1)$ and $\gamma=1$ as in Proposition \ref{prop-rotate-tilt}. We will verify assumptions in Theorem \ref{thm-induce-stab}.

By Lemma \ref{lem-delta-JH} and Lemma \ref{lem-large-volume-converse}, we know that $G, G(-2)[1]\in \cA^{b}(X_s)$ is $\mu_{a,b}$-stable for any $a>0$ and $-1<b<0$, where $G\in \{\oh_{X_s}, \cL_s\}$. Next, by a computation of slopes, we see that $G, G(-2)[2]\in \cA^{a,b}(X_s)$ when $-\frac{1}{2}\leq b<0$ and $0<a<-b$ or $-1<b<-\frac{1}{2}$ and $0<a<1+b$, where $G\in \{\oh_{X_s}, \cL_s\}$. This verifies (a) and (b) in Theorem \ref{thm-induce-stab}. Moreover, assumptions (c) and (d) are clear. Finally, (e) follows from the strong boundedness result in Theorem \ref{thm:tilt-HN-structure}.
\end{proof}

Finally, we discuss the index $1$ case. For an index $1$ prime Fano threefold $X$, when $g(X)$ is even, we say a rank $2$ vector bundle $\cU_X$ is a \emph{Mukai bundle} if it is $\mu_{-K_X}$-stable and satisfies

\begin{itemize}
    \item $\det(\cU_X)\cong \oh_X(K_X)$,

    \item $\rH^i(X,\cU_X)=0$ for all $i$, and 
    
    \item $\RHom_X(\cU_X, \cU_X)=\kk$.
\end{itemize}
In particular, $\oh_X, \cU^{\vee}_X$ is an exceptional collection in $\Db(X)$. When $X$ is factorial, terminal, and $g(X)\geq 6$, the existence and the uniqueness of a Mukai bundle is known (cf.~\cite{bayer:mukai-bundle,bayer:mukai-model-fano-var}).

In the case of index $1$ Fano threefolds, we have the following.

\begin{theorem}\label{thm-ku-index-1}
Let $f\colon X\to S$ be an admissible morphism to a regular Nagata scheme $S$ of finite Krull dimension which is quasi-projective over a Noetherian affine scheme of characteristic zero. Assume that for each geometric point $t\to S$, $X_t$ is an index $1$ prime Fano threefold of even genus $g\geq 6$ and there exists a vector bundle $\cU_X$ on $X$ such that $\cU_{X_t}$ is a Mukai bundle. Assume furthermore that $f$ has a smooth fiber. Then we have an $S$-linear strong semi-orthogonal decomposition of finite cohomological amplitude
\[\Db(X)=\langle \cA_{X/S},\alpha_{\oh_X}(\Db(S)),\alpha_{\cU^{\vee}_X}(\Db(S)) \rangle\]
such that there exists a stability condition on $\cA_{X/S}$ over $S$.
\end{theorem}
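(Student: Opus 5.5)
The plan is to follow the proof of Theorem \ref{thm:stab-index-2} and apply Theorem \ref{thm-induce-stab} to the rotated tilt family $\underline{\sigma}=\underline{\sigma}^{b,\frac{a^2+b^2}{2},b}$ of Proposition \ref{prop-rotate-tilt}, built from $\cL\coloneqq\omega_{X/S}^{\vee}$ and $\gamma=1$. First, by Lemma \ref{lem-10.3}, since $f$ is Gorenstein and $(\oh_{X_t},\cU^{\vee}_{X_t})$ is exceptional on every fiber, the pair $\oh_X,\cU_X^{\vee}$ is a relative exceptional collection. This gives the strong $S$-linear semi-orthogonal decomposition of finite cohomological amplitude defining $\cA_{X/S}$. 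Admissibility of $f$ together with Cohen--Macaulayness of the fibers puts us in the setting of Proposition \ref{prop-rotate-tilt}. The fibers have rational singularities, so Theorem \ref{thm-bg-normal} gives the standard BG function and hence the relevant region is $\cU$.

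The main work is conditions (a) and (b) of Theorem \ref{thm-induce-stab}, uniformly in $s$. On each fiber we need $\oh_{X_s},\cU^{\vee}_{X_s}\in\cA^{a,b}(X_s)$ and $\oh_{X_s}(-H)[2],\cU^{\vee}_{X_s}(-H)[2]\in\cA^{a,b}(X_s)$ for a fixed rational $(a,b)$ (note $\omega_{X_s}^\bullet=\oh(-H)[3]$).

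For the line bundles, the argument of Theorem \ref{thm:stab-ox} applies verbatim. For $\cU^{\vee}$, we have $\mu(\cU^{\vee})=1/2$. As a $\mu$-stable locally free sheaf, $\cU^\vee$ is $\nu_{b,w}$-stable for $b<1/2$ and $w\gg0$ by Lemma \ref{lem-large-volume-converse}, and $\cU^\vee(-H)[1]$ is stable for $b\ge -1/2$. The point is to show there are no walls for these objects down to $w=b^2/2$ in a suitable range $b\in(-1/2,0)$, say near $b=-\tfrac12+\varepsilon$ with small $a$.

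I plan to compute $\bv_{\le2}(\cU^\vee)=(2H^3,H^3,\bv_2)$. On the smooth fiber we have $\ch_2(\cU^\vee).H=(H^3-2\cdot\text{const})/\ldots$, and the relevant $\bDelta$ is small, namely $\bDelta(\cU^\vee)$ equals $(H^3)^2-4H^3\bv_2$ with $\bv_2=H^3/4-(g-?)$ determined. The numerical value is locally constant in the family by Lemma \ref{lem-constant-chM}. I would use the smooth fiber precisely to read off these Chern character values, via Riemann--Roch and the known $c_2(\cU)$, and propagate them to all fibers, including singular ones where direct computation is unavailable.

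Then I argue as in \cite[Lemma 6.? / Theorem 6.9]{bayer2017stability}. A destabilizing wall for $\cU^\vee$ in $\cU$ would produce by Lemma \ref{lem-delta-JH} factors with strictly smaller $\bDelta$, and by integrality of $\Lambda_{\le2}$ (rank $<2$ forces rank one, hence $\bDelta$ computed via Lemma \ref{lem:delta-rk1}) such factors do not fit. This shows $\cU^\vee$ and $\cU^\vee(-H)[1]$ are $\nu_{b,w}$-stable on all of $\cU$. This wall analysis on singular fibers is the step I expect to be the main obstacle, since Lemma \ref{lem:delta-rk1} needs isolated singularities and $\QQ$-factoriality. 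If that fails, I would instead bound possible destabilizing subobjects via the finite set of classes in Lemma \ref{lem:finite-rotate} and argue by the minimal-$\bDelta$ Lemma \ref{lem-delta-0}.

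Given fiberwise stability, the slope inequalities $\mu_{a,b}(\cU^\vee)>0>\mu_{a,b}(\cU^\vee(-H)[1])$ for $b$ slightly above $-1/2$ and $a$ small place these objects in the rotated heart. Conditions (c) and (d) follow because $\Lambda^Z$ is spanned by $\bp$-type classes in degree $3$, which meet $\Lambda'$ trivially since $\chi(\oh,\oh_x)\ne0$. Condition (e) follows from the strong boundedness in Theorem \ref{thm:tilt-HN-structure}(a).
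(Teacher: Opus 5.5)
Your overall architecture is the paper's: the rotated tilt family of Proposition~\ref{prop-rotate-tilt} fed into Theorem~\ref{thm-induce-stab}, with the Chern data of $\cU_{X_s}^\vee$ read off on the smooth fibre and propagated by Lemma~\ref{lem-constant-chM}. However, the two steps that carry the content fail.

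\textbf{The parameter range.} Put $d=H^3=2g-2$. Then $\bv_{\le 2}(\cU_{X_s}^\vee)=(2d,d,\frac{g}{2}-2)$, and $\frac{g}{2}-2=\frac{d-6}{4}$. Since $\cU_{X_s}^\vee\otimes\omega^\bullet_{X_s}=\cU_{X_s}[3]$, condition (b) of Theorem~\ref{thm-induce-stab} requires $\cU_{X_s}[1]\in\cF^{a,b}$, and hence $\mu_{a,b}(\cU_{X_s}[1])\le 0$. But a direct computation gives
\[
\mu_{a,b}(\cU_{X_s}[1])=\frac{a^2+\frac{3}{2d}-\left(b+\frac{1}{2}\right)^2}{1+2b}.
\]
For $b=-\frac12+\varepsilon$ and small $a$ this is positive; it even tends to $+\infty$ as $\varepsilon\to 0$. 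So $\cU_{X_s}[1]\in\cT^{a,b}$, and (b) fails at your chosen parameters. You need $(b+\frac12)^2\ge a^2+\frac{3}{2d}$, together with $a<|b|$ for $\cO_{X_s}$. This is why the paper, following \cite[Theorem 6.9]{bayer2017stability}, works at $b=-\epsilon$ with $0<a<\epsilon\ll 1$.

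\textbf{Fibrewise stability.} Your sketch does not establish stability of $\cU_{X_s}^\vee$ and $\cU_{X_s}[1]$.
\begin{itemize}
\item Nothing forces a destabilizing subobject in $\cA^b(X_s)$ to have rank $<2$, since the quotient may have $\cH^{-1}\neq 0$.
\item Lemma~\ref{lem:delta-rk1} needs isolated singularities and $\QQ$-factoriality, and only concerns rank-one $S_2$ sheaves. Fibres of an admissible lci family need not be $\QQ$-factorial (e.g.\ the nodal fibre in Corollary~\ref{cor-ks-conj}).
\item The fallback via Lemmas~\ref{lem:finite-rotate} and~\ref{lem-delta-0} is not an argument.
\end{itemize}
The paper needs no rank-one computation. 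At $b=0$ it writes $\bv_{\le2}(A)=(dr,dx,dy)$ for a destabilizing subobject $A$ with quotient $B$. It obtains $r\in\ZZ_{\ge1}$, $0<x<1$, and $(r-2x)\bigl(y-x\frac{d-6}{4d}\bigr)\ge 0$ from the wall equation. Combined with $\bDelta(A),\bDelta(B)\ge 0$ and $\bDelta(A)+\bDelta(B)\le\bDelta(\cU_{X_s}^\vee)$ (Lemma~\ref{lem-delta-JH}), this forces $r=2x=1$ and $\bv_{\le 2}(A)=\bv_{\le 2}(B)$, which is not a wall.

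Stability along $\{b=0,\,w>0\}$ still does not reach the point you need for $\cU_{X_s}[1]$. Its walls are lines through $\Pi(\cU_{X_s})=(-\frac12,\frac18-\frac{3}{4d})$. Those lying below the line through $\Pi(\cU_{X_s})$ and $(0,0)$ never meet $\{b=0,\,w>0\}$. The point $(-\epsilon,\frac{a^2+\epsilon^2}{2})$ with $a<\epsilon$ lies exactly in that region.

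Numerics alone cannot rule out walls in this corner. For $g=6$, the splitting $\bv_{\le2}(\cU_{X_s}[1])=\bv_{\le2}(\cO_{X_s})+(-30,10,-1)$ satisfies all the $\bDelta$-inequalities and gives a numerical wall along the line through $(0,0)$ and $\Pi(\cU_{X_s})$. The paper handles this corner by running the proof of \cite[Lemma 6.11]{bayer2017stability} with Lemma~\ref{lem-delta-0-rk1}(c) in place of \cite[Proposition 2.14]{bayer2017stability}. That lemma identifies tilt-stable objects with $\bDelta=0$ and $\bv_{\le1}=\pm\bv_{\le1}(\cO_X)$ as $\cO_X$ or $\cI_Z$ up to shift. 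Your claim of stability on the whole region $w>b^2/2$ skips precisely this step.
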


\begin{proof}
The same argument in \cite[Theorem 6.9]{bayer2017stability} works once we know that $\cU_{X_s}^{\vee},\cU_{X_s}[1]\in \cA^{-\epsilon}(X_s)$ are both $\nu_{-\epsilon,w}$-stable for any $w>\frac{\epsilon^2}{2}$ and $0<\epsilon \ll 1$ as in \cite[Lemma 6.11]{bayer2017stability}. To prove this, using Lemma \ref{lem-delta-0-rk1}(c) instead of \cite[Proposition 2.14]{bayer2017stability} in the proof of \cite[Lemma 6.11]{bayer2017stability}, it suffices to prove that $\cU_{X_s}^{\vee}$ and $\cU_{X_s}[1]\in \cA^{0}(X_s)$ are $\nu_{0,w}$-stable for any $w>0$.

In the following, we only prove this for $\cU_{X_s}^{\vee}$, as the argument for $\cU_{X_s}[1]$ is similar. Since $f$ has a smooth fiber, from Lemma \ref{lem-constant-chM}, \ref{lem:constant-generalization}, and the computation in the smooth case, we get $$\bv_{\leq 2}(\cU^{\vee}_{X_{s}})=\left(2d(X), d(X), \frac{g(X)}{2} -2\right).$$ By Theorem \ref{thm:wall-chamber-tilt}, if $\cU_{X_s}^{\vee}$ is strictly $\nu_{0,w_0}$-semistable for some $w_0>0$, then we have an exact sequence
\[0\to A\to \cU^{\vee}_{X_{s}}\to B\to 0\]
of $\nu_{0,w_0}$-semistable objects in $\cA^0(X_s)$ such that $\nu_{0,w_0}(A)=\nu_{0,w_0}(\cU_{X_s}^{\vee})=\nu_{0,w_0}(B)$. Then $A$ is a torsion-free sheaf. If we set $$\bv_{\leq 2}(A)=(d(X)r,d(X)x,d(X)y),$$ then $r\in \ZZ_{\geq 1}$. By $\bv_1(A), \bv_1(B)>0$, we also have $0<x<1$. From $\nu_{0,w_0}(A)=\nu_{0,w_0}(\cU_{X_s}^{\vee})$, we get
\[y-x\left(\frac{g}{2d}-\frac{2}{d}\right)=(r-2x)w_0.\]
Since $w_0>0$, we obtain
\[(r-2x)\left(y-x\left(\frac{g}{2d}-\frac{2}{d}\right)\right)\geq 0.\]
Together with inequalities
$\bDelta(A)+\bDelta(B)\leq \bDelta(\cU^{\vee}_{X_{s}})$ and $\bDelta(A),\bDelta(B)\geq 0$ from Lemma \ref{lem-delta-JH}, the only possibility is $r=2x=1$ and $\bv_{\leq 2}(A)=\bv_{\leq 2}(B)$, which cannot be a wall. This implies the $\nu_{0,w}$-stability of $\cU^{\vee}_{X_{s}}$ for any $w>0$ and the result follows.
\end{proof}


As a corollary, we have the following singular analog of \cite[Conjecture 1.8]{kuznetsov:fano-threefold-degneration}. Recall that for each $1\leq d\leq 5$, there exists a morphism $\cX\to B$ between varieties over $\kk$ constructed in \cite[Theorem 3.6]{kuznetsov:fano-threefold-degneration} such that $B$ is a smooth curve, the total space $\cX$ is smooth, $\cX_{o}$ is a $1$-nodal index $1$ prime Fano threefold of genus $2d+2$ for a closed point $o\in B$, and $\cX_b$ is a smooth index $1$ prime Fano threefold of genus $2d+2$ for each $b\in B\setminus o$. Moreover, there exists a vector bundle $\cU_{\cX}$ on $\cX$ such that $\cU_{\cX_b}$ is a Mukai bundle on $\cX_b$ for each $b\in B$ and we have a $B$-linear semi-orthogonal decomposition
\[\Db(\cX)=\langle i_{o*}P_{\cX_o}, \bar{\cA}_{\cX}, \alpha_{\oh_{\cX}}(\Db(B)), \alpha_{\cU^{\vee}_{\cX}}(\Db(B)) \rangle \]
such that $P_{\cX_o}$ is a $\PP^{\infty,2}$ object on the central fiber $\cX_{o}$ and $i_{o*}\colon \cX_o\hookrightarrow\cX$ is the inclusion. Moreover, $\bar{\cA}_{\cX}$ is smooth and proper over $B$.

In \cite[Conjecture 1.8]{kuznetsov:fano-threefold-degneration}, it is expected that there exists a stability condition on $\bar{\cA}_{\cX}$ over $B$, which then may lead to applications of \cite[Theorem 3.6]{kuznetsov:fano-threefold-degneration} in studying moduli spaces. When $B$ is the spectrum of a complete DVR, this is proved by \cite{LMPSZ:deformation}. Applying Theorem \ref{thm-ku-index-1} to this setting, we obtain an analog of \cite[Conjecture 1.8]{kuznetsov:fano-threefold-degneration} by replacing $\bar{\cA}_{\cX}$ with
\[\cA_{\cX/B}=\langle i_{o*}P_{\cX_o}, \bar{\cA}_{\cX}\rangle.\]

\begin{corollary}\label{cor-ks-conj}
In the setting above, if $d\geq 2$, then there exists a stability condition on $\cA_{\cX/B}$ over $B$.
\end{corollary}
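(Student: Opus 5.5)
The plan is to apply Theorem \ref{thm-ku-index-1} to the family $f\colon \cX\to B$ and then identify the resulting component $\cA_{\cX/B}$ with $\langle i_{o*}P_{\cX_o}, \bar{\cA}_{\cX}\rangle$.

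First I check the hypotheses of Theorem \ref{thm-ku-index-1}. The base $B$ is a smooth curve over a field of characteristic zero, so it is regular, Nagata, of finite Krull dimension and quasi-projective over $\Spec \kk$. Since $\cX$ is smooth and $B$ is a smooth curve, the flat projective morphism $f$ is lci (flat with lci fibers). Hence $f$ is admissible in the sense of Definition \ref{def:adm}(1) and also Gorenstein, so it is Cohen--Macaulay. Every geometric fiber is an index $1$ prime Fano threefold of genus $g=2d+2$. The assumption $d\geq 2$ gives $g\geq 6$, which is exactly the range required in Theorem \ref{thm-ku-index-1}, and $g$ is even. The 1-nodal fiber is still a Fano threefold in the paper's sense, being normal with rational Gorenstein singularities and with $-K$ ample. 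The vector bundle $\cU_{\cX}$ restricts to a Mukai bundle on every fiber, and the fibers over $B\setminus\{o\}$ are smooth. Theorem \ref{thm-ku-index-1} therefore yields a $B$-linear strong semi-orthogonal decomposition
\[\Db(\cX)=\langle \cA_{\cX/B},\alpha_{\oh_{\cX}}(\Db(B)),\alpha_{\cU^{\vee}_{\cX}}(\Db(B))\rangle\]
together with a stability condition on $\cA_{\cX/B}$ over $B$.

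Second, I identify the components. By Lemma \ref{lem-10.3}, $\cA_{\cX/B}$ is the right orthogonal of $\langle\alpha_{\oh_{\cX}}(\Db(B)),\alpha_{\cU^{\vee}_{\cX}}(\Db(B))\rangle$. Comparing with Kuznetsov--Shinder's decomposition, this orthogonal equals $\langle i_{o*}P_{\cX_o},\bar{\cA}_{\cX}\rangle$, since both decompositions share the last two components and a semi-orthogonal component is determined by its complement. This is the subcategory named $\cA_{\cX/B}$ in the statement.

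The main obstacle is inside the proof of Theorem \ref{thm-ku-index-1} on the singular central fiber. There one must show that $\cU^{\vee}_{\cX_o}$ and $\cU_{\cX_o}[1]$ are tilt-stable near $b=0$, which relies on Lemma \ref{lem-delta-0-rk1}(c). That lemma needs $\cX_o$ to be lci or $\QQ$-factorial Gorenstein, and a nodal threefold is lci, so the requirement is met. It also needs the Chern character computation of $\cU^\vee$, which transfers from the smooth fibers by Lemma \ref{lem-constant-chM}. Once these checks hold, the corollary follows directly.
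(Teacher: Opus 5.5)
Your proposal is correct and matches the paper's proof, which simply notes that every fiber is smooth or 1-nodal, hence has terminal Gorenstein lci singularities. So $\cX\to B$ is lci (hence admissible), and Theorem \ref{thm-ku-index-1} applies because $g=2d+2\geq 6$; your identification of the complement of $\langle\alpha_{\oh_{\cX}}(\Db(B)),\alpha_{\cU^{\vee}_{\cX}}(\Db(B))\rangle$ with $\langle i_{o*}P_{\cX_o},\bar{\cA}_{\cX}\rangle$ is a correct detail the paper leaves implicit.
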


\begin{proof}
Since each geometric fiber of $\cX\to B$ is either smooth or has at most one node, it has terminal Gorenstein lci singularities. In particular, $\cX\to B$ is lci and Theorem \ref{thm-ku-index-1} applies.
\end{proof}


\begin{appendix}

\section{An alternative approach to tilt-stability}\label{appendix:approach}
In this appendix, we discuss another construction of tilt-stability on an arbitrary projective scheme.

Let \( X \) be a projective scheme of dimension \( n \geq 2 \) over a field \( \kk \), and let \( H \) be an ample divisor. One can define a version of tilt-stability on \( \Db(X) \) as follows. For any object \( E \in \Db(X) \), let \( \alpha_i(E) \) be the rational number such that \( i! \, \alpha_i(E) \) is the coefficient of the degree \( i \) term in the Hilbert polynomial of \( E \) with respect to \( H \). We set $$\rk(E)\coloneqq \frac{\alpha_n(E)}{\alpha_n(\cO_X)},$$ $$\deg_H(E)\coloneqq \alpha_{n-1}(E)-\rk(E)\alpha_{n-1}(\cO_X),$$ and $$\mathsf{c}^H_2(E)\coloneqq \alpha_{n-2}(E)-\rk(E)\alpha_{n-2}(\cO_X)-\frac{\alpha_{n-1}(\cO_X)}{\alpha_n(\cO_X)}\deg_H(E).$$
When $X$ is lci and integral, by Lemma \ref{lem-chM-general}(a), we have 
\[\rk(E)H^n=\bch_0(E).H^{n},\]
\[\deg_H(E)=\bch_1(E).H^{n-1},\]
and
\[\mathsf{c}^H_2(E)=\bch_2(E).H^{n-2} +\left(\td_1(X).H^{n-2}-\frac{H^{n-1}.\td_1(X)}{H^n}H^{n-1}\right).\bch_1(E).\]
Thus, if the numerical class of $K_X$ is proportional to $H$, we have
\[\bv_{H,\leq 2}(E)=(\rk(E)H^n, \deg_H(E), \mathsf{c}^H_2(E)).\]

We define a pair 
\[\tau_H\coloneqq (\Coh(X),\ -\deg_H(-) + \mathfrak{i} \rk(-)),\]
then it is easy to check that $\tau_H$ is a weak stability condition on $\Db(X)$ with respect to the lattice $\mathsf{H}$, which is the image of the homomorphism
\[\Knum(X)\to \QQ^{n+1},\quad \xi\mapsto (\alpha_0(\xi), \alpha_1(\xi),\dots,\alpha_n(\xi)).\]
Analogously to the Le Potier function defined in Section \ref{sec:bg}, we set
\[\Psi_{X, H}(x)\coloneqq \limsup_{\mu\to x}\left\{\frac{\mathsf{c}_2^H(E)}{\rk(E)}\colon E \text{ is a }\tau_H\text{-semistable torsion-free sheaf with }\mu_{\tau_H}(E)=\mu\right\}\in \RR\cup\{\pm \infty\}.\]

As in Theorem \ref{thm:exist-bg-function}, we have the following Bogomolov--Gieseker inequality.

\begin{theorem}\label{thm-bg-general}
Let $X$ be a projective scheme of dimension $n\geq 2$ over a field $\kk$ and $H$ be an ample divisor. Then there exists a constant $\mathsf{N}_{X, H}\geq 0$ such that 
\[(\deg_H(E))^2-2\rk(E)H^n \mathsf{c}^H_2(E)\geq -\mathsf{N}_{X, H} (\rk(E)H^n)^2\]
for any $\tau_{H}$-semistable torsion-free sheaf $E\in \Coh(X)$.
\end{theorem}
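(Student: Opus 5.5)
The plan is to follow Step 1 of the proof of Theorem \ref{thm:exist-bg-function}, replacing the Chern characters $\bch_i$ on $X$ by the Hilbert-polynomial coefficients $\alpha_i$. No lci hypothesis on $X$ is then needed, because on the target $\PP^n$ everything is classical.

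First, I reduce to the case where $\kk$ is algebraically closed and $H$ is very ample. Hilbert polynomials, and hence $\rk$, $\deg_H$ and $\mathsf{c}^H_2$, are invariant under field extension. Semistability is preserved by \cite[Theorem 1.3.7]{HL10}. Replacing $H$ by $mH$ rescales the $\alpha_i$ in a controlled triangular way, and I expect the quadratic expression to transform by a factor up to bounded error terms. If this rescaling becomes messy, I will instead keep $H$ and choose the finite projection below using sections of $mH$, with $\pi^*\cO(1)=mH$, and compare Hilbert polynomials directly.

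Next, I take a finite surjective general projection $\pi\colon X\to\PP^n$ with $\pi^*h=H$, as in Step 1. The key observation is that $\chi(E(tH))=\chi(\pi_*E(t))$, so $E$ and $\pi_*E$ have identical Hilbert polynomials. On $\PP^n$ the $\alpha_i$ are expressed through $\ch_i(\pi_*E).h^{n-i}$ by Riemann--Roch, with universal Todd corrections. Consequently $\rk(\pi_*E)=\rk(E)\alpha_n(\cO_X)$. The degree $\deg_h(\pi_*E)$ equals $\deg_H(E)$ plus a fixed multiple of $\rk(E)$, and $\ch_2(\pi_*E).h^{n-2}$ equals $\mathsf{c}^H_2(E)$ plus terms linear in $\deg_H(E)$ and $\rk(E)$, with coefficients depending only on $X$, $H$ and $\pi$. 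So the slope comparison \eqref{eq:diff-slope} holds with constants in place of $t_1^\pm$. Crucially, there is no $\tau_1.\bch_1$-type term that varies with $E$; the correction is an honest linear function of $(\rk,\deg_H)$. This removes the duality step of Theorem \ref{thm:exist-bg-function}, which needed lci in codimension two.

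Then I bound the HN spread of $\pi_*E$ for $\tau_H$-semistable torsion-free $E$, repeating the adjunction arguments. For the first HN factor $F$, the map $\pi^*F\to E$ and a surjection $\cO(-N)^{\oplus m}\twoheadrightarrow\pi_*\cO_X$ give an upper bound. For the last factor $Q$, the map $E\to\cH^0(\pi^!Q)$ and $\pi_*\pi^!Q=R\cH om(\pi_*\cO_X,Q)$ give a lower bound. Only the $\tau_H$-semistability of $E$ and the slope comparison enter, so I obtain $\mu^+_h(\pi_*E)-\mu^-_h(\pi_*E)\le 2(c+N)$. Applying Langer's Bogomolov--Gieseker inequality on $\PP^n$ \cite{langer:positive-char} to each HN factor, together with Lemma \ref{lem-key-lem}, gives $\Delta_h(\pi_*E)\ge -C\rk^2$.

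Finally, I translate back to $X$. After twisting by $\cO_X(kH)$ I may assume $\mu_{\tau_H}(E)\in[-1,1]$; the twist shifts the $\alpha_i$ triangularly, and I need the quadratic form to be twist-invariant up to a term bounded by $\rk^2$. I expect to check this from the definition of $\mathsf{c}^H_2$ via the shift $\chi(E((t+k)H))$. Substituting the linear relations then gives the inequality with some constant $\mathsf{N}_{X,H}$, and the linear cross terms are absorbed using $|\deg_H|\le\rk H^n$. I expect this twist-invariance bookkeeping to be the main obstacle, in particular normalizing $\mathsf{c}^H_2$ so that twisting produces no $\deg_H$-dependent error; the rest is a copy of Step 1.
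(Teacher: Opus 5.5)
Your proposal is correct and is essentially the paper's proof. The shared steps are: a finite projection $\pi\colon X\to\PP^n$ with $\pi^*h=H$; the observation that $E$ and $\pi_*E$ have the same Hilbert polynomial, so the invariants change by a fixed linear substitution with no $E$-dependent correction (hence no duality/averaging step); the bound $\mu_h^+(\pi_*E)-\mu_h^-(\pi_*E)\le 2N$ from the two adjunctions; and Bogomolov--Gieseker on $\PP^n$ combined with Lemma~\ref{lem-key-lem}.

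The twist bookkeeping you expect to be the main obstacle is unnecessary. After substituting the linear relations, the $\deg_H$ cross terms cancel identically:
\[\deg_h(\pi_*E)^2-2\rk(\pi_*E)\,\mathsf{c}^h_2(\pi_*E)=\deg_H(E)^2-2\rk(E)H^n\mathsf{c}^H_2(E)+\rk(E)^2\bigl(\alpha_{n-1}(\cO_X)^2-2\alpha_n(\cO_X)\alpha_{n-2}(\cO_X)-\tfrac{n+1}{12}\alpha_n(\cO_X)^2\bigr).\]
This directly yields the paper's explicit constant $\mathsf{N}_{X,H}$.
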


\begin{proof}
As in the proof of Theorem \ref{thm:exist-bg-function}, we may assume that $\overline{\kk}=\kk$ and $H$ is very ample. Fix a finite surjective morphism $\pi\colon X\to \PP^n$ such that $\pi^*h=H$, where $h$ is the hyperplane class on $\PP^n$. Then for any $E\in \Coh(X)$, we have
\[\rk(\pi_*E)=\alpha_n(\cO_X)\rk(E),\]
\[\deg_h(\pi_*E)=\deg_H(E)+\rk(\pi_*E)\left(\frac{\alpha_{n-1}(\cO_X)}{\alpha_n(\cO_{X})}-\frac{n+1}{2}\right),\]
and
\begin{align*}
\mathsf{c}^h_2(\pi_*E)=&\mathsf{c}^H_2(E)+\left(\frac{\alpha_{n-1}(\cO_X)}{\alpha_n(\cO_X)}-\frac{n+1}{2}\right)\deg_H(E)\\
&+\left(\alpha_{n-2}(\cO_X)-\frac{(n+1)(3n+2)}{24}\alpha_n(\cO_X)-\frac{n+1}{2}\alpha_{n-1}(\cO_X)+\frac{(n+1)^2}{4}\alpha_n(\cO_X)\right)\rk(E).
\end{align*}
Note that for any torsion-free $\mu_h$-semistable sheaf $F\in \Coh(\PP^n)$, we have
\[(\deg_h(F))^2-2\rk(F) \mathsf{c}^h_2(F)\geq 0.\]
Therefore, after fixing a surjection $\cO_{\PP^n}(-N)^{\oplus m}\twoheadrightarrow \pi_*\cO_X$, the argument in Step 1 of Theorem \ref{thm:exist-bg-function} applies to this case. In particular, we can take
\[\mathsf{N}_{X, H}\coloneqq N^2+\left( \frac{\alpha_{n-1}(\cO_X)}{\alpha_n(\cO_X)}\right)^2-\frac{2\alpha_{n-2}(\cO_X)}{\alpha_n(\cO_X)}-\frac{n+1}{12}.\]
\end{proof}

We define an open subset $V_{X, H}\subset \RR^2$ as
\[V_{X, H}\coloneqq \{(s,t)\in \RR^2\colon t>\Psi_{X, H}(s)\}.\]
Then we have the following version of Theorem \ref{thm-bms}.

\begin{theorem}\label{thm:tilt-hilbert}
Let $X$ be a projective scheme of dimension $n\geq 2$ over a field $\kk$ and $H$ be an ample divisor. Then we have a continuous injective map
\[V_{X,H}\to \Stab_{\mathsf{H}}^{\mathsf{w}}(\Db(X)), \quad (s,t)\mapsto \tau^{s,t}_H=(\Coh^{sH}(X), -\mathsf{c}^H_2(-) + t \rk(-) + \mathfrak{i}(\deg_H(-) - s \rk(-))),\]
where the heart $\Coh^{sH}(X)$ is defined by tilting of $\tau_H$ at the slope $s$ as in Section \ref{subsec-tilting-property}.

Moreover, there is a wall-chamber structure on $V_{X, H}$ as described in Theorem \ref{thm:wall-chamber-abstract}.
\end{theorem}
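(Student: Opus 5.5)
The plan is to deduce Theorem \ref{thm:tilt-hilbert} from the abstract results of Section \ref{sec:general-tilt}, namely Theorem \ref{thm:tilt-stability} and Theorem \ref{thm:wall-chamber-abstract}. We apply them to the weak stability condition $\sigma=\tau_H=(\Coh(X),Z)$ with $Z=-\deg_H+\mathfrak{i}\rk$, the lattice $\mathsf{H}$, and the functional $W\coloneqq-\mathsf{c}^H_2$. With these choices $Z^{b,w}=W+w\Im Z+\mathfrak{i}(-\Re Z-b\Im Z)$ is exactly $-\mathsf{c}^H_2+t\rk+\mathfrak{i}(\deg_H-s\rk)$ with $(b,w)=(s,t)$, and $\Phi_W=\Psi_{X,H}$. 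So the work is to check the hypotheses of the abstract theorems.

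First I check that $\tau_H$ is a weak stability condition defined over $\QQ[\mathfrak{i}]$ with the tilting property, mirroring Proposition \ref{prop-slope-weak-stab}. The values $\alpha_i$ are rational, so $Z$ is defined over $\QQ[\mathfrak{i}]$. The key observation is that the kernel $\Coh(X)^Z$ consists of sheaves supported in codimension $\geq 2$. Indeed, $\rk(E)=0$ forces $\dim E<n$, and then $\deg_H(E)=\alpha_{n-1}(E)$ is the leading Hilbert coefficient of an $(n-1)$-dimensional part, hence positive unless $\dim E\le n-2$. From this, the weak stability function property and the HN property follow as in \cite[Theorem 1.3.4]{HL10}. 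The support property holds with $Q\equiv0$ on the rank-two quotient, as before.

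Next come the tilting conditions. Condition \ref{t1} holds because codimension-$\ge2$ sheaves form a Noetherian torsion subcategory; the torsion part is obtained by taking the maximal subsheaf of dimension $\le n-2$. Condition \ref{t2}, indeed \ref{t3}, is obtained by sending $E$ to $E^H$ after killing its torsion part, using Lemma \ref{lem-S2-hull}(a),(c),(d). This requires $X$ to be equidimensional and to admit a dualizing complex. For non-equidimensional $X$ I would instead take $\wt E$ to be the maximal extension of $E$ by codimension-$\geq2$ sheaves inside $j_*j^*E$, which is finite by standard $S_2$-ification of $E$ off its codimension-$\le1$ points; this is the step needing the most care.

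Then I verify the hypotheses on $W$. On $\Coh(X)^Z$ we have $W(E)=-\alpha_{n-2}(E)\le0$, with strict inequality for any sheaf of dimension exactly $n-2$, for instance the structure sheaf of a codimension-two linear section. The functionals $\Im Z,\Re Z,W$ are linearly independent on $\mathsf{H}$, which is seen by evaluating on $\cO_X$, $\cO_D$ for $D\in|H|$, and $\cO_{D\cap D'}$. The main hypothesis is $(\Lambda_W)_\RR=\ker\Im Z\cap\ker\Re Z\cap\ker W$. I would deduce it from Lemma \ref{lem:support-property}-type reasoning: the class map $\mathsf{H}\to\QQ^{n+1}$ is determined by the $\alpha_i$. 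The intersection of the three kernels is the span of vectors with $\alpha_n=\alpha_{n-1}=\alpha_{n-2}=0$, and this is spanned by classes of structure sheaves of linear sections of dimension $\le n-3$. These lie in $\Coh(X)^Z$ with $W=0$. Hence the equality holds, and Theorem \ref{thm:tilt-stability} gives the continuous family on $V_{X,H}$.

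Finally, to obtain the wall-chamber structure from Theorem \ref{thm:wall-chamber-abstract} we need $\Psi_{X,H}<+\infty$. This is exactly Theorem \ref{thm-bg-general}, which gives $\Psi_{X,H}(x)\le \frac{x^2}{2}+\frac{\mathsf N_{X,H}}{2}$ after normalizing by $H^n$. Injectivity of $(s,t)\mapsto\tau^{s,t}_H$ is immediate from the central charges. I expect the tilting property \ref{t2} for arbitrary (possibly non-equidimensional) projective $X$ to be the main obstacle.
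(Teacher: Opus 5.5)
Your proposal is correct and follows the paper's proof: apply Theorem \ref{thm:tilt-stability} to $\tau_H$ with $W=-\mathsf{c}^H_2$, verify the tilting property via hulls and the kernel condition, then use Theorem \ref{thm-bg-general} to get $\Psi_{X,H}<+\infty$ so that Theorem \ref{thm:wall-chamber-abstract} applies. The only difference is that the paper settles the tilting property for an arbitrary projective $X$ in one step, by citing Kollár's existence of hulls \cite[Corollary 33]{kollar:duality} together with Lemma \ref{lem-general-S2}(d); this is exactly what your $S_2$-ification sketch for the non-equidimensional case amounts to.
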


\begin{proof}
By \cite[Corollary 33]{kollar:duality} and Lemma \ref{lem-general-S2}(d), $\tau_H$ satisfies tilting property. Therefore, it is easy to check that $\tau_H$ meets all assumptions in Theorem \ref{thm:tilt-stability}, and the first statement follows. Moreover, by Theorem \ref{thm-bg-general}, we have $\Psi_{X, H}<+\infty$, so Theorem \ref{thm:wall-chamber-abstract} gives a wall-chamber structure on $V_{X, H}$.
\end{proof}

In particular, when $\dim X=2$, this approach yields a family of stability conditions on $\Db(X)$. Furthermore, the relative results established in Section \ref{sec:tilt-3} extend naturally to this setting. Notably, the central charges defined via this method are automatically locally constant in families, thereby obviating the need to verify the admissibility of morphisms.

However, the Bogomolov-Gieseker-type inequalities in this setting are not sharp for line bundles, even in the smooth case, unless $K_X$ is proportional to $H$. Because this sharpness is essential for investigating the BMT conjecture, we do not pursue this approach in the main text of the paper.


\end{appendix}

\bibliography{singular}

\bibliographystyle{alpha}

\end{document}